\newtheorem{thm}{Theorem}[section]
\newtheorem{definition}{Definition}[section]
\newtheorem{Lemma}[thm]{Lemma}
\newtheorem{remark}{Remark}[section]
\newtheorem{theorem}[thm]{Theorem}
\newtheorem{proposition}[thm]{Proposition}
\newtheorem{corollary}[thm]{Corollary}
\numberwithin{equation}{section}
\newcommand{\beq}{\begin{equation}}
\newcommand{\eeq}{\end{equation}}
\newcommand{\ben}{\begin{eqnarray}}
\newcommand{\een}{\end{eqnarray}}
\newcommand{\beno}{\begin{eqnarray*}}
\newcommand{\eeno}{\end{eqnarray*}}
\newcommand{\bu}{\mathbf{u}}
\newcommand{\bv}{\mathbf{v}}
\newcommand{\Bm}{\mathbf{m}}
\newcommand{\bQ}{\mathbf{Q}}
\newcommand{\bw}{\mathbf{w}}
\newcommand{\bW}{\mathbf{W}}
\newcommand{\bU}{\mathbf{U}}
\numberwithin{equation}{section}
\begin{document}
\begin{sloppypar}
\title[ relativistic Euler equations in 2D]{Low Regularity Well-Posedness of Cauchy Problem for Two-Dimensional Relativistic Euler Equation}

\subjclass[2010]{Primary 35Q35,35R05}

\author{Huali Zhang}
\address{Hunan University, School of Mathematics, Lushan South Road in Yuelu District, Changsha, 410882, People's Republic of China.}

\email{hualizhang@hnu.edu.cn}

\date{\today}

\keywords{two dimensional relativistic Euler equation, Strichartz estimates, low regularity solutions, rescaled velocity, }

\begin{abstract}

In this article, we initiate the study of the Cauchy problem for the two-dimensional relativistic Euler equations in a low-regularity setting. By introducing good variables--a rescaled velocity, logarithmic enthalpy, and an appropriately defined vorticity, we reformulate the equations into a coupled wave-transport system. The nice structures, combined with new elliptic estimates and the Smith-Tataru approach \cite{ST}, enable us to extend ideas from the quasilinear wave equations to the two-dimensional relativistic Euler equations.
%According to the expression of the acoustic metric, we introduce a good decomposition for this rescaled velocity, where one component satisfies a space-time elliptic equation. Additionally, we obtain useful elliptic estimates over the entire space-time domain by applying an extension method. 
%These nice structures, together with new elliptic estimates over the space-time domain and the Smith-Tataru approach \cite{ST}, allow us to extend techniques from the compressible Euler equations to the two-dimensional relativistic Euler equations.

First, we prove the existence and uniqueness of solutions when the initial logarithmic enthalpy $h_0$, rescaled velocity $\bv_0$, and vorticity $\bw_0$ satisfy $(h_0, \bv_0, \bw_0, \nabla \bw_0) \in H^{\frac{7}{4}+}(\mathbb{R}^2) \times H^{\frac{7}{4}+}(\mathbb{R}^2) \times H^{\frac32+}(\mathbb{R}^2) \times L^8(\mathbb{R}^2)$. If the initial vorticity $\bw_0\in H^{\frac32}(\mathbb{R}^2)$, then a strong Strichartz estimate holds on a time interval that depends on the frequency size. Inspired by the semiclassical analysis approach of Bahouri-Chemin \cite{BC1} and Ai-Ifrim-Tataru \cite{AIT}, we further derive a loss of derivatives in the Strichartz estimate when summing over the number of these intervals. This yields a relaxed well-posedness result when $(h_0, \bv_0, \bw_0, \nabla \bw_0) \in H^{\frac{7}{4}+}(\mathbb{R}^2) \times H^{\frac{7}{4}+}(\mathbb{R}^2) \times H^{\frac32}(\mathbb{R}^2) \times L^8(\mathbb{R}^2)$. Both results are valid for the general state function $p(\varrho)=\varrho^A$ ($A \geq 1$).

Secondly, in the special case where $p(\varrho)=\varrho$, the acoustic metric reduces to the standard flat Minkowski metric. By applying Strichartz estimates to the wave part and combining with new elliptic estimate, we establish the well-posedness of solutions when $(h_0, \mathbf{v}_0, \mathbf{w}_0) \in H^{\frac{7}{4}+}(\mathbb{R}^2) \times H^{\frac{7}{4}+}(\mathbb{R}^2) \times H^{1+}(\mathbb{R}^2)$. The regularity exponents for the log-enthalpy and rescaled velocity correspond to those in Smith and Tataru \cite{ST}, while the vorticity regularity corresponds to Bourgain and Li \cite{BL}.

Moreover, if the stiff flow is irrotational, we can prove the local well-posedness for $(h_0, \mathbf{v}_0) \in H^{1+}(\mathbb{R}^2)$, and global well-posedness for small initial data $(h_0, \bv_0) \in \dot{B}^{1}_{2,1}(\mathbb{R}^2)$.
\end{abstract}
	
\maketitle
\section{Introduction}
\subsection{Background}
In fluid mechanics and astrophysics, the relativistic Euler equations are a generalization of the Euler equations that account for the effects of general relativity. This paper addresses the Cauchy problem for the two-dimensional relativistic Euler equations with low regularity data. To formulate the system, we use the notations $\varrho$ and $\bu=(u^0,u^1,u^2)^{\textrm{T}}$ to represent the energy density and the relativistic velocity, respectively. Moreover, the velocity is assumed to be a forward time-like vector field, normalized by
\begin{equation}\label{muu}
	u^\alpha u_\alpha =-1,
\end{equation}
where $u_{\alpha }= m_{\alpha \beta} u^\beta$, and $\Bm=(m_{\alpha \beta})_{3\times 3}$ is the flat standard Minkowski metric. Also, we use the Greek indices $\alpha,\beta,\gamma,\cdots$ take on the values $0,1,2$, while the Latin spatial indices $a,b,c,\cdots,i,j,k,\cdots$ take on the values $1,2$. Repeated indices are summed over (from $0$ to $2$ if they are Greek, and from $1$ to $2$ if they are Latin). Greek and Latin indices are lowered and raised with the Minkowski metric $\Bm$ and its inverse $\Bm^{-1}$. We will use this convention throughout the paper unless otherwise indicated.
%Due to the constraint \eqref{muu}, so there only three of the quantities $u^1,u^2$
%are independent. We denote
%\begin{equation}\label{rev}
%	\mathring{\bu}=(u^1,u^2).
%\end{equation}
%Therefore, we can also write the velocity as
%\begin{equation}\label{reva}
%	\bu=(u^0,\mathring{\bu}).
%\end{equation}

Let $\mathcal{Q}$ be the stress energy tensor, which is defined by
\begin{equation*}%\label{mo2}
	\mathcal{Q}^{\alpha \beta}=(p+\varrho)u^\alpha u^\beta+ p m^{\alpha \beta}.
\end{equation*}
Then the motion of the relativistic Euler equations can be described by 
\begin{equation*}\label{mo1}
	\partial_{\alpha} \mathcal{Q}^{\alpha \beta}=0.
\end{equation*}
Projecting the above equation into the subspace
parallel and orthogonal to $u^\alpha$ yields (c.f. Disconzi-Ifrim-Tataru \cite{DIT} or Oliynyk \cite{Todd1,Todd2})
\begin{equation}\label{OREE}
	\begin{cases}
		u^\kappa\partial_\kappa \varrho+(p+\varrho) \partial_\kappa u^\kappa=0, \qquad t>0, \ x\in \mathbb{R}^2,
		\\
		(p+\varrho)u^\kappa\partial_\kappa u^\alpha + (m^{\alpha \kappa}+u^{\alpha}u^{\kappa}) \partial_\kappa p=0.
	\end{cases}
\end{equation}
An appropriate equation of state (please c.f. Choquet-Bruhat \cite{Bru}) is of the form
\begin{equation}\label{mo3}
	p=p(\varrho)=\varrho^{A}.
\end{equation}
where $A$ is the adiabatic constant. If $ A > 1 $, the fluid is called polytropic. When $ A = 1 $, it is referred to as a stiff fluid, in which the speed of sound equals the speed of light. Both in mathematics and physics, the system \eqref{OREE}-\eqref{mo3} has attracted significant attention. In our paper, we aim to study low-regularity solutions to the Cauchy problem for the two-dimensional relativistic Euler equations. Next, let us review some historical results related to the compressible or relativistic Euler equations.
\subsection{Historical results}
The local well-posedness theory for the compressible Euler equations began with Majda's work \cite{M}, where the initial velocity, density, and entropy belong to the Sobolev space $ H^{s}(\mathbb{R}^n)$ with $s > 1 + \frac{n}{2}$. In the special case of isentropic and irrotational, the compressible Euler equations can be written as a quasilinear wave system. A general quasilinear wave equation takes the form
\begin{equation}\label{qwe}
	\begin{cases}
			&\square_{h(\phi)} \phi=q(d \phi, d \phi), \qquad t>0, x\in \mathbb{R}^n,
			\\
			& (\phi, \partial_t \phi)|_{t=0}=(\phi_0, \phi_1)\in H^s(\mathbb{R}^n) \times H^{s-1}(\mathbb{R}^n),
		\end{cases}
\end{equation}
where $\phi$ is a scalar function, $h(\phi)$ is a Lorentzian metric depending on $\phi$, $d=(\partial_t, \partial_1, \partial_2, \cdots, \partial_n)$, and $q$ is quadratic in $d \phi$. The sharp local well-posedness was obtained by Smith and Tataru \cite{ST} for $s>\frac74 (n=2)$ and $s>2(n=3)$ ( c.f. Lindblad \cite{L} and Ohlman \cite{Oman} for ill-posedness). We also highlight significant related progress \cite{BC1, BC2, HKM, Geba, Kap, KR2, MSS, Sm, SS, ST0, T1, T2, T3, WQSharp} on the low regularity problems involving quasilinear wave equations. There are also two other important quasilinear wave models, namely, Einstein vacuum and time-like minimal surface equations. Regarding Einstein vacuum equations, the results depend on the choice of gauge. In the Yang-Mills gauge, Klainerman-Rodnianski-Szeftel \cite{KR1} solved the well-known $L^2$ curvature conjecture. In wave gauge, the problem is well-posed in $H^{2+}$ and ill-posed in $H^{2}$, cf. Klainerman-Rodnianski \cite{KR} and Ettinger-Lindblad \cite{EL}. In the CMC gauge, the system is well-posed in $H^{2+}$, please refer Andersson-Moncrief \cite{AM} and Wang \cite{WQRough}. For the time-like minimal surface equations, which satisfy null conditions, a celebrated work was obtained by Ai-Ifrim-Tataru \cite{AIT}, namely by lowering $\frac18$ derivatives in two space dimensions and by $\frac14$ derivatives in higher dimensions. For radial initial data, Wang and Zhou \cite{WZ} proved the critical global well-posedness in two dimensions. Very recently, By finding a good gauge and applying a parametric representation, Moschidis and Rodnianski \cite{MR} reduced the additional $\frac{1}{12}$-order regularity requirement for time-like surface equations in three dimensions.

In the general case, the compressible Euler equations can be written as a coupled wave-transport system (c.f. Luk-Speck \cite{LS2}). Based on this formulation, Disconzi-Luo-Mazzone-Speck \cite{DLS} proved the existence and uniqueness of solutions of compressible Euler equations when the initial velocity $\bu_0$, density $\rho_0$, vorticity $\bw_0$, and entropy $S_0$ satisfy $(\bu_0,\rho_0,\bw_0,S_0)\in H^{2+}(\mathbb{R}^3) \times H^{2+}(\mathbb{R}^3) \times H^{2+}(\mathbb{R}^3) \times H^{3+}(\mathbb{R}^3)$ with H\"older condition $\mathrm{curl} \bw_0, \Delta S_0\in C^{\sigma}, 0<\sigma<1$. Meanwhile, Wang \cite{WQEuler} established the existence and uniqueness of solutions of 3D isentropic compressible Euler equations if $(\bu_0,\rho_0,\bw_0) \in H^{s}(\mathbb{R}^3) \times H^{s}(\mathbb{R}^3) \times H^{s_0}(\mathbb{R}^3)$, where $2<s_0<s$. The work \cite{DLS} and \cite{WQEuler} both are based on vector-fields approach. By applying Smith-Tataru's method \cite{ST}, together with semi-classical analysis and Tao's frequency envelope, Andersson and Zhang \cite{AZ} proved a complete local well-posedness of 3D compressible Euler equations with relaxed assumptions $(\bu_0,\rho_0,\bw_0)\in H^{2+}(\mathbb{R}^3) \times H^{2+}(\mathbb{R}^3) \times H^{2}(\mathbb{R}^3)$ or $(\bu_0,\rho_0,\bw_0,S_0)\in H^{\frac52}(\mathbb{R}^3) \times H^{\frac52}(\mathbb{R}^3) \times H^{\frac32+}(\mathbb{R}^3)\times H^{\frac52+}(\mathbb{R}^3)$. In two dimensions, Zhang \cite{Z1,Z2,Z3} recently proved the well-posedness of solutions of isentropic compressible Euler equations when $(\bu_0, \rho_0, w_0, \nabla w_0) \in H^{\frac74+}(\mathbb{R}^2) \times H^{\frac74+}(\mathbb{R}^2)  \times H^{\frac32}(\mathbb{R}^2)  \times L^8(\mathbb{R}^2) $. In the reverse direction, the ill-posedness results were established by An-Chen-Yin \cite{ACY, ACY1} if $(\bu_0, \rho_0) \in \dot{H}^{2}(\mathbb{R}^3)$ or $(\bu_0, \rho_0) \in \dot{H}^{\frac74}(\mathbb{R}^2)$ with smooth vorticity.

Let us return to the study of the relativistic Euler equations. The early work on the Cauchy problem is attributed to Bruhat \cite{BF} (see also Friedrich \cite{HF}). Makino-Ukai \cite{MU1, MU2} established the local well-posedness of the Cauchy problem for the three-dimensional relativistic Euler equations with initial data in $H^s(\mathbb{R}^3)$ for $s\geq 3$. For an overview of the standard theory, we refer to the relevant chapter in Choquet-Bruhat's book \cite{Bru}. With vacuum, the pioneering result was obtained by Rendall \cite{Ren} and Guo-Tahvildar-Zadeh \cite{GT}. By introducing a new symmetrization, LeFloch-Ukai \cite{LUa} established a local-in-time existence result for solutions in $n$ spatial dimensions when the initial data belong to $H^s(\mathbb{R}^n)$ with $s>1+\frac{n}{2}$. Meanwhile, there are several interesting results on free boundary problems by Avadanei \cite{Av}, Jang-LeFloch-Masmoudi \cite{JLM}, Had$\check{\mathrm{z}}$i$\acute{\mathrm{c}}$-Shkoller-Speck \cite{HSS}, Disconzi-Ifrim-Tataru \cite{DIT}, and Miao-Shahshahani \cite{MS} and so on. Recently, Disconzi and Speck \cite{DS} proposed a new formulation of wave-transport equations for the three-dimensional relativistic Euler equations, in which the inherent structure plays a crucial role in addressing low-regularity problems. Based on \cite{DLS} and \cite{DS}, Yu \cite{Yu} proved the existence and uniqueness of rough solutions of 3D relativistic Euler equations when the initial enthalpy $h_0$, velocity $\bu_0$, vorticity $\bw_0$, and entropy $S_0$ satisfy $(h_0,\bu_0,\bw_0,S_0)\in H^{2+}(\mathbb{R}^3) \times H^{2+}(\mathbb{R}^3) \times H^{2+}(\mathbb{R}^3) \times H^{3+}(\mathbb{R}^3)$, and $\mathrm{vort} \bw_0, \Delta S_0 \in C^\sigma (0<\sigma<1)$. Very recently, Zhang \cite{Zhang} proved the complete local well-posedness of the 3D relativistic Euler equations with initial data $(h_0,\bu_0,\bw_0) \in H^{2+}(\mathbb{R}^3) \times H^{2+}(\mathbb{R}^3) \times H^{2}(\mathbb{R}^3)$. This result also resolves the ``open problem D" proposed by Disconzi \cite{Dis}. 

\subsection{Motivation}
Although there are several low regularity results \cite{DS,Yu,Zhang} for the 3D relativistic Euler equations, the Cauchy problem for the 2D case remains unknown. In three dimensions, the analysis crucially relies on the $\mathbf{u}$-vorticity formulation, which effectively arises in the wave component of velocity and has nice transport dynamics. However, this approach does not directly generalize to two dimensions, as it may not yield effective transport structures. On the other hand, low regularity well-posedness relies on Strichartz estimates, which differ between two and three dimensions. Strongly motivated by these considerations, we study low regularity solutions to the Cauchy problem for the 2D relativistic Euler equations.

To overcome the associated difficulties, we draw inspiration from \cite{DIT} and \cite{DS}, where a rescaled velocity and logarithmic enthalpy are introduced, respectively. Combining their idea, we define new variables, including rescaled velocity and logarithmic enthalpy, which reveal a coupled wave-transport structure and allow for a proper definition of vorticity in the 2D relativistic Euler equations. Under this reformulation, we apply the method of Smith and Tataru \cite{ST} (originally introduced for quasilinear wave equations) to establish the first low regularity result for the Cauchy problem of 2D relativistic Euler equations. Our main results(see section \ref{subs} below) can be summarized as follows:

%However, for the Cauchy problem of the 2D relativistic Euler equations with low regularity data, no local existence and uniqueness results have been previously established. This strongly motivates us to discuss the low regularity solutions for the 2D relativistic Euler equations. From historical results on the 3D relativistic Euler equations \cite{DS,Yu,Zhang}, the definition of $\bu$-vorticity plays a crucial role. In 2D relativistic Euler equations, the idea for $\bu$-vorticity cannot be directly applied, as the wave and transport formulations are not good. Inspired by \cite{DIT}, we introduce a rescaled velocity, which provides a better formulation than the original velocity. Using these new variables, we can define vorticity and derive the formulations of the wave and transport equations. This leads us to use Smith-Tataru's approach \cite{ST} to study low-regularity solutions of Cauchy problem for 2D relativistic Euler equations. In summary, we can obtain:

\begin{itemize}
	\item  in the general polytropic case, i.e. $p(\varrho)=\varrho^A$ with the constant $A\geq 1$, the acoustic metric depends on log-enthalpy $h$ and rescaled velocity $\bv$, making the system \eqref{REEf} genuinely quasilinear. Using the energy method, the key step is to bound the Strichartz estimate $\|dh, d\bv\|_{L^4_t L^\infty_x}$. Following \cite{ST}, we derive Strichartz estimates for linear wave equations equipped with the acoustic metric by using wave packets along characteristic hypersurfaces. By carefully analyzing the characteristic hypersurfaces, a regularity exponent of $\frac{3}{2}+$ is sufficient. Indeed, the vorticity and acoustic metric determines the geometry of characteristic hypersurfaces. Consequently, the existence and uniqueness of \eqref{REEf} holds if the initial data \((h_0, \bv_0, \bw_0, \nabla \bw_0)\) belong to the spaces $H^{\frac{7}{4}+}(\mathbb{R}^2) \times H^{\frac74+}(\mathbb{R}^2) \times H^{\frac32+}(\mathbb{R}^2) \times L^8(\mathbb{R}^2)$. By applying the Strichartz estimates again and combining them with semiclassical analysis, the existence and uniqueness of \eqref{REEf} also holds if the initial data $(h_0,\bv_0,\bw_0,\nabla \bw_0) \in H^{\frac74+}(\mathbb{R}^2) \times H^{\frac74+}(\mathbb{R}^2) \times H^{\frac32}(\mathbb{R}^2) \times L^8 (\mathbb{R}^2)$. This result can be seen as an extension of the author's result \cite{Z3} from the Newtonian compressible Euler equations to the relativistic case.
	\item in the stiff case, i.e. $p(\varrho)=\varrho$, the acoustic metric is a flat Minkowski metric. Applying the Strichartz estimates in \cite{KT}, new elliptic estimate and bootstrap arguments, we can prove the local well-posedness of \eqref{REEf} if the initial data $(h_0,\bv_0,\bw_0) \in H^{\frac74+}(\mathbb{R}^2) \times H^{\frac74+}(\mathbb{R}^2) \times H^{1+}(\mathbb{R}^2)$. The regularity exponents for the log-enthalpy and rescaled velocity correspond to those in \cite{ST}, while the vorticity regularity corresponds to \cite{BL}.
	%Furthermore, using the semi-classical analysis, we can extend the solutions from short-time intervals to a fixed, regular one. Then the local well-posedness of \eqref{REEf} holds if the initial data $(h_0,\bv_0,\bw_0,\nabla \bw_0) \in H^{\frac74+}(\mathbb{R}^2) \times H^{\frac74+}(\mathbb{R}^2) \times H^{\frac32}(\mathbb{R}^2) \times L^8 (\mathbb{R}^2)$.
	\item 
	in the stiff case, for irrotational flow, the system becomes a semi-linear wave equations satisfying ``wave map" null conditions. Therefore, referring to the classical results in \cite{KM1} and \cite{T}, the local well-posedness of \eqref{REEf} holds if $(h_0, \bv_0)\in H^{1+}(\mathbb{R}^2) $ and the small, global well-posedness holds if $(h_0, \mathbf{v}_0) \in \dot{B}_{2,1}^{1}(\mathbb{R}^2)$.
\end{itemize}

We next define several good variables as follows. 
\subsection{The good variables}
Let us introduce some variables--acoustic speed, number density, enthalpy, and vorticity.
\begin{definition}\cite{DS}
	We denote the acoustic speed $c_s$
	\begin{equation}\label{mo4}
		c_s=\sqrt{\frac{dp}{d\varrho}},
	\end{equation}
	the number density $q$
	\begin{equation}\label{mo5}
		p+\varrho=q\frac{d\varrho}{dq},
	\end{equation}
	and the enthalpy per particle $H$
	\begin{equation}\label{mo6}
		H=\frac{p+\varrho}{q}.
	\end{equation}
\end{definition}
Due to the work of Disconiz, Ifrim, and Tataru (\cite{DIT}, page 132, equation (1.9)), the rescaled velocity exhibits a better structure than the original velocity. Following this idea, we introduce the rescaled velocity as follows.
\begin{definition}
	We denote the logarithmic enthalpy $h$
	\begin{equation}\label{mo7}
		h=\log H,
	\end{equation}
	and the rescaled velocity
	\begin{equation}\label{mo8}
		\bv=\mathrm{e}^h \bu.
	\end{equation}
	%To be simple, we assume $\bar{H}=1$. Then we have
	%\begin{equation}\label{mo9}
	%	h=\log{H},
	%\end{equation}
	%and
	%	\begin{equation}\label{mo8}
	%	\bv=\mathrm{e}^h \bu.
%	\end{equation}
\end{definition}
\begin{remark}
\begin{enumerate}
	\item 
To ensure that the system \eqref{OREE} is hyperbolic, we assume $0 < c_s \leq 1$ throughout the paper.
\item Due to \eqref{muu} and \eqref{mo8}, we have
	\begin{equation}\label{rsv}
		\mathrm{e}^{-2h}v^\alpha v_\alpha=-1.
	\end{equation}
	If we set $\bv=(v^0,\mathring{\bv})$, then 
	\begin{equation*}\label{rsv0}
		v^0=\sqrt{\mathrm{e}^{2h}+|\mathring{\bv}|^2 }.
	\end{equation*}
	Under the settings \eqref{mo4}, \eqref{mo5}, \eqref{mo6}, and \eqref{mo7}, then $\varrho, p$, and $c_s$ are functions of $h$. To be simple, we record
	\begin{equation*}\label{und}
		\varrho=\varrho(h), \quad p=p(h), \quad c_s=c_s(h).
	\end{equation*}
\end{enumerate}
\end{remark}

In terms of these variables, the relativistic Euler equations \eqref{OREE} becomes (see Lemma \ref{tra} below)
\begin{align}\label{REEf}
	\begin{cases}
		& (1-c^2_s)v^\kappa \partial_\kappa h + c^2_s \partial_\kappa v^\kappa=0, %\quad \qquad \qquad (t,x)\in (0,\infty) \times \mathbb{R}^2,
\\
		& v^\kappa \partial_\kappa v^\alpha + \mathrm{e}^{2h} \partial^\alpha h=0.
	\end{cases}
\end{align}
We will work with this system for the rest of the paper. To study the low regularity solutions, we need a precise wave-transport structure for \eqref{REEf}. Therefore, we define the metric $g$ and the vorticity $\bw$ as follows.
\begin{definition}\label{Def}
Define the function $	\Theta=	\Theta(h,\bv)$ as 
\begin{equation}\label{theta}
	\Theta=\frac{1}{c^2_s - \mathrm{e}^{-2h}(c^2_s - 1) (v^0)^2}.
\end{equation}
Define the acoustic metric $g=(g^{\alpha \beta})_{3\times3}$ by
\begin{equation}\label{met1}
\begin{split}
	g^{\alpha \beta}= &  	\Theta \left\{   c^2_s m^{\alpha \beta } + \mathrm{e}^{-2h}(c^2_s-1) v^\alpha v^\beta  \right\} .
%	\\
%	g_{\alpha \beta}= & 	\Theta^{-1} \left\{    c^{-2}_s m_{\alpha \beta } + \mathrm{e}^{-2h} (c^{-2}_s-1) v_\alpha v_\beta  \right\}  .
\end{split}	
\end{equation}
Define the vorticity $\bw=(w^0,w^1,w^2)$ as
\begin{equation}\label{Vor}
	w^\alpha= \epsilon^{\alpha \beta \gamma} \partial_\beta  v_\gamma .
\end{equation}
and $\bW=(W^0,W^1,W^2)$ as
\begin{equation}\label{EW2}
	W^\alpha=\epsilon^{\alpha \beta \gamma} \partial_\beta  w_\gamma + (1-c^{-2}_s) \epsilon^{\alpha \beta \gamma} w_\gamma \partial_\beta h.
\end{equation}
\end{definition}
\begin{remark}\label{rem}
%\begin{enumerate}
	 If $p(\varrho)=\varrho$, then $c_s\equiv 1$. In this case, using \eqref{theta} and \eqref{met1}, the acoustic metric $g$ is a flat Minkowski metric and
	\begin{equation*}
		g=\Bm .
	\end{equation*}
%\end{enumerate}
\end{remark}
Inspired by Wang's paper \cite{WQEuler}, we also introduce a decomposition of rescaled velocity.
\begin{definition}
	Let $h$ and $\bv$ be the logarithmic enthalpy and rescaled velocity. Let $\bw$ and $\bW$ be defined in \eqref{Vor}- \eqref{EW2}. Let $\mathbf{Id}$ be the identity operator. Define the vector $\bv_{-}=(v_{-}^0,v_{-}^1,v_{-}^2)^{\mathrm{T}}$ by
	\begin{equation}\label{De0}
		\begin{split}
			( \mathbf{Id}-\mathbf{P} ) v_{-}^\alpha& = \epsilon^{\alpha \beta \gamma} \partial_\beta  w_\gamma, 
		\end{split}
	\end{equation}
	where
	\begin{equation}\label{De1}
		\mathbf{P}= (m^{\beta \gamma}+2\mathrm{e}^{-2h}v^{\beta}v^{\gamma}) \partial^2_{\beta \gamma}.
	\end{equation}
Meanwhile, we define the vector $\bv_{+}=(v_{+}^0,v_{+}^1,v_{+}^2)^{\mathrm{T}}$ by
	\begin{equation}\label{De}
		v^\alpha_{+}=v^\alpha-v^\alpha_{-}.
	\end{equation}
\end{definition}
\begin{remark}
From \eqref{De1}, we can verify that $\mathbf{{P}}$ is a space-time elliptic operator throughout the entire space-time domain; for details, see Lemma \ref{Ees} below. From \eqref{De1}, we can verify that $\mathbf{{P}}$ is a space-time elliptic operator throughout the entire space-time domain; for details, see Lemma \ref{Ees} below. 

Therefore, the operator $\mathbf{P}$ differs from the spatial elliptic operator defined in \cite{WQEuler}.
\end{remark}

\subsection{Wave-transport reduction}
\begin{Lemma}\label{Wrs}
	Let $(h,\bv)$ be a solution of \eqref{REEf}. Let $g$ and $\bw$ be defined in \eqref{met1} and \eqref{Vor}. Then the system \eqref{REEf} can be written as
		\begin{equation}\label{wrt}
		\begin{cases}
			& \square_g h = \mathcal{D},
			\\
			& \square_g v^\alpha = -c^2_s  \Theta  \epsilon^{\alpha \beta \gamma} \partial_\beta  w_\gamma + Q^\alpha,
			\\
			& v^\kappa \partial_\kappa w^\alpha= w^\kappa \partial^\alpha v_\kappa- w^\alpha \partial_\kappa v^\kappa,
		\end{cases}
	\end{equation}
	where $\square_g=g^{\alpha \beta } \partial^2_{\alpha \beta} $, $\mathcal{D}$ and $\bQ=(Q^0,Q^1,Q^2)$ are quadratic terms
	\begin{equation}\label{wrt0}
		\begin{split}
			\mathcal{D}=& - 2 \mathrm{e}^{-2h} \Theta c^{-1}_s c'_s v^\beta v^\kappa \partial_\beta h \partial_\kappa h
			-\mathrm{e}^{-2h} \Theta c^2_s \partial_\kappa v^\beta \partial_\beta v^\kappa- \Theta  (1+ c^2_s) \partial_\kappa h \partial^\kappa h,
		\end{split}
	\end{equation}
	and
	\begin{equation}\label{wrt1}
		\begin{split}
			Q^\alpha =& -  \mathrm{e}^{-2h} (c^2_s-1)\Theta v^\beta \partial_\beta v^\kappa \partial_\kappa v^\alpha 
			- 2 (c^2_s-1)\Theta v^\beta \partial_\beta h \partial^\alpha h -2\Theta c_s c'_s  \partial^\alpha h    \partial_\kappa v^\kappa  	
			\\
			& +  (c^2_s-1) \Theta\partial^\alpha v^\beta \partial_\beta h + 2\Theta c_s c'_s  v^\beta \partial^\alpha h \partial_\beta h.
		\end{split}
	\end{equation}
\end{Lemma}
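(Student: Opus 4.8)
The plan is to establish the three equations of \eqref{wrt} one at a time. In each case I differentiate \eqref{REEf} once more and then eliminate every second derivative of $\bv$ using the two first-order consequences of \eqref{REEf},
\begin{equation*}
\partial_\kappa v^\kappa = (1-c^{-2}_s)\,v^\kappa\partial_\kappa h, \qquad v^\kappa\partial_\kappa v^\alpha = -\mathrm{e}^{2h}\partial^\alpha h
\end{equation*}
(the latter also with the index lowered, since $\Bm$ is constant); the principal part is then identified by comparison with \eqref{met1}, and the remaining lower-order terms are consolidated using $c_s=c_s(h)$. The only algebraic fact I need about the metric is the trivial rearrangement of \eqref{met1}, namely $c^2_s m^{\alpha\beta}+\mathrm{e}^{-2h}(c^2_s-1)v^\alpha v^\beta=\Theta^{-1}g^{\alpha\beta}$.

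\emph{The wave equation for $h$.} Applying $v^\lambda\partial_\lambda$ to the first relation above and $\partial_\kappa$ to $v^\lambda\partial_\lambda v^\kappa=-\mathrm{e}^{2h}\partial^\kappa h$ gives two identities which both contain the third-order term $v^\lambda\partial_\lambda\partial_\kappa v^\kappa$; I eliminate it between them and substitute the first-order relations wherever $v^\lambda\partial_\lambda v^\kappa$ or $\partial_\kappa v^\kappa$ reappear. The surviving second-order part is then $\big(\mathrm{e}^{2h}m^{\kappa\lambda}+(1-c^{-2}_s)v^\kappa v^\lambda\big)\partial_\kappa\partial_\lambda h$, which by the rearrangement above equals $\mathrm{e}^{2h}(c^2_s\Theta)^{-1}\square_g h$. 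Multiplying through by $c^2_s\mathrm{e}^{-2h}\Theta$ and using the elementary identities $c^2_s(1+c^{-2}_s)=1+c^2_s$ and $c^2_s\frac{d}{dh}(1-c^{-2}_s)=2c^{-1}_s c'_s$, the right-hand side collapses to exactly $\mathcal D$ in \eqref{wrt0}.

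\emph{The wave equation for $v^\alpha$.} Here I start from the div--curl identity $\square_m v^\alpha=\partial^\alpha(\partial_\kappa v^\kappa)-\epsilon^{\alpha\beta\gamma}\partial_\beta w_\gamma$, obtained by inserting \eqref{Vor} into $\epsilon^{\alpha\beta\gamma}\partial_\beta w_\gamma$, contracting the resulting product of two Levi--Civita symbols (the sign being fixed by the orientation convention of Definition \ref{Def}), and using $\epsilon^{\alpha\beta\gamma}\partial_\beta\partial_\gamma(\cdot)=0$. Since $\square_g v^\alpha=\Theta c^2_s\square_m v^\alpha+\Theta\mathrm{e}^{-2h}(c^2_s-1)v^\kappa v^\lambda\partial_\kappa\partial_\lambda v^\alpha$, I compute $v^\kappa v^\lambda\partial_\kappa\partial_\lambda v^\alpha$ by applying $v^\lambda\partial_\lambda$ to $v^\kappa\partial_\kappa v^\alpha=-\mathrm{e}^{2h}\partial^\alpha h$, and I expand $\partial^\alpha(\partial_\kappa v^\kappa)$ via $\partial_\kappa v^\kappa=(1-c^{-2}_s)v^\kappa\partial_\kappa h$. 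The two surviving third-order terms, $+\Theta(c^2_s-1)v^\kappa\partial^\alpha\partial_\kappa h$ from the first source and $-\Theta(c^2_s-1)v^\kappa\partial_\kappa\partial^\alpha h$ from the second, cancel by equality of mixed partials — the only step that uses the precise coefficients of $g$ in \eqref{met1} together with the first equation of \eqref{REEf}. What remains is $-c^2_s\Theta\epsilon^{\alpha\beta\gamma}\partial_\beta w_\gamma$ plus quadratic terms, and matching these against $Q^\alpha$ in \eqref{wrt1} — using $\partial^\kappa h=-\mathrm{e}^{-2h}v^\lambda\partial_\lambda v^\kappa$ for one term and the identity $c_s c'_s-c_s c'_s(1-c^{-2}_s)=c^{-1}_s c'_s$ for another — completes this step.

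\emph{The transport equation for $w^\alpha$.} Apply $v^\kappa\partial_\kappa$ to $w^\alpha=\epsilon^{\alpha\beta\gamma}\partial_\beta v_\gamma$ and commute the derivatives to obtain $v^\kappa\partial_\kappa w^\alpha=\epsilon^{\alpha\beta\gamma}\partial_\beta(v^\kappa\partial_\kappa v_\gamma)-\epsilon^{\alpha\beta\gamma}(\partial_\beta v^\kappa)(\partial_\kappa v_\gamma)$. The first term vanishes because $v^\kappa\partial_\kappa v_\gamma=-\mathrm{e}^{2h}\partial_\gamma h$ and $\epsilon^{\alpha\beta\gamma}$ annihilates both $\partial_\beta h\,\partial_\gamma h$ and $\partial_\beta\partial_\gamma h$. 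In the second term, the inversion $\partial_\beta v_\gamma-\partial_\gamma v_\beta=-\epsilon_{\sigma\beta\gamma}w^\sigma$ of \eqref{Vor} lets me replace $\partial_\kappa v_\gamma$ by a piece symmetric in $(\beta,\gamma)$, killed by $\epsilon^{\alpha\beta\gamma}$, plus a vorticity piece, and the $\epsilon$-contraction identity reduces the whole term to $w^\alpha\partial_\kappa v^\kappa-w^\kappa\partial_\kappa v^\alpha$; finally $w^\kappa(\partial_\kappa v^\alpha-\partial^\alpha v_\kappa)=0$ because, by \eqref{Vor}, $\partial_\kappa v^\alpha-\partial^\alpha v_\kappa$ is a multiple of $\epsilon$ contracted against $w$, so $w^\kappa\partial_\kappa v^\alpha=w^\kappa\partial^\alpha v_\kappa$ and the stated equation follows. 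Apart from these structural points the proof is a long but routine computation, and the main obstacle is the bookkeeping: ensuring the third-order cancellation in the $v^\alpha$-equation is exact, and tracking the signs in the Lorentzian Levi--Civita contractions and the $c'_s$-factors so that the quadratic remainders reproduce $\mathcal D$ and $Q^\alpha$ verbatim rather than merely up to a rearrangement.
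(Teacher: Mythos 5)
Your proposal is correct and follows essentially the same route as the paper's proof (Lemmas \ref{DW0} and \ref{DW1}): differentiate the first-order system once, eliminate the reappearing first derivatives via the two relations of \eqref{REEf}, identify the acoustic wave operator through the rearrangement $c_s^2 m^{\alpha\beta}+\mathrm{e}^{-2h}(c_s^2-1)v^\alpha v^\beta=\Theta^{-1}g^{\alpha\beta}$, and use the Levi--Civita contraction identities (the paper's \eqref{a08} and \eqref{a12}) for the curl term and the vorticity transport. The only differences are organizational (e.g., starting from $\square_g v^\alpha$ and decomposing rather than applying $(c_s^2-1)v^\beta\partial_\beta$ to the velocity equation), and the on-shell identities you cite for matching the quadratic remainders to $\mathcal D$ and $Q^\alpha$ check out.
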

\begin{remark}
	For the proof of Lemma \ref{Wrs}, please refer to Lemma \ref{DW0} and \ref{DW1} below.
\end{remark}

\subsection{Notations}
\begin{itemize}
	\item  Define the operators $\nabla=(\partial_{1}, \partial_{2})^\mathrm{T}$, $d=(\partial_t, \partial_{1}, \partial_{2})^\mathrm{T}$, $	\Delta=\partial^2_{1}+\partial^2_{2}$, $\Lambda_x=(-\Delta)^{\frac{1}{2}}$, $\square=-\partial^2_{tt}+\Delta$, $\square_g=g^{\alpha \beta } \partial^2_{\alpha \beta} $, and
	\begin{equation}\label{opt}
		\mathbf{T}=(v^0)^{-1}v^\kappa \partial_\kappa=\partial_t+ (v^0)^{-1} v^i \partial_i.
	\end{equation} 
	\item Let $\left< \xi \right>=(1+|\xi|^2)^{\frac{1}{2}}, \ \xi \in \mathbb{R}^2$. Denote by $\left< \nabla \right>$ the corresponding Bessel potential multiplier. 
\item Let $\zeta$ be a smooth function with support in the shell $\{ \xi\in \mathbb{R}^2: \frac{1}{2} \leq |\xi| \leq 2 \}$. Here, $\xi$ denotes the variable of the spatial Fourier transform. Let $\zeta$ also satisfy the
condition $\sum_{k \in \mathbb{Z}} \zeta(2^{k}\xi)=1$. Let ${P}_j$ be the Littlewood-Paley operator with frequency $2^j, j \in \mathbb{Z}$ (cf. Bahouri-Chemin-Danchin \cite{BCD}, page 78),
\begin{equation*}\label{Dej}
	{P}_j f = \int_{\mathbb{R}^2} \mathrm{e}^{-\mathrm{i}x\cdot \xi} \zeta(2^{-j}\xi) \hat{f}(\xi)d\xi.
\end{equation*}
For $f \in H^s(\mathbb{R}^2)$, let
\begin{align*}
	\|f\|_{H^s}= \|f\|_{L^2(\mathbb{R}^2)}+\|f\|_{\dot{H}^s(\mathbb{R}^2)},
\end{align*}
with the homogeneous norm $\|f\|^2_{\dot{H}^s} = {\sum_{j \in \mathbb{Z}}} 2^{2js}\|{P}_j f\|^2_{L^2(\mathbb{R}^2)} $. 
To avoid confusion, when the function $f$ is related to both time and space variables, we use the notation $\|f\|_{H_x^s}=\|f(t,\cdot)\|_{H^s(\mathbb{R}^2)}$.
\item Specially, $\bv \in H^s_x$ means $\bv-(1,0,0)^{\mathrm{T}} \in H^s_x$.
\item For $a\in \mathbb{R}$, we define
\begin{equation*}
 \|f\|_{\dot{B}^a_{\infty,2}}=\left( \sum_{j \in \mathbb{Z}} 2^{2ja} \|P_j f\|^2_{L^\infty_x(\mathbb{R}^2)} \right)^{\frac12} . 
\end{equation*}
\item  Assume there exists two positive constants $C_0$ and $c_0$ such that
\begin{equation}\label{HEw}
	|h_0,{\bv}_0 | \leq C_0,  \qquad 0<c_0<c_s|_{t=0}<1,
\end{equation}
where ${\bv}_0=(v^0_0, v^1_0, v^2_0)$. 
\item We use three small parameters
\begin{equation}\label{a0}
	0 < 	\epsilon_3 \ll \epsilon_2 \ll \epsilon_1 \ll \epsilon_0 \ll 1.
\end{equation}
\item 
In the following, constants $C$ depending only on $C_0, c_0$ are called universal. Unless otherwise stated, all constants that appear are universal in this sense.
The notation $X \lesssim Y$ means $X \leq CY$, where $C$ is a universal constant, possibly depending on $C_0, c_0$. Similarly, we write  $X \simeq Y$ when $C_1 Y \leq X \leq C_2Y$, with $C_1$ and $C_2$ universal constants, and $X \ll Y$ when $X \leq CY$ for a sufficiently large constant $C$. The universal constant may change from line to line.
\end{itemize}

\section{Main results}\label{subs} 
We present four theorems concerning the well-posedness of Cauchy problem \eqref{wrt}.
\subsection{Statement of results.} Our first result is stated as follows.
%The main result of this paper is as follows:
\begin{theorem}\label{dingli}
	Assume $\frac74<s_0\leq s\leq \frac{15}{8}$.  Consider the Cauchy problem \eqref{wrt}. Let $\bw$ be defined in \eqref{Vor}. For any given initial data $(h_0, \bv_0, \bw_0)$ satisfies \eqref{HEw}, and for any $M_0>0$ such that
	\begin{equation}\label{chuzhi1}
		\| (h_0,\bv_0)\|_{H^{s}}  + \| \bw_0 \|_{ H^{s_0-\frac14} } +  \|\nabla \bw_0\|_{ L^8 }
		\leq M_0,
	\end{equation}
%	where
%	\begin{equation}\label{A01}
%		\bw_0=\mathrm{vort}(\mathrm{e}^{h_0} \bu_0),
%	\end{equation}
	there exist positive constants $T>0$ and $M_1>0$ ($T$ and ${M}_1$ depends on $C_0, c_0, s, s_0, {M}_0$) 
	such that \eqref{wrt} has a unique solution $(h, \bv) \in C([0,T],H_x^s) \cap C^1([0,T],H_x^{s-1})$,
	$\bw \in C([0,T],H_x^{s_0}) \cap C^1([0,T],H_x^{s_0-1})$. Moreover,
	\begin{enumerate}
		\item \label{point:1}
		the solution $(h, \bv, \bw)$ satisfy the energy estimate
		\begin{equation}\label{A02}
			\begin{split}
				&\|( h, \bv )\|_{L^\infty_{[0,T]} H_x^{s}}+ \|\bw\|_{L^\infty_{[0,T]} H_x^{s_0-\frac14}}
				 + \|\nabla \bw \|_{ L^\infty_{[0,T]} L_x^8 } \leq M_1,
				%\\
				%& \|(\partial_t {\bu}, \partial_t h)\|_{L^\infty_{[0,T]}H_x^{\frac34+\epsilon}}+\|\partial_t \mathbf{\Omega} \|_{L^\infty_{[0,T]}H_x^{\frac12+\epsilon}} \leq M_1,
			\end{split}
		\end{equation}
		and
		\begin{equation*}
			\|h,\bv\|_{L^\infty_{ [0,{T}]} L^\infty_x } \leq 1+C_0.  %\qquad u^0 \geq 1.
		\end{equation*}
		\item \label{point:2}
		% $\mathrm{(2)}$
		the solution also satisfies the Strichartz estimate
		\begin{equation}\label{SSr}
			 \|dh, d\bv_{+}, d\bv\|_{L^4_{[0,T]}L_x^\infty}  \leq M_1 ,
		\end{equation}
		where $\bv_+$ is denoted in \eqref{De}.
		\item \label{point:3}
		% $\mathrm{(3)}$
		for any $1 \leq r \leq s+1$, and for each $t_0 \in [0,T)$, the Cauchy problem of the linear equation
		\begin{equation}\label{linear}
			\begin{cases}
				& \square_g f=0, \qquad (t,x) \in (t_0,T]\times \mathbb{R}^2,
				\\
				&(f,\partial_t f)|_{t=t_0}=(f_0,f_1) \in H_x^r(\mathbb{R}^2) \times H_x^{r-1}(\mathbb{R}^2),
			\end{cases}
		\end{equation}
		admits a solution $f \in C([0,T],H_x^r) \times C^1([0,T],H_x^{r-1})$. The following estimate holds:
		\begin{equation*}%\label{E0}
			\| f\|_{L_{[0,T]}^\infty H_x^r}+ \|\partial_t f\|_{L_{[0,T]}^\infty H_x^{r-1}} \leq  C_{M_0}( \|f_0\|_{H_x^r}+ \|f_1\|_{H_x^{r-1}} ).
		\end{equation*}
		Additionally, the following estimates hold, provided $k<r-\frac34$,
		\begin{equation}\label{SE1}
			\| \left<\nabla \right>^k f\|_{L^4_{[0,T]}L^\infty_x} \leq  C_{M_0}( \|f_0\|_{H_x^r}+ \|f_1\|_{H_x^{r-1}} ),
		\end{equation}
		and the same estimates hold with $\left< \nabla \right>^k$ replaced by $\left< \nabla \right>^{k-1}d$. Here, $C_{M_0}$ is a constant depending on $C_0, c_0, s, s_0$ and $M_0$.
		%\item \label{point:4}
		%the map from $(h_0,{\bu}_0,\bw_0) \in H^s \times H^s \times H^{s_0}$  to $(h,{\bu},\bw)(t,\cdot) \in C([0,T];H_x^s \times H_x^s \times H_x^{s_0})$ is continuous.
	\end{enumerate}
%	\begin{remark}
%\textcolor{red}{ For the continuous dependence, we can also prove it by using the frequency method. }
%	\end{remark}
\end{theorem}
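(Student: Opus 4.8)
\textbf{Proof proposal for Theorem \ref{dingli}.}

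The plan is to set up a Picard-type iteration / frequency-localized approximation scheme for the wave-transport system \eqref{wrt}, controlled by a continuity (bootstrap) argument that closes the three families of estimates \eqref{A02}, \eqref{SSr}, \eqref{SE1} simultaneously. The starting point is the observation from Lemma \ref{Wrs} that $h$ and $\bv_+$ solve wave equations for $\square_g$ with quadratic right-hand sides plus a vorticity source $-c_s^2\Theta\,\epsilon^{\alpha\beta\gamma}\partial_\beta w_\gamma$, while $\bw$ satisfies the transport equation along $\mathbf{T}$ in \eqref{opt}, and $\bv_-$ is recovered elliptically from $\nabla\bw$ via the operator $\mathbf{P}$ of \eqref{De1} (space-time elliptic by Lemma \ref{Ees}). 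First I would run the classical energy method on the coupled system: commuting $\langle\nabla\rangle^s$ through the wave operators and $\langle\nabla\rangle^{s_0}$ (and $\nabla$ in $L^8$) through the transport equation, Gronwall in time, with all quadratic and transport-coefficient terms estimated by $\|dh,d\bv\|_{L^1_tL^\infty_x}$, which by Hölder is controlled by $T^{3/4}\|dh,d\bv\|_{L^4_tL^\infty_x}$. This reduces everything to the Strichartz bound \eqref{SSr}, so the energy estimate \eqref{A02} is conditional on \eqref{SSr}, and conversely the Strichartz proof will need \eqref{A02} as input — hence the bootstrap.

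The core of the argument is producing the Strichartz estimates \eqref{SE1} for the linear equation $\square_g f=0$ with a metric $g$ of only $H^{s}$, $s>7/4$, regularity (here the vorticity source in the $h,\bv_+$ equations is harmless for Strichartz because $\epsilon^{\alpha\beta\gamma}\partial_\beta w_\gamma$ is the $(\mathbf{Id}-\mathbf{P})$-image of $v_-$, so $dv_-$ is controlled by $\nabla\bw\in L^8_x\hookrightarrow$ Strichartz directly, without derivative loss). Following Smith--Tataru \cite{ST}, I would (i) do a Littlewood--Paley decomposition $f=\sum_k f_k$ and rescale to frequency $\lambda=2^k$, turning the problem into a small-time ($|t|\lesssim\lambda^{-\sigma}$ for suitable $\sigma$, or after a second dyadic time decomposition) problem with a metric regularized at frequency $\lambda$; (ii) construct the wave-packet parametrix as a superposition of tube-supported wave packets transported along the null geodesic flow of the regularized acoustic metric $g_{<\lambda}$; (iii) prove the dispersive/square-function bounds for a single packet and then orthogonality/$TT^*$ for the sum, which requires controlling the geometry of the characteristic (null) hypersurfaces of $g$ — their second fundamental form, the eikonal function $\nabla\Gamma$, and the conformal energy along them. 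The geometric control reduces, via the structure equations, to $\|dg\|_{L^2_tL^\infty_x}$-type and mixed-norm bounds, i.e. ultimately again to $\|dh,d\bv\|_{L^4_tL^\infty_x}$, plus the curvature of the hypersurfaces, which depends on $\bw$ — this is exactly where the $H^{s_0-1/4}\cap W^{1,8}$ regularity of $\bw$ (one derivative better than scaling in $L^8$) is used, so that the null-hypersurface geometry sits at the Smith--Tataru threshold despite $\bw$ being a transported, not wave, quantity. Summing the dyadic pieces with Tao's frequency-envelope bookkeeping yields \eqref{SE1} with the $3/4$ loss (i.e. gaining $k<r-3/4$ derivatives in $L^4_tL^\infty_x$), and applying it to $f=h,\bv_+$ — with the elliptic bound for $dv_-$ added by hand — produces \eqref{SSr}.

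With \eqref{A02}, \eqref{SSr}, \eqref{SE1} closed for smooth solutions of the iteration scheme on a uniform time interval $[0,T]$ with $T=T(C_0,c_0,s,s_0,M_0)$, the remaining steps are standard: uniform bounds plus a weaker (e.g. $L^2\times H^{-1}$, or $H^{s-1}$) energy estimate for differences of iterates give convergence and existence; the same difference estimate, together with the transport estimate for $\bw$ and the elliptic recovery of $\bv_-$, gives uniqueness; persistence of regularity and continuity in time $C([0,T],H^s_x)\cap C^1([0,T],H^{s-1}_x)$ (and likewise for $\bw$) follow from the energy estimates and a standard Bona--Smith-type approximation argument, upgrading weak to strong continuity. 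I expect the main obstacle to be step (iii) above — the control of the null-hypersurface geometry of the rough acoustic metric, specifically showing that the contribution of the vorticity $\bw$ to the second fundamental form / conformal factor of the characteristic cones is consistent with only $H^{s_0-1/4}$, $\nabla\bw\in L^8$ regularity (rather than the $H^{s_0}$ one might naively want), since $\bw$ does not enjoy wave dispersion and must be handled purely through its transport equation and the elliptic relation \eqref{De0}; getting the bilinear/commutator bookkeeping here to match the $7/4^+$ Smith--Tataru exponent is the crux.
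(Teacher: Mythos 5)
Your proposal follows essentially the same route as the paper: energy estimates conditional on the $L^4_tL^\infty_x$ Strichartz bound, a bootstrap closing both simultaneously, Smith--Tataru wave packets for the rough acoustic metric, control of the characteristic null-hypersurface geometry (second fundamental form $\chi$, curvature decomposition) as the place where the $H^{s_0-\frac14}\cap \dot W^{1,8}$ vorticity regularity enters, and elliptic recovery of $d\bv_-$ from $\nabla\bw$ via $\mathbf{P}$. The only step you gloss over that the paper needs explicitly is the reduction, by time rescaling plus physical localization with a partition of unity, to small, smooth, compactly supported data (Propositions \ref{p3} and \ref{p1}) -- this is where the smallness $\epsilon_1,\epsilon_2,\epsilon_3$ that actually closes the hypersurface bootstrap comes from, rather than from a direct large-data continuity argument on $[0,T]$.
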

\begin{remark}
\begin{enumerate}
		\item The initial condition $(h_0, \bv_0)\in H^{\frac74+}$ ensures the Strichartz admissibility for 2D wave system. The vorticity $ \bw_0\in H^{\frac32+}$ controls the geometry condition of null hypersurface in section \ref{secp4}. Meanwhile, the additional condition $\nabla \bw_0 \in L^8$ is imposed specifically to close the energy estimates for vorticity due to commutator estimate.
		\item Theorem \ref{dingli} is valid for the general state $p(\varrho)=\varrho^A$ ($A\geq 1$).
\end{enumerate}
\end{remark}
Our second result is stated below.
%\subsection{Statement of result.}
%The main result of this paper is as follows:
\begin{theorem}\label{dingli2}
	Assume $\frac74< s\leq \frac{15}{8}$.  Consider the Cauchy problem \eqref{wrt}. Let $\bw$ be defined in \eqref{Vor}. For any given initial data $(h_0, \bv_0, \bw_0)$ satisfies \eqref{HEw}, and for any $M_0>0$ such that
	\begin{equation}\label{czB}
		\| (h_0,\bv_0)\|_{H^{s}}  + \| \bw_0 \|_{ H^{\frac32} } +  \|\nabla \bw_0\|_{ L^8 }
		\leq M_0,
	\end{equation}
	%	where
	%	\begin{equation}\label{A01}
		%		\bw_0=\mathrm{vort}(\mathrm{e}^{h_0} \bu_0),
		%	\end{equation}
	there exist positive constants $T^*>0$ and $M_2>0$ ($T^*$ and ${M}_2$ depends on $C_0, c_0, s, {M}_0$) such that \eqref{wrt} 
	has a unique solution $(h, \bv) \in C([0,T^*],H_x^s) \cap C^1([0,T^*],H_x^{s-1})$,
	$\bw \in C([0,T^*],H_x^{\frac32}) \cap C^1([0,T^*],H_x^{\frac12})$. Moreover,
	the following statements hold:
	\begin{enumerate}
		\item \label{poi1}
		The solutions  $(h, \bv, \bw)$ satisfy the energy estimate
		\begin{equation*}\label{B02}
			\begin{split}
				&\|( h, \bv )\|_{L^\infty_{[0,T^*]} H_x^{s}}+ \|\bw\|_{L^\infty_{[0,T^*]} H_x^{\frac32}}
				+ \|\nabla \bw \|_{ L^\infty_{[0,T^*]} L_x^8 } \leq M_2,
				%\\
				%& \|(\partial_t {\bu}, \partial_t h)\|_{L^\infty_{[0,T]}H_x^{\frac34+\epsilon}}+\|\partial_t \mathbf{\Omega} \|_{L^\infty_{[0,T]}H_x^{\frac12+\epsilon}} \leq M_1,
			\end{split}
		\end{equation*}
		and
		\begin{equation*}
			\|h,\bv\|_{L^\infty_{ [0,{T^*}]} L^\infty_x } \leq 1+C_0.  %\qquad u^0 \geq 1.
		\end{equation*}
		\item \label{poi2}
		% $\mathrm{(2)}$
		The solution also satisfies the Strichartz estimate
		\begin{equation*}\label{BSr}
			\|dh, d\bv_{+}, d\bv\|_{L^4_{[0,T^*]}L_x^\infty}  \leq M_2 ,
		\end{equation*}
		where $\bv_+$ is denoted in \eqref{De}.
		\item \label{poi3}
		% $\mathrm{(3)}$
		For any $ s-\frac34 \leq  r \leq \frac{11}{4}$, and for each $t_0 \in [0,T)$, the Cauchy problem of the linear equation
	\eqref{linear} admits a solution $f \in C([0,T^*],H_x^r) \times C^1([0,T^*],H_x^{r-1})$. The following estimate holds:
		\begin{equation*}%\label{E0}
			\| f\|_{L_{[0,T^*]}^\infty H_x^r}+ \|\partial_t f\|_{L_{[0,T^*]}^\infty H_x^{r-1}} \leq  {M_3}( \|f_0\|_{H_x^r}+ \|f_1\|_{H_x^{r-1}} ).
		\end{equation*}
		Additionally, the following estimates hold, provided $k<r-(s-1)$,
		\begin{equation*}\label{BE1}
			\| \left<\nabla \right>^k f\|_{L^4_{[0,T^*]}L^\infty_x} \leq  {M_3}( \|f_0\|_{H_x^r}+ \|f_1\|_{H_x^{r-1}} ),
		\end{equation*}
		and the same estimates hold with $\left< \nabla \right>^k$ replaced by $\left< \nabla \right>^{k-1}d$. Here, ${M_3}$ is a positive constant depending on $C_0, c_0, s$ and $M_0$.
	%	\item \label{poi4}
	%	the map from $(h_0,\bv_0,\bw_0,\nabla \bw_0) \in H^s \times H^s \times H^{\frac32}\times L^8$  to $(h,\bv,\bw,\nabla \bw)(t,\cdot) \in C([0,T];H_x^s \times H_x^s \times H_x^{\frac32} \times L^8_x)$ is continuous.
	\end{enumerate}
\end{theorem}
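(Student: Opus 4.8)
The strategy is to re-run the proof of Theorem \ref{dingli} as a bootstrap on a time interval $[0,T^*]$, keeping the energy scheme and the $\bv_\pm$-decomposition intact, and to replace its one genuinely new ingredient---the ``strong'' geometric Strichartz estimate \eqref{SE1}, which consumed the extra regularity $\bw_0\in H^{\frac32+}$---by a frequency-localized variant that instead pays an $s$-dependent loss of derivatives. The whole distance between the two theorems therefore sits in item \ref{poi3}; granting that estimate, items \ref{poi1} and \ref{poi2}, existence, uniqueness and the stated persistence of regularity all follow verbatim from the proof of Theorem \ref{dingli}.

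I would start with the bootstrap set-up: on $[0,T^*]$ assume
\[
\|(h,\bv)\|_{L^\infty_t H^s_x}\le 2M_0,\qquad \|\bw\|_{L^\infty_t H^{\frac32}_x}+\|\nabla\bw\|_{L^\infty_t L^8_x}\le 2M_0,\qquad \|dh,d\bv_+,d\bv\|_{L^4_t L^\infty_x}\le K,
\]
with $K=K(M_0)$ large. The wave equations $\square_g h=\mathcal{D}$ and $\square_g v^\alpha=-c_s^2\Theta\,\epsilon^{\alpha\beta\gamma}\partial_\beta w_\gamma+Q^\alpha$ are propagated in $H^s_x$ by energy identities for $\square_g$, whose coefficients depend only on $(h,\bv)$ so that the commutators cost nothing more than $dh,d\bv\in L^4_t L^\infty_x$, while the dangerous source $\epsilon^{\alpha\beta\gamma}\partial_\beta w_\gamma=(\mathbf{Id}-\mathbf{P})v_-^\alpha$ from \eqref{De0} is turned, through the elliptic estimate for $\mathbf{P}$ (Lemma \ref{Ees}), into control of $v_-^\alpha$ in $H^s_x$ without differentiating $\bw$. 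The transport equation $v^\kappa\partial_\kappa w^\alpha=w^\kappa\partial^\alpha v_\kappa-w^\alpha\partial_\kappa v^\kappa$ propagates $\|\bw\|_{L^\infty_t H^{\frac32}_x}$ by a standard commutator argument using $d\bv\in L^4_t L^\infty_x$, and differentiating it and commuting $\nabla$ past $(v^0)^{-1}v^i\partial_i$ propagates $\|\nabla\bw\|_{L^\infty_t L^8_x}$, the borderline commutator being absorbed precisely by pairing $d\bv\in L^4_t L^\infty_x$ with $\nabla\bw\in L^8_x$---which is why the exponent $8$, rather than $2$, is imposed on $\nabla\bw_0$. Frequency envelopes in the spirit of Tao keep the dyadic sums summable throughout.

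The heart of the matter is item \ref{poi3}, and here I would follow the semiclassical scheme of Bahouri--Chemin \cite{BC1} and Ai--Ifrim--Tataru \cite{AIT}. Fix a dyadic frequency $\lambda$ and form a regularized metric $g_{<\lambda}$ from the frequency-$<\lambda$ truncations of $(h,\bv,\bw)$. The parts built from $h,\bv$ are already $H^s_x$-regular, so only the vorticity matters: since $\bw_0$ merely lies in $H^{\frac32}$, the $H^{\frac32+}$-type norms that govern the geometry of the characteristic hypersurfaces of $g_{<\lambda}$ (section \ref{secp4}) grow like a small power $\lambda^{c}$, so the strong Strichartz estimate of Theorem \ref{dingli} is available only on time intervals of length $\sim\lambda^{-\delta}$. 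After the semiclassical rescaling $(t,x)\mapsto(\lambda t,\lambda x)$ one reduces, on such an interval, to a unit-scale wave estimate with coefficients concentrated near frequency $O(1)$, to which the $\tfrac34$-loss estimate of Theorem \ref{dingli} applies. Partitioning $[0,T^*]$ into $N_\lambda\sim\lambda^{\delta}$ such intervals, estimating $\|\langle\nabla\rangle^k P_\lambda f\|_{L^4_{[0,T^*]}L^\infty_x}^4$ as the sum over the $N_\lambda$ pieces (the data at the left endpoint of each piece being controlled by the $L^\infty_t H^r_x$ energy bound from the earlier pieces), and summing over $\lambda$ produces an extra factor $\lambda^{\delta/4}$ on top of the $\lambda^{3/4}$ loss; choosing $\delta=\delta(s)>0$ so that the cumulative loss is $\tfrac34+\tfrac{\delta}{4}=s-1$ turns this into exactly the claimed condition $k<r-(s-1)$, and this is where the slack $s-\tfrac74>0$ in the regularity of $(h_0,\bv_0)$ is spent. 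Estimate \ref{poi2} then follows from \ref{poi3} by the Duhamel/paradifferential reduction of Theorem \ref{dingli}.

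The main obstacle is making this last chain quantitative: one must show precisely how the null-hypersurface geometry of $g_{<\lambda}$ degrades under frequency truncation with only $\bw\in H^{\frac32}$, control the rescaled solution uniformly on each short interval, and verify that the triple balance---interval length $\lambda^{-\delta}$, per-interval Strichartz constant, and frequency summation---leaves the loss strictly below $s-1$ so that the nonlinear argument for $dh,d\bv\in L^4_t L^\infty_x$ still closes for $s>\tfrac74$, and finally close the bootstrap by taking $T^*=T^*(M_0)$ small; a standard continuation and uniqueness argument (again using the $\bv_\pm$-split to trade the vorticity source for an elliptic gain) then delivers $(h,\bv)\in C([0,T^*],H^s_x)\cap C^1([0,T^*],H^{s-1}_x)$ and $\bw\in C([0,T^*],H^{\frac32}_x)\cap C^1([0,T^*],H^{\frac12}_x)$.
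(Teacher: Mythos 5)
Your overall strategy is the right one and matches the paper's: frequency-truncate, use Theorem \ref{dingli} on time intervals of length $\sim 2^{-\delta_1 j}$ dictated by the size of the truncated vorticity in $H^{\frac32+\delta_1}$, and recover a uniform-in-time Strichartz estimate by summing over the $\sim 2^{\delta_1 j}$ subintervals, calibrating the cumulative loss to $s-1$. But there are two genuine gaps in how you propose to close the argument.

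First, the existence itself cannot ``follow verbatim from the proof of Theorem \ref{dingli}'': that theorem requires $\bw_0\in H^{\frac32+}$, so with data only in $H^{\frac32}$ you have no solution on which to run your bootstrap. The paper resolves this by working with a sequence of genuine solutions $(h_j,\bv_j,\bw_j)$ launched from the truncated data $(P_{\le j}h_0,P_{\le j}\bv_0)$, each living a priori only on $[0,T_j^*]$ with $T_j^*\sim 2^{-\delta_1 j}$, and then \emph{extending} each one to a uniform interval $[0,T^*_{N_0}]$ by an induction on the number of extension steps; the solution of Theorem \ref{dingli2} is obtained only afterwards as the limit of this sequence. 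Your proposal instead regularizes the metric of a single (assumed) solution, which is circular as stated.

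Second, and more substantively, your mechanism for summing over the $N_\lambda\sim\lambda^{\delta}$ subintervals --- estimating $\|\langle\nabla\rangle^k P_\lambda f\|_{L^4L^\infty}^4$ piece by piece with the left-endpoint data controlled by energy --- produces a flat loss $\lambda^{\delta/4}$ at \emph{every} frequency. Applied to the nonlinear quantities at $r=s$ this lands exactly at the endpoint $k=r-(s-1)=1$ needed to recover $dh,d\bv\in L^4_tL^\infty_x$, and the sum over $\lambda$ does not obviously converge. The paper's proof avoids this by a telescoping decomposition
$d\bv_j=P_{\ge j}d\bv_j+\sum_{k<j}\sum_{m\ge k}P_k(d\bv_{m+1}-d\bv_m)+\sum_{k<j}P_kd\bv_k$,
in which the high-frequency blocks gain $2^{-\delta_1 k}2^{-7\delta_1 j}$ from Bernstein and the low-frequency differences gain $2^{-\delta_1 k}2^{-6\delta_1 m}$ from the $L^2$-difference bounds $\|h_{m+1}-h_m\|_{L^2}\lesssim 2^{-(s-\delta_1)m}$, $\|\bw_{m+1}-\bw_m\|_{L^2}\lesssim 2^{-(\frac32-\delta_1)m}$ combined with the short-time Strichartz estimate with loss. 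It is precisely this geometric decay in both indices that beats the $2^{\delta_1 l}$ growth of the number of extension steps and makes the double sum converge; without it your scheme does not close. (A minor further imprecision: the acoustic metric \eqref{met1} depends only on $(h,\bv)$, not on $\bw$; the vorticity regularity enters the geometry of the characteristic hypersurfaces only through the source term $\epsilon^{\alpha\beta\gamma}\partial_\beta w_\gamma$ in $\square_g\bv$ via the curvature decomposition and the characteristic energy estimates of Lemma \ref{te20}, which is where the threshold $H^{\frac32+}$ originates.)
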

\begin{remark}
	\begin{enumerate}
		\item Compared with Theorem \ref{dingli}, Theorem \ref{dingli2} lowers the regularity exponent of the vorticity from $\frac32+$ to exactly $\frac{3}{2}$. This reduction is non-trivial because, if we apply Smith-Tataru's approach directly, we cannot obtain the Strichartz estimates for velocity and density when $w_0\in H^{\frac32}$. 
		
		When $(h_0, \bv_0, \bw_0, \nabla \bw_0)\in H^{\frac74+}(\mathbb{R}^2)\times H^{\frac74+}(\mathbb{R}^2) \times H^{\frac32}(\mathbb{R}^2) \times L^{8}(\mathbb{R}^2)$, we first use the frequency truncation to get a sequence of initial data belonging in $ H^{\frac74+}(\mathbb{R}^2)\times H^{\frac74+}(\mathbb{R}^2) \times H^{\frac32+}(\mathbb{R}^2) \times L^{8}(\mathbb{R}^2)$. Based on Theorem \ref{dingli}, a sequence of solutions on a short time-interval, and these intervals depends on the size of frequency.
		By extending the solutions from these short time intervals to a uniformly regular time-interval, a Strichartz estimate with a loss of derivatives can be obtained. This leads us to prove the local existence and uniqueness of solutions to the 2D compressible Euler equations for initial data $(h_0, \bv_0, \bw_0, \nabla \bw_0)\in H^{\frac74+}(\mathbb{R}^2)\times H^{\frac74+}(\mathbb{R}^2) \times H^{\frac32}(\mathbb{R}^2) \times L^{8}(\mathbb{R}^2)$.
		\item Theorem \ref{dingli2} is also valid for the general state $p(\varrho)=\varrho^A$ ($A\geq 1$).
	\end{enumerate}
\end{remark}
Very interestingly, in the special case where $p(\varrho) = \varrho$, namely the stiff fluid, the system \eqref{wrt} becomes (see Lemma \ref{Wrq} below)
\begin{equation}\label{wrtw}
	\begin{cases}
		& \square h = \mathrm{e}^{-2h} w^\beta w_\beta - 2 \partial_\beta h \partial^\beta h- \mathrm{e}^{-2h} \partial_\beta v^\alpha \partial^\beta v_\alpha,
		\\
		& \square v^\alpha = -  \epsilon^{\alpha \beta \gamma}\partial_\beta w_\gamma ,
		\\
		& v^\kappa \partial_\kappa w^\alpha= w^\kappa \partial^\alpha v_\kappa.
	\end{cases}
\end{equation}
Our third result is as follows.
\begin{theorem}\label{thm3}
	Assume $0< \epsilon' < \epsilon\leq \frac18$. Let the state function $p(\varrho)=\varrho$ and $\bw$ be defined in \eqref{Vor}. Consider the Cauchy problem \eqref{wrtw} with the initial data 
	\begin{equation*}
		(h,\bv,\bw)|_{t=0}=(h_0, \bv_0, \bw_0).
	\end{equation*} 
	If $(h_0, \bv_0, \bw_0)\in H^{\frac74+\epsilon}(\mathbb{R}^2) \times H^{\frac74+\epsilon}(\mathbb{R}^2) \times H^{1+\epsilon'}(\mathbb{R}^2)$, the Cauchy problem \eqref{wrtw} is locally well-posed and $(dh,d\bv)\in L^4_t L^\infty_x \cap L^4_t \dot{B}^{\epsilon'}_{\infty,2}$.
\end{theorem}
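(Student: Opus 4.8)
\textbf{Proof strategy for Theorem \ref{thm3}.} The plan is to treat \eqref{wrtw} as a semilinear wave system coupled to the transport equation for $\bw$, and to run a Picard-type iteration in the function space $X_T$ whose norm combines the energy piece $\|(h,\bv)\|_{L^\infty_{[0,T]}H^{\frac74+\epsilon}_x}\cap C^1 H^{\frac34+\epsilon}_x$, the Strichartz piece $\|(dh,d\bv)\|_{L^4_{[0,T]}L^\infty_x\cap L^4_{[0,T]}\dot B^{\epsilon'}_{\infty,2}}$, and the vorticity piece $\|\bw\|_{L^\infty_{[0,T]}H^{1+\epsilon'}_x}$. Since the acoustic metric is exactly Minkowski in the stiff case (Remark \ref{rem}), the wave operators $\square h$ and $\square v^\alpha$ are the constant-coefficient d'Alembertian, so the classical Strichartz estimates of Keel-Tao \cite{KT} apply with no geometric reduction needed: for $\square f=F$ in $\mathbb R^{1+2}$ one has $\|\left<\nabla\right>^{-1}df\|_{L^4_t L^\infty_x}\lesssim \|f_0\|_{H^{\frac74+\epsilon}}+\|f_1\|_{H^{\frac34+\epsilon}}+\|F\|_{L^1_t H^{\frac34+\epsilon}_x}$, and similarly with the $\dot B^{\epsilon'}_{\infty,2}$ endpoint version by Littlewood-Paley. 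The first main step is therefore to check that the right-hand sides of the first two equations in \eqref{wrtw} are controllable in $L^1_{[0,T]}H^{\frac34+\epsilon}_x$ by the $X_T$-norm: the quadratic terms $\partial_\beta h\partial^\beta h$ and $\mathrm{e}^{-2h}\partial_\beta v^\alpha\partial^\beta v_\alpha$ are handled by the product estimate $\|fg\|_{H^{\frac34+\epsilon}}\lesssim \|df\|_{L^\infty}\|g\|_{H^{\frac34+\epsilon}}$ together with the Strichartz $L^4_tL^\infty_x$ control of $df$ (so these are at the level $L^4_t\cdot L^\infty_t\hookrightarrow L^2_t\hookrightarrow L^1_t$ on $[0,T]$, gaining a power of $T$); the term $\mathrm{e}^{-2h}w^\beta w_\beta$ uses $H^{1+\epsilon'}\cdot H^{1+\epsilon'}\hookrightarrow H^{\frac34+\epsilon}$ in two space dimensions (valid since $1+\epsilon'>\frac34+\epsilon$ and the product of two $H^{1+}$ functions lies in $H^{1+}$); and the curl term $\epsilon^{\alpha\beta\gamma}\partial_\beta w_\gamma$ costs exactly one derivative on $\bw\in H^{1+\epsilon'}$, landing in $H^{\epsilon'}\hookrightarrow H^{\frac34+\epsilon}$ — \emph{this is the place where the regularity thresholds are dictated}, and it is why one needs $\bw_0\in H^{1+}$ rather than merely $H^1$.

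The second main step is the vorticity transport estimate. The equation $v^\kappa\partial_\kappa w^\alpha = w^\kappa\partial^\alpha v_\kappa$ is transported by the vector field $\mathbf T=(v^0)^{-1}v^\kappa\partial_\kappa=\partial_t+(v^0)^{-1}v^i\partial_i$, whose coefficients $(v^0)^{-1}v^i$ are Lipschitz in space once $d\bv\in L^4_tL^\infty_x$ (indeed $L^1_t\mathrm{Lip}_x$ on $[0,T]$). I would commute $\left<\nabla\right>^{1+\epsilon'}$ through the transport operator and use the Kato-Ponce / Bony-type commutator bound $\|[\left<\nabla\right>^{1+\epsilon'},(v^0)^{-1}v^i]\partial_i \bw\|_{L^2}\lesssim \|d\bv\|_{L^\infty_x}\|\bw\|_{H^{1+\epsilon'}}$, closing a Gr\"onwall inequality $\|\bw(t)\|_{H^{1+\epsilon'}}\lesssim \|\bw_0\|_{H^{1+\epsilon'}}\exp\big(C\int_0^t\|d\bv\|_{L^\infty_x}\,ds\big)$; the source $w^\kappa\partial^\alpha v_\kappa$ is absorbed the same way, using $\|w^\kappa\partial^\alpha v_\kappa\|_{H^{1+\epsilon'}}\lesssim \|\bw\|_{H^{1+\epsilon'}}\|d\bv\|_{L^\infty_x}+\|\bw\|_{L^\infty_x}\|d\bv\|_{H^{1+\epsilon'}}$ where the second factor $\|d\bv\|_{H^{1+\epsilon'}}\lesssim\|\bv\|_{H^{\frac74+\epsilon}}$ requires $\frac74+\epsilon\ge 2+\epsilon'$ — which fails — so I would instead keep the derivative on $\bw$ and write the source as $w^\kappa\partial^\alpha v_\kappa$ in a form that only sees $\left<\nabla\right>^{\epsilon'}d\bv\in L^4_tL^\infty_x$, i.e. estimate $\|\bw\,d\bv\|_{H^{1+\epsilon'}}$ by $\|\bw\|_{H^{1+\epsilon'}}\|d\bv\|_{\dot B^{\epsilon'}_{\infty,2}\cap L^\infty_x}$ using the fractional product rule with the rougher factor measured in the Besov Strichartz norm. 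This is the technical crux, and it is exactly why the statement advertises $(dh,d\bv)\in L^4_t\dot B^{\epsilon'}_{\infty,2}$ — that endpoint is not a bonus conclusion but a necessary ingredient for closing the vorticity estimate at regularity $H^{1+\epsilon'}$.

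With both pieces in place, the third step is the standard continuity/contraction argument: define the iteration map $\Phi$ sending $(h,\bv,\bw)$ to the solution of the linear problems with these right-hand sides, show $\Phi$ maps a ball $B_R\subset X_T$ into itself for $T=T(R)$ small (every estimate above carries a positive power of $T$ from the $L^4_t\hookrightarrow L^1_t$ H\"older loss, except the $H^{1+\epsilon'}$-product term $w^2$ which is handled by choosing $R$ and $T$ jointly), and prove $\Phi$ is a contraction in the weaker topology $L^\infty_t L^2_x\times L^\infty_tL^2_x\times L^\infty_tH^{-1+}_x$ for differences — one loses one derivative on differences of solutions, which is harmless for uniqueness. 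Persistence of higher regularity (so that $(h,\bv)\in C_tH^{\frac74+\epsilon}_x$ rather than merely $L^\infty_t$, and continuous dependence) follows by the usual Bona-Smith approximation or by a frequency-envelope argument. The main obstacle, as indicated, is the vorticity source term $w^\kappa\partial^\alpha v_\kappa$: naively it asks for $d\bv$ in $H^{1+\epsilon'}$ which the data do not provide, and the resolution — distributing one derivative onto $\bw$ and measuring the remaining $\left<\nabla\right>^{\epsilon'}d\bv$ in the $L^4_t\dot B^{\epsilon'}_{\infty,2}$ Strichartz space — is precisely the mechanism that forces the Besov endpoint into the theorem's conclusion and ties the vorticity threshold $H^{1+}$ to the Bourgain-Li \cite{BL} borderline. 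For the irrotational case $\bw\equiv0$ the system collapses to a semilinear wave equation with wave-map null structure, and the $H^{1+}$ local and $\dot B^1_{2,1}$ small-data global statements are then immediate from \cite{KM1} and \cite{T}.
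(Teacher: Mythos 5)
Your overall architecture (a bootstrap/iteration in a space combining the $H^{\frac74+\epsilon}$ energy, the $L^4_tL^\infty_x\cap L^4_t\dot B^{\epsilon'}_{\infty,2}$ Strichartz norm, and the $H^{1+\epsilon'}$ vorticity norm, with flat Keel--Tao Strichartz since $g=\Bm$) matches the paper, and you correctly identified that the Besov endpoint is forced by the vorticity estimate. However, two of your concrete steps fail, and in both cases the paper uses an additional structural device that your proposal is missing.

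First, the Strichartz estimate for $\bv$ does not close the way you set it up. You treat $\square v^\alpha=-\epsilon^{\alpha\beta\gamma}\partial_\beta w_\gamma$ with the source placed in $L^1_tH^{\frac34+\epsilon}_x$, asserting ``$H^{\epsilon'}\hookrightarrow H^{\frac34+\epsilon}$''. This embedding is backwards: since $\epsilon'<\epsilon\leq\frac18$, one has $\epsilon'<\frac34+\epsilon$, so $\partial_\beta w_\gamma\in H^{\epsilon'}$ gives no control in $H^{\frac34+\epsilon}$; to use it as a Duhamel source you would need $\bw\in H^{\frac74+\epsilon}$, far above the hypothesis $\bw_0\in H^{1+\epsilon'}$. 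The paper resolves this with the decomposition $\bv=\bv_++\bv_-$, where $\bv_-$ solves the spacetime elliptic equation $(\mathbf{Id}-\mathbf{P})v_-^\alpha=\epsilon^{\alpha\beta\gamma}\partial_\beta w_\gamma$ (so $\bv_-\in H^{2+\epsilon'}_x$ gains two derivatives over $\partial\bw$, and $d\bv_-\in L^\infty_x\cap\dot B^{\epsilon'}_{\infty,2}$ by Sobolev embedding in 2D), while $\bv_+$ satisfies $\square\bv_+=-2\mathrm{e}^{-2h}(v^0)^2\mathbf{T}\mathbf{T}\bv_--\bv_-$ (Lemma \ref{Wagb}), whose right-hand side \emph{is} controlled in $L^1_tH^{\frac34+\epsilon}_x$ via the elliptic estimate \eqref{etaq} for $\mathbf{T}\bv_-$. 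Without this (or an equivalent device) the dispersive part of your scheme does not start.

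Second, your fix for the vorticity source is based on a false product estimate. You propose $\|\bw\,d\bv\|_{H^{1+\epsilon'}}\lesssim\|\bw\|_{H^{1+\epsilon'}}\left(\|d\bv\|_{L^\infty_x}+\|d\bv\|_{\dot B^{\epsilon'}_{\infty,2}}\right)$, but the high-low paraproduct in which the $1+\epsilon'$ derivatives land on $d\bv$ contributes $\|\bw\|_{L^2}\|d\bv\|_{\dot B^{1+\epsilon'}_{\infty,2}}$ (or $\|\bw\|_{L^\infty}\|d\bv\|_{\dot H^{1+\epsilon'}}$), and neither factor is available since $\bv\in H^{\frac74+\epsilon}$ with $\frac74+\epsilon<2+\epsilon'$; the fractional Leibniz rule cannot place a full derivative's worth of regularity on the rougher factor at only Besov order $\epsilon'$. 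This is exactly why Lemma \ref{pro} in the paper is restricted to $0<a<1$. The paper's way out is to differentiate the transport equation once: it introduces $\bW=\epsilon^{\alpha\beta\gamma}\partial_\beta w_\gamma$, which in the stiff case satisfies its own transport equation with source of the schematic form $d\bv\cdot d\bw$, runs the commutator and product estimates at the low regularity $H^{\epsilon'}$ (where Lemma \ref{pro} applies and only $\|d\bv\|_{\dot B^{\epsilon'}_{\infty,2}}$ is needed against $\|d\bw\|_{L^2}$), and then recovers $\|\bw\|_{H^{1+\epsilon'}}$ from $\|\bW\|_{H^{\epsilon'}}$ through the div--curl elliptic system \eqref{nu12}--\eqref{nu24}, using $|\mathring\bv|^2<(v^0)^2$ from the normalization \eqref{rsv}. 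Your direct commutation of $\left<\nabla\right>^{1+\epsilon'}$ through the transport operator runs into the same obstruction in the commutator term itself. Both gaps are therefore real and both are repaired in the paper by the same two auxiliary structures ($\bv_\pm$ and $\bW$) that your proposal omits.
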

\begin{remark}
	\begin{enumerate}
		\item In the case of a stiff fluid, the acoustic metric is the standard flat Minkowski metric. Therefore, the system \eqref{wrtw} is a semilinear wave-transport system, which is different from the quasilinear wave-transport system \eqref{wrt}. Therefore, we can directly obtain the Strichartz estimates for $h$ and $\bv_{+}$. Consequently, compared to theorems \ref{dingli} and \ref{dingli2}, the required regularity of the vorticity can be reduced from $\frac{3}{2}+$ to $1+$.
		\item 
		In Theorem \ref{thm3}, the regularity of enthalpy and velocity corresponds to the sharp results of Smith and Tataru \cite{ST}, and the regularity of vorticity, c.f. Bourgain and Li \cite{BL}, is also sharp.
		\item For the compressible Euler equations, there is no such good structure \eqref{wrtw} in the particular state $p(\varrho)=\varrho$.
	\end{enumerate}
\end{remark}
Furthermore, when the stiff fluid is irrotational, namely, $\bw \equiv 0$, the system \eqref{wrt} becomes
\begin{equation}\label{wrtq}
	\begin{cases}
		& \square h = - 2 \partial_\beta h \partial^\beta h- \mathrm{e}^{-2h} \partial_\beta v^\alpha \partial^\beta v_\alpha,
		\\
		& \square v^\alpha = 0.
	\end{cases}
\end{equation}
We find that there are ``wave-map" null forms in the equation \eqref{wrtq}. Therefore, we can obtain the following result: 
\begin{theorem}\label{thm4}
	Assume $0< \epsilon\leq \frac18$. Let the state function $p(\varrho)=\varrho$ and $\bw \equiv 0$.  Consider the Cauchy problem \eqref{wrtq} with the initial data 
	\begin{equation*}
		(h,\bv)|_{t=0}=(h_0, \bv_0).
	\end{equation*}
	If $(h_0, \bv_0)\in H^{1+\epsilon}(\mathbb{R}^2) $, the Cauchy problem \eqref{wrtq} is locally well-posed. Moreover, if $(h_0, \mathbf{v}_0) \in \dot{B}_{2,1}^{1}(\mathbb{R}^2)$ and is sufficiently small, the Cauchy problem \eqref{wrtq} is globally well-posed.
\end{theorem}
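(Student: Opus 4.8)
\textbf{Proof proposal for Theorem \ref{thm4}.}

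The plan is to treat \eqref{wrtq} as a semilinear system of wave equations with quadratic nonlinearities obeying the classical ``wave map'' null structure, and then invoke the sharp low-regularity theory for such systems. First I would observe that once $\bw\equiv 0$ the second equation $\square v^\alpha=0$ is a linear wave equation, so the only nonlinear equation is the scalar one for $h$, whose right-hand side is $-2\partial_\beta h\,\partial^\beta h - \mathrm{e}^{-2h}\partial_\beta v^\alpha\,\partial^\beta v_\alpha$. Both quadratic terms are contractions with the Minkowski metric, i.e. they are precisely null forms $Q_0(\phi,\psi)=m^{\beta\gamma}\partial_\beta\phi\,\partial_\gamma\psi$ (up to the smooth, bounded coefficient $\mathrm{e}^{-2h}$, which can be treated perturbatively via the bound $|h|\le 1+C_0$ from \eqref{HEw} together with the standard Moser/product estimates in $H^{1+\epsilon}$). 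Hence \eqref{wrtq} falls exactly into the framework of Klainerman--Machedon \cite{KM1} for equations with null form nonlinearities in $2+1$ dimensions, and into Tao's refinement \cite{T} at the scaling-critical regularity; the scaling-critical Sobolev exponent for a $Q_0$-type null-form wave equation in $n=2$ is $s=1$, so $H^{1+\epsilon}$ for $\epsilon>0$ is subcritical and $\dot B^1_{2,1}$ is the natural critical Besov space in which small-data global existence holds.

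Concretely I would carry out the argument in the following steps. (1) Set up the $X^{s,\theta}_b$ (Bourgain--Klainerman--Machedon) spaces adapted to the wave operator $\square$, with $s=1+\epsilon$ and $b$ slightly above $1/2$, and record the standard linear estimates: energy estimate, the $L^4_{t,x}$ Strichartz estimate in $2+1$ dimensions (admissible with the pair $(4,4)$), and the transfer principle. (2) Establish the bilinear null-form estimate $\|Q_0(\phi,\psi)\|_{X^{s,b-1}}\lesssim \|\phi\|_{X^{s,b}}\|\psi\|_{X^{s,b}}$ for $s=1+\epsilon$; this is exactly the estimate underlying \cite{KM1,KM2} and its proof uses the algebraic identity $2Q_0(\phi,\psi)=\square(\phi\psi)-\phi\square\psi-\psi\square\phi$ together with the fact that the symbol of $\square$ provides the extra derivative gain needed to beat the failure of the naive product estimate at this regularity. (3) Handle the variable coefficient $\mathrm{e}^{-2h}$ by writing $\mathrm{e}^{-2h}=1+(\mathrm{e}^{-2h}-1)$, absorbing the constant-coefficient piece into step (2) and estimating the remainder by Moser-type estimates in $X^{s,b}$, using that $s>1>n/2$ is not available but $s=1+\epsilon$ still controls an algebra-type product once one pairs it with the null structure (alternatively, bound the coefficient in $L^\infty_t H^{s}_x$ and use a product-with-a-function-in-$H^s$ estimate). (4) Run the contraction mapping argument on a small time interval for the local result; for the global small-data result in $\dot B^1_{2,1}$, replace the $X^{s,b}$ spaces by their homogeneous $\dot X^{1,b}$ (or Besov-type $\dot X^{1,b}_{2,1}$) analogues, where the bilinear null-form estimate closes globally in time because there is no low-frequency obstruction once one works in $\dot B^1_{2,1}$ rather than $\dot H^1$, and a standard continuity/bootstrap argument upgrades local to global. (5) Deduce the stated propagation $(dh,d\bv)\in L^4_tL^\infty_x$ for free from the $L^4_{t,x}$ Strichartz bound combined with one extra derivative (since $s>1$, $\langle\nabla\rangle dh\in L^4_tL^4_x$ embeds appropriately, or more simply use the $(4,\infty)$-type endpoint recovered from the $X^{s,b}$ control with $s>1$), although this clause is actually part of the conclusion of the previous two theorems; for Theorem \ref{thm4} it is immediate.

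I expect the main obstacle to be step (2)--(3): making the null-form bilinear estimate work at the sharp regularity $s=1+\epsilon$ in $n=2$ and, more delicately, at the critical level $\dot B^1_{2,1}$ for the global statement. At $s=1$ the generic product estimate in $2+1$ dimensions fails logarithmically, and it is only the $Q_0$ null structure that saves it; one must be careful that the smooth coefficient $\mathrm{e}^{-2h}$ does not destroy this structure, which is why the paper's choice of ``good variables'' (so that the nonlinearity is genuinely $Q_0$-type rather than a generic $\partial h\,\partial h$) is essential. The global-in-time closure in $\dot B^1_{2,1}$ additionally requires the Besov summation to interact cleanly with the bilinear estimate, i.e. a Bony-paraproduct decomposition in which the null form gain is uniform in the frequency blocks; this is standard following \cite{T} and the wave-map literature, but it is the one place where care is genuinely needed. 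Everything else--the energy estimates, the Strichartz pair $(4,4)$, the uniqueness via a difference estimate at one level of regularity lower, and the persistence of regularity--is routine.
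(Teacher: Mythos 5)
Your proposal is correct and follows essentially the same route as the paper: the paper's proof consists precisely of observing that \eqref{wrtq} has wave-map-type $Q_0$ null structure (with the linear equation $\square v^\alpha=0$ decoupled) and citing Klainerman--Machedon/Klainerman--Selberg for local well-posedness in $H^{1+\epsilon}(\mathbb{R}^2)$ and Tataru for small-data global well-posedness in $\dot B^1_{2,1}(\mathbb{R}^2)$. Your additional sketch of the $X^{s,b}$ and paraproduct machinery is just the standard content behind those citations, so there is no substantive divergence.
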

\begin{remark}
	 For the system \eqref{wrtq} satisfying ``wave-map" null forms, therefore, we can derive Theorem \ref{thm4} directly from the works of Klainerman-Machedon \cite{KM1}, Klainerman-Selberg \cite{KS}, and Tataru \cite{T}.
\end{remark}
%\subsection{Key idea of proofs}
\subsection{Acknowledgments} We thank Daniel Tataru for very helpful discussions, especially for introducing the rescaled velocity. We express our gratitude to Lars Andersson and Changhua Wei for reminding us about the stiff case. The author is supported by the Natural Science Foundation of Hunan Province, China (Grant No. 2025JJ40003) and the Fundamental Research Funds for the Central Universities (Grant No. 531118010867).

\subsection{Organization of the rest paper} 
In Section \ref{nee}, we present preliminary work, including the derivation of new structures and energy estimates. In Section \ref{ptr}, we provide a proof of Theorem \ref{dingli}, assuming Proposition \ref{p1}. In Section \ref{sec4.3}, we prove Proposition \ref{p1} under the assumptions of Propositions \ref{r2} and \ref{r3}. In Section \ref{secp4}, we present a proof of Proposition \ref{r2}. In Section \ref{sec7}, we prove Proposition \ref{r3} by assuming Proposition \ref{r5}. After that, in Section \ref{Ap}, we prove Proposition \ref{r5}. In Section \ref{Sec8}, we provide a detailed proof of Theorem \ref{dingli2}. In Section \ref{sec10}, we present proofs of Theorems \ref{thm3} and \ref{thm4}.

\section{New structures and energy estimates}\label{nee} 
In this section, we introduce several structures of the system \eqref{REEf} and prove some energy estimates.
\subsection{Hyperbolic reduction}
\begin{Lemma}[\cite{Bru}, Theorem 13.1, page 283]\label{QH}
	Let $(\varrho, \bu)$ be a solution of \eqref{OREE}. Let $h$ and $\bv$ be defined in \eqref{mo7} and \eqref{mo8}. Denote $\bU=(p(h),\mathrm{e}^{-h}v^1,\mathrm{e}^{-h}v^2)^{\mathrm{T}}$. Then System \eqref{OREE} can be reduced to the following symmetric hyperbolic equation:
	\begin{equation}\label{QHl}
		A^0(\bU) \partial_t \bU + A^i(\bU) \partial_i \bU =0,
	\end{equation}
	where
	\begin{equation*}
		A^0=
		\left(
		\begin{array}{cccc}
			(\varrho+p)^{-2}\varrho'(p)\mathrm{e}^{-h}v^0 & -(\varrho+p)^{-1}\frac{v^1}{v_0} & -(\varrho+p)^{-1}\frac{v^2}{v_0}   \\
			-(\varrho+p)^{-1}\frac{v^1}{v_0} & \mathrm{e}^{-h}v^0(1+\frac{v^1v_1}{v^0v_0}) & \mathrm{e}^{-h}v^0\frac{v^1 v_2}{v^0v_0}  \\
			-(\varrho+p)^{-1}\frac{v^2}{v_0} & \mathrm{e}^{-h}v^0\frac{v^1 v_2}{v^0v_0} & \mathrm{e}^{-h} v^0(1+\frac{v^2 v_2}{v^0v_0})  \\
		\end{array}
		\right ),
	\end{equation*}
	\begin{equation*}
		A^1=
		\left(
		\begin{array}{cccc}
			(\varrho+p)^{-2}\varrho'(p)\mathrm{e}^{-h} v^1 & -(\varrho+p)^{-1} & 0    \\
			-(\varrho+p)^{-1} & \mathrm{e}^{-h} v^1(1+\frac{v^1v_1}{v^0v_0}) & \mathrm{e}^{-h} v^1\frac{v^1 v_2}{v^0v_0}  \\
			0 & \mathrm{e}^{-h}v^1\frac{v^1 v_2}{v^0v_0} & \mathrm{e}^{-h} v^1(1+\frac{v^2 v_2}{v^0v_0}) 
		\end{array}
		\right ),
	\end{equation*}
	\begin{equation*}
		A^2=
		\left(
		\begin{array}{cccc}
			(\varrho+p)^{-2}\varrho'(p)\mathrm{e}^{-h} v^2 & 0 & -(\varrho+p)^{-1}   \\
			0 & \mathrm{e}^{-h} v^2(1+\frac{v^1v_1}{v^0v_0}) & \mathrm{e}^{-h}v^2\frac{v^1 v_2}{v^0v_0}  \\
			-(\varrho+p)^{-1} & \mathrm{e}^{-h} v^2\frac{v^1 v_2}{v^0v_0} & \mathrm{e}^{-h} v^2(1+\frac{v^2 v_2}{v^0v_0}) 
		\end{array}
		\right ).
	\end{equation*}
\end{Lemma}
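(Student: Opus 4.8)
The plan is to obtain \eqref{QHl} directly from \eqref{OREE} by a change of unknowns followed by a symmetrization --- that is, to reconstruct the classical relativistic-Euler symmetrization underlying \cite[Theorem 13.1]{Bru}. First note that, by \eqref{mo7}--\eqref{mo8}, the listed unknown is $\bU=(p,u^1,u^2)^{\mathrm T}$: the pressure together with the two spatial components of the four-velocity, with $u^0$ no longer an unknown but the shorthand $u^0=\sqrt{1+\delta_{ij}u^iu^j}=\mathrm{e}^{-h}v^0$ dictated by \eqref{muu}. Two algebraic inputs then turn \eqref{OREE} into a genuine $3\times 3$ first-order system for $\bU$: (i) since $c_s^2=dp/d\varrho$ and $\varrho,p,c_s$ are functions of $h$, one has $\partial_\kappa\varrho=c_s^{-2}\partial_\kappa p$, equivalently $\varrho'(p)=c_s^{-2}$, which I use to replace every density derivative in the continuity equation of \eqref{OREE} by a pressure derivative; (ii) differentiating \eqref{muu} gives $\partial_\mu u^0=u_i(u^0)^{-1}\partial_\mu u^i$, which I use to eliminate all derivatives of $u^0$ (which enter only through $\partial_\kappa u^\kappa$). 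I keep the continuity equation for the $p$-row and the $\alpha=1,2$ momentum equations for the $u^1,u^2$-rows, the $\alpha=0$ momentum equation being a consequence of these two --- this is the standard fact that, by \eqref{muu}, contracting the momentum equation of \eqref{OREE} with $u_\alpha$ gives the identity $0=0$.

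After these substitutions one has a quasilinear system $\widetilde A^\mu(\bU)\,\partial_\mu\bU=0$ whose coefficient matrices are not yet symmetric --- the pressure-gradient term contributes $u^iu^\kappa\partial_\kappa p$, which sits asymmetrically against the $u^\kappa\partial_\kappa u^i$ terms. I then left-multiply by an explicit $\bU$-dependent symmetrizer $S(\bU)$ whose entries are rational in $(\varrho+p)^{-1}$, $\varrho'(p)$ and the $u^\alpha$: its effect is to weight the continuity row by $(\varrho+p)^{-2}$ and the momentum rows by $(\varrho+p)^{-1}$, and to recombine the two blocks so that $u^iu^\kappa\partial_\kappa p$ is absorbed and the inverse projection tensor $(\delta^{ij}+u^iu^j)^{-1}=\delta^{ij}+u^iu_j/(u^0u_0)$ appears in the lower-right block; the $\mathrm{e}^{-h}$ factors in the statement merely re-express $u^\mu=\mathrm{e}^{-h}v^\mu$. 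With $A^\mu:=S\widetilde A^\mu$ it then remains to check entry by entry that these equal the matrices displayed above; the only identities that are not immediate are $-v^i/v_0=u^i/u^0$ and $\delta^{ij}+u^iu_j/(u^0u_0)=(\delta^{ij}+u^iu^j)^{-1}$, after which symmetry of $A^0,A^1,A^2$ is visible by inspection.

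The one substantive point --- and the only place where the hypotheses really enter --- is \emph{hyperbolicity}, i.e. positive definiteness of $A^0$, where I would use the standing assumptions \eqref{HEw}: $0<c_s\le 1$ and the forward time-like normalization (so $v^0>0$, hence $u^0\ge 1$). Writing $q^i=u^i/u^0$, so $|q|^2=1-(u^0)^{-2}<1$, the lower-right block of $A^0$ equals $u^0(\delta_{ij}-q^iq^j)$, which is positive definite; taking the Schur complement of this block shows that $A^0>0$ reduces to the scalar inequality $\varrho'(p)>1-(u^0)^{-2}$, which holds because $\varrho'(p)=c_s^{-2}\ge 1$. Thus subluminality of the sound speed is precisely what makes the system symmetric hyperbolic (and is sharp: for $c_s>1$ the inequality fails once $|\mathring{\bu}|$ is large). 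I expect the bulk of the labor to be bookkeeping --- pinning down $S(\bU)$ so that the matrices come out in the exact normalization of the statement --- rather than anything conceptual; in a self-contained account one could instead simply cite \cite[Theorem 13.1]{Bru} and record only this positivity check.
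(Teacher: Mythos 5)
The paper does not actually prove this lemma: it invokes \cite[Theorem~13.1]{Bru} and, in the remark that follows, merely records that the unknowns $(\varrho,\bu)$ of the cited theorem have been rewritten as $(h,\bv)$ via \eqref{mo7}--\eqref{mo8}. Your reconstruction is therefore a genuinely different, more self-contained route. Its skeleton is the classical one, and the parts you actually carry out are correct: $\bU$ is indeed $(p,u^1,u^2)^{\mathrm T}$ in disguise; eliminating $\partial_\kappa\varrho$ via $\varrho'(p)=c_s^{-2}$ and $\partial_\mu u^0$ via \eqref{muu} is the right reduction to a $3\times3$ system; the redundancy of the $\alpha=0$ momentum equation follows, as you say, from the identity $u_\alpha\bigl[(p+\varrho)u^\kappa\partial_\kappa u^\alpha+(m^{\alpha\kappa}+u^\alpha u^\kappa)\partial_\kappa p\bigr]\equiv 0$; and the hyperbolicity computation at the end --- lower-right block $u^0(\delta_{ij}-u^iu^j/(u^0)^2)$ positive since $|\mathring\bu|^2<(u^0)^2$, Schur complement reducing $A^0>0$ to $\varrho'(p)>1-(u^0)^{-2}$, which holds because $c_s\le 1$ --- is the one piece of real mathematical content and is done correctly.

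The weak point is that the symmetrizer $S$ is described only qualitatively and never exhibited, while for this lemma the explicit matrices \emph{are} the statement. Note in particular that $S$ cannot act as you describe (``weight the continuity row by $(\varrho+p)^{-2}$''), even approximately: dividing the continuity equation by $(\varrho+p)^2$ after your substitutions yields $\partial_iu^i$-coefficient $+(\varrho+p)^{-1}$, whereas the displayed entry of $A^i$ in position $(1,i+1)$ is $-(\varrho+p)^{-1}$; similarly $(\varrho+p)^{-1}$ times the $i$-th momentum equation has $\partial_ip$-coefficient $(\varrho+p)^{-1}(1+(u^i)^2)$, not $-(\varrho+p)^{-1}$. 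So the recombination must mix the momentum equations into the continuity row as well, and until $S$ is written down the promised ``entry by entry'' check cannot be performed; one must also settle whether the displayed matrices are transcribed in the $(-,+,+)$ convention of \eqref{muu} or in the convention of \cite{Bru}. Your own closing fallback --- cite \cite[Theorem~13.1]{Bru} and record only the positivity of $A^0$ --- is in effect exactly what the paper does, and is the safer course unless you are prepared to carry out that algebra explicitly.
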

\begin{remark}
	In the book \cite{Bru} (Theorem 13.1, page 283), the hyperbolic system is formulated in terms of the variables $\varrho$ and $\bu$. In our paper, we transform these variables $(\varrho, \bu)$ to $(h, \bv)$. Moreover, by applying equations \eqref{mo3}, \eqref{mo5}, \eqref{mo6}, and \eqref{mo7}, the functions $\varrho$, $p$ and $\varrho'(p)$ are all depend on $h$. For other results on hyperbolic reduction, we also refer to \cite{LQ} and \cite{LU}.
\end{remark}
\subsection{Wave-transport structures}
We begin by deriving \eqref{REEf}. 
\begin{Lemma}\label{tra}
	Under the settings \eqref{mo4}-\eqref{mo7}, the system \eqref{OREE} can be written as \eqref{REEf}.
\end{Lemma}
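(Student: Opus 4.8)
The goal of Lemma \ref{tra} is to pass from the system \eqref{OREE}, written in the original variables $(\varrho,\bu)$, to the system \eqref{REEf}, written in the good variables $(h,\bv)$ defined by \eqref{mo7}--\eqref{mo8}. The plan is a direct change of variables, exploiting the thermodynamic identities \eqref{mo4}--\eqref{mo6} that express $\varrho$, $p$, $c_s$, and $H$ as functions of a single scalar and hence of $h=\log H$.

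First I would record the differential relations among the thermodynamic quantities. From \eqref{mo5}--\eqref{mo6} one has $H=d\varrho/dq$, so differentiating $h=\log H=\log(d\varrho/dq)$ and using $p+\varrho=qH$ together with $c_s^2=dp/d\varrho$ gives, after a short computation, the key identity $dh = \frac{dp}{p+\varrho}$, equivalently $(p+\varrho)\,\partial_\kappa h = \partial_\kappa p$ along the flow and in space. This is the single algebraic fact that drives everything: it converts the pressure gradient $\partial_\kappa p$ appearing in the second equation of \eqref{OREE} into $(p+\varrho)\partial_\kappa h$, and it likewise lets me rewrite $u^\kappa\partial_\kappa\varrho$ via $\partial_\kappa\varrho = c_s^{-2}\partial_\kappa p = c_s^{-2}(p+\varrho)\partial_\kappa h$.

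Next I would substitute $\bu = \mathrm{e}^{-h}\bv$ from \eqref{mo8} into both equations of \eqref{OREE} and simplify. For the second (momentum) equation: $(p+\varrho)u^\kappa\partial_\kappa u^\alpha = (p+\varrho)\mathrm{e}^{-h}v^\kappa\partial_\kappa(\mathrm{e}^{-h}v^\alpha) = (p+\varrho)\mathrm{e}^{-2h}\big(v^\kappa\partial_\kappa v^\alpha - v^\alpha v^\kappa\partial_\kappa h\big)$, while $(m^{\alpha\kappa}+u^\alpha u^\kappa)\partial_\kappa p = (m^{\alpha\kappa}+\mathrm{e}^{-2h}v^\alpha v^\kappa)(p+\varrho)\partial_\kappa h$. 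Dividing by $(p+\varrho)\mathrm{e}^{-2h}$, the two $v^\alpha v^\kappa\partial_\kappa h$ terms cancel and one is left with $v^\kappa\partial_\kappa v^\alpha + \mathrm{e}^{2h}m^{\alpha\kappa}\partial_\kappa h = v^\kappa\partial_\kappa v^\alpha + \mathrm{e}^{2h}\partial^\alpha h = 0$, which is exactly the second line of \eqref{REEf}. For the first (continuity) equation: replace $u^\kappa\partial_\kappa\varrho$ by $c_s^{-2}(p+\varrho)u^\kappa\partial_\kappa h$ and $\partial_\kappa u^\kappa$ by $\mathrm{e}^{-h}(\partial_\kappa v^\kappa - v^\kappa\partial_\kappa h)$; after multiplying through by $c_s^2\mathrm{e}^h/(p+\varrho)$ and collecting the coefficients of $v^\kappa\partial_\kappa h$, one gets $(1-c_s^2)v^\kappa\partial_\kappa h + c_s^2\partial_\kappa v^\kappa = 0$, the first line of \eqref{REEf}. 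One should also note $\mathrm{e}^{-2h}v^\alpha v_\alpha = -1$ (equation \eqref{rsv}), which follows from \eqref{muu} and \eqref{mo8}, to keep track of the constraint, though it is not strictly needed for the algebraic reduction itself.

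The only real obstacle is bookkeeping: correctly deriving and applying the thermodynamic identity $dh = dp/(p+\varrho)$ from the somewhat implicit definitions \eqref{mo5}--\eqref{mo6} of the number density $q$ and enthalpy per particle $H$, and then carefully tracking the exponential factors $\mathrm{e}^{-h}$, $\mathrm{e}^{-2h}$, $\mathrm{e}^{2h}$ through the product rule so that the $v^\alpha v^\kappa\partial_\kappa h$ terms cancel as claimed. There is no analytic difficulty here — it is an exact, pointwise computation valid wherever the solution is smooth and $0<c_s\le 1$, $p+\varrho>0$ — so the proof is essentially a matter of organizing these substitutions cleanly. I would present it by first stating the lemma $dh=dp/(p+\varrho)$ (or citing \cite{DS}), then doing the momentum equation, then the continuity equation.
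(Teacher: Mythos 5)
Your proposal is correct and follows essentially the same route as the paper: the paper's Lemma \ref{tra} likewise first extracts the thermodynamic identity (in the form $c_s^2 = q\mathrm{e}^h\,\frac{dh}{d\varrho}$, equivalent to your $dh = dp/(p+\varrho)$, obtained by differentiating $p+\varrho = q\mathrm{e}^h$), and then substitutes $\bu=\mathrm{e}^{-h}\bv$ into each equation of \eqref{OREE} so that the $v^\alpha v^\kappa\partial_\kappa h$ terms cancel in the momentum equation and the coefficients of $v^\kappa\partial_\kappa h$ collect to $(1-c_s^2)$ in the continuity equation. Your sign/exponent bookkeeping ($\mathrm{e}^{2h}\partial^\alpha h$ in the momentum equation) matches the stated system \eqref{REEf}.
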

\begin{proof}
	By using \eqref{mo6} and \eqref{mo7}, we get
	\begin{equation}\label{mo10}
		\begin{split}
			p+\varrho=q\mathrm{e}^h.
		\end{split}
	\end{equation}
	Operating derivatives to $\varrho$ for \eqref{mo10}, we can see
	\begin{equation}\label{mo11}
		\begin{split}
			p'(\varrho)+1=\mathrm{e}^h \frac{dq}{d\varrho}+q\mathrm{e}^h \frac{dh}{d\varrho}.
		\end{split}
	\end{equation}
	Inserting \eqref{mo4}, \eqref{mo5} to \eqref{mo11}, it yields
	\begin{equation}\label{mo12}
		\begin{split}
			c^2_s+1=\mathrm{e}^h \frac{q}{p+\varrho}+q\mathrm{e}^h \frac{dh}{d\varrho}.
		\end{split}
	\end{equation}
	Using \eqref{mo10}, then \eqref{mo12} tells us that
	\begin{equation}\label{mo13}
		\begin{split}
			c^2_s=q\mathrm{e}^h \frac{dh}{d\varrho}.
		\end{split}
	\end{equation}
	From \eqref{mo13}, we find
	\begin{equation}\label{mo14}
		\begin{split}
			\partial_\kappa \varrho=c^{-2}_sq\mathrm{e}^h\partial_\kappa h.
		\end{split}
	\end{equation}
	Using \eqref{mo10} and \eqref{mo8}, we can derive that
	\begin{equation}\label{mo15}
		\begin{split}
			(p+\varrho) \partial_\kappa u^\kappa  = & q\mathrm{e}^h \partial_\kappa u^\kappa
			\\
			=& q ( \partial_\kappa v^\kappa -v^\kappa \partial_\kappa h ).
		\end{split}
	\end{equation}
	From \eqref{mo14}, we obtain
	\begin{equation}\label{mo16}
		\begin{split}
			u^\kappa \partial_\kappa \varrho=& c^{-2}_sq\mathrm{e}^h u^\kappa \partial_\kappa h
			\\
			=& c^{-2}_sq  v^\kappa \partial_\kappa h.
		\end{split}
	\end{equation}
	Adding \eqref{mo15} and \eqref{mo16} yields 
	\begin{equation}\label{mo17}
		v^\kappa \partial_\kappa h (c^{-2}_s -1) + \partial_\kappa v^\kappa=0.
	\end{equation}
	Multiplying \eqref{mo17} with $c^2_s$, we therefore get
		\begin{equation*}\label{mo18}
		(1-c^{2}_s ) v^\kappa \partial_\kappa h  + c^{2}_s \partial_\kappa v^\kappa=0.
	\end{equation*}
Similarly, we can calculate
	\begin{equation}\label{mo19}
		\begin{split}
			(p+\varrho) \partial_\kappa u^\kappa  = & q\mathrm{e}^h \partial_\kappa u^\kappa,
		\\
		=& q\mathrm{e}^h ( v^\kappa \partial_\kappa v^\alpha - v^\alpha v^\kappa \partial_\kappa h),
		\end{split}
	\end{equation}
and
	\begin{equation}\label{mo20}
		\begin{split}
		(m^{\alpha \kappa} + u^\alpha u^\kappa)	\partial_\kappa p=& (m^{\alpha \kappa} + u^\alpha u^\kappa) c^2_s \partial_\kappa \varrho 
		\\
		=& (m^{\alpha \kappa} + \mathrm{e}^{-2h}v^\alpha v^\kappa) q\mathrm{e}^h\partial_\kappa h.
		\end{split}
	\end{equation}
	Adding \eqref{mo19} and \eqref{mo20}, we have shown 
	\begin{equation}\label{mo21}
		v^\kappa \partial_\kappa v^\alpha + \mathrm{e}^{-2h} \partial^\alpha h=0.
	\end{equation}
Combining \eqref{mo17} and \eqref{mo21} yields the system \eqref{REEf}.
\end{proof}

In the following lemma, we derive the new wave structure for the enthalpy and the rescaled velocity.
\begin{Lemma}\label{DW0} Let $(h,\bv)$ be a solution of \eqref{REEf}. Then we have
	\begin{equation*}\label{dw0}
		\begin{cases}
			& \square_g h \  =  \mathcal{D},
			\\
			& \square_g v^\alpha =  - c^2_s  \Theta \epsilon^{\alpha \beta \gamma} \partial_\beta w_\gamma + Q^\alpha,
		\end{cases}
	\end{equation*}
	where $\mathcal{D}$ and $\bQ=(Q^0,Q^1,Q^2)$ are quadratic terms defined as in \eqref{wrt0} and \eqref{wrt1}.
\end{Lemma}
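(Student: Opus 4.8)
The plan is to derive the wave equations in \eqref{dw0} directly from the first-order system \eqref{REEf} by differentiating and using the algebraic relation \eqref{met1} between $g$ and the coefficients of \eqref{REEf}. The starting point is the momentum equation $v^\kappa \partial_\kappa v^\alpha = -\mathrm{e}^{2h}\partial^\alpha h$ (note the sign convention: in \eqref{REEf} it is written with $+\mathrm{e}^{2h}\partial^\alpha h=0$, so $v^\kappa\partial_\kappa v^\alpha = -\mathrm{e}^{2h}\partial^\alpha h$) and the wave-gauge-type relation one obtains by applying $\partial_\alpha$ to the momentum equation and combining with the continuity equation $(1-c_s^2)v^\kappa\partial_\kappa h + c_s^2 \partial_\kappa v^\kappa = 0$. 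I would first record the basic consequences of \eqref{REEf}: the transport of $h$ along $v$, the expression for $\partial_\kappa v^\kappa$ in terms of $v^\kappa\partial_\kappa h$, and the formula for $v^0$ and hence $\Theta$ in terms of $(h,\bv)$.

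For the enthalpy equation, the key step is to compute $g^{\alpha\beta}\partial^2_{\alpha\beta}h$ using \eqref{met1}. Writing $\square_g h = \Theta\{c_s^2 \square h + \mathrm{e}^{-2h}(c_s^2-1)v^\alpha v^\beta \partial^2_{\alpha\beta}h\}$, I would handle the second piece by noting $v^\alpha v^\beta \partial^2_{\alpha\beta}h = v^\alpha\partial_\alpha(v^\beta\partial_\beta h) - v^\alpha(\partial_\alpha v^\beta)\partial_\beta h$, then substitute $v^\beta\partial_\beta h = -\frac{c_s^2}{1-c_s^2}\partial_\kappa v^\kappa$ from the continuity equation (valid for $A>1$; the stiff case $c_s=1$ is separate, cf. Remark \ref{rem}) and differentiate once more, producing $\partial_\kappa v^\kappa$ derivatives that are then rewritten via the momentum equation. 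The flat d'Alembertian piece $\square h = m^{\alpha\beta}\partial^2_{\alpha\beta}h$ I would get by taking $m^{\alpha\beta}\partial_\alpha$ of the momentum equation: $m^{\alpha\beta}\partial_\alpha(v^\kappa\partial_\kappa v_\beta) = -\square(\mathrm{e}^{2h}h)/\text{(something)}$—more precisely $\partial^\alpha(v^\kappa\partial_\kappa v_\alpha) = -\partial^\alpha(\mathrm{e}^{2h}\partial_\alpha h) = -\mathrm{e}^{2h}(\square h + 2\partial^\alpha h\partial_\alpha h)$, and the left side expands as $(\partial^\alpha v^\kappa)\partial_\kappa v_\alpha + v^\kappa\partial_\kappa(\partial^\alpha v_\alpha)$, where $\partial^\alpha v_\alpha = \partial_\kappa v^\kappa$ is again controlled by the continuity equation. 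Collecting all terms and matching them against \eqref{wrt0} is then bookkeeping; the normalization $\mathrm{e}^{-2h}v^\alpha v_\alpha = -1$ from \eqref{rsv} and the relation between $c_s'$ and $h$-derivatives of $c_s$ are used to simplify the quadratic remainder.

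For the velocity equation, I would apply $\partial^\alpha$ would not be right — instead apply $m^{\mu\kappa}\partial_\kappa$ is not it either; rather, the standard trick is: differentiate the momentum equation and form the combination that reconstructs $\square_g v^\alpha$. Concretely, $v^\kappa\partial_\kappa v^\alpha = -\mathrm{e}^{2h}\partial^\alpha h$, so $v^\mu\partial_\mu(v^\kappa\partial_\kappa v^\alpha) = -v^\mu\partial_\mu(\mathrm{e}^{2h}\partial^\alpha h)$; the left side is $v^\mu v^\kappa\partial^2_{\mu\kappa}v^\alpha + (v^\mu\partial_\mu v^\kappa)\partial_\kappa v^\alpha$, giving the "bad" second-order direction $v^\mu v^\kappa\partial^2_{\mu\kappa}v^\alpha$. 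Separately, the curl structure enters: from $w^\alpha = \epsilon^{\alpha\beta\gamma}\partial_\beta v_\gamma$ one gets $\epsilon^{\alpha\beta\gamma}\partial_\beta w_\gamma = \epsilon^{\alpha\beta\gamma}\epsilon_{\gamma\mu\nu}\partial_\beta\partial^\mu v^\nu$, which after contracting the $\epsilon$'s in $\mathbb{R}^{1+2}$ (taking care with the Lorentzian signature and the identity $\epsilon^{\alpha\beta\gamma}\epsilon_{\gamma\mu\nu} = -(\delta^\alpha_\mu\delta^\beta_\nu - \delta^\alpha_\nu\delta^\beta_\mu)$ up to metric signs) expresses $\square v^\alpha - \partial^\alpha(\partial_\kappa v^\kappa)$ in terms of $\epsilon^{\alpha\beta\gamma}\partial_\beta w_\gamma$. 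Then $\partial^\alpha(\partial_\kappa v^\kappa)$ is converted through the continuity equation into $\partial^\alpha(v^\kappa\partial_\kappa h)$-type terms, which combine with the $v^\mu v^\kappa\partial^2_{\mu\kappa}v^\alpha$ contribution and the definition \eqref{met1} of $g$ to assemble exactly $\Theta^{-1}\square_g v^\alpha$. Matching the remainder to $Q^\alpha$ in \eqref{wrt1} uses the momentum equation once more to trade $v^\mu\partial_\mu v^\kappa$ for $-\mathrm{e}^{2h}\partial^\kappa h$.

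The main obstacle I expect is the velocity equation: keeping the index gymnastics consistent between raising/lowering with $m$ versus $g$, tracking the Lorentzian signs in the $\epsilon$-contraction identity in $1+2$ dimensions, and ensuring that the "elliptic/transport split" — namely that $\square_g v^\alpha$ genuinely decomposes as a curl term plus quadratics with no leftover second-order pieces — actually closes. In particular one must verify that the direction $v^\mu v^\kappa\partial^2_{\mu\kappa}v^\alpha$, the flat $\square v^\alpha$, and $\partial^\alpha\partial_\kappa v^\kappa$ combine with precisely the coefficients dictated by \eqref{theta}--\eqref{met1} so that the principal part is $g^{\alpha\beta}\partial^2_{\alpha\beta}v^\alpha$ and nothing else; this is where the specific algebraic form of $\Theta$ is forced. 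The enthalpy equation is comparatively routine once the substitution $v^\kappa\partial_\kappa h = -\frac{c_s^2}{1-c_s^2}\partial_\kappa v^\kappa$ is in hand, modulo careful treatment of the terms where $c_s'(h)$ appears upon differentiating $c_s$. Throughout I would suppress the lower-order (quadratic) terms until the end and only then expand them to confirm they equal $\mathcal{D}$ and $Q^\alpha$ verbatim.
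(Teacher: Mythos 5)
Your plan is correct and is essentially the paper's own computation: the paper applies $v^\beta\partial_\beta$ to the continuity equation and $(c^2_s-1)v^\beta\partial_\beta$ to the momentum equation, converts the resulting $v^\beta\partial_\beta(\partial_\kappa v^\kappa)$ and $v^\beta\partial_\beta\partial^\alpha h$ terms by means of the other equation, and uses $\partial_\kappa(\partial^\alpha v^\kappa-\partial^\kappa v^\alpha)=\epsilon^{\alpha\beta\gamma}\partial_\beta w_\gamma$ to produce the vorticity term, exactly the ingredients you list. The one organizational difference is that the paper never divides by $1-c^2_s$ (it keeps the continuity equation in the multiplied-through form $(c^2_s-1)v^\kappa\partial_\kappa h=c^2_s\partial_\kappa v^\kappa$), so the identity is obtained uniformly for all $0<c_s\leq 1$ and no separate stiff-case argument is needed.
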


\begin{proof}
	Operating the operator $v^\beta \partial_\beta$ on the first equation of \eqref{REEf}, we have
	\begin{equation}\label{a00}
		v^\beta \partial_\beta \left\{  (1-c^2_s)v^\kappa \partial_\kappa h   + c^2_s  \partial_\kappa v^\kappa \right\} =0.
	\end{equation}
	A direct calculation on \eqref{a00} tells us
	\begin{equation}\label{a02}
		\begin{split}
			& (1-c^2_s)v^\beta v^\kappa \partial^2_{\beta \kappa} h+ \underbrace{c^2_s v^\beta \partial_\beta ( \partial_\kappa u^\kappa)}_{\equiv R_1} + (1-c^2_s)v^\beta \partial_\beta v^\kappa \partial_\kappa h
			\\
			& \quad - 2c_s c'_s v^\beta v^\kappa \partial_\beta h \partial_\kappa h+ 2c_s c'_s v^\beta \partial_\beta h \partial_\kappa v^\kappa  =0.
		\end{split}
	\end{equation}
	For $R_1$, using \eqref{REEf}, so we can compute it by
	\begin{equation}\label{a03}
		\begin{split}
			R_1=& c^2_s \partial_\kappa ( v^\beta \partial_\beta v^\kappa  ) - c^2_s \partial_\kappa v^\beta \partial_\beta v^\kappa
			\\
			=& -c^2_s \partial_\kappa (  \mathrm{e}^{2h} \partial^\kappa  h  ) - c^2_s \partial_\kappa v^\beta \partial_\beta v^\kappa
			\\
			=& -\mathrm{e}^{2h}c^2_s  m^{\beta \kappa} \partial^2_{ \beta \kappa } h -c^2_s \partial_\kappa v^\beta \partial_\beta v^\kappa- 2 \mathrm{e}^{2h} c^2_s \partial_\kappa h \partial^\kappa h.
		\end{split}
	\end{equation}
	Substituting \eqref{a03} to \eqref{a02}, and using \eqref{REEf} again, we obtain
	\begin{equation}\label{a04}
		\begin{split}
			& \left\{  \mathrm{e}^{2h} c^2_s m^{\alpha \beta } + (c^2_s-1) v^\alpha v^\beta  \right\} \partial^2_{ \alpha \beta }  h
			% u^\beta u^\kappa \partial^2_{\beta \kappa} h -c^2_s m^{\beta \kappa} \partial^2_{ \beta \kappa } h-c^2_s u^\beta u^\kappa \partial^2_{\beta \kappa} h 
			\\
			=& (1-c^2_s)v^\beta \partial_\beta v^\kappa \partial_\kappa h - 2c_s c'_s v^\beta v^\kappa \partial_\beta h \partial_\kappa h+ 2c_s c'_s v^\beta \partial_\beta h \partial_\kappa v^\kappa
			\\
			&  -c^2_s \partial_\kappa v^\beta \partial_\beta v^\kappa- 2 \mathrm{e}^{2h} c^2_s \partial_\kappa h \partial^\kappa h
			\\
			=&  - \mathrm{e}^{2h} (1+ c^2_s) \partial_\kappa h \partial^\kappa h- 2c^{-1}_s c'_s v^\beta v^\kappa \partial_\beta h \partial_\kappa h
			-c^2_s \partial_\kappa v^\beta \partial_\beta v^\kappa.
		\end{split}
	\end{equation}
	To assure $g^{00}=-1$, multiplying \eqref{a04} with $\mathrm{e}^{-2h}\Theta$, we can see
	\begin{equation}\label{a05}
		\begin{split}
			\square_g h 
			=& - 2\mathrm{e}^{-2h} \Theta c^{-1}_s c'_s v^\beta v^\kappa \partial_\beta h \partial_\kappa h
			-\mathrm{e}^{-2h} \Theta c^2_s \partial_\kappa v^\beta \partial_\beta v^\kappa- \Theta  (1+ c^2_s) \partial_\kappa h \partial^\kappa h.
		\end{split}
	\end{equation}
	Operating $(c^2_s-1)v^\beta \partial_\beta$ on the second equation in \eqref{REEf}, we get
	\begin{equation}\label{a06}
		\begin{split}
			(c^2_s-1) v^\beta v^\kappa \partial^2_{\beta \kappa} v^\alpha + \underbrace{ \mathrm{e}^{2h} (c^2_s-1) v^\beta \partial_\beta\partial^\alpha h}_{\equiv R_2}
			+ (c^2_s-1) v^\beta \partial_\beta v^\kappa \partial_\kappa v^\alpha + 2(c^2_s-1) \mathrm{e}^{2h} v^\beta \partial_\beta h \partial^\alpha h =0.
		\end{split}
	\end{equation}
	Due to \eqref{REEf}, we deduce that
	\begin{equation}\label{a07}
		\begin{split}
			R_2 = &  \mathrm{e}^{2h} \partial^\alpha \left\{ (c^2_s-1) v^\beta \partial_\beta h \right\}  
			- \mathrm{e}^{2h} (c^2_s-1) \partial^\alpha v^\beta \partial_\beta h -2c_s c'_s \mathrm{e}^{2h} v^\beta \partial^\alpha h \partial_\beta h
			\\
			= &  \mathrm{e}^{2h} \partial^\alpha (  c^2_s  \partial_\kappa v^\kappa ) 
			- \mathrm{e}^{2h} (c^2_s-1) \partial^\alpha v^\beta \partial_\beta h -2c_s c'_s \mathrm{e}^{2h} v^\beta \partial^\alpha h \partial_\beta h
			\\
			=& c^2_s \mathrm{e}^{2h} \partial_\kappa ( \partial^\alpha   v^\kappa  )+  2c_s c'_s \mathrm{e}^{2h} \partial^\alpha h    \partial_\kappa v^\kappa  	- \mathrm{e}^{2h} (c^2_s-1) \partial^\alpha v^\beta \partial_\beta h -2c_s c'_s \mathrm{e}^{2h} v^\beta \partial^\alpha h \partial_\beta h
		\end{split}
	\end{equation}
	We also note
	\begin{equation}\label{a08}
		\begin{split}
			c^2_s \mathrm{e}^{2h} \partial_\kappa ( \partial^\alpha   v^\kappa  )=& c^2_s \mathrm{e}^{2h} \partial_\kappa ( \partial^\alpha   v^\kappa -  \partial^\kappa   v^\alpha)+ c^2_s \mathrm{e}^{2h} m^{\beta \kappa} \partial^2_{\beta \kappa} v^\alpha
			\\
			= & c^2_s \mathrm{e}^{2h} \epsilon^{\alpha \beta \gamma}\partial_\beta w_\gamma+ c^2_s \mathrm{e}^{2h} m^{\beta \kappa} \partial^2_{\beta \kappa} v^\alpha
		\end{split}
	\end{equation}
	Summing up our outcome \eqref{a06}, \eqref{a07}, \eqref{a08}, we have
	\begin{equation}\label{a09}
		\begin{split}
			\left\{   \mathrm{e}^{2h} c^2_s  m^{\beta \kappa} + (c^2_s-1) v^\beta v^\kappa  \right\} \partial^2_{\beta \kappa} v^\alpha=& -c^2_s \mathrm{e}^{2h} \epsilon^{\alpha \beta \gamma}\partial_\beta w_\gamma - (c^2_s-1) v^\beta \partial_\beta v^\kappa \partial_\kappa v^\alpha 
			\\
			& - 2(c^2_s-1) \mathrm{e}^{2h} v^\beta \partial_\beta h \partial^\alpha h -2c_s c'_s \mathrm{e}^{2h} \partial^\alpha h    \partial_\kappa v^\kappa  	
			\\
			& + \mathrm{e}^{2h} (c^2_s-1) \partial^\alpha v^\beta \partial_\beta h + 2c_s c'_s \mathrm{e}^{2h} v^\beta \partial^\alpha h \partial_\beta h.
		\end{split}
	\end{equation}
	Multiplying \eqref{a09} with $\mathrm{e}^{-2h}\Theta$, then we have
	\begin{equation}\label{a10}
		\begin{split}
			\square_{{g}} v^\alpha=& -\Theta c^2_s  \epsilon^{\alpha \beta \gamma}\partial_\beta w_\gamma - \mathrm{e}^{-2h} \Theta (c^2_s-1) v^\beta \partial_\beta v^\kappa \partial_\kappa v^\alpha 
			\\
			& - 2\Theta (c^2_s-1)  v^\beta \partial_\beta h \partial^\alpha h -2\Theta c_s c'_s  \partial^\alpha h    \partial_\kappa v^\kappa  	
			\\
			& + \Theta (c^2_s-1) \partial^\alpha v^\beta \partial_\beta h + 2\Theta c_s c'_s  v^\beta \partial^\alpha h \partial_\beta h.
		\end{split}
	\end{equation}
	Combining \eqref{a05} and \eqref{a10}, this concludes the proof of this lemma.
\end{proof}
Next, we derive a good wave equation for $\bv_{+}$.
\begin{Lemma}\label{Wag}
Let $(h,\bv)$ be a solution of \eqref{REEf}. Let $\bv_{+}$ and $\bv_{-}$ be defined in \eqref{De}. Then we have
\begin{equation}\label{wag0}
\begin{split}
		 		\square_{{g}} \bv_{+}
		 =& \Theta\mathrm{e}^{-2h}(v^0)^2  (1-3 c^2_s)    \mathbf{T} \mathbf{T} \bv_{-}- c^2_s  \Theta \bv_{-} + \bQ,
\end{split}
\end{equation}
where $\Theta$, $\bQ$, and $\mathbf{T}$ are stated as in \eqref{theta}, \eqref{wrt1}, and \eqref{opt}.
\end{Lemma}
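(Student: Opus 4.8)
The plan is to read off the equation for $\bv_{+}$ from the wave equation for $v^\alpha$ established in Lemma \ref{DW0}, using the definition of $\bv_{-}$ precisely to cancel the rough ``curl of vorticity'' term $\epsilon^{\alpha\beta\gamma}\partial_\beta w_\gamma$ appearing there. First I would use \eqref{De} to split $v^\alpha=v^\alpha_{+}+v^\alpha_{-}$, so that $\square_{g}v^\alpha_{+}=\square_{g}v^\alpha-\square_{g}v^\alpha_{-}$; into the first term I insert Lemma \ref{DW0}, namely $\square_{g}v^\alpha=-c^2_s\Theta\,\epsilon^{\alpha\beta\gamma}\partial_\beta w_\gamma+Q^\alpha$, and from \eqref{De0} I record the pointwise identity $\mathbf{P}v^\alpha_{-}=v^\alpha_{-}-\epsilon^{\alpha\beta\gamma}\partial_\beta w_\gamma$. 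Throughout, the elliptic estimate for $\mathbf{P}$ in Lemma \ref{Ees} guarantees that $\bv_{-}$ is regular enough for every manipulation below.

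Next I would compare $\square_{g}$ with $c^2_s\Theta\,\mathbf{P}$ at the level of their principal parts. By \eqref{met1}, \eqref{theta} and \eqref{De1}, both $g^{\alpha\beta}$ and the symbol $m^{\alpha\beta}+2\mathrm{e}^{-2h}v^\alpha v^\beta$ of $\mathbf{P}$ lie in the two-dimensional span of $m^{\alpha\beta}$ and $\mathrm{e}^{-2h}v^\alpha v^\beta$, so a short algebraic computation produces an identity $\square_{g}=c^2_s\Theta\,\mathbf{P}+\kappa\,\Theta\,\mathrm{e}^{-2h}v^\alpha v^\beta\partial^2_{\alpha\beta}$ with an explicit scalar $\kappa=\kappa(c_s)$. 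Applying this to $v^\alpha_{-}$, substituting $\mathbf{P}v^\alpha_{-}=v^\alpha_{-}-\epsilon^{\alpha\beta\gamma}\partial_\beta w_\gamma$, and subtracting from $\square_{g}v^\alpha$, the two occurrences of $c^2_s\Theta\,\epsilon^{\alpha\beta\gamma}\partial_\beta w_\gamma$ cancel — this is exactly what the construction of $\mathbf{P}$ and $\bv_{-}$ is built to achieve — and I am left with $\square_{g}v^\alpha_{+}=Q^\alpha-c^2_s\Theta\,v^\alpha_{-}-\kappa\,\Theta\,\mathrm{e}^{-2h}v^\beta v^\gamma\partial^2_{\beta\gamma}v^\alpha_{-}$.

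The last step is to convert the operator $v^\beta v^\gamma\partial^2_{\beta\gamma}$ into $(v^0)^2\mathbf{T}\mathbf{T}$ modulo lower order. Using $v^\kappa\partial_\kappa=v^0\mathbf{T}$ from \eqref{opt} and the momentum equation $v^\kappa\partial_\kappa v^\alpha=-\mathrm{e}^{2h}\partial^\alpha h$ of \eqref{REEf}, one computes $v^\beta v^\gamma\partial^2_{\beta\gamma}f=(v^0)^2\mathbf{T}\mathbf{T}f+v^0(\mathbf{T}v^0)\mathbf{T}f+\mathrm{e}^{2h}\partial^\gamma h\,\partial_\gamma f$, which with $f=v^\alpha_{-}$ gives exactly the leading term $\Theta\mathrm{e}^{-2h}(v^0)^2(1-3c^2_s)\mathbf{T}\mathbf{T}\bv_{-}$ displayed in \eqref{wag0}, together with a remainder that is only first order in $\bv_{-}$. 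Writing $\bv_{-}=\bv-\bv_{+}$ in that remainder and eliminating $\mathbf{T}v^0$ and $\partial\bv$ via \eqref{REEf}, every resulting term is a product of the form $\partial h\cdot\partial h$, $\partial h\cdot\partial\bv$ or $\partial\bv\cdot\partial\bv$ with a smooth coefficient — the same shape as $\bQ$ in \eqref{wrt1} — so it is absorbed into $\bQ$, which completes \eqref{wag0}.

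The bulk of the work is in the two algebraic steps: the comparison $\square_{g}=c^2_s\Theta\,\mathbf{P}+\kappa\,\Theta\,\mathrm{e}^{-2h}v^\alpha v^\beta\partial^2_{\alpha\beta}$ and the term-by-term matching of the first-order remainder with $\bQ$. I expect the delicate point to be the first of these: the conformal factor $\Theta$ and the precise form of $\mathbf{P}$ must be tracked with care, since any slip in signs or constants there feeds straight into the coefficient $(1-3c^2_s)$ in front of $\mathbf{T}\mathbf{T}\bv_{-}$.
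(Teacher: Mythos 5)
Your route is the same as the paper's: split $\bv=\bv_{+}+\bv_{-}$, feed in Lemma \ref{DW0}, use $(\mathbf{Id}-\mathbf{P})v^\alpha_{-}=\epsilon^{\alpha\beta\gamma}\partial_\beta w_\gamma$ to cancel the curl-of-vorticity term, and identify the surviving second-order operator acting on $\bv_{-}$. Packaging the cancellation as the operator identity $\square_g=c_s^2\Theta\,\mathbf{P}+\kappa\,\Theta\,\mathrm{e}^{-2h}v^\alpha v^\beta\partial^2_{\alpha\beta}$ is a clean reorganization of exactly the algebra the paper performs line by line. Two points, however, deserve attention.

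First, you correctly identified the delicate spot but then did not execute it: carrying out your own scheme gives $\kappa=-(1+c_s^2)$, hence a leading coefficient $(1+c_s^2)$ in front of $\mathrm{e}^{-2h}v^\beta v^\gamma\partial^2_{\beta\gamma}v^\alpha_{-}$, whereas you assert without computation that you recover the displayed $(1-3c_s^2)$. (The value $1-3c_s^2=-(c_s^2-1)-2c_s^2$ is what one gets by expanding $-c_s^2\Theta(\mathbf{Id}-\mathbf{P})\bv_{-}$ with the wrong sign on the $\mathbf{P}$ term; the correct expansion gives $-(c_s^2-1)+2c_s^2=1+c_s^2$.) Since only the boundedness of this coefficient is used later, the discrepancy is harmless for the applications, but your claim of exact agreement is unverified and, on a careful run of your own algebra, false as stated. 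Second, your conversion of $v^\beta v^\gamma\partial^2_{\beta\gamma}$ into $(v^0)^2\mathbf{T}\mathbf{T}$ is more careful than the paper's (which silently treats the two as equal), but your disposal of the first-order remainder does not close: after writing $\bv_{-}=\bv-\bv_{+}$, the correction terms still contain $d\bv_{+}$ (equivalently $d\bv_{-}$), which is not of the form $(dh,d\bv)\cdot(dh,d\bv)$ and therefore cannot be absorbed into the \emph{fixed} expression $\bQ$ of \eqref{wrt1}. Either the right-hand side of \eqref{wag0} must be enlarged by those first-order terms in $\bv_{-}$ (which are controllable via Lemma \ref{yuy}, so nothing downstream breaks), or one adopts the paper's convention of reading $(v^0)^2\mathbf{T}\mathbf{T}\bv_{-}$ as shorthand for $v^\beta v^\gamma\partial^2_{\beta\gamma}\bv_{-}$; as written, your final absorption step overclaims.
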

\begin{proof}
	According to the expression of $\square_{{g}}$, we find that
	\begin{equation}\label{A0}
		\begin{split}
				\square_g v^\alpha=& g^{\beta \gamma}\partial^2_{\beta \gamma} v^\alpha
				 =\Theta \left\{ c^2_s m^{\beta \gamma}+ \mathrm{e}^{-2h}(c^2_s -1)v^\beta v^\gamma \right\} \partial^2_{\beta \gamma}v^\alpha.
		\end{split}
	\end{equation}
	For $\bv={\bv}_{+}+\bv_{-}$, the equation \eqref{A0} becomes
		\begin{equation}\label{A1a}
		\begin{split}
			\square_g v^\alpha
			=&  \Theta \left\{ c^2_s m^{\beta \gamma}+ \mathrm{e}^{-2h}(c^2_s -1)v^\beta v^\gamma \right\} \partial^2_{\beta \gamma}v_{+}^\alpha
			\\
			& + \Theta \left\{ c^2_s m^{\beta \gamma}+ \mathrm{e}^{-2h}(c^2_s -1)v^\beta v^\gamma \right\} \partial^2_{\beta \gamma}v_{-}^\alpha.
		\end{split}
	\end{equation}
Due to Lemma \ref{Wrs}, we also get
\begin{equation}\label{A2}
	\square_g v^\alpha = -c^2_s  \Theta \epsilon^{\alpha \beta \gamma} \partial_\beta  w_\gamma + Q^\alpha.
\end{equation}
Combining \eqref{A1a} and \eqref{A2} yield
\begin{equation*}
	\begin{split}
   \Theta \left\{  c^2_s m^{\beta \gamma}+ \mathrm{e}^{-2h}(c^2_s -1)v^\beta v^\gamma \right\} \partial^2_{\beta \gamma}v_{+}^\alpha
	=
		&  - \Theta \left\{ c^2_s m^{\beta \gamma}+ \mathrm{e}^{-2h}(c^2_s -1)v^\beta v^\gamma  \right\} \partial^2_{\beta \gamma}v_{-}^\alpha
		\\
		& -c^2_s  \Theta \epsilon^{\alpha \beta \gamma} \partial_\beta  w_\gamma + Q^\alpha.
	\end{split}
\end{equation*}
Using \eqref{De0} and \eqref{De1}, we further get
\begin{equation*}
	\begin{split}
		& \Theta \left\{ c^2_s m^{\beta \gamma}+ \mathrm{e}^{-2h}(c^2_s -1)v^\beta v^\gamma  \right\} \partial^2_{\beta \gamma}v_{+}^\alpha
		\\
		=
		&  - \Theta \left\{  c^2_s m^{\beta \gamma}+ \mathrm{e}^{-2h}(c^2_s -1)v^\beta v^\gamma  \right\}  \partial^2_{\beta \gamma}v_{-}^\alpha
		 - c^2_s  \Theta ( \mathbf{{Id}} - \mathbf{{P}} ) v_{-}^\alpha + Q^\alpha
		 \\
		 =& - \Theta \left\{ c^2_s m^{\beta \gamma}+ \mathrm{e}^{-2h}(c^2_s -1)v^\beta v^\gamma  \right\} \partial^2_{\beta \gamma}v_{-}^\alpha - c^2_s  \Theta \left\{  v^\alpha_{-}  -  (m^{\beta\gamma} + 2 \mathrm{e}^{-2h}v^\beta v^\gamma) \partial^2_{\beta \gamma} v^\alpha_{-} \right\} + Q^\alpha
		 \\
		 =& \Theta(1-3 c^2_s)   \mathrm{e}^{-2h} v^\beta v^\gamma\partial^2_{\beta \gamma}v^\alpha_{-}- c^2_s  \Theta v^\alpha_{-} + Q^\alpha
		 \\
		 =& \Theta\mathrm{e}^{-2h}(1-3 c^2_s)    (v^0)^2 \mathbf{T} \mathbf{T} v^\alpha_{-}- c^2_s  \Theta v^\alpha_{-} + Q^\alpha.
	\end{split}
\end{equation*}
Thus, we have proved \eqref{wag0}. Therefore, we have finished the proof of this lemma.
\end{proof}
\begin{remark}
By comparing equations \eqref{wrt} and \eqref{wag0}, we see that $\bv_+$ satisfies a better wave equation than $\bv$. Moreover, the equation \eqref{wag0} plays a key role in establishing Strichartz estimates for $\bv_{+}$.
\end{remark}
Finally, we will derive two good transport formulations for vorticity.
\begin{Lemma}\label{DW1}
	Let $(h,\bv)$ be a solution of \eqref{REEf}. Let $\bw$ and $\bW$ be defined in \eqref{Vor} and \eqref{EW2}. Then we have the following two transport equations
	\begin{equation}\label{EW0}
		v^\kappa \partial_\kappa w^\alpha= w^\kappa \partial^\alpha v_\kappa- w^\alpha \partial^\kappa v_\kappa,
	\end{equation}
	and
	\begin{equation}\label{EW1}
		\begin{split}
				v^\kappa \partial_\kappa    W^\alpha   
			=& -\epsilon^{\alpha \beta \gamma} \partial_\beta v^\kappa \partial_\kappa w_\gamma- \epsilon^{\alpha \beta \gamma} \partial_\beta w_\gamma  \partial^\kappa v_\kappa 
			+ \epsilon^{\alpha \beta \gamma}\ \partial_\beta w^\kappa \partial_\gamma v_\kappa
		\\
		&	+ \epsilon^{\alpha \beta \gamma} w_\gamma \partial_\beta \left\{ ( c^{-2}_s-1) v^\kappa \right\} \partial_\kappa h 
			 - \epsilon^{\alpha \beta \gamma} ( c^{-2}_s-1) v^\kappa \partial_\kappa w_\gamma \partial_\beta h 
		\\
		&	+2 \epsilon^{\alpha \beta \gamma}  c^{-3}_s c'_s v^\kappa w_\gamma \partial_\kappa h \partial_\beta h.
		\end{split}
	\end{equation}
\end{Lemma}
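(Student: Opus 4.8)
The plan is to obtain both formulas by applying the spacetime ``curl'' operator $\epsilon^{\alpha\beta\gamma}\partial_\beta$ to the equations of \eqref{REEf}, commuting it through the material derivative $v^\kappa\partial_\kappa$, and cleaning up the quadratic remainders with the elementary three-index Levi-Civita identities. The tools I will use repeatedly are: (i) $\epsilon^{\alpha\beta\gamma}$ is constant and kills any object symmetric in the index pair $\beta\gamma$ — in particular $\epsilon^{\alpha\beta\gamma}\partial_\beta\partial_\gamma f=0$, $\epsilon^{\alpha\beta\gamma}\partial_\beta f\,\partial_\gamma f=0$, and $\epsilon^{\alpha\beta\gamma}\,\partial_\beta v^\kappa\,\partial_\gamma v_\kappa=0$; and (ii) the contraction identity $\epsilon^{\alpha\beta\gamma}\epsilon_{\lambda\mu\gamma}=-(\delta^\alpha_\lambda\delta^\beta_\mu-\delta^\alpha_\mu\delta^\beta_\lambda)$, the overall minus sign being forced by $\det\Bm=-1$; equivalently $\partial_\mu v_\nu-\partial_\nu v_\mu=-\epsilon_{\mu\nu\lambda}w^\lambda$.

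For \eqref{EW0}, apply $\epsilon^{\alpha\beta\gamma}\partial_\beta$ to the second equation of \eqref{REEf} with its free index lowered, $v^\kappa\partial_\kappa v_\gamma+\mathrm{e}^{2h}\partial_\gamma h=0$. The enthalpy term $\epsilon^{\alpha\beta\gamma}\partial_\beta(\mathrm{e}^{2h}\partial_\gamma h)$ vanishes as a curl of a gradient by (i). Expanding the convective term, $\epsilon^{\alpha\beta\gamma}\partial_\beta(v^\kappa\partial_\kappa v_\gamma)=v^\kappa\partial_\kappa w^\alpha+\epsilon^{\alpha\beta\gamma}(\partial_\beta v^\kappa)(\partial_\kappa v_\gamma)$; in the last term I split $\partial_\kappa v_\gamma=\partial_\gamma v_\kappa+(\partial_\kappa v_\gamma-\partial_\gamma v_\kappa)$, discard the symmetric piece by (i), and use the antisymmetric piece $=-\epsilon_{\kappa\gamma\lambda}w^\lambda$ together with (ii) to get $\epsilon^{\alpha\beta\gamma}(\partial_\beta v^\kappa)(\partial_\kappa v_\gamma)=w^\alpha\partial_\kappa v^\kappa-w^\kappa\partial_\kappa v^\alpha$. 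Since $w^\kappa\partial_\kappa v^\alpha=w^\kappa\partial^\alpha v_\kappa$ (their difference is a symmetric $w^\kappa w^\lambda$ contracted against an object antisymmetric in $\kappa,\lambda$) and $\partial_\kappa v^\kappa=\partial^\kappa v_\kappa$, rearranging yields \eqref{EW0}.

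For \eqref{EW1}, write $W^\alpha=W_1^\alpha+W_2^\alpha$ with $W_1^\alpha=\epsilon^{\alpha\beta\gamma}\partial_\beta w_\gamma$ and $W_2^\alpha=(1-c_s^{-2})\epsilon^{\alpha\beta\gamma}w_\gamma\partial_\beta h$, and apply $v^\kappa\partial_\kappa$ to each. For $W_1^\alpha$ I commute through $\partial_\beta$ via $v^\kappa\partial_\kappa\partial_\beta w_\gamma=\partial_\beta(v^\kappa\partial_\kappa w_\gamma)-(\partial_\beta v^\kappa)\partial_\kappa w_\gamma$, substitute the lowered form of \eqref{EW0} for $v^\kappa\partial_\kappa w_\gamma$, and when differentiating the divergence term use the first equation of \eqref{REEf} in the form $\partial_\kappa v^\kappa=(1-c_s^{-2})v^\kappa\partial_\kappa h$; after dropping the $\epsilon$-contracted Hessian of $v_\kappa$ by (i), this yields the first four terms of \eqref{EW1} plus a leftover $-(1-c_s^{-2})\epsilon^{\alpha\beta\gamma}w_\gamma v^\kappa\partial_\beta\partial_\kappa h$. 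For $W_2^\alpha$ the Leibniz rule, combined with $v^\kappa\partial_\kappa(1-c_s^{-2})=2c_s^{-3}c_s'\,v^\kappa\partial_\kappa h$ (recall $c_s=c_s(h)$) and \eqref{EW0} for $v^\kappa\partial_\kappa w_\gamma$, yields the last two terms of \eqref{EW1} plus $(1-c_s^{-2})\epsilon^{\alpha\beta\gamma}w_\gamma v^\kappa\partial_\kappa\partial_\beta h$. Because $\partial_\beta\partial_\kappa h$ is symmetric, this exactly cancels the leftover from $W_1^\alpha$, and adding the two contributions gives \eqref{EW1}.

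The argument is pure Leibniz bookkeeping, so the only genuine care needed is algebraic: keeping the sign in the Levi-Civita contraction identity consistent with the Minkowski signature, and at each stage correctly spotting which quadratic terms vanish by the symmetry/antisymmetry pairing with $\epsilon^{\alpha\beta\gamma}$. The structural point worth verifying carefully is the cancellation of the two second-order enthalpy terms coming from $W_1^\alpha$ and $W_2^\alpha$, which is exactly what makes \eqref{EW1} close as a genuine transport equation with only first-order terms on the right-hand side.
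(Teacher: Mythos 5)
Your proof is correct and follows essentially the same route as the paper: for \eqref{EW0} you apply $\epsilon^{\alpha\beta\gamma}\partial_\beta$ to the momentum equation and convert the quadratic velocity-gradient term into vorticity contractions via the Levi-Civita identities (the paper antisymmetrizes in $(\beta,\kappa)$ where you antisymmetrize in $(\kappa,\gamma)$, an immaterial difference), and for \eqref{EW1} your decomposition $W=W_1+W_2$ with the cancellation of the two $\epsilon^{\alpha\beta\gamma}w_\gamma v^\kappa\partial^2_{\beta\kappa}h$ terms is exactly the paper's step of rewriting $v^\kappa\partial_\kappa(\partial_\beta h)$ as a total transport derivative of the $W_2$-type correction, just organized in the opposite order. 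One cosmetic remark: in the $W_2$ computation you should not actually substitute \eqref{EW0} into $v^\kappa\partial_\kappa w_\gamma$, since \eqref{EW1} keeps that factor unexpanded in its fifth term; the Leibniz rule alone already produces it.
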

\begin{proof}
	Operating $\epsilon^{\alpha \beta \gamma} \partial_\beta$ on the second equation of \eqref{REEf}, we have
	\begin{equation*}
		\epsilon^{\alpha \beta \gamma} \partial_\beta \left( v^\kappa \partial_\kappa v_\gamma + \mathrm{e}^{2h} \partial_\gamma h \right)=0.
	\end{equation*}
	By chain's rule, we further get
	\begin{equation}\label{a11}
		v^\kappa \partial_\kappa \left(	\epsilon^{\alpha \beta \gamma} \partial_\beta  v_\gamma \right) + \epsilon^{\alpha \beta \gamma} \partial_\beta v^\kappa \partial_\kappa v_\gamma+ \underbrace{\mathrm{e}^{2h} \epsilon^{\alpha \beta \gamma}
			\partial_\beta h \partial_\gamma h }_{\equiv 0} + 
		\underbrace{  \mathrm{e}^{2h} \epsilon^{\alpha \beta \gamma} \partial^2_{\beta \gamma} h }_{\equiv 0}=0.
	\end{equation}
	We also calculate
	\begin{equation}\label{a12}
		\begin{split}
			\epsilon^{\alpha \beta \gamma} \partial_\beta v^\kappa \partial_\kappa v_\gamma=& \epsilon^{\alpha \beta \gamma} 
			( \partial_\beta v_\kappa - \partial_\kappa v_\beta) \partial^\kappa v_\gamma + \underbrace{ \epsilon^{\alpha \beta \gamma} \partial^\kappa v_\beta \partial_\kappa v_\gamma }_{\equiv 0}
			\\
			=& \epsilon^{\alpha \beta \gamma} \epsilon_{\mu \beta \kappa} w^\mu \partial^\kappa v_\gamma
			\\
			=&  - w^\kappa \partial^\alpha v_\kappa+ w^\alpha \partial^\kappa v_\kappa.
		\end{split}
	\end{equation}
	By using \eqref{a11} and \eqref{a12}, we therefore get \eqref{EW0}. Operating $\epsilon^{\alpha \beta \gamma}\partial_\beta$ on \eqref{EW0}, we show that
	\begin{equation*}
		\epsilon^{\alpha \beta \gamma}\partial_\beta ( v^\kappa \partial_\kappa w_\gamma )= \epsilon^{\alpha \beta \gamma}\partial_\beta ( w^\kappa \partial_\gamma v_\kappa)- \epsilon^{\alpha \beta \gamma}\partial_\beta ( w_\gamma \partial^\kappa v_\kappa).
	\end{equation*}
	By chain's rule, it yields
	\begin{equation*}
		\begin{split}
			v^\kappa \partial_\kappa ( \epsilon^{\alpha \beta \gamma}\partial_\beta  w_\gamma ) =& -\epsilon^{\alpha \beta \gamma} \partial_\beta v^\kappa \partial_\kappa w_\gamma- \epsilon^{\alpha \beta \gamma} \partial_\beta w_\gamma \partial^\kappa v_\kappa - \epsilon^{\alpha \beta \gamma} w_\gamma \partial_\beta \partial^\kappa v_\kappa
			\\
			&+ \epsilon^{\alpha \beta \gamma}\ \partial_\beta w^\kappa \partial_\gamma v_\kappa + \underbrace{  \epsilon^{\alpha \beta \gamma} w^\kappa \partial^2_{\beta \gamma} v_\kappa }_{\equiv 0}.
		\end{split}
	\end{equation*}
	Therefore, we obtain
	\begin{equation}\label{a14}
		\begin{split}
			v^\kappa \partial_\kappa  ( \epsilon^{\alpha \beta \gamma}\partial_\beta  w_\gamma )  =& -\epsilon^{\alpha \beta \gamma} \partial_\beta v^\kappa \partial_\kappa w_\gamma-  \epsilon^{\alpha \beta \gamma}\partial_\beta  w_\gamma  \partial^\kappa v_\kappa - \epsilon^{\alpha \beta \gamma} w_\gamma \partial_\beta \partial^\kappa v_\kappa
			+ \epsilon^{\alpha \beta \gamma}\ \partial_\beta w^\kappa \partial_\gamma v_\kappa .
		\end{split}
	\end{equation}
	Taking advantage of \eqref{REEf}, we can calculate
	\begin{equation}\label{a15}
		\begin{split}
			- \epsilon^{\alpha \beta \gamma} w_\gamma \partial_\beta \partial^\kappa v_\kappa= & \epsilon^{\alpha \beta \gamma} w_\gamma \partial_\beta \left\{ ( c^{-2}_s-1) v^\kappa \partial_\kappa h  \right\}
			\\
			=& \epsilon^{\alpha \beta \gamma} w_\gamma \partial_\beta \left\{ ( c^{-2}_s-1) v^\kappa \right\} \partial_\kappa h
			+ \epsilon^{\alpha \beta \gamma} ( c^{-2}_s-1) w_\gamma v^\kappa \partial_\kappa (\partial_\beta h)
			\\
			=& \epsilon^{\alpha \beta \gamma} w_\gamma \partial_\beta \left\{ ( c^{-2}_s-1) v^\kappa \right\} \partial_\kappa h
			+ v^\kappa \partial_\kappa \left\{ \epsilon^{\alpha \beta \gamma} ( c^{-2}_s-1) w_\gamma  \partial_\beta h  \right\}  
			\\
			& - \epsilon^{\alpha \beta \gamma} ( c^{-2}_s-1) v^\kappa \partial_\kappa w_\gamma \partial_\beta h 
			- \epsilon^{\alpha \beta \gamma} w_\gamma (-2c^{-3}_s c'_s) v^\kappa \partial_\kappa h \partial_\beta h.
		\end{split}
	\end{equation}
	By \eqref{a14} and \eqref{a15}, we therefore deduce
	\begin{equation*}
		\begin{split}
				v^\kappa \partial_\kappa   W^\alpha 
			=& -\epsilon^{\alpha \beta \gamma} \partial_\beta v^\kappa \partial_\kappa w_\gamma-  \epsilon^{\alpha \beta \gamma}\partial_\beta  w_\gamma   \partial^\kappa v_\kappa 
			+ \epsilon^{\alpha \beta \gamma}\ \partial_\beta w^\kappa \partial_\gamma v_\kappa
		 + \epsilon^{\alpha \beta \gamma} w_\gamma \partial_\beta \left\{ ( c^{-2}_s-1) v^\kappa \right\} \partial_\kappa h 
			\\
			& - \epsilon^{\alpha \beta \gamma} ( c^{-2}_s-1) v^\kappa \partial_\kappa w_\gamma \partial_\beta h 
			+2 \epsilon^{\alpha \beta \gamma}  c^{-3}_s c'_s v^\kappa w_\gamma \partial_\kappa h \partial_\beta h .
		\end{split}
	\end{equation*} 
	Therefore, we have finished the proof of \eqref{EW1}. This concludes the proof of this lemma.
\end{proof}
%In the following, we will introduce some commutator and product estimates, which have been proved in the references \cite{KP,ST}. 
\subsection{Commutator estimates and product estimates}
Let us start by introducing a classical commutator estimate, which have been proved in the references \cite{KP,LD,ST}.
\begin{Lemma}\label{jiaohuan}\cite{KP}
	Let $a \geq 0$. Then for any scalar function $f_1, f_2$, we have
	\begin{equation*}\label{200}
		\|\Lambda_x^a(f_1f_2)-(\Lambda_x^a f_1)f_2\|_{L^2_x(\mathbb{R}^n)} 
		\lesssim  \|\Lambda_x^{a-1}f_1\|_{L^2_x(\mathbb{R}^n)}\|\nabla f_2\|_{L^\infty_x(\mathbb{R}^n)}+ \|f_1\|_{L^p_x(\mathbb{R}^n)}\|\Lambda_x^a f_2\|_{L^q_x(\mathbb{R}^n)},
	\end{equation*}
	where $\frac{1}{p}+\frac{1}{q}=\frac{1}{2}$.
\end{Lemma}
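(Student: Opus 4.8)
The plan is to prove this commutator estimate by a Littlewood--Paley decomposition, isolating the single frequency interaction in which the commutator structure produces a genuine gain --- this is essentially the Kato--Ponce/Kenig--Ponce--Vega argument. Since the left side vanishes identically when $a=0$, assume $a>0$. Write $f_1=\sum_jP_jf_1$, $f_2=\sum_kP_kf_2$ and split the product bilinearly into the three standard regimes: the ``high--low'' part $HL=\sum_j(P_jf_1)(S_{j-4}f_2)$ (with $S_{j-4}=\sum_{k\le j-4}P_k$ the low-frequency cutoff), the ``low--high'' part $LH=\sum_k(S_{k-4}f_1)(P_kf_2)$, and the ``high--high'' part $HH=\sum_{|j-k|\le3}(P_jf_1)(P_kf_2)$; decompose $(\Lambda_x^af_1)f_2$ by the same trichotomy, so that the commutator splits accordingly (low output frequencies being harmless, handled directly by Bernstein's inequality).

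For the $LH$ and $HH$ pieces there is nothing to cancel. In each, every output block has frequency at most a fixed multiple of the largest input frequency, so since $a\ge0$ the weight $2^{a\ell}$ on an output block of frequency $2^\ell$ is dominated by that input frequency to the power $a$; summing the resulting geometric series over output frequencies is where $a>0$ is used. Estimating $\Lambda_x^a$ of the product and $\Lambda_x^af_1$ times $f_2$ separately by the triangle inequality, then invoking Bernstein's inequality and H\"older's inequality with $\tfrac1p+\tfrac1q=\tfrac12$ (note $p,q\ge2$, which makes the square-function bound $\|\Lambda_x^af_2\|_{L^q}\gtrsim\big(\sum_k2^{2ak}\|P_kf_2\|_{L^q}^2\big)^{1/2}$ point in the needed direction), one bounds both contributions by $\|f_1\|_{L^p}\|\Lambda_x^af_2\|_{L^q}$.

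The heart of the matter is the $HL$ piece. There the block $(P_jf_1)(S_{j-4}f_2)$ has spatial frequency $\sim2^j$, the same order as $\Lambda_x^aP_jf_1$, so $\Lambda_x^a\big((P_jf_1)(S_{j-4}f_2)\big)$ and $(\Lambda_x^aP_jf_1)(S_{j-4}f_2)$ nearly cancel. Quantitatively, writing $\Lambda_x^a$ as the Fourier multiplier of symbol $|\zeta|^a$ with $\zeta=\xi+\eta$, I would use the mean value identity $|\xi+\eta|^a-|\xi|^a=\eta\cdot\int_0^1\nabla(|\cdot|^a)(\xi+s\eta)\,ds$, valid and smooth on the relevant support $|\xi|\sim2^j$, $|\eta|<2^{j-3}$ (where $|\xi+s\eta|\sim2^j$ stays away from the origin); this exhibits the difference as one spatial derivative $\partial_l$ landing on the $f_2$-factor, composed with a bilinear Fourier multiplier supported in $|\xi|\sim2^j$ of size $2^{(a-1)j}$. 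After rescaling the frequency variables by $2^j$ this multiplier is Schwartz-class, so its convolution kernel has $L^1$ norm $\lesssim2^{(a-1)j}$, and hence the corresponding term is bounded by $2^{(a-1)j}\|P_jf_1\|_{L^2}\|\partial_lS_{j-4}f_2\|_{L^\infty}\lesssim2^{(a-1)j}\|P_jf_1\|_{L^2}\|\nabla f_2\|_{L^\infty}$. Summing over $j$ by Littlewood--Paley (the summands are frequency-localized at $2^j$) yields $\|\Lambda_x^{a-1}f_1\|_{L^2}\|\nabla f_2\|_{L^\infty}$.

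I expect the crux to be precisely this last step: extracting the derivative cleanly from the symbol difference and bounding the $L^1$ norms of the leftover bilinear multipliers by $2^{(a-1)j}$ \emph{uniformly} in $j$, so that the $j$-summation closes --- a routine but slightly delicate integration-by-parts estimate against the rescaled symbol. An alternative that hides this inside standard machinery is to pass to Bony's paraproduct decomposition, $f_1f_2=T_{f_1}f_2+T_{f_2}f_1+R(f_1,f_2)$ and $(\Lambda_x^af_1)f_2=T_{\Lambda_x^af_1}f_2+T_{f_2}(\Lambda_x^af_1)+R(\Lambda_x^af_1,f_2)$: the genuine commutator $[\Lambda_x^a,T_{f_2}]f_1$ is the classical commutator of $\Lambda_x^a$ with a paraproduct operator and obeys $\|[\Lambda_x^a,T_{f_2}]f_1\|_{L^2}\lesssim\|\nabla f_2\|_{L^\infty}\|\Lambda_x^{a-1}f_1\|_{L^2}$ (via the $S^0_{1,1}$ symbol calculus, cf.\ \cite{KP,ST}), while $\Lambda_x^a\big(T_{f_1}f_2+R(f_1,f_2)\big)$, $T_{\Lambda_x^af_1}f_2$, and $R(\Lambda_x^af_1,f_2)$ are each controlled by the elementary low--high and high--high paraproduct and remainder estimates, all bounded by $\|f_1\|_{L^p}\|\Lambda_x^af_2\|_{L^q}$.
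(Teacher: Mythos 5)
The paper does not prove this lemma; it is quoted from Kato--Ponce \cite{KP} (see also \cite{LD}), so there is no in-paper argument to compare against. Your proof is the standard Littlewood--Paley/paraproduct argument for this estimate, and the part you correctly identify as the heart of the matter --- the high--low piece, where the symbol difference $|\xi+\eta|^a-|\xi|^a$ is expanded by the mean value theorem on the support $|\xi|\sim 2^j$, $|\eta|\ll 2^j$ to extract one derivative on $f_2$ and a bilinear multiplier of size $2^{(a-1)j}$ with uniformly $L^1$ kernel after rescaling --- is handled correctly, as is the square-summation of these frequency-localized blocks to produce $\|\Lambda_x^{a-1}f_1\|_{L^2}\|\nabla f_2\|_{L^\infty}$.

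There is, however, a gap in your treatment of the low--high and especially the high--high (remainder) pieces at the endpoint exponents, which are permitted by $\tfrac1p+\tfrac1q=\tfrac12$ and are precisely the exponents this paper invokes (e.g.\ $p=\infty$, $q=2$ in Lemma \ref{yux} and in \eqref{U2}--\eqref{U3}). For $2\le p,q<\infty$ your argument closes: you bound $R(\Lambda_x^af_1,f_2)=\sum_j(P_j\Lambda_x^af_1)(\tilde P_jf_2)$ by $\sum_j 2^{aj}\|P_jf_1\|_{L^p}\|\tilde P_jf_2\|_{L^q}$ and apply Cauchy--Schwarz in $j$, using $(\sum_j\|P_jf_1\|_{L^p}^2)^{1/2}\lesssim\|f_1\|_{L^p}$. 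But when $p=\infty$ that square-function bound for $f_1$ fails, and the geometric-series trick is unavailable for this term because there is no external $\Lambda_x^a$ on the output: the summands have Fourier support $\{|\zeta|\lesssim 2^j\}$, which are not almost orthogonal, so one is stuck with an $\ell^1_j$ sum of quantities controlled only in $\ell^2_j$. Equivalently, in your paraproduct formulation the remainder $R(\Lambda_x^af_1,f_2)$ has total regularity $-a+a=0$, exactly where the ``elementary'' remainder estimate breaks down. The estimate is nevertheless true at the endpoints, but this requires the refined arguments of Grafakos--Oh, Bourgain--Li, or \cite{LD} rather than the elementary summation you describe; you should either restrict to $2\le p,q<\infty$ or supply (or cite) the endpoint treatment of the remainder term.
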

\begin{Lemma}[\cite{LD}, Theorem 1.2]\label{jiaohuan3}
	Let $0<a<1$. Let $\mathbf{{P}}$ be stated as in \eqref{De1}. Then we have
	\begin{equation*}
		\|[ \Lambda_x^a, \mathbf{{P}}] f \|_{L^2_x(\mathbb{R}^n)} \lesssim \|\Lambda_x^a \bv \|_{L^\infty_x(\mathbb{R}^n)} \|d^2 f \|_{L^2_x(\mathbb{R}^n)} .
	\end{equation*}
\end{Lemma}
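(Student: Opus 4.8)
The plan is to strip off the constant-coefficient part of $\mathbf{P}$, reduce the operator commutator to commutators of $\Lambda_x^a$ with multiplication by scalar functions, and then invoke a sharp Kato--Ponce type commutator bound. Write the principal coefficient of $\mathbf{P}$ as $A^{\beta\gamma}=m^{\beta\gamma}+2\mathrm{e}^{-2h}v^\beta v^\gamma$. The piece $m^{\beta\gamma}\partial^2_{\beta\gamma}=\square$ has constant coefficients, so it commutes with the purely spatial multiplier $\Lambda_x^a$; likewise $\Lambda_x^a$ commutes with each $\partial^2_{\beta\gamma}$ (including those containing $\partial_t$). Hence only the genuinely variable part survives in the commutator, and
\begin{equation*}
	[\Lambda_x^a,\mathbf{P}]f=2\sum_{\beta,\gamma}\big[\Lambda_x^a,\ \mathrm{e}^{-2h}v^\beta v^\gamma\big]\big(\partial^2_{\beta\gamma}f\big).
\end{equation*}
It therefore suffices to bound a finite sum of commutators $[\Lambda_x^a,b]g$ with $b=\mathrm{e}^{-2h}v^\beta v^\gamma$ and $g=\partial^2_{\beta\gamma}f$, noting that $\|g\|_{L^2_x}\le\|d^2f\|_{L^2_x}$.

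Next I would control the coefficient $b$ purely in terms of $\bv$. By the normalization \eqref{rsv} one has $\mathrm{e}^{2h}=(v^0)^2-|\mathring{\bv}|^2$, so on the region singled out by \eqref{HEw}, where $\bv$ is uniformly timelike, $h$ is a smooth function of $\bv$ alone; thus $\mathrm{e}^{-2h}v^\beta v^\gamma=F^{\beta\gamma}(\bv)$ with $F^{\beta\gamma}$ smooth and with uniformly bounded derivatives. A fractional chain rule / composition estimate then gives
\begin{equation*}
	\big\|\Lambda_x^a\big(\mathrm{e}^{-2h}v^\beta v^\gamma\big)\big\|_{L^\infty_x}\lesssim\|\Lambda_x^a\bv\|_{L^\infty_x},
\end{equation*}
the implicit constant being universal in the sense of \eqref{HEw}.

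The last ingredient is the scalar commutator estimate: for $0<a<1$ and any $b,g$,
\begin{equation*}
	\|[\Lambda_x^a,b]g\|_{L^2_x}\lesssim\|\Lambda_x^a b\|_{L^\infty_x}\,\|g\|_{L^2_x},
\end{equation*}
which is exactly \cite{LD}, Theorem 1.2; combining it with the two displays above finishes the proof. If one wanted a self-contained argument here, the natural route is a Littlewood--Paley / paraproduct decomposition of $[\Lambda_x^a,b]g$, using the elementary subadditivity bound $\big||\xi|^a-|\eta|^a\big|\le|\xi-\eta|^a$ valid for $0<a<1$ (which makes the commutator symbol no worse than $\Lambda_x^a$ acting on $b$), handling the high--low and low--high paraproducts by Bernstein's inequality, and summing the high--high resonant contributions with the dyadic gain coming from $a<1$.

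The only genuinely delicate point is this last bound, with $L^\infty$ --- rather than Wiener-algebra $FL^1$ --- control of $\Lambda_x^a b$: the crude Fourier estimate $\|[\Lambda_x^a,b]g\|_{L^2_x}\le\|\widehat{\Lambda_x^a b}\|_{L^1}\|g\|_{L^2_x}$ only yields the $FL^1$ version, and the upgrade to $\|\Lambda_x^a b\|_{L^\infty_x}$ is precisely what the paraproduct machinery (equivalently \cite{LD}) provides. The remaining steps --- vanishing of the constant-coefficient contribution, the reduction to scalar commutators, and the chain-rule bound on the coefficient --- are routine, so in practice the lemma follows by combining these two reductions with the cited theorem.
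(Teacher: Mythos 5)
Your proposal is correct and follows the same route the paper intends: the paper gives no proof of this lemma at all, stating it as a direct citation of \cite{LD}, Theorem 1.2, and your reduction (the constant-coefficient part $m^{\beta\gamma}\partial^2_{\beta\gamma}$ drops out, $\Lambda_x^a$ commutes with all of $\partial^2_{\beta\gamma}$, and what remains is a finite sum of scalar commutators $[\Lambda_x^a,\mathrm{e}^{-2h}v^\beta v^\gamma]\partial^2_{\beta\gamma}f$ handled by Li's endpoint Kato--Ponce commutator bound) is exactly the missing glue. The only soft spot is the coefficient estimate $\|\Lambda_x^a F(\bv)\|_{L^\infty_x}\lesssim\|\Lambda_x^a\bv\|_{L^\infty_x}$: the fractional chain rule for $0<a<1$ naturally lives in $\dot B^a_{\infty,q}$ rather than in $\Lambda^{-a}L^\infty$, so strictly one either picks up a Besov norm on one side or accepts the same mild imprecision already present in the paper's statement (which, in its actual application \eqref{Dsbbb}, is anyway dominated by $\|h,\bv\|_{H^s_x}$ with $s>1+a$ via Sobolev embedding, so nothing is lost).
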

Next, let us introduce some product estimates.
\begin{Lemma}\label{jiaohuan0}\cite{KP}
	Let $F(f)$ be a smooth function of $u$, $F(0)=0$ and $f \in L^\infty_x$. For any $a \geq 0$, we have
	\begin{equation*}\label{201}
		\|F(f)\|_{H^a} \lesssim  \|f\|_{H^{a}}(1+ \|f\|_{L^\infty_x}).
	\end{equation*}
\end{Lemma}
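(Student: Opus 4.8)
The plan is to prove this classical Moser-type composition estimate by a Littlewood--Paley telescoping argument. First I would reduce to the homogeneous bound: the low-frequency contribution is immediate, since $F(0)=0$ gives the pointwise Lipschitz bound $|F(f)|\le\big(\sup_{|s|\le\|f\|_{L^\infty}}|F'(s)|\big)|f|$, hence $\|F(f)\|_{L^2}\lesssim\|f\|_{L^2}$ with constant depending on $F$ and $\|f\|_{L^\infty}$; this already settles $a=0$, and for $a>0$ it also controls $\sum_{j\le 0}2^{2ja}\|P_jF(f)\|_{L^2}^2$ because $\sum_{j\le0}2^{2ja}<\infty$. So it remains to bound $\sum_{j\ge1}2^{2ja}\|P_jF(f)\|_{L^2}^2$. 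For $a>0$, writing $S_k=\sum_{j<k}P_j$, I would telescope
\begin{equation*}
F(f)=\sum_{k\in\mathbb Z}\big(F(S_{k+1}f)-F(S_kf)\big)=\sum_{k\in\mathbb Z}P_kf\cdot m_k,\qquad m_k:=\int_0^1 F'(S_kf+\tau P_kf)\,d\tau,
\end{equation*}
the series converging in $L^2$ since $S_kf\to f$ and $S_kf\to0$ in $L^2$ while $F$ is smooth with $F(0)=0$.

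The heart of the argument is a family of uniform bounds on the ``symbols'' $m_k$. Because the Littlewood--Paley projectors are bounded on $L^\infty$ uniformly in the frequency parameter, $\|S_kf+\tau P_kf\|_{L^\infty}\lesssim\|f\|_{L^\infty}$, so $\|m_k\|_{L^\infty}\lesssim1$ with constant depending only on $F$ and $\|f\|_{L^\infty}$; differentiating under the integral sign and combining the chain rule with Bernstein's inequality ($\|\nabla^N S_kf\|_{L^\infty},\|\nabla^N P_kf\|_{L^\infty}\lesssim 2^{Nk}\|f\|_{L^\infty}$) yields $\|\nabla^N m_k\|_{L^\infty}\lesssim_N 2^{Nk}$ for every $N\ge0$. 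Equivalently, $m_k$ is a smooth, slowly varying factor at spatial scale $2^{-k}$, so that $\|P_\ell m_k\|_{L^\infty}\lesssim_N 2^{(k-\ell)N}$ whenever $\ell\gtrsim k$ and for any $N$.

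Next I would expand $P_jF(f)=\sum_k P_j(P_kf\cdot m_k)$ and split the sum. When $k\ge j-C$ (output frequency at most comparable to the frequency $2^k$ of $P_kf$), bound $\|P_j(P_kf\cdot m_k)\|_{L^2}\lesssim\|P_kf\|_{L^2}$ using only $\|m_k\|_{L^\infty}\lesssim1$; then $2^{ja}=2^{(j-k)a}2^{ka}$ together with $(2^{ma}\mathbf 1_{m\le C})_m\in\ell^1$ for $a>0$ and Young's inequality for discrete convolutions gives an $\ell^2_j$-estimate $\lesssim_a\|f\|_{\dot H^a}$. When $k<j-C$ (high-frequency output) write instead $P_j(P_kf\cdot m_k)=\sum_{\ell\sim j}P_j(P_kf\cdot P_\ell m_k)$ and use the decay $\|P_\ell m_k\|_{L^\infty}\lesssim_N 2^{(k-\ell)N}$; choosing $N>a$ makes $\big(2^{m(N-a)}\mathbf 1_{m\le-C}\big)_m\in\ell^1$, and Young again yields $\lesssim_a\|f\|_{\dot H^a}$. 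Adding the two regimes to the $L^2$ bound gives $\|F(f)\|_{H^a}\lesssim\|f\|_{H^a}$ with constant depending on $F$ and $\|f\|_{L^\infty}$, which is the asserted estimate (the factor $(1+\|f\|_{L^\infty})$ being the form in which it is recorded in \cite{KP}).

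The only real obstacle in this otherwise standard lemma is the high-frequency output regime: one cannot exploit any Fourier localization of $m_k$, since a nonlinear function of low-frequency pieces of $f$ is not frequency localized, so the argument must instead use that $m_k$ is a genuine smooth symbol at scale $2^{-k}$ via the all-order derivative bounds, picking the summation exponent $N$ strictly above $a$. A secondary point requiring care is keeping the constant dependent only on $F$ and $\|f\|_{L^\infty}$, which rests on the $L^\infty$-boundedness of the Littlewood--Paley projectors uniformly in the frequency index. (For $0<a<1$ one may bypass the Littlewood--Paley machinery altogether, using the Gagliardo difference characterization of $\dot H^a$ and the pointwise bound $|F(f(x))-F(f(y))|\le\big(\sup_{|s|\le\|f\|_{L^\infty}}|F'(s)|\big)|f(x)-f(y)|$.)
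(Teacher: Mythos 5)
The paper does not prove this lemma; it is quoted directly from Kato--Ponce \cite{KP}, so there is no in-paper argument to compare against. Your proof is the standard Meyer/Bony telescoping argument for the Moser composition estimate, and it is correct: the decomposition $F(f)=\sum_k m_kP_kf$ with $m_k=\int_0^1F'(S_kf+\tau P_kf)\,d\tau$, the symbol bounds $\|\nabla^Nm_k\|_{L^\infty}\lesssim_N2^{Nk}$ via Bernstein and the chain rule, and the two-regime frequency analysis (using the vanishing moments of $P_\ell$ to get $\|P_\ell m_k\|_{L^\infty}\lesssim_N2^{(k-\ell)N}$ in the high-output case) are exactly the right ingredients, and the convergence of the telescoping series and the treatment of the inhomogeneous/low-frequency pieces are handled properly. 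The only caveat worth flagging is the precise form of the constant: the chain-rule expansion of $\nabla^Nm_k$ produces products of up to $N$ factors of $f$-derivatives, so the argument yields $\|F(f)\|_{H^a}\le C\bigl(F,\|f\|_{L^\infty}\bigr)\|f\|_{H^a}$ with a constant that is in general a nonlinear (polynomial-type) function of $\|f\|_{L^\infty}$ rather than the literal linear factor $(1+\|f\|_{L^\infty})$ displayed in the statement; this is immaterial for the paper, which only applies the lemma with $\|f\|_{L^\infty}$ uniformly bounded, and you correctly note that the displayed factor is just the form recorded in \cite{KP}.
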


\begin{Lemma}\label{pps}\cite{ST}
	Suppose that $0 \leq r, r' < \frac{n}{2}$ and $r+r' > \frac{n}{2}$. Then
	\begin{equation*}\label{20000}
		\|hf\|_{H^{r+r'-\frac{n}{2}}(\mathbb{R}^n)} \leq C_{r,r'} \|h\|_{H^{r}(\mathbb{R}^n)}\|f\|_{H^{r'}(\mathbb{R}^n)}.
	\end{equation*}
	Moreover, if $-r \leq r' \leq r$ and $r>\frac{n}{2}$, then the following estimate holds:
	\begin{equation*}\label{20001}
		\|hf\|_{H^{r'}(\mathbb{R}^n)} \leq C_{r,r'} \|h\|_{H^{r}(\mathbb{R}^n)}\|f\|_{H^{r'}(\mathbb{R}^n)}.
	\end{equation*}
\end{Lemma}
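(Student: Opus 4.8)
The plan is to prove both inequalities by the Bony paraproduct decomposition (Littlewood--Paley trichotomy), which is the standard route for bilinear Sobolev estimates. Writing $S_k=\sum_{j\le k}P_j$ and working with the inhomogeneous decomposition (so that all frequencies below $1$ sit in a single block $S_0$, which avoids spurious low-frequency divergences), we split
$$hf=\sum_{j<k-2}P_jh\,P_kf+\sum_{j>k+2}P_jh\,P_kf+\sum_{|j-k|\le2}P_jh\,P_kf=:\pi_{lh}+\pi_{hl}+\pi_{hh},$$
where $\pi_{lh}=\sum_kS_{k-2}h\,P_kf$ has output frequency $\simeq2^k$, $\pi_{hl}=\sum_jP_jh\,S_{j-2}f$ has output frequency $\simeq2^j$, and the resonant sum $\pi_{hh}=\sum_jg_j$, with $g_j:=P_jh\sum_{|k-j|\le2}P_kf$, has each $g_j$ supported at frequencies $\lesssim2^{j+3}$. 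One first records that in the first case the hypotheses $0\le r,r'<\tfrac n2$ together with $r+r'>\tfrac n2$ actually force $0<r,r'<\tfrac n2$ and $0<s<\min(r,r')$, where $s:=r+r'-\tfrac n2$; in particular $s>0$ strictly, and this strictness is the crucial point.

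For the first estimate the two paraproduct pieces are routine. By Bernstein, $\|S_{k-2}h\|_{L^\infty_x}\lesssim\min(1,2^{k(\frac n2-r)})\|h\|_{H^r}$, which is where $r<\tfrac n2$ enters; then by almost orthogonality at frequency $2^k$,
$$\|\pi_{lh}\|_{\dot H^s}^2\lesssim\sum_k2^{2ks}\|S_{k-2}h\|_{L^\infty_x}^2\|P_kf\|_{L^2}^2\lesssim\|h\|_{H^r}^2\sum_k2^{2k(s+\frac n2-r)}\|P_kf\|_{L^2}^2=\|h\|_{H^r}^2\|f\|_{\dot H^{r'}}^2,$$
since $s+\tfrac n2-r=r'$; the $L^2$ component of $\pi_{lh}$, and the symmetric piece $\pi_{hl}$, are treated identically (using $r'\ge0$). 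For $\pi_{hh}$, Hölder and Bernstein give $\|g_j\|_{L^2}\lesssim\|P_jh\|_{L^\infty_x}\|P_jf\|_{L^2}\lesssim2^{jn/2}\|P_jh\|_{L^2}\|P_jf\|_{L^2}$, and since $s>0$ the standard summation lemma for sums of functions with frequency $\lesssim2^j$ yields
$$\|\pi_{hh}\|_{\dot H^s}\lesssim\Big(\sum_j2^{2js}\|g_j\|_{L^2}^2\Big)^{1/2}\lesssim\Big(\sum_j\big(2^{jr}\|P_jh\|_{L^2}\big)^2\big(2^{jr'}\|P_jf\|_{L^2}\big)^2\Big)^{1/2}\le\|h\|_{\dot H^r}\|f\|_{\dot H^{r'}},$$
using $2js+jn=2j(r+r')$. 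Collecting the three bounds gives the first inequality.

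For the second estimate, $r>\tfrac n2$ gives the embedding $H^r\hookrightarrow L^\infty_x$. The $h$-low piece is then immediate and valid for every real $r'$: $\|\pi_{lh}\|_{\dot H^{r'}}^2\lesssim\|h\|_{L^\infty_x}^2\sum_k2^{2kr'}\|P_kf\|_{L^2}^2\lesssim\|h\|_{H^r}^2\|f\|_{\dot H^{r'}}^2$. For the $f$-low piece, Bernstein gives $\|S_{j-2}f\|_{L^\infty_x}\lesssim2^{j(\frac n2-r')}\|f\|_{H^{r'}}$ when $r'<\tfrac n2$ and a uniform bound when $r'>\tfrac n2$, whence $\|\pi_{hl}\|_{\dot H^{r'}}^2\lesssim\|f\|_{H^{r'}}^2\sum_j2^{j\max(n,2r')}\|P_jh\|_{L^2}^2\lesssim\|f\|_{H^{r'}}^2\|h\|_{H^r}^2$, where $\max(n,2r')\le 2r$ (from $r>\tfrac n2$ and $r'\le r$) is used for $j\ge0$ and the single block $j<0$ is absorbed into $\|h\|_{L^2}^2$. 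Finally, for $\pi_{hh}$ one has $\|g_j\|_{L^2}\lesssim\|h\|_{L^\infty_x}\|P_jf\|_{L^2}$; when $r'\ge0$ the $s>0$ summation lemma closes it, while for $-r\le r'<0$ one writes instead $\|g_j\|_{L^2}\lesssim2^{j(\frac n2-r-r')}(2^{jr}\|P_jh\|_{L^2})(2^{jr'}\|P_jf\|_{L^2})$ and sums this against $H^{r'}$ by a Schur test, which is exactly where the constraint $r'\ge-r$ is consumed. The borderline exponents (e.g.\ $r'=\tfrac n2$, or $r'=0$ in the summation lemma) are handled by inserting a harmless logarithmic factor absorbed by Cauchy--Schwarz, or by real interpolation from the non-endpoint cases.

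The main obstacle in both inequalities is the resonant piece $\pi_{hh}=\sum_jg_j$, whose summands occupy all frequency scales up to $2^j$. In the first inequality, landing $\sum_jg_j$ in $H^s$ genuinely requires $s>0$, i.e.\ the strict inequality $r+r'>\tfrac n2$; this is the only essential use of that hypothesis and it accounts for the $\tfrac n2$ derivative loss. In the second inequality, placing $\sum_jg_j$ in the negative-order space $H^{r'}$ when $r'<0$ is precisely what forces the sharp constraint $r'\ge-r$. Once the correct Bernstein--Hölder bound on $\|g_j\|_{L^2}$ is in hand, everything else reduces to the standard weighted-$\ell^2$ summation lemmas for Littlewood--Paley pieces, and what remains is bookkeeping of the low-frequency block and of the borderline Sobolev exponents.
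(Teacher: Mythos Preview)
The paper does not supply a proof of this lemma; it is quoted verbatim from Smith--Tataru \cite{ST} as a known product estimate, so there is no ``paper's proof'' to compare against. Your paraproduct route is the standard one and is correct for the first inequality and for the second when $r'\ge0$.

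One point deserves tightening. For the second inequality with $-r\le r'<0$, your treatment of the resonant piece $\pi_{hh}$ is too sketchy: from the bound $\|g_j\|_{L^2}\lesssim 2^{j(n/2-r-r')}a_jb_j$ alone (with $a_j=2^{jr}\|P_jh\|_{L^2}$, $b_j=2^{jr'}\|P_jf\|_{L^2}$) one cannot conclude $\pi_{hh}\in H^{r'}$ by a Schur test, because the hypothesis only gives $r+r'\ge0$, not $r+r'\ge n/2$. What is actually needed is the sharper low-frequency gain $\|P_\ell g_j\|_{L^2}\lesssim 2^{\ell n/2}\|g_j\|_{L^1}\lesssim 2^{\ell n/2}\|P_jh\|_{L^2}\|P_jf\|_{L^2}$ for $\ell\le j$, after which the $\ell^1\to\ell^2$ kernel bound goes through and consumes exactly $r'\ge -r$. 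A cleaner alternative, which you might prefer, is duality: for $r'<0$ write $\|hf\|_{H^{r'}}=\sup_{\|g\|_{H^{-r'}}=1}|\langle f,hg\rangle|\le\|f\|_{H^{r'}}\|hg\|_{H^{-r'}}$ and invoke the already-proved case $0\le -r'\le r$. Either fix closes the argument.
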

We next prove a product estimate.
\begin{Lemma}\label{pro}
		Let $0 < a <1 $. Then we have
		\begin{equation*}
			\|\Lambda_x^a(f_1f_2)\|_{L^{2}_x(\mathbb{R}^2)} \lesssim \|f_1\|_{\dot{B}^{a}_{\infty,2}(\mathbb{R}^2)}\|f_2\|_{L^2_x(\mathbb{R}^2)}+ \|f_1\|_{L^\infty_x(\mathbb{R}^2)}\|f_2\|_{\dot{H}^a_x(\mathbb{R}^2)}.
		\end{equation*}
\end{Lemma}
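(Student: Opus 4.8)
The plan is to prove the product estimate
\[
\|\Lambda_x^a(f_1f_2)\|_{L^{2}_x(\mathbb{R}^2)} \lesssim \|f_1\|_{\dot{B}^{a}_{\infty,2}(\mathbb{R}^2)}\|f_2\|_{L^2_x(\mathbb{R}^2)}+ \|f_1\|_{L^\infty_x(\mathbb{R}^2)}\|f_2\|_{\dot{H}^a_x(\mathbb{R}^2)}
\]
by a Littlewood--Paley paraproduct decomposition, which is the standard route for such fractional Leibniz-type bounds. First I would write $f_1 = \sum_j P_j f_1$ and $f_2 = \sum_k P_k f_2$ and split the product into the three pieces $f_1 f_2 = T_{f_1} f_2 + T_{f_2} f_1 + R(f_1,f_2)$, where $T_{f_1}f_2 = \sum_k S_{k-N}f_1\, P_k f_2$ is the paraproduct with low frequencies on $f_1$, $T_{f_2}f_1$ is the symmetric one, and $R(f_1,f_2)$ is the resonant (high-high) term. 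Applying $\Lambda_x^a$ and using that $\Lambda_x^a$ acts like multiplication by $\sim 2^{ja}$ on dyadic block $j$, I would estimate the $\dot H^a$-type norm of each piece by a square-function ($\ell^2$ in the dyadic parameter) computation combined with Bernstein's inequality in two dimensions.

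For the low-high paraproduct $\Lambda_x^a(T_{f_1}f_2)$, the output block at frequency $\sim 2^k$ is essentially $S_{k-N}f_1\cdot P_k f_2$ with an extra $2^{ka}$; since $\|S_{k-N}f_1\|_{L^\infty_x}\lesssim \|f_1\|_{L^\infty_x}$, Hölder and the almost-orthogonality of the output blocks give
\[
\|\Lambda_x^a(T_{f_1}f_2)\|_{L^2_x}\lesssim \|f_1\|_{L^\infty_x}\Bigl(\sum_k 2^{2ka}\|P_k f_2\|_{L^2_x}^2\Bigr)^{1/2}\lesssim \|f_1\|_{L^\infty_x}\|f_2\|_{\dot H^a_x},
\]
which is the second term on the right. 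For the high-low paraproduct $\Lambda_x^a(T_{f_2}f_1)$, the output block at frequency $\sim 2^j$ is $\sim 2^{ja}S_{j-N}f_2\cdot P_j f_1$; here I would put $P_j f_1$ in $L^\infty_x$ and $S_{j-N}f_2$ in $L^2_x$ (bounded by $\|f_2\|_{L^2_x}$), so almost-orthogonality yields
\[
\|\Lambda_x^a(T_{f_2}f_1)\|_{L^2_x}\lesssim \Bigl(\sum_j 2^{2ja}\|P_j f_1\|_{L^\infty_x}^2\Bigr)^{1/2}\|f_2\|_{L^2_x}=\|f_1\|_{\dot B^a_{\infty,2}}\|f_2\|_{L^2_x},
\]
the first term on the right.

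The remaining, and delicate, piece is the resonant term $R(f_1,f_2)=\sum_{|j-k|\le N}P_j f_1\,P_k f_2$: unlike the paraproducts, the output of a high-high interaction can live at low frequency, so the $\Lambda_x^a$ weight is not comparable to the input frequency and one cannot simply insert $2^{\ell a}$ on the output block $\ell$. The fix, and I expect this to be the main obstacle, is to exploit $a<1$: since $a<n/2=1$ in two dimensions, $\Lambda_x^a$ maps $L^p\to L^2$ for the appropriate $p$ by Sobolev embedding, so I would bound $\|\Lambda_x^a(R(f_1,f_2))\|_{L^2_x}\lesssim \|R(f_1,f_2)\|_{L^{p}_x}$ with $\tfrac1p=\tfrac12+\tfrac a2$, and then estimate $\|R(f_1,f_2)\|_{L^p_x}$ directly. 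Summing over $|j-k|\le N$, putting $P_j f_1$ in $L^\infty_x$ and $P_k f_2$ in $L^{2/(something)}$ — concretely one writes $\|P_j f_1 P_k f_2\|_{L^p_x}\le \|P_j f_1\|_{L^{2/a}_x}\|P_k f_2\|_{L^2_x}$ and uses Bernstein $\|P_j f_1\|_{L^{2/a}_x}\lesssim 2^{ja}\|P_j f_1\|_{L^\infty_x}$ — the dyadic sum becomes an $\ell^2$--$\ell^2$ pairing against $\|f_2\|_{L^2_x}$ (using $|j-k|\le N$ so the sums are essentially diagonal and Cauchy--Schwarz applies), producing $\|f_1\|_{\dot B^a_{\infty,2}}\|f_2\|_{L^2_x}$. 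Collecting the three contributions gives the claimed inequality; the only subtlety beyond bookkeeping is the careful use of the constraint $a<1$ to handle the resonant term via Sobolev embedding, which is exactly why the hypothesis $0<a<1$ appears in the statement.
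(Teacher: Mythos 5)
Your decomposition and your treatment of the two paraproduct terms coincide with the paper's proof: the paper splits $P_j(f_1f_2)$ into the low–high piece $\sum_{|k-j|\le 2}S_{k-1}f_1\,P_kf_2$, the high–low piece $\sum_{|k-j|\le 2}S_{k-1}f_2\,P_kf_1$, and the resonant piece $\sum_{k\ge j-1}P_kf_1\,P_kf_2$, and estimates the first two exactly as you do, obtaining $\|f_1\|_{L^\infty_x}\|f_2\|_{\dot H^a_x}$ and $\|f_1\|_{\dot B^a_{\infty,2}}\|f_2\|_{L^2_x}$ respectively. Up to that point your argument is correct.

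Your handling of the resonant term, however, contains a genuine gap. First, the assertion that ``$\Lambda_x^a$ maps $L^p\to L^2$ by Sobolev embedding'' with $\tfrac1p=\tfrac12+\tfrac a2$ is backwards: Sobolev (Hardy--Littlewood--Sobolev) gives $\|g\|_{L^2}\lesssim\|\Lambda_x^a g\|_{L^p}$, i.e.\ boundedness of the \emph{negative}-order operator $\Lambda_x^{-a}:L^p\to L^2$; the positive-order operator $\Lambda_x^a$ is unbounded from $L^p$ to $L^2$. Second, the inequality $\|P_jf_1\|_{L^{2/a}_x}\lesssim 2^{ja}\|P_jf_1\|_{L^\infty_x}$ is a \emph{reverse} Bernstein inequality and is false: Bernstein only upgrades integrability ($L^p\to L^q$ with $q\ge p$), and a frequency-localized piece can have arbitrarily large $L^{2/a}$ norm relative to its $L^\infty$ norm (spread $P_jf_1$ over a large region). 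So both steps of your resonant estimate fail. The fix is simpler than what you attempted and is what the paper does: since each summand $P_kf_1\,P_kf_2$ has Fourier support in a ball of radius $\sim 2^k$, only output frequencies $j\le k+O(1)$ contribute, and one writes
\begin{equation*}
\sum_{j}2^{2ja}\Big\|P_j\Big(\sum_{k\ge j-1}P_kf_1P_kf_2\Big)\Big\|_{L^2_x}^2
\lesssim \sum_{k}2^{2ka}\|P_kf_1\|_{L^\infty_x}^2\|P_kf_2\|_{L^2_x}^2\sum_{j\le k}2^{2(j-k)a}
\lesssim \|f_1\|_{\dot B^a_{\infty,2}}^2\|f_2\|_{L^2_x}^2,
\end{equation*}
the geometric sum converging because $a>0$. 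Note in particular that the hypothesis doing the work for the resonance is $a>0$, not $a<1$ as you suggest; your conclusion for this term is correct, but the route you propose to it does not.
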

\begin{proof}
Due to Littlewood-Paley decomposition, we have
\begin{equation*}
	\begin{split}
		P_j (f_1f_2)= & P_j \big( \sum_{|k-j|\leq 2} S_{k-1} f_1 P_k f_2 \big) + P_j \big( \sum_{|k-j|\leq 2} S_{k-1} f_2 P_k f_1 \big)+ P_j \big( \sum_{k\geq j-1} P_{k} f_1 P_k f_2 \big),
	\end{split}
\end{equation*}
where $ S_{k-1}= \sum_{j'\leq k-1} P_{j'}$. Due to H\"older's inequality, it follows
\begin{equation}\label{ppb}
	\begin{split}
	  \sum_{j\in \mathbb{Z}} 2^{2ja} \| P_j \big( \sum_{|k-j|\leq 2} S_{k-1} f_1 P_k f_2 \big)  \|^2_{L^2_x} 
	\lesssim  & \sum_{j\in \mathbb{Z}} 2^{2ja} \|P_j f_2\|^2_{L^{2}_x(\mathbb{R}^2)} \| f_1 \|^2_{L^\infty_x} 
	\\
	\lesssim & \|f_2\|^2_{\dot{H}^a_x(\mathbb{R}^2)} \|f_1\|^2_{L^\infty_x(\mathbb{R}^2)}.
	\end{split}
\end{equation}
Similarly, we can obtain
\begin{equation}\label{ppc}
	\begin{split}
	 \sum_{j\in \mathbb{Z}} 2^{2ja} \| P_j \big( \sum_{|k-j|\leq 2} S_{k-1} f_2 P_k f_1  \big) \|^2_{L^{2}_x(\mathbb{R}^2)}
		\lesssim & \sum_{j\in \mathbb{Z}} 2^{2ja} \|P_j f_1 \|^2_{L^{\infty}_x(\mathbb{R}^2)} \| f_2 \|^2_{L^{2}_x(\mathbb{R}^2)} 
			\\
		\lesssim & \|f_1\|^2_{\dot{B}^{a}_{\infty,2}(\mathbb{R}^2)}\|f_2\|^2_{L^2_x(\mathbb{R}^2)}.
	\end{split}
\end{equation}
For the rest term, owing to H\"older's inequality, it yields
\begin{equation}\label{ppd}
	\begin{split}
	\sum_{j\in \mathbb{Z}} 2^{2ja} \|  P_j \big( \sum_{k\geq j-1} P_{k} f_1 P_k f_2 \big) \|^2_{L^{2}_x(\mathbb{R}^2)} 
		\lesssim &   \sum_{k \in \mathbb{Z}} 2^{2ka}\| P_{k} f_1 \|^2_{L^\infty_x} \|P_k f_2 \|^2_{L^{2}_x(\mathbb{R}^2)}  \sum_{j\leq k} 2^{2(j-k)a}
		\\
		\lesssim & \|f_1\|^2_{\dot{B}^{a}_{\infty,2}(\mathbb{R}^2)}\|f_2\|^2_{L^2_x(\mathbb{R}^2)}.
	\end{split}
\end{equation}
Combing \eqref{ppb}, \eqref{ppc}, and \eqref{ppd}, we have proved this lemma.
\end{proof}
\subsection{Elliptic operator and estimates}
In \eqref{De1}, we define a new second-order operator $\mathbf{P}= (m^{\beta \gamma}+2\mathrm{e}^{-2h}v^{\beta}v^{\gamma}) 
\partial^2_{\beta \gamma}$. In the following, we will verify that $\mathbf{P}$ is a uniformly elliptic operator.
\begin{Lemma}\label{Ees}
	Let $(h,\bv)$ be a solution of \eqref{REEf} and $(h,\bv)\in C([0,T]\times \mathbb{R}^{2})$, and assume that \eqref{HEw} holds. 
	Let $\mathbf{P}$ be stated as in \eqref{De1}. Then $\mathbf{P}$ is a uniformly elliptic operator on $[0,T]\times \mathbb{R}^{2}$. 
	Furthermore, for $F \in L^2([0,T];L_x^{2})$, consider the elliptic equation:
	\begin{equation*}\label{ell}
	(\mathbf{Id}-\mathbf{P}) f =F.
	\end{equation*}
	Then there is a unique solution $f\in H^2_{t,x}( [0,T]\times \mathbb{R}^{2} )$ and 
	\begin{equation}\label{appl}
		\| f \|_{H^2_{t,x}( [0,T]\times \mathbb{R}^{2} ) } \lesssim \|F\|_{L_{[0,T]}^2 L_x^{2}},
	\end{equation}
	where $$ H^2_{t,x}( [0,T]\times \mathbb{R}^{2} )=\left\{ f(t,x): \partial^{a}_t \partial_x^b f\in L^2( [0,T]\times \mathbb{R}^{2} ) , a\geq 0,b\geq 0, a+b \leq 2 \right\} , $$
	and
	$$ \| f \|_{H^2_{t,x}( [0,T]\times \mathbb{R}^{2} ) } =\sum_{a \geq 0, b \geq 0, a+b\leq 2} \| \partial^{a}_t \partial_x^b f \|_{ L^2_ { [0,T] } L^2_x  } . $$
\end{Lemma}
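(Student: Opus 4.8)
The plan is to first show that $\mathbf P$ is elliptic by a direct symbol computation, and then to invoke standard $L^2$-based elliptic theory for operators with (merely continuous) coefficients. \emph{Ellipticity.} Writing $\partial_\beta\mapsto \mathrm i\xi_\beta$ with $\xi=(\xi_0,\xi_1,\xi_2)\in\mathbb R^3$, the principal symbol of $\mathbf P$ is $-\bar g^{\alpha\beta}\xi_\alpha\xi_\beta$ with $\bar g^{\alpha\beta}:=m^{\alpha\beta}+2\mathrm e^{-2h}v^\alpha v^\beta$; I will show $\bar g^{\alpha\beta}$ is positive definite, uniformly on $[0,T]\times\mathbb R^2$. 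By \eqref{rsv} the vector $\bv$ is $m$-timelike, so I decompose the $m$-raised covector as $\xi^\sharp=s\,\mathrm e^{-h}\bv+\eta$ with $m(\bv,\eta)=0$; since the $m$-orthogonal complement of a timelike vector is spacelike, $m(\eta,\eta)\ge0$. Using $\mathrm e^{-2h}v^\kappa v_\kappa=-1$ one finds $v^\kappa\xi_\kappa=-s\,\mathrm e^{h}$ and $m(\xi^\sharp,\xi^\sharp)=-s^2+m(\eta,\eta)$, hence
\[ \bar g^{\alpha\beta}\xi_\alpha\xi_\beta=m(\xi^\sharp,\xi^\sharp)+2\mathrm e^{-2h}(v^\kappa\xi_\kappa)^2=s^2+m(\eta,\eta)\ge0, \]
with equality only if $\xi=0$. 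Because $|h|,|\mathring\bv|$ — hence $v^0=\sqrt{\mathrm e^{2h}+|\mathring\bv|^2}$ and $\mathrm e^{-h}|\bv|$ — are bounded in terms of $C_0,c_0$ by \eqref{HEw}, the scalars $|s|$ and $|\eta|$ are comparable to $|\xi|$ with universal constants, so $\bar g^{\alpha\beta}\xi_\alpha\xi_\beta\simeq|\xi|^2$ uniformly; thus $-\mathbf P$ is a uniformly elliptic second-order operator and $\mathbf{Id}-\mathbf P$ has full symbol $\simeq\langle\xi\rangle^2$. (Equivalently, Sherman–Morrison gives the inverse matrix $\bar g_{\alpha\beta}=m_{\alpha\beta}+2\mathrm e^{-2h}v_\alpha v_\beta$, which one checks to be positive definite with bounded entries.)

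\emph{Solvability and the bound.} The coefficients $\bar g^{\alpha\beta}$ are bounded, and since $(h,\bv)\in C([0,T]\times\mathbb R^2)$ with $d(h,\bv)\in L^\infty_{[0,T]}H^{s-1}_x\hookrightarrow L^\infty_{[0,T]}L^8_x$ (2D Sobolev, $s>\tfrac74$), they are uniformly continuous with $d\bar g^{\alpha\beta}\in L^\infty_{[0,T]}L^8_x$; being only H\"older rather than Lipschitz, I must use elliptic regularity valid for continuous coefficients. I extend $(h,\bv)$ in time to $\mathbb R_t\times\mathbb R^2_x$ (keeping the $C_0$-bounds, uniform continuity, and hence uniform ellipticity of $-\mathbf P$) and extend $F$ by zero to $\widetilde F\in L^2(\mathbb R^3)$; it then suffices to solve $(\mathbf{Id}-\mathbf P)f=\widetilde F$ in $H^2(\mathbb R^3)$ and restrict to $[0,T]\times\mathbb R^2$, which yields \eqref{appl}. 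On $\mathbb R^3$ I would establish $\|f\|_{H^2}\lesssim\|(\mathbf{Id}-\mathbf P)f\|_{L^2}$ in two steps: the second-derivative bound $\|d^2f\|_{L^2}\lesssim\|(\mathbf{Id}-\mathbf P)f\|_{L^2}+\|df\|_{L^2}+\|f\|_{L^2}$ by freezing the top-order coefficients pointwise (using the uniform constant-coefficient estimate $\|d^2g\|_{L^2}\simeq\|\bar g^{\alpha\beta}(z_0)\partial_\alpha\partial_\beta g\|_{L^2}$ together with a partition of unity at the scale set by the modulus of continuity of $\bar g$), and the lower-order bound from pairing with $f$ and integrating by parts,
\[ \|f\|_{L^2}^2+\int_{\mathbb R^3}\bar g^{\alpha\beta}\partial_\alpha f\,\partial_\beta f=\int_{\mathbb R^3}(\mathbf{Id}-\mathbf P)f\cdot f-\int_{\mathbb R^3}(\partial_\beta\bar g^{\alpha\beta})\,\partial_\alpha f\,f, \]
where the left side dominates $\|f\|_{L^2}^2+\|df\|_{L^2}^2$ by ellipticity and the last term is absorbed into $\varepsilon(\|d^2f\|_{L^2}^2+\|f\|_{L^2}^2)$ by H\"older, the Sobolev embeddings on $\mathbb R^3$, interpolation, Young's inequality, and $d\bar g\in L^8$. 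Existence then follows from the method of continuity along $L_\tau:=\mathbf{Id}-(1-\tau)\Delta_{t,x}-\tau\mathbf P$, which is uniformly elliptic for $\tau\in[0,1]$ with the same coefficient bounds, so that the above estimate holds for every $L_\tau$ with a $\tau$-independent constant; since $L_0=\mathbf{Id}-\Delta_{t,x}$ is an isomorphism $H^2(\mathbb R^3)\to L^2(\mathbb R^3)$, so is $L_1=\mathbf{Id}-\mathbf P$, producing the solution and \eqref{appl}, with uniqueness read off from the a priori estimate.

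\emph{Main obstacle.} The only genuinely non-routine step is the ellipticity of $\mathbf P$: the matrix $m^{\alpha\beta}+2\mathrm e^{-2h}v^\alpha v^\beta$ combines a Lorentzian term with a rank-one timelike correction and is not visibly definite — its positivity is forced precisely by the normalization $\mathrm e^{-2h}v^\kappa v_\kappa=-1$, which makes the $\bv$-direction contribute $+2s^2$ and overcompensate the $-s^2$ coming from $m$. The remaining difficulty is purely technical: $\bar g$ is only H\"older continuous, so the $W^{2,2}$ estimate must be run through the frozen-coefficient argument rather than a naive integration by parts, and the first-order remainder produced on passing to non-divergence form is controlled only thanks to $d(h,\bv)\in L^8_x$ (equivalently $\in L^4_tL^\infty_x$ by the Strichartz bound).
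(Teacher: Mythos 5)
Your proof is correct and follows the same overall architecture as the paper's (verify ellipticity pointwise, extend the coefficients and data off the time slab, invoke global $W^{2,2}$ elliptic theory on $\mathbb{R}^{1+2}$, restrict back), but both halves are executed differently. For ellipticity, the paper checks Sylvester's criterion by computing the three leading principal minors of $m^{\alpha\beta}+2\mathrm{e}^{-2h}v^\alpha v^\beta$ and using $(v^0)^2=\mathrm{e}^{2h}+|\mathring{\bv}|^2$ to show each is $\geq 1$; your decomposition $\xi^\sharp=s\,\mathrm{e}^{-h}\bv+\eta$ with $\eta\perp_m\bv$, giving $\bar g^{\alpha\beta}\xi_\alpha\xi_\beta=s^2+m(\eta,\eta)$, is an equivalent but more conceptual and dimension-independent argument that makes transparent why the normalization \eqref{rsv} is exactly what forces positivity. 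For solvability, the paper simply cites Gilbarg--Trudinger (Lemma 9.17) after its time-extension, whereas you reprove that input from scratch (frozen coefficients plus partition of unity for the top-order bound, method of continuity for existence); this is the standard proof of the cited result, so nothing is gained or lost except self-containedness. One caveat: your lower-order estimate, obtained by integrating by parts and absorbing $\int(\partial_\beta\bar g^{\alpha\beta})\partial_\alpha f\,f$, requires $d\bar g\in L^8$, i.e.\ $d(h,\bv)\in L^\infty_tH^{s-1}_x$, which exceeds the lemma's stated hypotheses (continuity plus the $L^\infty$ bound \eqref{HEw}). This is harmless wherever the lemma is actually applied, and it is avoidable: running the frozen-coefficient argument on the full operator $\mathbf{Id}-\mathbf{P}$ (whose frozen symbol is $1+\bar g(z_0)^{\alpha\beta}\xi_\alpha\xi_\beta\simeq\langle\xi\rangle^2$) yields the zeroth- and first-order control without ever differentiating the coefficients, which is how the cited Gilbarg--Trudinger machinery gets by with merely continuous $\bar g$.
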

\begin{proof}
	We first calculate the matrix $\mathrm{P}=(\mathrm{P}^{\beta \gamma})_{3\times 3}$, where $\mathrm{P}^{\beta \gamma}= m^{\beta \gamma}+2\mathrm{e}^{-2h}v^{\beta}v^{\gamma}$. Let us pick three leading principal minors
	\begin{equation*}
		\begin{split}
			p_1= \begin{matrix}
				-1+2\mathrm{e}^{-2h}(v^0)^2
			\end{matrix}  , \qquad p_2=\left | \begin{matrix}
				-1+2\mathrm{e}^{-2h}(v^0)^2 &2\mathrm{e}^{-2h}v^0v^1    \\
				2\mathrm{e}^{-2h}v^0v^1 &1+2\mathrm{e}^{-2h}(v^1)^2   \\
			\end{matrix} \right |,
		\end{split}
	\end{equation*}
	and
	\begin{equation*}
		\begin{split}
			p_3= \left | \begin{matrix}
				-1+2\mathrm{e}^{-2h}(v^0)^2 &2\mathrm{e}^{-2h}v^0v^1   & 2\mathrm{e}^{-2h}v^0v^2 \\
				2\mathrm{e}^{-2h}v^0v^1 &1+2\mathrm{e}^{-2h}(v^1)^2 & 2\mathrm{e}^{-2h}v^1v^2  \\
				2\mathrm{e}^{-2h}v^0v^2 & 2\mathrm{e}^{-2h}v^1v^2 &1+2\mathrm{e}^{-2h}(v^2)^2 \\
			\end{matrix} \right |  .
		\end{split}
	\end{equation*}
	Using $\mathrm{e}^{-2h}m_{\alpha \beta} v^\alpha v^\beta=-1$, it follows
	\begin{equation}\label{apps}
		(v^0)^2= \mathrm{e}^{2h} +(v^1)^2 +(v^2)^2 .
	\end{equation}
	For $p_1$, using \eqref{apps}, we get
	\begin{equation}\label{app2}
		\begin{split}
			p_1=-1+2\mathrm{e}^{-2h}(v^0)^2    \geq 1.
		\end{split}
	\end{equation}
	Due to \eqref{apps}, a direct calculation yields
	\begin{equation}\label{app3}
		\begin{split}
			p_2=& -1+2\mathrm{e}^{-2h} \{(v^0)^2 - (v^1)^2\} \geq 1,
			\\
			p_3=& -1+2\mathrm{e}^{-2h} \{(v^0)^2 - (v^1)^2- (v^2)^2 \} \geq 1.
		\end{split}
	\end{equation}
	By applying \eqref{app2} and \eqref{app3}, $\mathbf{P}$ is a positive definite matrix. Therefore, $\mathbf{P}$ is a uniformly elliptic operator.

Since we are only considering the Cauchy problem, no boundary conditions are added to solve equation \eqref{ell}. Therefore, we extend \(h\), \(\mathbf{v}\), \(F\), and $\mathbf{{P}}$ from $[0,T]\times \mathbb{R}^2$ to $\mathbb{R}^{1+2}$. Let
	\begin{equation*}
		\bar{h}(t,x)=\begin{cases}
			& 0, \qquad \qquad \qquad  \quad    t\in (-\infty,-T),
			\\
			& (1+\frac{t}{T})h(-t,x),  \ \quad t\in [-T,0),
			\\
			& h(t,x), \qquad \qquad \ \ \ t\in [0,T],
			\\
			& (2- \frac{t}{T})h(2T-t,x) , \quad t\in (T,2T),
			\\
			& 0, \quad \qquad \qquad \ \ \ \ \ \ \ t\in (2T, +\infty).
		\end{cases}
	\end{equation*}
	and
	\begin{equation*}
	\bar{\bv}(t,x)=\begin{cases}
		& 0, \qquad \qquad \qquad  \quad    t\in (-\infty,-T),
		\\
		& (1+\frac{t}{T})\bv(-t,x),  \ \quad t\in [-T,0),
		\\
		& \bv(t,x), \qquad \qquad \ \ \ t\in [0,T],
		\\
		& (2- \frac{t}{T})\bv(2T-t,x) , \quad t\in (T,2T),
		\\
		& 0, \quad \qquad \qquad \ \ \ \ \ \ \ t\in (2T, +\infty).
	\end{cases}
\end{equation*}
and
	\begin{equation}\label{lit}
	\bar{F}(t,x)=\begin{cases}
		& 0, \qquad \qquad \qquad  \quad    t\in (-\infty,-T),
		\\
		& (1+\frac{t}{T})F(-t,x),  \ \quad t\in [-T,0),
		\\
		& F(t,x), \qquad \qquad \ \ \ t\in [0,T],
		\\
		& (2- \frac{t}{T})F(2T-t,x) , \quad t\in (T,2T),
		\\
		& 0, \quad \qquad \qquad \ \ \ \ \ \ \ t\in (2T, +\infty).
	\end{cases}
\end{equation}
	We also set
	\begin{equation*}
	\mathbf{	\bar{P}} = ( m^{\alpha \beta} + 2 \mathrm{e}^{-2\bar{h}} \bar{v}^\alpha  \bar{v}^\beta) \partial^2_{\alpha \beta}. 
	\end{equation*}
Consider the problem 
\begin{equation}\label{ell2}
		(\mathbf{Id}-\mathbf{\bar{P}}) \bar{f} = \bar{F}.
\end{equation}
Note $\mathbf{\bar{P}}|_{[0,T]\times \mathbb{R}^2}=\mathbf{{P}}$ and
\begin{equation*}\label{lim}
	(\bar{h}, \bar{\bv}, \bar{F})|_{[0,T]\times \mathbb{R}^2} =({h}, {\bv}, {F}).
\end{equation*}
According to the uniqueness of solutions, we therefore obtain
\begin{equation}\label{lim2}
	\bar{f}|_{ [0,T]\times \mathbb{R}^2 } =f.
\end{equation}
For \eqref{ell2}, using \cite{GTe} (Lemma 9.17, page 255), we derive
	\begin{equation}\label{lim4}
		\begin{split}
			\| \bar{f} \|_{H^2_{t,x}(\mathbb{R}^{1+2})} \leq & \| \bar{F} \|_{L^2_{t,x}(\mathbb{R}^{1+2}) } .
		\end{split}
	\end{equation}
Due to \eqref{lit}, we can calculate
	\begin{equation}\label{lim6}
	\begin{split}
	\| \bar{F} \|^2_{L^2_{t,x}(\mathbb{R}^{1+2}) } =& \int^0_{-T} \|(1+\frac{t}{T}) F(-t,x)\|^2_{L^2_x} dt + \int_0^{T} \| F(t,x)\|^2_{L^2_x} dt 
	\\
	& + \int^{2T}_{T} \|(2-\frac{t}{T}) F(2T-t,x)\|^2_{L^2_x} dt 
	\\
	\lesssim & \int^0_{-T} \| F(-t,x)\|^2_{L^2_x} dt + \int_0^{T} \| F(t,x)\|^2_{L^2_x} dt 
	 + \int^{2T}_{T} \| F(2T-t,x)\|^2_{L^2_x} dt 
	\\
 \lesssim 	& \| {F} \|^2_{L^2_{[0,T]} L^2_x }. 
	\end{split}
\end{equation}
By using \eqref{lim2}, we have
\begin{equation}\label{lim8}
	\| {f} \|_{H^2_{t,x}([0,T]\times \mathbb{R}^{2})} \leq \| \bar{f} \|_{H^2_{t,x}(\mathbb{R}^{1+2})} .
\end{equation}
Combining \eqref{lim4}, \eqref{lim6}, and \eqref{lim8}, we obtain the estimate \eqref{appl}. Therefore, we complete the proof of this lemma.
\end{proof}

%\begin{Lemma}\cite{AZ}\label{ceR}
%	Let $0 \leq a <1$ and $\bu=(u^0,u^1,u^2,u^3)^{\mathrm{T}}$. Denote the Riesz operator $\mathbf{R} =\nabla^2(-\Delta)^{-1}$. Then
%	\begin{equation*}%\label{wq20}
%		\| [\mathbf{R}, (u^0)^{-1}u^i \partial_i ]f\|_{\dot{H}^{a}_x(\mathbb{R}^3)} \lesssim \| \bu\|_{\dot{B}^{1+a}_{\infty, \infty}} \|f\|_{L^2_{x}(\mathbb{R}^3)}+ \| \bu\|_{\dot{B}^1_{\infty, \infty}} \|f\|_{\dot{H}^a_x(\mathbb{R}^3)}.
%	\end{equation*}
%\end{Lemma}
Based on the above lemma \ref{Ees}, we can obtain the following result:
\begin{Lemma}\label{yuy}
	Assume $s \in (\frac74,\frac{15}{8}]$. Let $(h,\bv,\bw)$ be a solution of \eqref{wrt} on $[0,T]\times \mathbb{R}^2$. Let $\bv_{-}$ and $\mathbf{T}$ be defined as in \eqref{De0} and \eqref{opt}. Then the function $\bv_{-}$ satisfies the following estimates
	\begin{equation}\label{Dsb}
			\| \partial_t \bv_{-}  \|_{L^2_t H_x^{1+a}}+ \| \bv_{-} \|_{L^2_t H_x^{2+a}}  \lesssim (1+\| h,\bv \|_{L^\infty_t H^{s}_x}) \| \bw \|_{L^2_t H_x^{1+a}}, \quad 0\leq a < s-1, 
	\end{equation}	
	and
	\begin{equation}\label{etaq}
			\| \partial_t \mathbf{T} \bv_{-} \|_{L^2_tH_x^{s-1}} + 	\| \mathbf{T} \bv_{-} \|_{L^2_tH_x^s} \lesssim \| \bw\|_{L^2_t H_x^{1+a}} (1+\| h,\bv \|^2_{L^\infty_t H_x^{s}}), \quad a > 0. 
	\end{equation}
\end{Lemma}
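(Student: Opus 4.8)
The plan is to treat $\bv_{-}$ as the solution of the elliptic equation $(\mathbf{Id}-\mathbf{P})\bv_{-}^\alpha = \epsilon^{\alpha\beta\gamma}\partial_\beta w_\gamma$ coming from \eqref{De0}, and to exploit the uniform ellipticity of $\mathbf{P}$ established in Lemma \ref{Ees}, upgraded from the $L^2$-based estimate \eqref{appl} to fractional Sobolev regularity by commuting with $\Lambda_x^{1+a}$. First I would apply $\Lambda_x^{1+a}$ to the equation, write $(\mathbf{Id}-\mathbf{P})\Lambda_x^{1+a}\bv_{-}^\alpha = \Lambda_x^{1+a}(\epsilon^{\alpha\beta\gamma}\partial_\beta w_\gamma) + [\Lambda_x^{1+a},\mathbf{P}]\bv_{-}^\alpha$, and then invoke the elliptic gain of two derivatives (Lemma \ref{Ees}) together with the commutator estimate Lemma \ref{jiaohuan3}, which gives $\|[\Lambda_x^a,\mathbf{P}]f\|_{L^2_x}\lesssim \|\Lambda_x^a\bv\|_{L^\infty_x}\|d^2f\|_{L^2_x}$ for $0<a<1$ — so for $1+a$ one splits $\Lambda_x^{1+a}=\Lambda_x\cdot\Lambda_x^{a}$ (or iterates) and absorbs the resulting $\|d^2\bv_{-}\|_{L^2_x}$-type terms into the left side using smallness of the time interval or a bootstrap. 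Since $\bv\in H^s_x$ with $s>\tfrac74$, the coefficients $\mathrm{e}^{-2h}v^\alpha v^\beta$ of $\mathbf{P}$ lie in $H^s_x\hookrightarrow \dot B^{a}_{\infty,2}\cap L^\infty_x$ for $a<s-1$, which is exactly the range in \eqref{Dsb}; the source $\epsilon^{\alpha\beta\gamma}\partial_\beta w_\gamma$ costs one derivative off $\bw$, explaining why $\|\bw\|_{L^2_tH^{1+a}_x}$ appears on the right. The $\partial_t\bv_{-}$ bound follows from the same elliptic estimate applied after differentiating in $t$, or directly from the $H^2_{t,x}$ statement of Lemma \ref{Ees} combined with interpolation, using that $\partial_t h,\partial_t\bv$ are controlled via \eqref{REEf} by spatial derivatives of $(h,\bv)$.

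For the second estimate \eqref{etaq}, I would first derive the equation satisfied by $\mathbf{T}\bv_{-}$. Applying $\mathbf{T}=(v^0)^{-1}v^\kappa\partial_\kappa$ to $(\mathbf{Id}-\mathbf{P})\bv_{-}^\alpha=\epsilon^{\alpha\beta\gamma}\partial_\beta w_\gamma$ and commuting $\mathbf{T}$ past $\mathbf{P}$ produces $(\mathbf{Id}-\mathbf{P})(\mathbf{T}\bv_{-}^\alpha) = \mathbf{T}(\epsilon^{\alpha\beta\gamma}\partial_\beta w_\gamma) + [\mathbf{T},\mathbf{P}]\bv_{-}^\alpha$. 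The commutator $[\mathbf{T},\mathbf{P}]$ is a second-order operator whose coefficients involve $d h, d\bv$ (at most one derivative of the coefficients of $\mathbf{P}$ and of $\mathbf{T}$), so it is morally $\lesssim (\text{first derivatives of }h,\bv)\cdot d^2\bv_{-}$, controllable in $L^2_tH^{s-1}_x$ by $\|h,\bv\|_{L^\infty_tH^s_x}\|\bv_{-}\|_{L^2_tH^{2+a}_x}$ via the product estimates Lemmas \ref{pps} and \ref{pro} (here $s-1>\tfrac34$ so $H^{s-1}_x$ is an algebra against $H^s_x$ after a half-derivative loss, which is why the quadratic factor $(1+\|h,\bv\|^2_{L^\infty_tH^s_x})$ shows up). The source term $\mathbf{T}(\epsilon^{\alpha\beta\gamma}\partial_\beta w_\gamma)$ involves $v^\kappa\partial_\kappa\partial_\beta w_\gamma$; rather than losing two derivatives on $\bw$ one rewrites $v^\kappa\partial_\kappa\partial_\beta w_\gamma = \partial_\beta(v^\kappa\partial_\kappa w_\gamma) - \partial_\beta v^\kappa\partial_\kappa w_\gamma$ and then uses the transport equation \eqref{EW0} for $\bw$ to replace $v^\kappa\partial_\kappa w_\gamma$ by a quadratic expression $w\cdot\partial v$, so the net cost is only one derivative on $\bw$ plus products handled by Lemmas \ref{pps}–\ref{pro}, yielding $\|\bw\|_{L^2_tH^{1+a}_x}(1+\|h,\bv\|^2_{L^\infty_tH^s_x})$. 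Finally the elliptic estimate of Lemma \ref{Ees}, applied in $\dot H^{s}_x$ (again via the $\Lambda_x^s$-commutator trick, legitimate since $s-1<1$), closes \eqref{etaq}; the $\partial_t\mathbf{T}\bv_{-}$ piece is then read off the equation by expressing $\partial_t$ in terms of $\mathbf{T}$ and spatial derivatives.

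The main obstacle I anticipate is the bookkeeping of the commutator $[\mathbf{T},\mathbf{P}]$ and the careful exploitation of the transport equation \eqref{EW0} to avoid losing a second derivative on the vorticity: naively $\mathbf{T}$ hitting the source $\epsilon^{\alpha\beta\gamma}\partial_\beta w_\gamma$ would require $\bw\in H^{2+a}$, far beyond what is assumed, so one genuinely must commute derivatives through the transport structure first. A secondary technical point is that Lemma \ref{jiaohuan3} is stated only for $0<a<1$, so to reach $H^{1+a}$ and $H^s$ (with $s$ up to $\tfrac{15}{8}$, i.e. exponent $s-1$ up to $\tfrac78<1$) one must split $\Lambda_x^{1+a}=\Lambda_x\Lambda_x^a$ or $\Lambda_x^s = \Lambda_x\Lambda_x^{s-1}$ and argue that the extra $\Lambda_x$ still lands on coefficients or on $\bv_{-}$ in a controllable way — this is routine but needs to be done with the product estimate Lemma \ref{pro} (with the $\dot B^a_{\infty,2}$ endpoint) rather than a crude Hölder bound, which is precisely why Lemma \ref{pro} was proved just above.
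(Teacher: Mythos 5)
Your strategy is the same as the paper's: treat \eqref{De0} as a uniformly elliptic equation, use Lemma \ref{Ees} for the two-derivative gain, commute fractional Laplacians via Lemma \ref{jiaohuan3}, and exploit the transport/curl structure of the vorticity so that $\mathbf{T}$ applied to the source $\epsilon^{\alpha\beta\gamma}\partial_\beta w_\gamma$ costs only one derivative of $\bw$ (the paper packages this through $\bW$ and \eqref{EW1}--\eqref{EW2}; your direct commutation of $\partial_\beta$ through the transport operator and substitution of \eqref{EW0} is the derivation of \eqref{EW1} and is equivalent). However, your derivative bookkeeping is off in a way that would make the first step fail as written: if you apply $\Lambda_x^{1+a}$ to \eqref{De0} and then invoke the full $H^2_{t,x}$ gain of Lemma \ref{Ees}, the source term becomes $\Lambda_x^{1+a}(\epsilon^{\alpha\beta\gamma}\partial_\beta w_\gamma)$, which requires $\bw\in H^{2+a}_x$ — one derivative more than is available. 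The correct move (and the paper's) is to commute only $\Lambda_x^{a}$ with $0\le a<s-1\le\frac78$; the elliptic estimate \eqref{appl} then yields $\|\Lambda_x^a\bv_{-}\|_{H^2_{t,x}}$, i.e.\ $\bv_{-}\in H^{2+a}_x$, from $\bw\in H^{1+a}_x$. This also dissolves your worry about the range $0<a<1$ in Lemma \ref{jiaohuan3}: no splitting $\Lambda_x^{1+a}=\Lambda_x\Lambda_x^a$ is needed, since the exponent actually being commuted already lies in $(0,1)$. The same slip recurs for \eqref{etaq}: one applies $\Lambda_x^{s-2}$ (a negative power, handled by hand with a Littlewood--Paley argument as in \eqref{Dsd}, not by Lemma \ref{jiaohuan3}), not $\Lambda_x^{s}$, so that the elliptic gain lands $\mathbf{T}\bv_{-}$ in $H^s_x$ while $\mathbf{\Gamma}$ need only be estimated in $H^{s-2}_x$.

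Two smaller points. First, your plan to "absorb the $\|d^2\bv_{-}\|_{L^2}$-type commutator terms into the left side using smallness of the time interval or a bootstrap" is neither necessary nor really available: these terms are not small relative to the left side. The paper instead proceeds in two steps — first the $a=0$ estimate \eqref{Ds}, which needs no commutator, and then uses it to bound $\|[\Lambda_x^a,\mathbf{P}]\bv_{-}\|_{L^2}\lesssim\|\Lambda_x^a(h,\bv)\|_{L^\infty}\|d^2\bv_{-}\|_{L^2}$ by the already-established lower-order bound. Second, $[\mathbf{T},\mathbf{P}]$ is the commutator of a first-order with a second-order operator, so besides $(dh,d\bv)\cdot d^2\bv_{-}$ it also produces $(d^2h,d^2\bv)\cdot d\bv_{-}$ (two derivatives falling on the coefficients of $\mathbf{T}$); since $s-2<0$ this term needs the product estimate of Lemma \ref{pps} with $\|d\bv_{-}\|_{H^{1+a}_x}$ from \eqref{Dsb}, which is exactly where the quadratic factor in \eqref{etaq} comes from. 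None of these issues requires a new idea, but as written your argument would not close without the corrected exponents.
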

\begin{proof}
	Noting that \eqref{De0}, and using Lemma \ref{Ees}, it yields
	\begin{equation}\label{Ds}
		\begin{split}
			\| \bv_{-}  \|_{L^2_t H_x^{2}} \lesssim & \| d\bw \|_{L^2_t L_x^{2}}
			\lesssim \| \bw \|_{L^2_t H_x^{1}}.
		\end{split}
	\end{equation}
	Operating $\Lambda_x^a$ on \eqref{De0}, we deduce
	\begin{equation*}\label{Dsa}
		\begin{split}
			(	\mathbf{{Id}}-\mathbf{{P}} )  \Lambda_x^a v^\alpha_{-}=\Lambda_x^a \left(  \epsilon^{\alpha \beta \gamma} \partial_\beta w_\gamma \right) + [\Lambda_x^a, \mathbf{{P}} ] v^\alpha_{-} .
		\end{split}
	\end{equation*}
	Using \ref{Ees}, for $0\leq a <s-1$ and $s>\frac74$, we can derive that
	\begin{equation}\label{Dsbb}
		\begin{split}
			\| \partial_t \Lambda_x^a \bv_{-}  \|_{L^2_t H_x^{1}}+	\| \Lambda_x^a \bv_{-}  \|_{L^2_t H_x^{2}} \lesssim & \|\Lambda_x^a(  d\bw )\|_{L^2_t L_x^{2}} + \|[\Lambda_x^a, \mathbf{{P}} ] \bv_{-}\|_{L^2_t L_x^{2}} 
			\\
			\lesssim & \| \bw \|_{L^2_t H_x^{1+a}} + + \|[\Lambda_x^a, \mathbf{{P}} ] \bv_{-}\|_{L^2_t L_x^{2}} .
		\end{split}
	\end{equation}
		Using Lemma \ref{jiaohuan3} and \eqref{Ds}, it follows 
	\begin{equation}\label{Dsbbb}
		\begin{split}
			\|[\Lambda_x^a, \mathbf{{P}} ] \bv_{-}\|_{L^2_t L_x^{2}}  \lesssim &  \|\Lambda_x^a h, \Lambda_x^a \bv\|_{L^\infty_t L^\infty_x} \|d^2 \bv_{-} \|_{L^2_t L_x^{2}} 
			\\
			\lesssim & \|h,\bv\|_{L^\infty_t H^s_x}\| \bw \|_{L^2_t H_x^{1}}  .
		\end{split}
	\end{equation}
Substituting \eqref{Dsbbb} to \eqref{Dsbb}, we therefore prove \eqref{Dsb}. Due to \eqref{De0}, we can derive
	\begin{equation}\label{b00}
		\begin{split}
			\mathbf{{P}} ( \mathbf{T} v^\alpha_{-} )=& - \mathbf{T} \big( c^2_s \Theta \epsilon^{\alpha \beta \gamma} \partial_\beta w_\gamma  \big)+ [\mathbf{T}, \mathbf{{P}}]v^\alpha_{-}
			\\
			=& - \mathbf{T} \big( c^2_s \Theta \big) \epsilon^{\alpha \beta \gamma} \partial_\beta w_\gamma  - c^2_s \Theta \mathbf{T} \left\{   W^\alpha - (1-c^{-2}_s) \epsilon^{\alpha \beta \gamma} w_\gamma \partial_\beta h    \right\}
			+  [\mathbf{T}, \mathbf{{P}}]v^\alpha_{-}
			\\
			=& - \mathbf{T} \big( c^2_s \Theta  \big) \epsilon^{\alpha \beta \gamma} \partial_\beta w_\gamma     - c^2_s \Theta \mathbf{T} W^\alpha +  c^2_s \Theta  \mathbf{T} \left\{    (1-c^{-2}_s) \right\}  \epsilon^{\alpha \beta \gamma} w_\gamma \partial_\beta h 
			\\
			&- [\mathbf{T}, \mathbf{{P}}]v^\alpha_{-}     + c^2_s  (1-c^{-2}_s) \Theta \epsilon^{\alpha \beta \gamma} \partial_\beta h \mathbf{T}      w_\gamma     + c^2_s  (1-c^{-2}_s) \Theta \epsilon^{\alpha \beta \gamma} w_\gamma \mathbf{T} ( \partial_\beta h   ). 
		\end{split}
	\end{equation}
	We set $\mathbf{{\Gamma}}=({\Gamma}^0,{\Gamma}^1,{\Gamma}^2)$ and
	\begin{equation*}
		\begin{split}
			{\Gamma}^\alpha= &- \mathbf{T} \big( c^2_s \Theta  \big) \epsilon^{\alpha \beta \gamma} \partial_\beta w_\gamma     - c^2_s \Theta \mathbf{T} W^\alpha +  c^2_s \Theta  \mathbf{T} \left\{    (1-c^{-2}_s) \right\}  \epsilon^{\alpha \beta \gamma} w_\gamma \partial_\beta h 
			\\
			&- [\mathbf{T}, \mathbf{{P}}]v^\alpha_{-}     + c^2_s  (1-c^{-2}_s) \Theta \epsilon^{\alpha \beta \gamma} \partial_\beta h \mathbf{T}      w_\gamma     + c^2_s  (1-c^{-2}_s) \Theta  \epsilon^{\alpha \beta \gamma} w_\gamma \mathbf{T} ( \partial_\beta h   ). 
		\end{split}
	\end{equation*}
	Seeing \eqref{EW1}, then $\mathbf{T} \bW$ formulates by $(d\bv,dh)\cdot d\bw$ and $\bw \cdot (d\bv,dh) \cdot (d\bv,dh)$. On the other hand, $[\mathbf{T}, \mathbf{{P}}]\bv_{-} $ expresses by $(dh,d\bv)\cdot d^2\bv_{-}$ and $(d^2h,d^2\bv)\cdot d\bv_{-}$. These imply that $\mathbf{\Gamma}$ includes the following terms:
	\begin{equation*}
		(d\bv,dh)\cdot d\bw, \quad \bw \cdot (d\bv,dh) \cdot (d\bv,dh), \quad	(dh,d\bv)\cdot d^2\bv_{-},  \quad (d^2h,d^2\bv)\cdot d\bv_{-}, \quad \bw \cdot d^2h.
	\end{equation*} 
	Due to \eqref{b00}, we have
	\begin{equation}\label{b02}
		\begin{split}
			\mathbf{{P}} \left(  \Lambda^{s-2}_x (\mathbf{T} v^\alpha_{-} ) \right) =&  \Lambda^{s-2}_x \Gamma^\alpha + [\mathbf{{P}}, \Lambda^{s-2}_x ] \mathbf{T} v^\alpha_{-} .
		\end{split}
	\end{equation}
	Applying Lemma \ref{Ees} on \eqref{b02}, we obtain 
	\begin{equation}\label{Dscc}
		\begin{split}
		\| \partial_t \Lambda^{s-2}_x (\mathbf{T} \bv_{-} )  \|_{L^2_t H^1_x}+ 	\| \Lambda^{s-2}_x (\mathbf{T} \bv_{-} )  \|_{L^2_t H^2_x} \lesssim   \| \Lambda^{s-2}_x \mathbf{{\Gamma}}\|_{L^2_t L^2_x}+ \| [\mathbf{{P}}, \Lambda^{s-2}_x ] \mathbf{T} \bv_{-} \|_{L^2_t L^2_x} .
		\end{split}
	\end{equation} 
	Note that $s\in (\frac74,\frac{15}{8}]$ and $0<a<s-1$. Using H\"older's inequality along with \eqref{Dsb}, we can deduce
	\begin{equation}\label{Dsc}
		\begin{split}
			&\| \Lambda^{s-2}_x \mathbf{{\Gamma}}\|_{L^2_t L^2_x}
			\\
			\lesssim	& 
			\|(d\bv,dh)\cdot d\bw \|_{L^2_t H^{s-2}_x} + \| \bw \cdot (d\bv,dh) \cdot (d\bv,dh) \|_{L^2_t H^{s-2}_x}
			\\
			& + \| (d\bv,dh)\cdot d^2\bv_{-}\|_{L^2_t H^{s-2}_x}+ \|  (d^2h,d^2\bv)\cdot d\bv_{-} \|_{L^2_t H^{s-2}_x}+ \| \bw \cdot d^2h \|_{L^2_t H^{s-2}_x}
			\\
			\lesssim & \| \bw\|_{L^2_t H_x^{1+a}} ( 1+\| h,\bv \|_{L^\infty_t H_x^{s}} ).
		\end{split}
	\end{equation} 
We next claim that
	\begin{equation}\label{Dsd}
	\begin{split}
		\| [ \Lambda^{s-2}_x, \bv \cdot \nabla^2  ] \partial_t \bv_{-} \|_{L^2_x}  
			\lesssim \|\bv\|_{H^s_x} \| \partial_t \bv_{-} \|_{L^\infty_x} 
			+ \|\Lambda^{s-1-a}_x \bv\|_{L^\infty_x} \| \partial_t \bv_{-} \|_{H^{1+a}_x}.
	\end{split}
\end{equation}
To verify \eqref{Dsd}, due to Little-wood Paley decomposition, we obtain
\begin{equation}\label{k39}
	\begin{split}
		P_j [\Lambda_x^{s-2}, \bv\cdot \nabla^2] \partial_t \bv_{-} =& I_1+I_2+I_3,
	\end{split}
\end{equation}
where $ S_{k-1}= \sum_{j'\leq k-1} P_{j'}$ and
\begin{equation*}
	\begin{split}
	 I_1= & \sum_{|k-j|\leq 2}  P_j \left\{  \Lambda_x^{s-2}\left( P_k\bv \cdot \nabla^2 S_{k-1} \partial_t \bv_{-} \right) - P_k\bv \cdot \nabla^2 \Lambda_x^{s-2} S_{k-1} \partial_t \bv_{-} \right\},
		\\
	I_2= & \sum_{|k-j|\leq 2}  P_j \left\{ \Lambda_x^{s-2}\left( S_{k-1} \bv \cdot \nabla^2  P_k \partial_t \bv_{-} \right) - S_{k-1} \bv \cdot \nabla^2 \Lambda_x^{s-2} P_k \partial_t \bv_{-} \right\},
		\\
	I_3= & \sum_{k\geq j-1}  P_j \left\{ \Lambda_x^{s-2}\left( P_k \bv \cdot \nabla^2  P_k \partial_t \bv_{-} \right) - P_k \bv \cdot \nabla^2 \Lambda_x^{s-2} P_k \partial_t \bv_{-} \right\} .
	\end{split}
\end{equation*}
By applying Bernstein estimate and H\"older's inequality, we derive
\begin{equation}\label{k40}
	\|I_1 \|_{L^2_x} + \|I_3 \|_{L^2_x} \lesssim  2^{js}\|P_j \bv\|_{L^2_x} \| \partial_t \bv_{-} \|_{L^\infty_x} .
\end{equation}
Consider
\begin{equation*}
	\begin{split}
		 & \Lambda_x^{s-2}\left( S_{k-1} \bv \cdot \nabla^2  P_k \partial_t \bv_{-} \right) - S_{k-1} \bv \cdot \nabla^2 \Lambda_x^{s-2} P_k \partial_t \bv_{-} 
		\\
		= &  \Lambda_x^{s-2} \nabla \left( S_{k-1} \bv \cdot \nabla  P_k \partial_t \bv_{-} \right) 
		-\Lambda_x^{s-2} ( \nabla S_{k-1} \bv \cdot \nabla  P_k \partial_t \bv_{-}  )
		- S_{k-1} \bv \cdot \nabla^2 \Lambda_x^{s-2} P_k \partial_t \bv_{-} 
		\\
		=& [ \Lambda_x^{s-2} \nabla,  S_{k-1} \bv ]  \nabla  P_k \partial_t \bv_{-} -\Lambda_x^{s-2} ( \nabla S_{k-1} \bv \cdot \nabla  P_k \partial_t \bv_{-}  ).
	\end{split}
\end{equation*}
We therefore can write
\begin{equation*}
	\begin{split}
		I_2= & \sum_{|k-j|\leq 2}  P_j \left(  [ \Lambda_x^{s-2} \nabla,  S_{k-1} \bv ]  \nabla  P_k \partial_t \bv_{-}  \right)- \sum_{|k-j|\leq 2}  P_j \left\{  \Lambda_x^{s-2} ( \nabla S_{k-1} \bv \cdot \nabla  P_k \partial_t \bv_{-}  ) \right\}.
	\end{split}
\end{equation*}
Applying classical commutator estimate, Bernstein estimate and H\"older's inequality, we can obtain
\begin{equation}\label{k42}
	\begin{split}
			\|I_2 \|_{L^2_x} \lesssim &  \sum_{|k-j|\leq 2} \| \Lambda_x^{s-1}S_{k-1} \bv\|_{L^\infty_x} \cdot \|\Delta_k \nabla \partial_t \bv_{-} \|_{L^2_x} .
		\\
		\lesssim & \| \Lambda_x^{s-1-a} \bv\|_{L^\infty_x} \cdot 2^{ja}\|\Delta_j \nabla \partial_t \bv_{-} \|_{L^2_x} .
	\end{split}
\end{equation}
By using \eqref{k39}, \eqref{k40}, and \eqref{k42}, and taking $l^2$ norm for $j$, we finally obtain \eqref{Dsd}. Similarly, we can also conclude\footnote{If it concerns $\partial^2_{tt}\bv_{-}$, we can use \eqref{De0} to transfer it to $\nabla\partial_{t}\bv_{-}$ and $\nabla^2\bv_{-}$.}
	\begin{equation}\label{Dsd2}
		\begin{split}
			\| [\Lambda^{s-2}_x, \bv \partial^2_{tt} ] \partial_t \bv_{-} \|_{L^2_x}  
				\lesssim \|\bv\|_{H^s_x} \| \partial_t \bv_{-} \|_{L^\infty_x} 
				+ \|\Lambda^{s-1-a}_x \bv\|_{L^\infty_x} \| \partial_t \bv_{-} \|_{H^{1+a}_x}.
		\end{split}
	\end{equation}
	and
\begin{equation}\label{Dsd3}
	\begin{split}
		\| [ \Lambda^{s-2}_x, \bv \cdot \partial_t \nabla ] \partial_t \bv_{-} \|_{L^2_x}  
			\lesssim \|\bv\|_{H^s_x} \| \partial_t \bv_{-} \|_{L^\infty_x} 
			+ \|\Lambda^{s-1-a}_x \bv\|_{L^\infty_x} \| \partial_t \bv_{-} \|_{H^{1+a}_x}.
	\end{split}
\end{equation}
Since \eqref{Dsd}, \eqref{Dsd2}, and \eqref{Dsd3}, we can obtain
	\begin{equation}\label{Dse}
	\begin{split}
		\| [\Lambda^{s-2}_x, \mathbf{{P}} ] \mathbf{T} \bv_{-} \|_{L^2_t L^2_x}  
			\lesssim & \|\bv\|_{H^s_x} ( \| \partial_t \bv_{-} \|_{L^2_t L^\infty_x} + \|\bv \cdot \nabla \bv_{-} \|_{L^2_t L^\infty_x} )
			\\
			& + \|\Lambda^{s-1-a}_x \bv\|_{L^\infty_t L^\infty_x} 
			( \| \partial_t \bv_{-} \|_{L^2_t H^{1+a}_x} + \|\bv \cdot \nabla \bv_{-} \|_{L^2_t H^{1+a}_x} ) .
	\end{split}
\end{equation}
By applying Soblev's imbeddings and \eqref{Dsb}, we have
\begin{equation}\label{Dsf}
	\begin{split}
		 \| \partial_t \bv_{-} \|_{L^2_t L^\infty_x} + \|\bv \cdot \nabla \bv_{-} \|_{L^2_t L^\infty_x}   \lesssim  & \| \bw\|_{L^2_t H_x^{1+a}} ( 1+\| h,\bv \|_{L^\infty_t H_x^{s}} ).
	\end{split}
\end{equation}
Due to Soblev's imbeddings again, we derive 
	\begin{equation}\label{Dsg}
	\begin{split}
		 \|\Lambda^{s-1-a}_x \bv\|_{L^\infty_t L^\infty_x} \lesssim \| \bv \|_{L^\infty_t H_x^{s}} .
	\end{split}
\end{equation}
Using \eqref{Dsb} again, we obtain
\begin{equation}\label{Dsh}
	\begin{split}
		\| \partial_t \bv_{-} \|_{L^2_t H^{1+a}_x} + \|\bv \cdot \nabla \bv_{-} \|_{L^2_t H^{1+a}_x} 
			\lesssim & \| \bw\|_{L^2_t H_x^{1+a}} ( 1+\| h,\bv \|_{L^\infty_t H_x^{s}} ).
	\end{split}
\end{equation}
Gathering the estimates \eqref{Dse}, \eqref{Dsf}, \eqref{Dsg}, and \eqref{Dsh}, it yields
	\begin{equation}\label{Dsi}
	\begin{split}
		\| [\mathbf{{P}}, \Lambda^{s-2}_x ] \mathbf{T} \bv_{-} \|_{L^2_t L^2_x}  
		\lesssim \|\bv\|_{L^\infty_t H^s_x} \| \bw\|_{L^2_t H_x^{1+a}} ( 1+\| h,\bv \|_{L^\infty_t H_x^{s}} ).
	\end{split}
\end{equation}
Combining inequalities \eqref{Dscc}, \eqref{Dsc}, and \eqref{Dsi}, we obtain \eqref{etaq}. This concludes the proof of this lemma.
\end{proof}

\subsection{Energy estimates}
We will first introduce a classical energy estimate for quasi-linear hyperbolic systems, as presented in Li and Qin's book \cite{LQ}.
\begin{Lemma}\label{DW3}\cite{LQ}
Let $a\geq 0$ and $(h,\bv,\bw)$ be a solution of \eqref{wrt} on $[0,t]\times \mathbb{R}^2$.	
Assume
\begin{equation}\label{aap}
	\| h, \bv\|_{L^\infty_{[0,t]} L^\infty_x} \leq 2+C_0.
\end{equation}
Then we have the integral inequality
	\begin{equation}\label{ew00}
		\| (h , \bv) \|^2_{H^{a}_x} \lesssim \| ( h_0 , \bv_0 )\|^2_{H^{a}_x}+ \int^t_0 \|(dh,d\bv)\|_{L^\infty_x} \| (h,\bv) \|^2_{H^{a}_x}  d\tau,
	\end{equation}
	and the energy inequality
	\begin{equation}\label{ew02}
		\| (h, \bv) \|_{H^{a}_x} \lesssim  \| ( h_0, \bv_0 ) \|_{H^{a}_x} \exp \left( \int^t_0   \|(dh,d\bv)\|_{L^\infty_x}  d\tau \right) .
	\end{equation}
\end{Lemma}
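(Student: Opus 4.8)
The solution $(h,\bv)$ solves \eqref{REEf} (the system from which \eqref{wrt} was derived), so by Lemma \ref{QH} it also solves the first-order symmetric hyperbolic system \eqref{QHl}, namely $A^0(\bU)\partial_t\bU+A^i(\bU)\partial_i\bU=0$ with $\bU=(p(h),\mathrm{e}^{-h}v^1,\mathrm{e}^{-h}v^2)^{\mathrm T}$. The plan is to run the classical Friedrichs $H^a$ energy estimate on this system, as in Li--Qin \cite{LQ}. First I would record the structural facts that make the argument go through. Under \eqref{aap}, together with the standing hypothesis $0<c_s\le 1$ and \eqref{HEw} --- so that $c_s=c_s(h)$, being continuous and positive on the compact set $\{|h|\le 2+C_0\}$, stays bounded below by a universal constant --- the entries of $A^0$, $A^i$ and $(A^0)^{-1}$ are bounded by universal constants, $A^0$ and $A^i$ are symmetric, and $A^0$ is uniformly positive definite. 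Moreover the change of variables $(h,\bv)\leftrightarrow\bU$ and its inverse are smooth and, after subtracting the reference state, vanish at the origin, so the composition estimate of Lemma \ref{jiaohuan0} gives, uniformly on $[0,t]$, the norm equivalence $\|\bU(\tau,\cdot)\|_{H^a_x}\simeq\|(h,\bv)(\tau,\cdot)\|_{H^a_x}$ and $\|d\bU\|_{L^\infty_x}\lesssim\|(dh,d\bv)\|_{L^\infty_x}$, while the equation itself gives $\|\partial_t\bU\|_{L^\infty_x}\lesssim\|\nabla\bU\|_{L^\infty_x}\lesssim\|(dh,d\bv)\|_{L^\infty_x}$. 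Hence it suffices to prove \eqref{ew00}--\eqref{ew02} with $\|(h,\bv)\|_{H^a_x}$ replaced by $\|\bU\|_{H^a_x}$.

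Next I would introduce the energy $E_a(\tau)=\int_{\mathbb R^2}\big\langle A^0(\bU)\langle\nabla\rangle^a\bU,\ \langle\nabla\rangle^a\bU\big\rangle\,dx$, which by the above is comparable to $\|\bU(\tau)\|_{H^a_x}^2$. Applying $\langle\nabla\rangle^a$ to \eqref{QHl} yields
\[
A^0\partial_t\big(\langle\nabla\rangle^a\bU\big)+A^i\partial_i\big(\langle\nabla\rangle^a\bU\big)=F,\qquad
F:=-[\langle\nabla\rangle^a,A^0]\,\partial_t\bU-[\langle\nabla\rangle^a,A^i]\,\partial_i\bU .
\]
Differentiating $E_a$ in $\tau$, substituting this identity, and integrating by parts in $x$ (using the symmetry of $A^i$, so that $2\int\langle A^i\partial_i\langle\nabla\rangle^a\bU,\langle\nabla\rangle^a\bU\rangle=-\int\langle(\partial_iA^i)\langle\nabla\rangle^a\bU,\langle\nabla\rangle^a\bU\rangle$) gives
\[
\frac{d}{d\tau}E_a=\int_{\mathbb R^2}\big\langle(\partial_tA^0+\partial_iA^i)\langle\nabla\rangle^a\bU,\ \langle\nabla\rangle^a\bU\big\rangle\,dx+2\int_{\mathbb R^2}\big\langle F,\ \langle\nabla\rangle^a\bU\big\rangle\,dx .
\]
Because $\partial_tA^0$ and $\partial_iA^i$ are smooth functions of $\bU$ times $d\bU$, they are bounded in $L^\infty_x$ by $\|(dh,d\bv)\|_{L^\infty_x}$, so the first integral is $\lesssim\|(dh,d\bv)\|_{L^\infty_x}E_a$. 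For $F$, the commutator estimate of Lemma \ref{jiaohuan} applied entrywise (with the rougher factor $\partial_\mu\bU$ and the H\"older pair $(\infty,2)$), combined with $\|dA^\mu\|_{L^\infty_x}\lesssim\|d\bU\|_{L^\infty_x}$, $\|A^\mu-A^\mu(\bar{\bU})\|_{\dot{H}^a_x}\lesssim\|\bU\|_{H^a_x}$ (Lemma \ref{jiaohuan0}) and $\|\partial_\mu\bU\|_{H^{a-1}_x}\lesssim\|\bU\|_{H^a_x}$, gives $\|F\|_{L^2_x}\lesssim\|(dh,d\bv)\|_{L^\infty_x}\|\bU\|_{H^a_x}$, so the second integral is also $\lesssim\|(dh,d\bv)\|_{L^\infty_x}E_a$.

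Combining the two estimates gives $\frac{d}{d\tau}E_a(\tau)\lesssim\|(dh,d\bv)\|_{L^\infty_x}E_a(\tau)$; integrating over $[0,t]$ and using $E_a\simeq\|(h,\bv)\|_{H^a_x}^2$ gives \eqref{ew00}, and Gronwall's inequality then yields \eqref{ew02} (the universal constant appearing in the exponent being harmless in the range where the estimate is applied). The only steps requiring genuine care are the fractional-order ones --- applying the commutator estimate of Lemma \ref{jiaohuan} and the Moser estimate of Lemma \ref{jiaohuan0} for non-integer $a$, and checking from the $L^\infty$ bound \eqref{aap} alone that the coefficient matrices remain uniformly bounded and $A^0$ uniformly positive definite on $[0,t]$ --- everything else being the standard symmetric-hyperbolic energy computation.
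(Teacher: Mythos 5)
The paper does not prove this lemma at all: it is quoted verbatim from Li--Qin \cite{LQ} as a classical energy estimate for quasilinear symmetric hyperbolic systems. Your argument --- passing to the symmetric form \eqref{QHl} via Lemma \ref{QH}, checking symmetry and uniform positivity of $A^0$ under \eqref{aap} and the normalization \eqref{rsv}, running the Friedrichs $H^a$ energy identity with the Kato--Ponce commutator (Lemma \ref{jiaohuan}) and Moser estimate (Lemma \ref{jiaohuan0}), and concluding by Gronwall --- is precisely the standard proof the citation refers to, and it is correct; the only point you rightly flag as needing care (controlling $\|[\langle\nabla\rangle^a,A^0]\partial_t\bU\|_{L^2}$ for fractional $0\le a<1$) is handled by substituting $\partial_t\bU=-(A^0)^{-1}A^i\partial_i\bU$ and using the negative-order product estimate of Lemma \ref{pps}.
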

Next, we will derive the total energy estimates concerning enthalpy, velocity, and vorticity as follows.
\begin{theorem}\label{DW4}
	Let $\frac74 \leq s' \leq s \leq 2$ and $(h,\bv,\bw)$ be a solution of \eqref{wrt} on $[0,t]\times \mathbb{R}^2$. Assume \eqref{aap} hold. Denote
	\begin{equation*}\label{ew04}
	\begin{split}
	E(t)=\| h \|_{H^{s}_x}+\| \bv \|_{H^{s}_x}+ \|  \bw \|_{H^{s'-\frac14}_x}
	+ \| \nabla \bw \|_{L^8_x}.
	\end{split}
\end{equation*}
When $0< t\leq 1$, We have
\begin{equation*}\label{ew06}
	\begin{split}
		E(t) 
		\lesssim & (1+E^5(0))
		\exp \left( \big[ 1+  E^3(0)\int^t_0 \|(d\bv,dh)\|_{L^\infty_x} d\tau \big] \cdot \big[ \int^t_0  \|(d\bv,dh)\|_{L^\infty_x}  d\tau +1 \big] \right).
	\end{split}
\end{equation*}
\end{theorem}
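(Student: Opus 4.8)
The plan is to control the three constituents of $E(t)$ --- the wave variables $(h,\bv)$ in $H^s_x$, the vorticity $\bw$ in $H^{s'-\frac14}_x$, and $\nabla\bw$ in $L^8_x$ --- by separate a priori differential (or integral) inequalities on $[0,t]$ and then close everything by Gr\"onwall. For the wave part we do \emph{not} use the second-order system \eqref{wrt} (whose right-hand side still carries a $\mathrm{curl}\,\bw$ term) but the first-order symmetric hyperbolic form \eqref{QHl} of Lemma \ref{QH}; since \eqref{aap} holds, Lemma \ref{DW3} with $a=s$ applies directly and gives
\begin{equation*}
	\|(h,\bv)\|_{H^s_x}\lesssim\|(h_0,\bv_0)\|_{H^s_x}\exp\Big(\int_0^t\|(dh,d\bv)\|_{L^\infty_x}\,d\tau\Big),
\end{equation*}
which is already of the required shape, with no $E(0)$-powers at all. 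Everything that follows concerns the vorticity, and it is here that the structural reformulation of Section \ref{nee} does its work.

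For $\|\bw\|_{H^{s'-\frac14}_x}$ I would start from the transport equation \eqref{EW0}, written with the material derivative \eqref{opt} as $\mathbf{T}w^\alpha=(v^0)^{-1}\big(w^\kappa\partial^\alpha v_\kappa-w^\alpha\partial_\kappa v^\kappa\big)$, apply $\Lambda_x^{s'-\frac14}$, commute it through $\mathbf{T}$ (the commutator being $-[\Lambda_x^{s'-\frac14},v^i/v^0]\partial_i\bw$), pair with $\Lambda_x^{s'-\frac14}\bw$ in $L^2_x$ and integrate by parts in space on the transport term, obtaining
\begin{equation*}
	\tfrac{d}{dt}\|\bw\|^2_{\dot{H}^{s'-\frac14}_x}\lesssim\|(d\bv,dh)\|_{L^\infty_x}\|\bw\|^2_{\dot{H}^{s'-\frac14}_x}+\big(\mathcal{C}+\mathcal{S}\big)\|\bw\|_{\dot{H}^{s'-\frac14}_x},
\end{equation*}
with $\mathcal{C}=\|[\Lambda_x^{s'-\frac14},v^i/v^0]\partial_i\bw\|_{L^2_x}$ and $\mathcal{S}=\|\Lambda_x^{s'-\frac14}\big((v^0)^{-1}\bw\cdot\nabla\bv\big)\|_{L^2_x}$. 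The commutator $\mathcal{C}$ is handled by Lemma \ref{jiaohuan} with $f_1=\partial_i\bw$, $f_2=v^i/v^0$ and the H\"older split $\tfrac1p+\tfrac1q=\tfrac12$, $p=8$, $q=\tfrac83$: its first piece is $\lesssim\|\Lambda_x^{s'-\frac54}\partial_i\bw\|_{L^2_x}\|\nabla(v^i/v^0)\|_{L^\infty_x}\lesssim\|\bw\|_{\dot{H}^{s'-\frac14}_x}\|d\bv\|_{L^\infty_x}$, and its second piece is $\|\partial_i\bw\|_{L^8_x}\|\Lambda_x^{s'-\frac14}(v^i/v^0)\|_{L^{8/3}_x}$, where $\dot{H}^{\frac14}(\mathbb{R}^2)\hookrightarrow L^{8/3}$ and the Moser estimate Lemma \ref{jiaohuan0} bound the last factor by $(1+\|(h,\bv)\|_{H^s_x})\|(h,\bv)\|_{H^{s'}_x}$ --- this is precisely where the hypothesis $\nabla\bw\in L^8_x$ is consumed. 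The genuinely delicate term is $\mathcal{S}$: after time-integration over $[0,t]\subset[0,1]$ one must bound $\int_0^t\|\bw\cdot\nabla\bv\|_{\dot{H}^{s'-\frac14}_x}\,d\tau$, and since $s'-\frac14$ may exceed $s-1$ the naive paraproduct estimate diverges on the interaction of low-frequency $\bw$ with high-frequency $\nabla\bv$. The remedy uses the full structural package of Section \ref{nee}: split $\nabla\bv=\nabla\bv_{+}+\nabla\bv_{-}$ via \eqref{De}; by the elliptic estimates of Lemmas \ref{Ees}--\ref{yuy}, $\bv_{-}$ sits at regularity $H^{s'+\frac34}_x$ in $L^2_t$ with norm controlled by $(1+\|(h,\bv)\|_{H^s_x})\|\bw\|_{H^{s'-\frac14}_x}$, so that interaction closes into a term $\lesssim t\,(1+\|(h,\bv)\|^C_{H^s_x})\|\bw\|^2_{H^{s'-\frac14}_x}$, while the interaction with $\bv_{+}$ is handled by a Littlewood--Paley decomposition in which the low-frequency-$\bv_{+}$ pieces go against $\|d\bv_{+}\|_{L^\infty_x}$ and the residual high-frequency pieces are transferred to the modified vorticity $\bW$ of \eqref{EW2} via its transport equation \eqref{EW1}, trading two derivatives on $v$ for one on $\bw$ plus lower-order terms; the product estimates Lemmas \ref{pps} and \ref{pro} are used throughout. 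This is the main obstacle of the proof.

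For $\|\nabla\bw\|_{L^8_x}$ I would differentiate \eqref{EW0} once in $x$ and run an $L^8$ energy estimate, i.e. multiply
\begin{equation*}
	\partial_t\nabla w^\alpha+\tfrac{v^i}{v^0}\partial_i\nabla w^\alpha=-\big(\partial_i(v^j/v^0)\big)\partial_j w^\alpha+\nabla\big((v^0)^{-1}(\bw\cdot\nabla\bv)\big)
\end{equation*}
by $|\nabla w^\alpha|^6\nabla w^\alpha$ and integrate. The transport term and the sources of type $\nabla\bw\cdot\nabla\bv$ contribute $\lesssim\|(d\bv,dh)\|_{L^\infty_x}\|\nabla\bw\|^8_{L^8_x}$, so the only problematic piece is $\bw\cdot\nabla^2\bv$, which is not in $L^8_x$ by Sobolev embedding alone. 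I would again invoke the vorticity--elliptic structure: writing the second spatial derivatives of the spatial part of $\bv$ through $\nabla\bw$, through $\nabla$ of the space-time divergence (which the first equation of \eqref{REEf} expresses as a smooth function of $(h,\bv)$ times $\nabla h$), and through $\bv_{-}$ via Lemma \ref{yuy}, this term reduces to quantities already controlled by $\|(d\bv,dh)\|_{L^\infty_x}$, $\|\nabla\bw\|_{L^8_x}$ and $\|\bw\|_{H^{s'-\frac14}_x}$, yielding $\tfrac{d}{dt}\|\nabla\bw\|_{L^8_x}\lesssim(1+\|(h,\bv)\|_{H^s_x})^{C}\big(\|(d\bv,dh)\|_{L^\infty_x}\|\nabla\bw\|_{L^8_x}+\|\bw\|_{H^{s'-\frac14}_x}+1\big)$.

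Finally I would combine the three inequalities by Gr\"onwall on $[0,t]$ with $0<t\le1$. Each factor $(1+\|(h,\bv)\|_{H^s_x})^{C}$ appearing in the vorticity inequalities is, by the first step, bounded by $(1+E(0))^{C}\exp\big(C\int_0^t\|(d\bv,dh)\|_{L^\infty_x}d\tau\big)$; substituting this, integrating the resulting linear differential inequalities for $\|\bw\|_{H^{s'-\frac14}_x}$ and $\|\nabla\bw\|_{L^8_x}$ (the $O(t)$ terms produced above being absorbable since $t\le1$), absorbing universal constants, and collecting powers of $E(0)$ and of $\int_0^t\|(d\bv,dh)\|_{L^\infty_x}d\tau$ produces the stated bound with the factor $1+E^5(0)$ out front and the exponent $\big[1+E^3(0)\int_0^t\|(d\bv,dh)\|_{L^\infty_x}d\tau\big]\cdot\big[\int_0^t\|(d\bv,dh)\|_{L^\infty_x}d\tau+1\big]$. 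Tracking the exact integer exponents $5$ and $3$ is routine bookkeeping once the three a priori differential inequalities are in hand.
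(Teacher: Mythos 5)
Your overall architecture (separate a priori inequalities for $(h,\bv)$ in $H^s_x$, $\bw$ in $H^{s'-\frac14}_x$, $\nabla\bw$ in $L^8_x$, then Gr\"onwall) matches the paper, and the wave part and the commutator term $\mathcal{C}$ are handled correctly. The gap is in the two terms you yourself flag as the main obstacles, and it is not closed by what you propose. For $\mathcal{S}=\|\Lambda_x^{s'-\frac14}(\bw\cdot\nabla\bv)\|_{L^2_x}$, the interaction of low-frequency $\bw$ with high-frequency $\nabla\bv_{+}$ requires $\nabla\bv_{+}\in H^{s'-\frac14}_x$, i.e.\ $\bv_{+}\in H^{s'+\frac34}_x$, which is strictly more regularity than the $H^s_x$ available from the energy when $s'>s-\frac34$ (in particular when $s'=s$). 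Your proposed remedy --- ``transferring the residual high-frequency pieces to $\bW$ via its transport equation \eqref{EW1}, trading two derivatives on $v$ for one on $\bw$'' --- has no mechanism behind it: $\bW$ in \eqref{EW2} is built from derivatives of $\bw$ and $h$, not of $\bv$, and neither its definition nor its transport equation produces control of high frequencies of $\nabla\bv_{+}$. The same problem recurs, worse, in your $L^8$ step, where $\bw\cdot\nabla^2\bv$ cannot be placed in $L^8_x$ from $\bv\in H^s_x$ with $s\le 2$, and the sketched reduction of $\nabla^2\bv$ to $\nabla\bw$, $\nabla(\mathrm{div})$ and $\bv_{-}$ does not account for the $\bv_{+}$ contribution.

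The paper avoids both terms entirely by never differentiating the transport equation \eqref{EW0} at top order. Instead, at each fixed time it uses the div--curl (elliptic) system for $\bw$: the constraint $\partial_\alpha w^\alpha=0$ together with \eqref{EW0} expresses $\mathrm{div}\,\mathring{\bw}$, and the definition \eqref{EW2} expresses $\mathrm{curl}\,\mathring{\bw}$, in terms of $\bW$ and products $\bw\cdot(dh,d\bv)$ taken at the \emph{lower} order $H^{s'-\frac54}_x$ (see \eqref{ew18}--\eqref{ew26}); this yields $\|\bw\|_{H^{s'-\frac14}_x}\lesssim\|\bw\|_{L^2_x}+\|\bW\|_{H^{s'-\frac54}_x}+\|\bw\cdot(dh,d\bv)\|_{H^{s'-\frac54}_x}$, and analogously $\|\nabla\bw\|_{L^8_x}\lesssim\|\bW\|_{L^8_x}+\|\bw\cdot(dh,d\bv)\|_{L^8_x}$. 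One then propagates $\bW$ (not $\bw$) in $H^{s'-\frac54}_x$ and $L^8_x$ via \eqref{EW1}, whose sources are only $(dh,d\bv)\cdot d\bw$ and $\bw\cdot(dh,d\bv)^2$ --- no second derivatives of $\bv$ and no top-order product $\bw\cdot d\bv$ ever appear, and the $L^8$ hypothesis on $\nabla\bw$ is what closes $\|(dh,d\bv)\cdot d\bw\|_{H^{s'-\frac54}_x}$. Without this reduction (or an equivalent substitute supplying $s'+\frac34$ derivatives of $\bv_{+}$), your differential inequalities for the vorticity do not close, so the proof as written is incomplete at its central step.
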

\begin{proof}
By using \eqref{ew00}, we only need to bound $\|  \bw \|_{H^{s_0-\frac14}_x}$ and $\| \nabla \bw \|_{L^8_x}$. To be simple, we set
\begin{equation*}
	\mathring{\bw}=(w^1,w^2). 
\end{equation*}

\textbf{Step 1: $\|  \bw \|_{H^{s'-\frac14}_x}$}. For $\bw=(w^0,\mathring{\bw})$, we get
\begin{equation*}\label{ew01}
	\begin{split}
			\|\bw\|_{H^{s'-\frac14}_x}=  & \|\bw\|_{L^{2}_x} + \|\bw\|_{\dot{H}^{s'-\frac14}_x} .
	\end{split}
\end{equation*}
By elliptic estimate, one can see
\begin{equation*}\label{ew08}
	\begin{split}
		\|  \bw \|_{\dot{H}^{s'-\frac14}_x}= & \|w^0 \|_{\dot{H}^{s'-\frac14}_x}+ \|\mathring{\bw} \|_{\dot{H}^{s'-\frac14}_x}
		\\
		= & \|w^0 \|_{\dot{H}^{s'-\frac14}_x}+ \| \textrm{curl} \mathring{\bw}  \|_{\dot{H}^{s'-\frac54}_x}+ \| \textrm{div} \mathring{\bw} \|_{\dot{H}^{s'-\frac54}_x}.
	\end{split}
\end{equation*}
Due to \eqref{Vor}, we have
\begin{equation}\label{ew10}
	\partial_\alpha w^\alpha =0.
\end{equation}
Therefore, \eqref{ew10} implies that
\begin{equation*}
	\textrm{div} \mathring{\bw}= \partial_\alpha w^\alpha- \partial_t w^0=-\partial_t w^0.
\end{equation*}
Due to \eqref{EW0}, we have
\begin{equation}\label{ew12}
	\partial_t w^\alpha + (v^0)^{-1} v^i \partial_i w^\alpha= (v^0)^{-1} w^\kappa \partial^\alpha v_\kappa - (v^0)^{-1} w^\alpha \partial^\kappa v_\kappa.
\end{equation}
Then we can deduce that
\begin{equation}\label{ew13}
	\textrm{div} \mathring{\bw}=  (v^0)^{-1} v^i \partial_i w^0+ (v^0)^{-1} w^\kappa \partial_t v_\kappa + (v^0)^{-1} w^0 \partial^\kappa v_\kappa.
\end{equation}
On the other hand, a direct calculation shows
\begin{equation}\label{ew14}
	\begin{split}
		\epsilon_{\alpha i 0} \epsilon^{\alpha \beta \gamma} \partial_\beta w_\gamma=& (\delta^\beta_i \delta^\gamma_0 - \delta^\beta_0 \delta^\gamma_i) \partial_\beta w_\gamma
		\\
		=& \partial_i w_0 - \partial_t w_i.
	\end{split}
\end{equation}
By use of \eqref{ew12} and \eqref{ew14}, we have
\begin{equation}\label{ew16}
	\begin{split}
	\partial_i w_0  =& 	\epsilon_{\alpha i 0} \epsilon^{\alpha \beta \gamma} \partial_\beta w_\gamma
		- (v^0)^{-1} v^j \partial_j w_i + (v^0)^{-1} w^\kappa \partial_i v_\kappa - (v^0)^{-1} w_i \partial^\kappa v_\kappa  
		\\
		=& 	\epsilon_{\alpha i 0}  W^\alpha - (1-c^{-2}_s) ( w_i \partial_t h- w_0 \partial_i h )
		- (v^0)^{-1} v^j \partial_j w_i 
		\\
		& + (v^0)^{-1} w^\kappa \partial_i v_\kappa - (v^0)^{-1} w_i \partial^\kappa v_\kappa.
	\end{split}
\end{equation}
Combing \eqref{ew13} and \eqref{ew16}, we further get
\begin{equation}\label{ew18}
	\begin{split}
		\textrm{div} \mathring{\bw}= & (v^0)^{-1} v^i \epsilon_{\alpha i 0} W^\alpha- (v^0)^{-2} v^i v^j \partial_j w_i +(v^0)^{-2} v^i  w^\kappa \partial_i v_\kappa  
		\\
		& - (v^0)^{-2}  v^i w_i \partial^\kappa v_\kappa +
		(v^0)^{-1} w^\kappa \partial_t v_\kappa + (v^0)^{-1} w^0 \partial^\kappa v_\kappa
		\\
		& - (1-c^{-2}_s) ( w_i \partial_t h- w_0 \partial_i h ).
	\end{split}
\end{equation}
From \eqref{ew18}, we know
\begin{equation}\label{ew20}
	\begin{split}
		\| \mathrm{div} \mathring{\bw} \|_{H^{s'-\frac54}_x} \leq  & C\| \bW \|_{H^{s'-\frac54}_x} +(\frac{|\mathring{\bv}|}{v^0})^2 \|\mathring{\bw} \|_{H^{s'-\frac54}_x}  + C\| \bw \cdot (dh, d\bv)\|_{H^{s'-\frac54}_x} .
	\end{split}
\end{equation}
By the definition \eqref{EW2} for $\bW$, we can see
\begin{equation}\label{ew22}
	\begin{split}
		\| \mathrm{curl} \mathring{\bw} \|_{H^{s'-\frac54}_x} \leq  & \| \bW \|_{H^{s'-\frac54}_x} +  \| \bw \cdot  (dh,d\bv)\|_{H^{s'-\frac54}_x} .
	\end{split}
\end{equation}
Taking advantage of \eqref{ew20} and \eqref{ew22}, we get
\begin{equation}\label{ew24}
	\left\{  1- (\frac{|\mathring{\bv}|}{v^0})^2  \right\} \| \mathring{\bw} \|_{\dot{H}^{s'-\frac14}_x} \lesssim  \| \bW \|_{H^{s'-\frac54}_x}+ \| \bw \cdot  (dh,d\bv)\|_{H^{s'-\frac54}_x}.
\end{equation}
By use of \eqref{rsv} and \eqref{aap}, then \eqref{ew24} becomes
\begin{equation}\label{ew25}
	\| \mathring{\bw} \|_{\dot{H}^{s'-\frac14}_x} \lesssim  \| \bW \|_{H^{s'-\frac54}_x}+ \| \bw \cdot (dh,d\bv)\|_{H^{s'-\frac54}_x}.
\end{equation}
From \eqref{ew16} and \eqref{ew25}, we obtain
\begin{equation}\label{ew26}
	\begin{split}
		\| w^0 \|_{\dot{H}^{s'-\frac14}_x} \lesssim  &  \| \bW \|_{H^{s'-\frac54}_x} + \|\mathring{\bw} \|_{H^{s'-\frac54}_x}  + \| \bw \cdot (dh, d\bv) \|_{H^{s'-\frac54}_x} 
		\\
		\lesssim  &  \| \bW \|_{H^{s'-\frac54}_x} + \| \bw \cdot (dh, d\bv)\|_{H^{s'-\frac54}_x} .
	\end{split}
\end{equation}
Combing \eqref{ew25} and \eqref{ew26}, we therefore get
\begin{equation*}\label{ew28}
	\begin{split}
		\| \bw \|_{{H}^{s'-\frac14}_x} \lesssim  & \| \bw \|_{L^2_x}  +  \| \bW \|_{H^{s'-\frac54}_x} + \| \bw \cdot (dh,d\bv)\|_{H^{s'-\frac54}_x} .
	\end{split}
\end{equation*}
Multiplying $w^\alpha$ on \eqref{ew12}, integrating it on $[0,t]\times \mathbb{R}^2$, and summing $\alpha$ over $0$ to $2$, it yields
\begin{equation}\label{ew30}
	\begin{split}
		\|  \bw \|^2_{L^2_x} \lesssim & \|  \bw_0 \|^2_{L^2_x} + \int^t_0 \left( \|(v^0)^{-1} d\bv\|_{L^\infty_x} + \|  (v^0)^{-2} \bv\cdot \nabla \bv \|_{L^\infty_x} \right) \|  \bw \|^2_{L^2_x} d\tau
	\\
	\lesssim & \|  \bw_0 \|^2_{L^2_x} + \int^t_0 \|d\bv\|_{L^\infty_x} \|  \bw \|^2_{L^2_x}  d\tau.
	\end{split}
\end{equation}
By \eqref{EW1}, we get
\begin{equation}\label{ew32}
	\begin{split}
			 (\partial_t + (v^0)^{-1}v^i\partial_i)   W^\alpha 
			=
			& - (v^0)^{-1} \left\{ \epsilon^{\alpha \beta \gamma}  ( c^{-2}_s-1) v^\kappa \partial_\kappa w_\gamma \partial_\beta h 
			+   \epsilon^{\alpha \beta \gamma}  w_\gamma \partial_\beta \left[    ( c^{-2}_s-1) v^\kappa \right]  \partial_\kappa h \right \}
			\\
			& \ -    \epsilon^{\alpha \beta \gamma} (v^0)^{-1} \partial_\beta w_\gamma  \partial^\kappa v_\kappa 
			+2 \epsilon^{\alpha \beta \gamma}  (v^0)^{-1} c^{-3}_s c'_s v^\kappa w_\gamma \partial_\kappa h \partial_\beta h
				\\
			& \ \ -\epsilon^{\alpha \beta \gamma} (v^0)^{-1} \partial_\beta v^\kappa \partial_\kappa w_\gamma
			+ \epsilon^{\alpha \beta \gamma} (v^0)^{-1}\partial_\beta w^\kappa \partial_\gamma v_\kappa  .
	\end{split}
\end{equation}
Seeing the right hand side of \eqref{ew32}, it includes two types terms:
\begin{equation*}
	(dh,d\bv)  \cdot d\bw, \quad \bw \cdot (dh,d\bv)  \cdot (dh,d\bv).
\end{equation*}
Operating $\Lambda_x^{s'-\frac54}$ on \eqref{ew32}, multiplying $\Lambda_x^{s'-\frac54} W^\alpha$, integrating it om $[0,t] \times \mathbb{R}^2$, and summing $\alpha$ over $0$ to $2$, we can derive that
\begin{equation}\label{ew34}
	\begin{split}
		 \| \bW \|^2_{{H}^{s'-\frac54}_x} 
		\lesssim &  \int^t_0 \| d\bv\|_{L^\infty_x} \| \bW \|^2_{{H}^{s'-\frac54}_x} d\tau
		+ \int^t_0 \| (dh, d\bv) \cdot d \bw\|_{{H}^{s'-\frac54}_x} \| \bW \|_{{H}^{s'-\frac54}_x} d\tau
		\\
		& 
		+\| \bW_0 \|^2_{{H}^{s'-\frac54}} + \int^t_0 \| \bw \cdot (dh,d\bv)  \cdot (dh,d\bv) \|_{{H}^{s'-\frac54}_x} \| \bW \|_{{H}^{s'-\frac54}_x} d\tau .
	\end{split}
\end{equation}
On the right hand side of \eqref{ew34}, we have
\begin{equation}\label{ew36}
	\begin{split}
		\| \bW_0 \|_{{H}^{s'-\frac54}} \lesssim \| \bw_0 \|_{{H}^{s'-\frac14}}+  \|\bw_0 \cdot \nabla h_0 \|_{{H}^{s'-\frac54}}\lesssim \| \bw_0 \|_{{H}^{s'-\frac14}_x}+\| (h_0,\bv_0) \|^2_{{H}^{s}}.
	\end{split}
\end{equation}
and
\begin{equation}\label{ew38}
	\begin{split}
		\| \bW \|_{{H}^{s'-\frac54}} \lesssim \| \bw \|_{{H}^{s'-\frac14}}+  \|\bw \cdot d h \|_{{H}^{s'-\frac54}}\lesssim \| \bw \|_{{H}^{s'-\frac14}_x}+\| (h,\bv) \|^2_{{H}_x^{s}}.
	\end{split}
\end{equation}
Utilizing \eqref{ew34}, \eqref{ew36}, \eqref{ew38}, and H\"older's inequality, we obtain
\begin{equation}\label{ew40}
	\begin{split}
		\| \nabla \bw \|^2_{{H}^{s'-\frac54}_x} 
		\lesssim & \| \bw_0 \|^2_{{H}^{s'-\frac14}_x}+\| (h_0,\bv_0) \|^4_{{H}^{s}}+ \int^t_0 \| (dh, d\bv)\|_{L^\infty_x} \| \bw \|^2_{{H}^{s'-\frac14}_x} d\tau
	\\
	& 	+ \int^t_0 \| (h, \bv)\|_{H^s_x} \|\nabla \bw\|_{L^8_x} \| \bw \|_{{H}^{s'-\frac14}_x} d\tau 
 + \int^t_0 \| (h, \bv)\|^3_{H^s_x} \| \bw \|^2_{{H}^{s'-\frac14}_x} d\tau   .
	\end{split}
\end{equation}
\qquad \textbf{Step 2: $\|  \nabla \bw \|_{L^{8}_x}$}. By Hodge's decomposition, we have
\begin{equation}\label{ew42}
	\begin{split}
		\|  \nabla \bw \|_{L^{8}_x}= \|  \nabla w^0 \|_{L^{8}_x}+ \|  \nabla \mathring{\bw} \|_{L^{8}_x}.
	\end{split}
\end{equation}
By using \eqref{ew16}, we can obtain
\begin{equation}\label{ew44}
	\begin{split}
		\|  \nabla w^0 \|_{L^{8}_x} \lesssim  & \| \bW \|_{L^{8}_x} +\|\nabla \mathring{\bw} \|_{L^{8}_x}  + \| \bw \cdot (dh, d\bv)\|_{L^{8}_x} .
	\end{split}
\end{equation}
Due to \eqref{ew18}, it yields
\begin{equation}\label{ew46}
	\begin{split}
		\|  \mathrm{div} \mathring{\bw} \|_{L^{8}_x} \leq  & C\| \bW \|_{L^{8}_x} +(\frac{|\mathring{\bv}|}{v^0})^2\|\nabla \mathring{\bw} \|_{L^{8}_x}  + C\| \bw \cdot (dh, d\bv)\|_{L^{8}_x} .
	\end{split}
\end{equation}
By using \eqref{EW2}, it follows
\begin{equation}\label{ew48}
	\begin{split}
		\| \mathrm{curl} \mathring{\bw} \|_{L^{8}_x} \leq  & \| \bW \|_{L^{8}_x}  + \| \bw \cdot (dh, d\bv)\|_{L^{8}_x}.
	\end{split}
\end{equation}
Taking advantage of \eqref{ew46} and \eqref{ew48}, we get
\begin{equation}\label{ew50}
	\begin{split}
	\left\{ 1- (\frac{|\mathring{\bv}|}{v^0})^2 \right\}	\|  \nabla \mathring{\bw} \|_{L^{8}_x} \leq  & C\| \bW \|_{H^{s'-\frac54}_x}  + C \| \bw \cdot (dh, d\bv)\|_{L^{8}_x} .
	\end{split}
\end{equation}
By using \eqref{rsv} and \eqref{aap}, so \eqref{ew50} can be calculated by
\begin{equation}\label{ew52}
	\| \nabla \mathring{\bw} \|_{L^{8}_x} \lesssim  \| \bW \|_{L^{8}_x}+ \| \bw \cdot (dh,d\bv)\|_{L^{8}_x}.
\end{equation}
Summing up the outcome \eqref{ew42}, \eqref{ew44}, \eqref{ew52}, we therefore derive
\begin{equation*}\label{ew54}
	\| \nabla {\bw} \|_{L^{8}_x} \lesssim  \| \bW \|_{L^{8}_x}+ \| \bw \cdot (dh,d\bv)\|_{L^{8}_x}.
\end{equation*}
Multiplying $W^\alpha | \bW|^{6}$ on \eqref{ew32} and integrating it on $\mathbb{R}^2$, and summing $\alpha$ over $0$ to $2$, we can derive that
\begin{equation}\label{ew56}
	\begin{split}
	 \frac{d}{dt}   \| \bW \|_{L_x^8}^8  \lesssim &  \| d\bv\|_{L^\infty_x} \| \bW \|^8_{L^8_x} 
		+ \| d\bv \cdot d\bw\|_{L^8_x} \| \bW \|^7_{L^8_x} 
		\\
		& 
		+ \| \bw \cdot (d \bu,dh) \cdot (d \bu,dh) \|_{L^8_x} \| \bW \|^7_{L^8_x} .
	\end{split}
\end{equation}
Due to \eqref{ew56}, it yields
\begin{equation}\label{ew58}
	\begin{split}
		\frac{d}{dt}   \| \bW \|_{L_x^8}^2  \lesssim &  \| d\bv\|_{L^\infty_x} \| \bW \|^2_{L^8_x} 
		+ \| d\bv \cdot d\bw\|_{L^8_x} \| \bW \|_{L^8_x} 
		\\
		& 
		+ \| \bw \cdot (d \bu,dh) \cdot (d \bu,dh) \|_{L^8_x} \| \bW \|_{L^8_x} .
	\end{split}
\end{equation}
Integrating \eqref{ew58} from $0$ to $t$, we can deduce that
\begin{equation}\label{ew60}
	\begin{split}
		\| \bW \|_{L_x^8}^2 - \| \bW_0 \|_{L_x^8}^2  \lesssim &  \int^t_0 \| d\bv\|_{L^\infty_x} \| \bW \|^2_{L^8_x} d\tau
		+ \int^t_0 \| d\bv \cdot d\bw\|_{L^8_x} \| \bW \|_{L^8_x} d\tau
		\\
		& 
		+ \int^t_0  \| \bw \cdot (d \bu,dh) \cdot (d \bu,dh) \|_{L^8_x} \| \bW \|_{L^8_x} d\tau.
	\end{split}
\end{equation}
On the left side of \eqref{ew60}, we have
\begin{equation}\label{ew62}
	\| \bW_0 \|_{L_x^8}^2 \leq \| \nabla \bw_0 \|_{L_x^8}^2+ \| \bw_0 \cdot \nabla h_0 \|_{L_x^8}^2 \leq \| \nabla \bw_0 \|_{L_x^8}^2 + \| (h_0,\bv_0 ) \|^4_{H^s} .
\end{equation}
Combining \eqref{ew60}, \eqref{ew62}, and using \eqref{EW2}, it turns out
\begin{equation}\label{ew64}
	\begin{split}
		 & \| \nabla \bw \|^2_{L^8_x} - \| \bw \cdot (d \bv,dh) \|^2_{L^8_x}- (\| \nabla \bw_0 \|_{L_x^8}^2 + \| (h_0,\bv_0 ) \|^4_{H^s})
		\\
		\lesssim 
		& + \int^t_0 \| d\bv\|_{L^\infty_x} ( \| \nabla \bw \|^2_{L^8_x} + \| \bw \cdot (d \bv,dh) \|^2_{L^8_x}) d\tau
		\\
		&+ \int^t_0 \| d\bv \cdot d \bw\|_{L^8_x} ( \| \nabla \bw \|_{L^8_x} + \| \bw \cdot (d \bv,dh) \|_{L^8_x}) d\tau
		\\
		& 
		+ \int^t_0 \| \bw \cdot (d \bv,dh) \cdot (d \bv,dh) \|_{L^8_x} ( \| \nabla \bw\|_{L^8_x} + \| \bw \cdot (d \bv,dh) \|_{L^8_x}) d\tau .
	\end{split}
\end{equation}
For $\frac74\leq s' \leq s\leq 2$, using H\"older's inequality, we can derive
\begin{equation*}
	\begin{split}
		\| \bw \cdot (d \bv,dh) \|_{L^8_x} \leq & \| \bw \cdot (d \bv,dh) \|_{H^{\frac34}_x} 
		\\
		\leq  & \| \bw \|_{H_x^{s_0-\frac34}} \| ( d\bv,dh) \|_{H^{\frac34}_x} .
	\end{split}
\end{equation*}
Using interpolation formula and Young's inequality, we can estimate $\| \bw \cdot (d \bv,dh) \|_{L^8_x} $ by 
\begin{equation}\label{ew66}
	\begin{split}
		\| \bw \cdot (d \bv,dh) \|_{L^8_x} 
		\leq & \| \bw \|^{\frac23}_{H_x^{s_0-1}} \| \bw \|^{\frac13}_{H_x^{s_0-1/4}} \| ( \bv,h) \|_{H^{s}_x} 
		\\
		\leq & \frac{1}{4} \| \bw \|_{H_x^{s_0-1/4}} + C\| \bw \|^{\frac23}_{H_x^{s_0-1}}\| ( \bv,h) \|^{\frac32}_{H^{s}_x}
		\\ 
		\leq & \frac{1}{4} \| \bw \|_{H_x^{s_0-1/4}} + C\| ( \bv,h) \|^{\frac52}_{H^{s}_x} .
	\end{split}
\end{equation}
Due to \eqref{ew64} and \eqref{ew66}, we can get
\begin{equation}\label{ew68}
	\begin{split}
		 & \| \nabla \bw \|^2_{L^8_x} - \frac{1}{4} \| \bw \|^2_{H_x^{s_0-\frac14}}  
		\\
		\lesssim & \| \nabla \bw_0 \|_{L_x^8}^2 + \| (h_0,\bv_0 ) \|^4_{H^s}+ \| ( \bv,h) \|^{5}_{H^{s}_x}
		\\
		& + \int^t_0 \| d\bv\|_{L^\infty_x} ( \| \nabla \bw \|^2_{L^8_x} + \| \bw \cdot (d \bv,dh) \|^2_{L^8_x}) d\tau
		\\
		&+ \int^t_0 \| d\bv \cdot d \bw\|_{L^8_x} ( \| \nabla \bw \|_{L^8_x} + \| \bw \cdot (d \bv,dh) \|_{L^8_x}) d\tau
		\\
		& 
		+ \int^t_0 \| \bw \cdot (d \bv,dh) \cdot (d \bv,dh) \|_{L^8_x} ( \| \nabla \bw\|_{L^8_x} + \| \bw \cdot (d \bv,dh) \|_{L^8_x}) d\tau .
	\end{split}
\end{equation}
By \eqref{ew68}, using H\"older's inequality, we finally obtain
\begin{equation}\label{ew70}
	\begin{split}
		  & \| \nabla \bw \|^2_{L^8_x} - \frac{1}{4} \| \bw \|^2_{H_x^{s_0-\frac14}} 
		\\
		\lesssim & \| \nabla \bw_0 \|_{L_x^8}^2 + \| (h_0,\bv_0 ) \|^4_{H^s}+ \| ( \bv,h) \|^{5}_{H^{s}_x}
		\\
		& + \int^t_0 \| d\bv\|_{L^\infty_x} ( \| \nabla \bw \|^2_{L^8_x} + \|\bw\|^2_{{H}^{s_0-\frac14}_x} \|(\bv,h) \|^2_{{H}^{s}_x} ) d\tau
		\\
		&+ \int^t_0 \| d\bv \|_{L^\infty_x} \|\nabla \bw\|_{L^8_x} ( \| \nabla \bw \|_{L^8_x} + \|\bw\|_{{H}^{s_0-\frac14}_x} \|(\bv,h) \|_{{H}^{s}_x}) d\tau
		\\
		& 
		+ \int^t_0 \|\bw\|_{{H}^{s_0-\frac14}_x} \|(\bv,h) \|^2_{{H}^{s}_x}  ( \| \nabla \bw \|_{L^8_x} + \|\bw\|_{{H}^{s_0-\frac14}_x} \|(\bv,h) \|_{{H}^{s}_x} ) d\tau .
	\end{split}
\end{equation}
Summing up our outcome \eqref{ew00}, \eqref{ew30}, \eqref{ew40}, and \eqref{ew70}, we have
\begin{equation}\label{ew72}
	\begin{split}
		E(t) \lesssim & E(0)+ \| ( \bv,h) \|^{5}_{H^{s}_x}+ \| ( \bv_0,h_0) \|^{4}_{H^{s}_x}
		\\
		& +\int^t_0 ( 1+ \|(d\bv,dh)\|_{L^\infty_x} ) (1+  \|(\bv,h)\|^3_{{H}_x^{s}} )  E(\tau)d\tau
		\\
	%	\lesssim & E(0)+ (1+ E^5(0)) \exp\left( 5\int^t_0 \|(dh,d\bv)\|_{L^\infty_x} d\tau \right)
	%	\\
	%	& +\int^t_0 ( 1+ \|(d\bv,dh)\|_{L^\infty_x} ) (1+  \|(\bv,h)\|^3_{{H}_x^{s}} )  E(\tau)d\tau
	%	\\
		\lesssim & (1+ E^5(0)) \exp\left( 5\int^t_0 \|(dh,d\bv)\|_{L^\infty_x} d\tau \right)
		\\
		& +\int^t_0 ( 1+ \|(d\bv,dh)\|_{L^\infty_x} ) (1+  \|(\bv,h)\|^3_{{H}_x^{s}} )  E(\tau)d\tau.
	\end{split}
\end{equation}
Referring to \eqref{ew72} and applying Gronwall's inequality along with \eqref{ew02} for $t\leq 1$, we obtain
\begin{equation*}
	\begin{split}
		E(t) \lesssim & \left\{ (1+ E^5(0)) \exp( \int^t_0 \|(dh,d\bv)\|_{L^\infty_x} d\tau ) \right\} 
		\\
		& \times \exp \left( \int^t_0 ( 1+ \|(d\bv,dh)\|_{L^\infty_x} ) (1+  \|(\bv,h)\|^3_{{H}_x^{s}} )  d\tau \right)
		\\
		\lesssim & (1+E^5(0))\exp( 5\int^t_0 \|(dh,d\bv)\|_{L^\infty_x} d\tau ) 
		\\
		& \times \exp \left( \big[ 1+  E^3(0)\int^t_0 \|(d\bv,dh)\|_{L^\infty_x} d\tau \big] \cdot \int^t_0 ( 1+ \|(d\bv,dh)\|_{L^\infty_x} )   d\tau \right)
		\\
		\lesssim & (1+E^5(0))
		\exp \left( 5\big[ 1+  E^3(0)\int^t_0 \|(d\bv,dh)\|_{L^\infty_x} d\tau \big] \cdot \big[ \int^t_0  \|(d\bv,dh)\|_{L^\infty_x}  d\tau +1 \big] \right).
	\end{split}
\end{equation*}
Therefore, we complete the proof of Lemma \ref{DW4}.
\end{proof}

Moreover, we also need the following energy estimates of some lower-order terms.
\begin{Lemma}\label{yux}
Assume $s \in (\frac74,\frac{15}{8}]$. Let $(h,\bv,\bw)$ be a solution of \eqref{wrt} on $[0,T]\times \mathbb{R}^2$. Let $\mathcal{D}$ and $\bQ$ be stated in \eqref{wrt0}-\eqref{wrt1}. Then the following estimate holds:
	\begin{equation}\label{YYE}
		\| \mathcal{D}, \bQ \|_{ H_x^{s-1}} \lesssim \| dh, d\bv \|_{L_x^\infty} \| h,\bv \|_{H_x^{s}}.
	\end{equation}
\end{Lemma}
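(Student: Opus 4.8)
The plan is to view $\mathcal{D}$ and each component $Q^\alpha$ as finite sums of \emph{trilinear} expressions $\mathcal{C}(h,\bv)\,G_1\,G_2$, where $G_1,G_2\in\{\partial_\mu h,\ \partial_\mu v^\nu:\ 0\le\mu,\nu\le 2\}$ and $\mathcal{C}$ is a smooth function of $(h,\bv)$. Inspecting \eqref{wrt0}--\eqref{wrt1}, each coefficient is built from $\mathrm{e}^{\pm 2h}$, $c_s=c_s(h)$, $c_s'$, $\Theta$ and the entries $v^\mu$ (with $v^0=\sqrt{\mathrm{e}^{2h}+|\mathring{\bv}|^2}$); under \eqref{HEw} one has $\Theta^{-1}=c_s^2+\mathrm{e}^{-2h}(1-c_s^2)(v^0)^2\in[c_0^2,\,C]$, so $\Theta$ — hence every $\mathcal{C}$, together with all its $(h,\bv)$-derivatives — is bounded by a universal constant on the region singled out by \eqref{HEw}. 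Thus it suffices to prove the model estimate $\|\mathcal{C}(h,\bv)\,G_1\,G_2\|_{H^{s-1}_x}\lesssim\|dh,d\bv\|_{L^\infty_x}\|h,\bv\|_{H^s_x}$ for such $\mathcal{C}$ and $s-1\in(\tfrac34,\tfrac78]\subset(0,1)$.

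First I would handle the product $g:=G_1G_2$. By the fractional Leibniz rule (the symmetric Kato--Ponce variant, cf.\ Lemma \ref{jiaohuan}, equivalently the Littlewood--Paley argument behind Lemma \ref{pro}) and $0<s-1<1$, $\|g\|_{\dot H^{s-1}_x}\lesssim\|G_1\|_{L^\infty_x}\|G_2\|_{\dot H^{s-1}_x}+\|G_1\|_{\dot H^{s-1}_x}\|G_2\|_{L^\infty_x}$, while $\|g\|_{L^2_x}\le\|G_1\|_{L^\infty_x}\|G_2\|_{L^2_x}$. Writing $G_i=\partial_\mu U_b$ with $U_b\in\{h,v^0,v^1,v^2\}$ and letting $\bar U=(0,1,0,0)$ be the reference state (consistent with the convention $\|\bv\|_{H^s}=\|\bv-(1,0,0)\|_{H^s}$), we get $\|G_i\|_{L^\infty_x}\le\|dh,d\bv\|_{L^\infty_x}$, $\|G_i\|_{\dot H^{s-1}_x}\le\|U_b-\bar U_b\|_{\dot H^{s}_x}\le\|h,\bv\|_{H^s_x}$, and $\|G_i\|_{L^2_x}\le\|U_b-\bar U_b\|_{\dot H^{1}_x}\lesssim\|h,\bv\|_{H^s_x}$ (using $1\le s$, and Moser for the $v^0$-component). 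Hence $\|g\|_{H^{s-1}_x}\lesssim\|dh,d\bv\|_{L^\infty_x}\|h,\bv\|_{H^s_x}$.

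It remains to multiply by the coefficient. Subtracting the constant $\mathcal{C}(\bar U)$ (which only adds $\lesssim\|g\|_{H^{s-1}_x}$), the function $\mathcal{C}(h,\bv)-\mathcal{C}(\bar U)$ vanishes at $\bar U$, so Lemma \ref{jiaohuan0} gives $\|\mathcal{C}(h,\bv)-\mathcal{C}(\bar U)\|_{H^s_x}\lesssim\|h,\bv\|_{H^s_x}(1+\|h,\bv\|_{L^\infty_x})\lesssim\|h,\bv\|_{H^s_x}$, and the multiplier estimate of Lemma \ref{pps} (second part, $r=s>1=\tfrac n2$, $r'=s-1$) yields $\|(\mathcal{C}(h,\bv)-\mathcal{C}(\bar U))g\|_{H^{s-1}_x}\lesssim\|\mathcal{C}(h,\bv)-\mathcal{C}(\bar U)\|_{H^s_x}\|g\|_{H^{s-1}_x}$. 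The only genuinely delicate point — and the place where I expect the main obstacle — is that this crude multiplication costs an extra factor $(1+\|h,\bv\|_{H^s})$, coming entirely from the high-frequency part of the composite coefficient $\mathcal{C}(h,\bv)$ (which is merely in $H^s\cap L^\infty$, not $W^{1,\infty}$, since $s<2$ in $2$D) meeting the low-frequency part of $g=\partial U\,\partial U$. To remove it one uses the Bony decomposition $\mathcal{C}g=T_{\mathcal{C}}g+R(\mathcal{C},g)+T_{g}\mathcal{C}$: the first two cost only $\|\mathcal{C}\|_{L^\infty_x}\lesssim1$ times $\|g\|_{\dot H^{s-1}_x}$, and in the remaining paraproduct $T_{g}\mathcal{C}$ I would insert the identity $\mathcal{C}(h,\bv)\,\partial_\mu U_b=\partial_\mu\big[\mathcal{C}(h,\bv)(U_b-\bar U_b)\big]-(U_b-\bar U_b)\,\partial_\mu\big(\mathcal{C}(h,\bv)\big)$ in each summand, recasting $\mathcal{C}\,G_1\,G_2$ as a product of two spatial derivatives of bounded $H^s$-functions that vanish at $\bar U$ — for which $\|\partial_\mu(\cdot)\|_{L^\infty_x}\lesssim\|dh,d\bv\|_{L^\infty_x}$ and $\|\partial_\mu(\cdot)\|_{\dot H^{s-1}_x}\lesssim\|h,\bv\|_{H^s_x}$ hold at once by the chain rule and Moser — plus lower-order corrections of exactly the same trilinear type with coefficients again vanishing at $\bar U$, handled identically. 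The remaining work is just finite bookkeeping over the terms of \eqref{wrt0}--\eqref{wrt1}; alternatively, in every place Lemma \ref{yux} is used the $H^s$-norm is already a priori bounded, so the crude bound with the $(1+\|h,\bv\|_{H^s})$ factor is harmless.
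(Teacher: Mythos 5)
Your core argument is the same as the paper's: the paper's proof of Lemma \ref{yux} is two lines — it observes that $\mathcal{D},\bQ$ are quadratic in $(dh,d\bv)$ and applies the Kato--Ponce estimate (Lemma \ref{jiaohuan}) to get $\|\mathcal{D},\bQ\|_{H^{s-1}_x}\lesssim\|dh,d\bv\|_{L^\infty_x}\|dh,d\bv\|_{H^{s-1}_x}$, silently absorbing the variable coefficients $\Theta, c_s, c_s', \mathrm{e}^{\pm 2h}, v^\beta$ into the implicit constant. Your treatment of the pure quadratic part $G_1G_2$ is exactly this step, done correctly. Where you go beyond the paper is in confronting the coefficient $\mathcal{C}(h,\bv)$ head-on, and your diagnosis is right: since $\mathcal{C}(h,\bv)-\mathcal{C}(\bar U)$ is only in $H^s\cap L^\infty$ (not $W^{1,\infty}$, as $s<2$ in $2$D), the multiplier estimate of Lemma \ref{pps} costs an extra factor $(1+\|h,\bv\|_{H^s_x})$, so the literal form of \eqref{YYE} with a universal constant is not what these tools give. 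This is a genuine imprecision in the paper that you have correctly surfaced.

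However, your proposed device for \emph{removing} that factor does not close as written. After the Bony splitting, the problematic piece is $T_g\mathcal{C}$, and the identity $\mathcal{C}\,\partial_\mu U_b=\partial_\mu[\mathcal{C}(U_b-\bar U_b)]-(U_b-\bar U_b)\,\partial_\mu\mathcal{C}$ produces a main term that is indeed a product of two factors each controlled simultaneously in $L^\infty$ by $\|dh,d\bv\|_{L^\infty_x}$ and in $\dot H^{s-1}$ by $\|h,\bv\|_{H^s_x}$ — fine — but the correction $(U_b-\bar U_b)\,\partial_\mu\mathcal{C}\cdot G_2$ is again of the form (smooth coefficient)$\times\partial U\times\partial U$, so declaring it ``handled identically'' is circular: iterating the identity never terminates, and any direct grouping of the three factors leaves either an extra $\|h,\bv\|_{H^s_x}$ (via Lemma \ref{pps}) or an extra $\|dh,d\bv\|_{L^\infty_x}$ (via $\|T_{FG}(U_b-\bar U_b)\|_{\dot H^{s-1}}\lesssim\|dh,d\bv\|_{L^\infty_x}^2\|h,\bv\|_{H^{s-1}_x}$). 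Your fallback observation is the correct resolution and is what the paper implicitly relies on: every invocation of Lemma \ref{yux} (e.g.\ in \eqref{w02} and \eqref{fgH}) occurs where $\|h,\bv\|_{L^\infty_tH^s_x}$ is already bounded by $\epsilon_2$ or $M$, so the estimate with the extra polynomial factor in $\|h,\bv\|_{H^s_x}$ serves every purpose. I would state the lemma (or your proof of it) in that weaker form rather than claim the clean bound.
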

\begin{proof}
	Seeing \eqref{wrt0} and \eqref{wrt1}, $\mathcal{D}$ and $\bQ$ are formulated by $(dh,d\bv)\cdot (dh,d\bv)$. By using Lemma \ref{jiaohuan} and the equation \eqref{REEf}, we can get
	\begin{equation*}
		\| \mathcal{D}, \bQ  \|_{H_x^{s-1}} \lesssim  \| dh,d \bv \|_{L_x^\infty} \| dh, d\bv \|_{H_x^{s-1}}\lesssim \| dh, d\bv \|_{L_x^\infty} \| h,\bv \|_{H_x^{s}}.
	\end{equation*}
Therefore, the estimate \eqref{YYE} holds. We have finished the proof of this lemma.
\end{proof}
\subsection{Uniqueness of solutions}
We now present two results regarding the uniqueness of the solution, which can be derived directly from theorem \ref{DW4}.
\begin{corollary}\label{uniq}
	Assume $\frac74<s_0\leq s \leq \frac{15}{8}$ and \eqref{HEw} hold. Consider the Cauchy problem \eqref{wrt} with the initial data $(h_0, \bv_0, \bw_0,$ $\nabla \bw_0) \in H^{s} \times H^{s} \times H^{s_0-\frac14} \times L^8$. If there exists a solution $(h, \bv, \bw)$ for \eqref{wrt} and
	$(h,\bv) \in C([0,T],H_x^s) $, $\bw \in C([0,T],H_x^{s_0-\frac14}) $, $\nabla \bw\in C([0,T],L^8_x)$ and $(dh, d\bv) \in {L^4_{[0,T]} L^\infty_x}$, then it's unique.
\end{corollary}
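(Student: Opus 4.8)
The uniqueness will be proved by a difference–energy argument carried out one derivative below the regularity of the solutions. The key structural point is that this \emph{must} be done on the first-order formulation \eqref{REEf}, not on the wave–transport system \eqref{wrt}: the difference of the two wave operators $\square_{g_1}-\square_{g_2}$ produces a term $(g_1-g_2)\,\partial^2_{\beta\gamma}h_2$ in which $\partial^2 h_2$ lies only in $H_x^{s-2}$ with $s-2<0$, so no $L^2$- (or even $H^1$-) level estimate for $\delta h$ can be closed there. Passing to \eqref{REEf} removes this loss.

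\textbf{Setup.} Let $(h_1,\bv_1,\bw_1)$ and $(h_2,\bv_2,\bw_2)$ be two solutions of \eqref{wrt} in the stated class with common data $(h_0,\bv_0,\bw_0)$. Since $\bw_i$ is the curl of $\bv_i$ via \eqref{Vor}, and by Lemmas \ref{tra} and \ref{Wrs} the two formulations are equivalent in this regularity class, each $(h_i,\bv_i)$ solves the first-order symmetric hyperbolic system \eqref{REEf}, equivalently the system \eqref{QHl} of Lemma \ref{QH} in the unknown $\bU_i=(p(h_i),\mathrm{e}^{-h_i}v_i^1,\mathrm{e}^{-h_i}v_i^2)^{\mathrm{T}}$. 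Observe that $dh_i,d\bv_i\in L^4_{[0,T]}L^\infty_x\subset L^1_{[0,T]}L^\infty_x$; that $h_i,\bv_i\in C([0,T],H_x^s)\hookrightarrow C([0,T]\times\mathbb{R}^2)\cap L^\infty$ since $s>\tfrac74>1$; and that, by \eqref{HEw} together with the time-continuity of the solutions (after possibly shrinking $T$, which is harmless because uniqueness on all short subintervals yields uniqueness on $[0,T]$ by a connectedness/continuation argument), the acoustic speeds $c_s(h_i)$ stay in a fixed compact subset of $(0,1)$ on $[0,T]$. Hence the matrices $A^0(\bU_i),A^j(\bU_i)$ are smooth on the (bounded) range of the solutions and $A^0(\bU_i)$ is uniformly positive definite.

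\textbf{Difference estimate.} Put $\delta\bU=\bU_1-\bU_2$. Subtracting the two copies of \eqref{QHl} and freezing the coefficients at $\bU_1$ gives the linear symmetric hyperbolic equation
\begin{equation*}
A^0(\bU_1)\partial_t\delta\bU+A^i(\bU_1)\partial_i\delta\bU=\bF,\qquad \bF:=-\big(A^0(\bU_1)-A^0(\bU_2)\big)\partial_t\bU_2-\big(A^i(\bU_1)-A^i(\bU_2)\big)\partial_i\bU_2 .
\end{equation*}
Smoothness of $A^0,A^j$ on the solution range gives $|A^j(\bU_1)-A^j(\bU_2)|\lesssim|\delta\bU|$, hence $\|\bF(t)\|_{L^2_x}\lesssim\|(dh_2,d\bv_2)(t)\|_{L^\infty_x}\|\delta\bU(t)\|_{L^2_x}$, the constant depending only on the $L^\infty$-size of the two solutions. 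Testing against $\delta\bU$, integrating over $\mathbb{R}^2$, and using the uniform positivity of $A^0(\bU_1)$ together with $\partial_tA^0(\bU_1),\partial_iA^i(\bU_1)\in L^\infty_x$ controlled by $\|(dh_1,d\bv_1)\|_{L^\infty_x}$ — this is exactly the $a=0$ instance of the energy computation of Lemma \ref{DW3} / Theorem \ref{DW4}, now applied to the difference system — we obtain
\begin{equation*}
\frac{d}{dt}\,\|\delta\bU(t)\|_{L^2_x}^2\;\lesssim\;\big(\|(dh_1,d\bv_1)(t)\|_{L^\infty_x}+\|(dh_2,d\bv_2)(t)\|_{L^\infty_x}\big)\,\|\delta\bU(t)\|_{L^2_x}^2 .
\end{equation*}

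\textbf{Conclusion.} Since $\delta\bU(0)=0$ and, by H\"older in time, $\int_0^T\big(\|(dh_1,d\bv_1)\|_{L^\infty_x}+\|(dh_2,d\bv_2)\|_{L^\infty_x}\big)\,d\tau\lesssim T^{3/4}\big(\|dh_1,d\bv_1\|_{L^4_{[0,T]}L^\infty_x}+\|dh_2,d\bv_2\|_{L^4_{[0,T]}L^\infty_x}\big)<\infty$, Gr\"onwall's inequality forces $\delta\bU\equiv0$ on $[0,T]$, i.e. $h_1\equiv h_2$ and $\bv_1\equiv\bv_2$; then $\bw_1\equiv\bw_2$ follows from \eqref{Vor}. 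The only substantive inputs are the $L^\infty_x$-in-$t$ control of $(dh_i,d\bv_i)$ (supplied by the Strichartz hypothesis $(dh,d\bv)\in L^4_{[0,T]}L^\infty_x$) and the uniform hyperbolicity from \eqref{HEw}; the main obstacle — the derivative loss in the second-order form — is circumvented by working with \eqref{REEf}.
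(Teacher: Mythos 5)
Your proof is correct and follows essentially the same route the paper intends: the paper states the corollary "can be derived directly from Theorem \ref{DW4}" without writing out details, but the mechanism it actually uses in the analogous situation (the $L^2$ difference estimate \eqref{ra0} for $\bU_k-\bU_l$ in the proof of Theorem \ref{dingli}) is exactly your Gr\"onwall argument on the symmetric hyperbolic formulation \eqref{QHl}, with the $L^4_{[0,T]}L^\infty_x$ Strichartz hypothesis making the coefficient integrable in time. Your observation about why one must work with the first-order system rather than the wave formulation, and your recovery of $\bw_1=\bw_2$ from \eqref{Vor}, are both sound and in fact more explicit than what the paper provides.
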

%\begin{proof}
%Suppose $(\bv,\rho,w)$ and $(\bv^*, \rho^*, w^*)$ to be two different solutions of \eqref{CEE} with the same initial data $(\bv_0, \rho_0, w_0, \nabla w_0) \in H^{s} \times H^{s} \times H^{s-\frac14} \times L^8$.
%\end{proof}
\begin{corollary}\label{uniq2}
	Assume $s\in (\frac74,\frac{15}{8}]$ and \eqref{HEw} hold. Consider the Cauchy problem \eqref{wrt} with the initial data $(h_0, \bv_0, \bw_0,$ $\nabla \bw_0) \in H^{s} \times H^{s} \times H^{\frac32} \times L^8$. If there exists a solution $(h, \bv, \bw)$ for \eqref{wrt} and
	$(h,\bv) \in C([0,T^*],H_x^s) $, $\bw \in C([0,T^*],H_x^{\frac32}) $, $\nabla \bw\in C([0,T^*],L^8_x)$ and $(dh, d\bv) \in {L^4_{[0,T^*]} L^\infty_x}$, then it's unique.
\end{corollary}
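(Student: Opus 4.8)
The plan is to prove uniqueness by an $L^2$ energy estimate for the difference of two solutions, carried out on the first‑order symmetric hyperbolic formulation of the equations rather than on the second‑order wave–transport system \eqref{wrt}; this is precisely the linearized analogue of the energy argument behind Theorem \ref{DW4}, and it is what makes the Strichartz‑type hypothesis $(dh,d\bv)\in L^4_{[0,T^*]}L^\infty_x$ the natural one to assume.

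First I would remove $\bw$ from the problem: since $\bw$ is defined by \eqref{Vor}, it is completely determined by $\bv$, so it suffices to show that two solutions $(h_i,\bv_i,\bw_i)$, $i=1,2$, with the same Cauchy data and in the stated regularity class satisfy $h_1=h_2$ and $\bv_1=\bv_2$; then $\bw_1=\mathrm{curl}\,\bv_1=\mathrm{curl}\,\bv_2=\bw_2$. Next I would use that such a solution $(h,\bv)$ also solves the first‑order system \eqref{REEf}, equivalently, by Lemma \ref{QH}, the symmetric hyperbolic system $A^0(\bU)\partial_t\bU+A^i(\bU)\partial_i\bU=0$ with $\bU=(p(h),\mathrm{e}^{-h}v^1,\mathrm{e}^{-h}v^2)^{\mathrm{T}}$. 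Since $s>\tfrac74$, the Sobolev embedding $H^s_x\hookrightarrow L^\infty_x$ gives $h_i,\bv_i\in L^\infty([0,T^*]\times\mathbb{R}^2)$, so the matrices $A^\beta(\bU_i)$ are symmetric and bounded on $[0,T^*]$, $A^0(\bU_i)$ is uniformly positive definite there (using $0<c_s\le1$), and moreover $dh_i,d\bv_i\in L^1_{[0,T^*]}L^\infty_x$ by Hölder in time since $(dh_i,d\bv_i)\in L^4_{[0,T^*]}L^\infty_x$ and $T^*<\infty$.

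Writing $\delta\bU=\bU_1-\bU_2$, $\delta h=h_1-h_2$, $\delta\bv=\bv_1-\bv_2$, so that $\delta\bU|_{t=0}=0$ and $\delta\bU\in C([0,T^*],L^2_x)$, the difference of the two copies of the system is
\[
A^0(\bU_1)\partial_t\delta\bU+A^i(\bU_1)\partial_i\delta\bU=F,\qquad
F=-\big(A^0(\bU_1)-A^0(\bU_2)\big)\partial_t\bU_2-\big(A^i(\bU_1)-A^i(\bU_2)\big)\partial_i\bU_2.
\]
Because the $A^\beta$ are smooth in $\bU$ and the $\bU_i$ are uniformly bounded, $|A^\beta(\bU_1)-A^\beta(\bU_2)|\lesssim|\delta\bU|$ pointwise; together with $\|d\bU_2\|_{L^\infty_x}\lesssim\|(dh_2,d\bv_2)\|_{L^\infty_x}$ (for $\partial_t\bU_2$ one reads it off the equation) this yields $\|F\|_{L^2_x}\lesssim\|(dh_2,d\bv_2)\|_{L^\infty_x}\|\delta\bU\|_{L^2_x}$. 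Pairing the difference equation with $\delta\bU$ in $L^2_x$ and integrating by parts in the $A^i\partial_i$ term (legitimate since $A^i$ is symmetric), with $\mathcal{E}(t)=\int_{\mathbb{R}^2}A^0(\bU_1)\delta\bU\cdot\delta\bU\,dx\simeq\|\delta\bU\|_{L^2_x}^2$, I obtain
\[
\frac{d}{dt}\mathcal{E}(t)\lesssim\big(\|(dh_1,d\bv_1)\|_{L^\infty_x}+\|(dh_2,d\bv_2)\|_{L^\infty_x}\big)\mathcal{E}(t),
\]
the right‑hand side collecting the contributions of $\partial_t A^0(\bU_1)+\partial_i A^i(\bU_1)$ and of $F$. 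Grönwall's inequality, using $\mathcal{E}(0)=0$ and the $L^1_{[0,T^*]}$‑integrability of the coefficient, forces $\mathcal{E}\equiv0$, hence $\delta\bU\equiv0$; since $h\mapsto p(h)$ is strictly monotone and the $h_i,\bv_i$ are bounded, this gives $\delta h\equiv0$ and $\delta\bv\equiv0$, and therefore $\bw_1=\bw_2$.

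The one genuinely delicate point — and the reason I would not attempt the estimate directly on \eqref{wrt} — is the low regularity $s\le\frac{15}{8}<2$: a difference estimate at the level of the quasilinear wave system produces a source term of the schematic form $(g_1-g_2)\,\partial^2 h_2$ with $\partial^2 h_2\in H^{s-2}$, and since $H^{s-1}$ is neither an algebra nor embedded in $L^\infty_x$ in two dimensions, one cannot absorb it with the available norms (the relevant product estimate, Lemma \ref{pps}, would require $2s>4$). Passing to the first‑order symmetric hyperbolic form eliminates this obstruction: only $\|\delta\bU\|_{L^2_x}$ enters, no derivative of the difference is needed, and the Grönwall coefficient is just $\|(dh,d\bv)\|_{L^\infty_x}$, whose time integrability is exactly what the Strichartz bound in the hypothesis provides. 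Apart from this, the argument is the standard $L^2$ energy identity for symmetric hyperbolic systems already invoked in the proof of Theorem \ref{DW4}.
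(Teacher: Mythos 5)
Your proof is correct and follows essentially the same route the paper intends: the paper derives the corollary from the $L^2$ difference estimate on the symmetric hyperbolic formulation \eqref{QHl} (exactly the computation in \eqref{ra0} of the proof of Theorem \ref{dingli}), with Gr\"onwall closed by the $L^4_{[0,T^*]}L^\infty_x$ bound on $(dh,d\bv)$. Your additional observations — that $\bw$ is slaved to $\bv$ via \eqref{Vor}, and that a direct difference estimate on the second-order system \eqref{wrt} would fail at this regularity — are accurate and consistent with the paper's treatment.
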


\section{Proof of Theorem \ref{dingli}}\label{ptr}
Due to Corollary \ref{uniq}, it suffices to prove the existence and Strichartz estimates of solutions. We will reduce the proof to a conclusion for small, smooth, compactly supported data.  
\subsection{Reduction to smooth initial data}\label{sec4.1}
We state a proposition as follows.
%\subsubsection{A conclusion about small, smooth, compactly supported data.}
\begin{proposition}\label{p3}
		Assume $\frac74<s_0\leq s \leq \frac{15}{8}$ and $\delta\in (0,s-\frac74)$. For each $M_0>0$, there exists $T, M_1, M_2>0$ (depending on $C_0,c_0,s, s_0$ and $M_0$) such that, for each smooth initial data
	%\footnote{Here $\bw_0=\mathrm{vort}(\mathrm{e}^{h_0}\bu_0)=-\epsilon^{\alpha \beta \gamma \delta}\mathrm{e}^{h_0} u_{0\beta} \partial_\gamma u_{0\delta} $, ${\bu}_0=(u^0_0,\mathring{\bu}_0)^{\mathrm{T}}$.} 
	$(h_0, {\bv}_0, \bw_0)$, which satisfies
	\begin{equation}\label{ig1}
		\begin{split}
			&\|h_0\|_{H^{s}}+\|\bv_0\|_{H^{s}} + \| \bw_0 \|_{H^{s_0-\frac14}} + \| \nabla \bw_0\|_{L^{8}} \leq M_0,
		\end{split}
	\end{equation}
	there exists a smooth solution $(h,\bv,\bw)$ to \eqref{wrt} on $[0,T] \times \mathbb{R}^2$ satisfying
	\begin{equation}\label{e9}
		\begin{split}
			& \|h, \bv\|_{L^\infty_{[0,T]}H_x^{s}}+\|\bw\|_{L^\infty_{[0,T]}H_x^{s_0-\frac14}} +\|\nabla \bw \|_{L^\infty_{[0,T]}L_x^{8}} \leq M_1,
			\\
			& \|h, \bv \|_{L^\infty_{[0,T]\times \mathbb{R}^2}} \leq 1+C_0.
		\end{split}
	\end{equation}
	Furthermore, the solution satisfies
\begin{enumerate}
		\item  dispersive estimate for $h$, $\bv$ and $\bv_+$
	\begin{equation}\label{p303}
		\|d h, d \bv_{+}, d \bv\|_{L^4_{[0,T]} C^\delta_x} \leq M_2,
	\end{equation}

		\item  Let $f$ satisfy the equation \eqref{linear}. For each $1 \leq r \leq s+1$, the Cauchy problem \eqref{linear} is well-posed in $H_x^r \times H_x^{r-1}$. Furthermore, the following energy estimate
	\begin{equation}\label{p304}
		\|f\|_{L^\infty_{[0,T]} H^r_x} + \|\partial_t f\|_{L^\infty_{[0,T]} H^{r-1}_x}  \leq C_{M_0} ( \| f_0\|_{H^r}+ \| f_1\|_{H^{r-1}}),
	\end{equation}
	and Strichartz estimates
	\begin{equation}\label{305}
		\|\left< \nabla \right>^k f\|_{L^2_{[0,T]} L^\infty_x} \leq  C_{M_0}(\| f_0\|_{H^r}+ \| f_1\|_{H^{r-1}}),\ \ \ k<r-1,
	\end{equation}
	hold. The same estimates hold with $\left< \nabla \right>^k$ replaced by $\left< \nabla \right>^{k-1}d$. Above, $C_{M_0}$ is also a constant depending on $C_0, c_0, s, s_0$ and $M_0$.
\end{enumerate}
\end{proposition}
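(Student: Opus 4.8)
The plan is to implement the Smith--Tataru scheme \cite{ST} adapted to the coupled wave-transport system \eqref{wrt}, using the structural lemmas already established. The first step is a standard iteration/regularization argument: since the data are smooth, a smooth local solution exists on some maximal interval by the symmetric-hyperbolic theory of Lemma \ref{QH}; the real content is to propagate the bounds \eqref{e9}, \eqref{p303}, \eqref{p304}, \eqref{305} on a time interval $T$ depending only on $M_0$ (and $C_0,c_0,s,s_0$), not on higher Sobolev norms of the data. The bootstrap is set up as follows: assume \eqref{p303} holds with $2M_2$ on $[0,T]$; feed this into the total energy estimate of Theorem \ref{DW4} together with Lemma \ref{yux}, which, since $\int_0^T \|(dh,d\bv)\|_{L^\infty_x}\,d\tau \lesssim T^{3/4}\|(dh,d\bv)\|_{L^4_t L^\infty_x} \lesssim T^{3/4} M_2$, yields $E(t)\le M_1$ once $T$ is chosen small relative to $M_0,M_2$. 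This closes \eqref{e9} and, with the transport equation for $\bw$ and $\mathbf T$ bounded via \eqref{HEw}, the $L^\infty$ bound on $(h,\bv)$.

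The heart of the matter is to then \emph{recover} \eqref{p303}--\eqref{305} with the \emph{fixed} constant $M_2$, i.e. to prove the Strichartz estimates for the linear equation \eqref{linear} with acoustic metric $g$ determined by $(h,\bv)$, and to apply them to the nonlinear quantities $dh$, $d\bv_+$, $d\bv$. For $h$ and $\bv_+$ this is the genuinely wave-equation part: write $\square_g h=\mathcal D$ and, crucially, $\square_g \bv_+ = \Theta e^{-2h}(v^0)^2(1-3c_s^2)\mathbf T\mathbf T\bv_- - c_s^2\Theta\bv_- + \bQ$ from Lemma \ref{Wag}. One applies the linear Strichartz estimate \eqref{305} (to be proven) with $r$ slightly above $7/4$ to the homogeneous evolution of the data, and via Duhamel to the inhomogeneities, where the right-hand sides are controlled in $L^1_t H^{s-1}_x$ by Lemma \ref{yux} for $\mathcal D,\bQ$ and by Lemma \ref{yuy} (estimates \eqref{Dsb}, \eqref{etaq}) for the $\mathbf T\mathbf T\bv_-$ and $\bv_-$ terms — note $\bv_-$ is a smoothing correction built from $\bw$, so it costs no derivatives of $\bv$. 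Then $d\bv = d\bv_+ + d\bv_-$, and $d\bv_-$ is estimated directly in $L^4_t C^\delta_x$ by Sobolev embedding using \eqref{Dsb} with $a>\delta+1/2$, which is available since $s>7/4$ gives room $a<s-1$.

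The main obstacle — and where almost all the work lies — is proving the linear Strichartz estimate \eqref{305}, equivalently \eqref{SE1}, for $\square_g f = 0$ with a metric $g$ whose coefficients have only $H^{7/4+}$ (hence $C^\delta$, $\delta$ small) regularity in space and the Strichartz regularity $dg\in L^4_t L^\infty_x$. This is exactly the Smith--Tataru construction: one performs a Littlewood--Paley/wave-packet decomposition at frequency $2^k$, rescales (the parabolic rescaling $(t,x)\mapsto(2^k t, 2^k x)$ combined with the conformal/time rescaling that normalizes the metric), regularizes the metric at frequency $\epsilon 2^k$, constructs a parametrix by the FIO/wave-packet method of \cite{ST} along the null (characteristic) hypersurfaces of $g$, and proves the fixed-time $L^1\to L^\infty$ dispersive bound for a single packet, summing via a $TT^*$ and orthogonality argument to get $L^2_t L^\infty_x$ with the $3/4$-derivative loss. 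The geometry of the characteristic hypersurfaces is governed, through $g$, by $(h,\bv)$ and — via the wave equation for $\bv$ — by the vorticity $\bw$; this is why $\bw_0\in H^{3/2+}$ is needed to control the second fundamental form and the conjugate-point/eikonal bounds. I will state the one-parameter family of rescaled problems, cite \cite{ST} for the core parametrix and dispersive estimate (with the modifications forced by the wave-transport coupling handled exactly as the inhomogeneities above), and then reassemble to obtain \eqref{304}, \eqref{305} on $[0,T]$; the energy estimate \eqref{304} follows from \eqref{ew02} applied to the linear equation, using that the coefficients of $g$ are already controlled in $L^\infty_t H^s_x$ with $dg\in L^4_t L^\infty_x$. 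Deferring the detailed parametrix construction to the later sections (Propositions \ref{r2}, \ref{r3}, \ref{r5} as organized in the introduction), the present proposition is reduced to those statements.
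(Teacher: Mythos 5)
There is a genuine gap: your plan runs the bootstrap and the Smith--Tataru parametrix directly at the scale of data of size $M_0$, and then ``reduces'' to Propositions \ref{r2}, \ref{r3}, \ref{r5}. But those propositions (and the whole wave-packet/characteristic-hypersurface machinery behind them: the class $\mathcal{H}$, the functional $\Re$, the truncated metric $\mathbf{g}$) are stated and proved only under the smallness hypotheses $\Re(h,\bv,\bw)\le 2\epsilon_1$ and data of size $\epsilon_3$ supported in a fixed ball. The self-improvement $\Re\le 2\epsilon_1\Rightarrow \Re\lesssim\epsilon_2\ll\epsilon_1$ that closes the geometric bootstrap, and the overlap/dispersive estimates for the packets, genuinely use $\epsilon_3\ll\epsilon_2\ll\epsilon_1\ll 1$; they do not hold with $M_0$ in place of $\epsilon_3$. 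So your claim that ``the present proposition is reduced to those statements'' does not go through: for large $M_0$ you cannot invoke them, and a bootstrap of the form ``assume \eqref{p303} with $2M_2$, recover $M_2$'' has no mechanism to close.

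What is missing is precisely the content of the paper's proof of Proposition \ref{p3}: a soft reduction of the large, non-compactly-supported data to the small, compactly supported setting of Proposition \ref{p1}. Concretely: (i) rescale $(t,x)\mapsto(Tt,Tx)$ with $T$ chosen so that the \emph{homogeneous} norms $\|\underline{h}_0\|_{\dot H^s}$, $\|\underline{\bv}_0\|_{\dot H^s}$, $\|\underline{\bw}_0\|_{\dot H^{s_0-1/4}}$, $\|\nabla\underline{\bw}_0\|_{L^8}$ all drop below $\epsilon_3$ (this is where $T$ acquires its dependence on $M_0,s,s_0$); (ii) for each lattice point $y$, localize by $\chi(x-y)(\underline h_0-\underline h_0(y))$ etc., which converts smallness of homogeneous norms into smallness of inhomogeneous norms and gives compact support; (iii) apply Proposition \ref{p1} to each localized problem, glue the resulting solutions with a partition of unity using finite speed of propagation, and undo the scaling --- the factors $T^{-(3/4+\delta)}\cdot T^{s-1}$, $T^{s_0+3/4}$, $T^{7/4}$ in \eqref{po2} are exactly what turns the $\epsilon_2$-bounds back into $M_0$-dependent bounds, and the same gluing argument yields \eqref{p304}--\eqref{305} from \eqref{304}. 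The bootstrap you describe (energy via Theorem \ref{DW4}, Strichartz for $\bv_+$ via Lemmas \ref{Wag}, \ref{yuy}, \ref{yux}) is correct in spirit but belongs inside the proof of Proposition \ref{p1} (via Proposition \ref{p4}), not at the level of Proposition \ref{p3}.
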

Indeed, Theorem \ref{dingli} is a consequence of Proposition \ref{p3}, which is presented as follows.
%\subsubsection
\begin{proof}[Proof of Theorem \ref{dingli} by using Proposition \ref{p3}]
	Let $\{ (h_{0k}, {\bv}_{0k}) \}_{k\in \mathbb{Z}^+}$ be a sequence of smooth data converging to $(h_0, {\bv}_0)$ in $H^{s}$. Similarly, noting \eqref{Vor}, we also define $\bw_{0k}=(w^0_{0k},w^1_{0k},w^2_{0k})$ by
	\begin{equation*}
		w^\alpha_{0k}=\epsilon^{\alpha \beta}_{ \ \ \ \gamma}\partial_\beta v^\gamma_{0 k}.
	\end{equation*}
	Therefore, $\{ (h_{0k}, {\bv}_{0k}, \bw_{0k}, \nabla \bw_{0k}) \}_{k\in \mathbb{Z}^+}$ is a smooth sequence which converges to $(h_0, {\bv}_0, \bw_0, \nabla \bw_0)$ in $H^{s} \times H^{s} \times H^{s_0-\frac14} \times L^8$. Due to \eqref{chuzhi1}, for all $k\in \mathbb{Z}^+$, we have
		\begin{equation}\label{unf}
		\|h_{0k}\|_{H^{s}} + \|{\bv}_{0k} \|_{H^{s}} + \|\bw_{0k}\|_{H^{s_0-\frac14}} + \|\nabla \bw_{0k}\|_{L^{8}} \leq 2M_0.
	\end{equation}
Due to \eqref{unf}, applying Proposition \ref{p3}, for every $k \in \mathbb{Z}^+$, there exists a solution $(h_k, \bv_k, \bw_k)$ on $[0,T]\times \mathbb{R}^2$ to \eqref{wrt} with the initial data
	\begin{equation*}
		(h_k, \bv_k, \bw_k)|_{t=0}=(h_{0k}, \bv_{0k}, \bw_{0k}),
	\end{equation*}
	where $T$ depends on $C_0,c_0,s,s_0$ and $M_0$, and $\bw_k=(w^0_{k},w^1_{k},w^2_{k})$ satisfies
	\begin{equation*}\label{unp}
		w^\alpha_{k}=\epsilon^{\alpha \beta}_{ \ \ \ \gamma}\partial_\beta v^\gamma_{ k}.
	\end{equation*}
We also note that the solution of \eqref{wrt} also satisfies \eqref{QHl}. We let $\bU_k=(p_k, v^0_{1k}, v^1_{1k}, v^2_{2k}), k \in \mathbb{Z}^+$, where $p_k=p(h_k)$. For every $k \in \mathbb{Z}^+$, we obtain
	\begin{equation*}
		\begin{split}
			A^0( \bU_k )\partial_t \bU_k+ \textstyle{ \sum^2_{i=1} } A^i( \bU_k )\partial_{i}\bU_k = 0 .
		\end{split}
	\end{equation*}
Therefore, for $j,l\in \mathbb{Z}^{+}$, we have
	\begin{equation*}
	\begin{split}
		A^0( \bU_k )\partial_t ( \bU_k - \bU_l) + \textstyle{ \sum^2_{i=1} } A^i( \bU_k )\partial_{i}( \bU_k - \bU_l) 
		= \left\{ -A^\alpha( \bU_k )+ A^\alpha( \bU_l ) \right\} \partial_\alpha \bU_l.
	\end{split}
\end{equation*}
Then the energy estimate in $L_x^{2}$ for the difference $\bU_k - \bU_l$ gives
	\begin{equation}\label{ra0}
		\frac{d}{dt}\|\bU_k - \bU_l \|^2_{L_x^{2}} \leq C_{k, l} \left(\| d\bU_k, d\bU_l\|_{L^\infty_x}\|\bU_k-\bU_l\|^2_{L^2_x}+ \|\bU_k-\bU_l\|^2_{L^2_x}\| d \bU_l\|_{L_x^\infty} \right),
	\end{equation}
	where $C_{k, l}$ depends on the $L^\infty_x$ norm of $\bU_k$and $\bU_l$. Using the Strichartz estimates \eqref{p303}, we get
	\begin{equation}\label{ra1}
		\| d\bv_k, dh_k \|_{L^4_{[0,T]C^\delta_x}} \leq M_1, \quad \forall k \in \mathbb{Z}^+.
	\end{equation}
	Due to \eqref{e9}, we can obtain
	\begin{equation}\label{raa}
		\|h_k, \bv_k\|_{L^\infty_{[0,T]}H_x^{s}}+\|\bw_k\|_{L^\infty_{[0,T]}H_x^{s_0-\frac14}} +\|\nabla \bw_k \|_{L^\infty_{[0,T]}L_x^{8}} \leq M_1.
	\end{equation}
Integrating \eqref{ra0} from $0$ to $t$ and using \eqref{ra1} yields
	\begin{equation*}
		\begin{split}
			\|(\bU_k-\bU_l)(t,\cdot)\|_{L_x^{2}} &\lesssim \|(\bU_k-\bU_l)(0,\cdot)\|_{L_x^{2}}
			\lesssim\|(h_{0k}-h_{0l}, \bv_{0k}-\bv_{0l}) \|_{L^{2}}.
		\end{split}
	\end{equation*}
	By using Lemma \ref{jiaohuan0}, it follows 
		\begin{equation*}
		\begin{split}
			\|(h_k-h_l, \bv_k-\bv_l)(t,\cdot)\|_{L_x^{2}} &
			\lesssim\|(h_{0k}-h_{0l}, \bv_{0k}-\bv_{0l}) \|_{L^{2}}.
		\end{split}
	\end{equation*}
	Therefore, $\{(h_k,\bv_{k})\}_{k \in \mathbb{Z}^+}$ is a Cauchy sequence in $C([0,T];L_x^{2})$. Denote $(p,u_1,u_2)$ being the limit. Then $(h,\bv) \in C([0,T];L_x^{2})$ and
	\begin{equation*}\label{uk1}
		\lim_{k\rightarrow \infty}(h_k,\bv_{k})=(h,\bv) \ \mathrm{in} \ C([0,T];L_x^{2}).
	\end{equation*}
Using \eqref{raa}, the function $(h,\bv) \in C([0,T];H_x^{s})$. Moreover, we also get
\begin{equation*}
	\|( h, \bv )\|_{L^\infty_{[0,T]} H_x^{s}}+ \|\bw\|_{L^\infty_{[0,T]} H_x^{s_0-\frac14}}
	+ \|\nabla \bw \|_{ L^\infty_{[0,T]} L_x^8 } \leq M_1,
\end{equation*}
This implies that the estimate \eqref{A02} holds. By using interpolation formula and \eqref{uk1}, we conclude
	\begin{equation}\label{hk1}
	\lim_{k\rightarrow \infty} (h_k, \bv_k)=(h, \bv) \ \ \mathrm{in} \ \ C([0,T];H_x^{s'}), \quad s' < s.
\end{equation}
Applying \eqref{wrt}, $\bw_j$ satisfies the following transport equation:
\begin{equation*}
	\begin{split}
 v^\kappa_j \partial_\kappa w^\alpha_j=w^\kappa_j \partial^\alpha v_{\kappa j}- w^\alpha_j \partial_\kappa v^\kappa_j.
	\end{split}
\end{equation*}
By $L^2_x$ energy estimates for the difference term $\bw_j - \bw_l$, we have
\begin{equation*}\label{uup}
	\begin{split}
		\frac{d}{dt} \|  \bw_j - \bw_l \|^2_{L^2_x} \lesssim & \|d\bv_j\|_{L^\infty_x} \|  \bw_j - \bw_l \|^2_{L^2_x}+ \|\bv_j - \bv_l \|_{L^\infty_x} \|  \nabla \bw_l \|_{L^2_x} \|  \bw_j - \bw_l \|_{L^2_x}
		\\
		& + \|  \bw_l \|_{L^\infty_x} \|d\bv_j - d\bv_l \|_{L^2_x}  \|  \bw_j - \bw_l \|_{L^2_x}
		\\
		\lesssim & \|d\bv_j\|_{L^\infty_x} \|  \bw_j - \bw_l \|^2_{L^2_x}+ \|\bv_k - \bv_l \|_{H^{1+a}_x} \| \bw_l \|_{H^1_x} \|  \bw_j - \bw_l \|_{L^2_x}
		\\
		& + \|  \bw_l \|_{H^{1+a}_x} \|\bv_k - \bv_l \|_{H^1_x}  \|  \bw_j - \bw_l \|_{L^2_x},
	\end{split}
\end{equation*}
for $a>0$. Integrating \eqref{uup} from $0$ to $t$, using the Gronwall's inequality, product estimates and Proposition \ref{p3}, we obtain 
	\begin{equation*}%\label{cx}
		\begin{split}
			\|\bw_k-\bw_l\|_{L_x^{2}} \lesssim \| \bv_{0k}-\bv_{0l}\|_{H^{\frac74}}+\|h_{0k}-h_{0l} \|_{H^{\frac74}} +\|\bw_{0k}-\bw_{0l} \|_{L^{2}}.
		\end{split}
	\end{equation*} 
As a result, $\{\bw_k\}_{k\in \mathbb{Z}}$ is a Cauchy sequence in the space $L_x^2$. Let $\mathbf{w}$ denote its limit. Since $\mathbf{w}_k$ is also bounded in $C([0,T]; H_x^{s_0 - \frac{1}{4}})$, it follows that $\mathbf{w} \in C([0,T]; H_x^{s_0 - \frac{1}{4}})$.

	It remains for us to prove \eqref{SSr} and \eqref{SE1}. Due to Proposition \ref{p3} again, the sequence $\{(d\bv_k,dh_k)\}_{k \in \mathbb{Z}^+}$ is uniformly bounded in $L^4([0,T];C_x^\delta)$. Consequently, we have
	\begin{equation}\label{ccr}
		\lim_{k \rightarrow \infty} (d\bv_k, dh_k)=(d\bv, dh), \quad \textrm{in} \ L^4([0,T];L_x^\infty).
	\end{equation}
	Similarly, we also get
	\begin{equation}\label{cc4}
		\lim_{k \rightarrow \infty} d\bv_{+k}= d\bv_{+}, \quad \textrm{in} \ L^4([0,T];L_x^\infty).
	\end{equation}
	Combining \eqref{ra1} and \eqref{ccr}, \eqref{cc4}, we get
	\begin{equation*}\label{cc1}
		\| (d \bv, dh, d\bv_{+})\|_{ L^4_{[0,T]}L^\infty_x } \leq M_1.
	\end{equation*}
	This implies the estimate \eqref{SSr} holds. Take $T \leq \frac{1}{(1+M_1)^{\frac43}}$. We therefore obtain 
	\begin{equation*}\label{ccs}
		\|h,\bv\|_{L^\infty([0,T]\times \mathbb{R}^2)} \leq C_0+T^{\frac34}\| d \bv, dh\|_{ L^4_{[0,T]}L^\infty_x } \leq 1+C_0.
	\end{equation*}
%Due to $\bv=\bv_{+}+\bv_{-}$, we easily get
%	\begin{equation}\label{cc2}
%		\begin{split}
%			\| d \bv_{+}\|_{L^\infty_x} \leq \|  d\bv\|_{L^\infty_x}+\| d \bv_{-}\|_{L^\infty_x}.
%		\end{split}
%	\end{equation}
%\textcolor{red}{ By applying Lemma \ref{Ees} and $0<T\leq \frac{1}{(1+M_1)^{\frac43}}$, we have }
%	\begin{equation}\label{cc3}
%		\begin{split}
% \| d \bv_{-}\|_{L^4_{[0,T]} L^\infty_x } \lesssim & \| d \bv_{-}\|_{L^\infty_{[0,T]} L^\infty_x }  
%			\\
%			\lesssim & \| \nabla \bw \|_{L^2_{[0,T]} L^\infty_x }
%			\\
%			\lesssim & \|\bw\|_{L^2_{[0,T]} {H}_x^{1+a}} . 
%		\end{split}
%	\end{equation}
	%By using \eqref{cc1}, \eqref{cc2} and \eqref{cc3}, we have $d \bv_{+} \in L^4([0,T];L^\infty_x)$. 
	Using \eqref{p304} and \eqref{305}, we have
		\begin{equation*}
		\|f_k \|_{L^\infty_{[0,T]} H^r_x} + \|\partial_t f_k \|_{L^\infty_{[0,T]} H^{r-1}_x}  \leq C_{M_0} ( \| f_0\|_{H^r}+ \| f_1\|_{H^{r-1}}),
	\end{equation*}
	and Strichartz estimates
	\begin{equation*}
		\|\left< \nabla \right>^a f_k\|_{L^2_{[0,T]} L^\infty_x} \leq  C_{M_0}(\| f_0\|_{H^r}+ \| f_1\|_{H^{r-1}}),\ \ \ a<r-1,
	\end{equation*}
	Taking $k\rightarrow \infty$ and setting $f_k \rightarrow f$ in $H^r_x$, the estimate \eqref{SE1} holds. Therefore, we complete the proof of Theorem \ref{dingli}.
\end{proof}
It remains for us to verify Proposition \ref{p3}.

\subsection{Reduction to small, smooth, and compactly supported data}\label{sec4.2}
Note that the propagation speed of \eqref{wrt} is finite. We therefore set the constant $c>0$ being the largest speed of
propagation corresponding to the acoustic metric. We can further reduce Proposition \ref{p3} to this following
result:
\begin{proposition}\label{p1}
	Assume $\frac74<s_0\leq s \leq \frac{15}{8}$ and $\delta\in (0,s-\frac74)$. Suppose the initial data $(h_0, {\bv}_0, \bw_0)$ be smooth, supported in $B(0,c+2)$ and satisfy
	\begin{equation}\label{300}
		\begin{split}
			&\| h_0 \|_{H^s} + \|{\bv}_0\|_{H^s} +  \|\bw_0\|_{H^{s_0-\frac14}}+ \|\nabla \bw_0\|_{L^{8}}  \leq \epsilon_3.
		\end{split}
	\end{equation}
	Then the Cauchy problem of \eqref{wrt} admits a smooth solution $(h,\bv,\bw)$ on $[-2,2] \times \mathbb{R}^2$. Moreover, it has the following properties:
	\begin{enumerate}
	\item energy estimate
	\begin{equation}\label{402}
		\begin{split}
			&\|h,\bv \|_{L^\infty_{[-2,2]} H_x^{s}} + \| \bw \|_{L^\infty_{[-2,2]} H_x^{s_0-\frac14}}+ \|\nabla \bw \|_{L^\infty_{[-2,2]} L_x^{8}} \leq \epsilon_2,
			\\
			& \|h,\bv\|_{L^\infty_{[-2,2]\times \mathbb{R}^2}} \leq 1+C_0.
		\end{split}
	\end{equation}

	\item  dispersive estimate for $h$ and $\bv$
	\begin{equation}\label{s403}
		\| d \bv, d h,d\bv_{+} \|_{L^4_{[-2,2]} C^\delta_x} \leq \epsilon_2,
	\end{equation}
	where $\bv_{+}$ is defined as in \eqref{De0}-\eqref{De}.

\item dispersive estimate for the linear equation 

Let $f$ satisfy
	\begin{equation}\label{linearAa}
		\begin{cases}
			& \square_g f=0, \qquad (t,x) \in [-2,2]\times \mathbb{R}^2,
			\\
			&(f, \partial_t f)|_{t=t_0}=(f_0, f_1) \in H_x^r(\mathbb{R}^2) \times H_x^{r-1}(\mathbb{R}^2),
		\end{cases}
	\end{equation}
	where $g$ is defined in \eqref{met1}. For each $1 \leq r \leq s+1$, the Cauchy problem \eqref{linearAa} has a unique solution in $C([-2,2]; H_x^r) \cap C^1([-2,2]; H_x^{r-1})$. Furthermore, for $k<r-1$, we also have
	\begin{equation}\label{304}
		\|\left< \nabla\right>^k f\|_{L^2_{[-2,2]} L^\infty_x} \lesssim  \| f_0\|_{H_x^r}+ \| f_1\|_{H_x^{r-1}},
	\end{equation}
	and the same estimates hold with $\left< \nabla \right>^k$ replaced by $\left< \nabla \right>^{k-1}d$.
	\end{enumerate}
\end{proposition}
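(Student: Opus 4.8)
\textbf{Proof plan for Proposition \ref{p1}.} The argument follows the Smith--Tataru scheme \cite{ST}: a continuous induction on the length of the time interval in which the energy estimates of Section \ref{nee} are driven by an $L^4_tL^\infty_x$ Strichartz control of $dh,d\bv$, and that control is in turn recovered from a parametrix for the acoustic wave operator $\square_g$. Since $(h_0,\bv_0,\bw_0)$ is smooth and compactly supported, the reduced symmetric hyperbolic system \eqref{QHl} of Lemma \ref{QH} yields a unique smooth solution $(h,\bv)$ --- hence $\bw$ by \eqref{Vor} --- on a maximal interval $(-T_\ast,T_\ast)$, and the usual continuation criterion lets us extend it past any time at which $\|dh,d\bv\|_{L^1_tL^\infty_x}$ stays bounded; so it suffices to prove the a priori bounds of \eqref{402}--\eqref{304} on $[-T,T]$ uniformly for $T\le 2$. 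By the time-reversal symmetry of \eqref{wrt} we work on $[0,T]$ and make the bootstrap assumption $\|dh,d\bv,d\bv_{+}\|_{L^4_{[0,T]}C^\delta_x}\le \epsilon_1$, with the parameters ordered as in \eqref{a0}, aiming to upgrade the bound to $\epsilon_2$.

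\textbf{Step 1 (energy estimates).} The bootstrap assumption gives $\int_0^T\|dh,d\bv\|_{L^\infty_x}\,d\tau\le (2T)^{3/4}\epsilon_1\ll 1$, which keeps the hypothesis \eqref{HEw}/\eqref{aap} valid ($\|h,\bv\|_{L^\infty_{[0,T]\times\mathbb R^2}}\le C_0+(2T)^{3/4}\|dh,d\bv\|_{L^4_tL^\infty_x}\le 1+C_0$, and $c_s=c_s(h)$ stays in a compact subset of $(0,1)$). The linear energy inequality \eqref{ew02} then gives $\|h,\bv\|_{L^\infty_tH^s_x}\lesssim\epsilon_3$, and the total energy estimate of Theorem \ref{DW4} --- in which, because the $(h,\bv)$-norms multiplying the vorticity terms are now $\lesssim\epsilon_3$, the Gr\"onwall exponent stays $O(1)$ --- yields $\|\bw\|_{L^\infty_tH^{s_0-\frac14}_x}+\|\nabla\bw\|_{L^\infty_tL^8_x}\lesssim\epsilon_3$. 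Taking $\epsilon_3$ small enough relative to $\epsilon_2$ recovers \eqref{402}. From the elliptic estimate \eqref{appl} of Lemma \ref{Ees} and Lemma \ref{yuy} we record, for a small $a>0$ (admissible since $s>\frac74$ and $s_0>\frac74$), $\|\bv_{-}\|_{L^2_tH^{2+a}_x}+\|\mathbf T\bv_{-}\|_{L^2_tH^{s}_x}+\|\partial_t\mathbf T\bv_{-}\|_{L^2_tH^{s-1}_x}\lesssim\epsilon_3$; and, by a space--time elliptic $W^{2,8}$ estimate for $\mathbf{Id}-\mathbf P$ together with $W^{2,8}(\mathbb R^{1+2})\hookrightarrow C^{1,5/8}$ and the bound on $\nabla\bw$ from above, the pointwise estimate $\|d\bv_{-}\|_{L^\infty_tC^\delta_x}\lesssim\epsilon_3$ (using $\delta<s-\tfrac74\le\tfrac18<\tfrac58$). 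This last bound is precisely where the hypothesis $\nabla\bw_0\in L^8$ in \eqref{300} is used.

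\textbf{Step 2 (the linear dispersive estimate, item (3)).} This is the analytic core and, I expect, the main obstacle. It is supplied by Section \ref{secp4} (Proposition \ref{r2}), which rests on Proposition \ref{r3}, via the wave-packet/FBI parametrix of Smith--Tataru: one truncates $g$ to frequencies $\lesssim\sqrt\lambda$, rescales to the unit-frequency regime, transports wave packets along the characteristic (null) hypersurfaces of the truncated metric, and extracts the decay from the geometry of those hypersurfaces. The crucial point is that, although $\partial^2 g$ lies only in $H^{s-2}_x$ (too rough to control the full curvature), the curvature components entering the null-hypersurface geometry are governed by $\square_g h=\mathcal D\in H^{s-1}_x$ (Lemma \ref{yux}) and by the vorticity $\bw$, so the $H^{\frac32+}_x$ control of $\bw$ from Step 1 suffices --- this is why the threshold on $\bw_0$ is $\frac32+$ and not $\frac74+$. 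Granting Proposition \ref{r2}, the Cauchy problem \eqref{linearAa} is well posed in $C([-2,2];H^r_x)\cap C^1([-2,2];H^{r-1}_x)$ for $1\le r\le s+1$ with the Strichartz bound \eqref{304} (in its sharp $L^4_t$ form, losing $\tfrac34$ derivatives); uniqueness follows from the energy inequality \eqref{ew00}.

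\textbf{Step 3 (Strichartz for $h$, $\bv_{+}$, and closing the bootstrap).} For $h$ we use $\square_g h=\mathcal D$ and Duhamel: applying the estimate of Step 2 to the free evolution and to $\int_0^t(\text{propagator})\mathcal D$, and bounding $\|\mathcal D\|_{L^1_tH^{s-1}_x}\le (2T)^{3/4}\|dh,d\bv\|_{L^4_tL^\infty_x}\|h,\bv\|_{L^\infty_tH^s_x}\lesssim\epsilon_1\epsilon_3$ via Lemma \ref{yux}, gives $\|dh\|_{L^4_tC^\delta_x}\lesssim\epsilon_3$. For $\bv_{+}$ we use the improved wave equation \eqref{wag0} of Lemma \ref{Wag}: its right-hand side is a combination of $\mathbf T\mathbf T\bv_{-}$, $\bv_{-}$ and $\bQ$, which by Step 1 and \eqref{Dsb}--\eqref{etaq} lie in $L^1_tH^{s-1}_x$ with norm $\lesssim\epsilon_3$ (for $\bQ$ one gains an extra $\epsilon_1$, as for $\mathcal D$), while $\bv_{+}(0)=\bv_0-\bv_{-}(0)$ is bounded in $H^s_x$ by $\lesssim\epsilon_3$; Duhamel against Step 2 then gives $\|d\bv_{+}\|_{L^4_tC^\delta_x}\lesssim\epsilon_3$. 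Adding $\|d\bv_{-}\|_{L^4_tC^\delta_x}\lesssim\epsilon_3$ from Step 1 yields $\|d\bv\|_{L^4_tC^\delta_x}\lesssim\epsilon_3$, so $\|dh,d\bv,d\bv_{+}\|_{L^4_{[0,T]}C^\delta_x}\le C\epsilon_3\le\epsilon_2\ll\epsilon_1$ once $\epsilon_3$ is small, which strictly improves the bootstrap assumption. Since $T\mapsto\|dh,d\bv,d\bv_{+}\|_{L^4_{[-T,T]}C^\delta_x}$ is continuous and vanishes at $T=0$ for the smooth solution, a continuous-induction argument propagates the bound to $T=2$; together with Steps 1--2 this gives \eqref{s403}, \eqref{402} and item (3), completing the proof.
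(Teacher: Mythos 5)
Your overall strategy coincides with the paper's: reduce to a priori bounds for the smooth solution, bootstrap a Strichartz norm, feed it into the energy estimates of Section \ref{nee}, and recover the dispersive control from the Smith--Tataru wave-packet parametrix, with $h$ and $\bv_{+}$ handled by Duhamel against Lemmas \ref{yux} and \ref{yuy}. Two structural choices differ, and one of them hides a genuine gap. First, the paper does not run a continuous induction in the endpoint $T$ of the time interval; it truncates the metric to a globally defined $\mathbf{g}$ (equation \eqref{AMd3}) that is flat for $t\le-\tfrac32$, works on the fixed slab $[-2,2]$, and runs the continuity argument in the amplitude parameter $\gamma\in[0,1]$ of the initial data. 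This is not cosmetic: the characteristic foliations $\Sigma_{\theta,r}$ and the parametrix are built starting from the flat region at $t=-2$, so they are objects attached to the whole slab, and their continuity in a time endpoint $T$ is not addressed anywhere; your scheme would have to rebuild the foliation for each $T$ and prove continuity of the resulting quantities in $T$, which you do not do and which the paper deliberately avoids.

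Second, and this is the gap: your only bootstrap quantity is $\|dh,d\bv,d\bv_{+}\|_{L^4C^\delta_x}\le\epsilon_1$, but Proposition \ref{r2} (and hence \ref{r3} and \ref{r5}, i.e.\ your entire Step 2) is stated under the hypothesis $\Re(h,\bv,\bw)\le 2\epsilon_1$, where $\Re$ in \eqref{500} measures the $H^{s_0-\frac14}$ regularity of the defining functions $\phi_{\theta,r}$ of the null hypersurfaces. This functional is itself a bootstrap quantity --- the content of Proposition \ref{p4}(1) is precisely that $\Re\le2\epsilon_1$ self-improves to $\Re\le\epsilon_1$ --- and "granting Proposition \ref{r2}" without assuming $\Re\le2\epsilon_1$ is circular: you cannot invoke the parametrix estimates in Step 2 under your stated assumption alone. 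To repair this you must add $\Re$ to the bootstrap (or to the set $\mathcal{A}$ in the paper's version) and verify its continuity, which is what the definition of $\mathcal{H}$ and the open/closed argument in Section \ref{sec4.3} accomplish. Two smaller points: the $\nabla\bw_0\in L^8$ hypothesis is used to close the vorticity energy estimates in Theorem \ref{DW4} (via the commutator terms), not to get Hölder control of $d\bv_{-}$ --- the paper obtains $\|d\bv_{-}\|_{L^4_tC^\delta_x}$ from the $L^2$-based elliptic estimate of Lemma \ref{Ees} and the embedding $H^{\frac54+2\delta}_x$-control of $\bw$ (see \eqref{w04}--\eqref{w06}), and your proposed $W^{2,8}$ space-time elliptic estimate for $\mathbf{Id}-\mathbf{P}$ is not established in the paper; and the metric is truncated in frequency at $\lambda$ (i.e.\ $\mathbf{g}_\lambda=P_{<\lambda}\mathbf{g}$), not at $\sqrt{\lambda}$, in the parametrix construction of Section \ref{Ap}.
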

Assuming Proposition \ref{p1} holds, we can prove Proposition \ref{p3} as follows.
\begin{proof}[Proof of Proposition \ref{p3} by using Proposition \ref{p1}] 
	We divide this proof into three steps.

	$\textbf{Step 1: Scaling}$. We note the initial data in Proposition \ref{p3} satisfying
	\begin{equation}\label{a4}
		\begin{split}
			&\|h_0\|_{H^{s}}+ \| \bv_0\|_{H^{s}} + \|\bw_0\|_{H^{s_0-\frac14}} + \|\nabla \bw_0\|_{L^{8}} \leq M_0,
		\end{split}
	\end{equation}
	which is not small. Take the scaling
	\begin{equation*}
		\begin{split}
			&\underline{h}(t,x)=h(Tt,Tx),\quad \underline{{\bv}}(t,x)=\bv(Tt,Tx).
		\end{split}
	\end{equation*}
	Due to \eqref{a4}, we can compute out
	\begin{equation}\label{sca}
		\begin{split}
			&\|\underline{\bv}_0\|_{\dot{H}^{s}}=T^{s-1} \|{\bv}_0\|_{\dot{H}^{s}} \leq M_0 T^{s-1}.
		\end{split}
	\end{equation}
	Similarly, we obtain
	\begin{equation}\label{scab}
		\begin{split}
			& \|\underline{h}_0\|_{\dot{H}^s} \leq M_0 T^{s-1}.
		\end{split}
	\end{equation}
	Also, we can calculate
	\begin{equation}\label{scac}
		\begin{split}
			 \| \underline{\bw}_{0} \|_{\dot{H}^{s_0-\frac14}} 
			=&  \|\epsilon^{\alpha \beta \gamma} \partial_\beta   \underline{v}_{0\gamma}  \|_{\dot{H}^{s_0-\frac14}}
			\\
			  = & T^{s_0+\frac34} \|\epsilon^{\alpha \beta \gamma } \partial_\beta   {v}_{0\gamma}  \|_{\dot{H}^{s_0-\frac14}}
			\\
			 = &T^{s_0+\frac34} \| \nabla \bw_0 \|_{L^{8}}
			\\
		 \leq	& M_0 T^{s_0+\frac34},
		\end{split}
	\end{equation}
	and
		\begin{equation}\label{scacd}
		\begin{split}
			\| \nabla \underline{\bw}_{0} \|_{L^8} = & \|\epsilon^{\alpha \beta \gamma} \partial_\beta  
			\underline{v}_{0\gamma}  \|_{\dot{W}^{1,8}} \\  = & T^{\frac74} \|\epsilon^{\alpha \beta \gamma } \partial_\beta  {v}_{0\gamma}  \|_{\dot{W}^{1,8}}
			\\
			 =& T^{\frac74} \| \bw_0 \|_{\dot{W}^{1,8}}
			\\
			 \leq & M_0 T^{\frac74}.
		\end{split}
	\end{equation}
	Choose sufficiently small $T$ such that
	\begin{equation}\label{sca0}
		\max\{M_0 T^{s-1} ,  M_0 T^{s_0+\frac34}, M_0 T^{\frac74}  \}\ll \epsilon_3.
	\end{equation}
	Therefore, by \eqref{sca}, \eqref{scab}, \eqref{scac}, and \eqref{sca0}, we get
	\begin{equation}\label{sca1}
		\begin{split}
			& \|\underline{\bv}_0\|_{\dot{H}^{s}}+ \|\underline{h}_0\|_{\dot{H}^{s}} + \|\underline{\bw}_0\|_{\dot{H}^{s_0-\frac14}} + \|\nabla\underline{\bw}_0\|_{L^{8}} \leq \epsilon_3.
		\end{split}
	\end{equation}
	Using \eqref{HEw} and \eqref{muu}, we also have
	\begin{equation}\label{sca2}
		\|\underline{\bv}_0,\underline{h}_0\|_{L^\infty}\leq C_0.
	\end{equation}
	Next, to reduce the initial data with compact support, we use the physical localization technique.
	
	$\textbf{Step 2: Localization}$. Note that the propagation speed of \eqref{wrt} is finite and $c$ be the largest one. Let $\chi(x)$ be a smooth function supported in $B(0,c+2)$, and which equals $1$ in $B(0,c+1)$. Define
	\begin{equation*}
		\underline{\bv}_0=(\underline{v}^0_0,\mathring{\underline{\bv}}_0).
	\end{equation*}
	For given $y \in \mathbb{R}^2$, we define the localized initial data for the velocity and density near $y$:
	\begin{equation}\label{sid0}
		\begin{split}
			\mathring{\widetilde{\bv}}_0(x)=&\chi(x-y) ( \underline{\mathring{\bv}}_0(x) - \mathring{\underline{\bv}}_0(y)) ,
			\\
			\widetilde{ h }_0(x)=&\chi(x-y)( \underline{h}_0(x)- \underline{h}_0(y) ).
		\end{split}
	\end{equation}
	Since the velocity $\widetilde{\bv}$ satisfies \eqref{muu}, so we set
	\begin{equation}\label{sid00}
		{\widetilde{v}^0}_0=\sqrt{\mathrm{e}^{2\widetilde{ h }_0}+ |\mathring{\widetilde{\bv}}_0|^2}, \quad \widetilde{\bv}_0=(	{\widetilde{v}^0}_0, \mathring{\widetilde{\bv}}_0).
	\end{equation}
	By using \eqref{sca2}, \eqref{sid0}, and \eqref{sid00}, we have
	\begin{equation}\label{sca3}
		\|\mathring{\widetilde{\bv}}_0,\widetilde{ h }_0\|_{L^\infty}\leq C_0, \quad 1\leq{\widetilde{v}^0}_0 \leq 1+C_0.
	\end{equation}
	Due to \eqref{sid0} and \eqref{sid00}, we can verify that
	\begin{equation}\label{245}
		\| \widetilde{\bv}_0\|_{{H}^{s}} \lesssim  \|\underline{\bv}_0\|_{\dot{H}^{s}}, \quad  \| \widetilde{h}_0 \|_{H^{s}} \lesssim \|\underline{h}_0 \|_{\dot{H}^{s}}.
	\end{equation}
	By use of \eqref{Vor}, we define
	\begin{equation}\label{wde}
		\widetilde{\bw}_0= \epsilon^{\alpha \beta \gamma } \partial_{\beta}  \widetilde{v}_{0\gamma}.
	\end{equation}
	For $\widetilde{\bv}_{0}$ is the initial data, combining \eqref{wde} and \eqref{sca3} yield
	\begin{equation}\label{sid1}
		\| \widetilde{\mathbf{\bw}}_0 \|_{L^2} \lesssim \|\nabla \widetilde{\bv}_{0}\|_{L^2} .
	\end{equation}
	Due to \eqref{wde} and \eqref{sid0}, we infer
	\begin{equation}\label{sid4}
		\begin{split}
			\| \widetilde{\bw}_0 \|_{\dot{H}^{s_0}}\lesssim 
		\|  \underline{\bw}_{0}\|_{\dot{H}^{s_0-\frac14}}, \quad \| \nabla \widetilde{\bw}_0 \|_{L^8}\lesssim 
		\|  \nabla \underline{\bw}_{0}\|_{L^8}.
		\end{split}
	\end{equation}
	Combining \eqref{245}, \eqref{sca1}, \eqref{sid1}, and \eqref{sid4}, we therefore conclude
	\begin{equation}\label{sid}
		\begin{split}
			&	\|\widetilde{ h }_0 \|_{{H}^{s}}+\|\widetilde{\bv}_0 \|_{{H}^{s}}+ \|\widetilde{ \bw }_0 \|_{{H}^{s_0}} + \|\nabla \widetilde{ \bw }_0 \|_{{L}^{8}}\lesssim \epsilon_3.
		\end{split}
	\end{equation}
	\qquad $\textbf{Step 3: Using Proposition \ref{p1}}$. 
	Under the initial conditions \eqref{sca3} and \eqref{sid}, by Proposition \ref{p1}, there is a smooth solution $(\widetilde{h}, \widetilde{\bv}, \widetilde{\bw})$ on $[-2,2]\times \mathbb{R}^2$ such that
	\begin{equation}\label{p}
		\begin{cases}
			\square_{\widetilde{g}} \widetilde{h}=\widetilde{\mathcal{D}},
			\\
			\square_{\widetilde{g}} \widetilde{v}^\alpha=-\widetilde{c}^2_s \widetilde{\Theta} \epsilon^{\alpha \beta \gamma}\partial_\beta w_\gamma+\widetilde{Q}^\alpha,
			\\
			(\widetilde{v}^\kappa + \underline{v}^\kappa_0(y)) \partial_\kappa \widetilde{w}^\alpha =\widetilde{w}^\kappa \partial^\alpha \widetilde{v}_\kappa - \widetilde{w}^\alpha \partial_\kappa \widetilde{v}^\kappa,
			\\
			(\widetilde{h},\widetilde{\bv},\widetilde{\bw})|_{t=0}=(\widetilde{h}_0,\widetilde{\bv}_0,\widetilde{\bw}_0),
		\end{cases}
	\end{equation}
	where the quantities $\widetilde{c}^2_s$, $\widetilde{{g}}$, and $\widetilde{\Theta}$ are defined by
	\begin{equation*}\label{DDE}
		\begin{split}
			\widetilde{c}^2_s&= c^2_s(\widetilde{h}+\underline{h}_0(y)),
			\\
			\widetilde{g}&=g(\widetilde{h}+\underline{h}_0(y),\widetilde{\bv}+\underline{\bv}_0(y) ),
			\\
			\widetilde{\Theta}&={\Theta}(\widetilde{h}+\underline{h}_0(y), \widetilde{\bv}+\underline{\bv}_0(y)),
		\end{split}
	\end{equation*}
	and $\widetilde{\mathcal{D}}, \widetilde{Q}^\alpha$ have the same formulations with $\mathcal{D}$, ${Q}^\alpha$ by replacing $({h},\bv)$ to $(\widetilde{h}+\underline{h}_0(y), \widetilde{\bv}+\underline{\bv}_0(y))$.

Next, we consider a better component of the velocity $\widetilde{\bv}$. We define
	\begin{equation}\label{deu1}
		\widetilde{\mathbf{P}}=\mathrm{I}-\left[ m^{\alpha \beta}+ 2 \mathrm{e}^{ 2( \widetilde{h}+\underline{h}_0(y) ) }(\widetilde{v}^\alpha+ \underline{v}^\alpha_0(y) ) (\widetilde{v}^\beta+ \underline{v}^\beta_0(y) ) \right] \partial^2_{ \alpha \beta}.
	\end{equation}
Referring to \eqref{deu1} and using Lemma \ref{Ees}, we conclude that $\widetilde{\mathbf{P}}$ is a space-time elliptic operator. Define
two vectors $\widetilde{\bv}_{-}=(\widetilde{v}^0_{-},\widetilde{v}^1_{-},\widetilde{v}^2_{-})$ and $\widetilde{\bv}_{+}=(\widetilde{v}^0_{+},\widetilde{v}^1_{+},\widetilde{v}^2_{+})$ by
\begin{equation*}\label{deu2}
		\begin{split}
			 \widetilde{\mathbf{P}} \widetilde{v}^\alpha_{-} & =    \epsilon^{\alpha \beta \gamma} \partial_\beta \widetilde{ w }_\gamma ,
			\\
			\widetilde{\bv}_{+} & =\widetilde{\bv}-\widetilde{\bv}_{-}.
		\end{split}
	\end{equation*}
Denote an operator
	\begin{equation*}\label{deu3}
		\widetilde{\mathbf{T}}= \partial_t + \frac{\widetilde{v}^i+\underline{v}^i_0(y)}{\widetilde{v}^0+ \underline{v}^0_0(y) }\partial_i.
	\end{equation*}
Due to \eqref{wag0}, we have
	\begin{equation*}
		\begin{split}
			\square_g  \widetilde{v}^{\alpha}_{+}&= \widetilde{\Theta}(1-3 \widetilde{c}^2_s)   \mathrm{e}^{-2(\widetilde{h}+\underline{h}_0(y))} ( \widetilde{v}^\beta+ \underline{\widetilde{v}}^\beta_0(y) ) ( \widetilde{v}^\gamma+ \underline{\widetilde{v}}^\gamma_0(y) ) \partial^2_{\beta \gamma}\widetilde{v}^\alpha_{-}- \widetilde{c}^2_s  \widetilde{\Theta} \widetilde{v}^\alpha_{-} + \widetilde{Q}^\alpha.
		\end{split}
	\end{equation*}
	Using Proposition \ref{p1} again, the solution $(\widetilde{h}, \widetilde{{\bv}}, \widetilde{\bw})$ also satisfies
	\begin{equation}\label{see0}
		\|\widetilde{h}, \widetilde{\bv}\|_{L^\infty_{[-2,2]}H_x^s}+  \| \widetilde{\bw} \|_{L^\infty_{[-2,2]}H_x^{s_0-\frac14}} + \|\nabla \widetilde{\bw} \|_{L^\infty_{[-2,2]}L_x^{8}} \leq \epsilon_2,
	\end{equation}
	and
	\begin{equation*}\label{see1}
		\|d\widetilde{\bv}, d\widetilde{h}\|_{L^4_{[-2,2]} C^\delta_x} \leq \epsilon_2.
	\end{equation*}
	Moreover, the linear wave equation endowed with the Lorentzian metric $\widetilde{{g}}$
	\begin{equation*}\label{312}
		\begin{cases}
			&\square_{ \widetilde{{g}}} \tilde{f}=0,
			\\
			&\tilde{f}(t_0,\cdot)=\widetilde{f}_0, \ \partial_t\tilde{f}(t_0,)=\widetilde{f}_1,
		\end{cases}
	\end{equation*}
	also admits a solution $\widetilde{f} \in C([-2,2],H_x^r)\times C^1([-2,2],H_x^{r-1})$. Furthermore, if $k<r-1$, the following Strichartz estimate holds:
	\begin{equation*}\label{s31}
		\|\left< \nabla \right>^k \widetilde{f}\|_{L^2_{[-2,2]} L^\infty_x} \lesssim  \| \widetilde{f}_0\|_{H^r}+ \| \widetilde{f}_1\|_{H^{r-1}}.
	\end{equation*}
On the other hand, set 
	\begin{equation}\label{vw}
		\begin{split}
			\bar{h}(t,x)  &=\widetilde{ h}+ \underline{h}_0(y),
			\\	
			\bar{{\bv}}(t,x)&=(\bar{v}^0, \mathring{\bar{\bv}}), \quad \bar{v}^0=\sqrt{\mathrm{e}^{2\bar{h}}+|\mathring{\bar{\bv}}|^2},
			\\
			\mathring{\bar{\bv}}(t,x)  &= \mathring{\widetilde{\bv}}+\underline{\mathring{\bv}}_0(y),
			\\
			\bar{w}^\alpha (t,x)  &= \epsilon^{\alpha \beta \gamma} \partial_\beta \bar{{v}}_\gamma.
		\end{split}
	\end{equation}
	Therefore, $(\bar{h},\bar{\bv},\bar{\bw})$ is also a solution of \eqref{p}, and its initial data coincides with $(h_0, \bv_0, \bw_0)$ in $B(y,c+1)$. Consider a restriction
	\begin{equation}\label{RS}
		(\bar{h},\bar{\bv},\bar{\bw})|_{\mathrm{K}^y},
	\end{equation}
	where $y\in \mathbb{R}^2$ and $\mathrm{K}^y=\left\{ (t,x): ct+|x-y| \leq c+1, |t| <1 \right\}$. Then \eqref{RS} solves \eqref{p} on $\mathrm{K}^y$. By finite speed of propagation, a smooth solution $(\bar{h}, \bar{\bv}, \bar{\bw})$ solves \eqref{wrt} on $[-1,1] \times \mathbb{R}^2$. Without diffusion, we still use the notation $\bar{h}, \bar{\bv}$ and $\bar{\bw}$ being the restrictions on $\mathrm{K}^y$.

	From \eqref{vw}, by time-space scaling $(t,x)$ to $(T^{-1}t,T^{-1}x)$, we also obtain
	\begin{equation}\label{po0}
		\begin{split}
			&(h, \bv, \bw)=(\bar{h}, \bar{\bv}, \bar{\bw})(T^{-1}t,T^{-1}x),
			\\
			&( h, \bv, \bw)|_{t=0}=(\bar{h}, \bar{\bv}, \bar{\bw})(0,T^{-1}x)=(h_0, \bu_0, \bw_0).
		\end{split}
	\end{equation}
	Therefore, the function $(h, \bv, \bw)$ defined in \eqref{po0} is the solution of \eqref{wrt} according to the uniqueness of solutions, i.e. Corollary \ref{uniq}. To obtain the energy estimates for $(h, \bv,\bw)$ on $[0,T]\times \mathbb{R}^2$, we use the cartesian grid $3^{-\frac12} \mathbb{Z}^2$ in $\mathbb{R}^2$, and a corresponding smooth partition of unity
	\begin{equation*}
		\textstyle{\sum}_{y \in 3^{-\frac12} \mathbb{Z}^2} \psi(x-y)=1,
	\end{equation*}
	such that the function $\psi$ is supported in the unit ball. Therefore, we have
	\begin{equation}\label{vwa}
		\begin{split}
			& \mathring{\bar{\bv}}=\textstyle{\sum}_{y \in 3^{-\frac12} \mathbb{Z}^2} \psi(x-y)(\mathring{\widetilde{\bv}}+\underline{\mathring{\bv}}_0(y) ),
			\\
			& \bar{h}=\textstyle{\sum}_{y \in 3^{-\frac12} \mathbb{Z}^2} \psi(x-y)(\widetilde{h}+\underline{h}_0(y) ),
			\\
			& \bar{v}^0=\sqrt{\mathrm{e}^{2\bar{h}}+|\mathring{\bar{\bv}}|^2}, \quad \bar{\bv}=(\bar{v}^0, \mathring{\bar{\bv}}) , 
			\\
			&\bar{w}^\alpha =\epsilon^{\alpha \beta \gamma} \partial_\beta \bar{v}_\gamma.
		\end{split}
	\end{equation}
	Due to \eqref{see0}, \eqref{sca2}, and \eqref{vwa}, we shall obtain
	\begin{equation}\label{po03}
		\begin{split}
			& \|\bar{h},\bar{\bv}\|_{L^\infty_{[0,1]}L^\infty_x} \leq \|\widetilde{h},\widetilde{\bv} \|_{H_x^{s}} + \|\underline{h}_0, \underline{\bv}_0\|_{L^\infty_x} \leq 1+C_0.
		\end{split}
	\end{equation}
	By changing the coordinates, it yields
	\begin{equation}\label{po2}
		\begin{split}
			& \|d{\bv}, d{h}\|_{L^4_{[0,T]}C^{\delta}_x}
			\\
			\leq & T^{-(\frac34+{\delta})}\|d\widetilde{\bv}, d\widetilde{h}\|_{L^4_{[0,1]}C^{\delta}_x}
			\\
			\leq
			&C T^{-(\frac34+{\delta})} (\|\widetilde{\bv}_0\|_{H_x^s}+ \|\widetilde{h}_0\|_{H_x^s}+ \| \widetilde{\bw}_0 \|_{H_x^{s_0-\frac14}} + \|\nabla \widetilde{\bw}_0 \|_{L_x^{8}})
			\\
			\leq & C T^{-(\frac34+{\delta})}(T^{s-1}\|{\bu}_0\|_{\dot{H}_x^s}+ T^{s-1} \|h_0\|_{\dot{H}_x^s}
			+ T^{s_0-\frac14}\| {\bw}_0 \|_{\dot{H}_x^{s_0-\frac14}} +  T^{\frac74}\|\nabla \bw_0\|_{L^8_x} )
			\\
			\leq & C (\|{\bv}_0\|_{H_x^s}+ \|{h}_0\|_{H_x^s}+ \| {\bw}_0 \|_{H_x^{s_0-\frac14}}).
		\end{split}
	\end{equation}
Above, we use ${\delta} \in (0,s-\frac74)$ and $\frac74<s_0\leq s \leq 2$. By using \eqref{po03} and changing of coordinates from $(t,x) \rightarrow (T^{-1}t,T^{-1}x)$, we can obtain
	\begin{equation}\label{po05}
		\begin{split}
			& \|{\bv}, {h}\|_{L^\infty_{[0,T]\times \mathbb{R}^3}} = \|{\bar{\bv}}, \bar{h}\|_{L^\infty_{[0,1]}L^{\infty}_x}  \leq 1+C_0,
		\end{split}
	\end{equation}
	and
	\begin{equation}\label{po0a}
		\begin{split}
			& v^0= \sqrt{\mathrm{e}^{2h}+ |\mathring{\bv}|^2} \geq \mathrm{e}^h.
		\end{split}
	\end{equation}
	Applying Theorem \ref{DW4} along with equations \eqref{po2}, \eqref{po05}, and \eqref{po0a}, we conclude that $(h, \bv, \bw)$ satisfies \eqref{ig1}, \eqref{e9} and \eqref{p303}.

	It remains for us to prove \eqref{p304} and \eqref{305}. For $1\leq r \leq s+1$, we consider the following homogeneous linear wave equation
	\begin{equation}\label{po3}
		\begin{cases}
			\square_{{g}} f=0, \qquad [0,T]\times \mathbb{R}^2,
			\\
			(f,f_t)|_{t=0}=(f_0,f_1)\in H_x^r \times H^{r-1}_x.
		\end{cases}
	\end{equation}
	For \eqref{po3} is a scaling invariant system, we can transfer \eqref{po3} to the localized linear equation
	\begin{equation}\label{po4}
		\begin{cases}
			\square_{\widetilde{g}} {f}^y=0, \quad [0,1]\times \mathbb{R}^2,
			\\
			({f}^y,{f}^y_t)|_{t=0}=(f_0^y,f^y_1),
		\end{cases}
	\end{equation}
	where
	\begin{equation*}\label{po5}
		\begin{split}
			& f_0^y=\chi(x-y)(\widetilde{f}_0-\widetilde{f}_0(y)), \quad f^y_1=\chi(x-y)\widetilde{f}_1,
			\\ & \widetilde{f}_0=f_0(Tx),\quad \widetilde{f}_1=f_1(Tx).
		\end{split}
	\end{equation*}
	Let
	\begin{equation}\label{po6}
		\widetilde{f}=\textstyle{\sum}_{y\in 3^{-\frac12}\mathbb{Z}^2}\psi(x-y)({f}^y+\widetilde{f}_0(y)).
	\end{equation}
	Seeing \eqref{po4}, and using Proposition \ref{p1} again, for $k<r-1$, we get
	\begin{equation}\label{po7}
		\begin{split}
			\|\left< \nabla \right>^{k-1} d {f}^y\|_{L^4_{[0,1]} L^\infty_x} \leq  & C(\| \chi(x-y)(\widetilde{f}_0-\widetilde{f}_0(y)) \|_{H_x^r}+ \| \chi(x-y)\widetilde{f}_1 \|_{H_x^{r-1}})
			\\
			\leq & C(\|\widetilde{f}_0\|_{\dot{H}_x^r}+ \| \widetilde{f}_1 \|_{\dot{H}_x^{r-1}}).
		\end{split}
	\end{equation}
	Due to the finite speed of propagation, we can claim $\widetilde{f}={f}^y+\widetilde{f}_0(y)$ in $\mathrm{K}^y$. Applying \eqref{po6} and \eqref{po7}, we obtain
	\begin{equation}\label{po8}
		\begin{split}
			\|\left< \nabla \right>^{k-1} d \widetilde{f}\|_{L^4_{[0,1]} L^\infty_x}
			= & \sup_{y\in 3^{-\frac12}\mathbb{Z}^2}\|\left< \nabla \right>^{k-1} d ({f}^y+\widetilde{f}_0(y))\|_{L^4_{[0,1]} L^\infty_x}
			\\
			= & \sup_{y\in 3^{-\frac12}\mathbb{Z}^2}\|\left< \nabla \right>^{k-1} d {f}^y\|_{L^4_{[0,1]} L^\infty_x}
			\\
			\leq & C(\|\widetilde{f}_0\|_{\dot{H}_x^r}+ \| \widetilde{f}_1 \|_{\dot{H}_x^{r-1}}).
		\end{split}
	\end{equation}
	By a change of coordinates: $(t,x)\rightarrow (T^{-1}t,T^{-1}x)$, we have
	\begin{equation}\label{po9}
		\begin{split}
			\|\left< \nabla \right>^{k-1} d{f}\|_{L^4_{[0,T]} L^\infty_x} = & T^{k-\frac14}\|\left< \nabla \right>^{k-1} d\widetilde{f}\|_{L^4_{[0,1]} L^\infty_x}.
		\end{split}
	\end{equation}
	Taking $T\leq 1$, combing \eqref{po8} with \eqref{po9}, if $k<r-1$, then we have
	\begin{equation}\label{po10}
		\begin{split}
			\|\left< \nabla \right>^{k-1} d{f}\|_{L^4_{[0,T]} L^\infty_x}
			\leq & CT^{r-1-k}(\|{f}_0\|_{\dot{H}_x^r}+ \| {f}_1 \|_{\dot{H}_x^{r-1}})
			\\
			\leq & C(\|{f}_0\|_{{H}_x^r}+ \| {f}_1 \|_{{H}_x^{r-1}}).
		\end{split}
	\end{equation}
	Using the energy estimates for \eqref{po3} and \eqref{po2}, we can carry out
	\begin{equation}\label{po11}
		\begin{split}
			\|d {f}\|_{L^\infty_{[0,T]} H^{r-1}_x} %\leq  %& C(\| {f}_0\|_{H_x^r}+ \| {f}_1\|_{H_x^{r-1}}) \exp\{  \|d g \|_{L^1_{[0,T]L^\infty_x}}\}
			\leq  & C_{M_0}(\| {f}_0\|_{H_x^r}+ \| {f}_1\|_{H_x^{r-1}}) \int^T_0   \|d \bv,dh \|_{L^\infty_x} d\tau
			\\
			\leq & C_{M_0}(\| {f}_0\|_{H_x^r}+ \| {f}_1\|_{H_x^{r-1}}).
		\end{split}
	\end{equation}
	Due to Newton-Leibniz formula $f=\int^{t}_0 \partial_ t fd\tau -f_0$, we therefore get
	\begin{equation}\label{po12}
		\begin{split}
			\|f\|_{L^\infty_{[0,T]} L^{2}_x}
			\leq & T \|d {f}\|_{L^\infty_{[0,T]} L^{2}_x}+ \| {f}_0\|_{L_x^2}
			\\
			\leq & C_{M_0}(\| {f}_0\|_{H_x^r}+ \| {f}_1\|_{H_x^{r-1}}).
		\end{split}
	\end{equation}
	Adding \eqref{po11} and \eqref{po12}, we therefore obtain
	\begin{equation}\label{po13}
		\begin{split}
			\|{f}\|_{L^\infty_{[0,T]} H^{r}_x}+ \|\partial_t {f}\|_{L^\infty_{[0,T]} H^{r-1}_x} \leq  C_{M_0}(\| {f}_0\|_{H_x^r}+ \| {f}_1\|_{H_x^{r-1}}).
		\end{split}
	\end{equation}
	Because of \eqref{po10}, \eqref{po11}, \eqref{po12}, and \eqref{po13}, the estimates \eqref{p304} and \eqref{305} hold. Thus, we complete the proof of Proposition \ref{p3}.
\end{proof}
It remains for us to verify Proposition \ref{p1}.

\section{Proof of Proposition \ref{p1}}\label{sec4.3}
In this section, we will establish Proposition \ref{p1} through a bootstrap argument. For technical reasons, it is convenient for us to replace the original acoustic metric by a truncated one. To do this, we define two vectors
\begin{equation*}\label{0e}
	\mathbf{0}=(0,0,0)^{\mathrm{T}}, \quad \mathbf{1}=(1,0,0)^{\mathrm{T}}.
\end{equation*}
We note that $h=0, \bv=\mathbf{1}, \bw=\mathbf{0}$ is an equilibrium state for the system \eqref{wrt}. Then the initial data in \eqref{300} can be seen as a small perturbation around this equilibrium state. By setting $h=0$ and $\bv=\mathbf{1}$ in $g$ (see \eqref{met1}), we denote this as $g_*$. The inverse matrix of the metric $g_*$ is
\begin{equation*}
	g^{-1}_*=
	\left(
	\begin{array}{cccc}
		-1 & 0 & 0 \\
		0 & c^2_s(0) & 0\\
		0 & 0 & c^2_s(0) 
	\end{array}
	\right ),
\end{equation*}
where $c_s(0)=c_s(h)|_{h=0}$. By a linear change of coordinates which preserves $dt$, we may assume that $g^{\alpha \beta}_*=m^{\alpha \beta}$. Let $\chi$ be a smooth cut-off function supported in the region $B(0,3+2c) \times [-\frac{3}{2}, \frac{3}{2}]$, which equals to $1$ in the region $B(0,2+2c) \times [-1, 1]$. Define $\mathbf{g}= (\mathbf{g}^{\alpha \beta})_{3\times 3}$ and
\begin{equation}\label{AMd3}
	\begin{split}
		\mathbf{g}^{\alpha \beta}=& \chi(t,x)(g^{\alpha \beta}-g^{\alpha \beta }_*)+g^{\alpha \beta }_*,
	\end{split}
\end{equation}
where $g^{\alpha\beta}=\Theta\{ c_s^{2}m^{\alpha\beta}+\mathrm{e}^{-2h}(c_s^2-1)v^\alpha v^\beta \}$ is stated as in \eqref{met1}. Moreover, using \eqref{AMd3}, we have
\begin{equation*}
	\mathbf{g}=g, \quad \textrm{if} \ (t,x)\in [-1,1]\times \mathbb{R}^2.
\end{equation*}
Therefore, $\mathbf{g}$ is an extension of the metric $g$. Consider the following wave-transport system:
\begin{equation}\label{XT}
	\begin{cases}
		\square_{\mathbf{g}} h=\mathcal{D},
		\\
		\square_{\mathbf{g}} v^\alpha=-c^2_s \Theta  \epsilon^{\alpha \beta \gamma}\partial_\beta w_\gamma+Q^\alpha,
		\\
		v^\kappa \partial_\kappa w^\alpha = w^\kappa \partial^\alpha v_\kappa - w^\alpha \partial_\kappa v^\kappa,
	\end{cases}
\end{equation}
where $\mathcal{D}$ and $Q^\alpha$ ($\alpha=0,1,2$) have the same formulations as in equations \eqref{wrt0} and \eqref{wrt1}. Consequently, the system \eqref{XT} is an extension of \eqref{wrt} from $[-1,1]\times \mathbb{R}^2$ to $[-2,2]\times \mathbb{R}^2$.

\begin{definition}\label{fh}(Definition for $\mathcal{H}$)
Assume $\frac74<s_0\leq s \leq \frac{15}{8}$ and $\delta\in (0,s-\frac74)$. We denote by $\mathcal{H}$ the family of smooth solutions $(h, \bv, \bw)$ to the equation \eqref{XT} for $t \in [-2,2]$, with the initial data $(h_0, {\bv}_0, \bw_0)$ supported in $B(0,2+c)$, and for which
\begin{equation}\label{401}
	\|\bv_0\|_{H^s} + \|h_0\|_{H^s}+\| \bw_0\|_{H^{s_0-\frac14}} + \|\nabla \bw_0\|_{L^{8}}   \leq \epsilon_3,
\end{equation}
\begin{equation}\label{402a}
	\| h \|_{L^\infty_{[-2,2]} H_x^{s}}+ \|\bv\|_{L^\infty_{[-2,2]} H_x^{s}}+\| \bw\|_{L^\infty_{[-2,2]} H_x^{s_0-\frac14}}+ \| \nabla \bw\|_{L^\infty_{[-2,2]} L_x^{8}} \leq 2 \epsilon_2,
\end{equation}
\begin{equation}\label{403}
	\| d \bv, d h,d\bv_{+} \|_{L^4_{[-2,2]} C_x^\delta} \leq 2 \epsilon_2.
\end{equation}
\end{definition}
The bootstrap argument can be stated as follows.
\subsection{A bootstrap argument}
\begin{proposition}\label{p4}
Assume $s\in (\frac74,\frac{15}{8}]$ and that \eqref{a0} holds. Then there exists a continuous functional $\Re: \mathcal{H} \rightarrow \mathbb{R}^{+}$, satisfying $\Re(0, \mathbf{1}, \mathbf{0}) = 0$, such that for each $(h, \bv, \bw) \in \mathcal{H}$ satisfying $\Re(h,\bv,\bw) \leq 2 \epsilon_1$ the following hold:

	\begin{enumerate}
	\item The function $h$, $\bv$, and $\bw$ satisfies 
	\begin{equation}\label{404a}
		\Re(h,\bv,\bw) \leq \epsilon_1.
	\end{equation}

	\item The energy estimate holds:
	\begin{equation}\label{404}
		\|h\|_{L^\infty_{[-2,2]} H_x^{s}}+ \|\bv\|_{L^\infty_{[-2,2]} H_x^{s}}+  \|\bw\|_{L^\infty_{[-2,2]} H_x^{s_0-\frac14}} +  \|\nabla \bw\|_{L^\infty_{[-2,2]} L_x^{8}} \leq \epsilon_2,
	\end{equation}
	and the Strichartz estimate
	\begin{equation}\label{405}
		\|d h,d \bv \|_{L^4_{[-2,2]} C^\delta_x} \leq \epsilon_2.
	\end{equation}

	\item For any $1 \leq k \leq s+1$, and for each $t_0\in [-2,2)$, the linear wave equation
	\begin{equation*}\label{linearB}
		\begin{cases}
			& \square_{\mathbf{g}} f=0, \qquad (t,x) \in [-2,2]\times \mathbb{R}^2,
			\\
			&(f, \partial_t f)|_{t=t_0}=(f_0,f_1) \in H^r(\mathbb{R}^2)\times H^{r-1}(\mathbb{R}^2),
		\end{cases}
	\end{equation*}
	admits a unique solution $f \in C([-2,2],H_x^r) \times C^1([-2,2],H_x^{r-1})$. Furthermore, the Strichartz estimate \eqref{304} holds.
	\end{enumerate}
\end{proposition}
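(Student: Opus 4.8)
The plan is the standard Smith--Tataru continuity argument. First I would choose $\Re$ to record the defect in the dispersive bounds that the rest of the proof has to produce, namely
\[
\Re(h,\bv,\bw)=\|dh\|_{L^4_{[-2,2]}C^\delta_x}+\|d\bv_{+}\|_{L^4_{[-2,2]}C^\delta_x},
\]
augmented by the dyadic (frequency-envelope) refinements of these two norms demanded by the wave-packet parametrix of Proposition \ref{r3} and by whatever smallness of the characteristic geometry of $\mathbf{g}$ is required as input by Proposition \ref{r2}. At the equilibrium $(0,\mathbf{1},\mathbf{0})$ one has $\mathbf{g}\equiv m$ and $\bv_{+}\equiv\mathbf{1}$, so every constituent of $\Re$ vanishes there and $\Re(0,\mathbf{1},\mathbf{0})=0$; continuity on $\mathcal{H}$ is clear since each constituent is a fixed norm depending continuously on $(h,\bv,\bw)$ in the topology of $\mathcal{H}$. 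The Sobolev norms of $(h,\bv,\bw)$ are deliberately kept out of $\Re$: inside $\mathcal{H}$ they are already bounded by $2\epsilon_2$, and they will be re-derived at the improved level $\epsilon_2$. The hypothesis $\Re\le 2\epsilon_1$ is used only through Propositions \ref{r2} and \ref{r3}.

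\textbf{Item (3): Strichartz for $\square_{\mathbf{g}}$.} Inside $\mathcal{H}$ the truncated metric $\mathbf{g}$ is a uniformly Lorentzian, small perturbation of Minkowski with coefficients in $L^\infty_{[-2,2]}H^s_x$, $s>\tfrac74$, whose null geometry is governed both by $\Re$ and, through the transport equations \eqref{EW0}--\eqref{EW1}, by the vorticity $\bw$. Under $\Re\le 2\epsilon_1$ I would invoke Proposition \ref{r2} to control the regularity of the characteristic hypersurfaces of $\mathbf{g}$, and then Proposition \ref{r3} (wave-packet parametrix) to obtain the Strichartz estimates for $\square_{\mathbf{g}}f=0$: in particular \eqref{304} for $1\le r\le s+1$ and $k<r-1$, which is item (3), as well as the sharper $L^4_t$ versions needed below. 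It is here that the threshold $\bw_0\in H^{s_0-\frac14}$ with $s_0>\tfrac74$ enters: Proposition \ref{r2} controls the second fundamental form of the null cones only at that level of vorticity regularity, and the auxiliary object $\bW$ together with $\nabla\bw\in L^8$ (Theorem \ref{DW4}) is exactly what makes the commutator-driven transport energy for $\bw$ close.

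\textbf{Items (1)--(2): Strichartz and energy for the solution.} Next I would cast the evolution in $\square_{\mathbf{g}}$-form: $\square_{\mathbf{g}}h=\mathcal{D}$ from \eqref{wrt}, and, via Lemma \ref{Wag} applied to the extended system \eqref{XT}, $\square_{\mathbf{g}}\bv_{+}=\Theta\mathrm{e}^{-2h}(v^0)^2(1-3c^2_s)\mathbf{T}\mathbf{T}\bv_{-}-c^2_s\Theta\bv_{-}+\bQ$ --- the point of passing from $\bv$ to $\bv_{+}$ being that the rough term $\epsilon^{\alpha\beta\gamma}\partial_\beta w_\gamma$, only as smooth as the data, has been absorbed into the elliptic correction $\bv_{-}$. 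Duhamel together with the linear estimates of the previous paragraph bounds $\|dh,d\bv_{+}\|_{L^4_{[-2,2]}C^\delta_x}$ by the data norms plus $\|\mathcal{D},\bQ\|_{L^1_{[-2,2]}H^{s-1}_x}+\|\mathbf{T}\mathbf{T}\bv_{-},\bv_{-}\|_{L^1_{[-2,2]}H^{s-1}_x}$; Lemma \ref{yux} bounds the first by $\|dh,d\bv\|_{L^\infty_x}\|h,\bv\|_{H^s_x}$, which is $O(\epsilon_2^2)$ after integrating in time, and Lemma \ref{yuy} bounds the $\bv_{-}$ and $\mathbf{T}\mathbf{T}\bv_{-}$ terms by $\|\bw\|_{L^2_tH^{1+a}_x}(1+\|h,\bv\|^2_{L^\infty_tH^s_x})=O(\epsilon_2)$. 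With the data $O(\epsilon_3)$ this yields $\|dh,d\bv_{+}\|_{L^4_tC^\delta_x}\ll\epsilon_2$; adding $\|d\bv_{-}\|_{L^4_tC^\delta_x}$, which the elliptic bounds (Lemmas \ref{Ees}, \ref{yuy}) together with the $L^\infty_tL^2$-smallness of $\bw$ (from the $L^2$ vorticity identity \eqref{ew30}, $\|\bw\|_{L^\infty_tL^2_x}\lesssim\epsilon_3$) and interpolation make $\ll\epsilon_2$, gives $\|dh,d\bv\|_{L^4_{[-2,2]}C^\delta_x}\le\epsilon_2$, i.e. \eqref{405}. Feeding $\|dh,d\bv\|_{L^4_tL^\infty_x}\lesssim\epsilon_2$ into Theorem \ref{DW4} (applied on unit subintervals of $[-2,2]$, via \eqref{ew00}--\eqref{ew02}) yields $E(t)\lesssim(1+E(0)^5)\exp(O(\epsilon_2))\lesssim\epsilon_3\ll\epsilon_2$, i.e. \eqref{404}; and since every constituent of $\Re$ is among the quantities just bounded by $\epsilon_3+\epsilon_2^2\ll\epsilon_1$, we conclude $\Re(h,\bv,\bw)\le\epsilon_1$, i.e. \eqref{404a}.

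\textbf{Main obstacle.} The serious work, used above as a black box, is the Strichartz estimate for $\square_{\mathbf{g}}$ with a metric of only $H^{\frac74+}$ regularity (Propositions \ref{r2}--\ref{r3}): the wave-packet parametrix forces a foliation by characteristic hypersurfaces whose curvature must be controlled, and the obstruction is that the relevant geometric quantities are not governed by the $H^{\frac74+}$ norm of $(h,\bv)$ alone but by the vorticity through \eqref{EW0}--\eqref{EW1} --- which is precisely why $\bw_0\in H^{\frac32+}$ appears and why $\bW$ and $\nabla\bw\in L^8$ are introduced. A secondary but genuine difficulty in the third paragraph is making the elliptic splitting $\bv=\bv_{+}+\bv_{-}$ interact correctly with $\square_{\mathbf{g}}$: one needs $\mathbf{P}$ uniformly space-time elliptic on the bootstrap region (Lemma \ref{Ees}) and, via Lemmas \ref{Wag} and \ref{yuy}, that every $\bv_{-}$ and $\mathbf{T}\mathbf{T}\bv_{-}$ term produced is genuinely smoother than $\epsilon^{\alpha\beta\gamma}\partial_\beta w_\gamma$ --- which is the reason Lemma \ref{yuy} is the most technical of the preliminary lemmas.
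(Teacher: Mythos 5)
Your proposal is correct and follows essentially the same route as the paper: the bootstrap is closed by invoking Propositions \ref{r2} and \ref{r3}, deriving \eqref{405} from the wave equation \eqref{wag0} for $\bv_{+}$ via the linear Strichartz estimate together with Lemmas \ref{yuy} and \ref{yux}, handling $d\bv_{-}$ by the elliptic and Sobolev-embedding bounds, and feeding the resulting dispersive control into Theorem \ref{DW4} to obtain \eqref{404}. The only discrepancy is the definition of $\Re$: the paper takes $\Re$ to be exactly the characteristic-foliation functional \eqref{500}, whose improvement from $2\epsilon_1$ to $O(\epsilon_2)$ is precisely the content of \eqref{Re} in Proposition \ref{r2}, whereas you leave that geometric component unspecified and instead foreground the Strichartz norms of the solution, which are redundant in $\Re$ since they already appear at the stronger level $2\epsilon_2\ll 2\epsilon_1$ in the definition of $\mathcal{H}$ via \eqref{403}.
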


Based on Proposition \ref{p4}, we are ready to prove Proposition \ref{p1}.
\begin{proof}[Proof of Proposition \ref{p1} by using Proposition \ref{p4}]
	Consider the initial data in Proposition \ref{p1} satisfying
	\begin{equation*}
		\|h_0\|_{H^s}+\|\bv_0\|_{H^s} +  \| \bw_0\|_{H^{s_0-\frac14}} +  \| \nabla \bw_0\|_{L^{8}}  \leq \epsilon_3.
	\end{equation*}
	Denote by $\mathcal{A}$ the subset of those $\gamma \in [0,1]$ such that the equation \eqref{XT} admits a smooth solution $(h_\gamma, \bv_\gamma, \bw_\gamma)$ having the initial data
	\begin{equation*}
		\begin{split}
			h_\gamma(0)=&\gamma h_0,
			\\
			\mathring{\bv}_\gamma(0)=&\gamma \mathring{\bv}_0,
			\\
			v^0_\gamma(0)=& \sqrt{\mathrm{e}^{2h_\gamma(0) }+|\mathring{\bv}_\gamma(0)|^2},
			\\
			w^\alpha_\gamma(0)=& \epsilon^{\alpha \beta }_{\ \ \ \kappa} \partial_\beta v^\kappa_{0\gamma},
		\end{split}
	\end{equation*}
	and such that $\Re(h_\gamma,\bv_\gamma, \bw_\gamma) \leq \epsilon_1$ and \eqref{404}, \eqref{405} hold. If $\gamma=0$, then
	\begin{equation*}
		(h_\gamma, {\bv}_\gamma, \bw_\gamma)(t,x)=(0,\mathbf{1},\mathbf{0}),
	\end{equation*}
	is a smooth solution of \eqref{XT} with initial data
	\begin{equation*}
		(h_\gamma, {\bv}_\gamma, \bw_\gamma)(0,x)=(0,\mathbf{1},\mathbf{0}).
	\end{equation*}
	Thus, the set $\mathcal{A}$ is not empty. If we can prove that $\mathcal{A}=[0,1]$, thus $1 \in \mathcal{A}$. Also, \eqref{300}, \eqref{402}, \eqref{s403}, \eqref{linearA}, and \eqref{304} follow from Proposition \ref{p4}. As a result, Proposition \ref{p1} holds. It is sufficient to prove that $\mathcal{A}$ is both open and closed in $[0,1]$.

	(1) $\mathcal{A}$ is open. Let $\gamma \in \mathcal{A}$. Then $(h_\gamma, \bv_\gamma, \bw_\gamma)$ is a smooth solution to \eqref{XT}, where
	\begin{equation*}
		w^\alpha_\gamma= \epsilon^{\alpha \beta}_{\ \ \ \kappa} \partial_\beta v^\kappa_\gamma .
	\end{equation*}
	Let $\beta$ be close to $\gamma$.  Since $(h^\gamma,\bv^\gamma,\bw^\gamma)$ is smooth, a perturbation argument
	shows that the equation \eqref{XT} has a smooth solution $(h^\beta, \bv^\beta, \bw^\beta)$, which
	depends continuously on $\beta$. By the continuity of $\Re$, for $\beta$ close to $\gamma$, we infer
	\begin{equation*}
		\Re(h_\beta,\bv_\beta,\bw_\beta) \leq 2\epsilon_1,
	\end{equation*}
	and \eqref{401}, \eqref{402a}, \eqref{403} hold. Due to Proposition \ref{p4}, we get
	\begin{equation*}
		\Re(h_\beta,\bv_\beta,\bw_\beta) \leq \epsilon_1,
	\end{equation*}
	and also \eqref{404}-\eqref{405} hold. Thus, we have showed that $\beta \in \mathcal{A}$.

	(2) $\mathcal{A}$ is closed. Let $\gamma_k \in \mathcal{A}, k \in \mathbb{N}^+$. Let $\gamma$ satisfy $\lim_{k \rightarrow \infty} \gamma_k = \gamma$.
	Then there exists a sequence $\{(h_{\gamma_k}, \bv_{\gamma_k},\bw_{\gamma_k}) \}_{k \in \mathbb{N}^+}$ being the smooth solutions to \eqref{XT} and
	\begin{align*}
		& \|h_{\gamma_k}, \bv_{\gamma_k}\|_{L^\infty_{[-2,2]} H_x^{s}}
		+ \| \bw_{\gamma_k} \|_{L^\infty_{[-2,2]} H_x^{s_0-\frac14}}
		 +\| \nabla \bw_{\gamma_k} \|_{L^\infty_{[-2,2]} L_x^{8}}+  \| d h_{\gamma_k}, d \bv_{\gamma_k}  \|_{L^4_{[-2,2]} C_x^\delta} \leq \epsilon_2.
	\end{align*}
	Then there exists a subsequence $\{(h_{\gamma_k}, \bv_{\gamma_k}, \bw_{\gamma_k}) \}_{k \in \mathbb{N}^+}$  such that 
	\begin{equation*}
		\lim_{k \rightarrow \infty} (h_{\gamma_k}, \bv_{\gamma_k}, \bw_{\gamma_k})=(h_{\gamma}, \bv_{\gamma}, \bw_{\gamma}) .
		%\quad \textrm{in} \ H^s_x \times \ H^s_x \times \ H^{s_0}_x.
	\end{equation*}
	Moreover, $(h_{\gamma}, \bv_{\gamma}, \bw_{\gamma})$ satisfies
	\begin{equation*}
		\begin{split}
			&\| h_{\gamma}, \bv_{\gamma}\|_{L^\infty_{[-2,2]} H_x^{s}}+ \| \bw_{\gamma} \|_{L^\infty_{[-2,2]} H_x^{s_0-\frac14}}
			+ \| \nabla \bw_{\gamma} \|_{L^\infty_{[-2,2]} L_x^{8}}+ \| dh_{\gamma} ,  d \bv_{\gamma}\|_{L^4_{[-2,2]} C_x^\delta} \leq \epsilon_2,
		\end{split}
	\end{equation*}
	Thus, $\Re(h_\gamma,\bv_\gamma,\bw_\gamma) \leq \epsilon_1$. Then $\gamma \in \mathcal{A}$. At this stage, we have proved Proposition \ref{p1}.
\end{proof}
Note that there is no precise definition of the functional $ \Re $. We now give the definition.
\subsection{Definition for the functional}
Let us first define foliations for the space-time and introduce new norms.

Assume $(h,\bv,\bw) \in \mathcal{H}$. Give a extension for $\mathbf{g}$ ($\mathbf{g}$ is defined in \eqref{AMd3}), which equals to the Minkowski metric for $t \in [-2, -\frac{3}{2}]$. When no confusion can arise, we still use the notation $\mathbf{g}$.

For each $\theta \in \mathbb{S}^1$, we consider a foliation of the slice $t=-2$ by taking level sets of the function $r_\theta(-2,x)=\theta \cdot x+2$. Then $\theta \cdot dx-t$ is a null covector field over $t=-2$, and it is co-normal to the level sets of $r_\theta(-2)$. Let $\Gamma_{\theta}$ be the graph of a null covector field given by $dr_{\theta}$, We define the hypersurface $\Sigma_{\theta,r}$ for $r \in \mathbb{R}$ as the level sets of $r_{\theta}$. The characteristic hypersurface $\Sigma_{\theta,r}$ is thus the flowout of the set $\theta \cdot x=r-2$ along the null geodesic flow in the direction $\theta$ at $t=-2$.

We introduce an orthonormal sets of coordinates on $\mathbb{R}^2$ by setting $x_{\theta}=\theta \cdot x$. Let $x'_{\theta}$ be given orthonormal coordinates on the hyperplane prependicular to $\theta$, which then define coordinates on $\mathbb{R}^2$ by projection along $\theta$. Then $(t,x'_{\theta})$ induce the coordinates on $\Sigma_{\theta,r}$, and $\Sigma_{\theta,r}$ is given by
\begin{equation*}
	\Sigma_{\theta,r}=\left\{ (t,x): x_{\theta}-\phi_{\theta, r}=0  \right\},
\end{equation*}
for a smooth function $\phi_{\theta, r}(t,x'_{\theta})$. We now introduce two norms for functions defined on $[-2,2] \times \mathbb{R}^2$, for $a \geq 1$,
\begin{equation*}\label{d0}
	\begin{split}
		&\vert\kern-0.25ex\vert\kern-0.25ex\vert f\vert\kern-0.25ex\vert\kern-0.25ex\vert_{a, \infty} = \sup_{-2 \leq t \leq 2} \sup_{0 \leq j \leq 1} \| \partial_t^j f(t,\cdot)\|_{H^{a-j}(\mathbb{R}^2)},
		\\
		& \vert\kern-0.25ex\vert\kern-0.25ex\vert  f\vert\kern-0.25ex\vert\kern-0.25ex\vert_{a,2} = \big( \sup_{0 \leq j \leq 1} \int^{2}_{-2} \| \partial_t^j f(t,\cdot)\|^2_{H^{a-j}(\mathbb{R}^2)} dt \big)^{\frac{1}{2}}.
	\end{split}
\end{equation*}
The same notation applies for functions in $[-2,2] \times \mathbb{R}^2$. We denote
\begin{equation*}
	\vert\kern-0.25ex\vert\kern-0.25ex\vert f\vert\kern-0.25ex\vert\kern-0.25ex\vert_{a,2,\Sigma_{\theta,r}}=\vert\kern-0.25ex\vert\kern-0.25ex\vert f|_{\Sigma_{\theta,r}} \vert\kern-0.25ex\vert\kern-0.25ex\vert_{a,2},
\end{equation*}
where the right hand side is the norm of the restriction of $f$ to ${\Sigma_{\theta,r}}$, taken over the $(t,x'_{\theta})$ variables used to parametrise ${\Sigma_{\theta,r}}$. Similarly, the notation
\begin{equation*}
	\|f\|_{H^{a}(\Sigma_{\theta,r})},
\end{equation*}
denotes the $H^{a}(\mathbb{R})$ norm of $f$ restricted to the time $t$ slice of ${\Sigma_{\theta,r}}$ using the $x'_{\theta}$ coordinates on ${\Sigma^t_{\theta,r}}$.

For $\frac74<s_0<s\leq \frac{15}{8}$, we define the functional $\Re$ by
\begin{equation}\label{500}
	\Re = \sup_{\theta, r} \vert\kern-0.25ex\vert\kern-0.25ex\vert d \phi_{\theta,r}-dt\vert\kern-0.25ex\vert\kern-0.25ex\vert_{s_0-\frac14,2,{\Sigma_{\theta,r}}}.
\end{equation}

\subsection{The proof of Proposition \ref{p4}} We postpone the proof and proceed to state the following two propositions.
\begin{proposition}\label{r2}
	Assume $\frac74<s_0\leq s\leq \frac{15}{8} $ and $ \delta_0 \in (0, s_0-\frac{7}{4})$. 	Let $(h,\bv, \bw) \in \mathcal{H}$ such that $\Re(h, \bv, \bw) \leq 2 \epsilon_1$. Then

	\begin{equation}\label{Re}
		\Re(h, \bv, \bw) \lesssim \epsilon_2.
	\end{equation}
	Furthermore, for each $t$ it holds that
	\begin{equation}\label{502}
		\|d \phi_{\theta,r}(t,\cdot)-dt \|_{C^{1,\delta_0}_{x'}} \lesssim \epsilon_2+  \| d {\mathbf{g}}(t,\cdot) \|_{C^{\delta_0}_x(\mathbb{R}^2)}.
	\end{equation}
\end{proposition}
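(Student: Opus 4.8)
\textbf{Proof strategy for Proposition \ref{r2}.}
The plan is to regard $\Sigma_{\theta,r}$ as the graph $x_\theta=\phi_{\theta,r}(t,x'_\theta)$ of the solution of the eikonal equation $\mathbf{g}^{\alpha\beta}\partial_\alpha r_\theta\,\partial_\beta r_\theta=0$, and then run energy estimates for the transport/evolution equations satisfied by $d\phi_{\theta,r}-dt$ along the null hypersurface. First I would write the eikonal equation in the form
\begin{equation*}
	\partial_t\phi_{\theta,r}=\Psi\big(t,x'_\theta,\nabla_{x'_\theta}\phi_{\theta,r};\mathbf{g}\big),
\end{equation*}
where $\Psi$ is a smooth function of its arguments determined by solving the quadratic eikonal relation for $\partial_t\phi$ (choosing the root that reduces to $\partial_t\phi=1$ when $\mathbf{g}=\mathbf{m}$); differentiating once in $(t,x')$ gives a quasilinear transport system for the quantities $\partial\phi_{\theta,r}-\text{(flat value)}$ with coefficients that are smooth functions of $d\phi_{\theta,r}$ and $\mathbf{g}$, and a right-hand side which is linear in $d\mathbf{g}$ and quadratic in $d\phi_{\theta,r}-dt$. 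The key point is that the vector field carrying this transport is (a rescaling of) the null generator $L$ of $\Sigma_{\theta,r}$, which is close to $\partial_t$ on $\mathcal{H}$ because of the bootstrap assumptions \eqref{402a}--\eqref{403}; consequently the characteristic foliation is a small Lipschitz perturbation of the flat one, and integration along $L$ costs only a bounded factor.

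The second step is the $\vert\kern-0.25ex\vert\kern-0.25ex\vert\cdot\vert\kern-0.25ex\vert\kern-0.25ex\vert_{s_0-\frac14,2,\Sigma_{\theta,r}}$ energy estimate. Applying $\Lambda_{x'}^{s_0-\frac54}$ to the transport system for $d\phi_{\theta,r}-dt$, multiplying by the corresponding quantity and integrating over $\Sigma_{\theta,r}$ in the $(t,x'_\theta)$ coordinates, one gets, using the classical commutator estimate (Lemma \ref{jiaohuan}) and the product estimate (Lemma \ref{pps}),
\begin{equation*}
	\Re(h,\bv,\bw)\ \lesssim\ \vert\kern-0.25ex\vert\kern-0.25ex\vert d\phi_{\theta,r}-dt\vert\kern-0.25ex\vert\kern-0.25ex\vert_{s_0-\frac14,2,\Sigma_{\theta,r}}\big|_{t=-2}\ +\ \int_{-2}^{2}\big(\|d\mathbf{g}\|_{L^\infty_x}+\|d\phi_{\theta,r}-dt\|_{L^\infty_{x'}}\big)\,\vert\kern-0.25ex\vert\kern-0.25ex\vert d\phi_{\theta,r}-dt\vert\kern-0.25ex\vert\kern-0.25ex\vert_{s_0-\frac14,2,\Sigma_{\theta,r}}\,dt\ +\ \|d\mathbf{g}\|_{L^2_t H^{s_0-\frac14}_x}.
\end{equation*}
At $t=-2$ the initial foliation is flat, so $d\phi_{\theta,r}-dt\equiv 0$ there and the first term vanishes. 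For the last (source) term I would bound $\|d\mathbf{g}\|_{L^2_tH^{s_0-\frac14}_x}$ by $\|(dh,d\bv)\|_{L^2_tH^{s_0-\frac14}_x}$ via Lemmas \ref{jiaohuan0}, \ref{pps} and the definition \eqref{AMd3}, then interpolate between the energy bound $\|h,\bv\|_{L^\infty_tH^s_x}\leq 2\epsilon_2$ (from \eqref{402a}, with $s>s_0$) and the Strichartz bound $\|dh,d\bv\|_{L^4_tC^\delta_x}\leq 2\epsilon_2$ (from \eqref{403}); since $s_0-\frac14<s-1+\tfrac14$ this yields $\|d\mathbf{g}\|_{L^2_tH^{s_0-\frac14}_x}\lesssim\epsilon_2$. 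The term $\|d\mathbf{g}\|_{L^1_tL^\infty_x}$ appearing in the Gronwall exponent is $\lesssim\|dh,d\bv\|_{L^4_tL^\infty_x}\lesssim\epsilon_2\ll 1$ by \eqref{403} and Hölder in time on $[-2,2]$, so Gronwall's inequality closes the estimate and gives $\Re(h,\bv,\bw)\lesssim\epsilon_2$, which is \eqref{Re}.

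For \eqref{502} I would differentiate the eikonal relation twice in $x'_\theta$ (and once in $t$) to obtain a transport equation for $d^2\phi_{\theta,r}$ along $L$, whose source is linear in $d^2\mathbf{g}$ — more precisely of the schematic form $L(d\phi_{\theta,r}-dt)=F(d\phi_{\theta,r},d\mathbf{g})\cdot d\mathbf{g}$, so that $L(d^2\phi)$ is controlled by $d\mathbf{g}$, $d^2\phi$ and $d^2\mathbf{g}$. Measuring in $C^{\delta_0}_{x'}$ and integrating along the (nearly vertical) generators on $[-2,t]$, using that the source is already $C^{\delta_0}$ in the $\mathbf{g}$-variables and that $\|d^2\phi-0\|_{C^{\delta_0}}\lesssim\Re\lesssim\epsilon_2$ follows from the Sobolev embedding $H^{s_0-\frac54}(\mathbb{R})\hookrightarrow C^{\delta_0}(\mathbb{R})$ valid for $\delta_0<s_0-\frac74$ applied fiberwise, one arrives at $\|d\phi_{\theta,r}(t,\cdot)-dt\|_{C^{1,\delta_0}_{x'}}\lesssim\epsilon_2+\|d\mathbf{g}(t,\cdot)\|_{C^{\delta_0}_x}$; a Gronwall absorption of the $\epsilon_2$-small self-interaction term finishes it. The main obstacle I anticipate is purely geometric bookkeeping: one must control the change of variables between the ambient coordinates and the $(t,x'_\theta)$ parametrization of $\Sigma_{\theta,r}$ uniformly in $(\theta,r)$, i.e.\ show the Jacobian and its derivatives stay comparable to those of the flat foliation, so that Sobolev/product/commutator estimates performed intrinsically on $\Sigma_{\theta,r}$ are equivalent to the flat ones; this is exactly where the smallness of $d\phi_{\theta,r}-dt$ (hence a nondegenerate, slowly-varying foliation) must be used self-consistently, and it is the step that requires the most care rather than the most computation.
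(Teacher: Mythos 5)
Your plan has a genuine gap at its core: the source terms you propose to use are not controlled at the regularity you need, and the arithmetic you invoke to control them is wrong. You claim to bound the source by $\|d\mathbf{g}\|_{L^2_tH^{s_0-\frac14}_x}\lesssim\epsilon_2$ ``since $s_0-\frac14<s-1+\frac14$.'' But with $\frac74<s_0\leq s\leq\frac{15}{8}$ one has $s_0-\frac14>\frac32$ while $s-\frac34\leq\frac98$, so the inequality fails badly: the metric only has $s\leq\frac{15}{8}$ spatial derivatives in $L^2_x$, hence $d\mathbf{g}$ has at most $s-1\leq\frac78$ derivatives on fixed-time slices, far short of $s_0-\frac14$. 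More structurally, even if you measure the source intrinsically on $\Sigma_{\theta,r}$, your once-differentiated eikonal equation has the schematic form $L(d\phi-dt)=F\cdot d\mathbf{g}$, so reaching $\vert\kern-0.25ex\vert\kern-0.25ex\vert d\phi_{\theta,r}-dt\vert\kern-0.25ex\vert\kern-0.25ex\vert_{s_0-\frac14,2,\Sigma}$ by integrating along the generators requires $s_0-\frac14$ tangential derivatives of $d\mathbf{g}|_{\Sigma}$ in $L^2(\Sigma)$; the characteristic energy estimates (Proposition \ref{r1}, built on Lemmas \ref{te1}, \ref{fre}, \ref{te20}) only furnish $\vert\kern-0.25ex\vert\kern-0.25ex\vert\mathbf{g}-\mathbf{m}\vert\kern-0.25ex\vert\kern-0.25ex\vert_{s_0-\frac14,2,\Sigma}$, i.e.\ $d\mathbf{g}$ with $s_0-\frac54$ tangential derivatives — one full derivative short. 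Your approach therefore loses a derivative exactly where the problem is hardest, and no interpolation between \eqref{402a} and \eqref{403} can recover it. The same loss recurs in your argument for \eqref{502}, where twice-differentiating the eikonal relation produces $d^2\mathbf{g}$ in $C^{\delta_0}_x$, which is not controlled.

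The paper's proof is organized precisely to avoid this loss, and the mechanism is not optional bookkeeping. It first proves the characteristic energy estimates of Proposition \ref{r1} for the metric components and the vorticity along $\Sigma$ (the gain over the trace theorem here comes from $\Sigma$ being characteristic for the hyperbolic system \eqref{QHl}). It then introduces the null frame $\{l,\underline{l},e_1\}$ and works with the null second fundamental form $\chi=\langle D_{e_1}l,e_1\rangle_{\mathbf{g}}$, which satisfies the Raychaudhuri-type transport equation $l(\chi)=\langle R(l,e_1)l,e_1\rangle_{\mathbf{g}}-\chi^2-l(\ln\sigma)\chi$. The decisive step is Corollary \ref{Rfenjie}: the curvature component decomposes as $l(f_2)+f_1$, where $f_2\sim d\mathbf{g}$ (with exactly the tangential regularity Proposition \ref{r1} provides) and $f_1$ collects $\square_{\mathbf{g}}\mathbf{g}_{\mu\nu}$ — controlled via the wave equations \eqref{wrt} by $d\bw$ and $(d\mathbf{g})^2$ — plus quadratic terms. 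Rewriting the transport equation as $l(\chi-f_2)=f_1-\chi^2-l(\ln\sigma)\chi$ moves the non-integrable part of the curvature under an exact $l$-derivative, so the quantity $\chi-f_2$ can be estimated without losing a derivative; the bounds on $d\phi_{\theta,r}-dt$ then follow by expressing $l$ and the connection coefficients in terms of $\chi$ and the Christoffel symbols. If you want to salvage your outline you would need to reproduce this null-structure cancellation; as written, the proposal does not close.
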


\begin{proposition}\label{r3}
Assume $\frac74<s_0\leq s\leq \frac{15}{8} $. Suppose that $(h,\bv,\bw) \in \mathcal{{H}}$ and $\Re(h,\bv,\bw)\leq 2 \epsilon_1$.
	For any $1 \leq r \leq s+1$, and for each $t_0 \in [-2,2]$, the linear, non-homogenous equation
	\begin{equation*}
		\begin{cases}
			& \square_{\mathbf{g}} f=F, \qquad (t,x) \in [-2,2]\times \mathbb{R}^2,
			\\
			& f(t,x)|_{t=t_0}=f_0 \in H_x^r(\mathbb{R}^2), 
			\\
			& \partial_t f(t,x)|_{t=t_0}=f_1 \in H_x^{r-1}(\mathbb{R}^2),
		\end{cases}
	\end{equation*}
	admits a solution $f \in C([-2,2],H_x^r) \times C^1([-2,2],H_x^{r-1})$ and the following estimates holds:
	\begin{equation}\label{lw0}
		\| f\|_{L_t^\infty H_x^r}+ \|\partial_t f\|_{L_t^\infty H_x^{r-1}} \lesssim \|f_0\|_{H_x^r}+ \|f_1\|_{H_x^{r-1}}+\|F\|_{L^1_tH_x^{r-1}}.
	\end{equation}
	Additionally, the following estimates hold, provided $a<r-\frac34$,
	\begin{equation}\label{lw1}
		\| \left<\nabla \right>^a f\|_{L^4_{t}L^\infty_x} \lesssim \|f_0\|_{H_x^r}+ \|f_1\|_{H_x^{r-1}}+\|F\|_{L^1_tH_x^{r-1}}.
	\end{equation}
	The similar estimate \eqref{lw1} holds if we replace $\left<\nabla \right>^a$ by $\left<\nabla \right>^{a-1}d$.
\end{proposition}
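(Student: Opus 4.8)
The plan is to reduce the estimates for the variable-coefficient wave operator $\square_{\mathbf g}$ to the corresponding estimates for the flat d'Alembertian and a frequency-localized parametrix, following the Smith--Tataru scheme \cite{ST}. First I would record the following reductions. By Duhamel's formula, it suffices to treat the homogeneous problem $\square_{\mathbf g} f=0$ and then convolve in time; the $L^1_t H^{r-1}_x$ term on the right of \eqref{lw0} and \eqref{lw1} arises exactly from this Duhamel integration, so after this step one only needs (i) the energy estimate $\|f\|_{L^\infty_t H^r_x}+\|\partial_t f\|_{L^\infty_t H^{r-1}_x}\lesssim \|f_0\|_{H^r_x}+\|f_1\|_{H^{r-1}_x}$ and (ii) the Strichartz estimate $\|\langle\nabla\rangle^a f\|_{L^4_t L^\infty_x}\lesssim \|f_0\|_{H^r_x}+\|f_1\|_{H^{r-1}_x}$ for $a<r-\frac34$. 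Both are scaling-invariant in the sense that it is enough to prove them for $r$ in a fixed small window (say $r$ near $s_0-\frac14$) and then propagate to $1\le r\le s+1$ by commuting $\langle\nabla\rangle^{r-(s_0-\frac14)}$ through the equation, paying a controlled commutator governed by $\|d\mathbf g\|_{L^1_t C^{\delta_0}_x}$, which is finite on $\mathcal H$ by \eqref{403} together with the truncation \eqref{AMd3}. The energy estimate (i) is classical for a Lorentzian metric whose coefficients lie in $L^1_t\mathrm{Lip}_x$; this regularity is guaranteed by \eqref{403} (which gives $d\mathbf g\in L^4_t C^{\delta_0}_x\subset L^1_t L^\infty_x$), so (i) follows from a standard multiplier/Gronwall argument.

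The core is the Strichartz estimate (ii). Here I would use the wave-packet parametrix construction of Smith--Tataru adapted to the acoustic metric $\mathbf g$. Fix a dyadic frequency $\lambda=2^k$ and let $f_\lambda=P_\lambda f$. One constructs an approximate solution operator as a superposition of wave packets transported along the null bicharacteristics of $\mathbf g$; the packets are supported in tubes of dimensions $\lambda^{-1/2}\times\cdots\times\lambda^{-1/2}\times 1$ about these bicharacteristics. The crucial geometric input is that the characteristic hypersurfaces $\Sigma_{\theta,r}$ — whose defining functions $\phi_{\theta,r}$ are exactly the objects controlled by the functional $\Re$ in \eqref{500} — have curvature and regularity bounds supplied by Proposition \ref{r2}: \eqref{502} bounds $d\phi_{\theta,r}-dt$ in $C^{1,\delta_0}_{x'}$ in terms of $\epsilon_2+\|d\mathbf g(t,\cdot)\|_{C^{\delta_0}_x}$, which by \eqref{403} and \eqref{AMd3} is $L^4_t$-integrable. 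This $C^{1,\delta_0}$ control of the null foliation is precisely what is needed to run the $TT^*$ argument: the kernel of $TT^*$ decays off the diagonal with enough room to sum, yielding the fixed-time dispersive bound $\|P_\lambda U(t)\|_{L^1_x\to L^\infty_x}\lesssim \lambda^{2}(1+\lambda|t|)^{-1/2}$ locally in $t$, hence after a standard $L^2$-duality and $T T^*$ the frequency-localized Strichartz estimate $\|P_\lambda f\|_{L^4_t L^\infty_x}\lesssim \lambda^{3/4}(\|P_\lambda f_0\|_{L^2_x}+\lambda^{-1}\|P_\lambda f_1\|_{L^2_x})$. Squaring, summing over $\lambda$ with the weight $\lambda^{2a}$, $a<r-\frac34$, and using Littlewood--Paley orthogonality gives \eqref{lw1} for the homogeneous problem; the same argument with $P_\lambda f$ replaced by $\langle\nabla\rangle^{-1}d P_\lambda f$ (which solves the same equation up to $\square_{\mathbf g}$-commutators of size $d\mathbf g$) gives the $d$-version.

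A few subsidiary points must be handled. The error between the true solution and the parametrix must be absorbed: $\square_{\mathbf g}$ applied to the packet superposition is $O(\lambda^{-N})$ in energy after a finite number of iterations, because the phase and amplitude of each packet are adjusted using the eikonal and transport equations for $\mathbf g$, whose solvability with the needed regularity again rests on Proposition \ref{r2}; this is where one repeatedly uses that $\mathbf g-\mathbf m$ is compactly supported in $[-\frac32,\frac32]\times B(0,3+2c)$ and small. One also needs to pass from the fixed-time dispersive estimate, which is only valid on a short time interval determined by the curvature bound, to the full interval $[-2,2]$ by subdividing $[-2,2]$ into $O(1)$ such subintervals — the number is $O(1)$ precisely because the relevant geometric quantities are controlled in $L^4_t$, not merely $L^1_t$, so the subdivision does not proliferate; this is the point where Theorem \ref{dingli} (as opposed to the lossy Theorem \ref{dingli2}) is available. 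The main obstacle is the parametrix error analysis coupled with the sharp counting of wave-packet overlaps: one must show that the $C^{1,\delta_0}$ regularity of the null foliation from \eqref{502} — which is exactly at the threshold — suffices to control the second fundamental form of $\Sigma_{\theta,r}$ well enough that the dispersive kernel sums, and that the iteration to kill the parametrix error converges. Everything else is, in principle, a careful bookkeeping adaptation of \cite{ST} to the metric $\mathbf g$ defined by \eqref{met1}, \eqref{AMd3}.
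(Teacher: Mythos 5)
Your overall architecture --- Duhamel reduction to the homogeneous problem (the paper's Proposition \ref{r5}), energy estimates from $d\mathbf{g}\in L^1_tL^\infty_x$, a shift in $r$ via commutators with $\langle\nabla\rangle^{r-1}$, and a wave-packet parametrix adapted to the null foliation controlled by Proposition \ref{r2} --- matches the paper's strategy in Sections \ref{sec7} and \ref{Ap}. However, the two mechanisms you place at the core of the Strichartz estimate are not the ones the paper uses, and neither is available at this regularity. First, you claim the parametrix error can be made $O(\lambda^{-N})$ ``after a finite number of iterations'' by solving eikonal and transport equations. With $\mathbf{g}$ only as regular as the solution itself ($H^s$ with $s$ barely above $\frac74$), no high-order parametrix exists; the point of the Smith--Tataru construction is that the packet superposition is only an approximate solution with error of size $\epsilon_0$ in $L^1_tL^2_x$ (Propositions \ref{np} and \ref{szy}, estimate \eqref{Yee}), and the exact solution is recovered afterwards by a contraction argument built on the operator $\mathbf{M}$ with $\|\square_{\mathbf{g}}\mathbf{M}F-F\|_{L^2_tL^2_x}\lesssim\epsilon_0\|F\|_{L^2_tL^2_x}$.

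Second, and more seriously, you derive the frequency-localized $L^4_tL^\infty_x$ bound from a fixed-time dispersive estimate $\|P_\lambda U(t)\|_{L^1_x\to L^\infty_x}\lesssim\lambda^{2}(1+\lambda|t|)^{-1/2}$ followed by $TT^*$. Such a pointwise dispersive estimate is precisely what is \emph{not} available for metrics of this regularity: at best it holds on frequency-dependent time intervals with derivative losses, which is the Bahouri--Chemin/semiclassical route the paper reserves for the lossy Theorem \ref{dingli2} in Section \ref{Sec8}. The bound \eqref{502} on the foliation has a right-hand side $\epsilon_2+\|d\mathbf{g}(t,\cdot)\|_{C^{\delta_0}_x}$ that is only $L^4$ in time, so it does not give uniform fixed-time kernel decay. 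The paper's substitute is purely combinatorial: the packets have transverse width $(\epsilon_0\lambda)^{-1/2}$ (not $\lambda^{-1/2}$) and unit energy, Corollary \ref{corl} bounds the number of slabs containing two given space-time points by $\epsilon_0^{-1/2}\lambda^{-1/2}|t_1-t_2|^{-1/2}$, and Proposition \ref{szt} converts this incidence count into $\|f\|_{L^4_tL^\infty_x}\lesssim\epsilon_0^{-1/4}(\ln\lambda)^{1/4}$ without any dispersive kernel estimate or time-interval subdivision. Unless you replace the $TT^*$ step by this overlap-counting argument (or an equivalent), the proof does not close.
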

Based on Propositions \ref{r2} and \ref{r3}, we are now ready to present the proof of Proposition \ref{p4}. 
\begin{proof}[ Proof of Proposition \ref{p4} ]
Due to Proposition \ref{r2} and Proposition \ref{r3}, it remains for us to verify \eqref{404} and \eqref{405}. Applying Theorem \ref{DW4}, \eqref{401}, and \eqref{403}, we have
\begin{equation}\label{w00}
		\|h\|_{L^\infty_{[-2,2]} H_x^{s}}+ \|\bv\|_{L^\infty_{[-2,2]} H_x^{s}}+  \|\bw\|_{L^\infty_{[-2,2]} H_x^{s_0-\frac14}} +  \|\nabla \bw\|_{L^\infty_{[-2,2]} L_x^{8}} \lesssim \epsilon_3.
\end{equation}
Note that $\epsilon_3 \ll \epsilon_2$. Therefore, the energy estimate \eqref{404} holds. By applying Proposition \ref{r3} to \eqref{XT}, for $\delta\in (0,s-\frac74)$, the following Strichartz estimate for $d\bv_{+}$ holds:
\begin{equation}\label{rtt}
	\| d\bv_{+} \|_{L^4_t C^\delta_x} \lesssim \|\partial_t \mathbf{T} \bv_{-} \|_{L^1_tH_x^{s-1}}+\|\nabla \mathbf{T} \bv_{-} \|_{L^1_tH_x^{s-1}}+  \| \bv_{-}\|_{L^1_tH_x^{s-1}}+\|\bQ\|_{L^1_tH_x^{s-1}}.
\end{equation}
The estimates \eqref{rtt}, \eqref{w00} combining with Lemma \ref{yuy} (taking $a=\frac12$), Lemma \ref{yux}, we therefore get
\begin{equation}\label{w02}
	\begin{split}
		\| d\bv_{+} \|_{L^4_t C^\delta_x} \lesssim & \| \bw\|_{L^2_t H_x^{\frac32}} (1+\| h,\bv \|^2_{L^\infty_t H_x^{s}}) + \|dh,d\bv\|_{L^1_t L_x^\infty} \| h,\bv \|_{L^\infty_t H^s_x}
		\\
		\lesssim & \epsilon_3 (1+ \epsilon^2_3) + \epsilon_3 \epsilon_2
		\\
		\lesssim & \epsilon_3.
	\end{split}
\end{equation}
%\textcolor{red}{ Due to Lemma \ref{yuy} and \eqref{w00}, for $\delta\in (0,s-\frac74)$, we also have }
By Sobolev imbedding $C^\delta_x(\mathbb{R}^2) \hookrightarrow H_x^{1+2\delta}(\mathbb{R}^2)$ and $L^4_t([-2,2]) \hookrightarrow H^{\frac14}_t([-2,2]) $, we then get
\begin{equation}\label{w04}
	\begin{split}
		\| d \bv_{-}\|_{L^4_t C^\delta_x} & \lesssim \| d \bv_{-} \|_{H^\frac14_t H_x^{1+2\delta}}
		\\
		& \lesssim \| \Lambda^{\frac14+2\delta}_{x}d \bv_{-} \|_{H^\frac14_t H_x^{\frac34}} 
		\\
		& \lesssim \| \Lambda^{\frac14+2\delta}_{x} \bv_{-} \|_{H^2_{t,x}} .
	\end{split}
\end{equation} 
By using Lemma \ref{jiaohuan3} and \ref{Ees}, we derive that
\begin{equation}\label{w05}
	\begin{split}
		\| \Lambda^{\frac14+2\delta}_{x} \bv_{-} \|_{H^2_{t,x}} \lesssim & \|\Lambda_x^{\frac14+2\delta} d\bw \|_{L^2_t L_x^{2}} + \|[\Lambda_x^{\frac14+2\delta}, \mathbf{{P}} ] v^\alpha_{-}\|_{L^2_t L_x^{2}} 
		\\
		\lesssim & \| d\bw \|_{L^2_t H_x^{{\frac14+2\delta}}} + \|\Lambda_x^{\frac14+2\delta} \bv\|_{L^\infty_t L^\infty_x} \|d^2 \bv \|_{L^2_t L_x^{2}} 
		\\
		\lesssim & \| \bw \|_{L^2_t H_x^{\frac54+2\delta}} ( 1+ \|h,\bv\|_{L^\infty_t H^s_x}).
	\end{split}
\end{equation}
Substituting \eqref{w05} to \eqref{w04}, and using \eqref{w00}, it yields
\begin{equation}\label{w06}
	\begin{split}
		\| d \bv_{-}\|_{L^4_t C^\delta_x} & \lesssim \| \bw \|_{L^2_t H_x^{\frac54+2\delta}} ( 1+ \|h,\bv\|_{L^\infty_t H^s_x})
		\\
		& \lesssim \epsilon_3.
	\end{split}
\end{equation} 
Adding \eqref{w02} and \eqref{w06}, the following estimate holds:
\begin{equation}\label{w07}
	\begin{split}
		\| d\bv \|_{L^4_t C^\delta_x} \lesssim \| d \bv_{+}\|_{L^4_t C^\delta_x}+ \| d \bv_{-}\|_{L^4_t C^\delta_x} \lesssim \epsilon_3.
	\end{split}
\end{equation}
Since \eqref{w06} holds and $\epsilon_3 \ll \epsilon_2$, we can derive the estimate \eqref{405}. Therefore, the proof of this proposition is complete.
\end{proof}
It remains for us to prove Propositions \ref{r2} and \ref{r3}. Since these proofs are lengthy, we will prove Proposition \ref{r2} in Section \ref{secp4}, and the proof of Proposition \ref{r3} will be presented in Section \ref{sec7}.
\section{Proof of Proposition \ref{r2}}\label{secp4}
Observing the conclusions in Proposition \ref{r2}, it depends on the regularity of the null hypersurfaces. We first establish the following result:
\begin{proposition}\label{r1}
	Assume $\frac74<s_0\leq s\leq \frac{15}{8} $.	Let $(h,\bv,\bw) \in \mathcal{H}$ so that $\Re(h,\bv,\bw) \leq 2 \epsilon_1$. Then
	\begin{equation*}\label{501}
		\vert\kern-0.25ex\vert\kern-0.25ex\vert  {\mathbf{g}}^{\alpha \beta}-\mathbf{m}^{\alpha \beta} \vert\kern-0.25ex\vert\kern-0.25ex\vert_{s_0-\frac14,2,{\Sigma_{\theta,r}}} + \vert\kern-0.25ex\vert\kern-0.25ex\vert \lambda ({\mathbf{g}}^{\alpha \beta}-P_{<\lambda} {\mathbf{g}}^{\alpha \beta}), d P_{ <\lambda } {\mathbf{g}}^{\alpha \beta}, \lambda^{-1} \nabla P_{<\lambda} d {\mathbf{g}}^{\alpha \beta}  \vert\kern-0.25ex\vert\kern-0.25ex\vert_{s_0-\frac54,2,{\Sigma_{\theta,r}}} \lesssim \epsilon_2.
	\end{equation*}
\end{proposition}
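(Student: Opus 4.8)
The plan is to reduce all the quantities appearing in the statement to Sobolev and H\"older norms of $h$, $\bv$, and $\bw$ on the hypersurfaces $\Sigma_{\theta,r}$, and then to invoke the bootstrap assumptions \eqref{402a}, \eqref{403}, together with the already-established control on $d\phi_{\theta,r}-dt$ coming from $\Re(h,\bv,\bw)\le 2\epsilon_1$. First I would observe that, by the formulas \eqref{theta}, \eqref{met1} and the truncation \eqref{AMd3}, each component $\mathbf{g}^{\alpha\beta}-\mathbf{m}^{\alpha\beta}$ is a smooth function of $(h,\bv)$ vanishing at the equilibrium $(0,\mathbf{1})$; hence by the Moser-type estimate (Lemma \ref{jiaohuan0}) and the product estimates (Lemma \ref{pps}) one has
\[
\|\mathbf{g}^{\alpha\beta}-\mathbf{m}^{\alpha\beta}\|_{H^{s_0-\frac14}(\Sigma^t_{\theta,r})}\lesssim \|(h,\bv)\|_{H^{s_0-\frac14}(\Sigma^t_{\theta,r})}\bigl(1+\|(h,\bv)\|_{L^\infty}\bigr),
\]
and similarly with $\partial_t$ falling on it. The key point is therefore a \emph{trace estimate}: restriction of $H^{s}(\mathbb{R}^{2+1})$-type functions (in the $|\kern-0.25ex\vert\kern-0.25ex\vert\cdot|\kern-0.25ex\vert\kern-0.25ex\vert_{s,\infty}$ and $|\kern-0.25ex\vert\kern-0.25ex\vert\cdot|\kern-0.25ex\vert\kern-0.25ex\vert_{s,2}$ sense) to the characteristic hypersurface $\Sigma_{\theta,r}$, which is the graph $x_\theta=\phi_{\theta,r}(t,x'_\theta)$ with $d\phi_{\theta,r}-dt$ controlled in the norm defining $\Re$. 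Since $s_0-\tfrac14>\tfrac32$, there is $\tfrac12$ of a derivative to spare and the trace onto a Lipschitz (indeed $C^{1,\delta_0}$, by Proposition \ref{r2}) hypersurface is bounded: a change of variables flattening $\Sigma_{\theta,r}$ has Jacobian controlled by $\|d\phi_{\theta,r}-dt\|_{L^\infty}\lesssim\Re\lesssim\epsilon_1$, and the pulled-back metric stays uniformly comparable to the Euclidean one.

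Next I would treat the frequency-localized pieces. For $\lambda(\mathbf{g}^{\alpha\beta}-P_{<\lambda}\mathbf{g}^{\alpha\beta})$ one writes $\mathbf{g}^{\alpha\beta}-P_{<\lambda}\mathbf{g}^{\alpha\beta}=\sum_{\mu\ge\lambda}P_\mu\mathbf{g}^{\alpha\beta}$ and estimates, in $|\kern-0.25ex\vert\kern-0.25ex\vert\cdot|\kern-0.25ex\vert\kern-0.25ex\vert_{s_0-\frac54,2}$, by $\sum_{\mu\ge\lambda}\lambda\mu^{s_0-\frac54-s_0+\frac14}\|P_\mu\mathbf{g}^{\alpha\beta}\|_{|\kern-0.25ex\vert\kern-0.25ex\vert\cdot|\kern-0.25ex\vert\kern-0.25ex\vert_{s_0-\frac14,2}}\lesssim\|\mathbf{g}^{\alpha\beta}-\mathbf{m}^{\alpha\beta}\|_{|\kern-0.25ex\vert\kern-0.25ex\vert\cdot|\kern-0.25ex\vert\kern-0.25ex\vert_{s_0-\frac14,2}}$ since the gain $\mu^{-1}$ beats the loss $\lambda$; the same Bernstein/summation argument handles $dP_{<\lambda}\mathbf{g}^{\alpha\beta}$ (one derivative costs $\lambda$, and $P_{<\lambda}$ with $s_0-\tfrac54=(s_0-\tfrac14)-1$ absorbs it) and $\lambda^{-1}\nabla P_{<\lambda}d\mathbf{g}^{\alpha\beta}$ (two derivatives cost $\lambda^2$, compensated by $\lambda^{-1}$ and the drop of one Sobolev index). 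In each case the restriction to $\Sigma_{\theta,r}$ is again handled by the same trace estimate, now at regularity $s_0-\tfrac54>\tfrac12$, which is still admissible for traces onto a $C^{1,\delta_0}$ hypersurface. All the resulting bounds collapse to a constant times $|\kern-0.25ex\vert\kern-0.25ex\vert\mathbf{g}^{\alpha\beta}-\mathbf{m}^{\alpha\beta}|\kern-0.25ex\vert\kern-0.25ex\vert_{s_0-\frac14,2,\Sigma_{\theta,r}}\lesssim|\kern-0.25ex\vert\kern-0.25ex\vert (h,\bv)|\kern-0.25ex\vert\kern-0.25ex\vert_{s_0-\frac14,2,\Sigma_{\theta,r}}(1+\epsilon_2)\lesssim\epsilon_2$, using \eqref{402a} after one more trace estimate to transfer the $\mathbb{R}^2$-Sobolev control of $(h,\bv)$ to $\Sigma_{\theta,r}$.

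The main obstacle is the trace inequality on the characteristic hypersurfaces, because $\Sigma_{\theta,r}$ is only as regular as its defining function $\phi_{\theta,r}$, whose control ($d\phi_{\theta,r}-dt$ small in the $\Re$-norm, i.e. $H^{s_0-\frac14}$-based on $\Sigma_{\theta,r}$, and $C^{1,\delta_0}$ pointwise by Proposition \ref{r2}) is itself only at the borderline regularity for making sense of the restriction of the $H^{s_0-\frac54}$ data of $d\mathbf g$. I expect this to require a careful interplay: one uses the $C^{1,\delta_0}$ bound of Proposition \ref{r2} to flatten $\Sigma_{\theta,r}$ via a $C^{1,\delta_0}$ change of variables (preserving $\delta_0<s_0-\tfrac74$ worth of H\"older regularity of the coefficients), then invokes that multiplication by and composition with such functions is bounded on $H^{a}$ for $0\le a\le 1+\delta_0$, which just barely covers $a=s_0-\tfrac54\in(\tfrac12,\tfrac58]$. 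The remaining estimates are then routine Littlewood-Paley bookkeeping of the type already used in Lemma \ref{yuy}; I would present the trace lemma as a separate self-contained statement before assembling the proof of Proposition \ref{r1}.
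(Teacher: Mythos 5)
Your argument hinges on a trace estimate that is not true. You claim that because $s_0-\frac14>\frac32$ there is ``half a derivative to spare'', so restriction to the $C^{1,\delta_0}$ hypersurface $\Sigma_{\theta,r}$ is bounded from $H^{s_0-\frac14}(\mathbb{R}^2)$ into $H^{s_0-\frac14}$ of the surface. The trace theorem only guarantees that the restriction of an $H^{s_0-\frac14}_x$ function to a curve lands in $H^{s_0-\frac34}_{x'}$; the spare half derivative ensures the trace \emph{exists}, not that it preserves the Sobolev index. This is exactly the content of Lemma \ref{te2} of the paper, $\sup_t\|f\|_{H^{r-\frac12}(\Sigma^t)}\lesssim\vert\kern-0.25ex\vert\kern-0.25ex\vert f\vert\kern-0.25ex\vert\kern-0.25ex\vert_{r,2,\Sigma}$, which goes in the opposite direction and records precisely the half-derivative loss. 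Consequently your chain $\vert\kern-0.25ex\vert\kern-0.25ex\vert\mathbf{g}-\mathbf{m}\vert\kern-0.25ex\vert\kern-0.25ex\vert_{s_0-\frac14,2,\Sigma_{\theta,r}}\lesssim\|(h,\bv)\|_{L^\infty_tH^{s_0-\frac14}_x}$ does not close, and since you never invoke the evolution equations in this step, nothing in your argument can recover the missing half derivative (taking a supremum over $\theta,r$ rather than an average over $r$ forecloses any Fubini-type recovery).

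The paper obtains the full tangential regularity on $\Sigma_{\theta,r}$ by a \emph{characteristic energy estimate} (Lemma \ref{te1}): one flattens the surface via $x_2\mapsto x_2-\phi(t,x')$, rewrites the symmetric hyperbolic system \eqref{QHl} for $\bU=(p(h),\mathrm{e}^{-h}v^1,\mathrm{e}^{-h}v^2)^{\mathrm{T}}$ in the new coordinates, and integrates by parts; the energy flux through the flattened surface controls $\vert\kern-0.25ex\vert\kern-0.25ex\vert\bU\vert\kern-0.25ex\vert\kern-0.25ex\vert_{s_0-\frac14,2,\Sigma}$ by $\|d\bU\|_{L^4_tL^\infty_x}+\|\bU\|_{L^\infty_tH^{s_0-\frac14}_x}$, with the commutator terms controlled by the $\Re$-bound on $d\phi_{\theta,r}-dt$. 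This is genuinely stronger than the trace theorem and is available only because $(h,\bv)$ solves the equations; the metric components are then handled through Corollary \ref{vte} and the algebra property in Lemma \ref{te2}. The frequency-localized quantities are treated in Lemma \ref{fre} by applying the same characteristic energy estimate to $\Delta_0\bU$ for the multipliers $\Delta_0\in\{P_{<\lambda},\ \lambda^{-1}\nabla P_{<\lambda},\dots\}$ and commuting $\Delta_0$ through the system --- again an equation-based argument, not the Bernstein summation on the surface that you propose, which would inherit the same unproved trace bound. To repair your proposal you would need to replace your trace lemma by this flux/energy argument.
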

To prove Proposition \ref{r1}, including Proposition \ref{r2}, it suffices to restrict our attention to the case where $\theta=(0,1)$ and $r=0$. We fix this choice, and suppress $\theta$ and $r$ in our notation. We use $(x_2, x')$ instead of $(x_{\theta}, x'_{\theta})$. Then $\Sigma$ is defined by
\begin{equation*}
	\Sigma=\left\{ x_2- \phi(t,x')=0 \right\}.
\end{equation*}
Let $ \Delta_{x'}$ be a Laplacian operator on $\Sigma$, and
\begin{equation}\label{Sig}
	\Lambda_{x'}= (- \Delta_{x'})^{\frac12}.
\end{equation}
The hypothesis $\Re\leq 2 \epsilon_1$ implies that
\begin{equation}\label{503}
	\vert\kern-0.25ex\vert\kern-0.25ex\vert d \phi_{\theta,r}(t,\cdot)-dt \vert\kern-0.25ex\vert\kern-0.25ex\vert_{s_0-\frac14,2, \Sigma} \leq 2 \epsilon_1.
\end{equation}
By using \eqref{503} and Sobolev imbedding, we have
\begin{equation}\label{504}
	\|d \phi(t,x')-dt \|_{L^4_t C^{1,\delta_0}_{x'}} + \| \partial_t d \phi(t,x')\|_{L^4_t C^{\delta_0}_{x'}} \lesssim \epsilon_1,
\end{equation}
where we use $s_0>\frac74$ and $\delta_0 \in (0,s_0-\frac74)$. Noting that $(h,\bv,\bw) \in \mathcal{H}$, we can obtain
\begin{equation}\label{5021}
	\vert\kern-0.25ex\vert\kern-0.25ex\vert \bv \vert\kern-0.25ex\vert\kern-0.25ex\vert_{s,\infty}+\vert\kern-0.25ex\vert\kern-0.25ex\vert h\vert\kern-0.25ex\vert\kern-0.25ex\vert_{s,\infty}+\vert\kern-0.25ex\vert\kern-0.25ex\vert \bw \vert\kern-0.25ex\vert\kern-0.25ex\vert_{s_0-\frac14,\infty} +\|d\bv,dh, d\bv_{+}\|_{L^4_tC^\delta_x} \lesssim \epsilon_2.
\end{equation}
We are now ready to prove Proposition \ref{r1}.
\subsection{Characteristic energy estimates and the proof of Proposition \ref{r1}}
To start, let us introduce two lemmas about characteristic energy estimates, which are referred to in \cite{ST}.
\begin{Lemma}\label{te0}(\cite{ST}, Lemma 5.5)
	Assume $s\in (\frac74,2]$. Let $\tilde{f}(t,x)=f(t,x',x_2+\phi(t,x'))$. Then we have
	\begin{equation*}
		\vert\kern-0.25ex\vert\kern-0.25ex\vert \tilde{f}\vert\kern-0.25ex\vert\kern-0.25ex\vert_{s-\frac14,\infty}\lesssim \vert\kern-0.25ex\vert\kern-0.25ex\vert f\vert\kern-0.25ex\vert\kern-0.25ex\vert_{s-\frac14,\infty}, \quad \|d\tilde{f}\|_{L^4_tL_x^\infty}\lesssim \|d f\|_{{L^4_tL_x^\infty}}, \quad  \|\tilde{f}\|_{H^{s-\frac14}_{x}}\lesssim \|f\|_{H^{s-\frac14}_{x}}.
	\end{equation*}
\end{Lemma}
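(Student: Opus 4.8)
\textbf{Proof plan for Lemma \ref{te0}.}

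The plan is to reduce everything to the mapping properties of the change of variables $\Phi(t,x',x_2)=(t,x',x_2+\phi(t,x'))$ and its inverse. First I would observe that $\Phi$ is a bi-Lipschitz diffeomorphism of $[-2,2]\times\mathbb{R}^2$: from \eqref{504} we have $\|d\phi-dt\|_{L^\infty_{t,x'}}\lesssim\epsilon_1$, so $D\Phi=\mathrm{Id}+O(\epsilon_1)$ in $C^0$, and in fact $D\Phi$ is bounded in $C^{\delta_0}_x$ uniformly in $t$ with $L^4_t$ control, with an analogous statement for the inverse map. The $L^4_tL^\infty_x$ bound $\|d\tilde f\|_{L^4_tL^\infty_x}\lesssim\|df\|_{L^4_tL^\infty_x}$ is then immediate from the chain rule $d\tilde f=(d f)\circ\Phi\cdot D\Phi$, taking the sup in $x$ (which is invariant under a bi-Lipschitz map at fixed $t$) and then the $L^4$ norm in $t$, using that $D\Phi$ is bounded in $L^\infty_{t,x}$.

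For the Sobolev bounds I would work at fixed time $t$ and treat $\phi(t,\cdot)$ as a fixed $C^{1,\delta_0}_{x'}$ function; the composition operator $f\mapsto f(\cdot,x_2+\phi(t,\cdot))$ is then a standard change of variables on $\mathbb{R}^2$. The key point is that composition with a map of the form $(x',x_2)\mapsto(x',x_2+\phi(x'))$ preserves $H^{\sigma}_x$ for $0\le\sigma\le 1+\delta_0$ with operator norm bounded in terms of $\|\phi\|_{C^{1,\delta_0}}$; since $s-\tfrac14\le\tfrac74<1+\delta_0$ is \emph{not} automatic, I would instead note $s-\tfrac14\in(\tfrac32,\tfrac74]$ and use that the shear map is smooth in $x_2$ and $C^{1,\delta_0}$ in $x'$, so by interpolation and the low regularity of the exponent it suffices that the Jacobian is $C^{\delta'}$ for some $\delta'>s-\tfrac54$, which is guaranteed by \eqref{504}. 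This gives $\|\tilde f(t,\cdot)\|_{H^{s-1/4}_x}\lesssim\|f(t,\cdot)\|_{H^{s-1/4}_x}$, and taking $\sup_t$ yields the third inequality; since the same bound holds at each fixed $t$, it also yields the $H^{s-1/4}_x$ part of the triple-bar norm $\vert\kern-0.25ex\vert\kern-0.25ex\vert\cdot\vert\kern-0.25ex\vert\kern-0.25ex\vert_{s-1/4,\infty}$.

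For the remaining $j=1$ piece of $\vert\kern-0.25ex\vert\kern-0.25ex\vert\tilde f\vert\kern-0.25ex\vert\kern-0.25ex\vert_{s-1/4,\infty}$, namely $\sup_t\|\partial_t\tilde f(t,\cdot)\|_{H^{s-5/4}_x}$, I would differentiate: $\partial_t\tilde f=(\partial_t f)\circ\Phi+\partial_t\phi\cdot(\partial_{x_2}f)\circ\Phi$. The first term is handled exactly as above, bounding $\|(\partial_t f)\circ\Phi(t,\cdot)\|_{H^{s-5/4}_x}\lesssim\|\partial_t f(t,\cdot)\|_{H^{s-5/4}_x}\le\vert\kern-0.25ex\vert\kern-0.25ex\vert f\vert\kern-0.25ex\vert\kern-0.25ex\vert_{s-1/4,\infty}$. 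For the second term I would use the product estimate Lemma \ref{pps}: $\partial_t\phi(t,\cdot)\in H^{s-1/4}_{x'}$ from \eqref{503} (with a trivial extension in $x_2$), $(\partial_{x_2}f)\circ\Phi(t,\cdot)\in H^{s-5/4}_x$, and since $s-\tfrac14>\tfrac n2=1$ and $|s-\tfrac54|\le s-\tfrac14$, Lemma \ref{pps} gives the product in $H^{s-5/4}_x$ with the desired bound. The main obstacle I anticipate is bookkeeping the borderline Sobolev exponents: because $s-\tfrac14$ sits just above the critical index $1$ and $s-\tfrac54$ just below it, one must be careful that all composition and product estimates are applied in their valid ranges — but since $s\in(\tfrac74,2]$ keeps all exponents safely inside the windows required by Lemma \ref{pps} and by standard composition-operator theory, no genuine difficulty arises beyond this careful accounting, and this is precisely why the statement is restricted to $s\in(\tfrac74,2]$.
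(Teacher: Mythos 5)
First, note that the paper does not prove Lemma \ref{te0} at all: it is quoted verbatim from \cite{ST} (Lemma 5.5), so there is no in-paper proof to compare against. Your overall architecture — chain rule for the $L^4_tL^\infty_x$ bound, a fixed-time composition estimate for the spatial Sobolev bound, and the decomposition $\partial_t\tilde f=(\partial_t f)\circ\Phi+\partial_t\phi\cdot(\partial_{x_2}f)\circ\Phi$ plus a product estimate for the $j=1$ piece of the triple-bar norm — is exactly the right one and matches the Smith--Tataru argument.

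There is, however, a genuine gap in how you source the regularity of $\phi$. Both the fixed-time bound $\|\tilde f(t,\cdot)\|_{H^{s-1/4}_x}\lesssim\|f(t,\cdot)\|_{H^{s-1/4}_x}$ and the $\sup_t$ in the triple-bar norm require control of $d\phi(t,\cdot)$ that is \emph{uniform in $t$}. You invoke \eqref{504}, but \eqref{504} is only an $L^4_t C^{1,\delta_0}_{x'}$ bound; at a fixed time it gives nothing. The correct input is \eqref{503} together with the trace estimate of Lemma \ref{te2}, which yields $\sup_t\|d\phi(t,\cdot)-dt\|_{H^{s_0-3/4}_{x'}}\lesssim\epsilon_1$; since $s_0-\frac34>1=\frac{n-1}{2}+\frac12$, this one-dimensional Sobolev space is an algebra and carries more than the $\sigma-1=s-\frac54\le\frac34$ derivatives of $\partial_{x'}\phi$ needed for the composition bound on $H^{\sigma}$, $\sigma=s-\frac14$. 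Your proposed H\"older patch (``Jacobian in $C^{\delta'}$ for some $\delta'>s-\frac54$'') is also quantitatively fragile even granting uniformity in $t$: the uniform-in-$t$ H\"older regularity one actually extracts from \eqref{503} by Sobolev embedding is $C^{s_0-5/4}_{x'}$, and $s_0-\frac54$ need not exceed $s-\frac54$ since $s_0\le s$; the Sobolev-based composition/commutator argument is the one that closes. Secondly, in the time-derivative term you cannot apply Lemma \ref{pps} to $\partial_t\phi(t,\cdot)$ ``with a trivial extension in $x_2$'': a nonzero function of $x'$ alone extended constantly in $x_2$ is not in $H^{r}(\mathbb{R}^2)$, so the hypotheses of Lemma \ref{pps} fail. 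You need the tangential-multiplier variant — write $\partial_t\phi=1+a(t,x')$ with $a(t,\cdot)\in H^{s_0-3/4}(\mathbb{R}_{x'})$, $s_0-\frac34>\frac12\ge|s-\frac54|$, and prove directly (e.g.\ by Littlewood--Paley in $x'$ only) that such an $a$ multiplies $H^{s-5/4}(\mathbb{R}^2)$. Both defects are repairable, but as written the proof does not close.
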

\begin{Lemma}\label{te2}(\cite{ST}, Lemma 5.4)
	For $r\geq 1$, we have
	\begin{equation*}
		\begin{split}
			\sup_{t\in[-2,2]} \| f\|_{H^{r-\frac{1}{2}}(\mathbb{R}^2)} & \lesssim \vert\kern-0.25ex\vert\kern-0.25ex\vert f \vert\kern-0.25ex\vert\kern-0.25ex\vert_{r,2},
			\\
			\sup_{t\in[-2,2]} \| f\|_{H^{r-\frac{1}{2}}(\Sigma^t)} & \lesssim \vert\kern-0.25ex\vert\kern-0.25ex\vert f \vert\kern-0.25ex\vert\kern-0.25ex\vert_{r,2,\Sigma}.
		\end{split}
	\end{equation*}
	If $r> \frac{3}{2}$, then
	\begin{equation*}
		\vert\kern-0.25ex\vert\kern-0.25ex\vert f_1f_2\vert\kern-0.25ex\vert\kern-0.25ex\vert_{r,2}\lesssim  \vert\kern-0.25ex\vert\kern-0.25ex\vert f_2 \vert\kern-0.25ex\vert\kern-0.25ex\vert_{r,2} \vert\kern-0.25ex\vert\kern-0.25ex\vert f_1\vert\kern-0.25ex\vert\kern-0.25ex\vert_{r,2}.
	\end{equation*}
	Similarly, if $r>1$, then
	\begin{equation*}
		\vert\kern-0.25ex\vert\kern-0.25ex\vert f_1f_2\vert\kern-0.25ex\vert\kern-0.25ex\vert_{r,2,\Sigma}\lesssim  \vert\kern-0.25ex\vert\kern-0.25ex\vert f_2 \vert\kern-0.25ex\vert\kern-0.25ex\vert_{r,2,\Sigma} \vert\kern-0.25ex\vert\kern-0.25ex\vert f_1 \vert\kern-0.25ex\vert\kern-0.25ex\vert_{r,2,\Sigma}.
	\end{equation*}
\end{Lemma}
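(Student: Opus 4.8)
The plan is to obtain Lemma~\ref{te2} from two elementary ingredients: a one–dimensional-in-time trace/interpolation argument for the two embedding estimates, and the combination of this trace bound with Sobolev embedding and a Moser–type product inequality for the two multiplicative estimates. For the statements attached to $\Sigma=\Sigma_{\theta,r}$ I would work throughout in the $(t,x')$ parametrisation of $\Sigma$, so that every Sobolev norm appearing in $\vert\kern-0.25ex\vert\kern-0.25ex\vert\cdot\vert\kern-0.25ex\vert\kern-0.25ex\vert_{r,2,\Sigma}$ is just a flat $H^a(\mathbb{R})$ norm in $x'$; since $\Re(h,\bv,\bw)\le 2\epsilon_1$ forces, via \eqref{504}, the graph function $\phi$ to be a uniformly small $C^{1,\delta_0}$ perturbation of $t$, this chart is non-degenerate with universal constants, and no intrinsic geometry of $\Sigma$ enters the estimates beyond this.

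For the trace estimates I would fix $f$, take the spatial Fourier transform (in $x\in\mathbb{R}^2$, resp. in $x'\in\mathbb{R}$ on $\Sigma$), and write $\hat f(t,\xi)$. For a fixed $\xi$ and $t_0,t_1\in[-2,2]$, the fundamental theorem of calculus together with the pointwise AM–GM bound $2\langle\xi\rangle^{2r-1}|\hat f||\partial_t\hat f|\le\langle\xi\rangle^{2r}|\hat f|^2+\langle\xi\rangle^{2r-2}|\partial_t\hat f|^2$ gives
\begin{equation*}
\langle\xi\rangle^{2r-1}|\hat f(t_1,\xi)|^2\le\langle\xi\rangle^{2r-1}|\hat f(t_0,\xi)|^2+\int_{-2}^{2}\big(\langle\xi\rangle^{2r}|\hat f(t,\xi)|^2+\langle\xi\rangle^{2r-2}|\partial_t\hat f(t,\xi)|^2\big)\,dt.
\end{equation*}
Averaging over $t_0\in[-2,2]$ absorbs the first term on the right (using $\langle\xi\rangle^{2r-1}\le\langle\xi\rangle^{2r}$), and integrating in $\xi$ yields $\sup_{t}\|f(t,\cdot)\|_{H^{r-1/2}}^2\lesssim\int_{-2}^{2}(\|f\|_{H^r}^2+\|\partial_tf\|_{H^{r-1}}^2)\,dt\lesssim\vert\kern-0.25ex\vert\kern-0.25ex\vert f\vert\kern-0.25ex\vert\kern-0.25ex\vert_{r,2}^2$, which needs only $r\ge1$. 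The identical computation on $\Sigma$, with $\partial_t$ now the derivative of $f|_\Sigma$ in the $t$-slot of the chart, gives the second trace bound, since $\vert\kern-0.25ex\vert\kern-0.25ex\vert f\vert\kern-0.25ex\vert\kern-0.25ex\vert_{r,2,\Sigma}$ controls exactly $\int_{-2}^{2}(\|f|_\Sigma\|_{H^r(x')}^2+\|\partial_t(f|_\Sigma)\|_{H^{r-1}(x')}^2)\,dt$.

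For the product estimates I would write $\partial_t(f_1f_2)=(\partial_tf_1)f_2+f_1(\partial_tf_2)$ and estimate the two pieces of $\vert\kern-0.25ex\vert\kern-0.25ex\vert f_1f_2\vert\kern-0.25ex\vert\kern-0.25ex\vert_{r,2}^2=\sup_{0\le j\le1}\int_{-2}^{2}\|\partial_t^j(f_1f_2)\|_{H^{r-j}_x}^2\,dt$ separately. For $j=0$ apply the Kato–Ponce inequality $\|f_1f_2\|_{H^r_x}\lesssim\|f_1\|_{H^r_x}\|f_2\|_{L^\infty_x}+\|f_1\|_{L^\infty_x}\|f_2\|_{H^r_x}$; for $j=1$ apply Lemma~\ref{pps} (second inequality, $n=2$), whose hypotheses $r-\tfrac12>1$ and $-(r-\tfrac12)\le r-1\le r-\tfrac12$ hold precisely for $r>\tfrac32$, to obtain $\|(\partial_tf_1)f_2\|_{H^{r-1}_x}\lesssim\|f_2\|_{H^{r-1/2}_x}\|\partial_tf_1\|_{H^{r-1}_x}$ and symmetrically. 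Then pull the $\sup_t$ off the $L^\infty_x$ and $H^{r-1/2}_x$ factors, and bound $\sup_t\|f_i\|_{L^\infty_x}\lesssim\sup_t\|f_i\|_{H^{r-1/2}_x}\lesssim\vert\kern-0.25ex\vert\kern-0.25ex\vert f_i\vert\kern-0.25ex\vert\kern-0.25ex\vert_{r,2}$ using the trace estimate and $H^{r-1/2}(\mathbb{R}^2)\hookrightarrow L^\infty$ (valid since $r>\tfrac32$). This closes the bound $\vert\kern-0.25ex\vert\kern-0.25ex\vert f_1f_2\vert\kern-0.25ex\vert\kern-0.25ex\vert_{r,2}\lesssim\vert\kern-0.25ex\vert\kern-0.25ex\vert f_1\vert\kern-0.25ex\vert\kern-0.25ex\vert_{r,2}\vert\kern-0.25ex\vert\kern-0.25ex\vert f_2\vert\kern-0.25ex\vert\kern-0.25ex\vert_{r,2}$. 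The $\Sigma$ case is verbatim the same with the ambient spatial dimension equal to $1$ rather than $2$, so that the threshold for Lemma~\ref{pps} and for $H^{r-1/2}\hookrightarrow L^\infty$ becomes $r>1$, which explains the weaker hypothesis there.

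The only genuinely non-routine point is the reduction of the $\Sigma$-statements to flat estimates: one must confirm that the $(t,x')$-parametrisation is uniformly non-degenerate and that restriction to a time slice $\Sigma^t$ followed by the $x'$-Sobolev norm behaves exactly as in the Euclidean setting. This is where the bootstrap hypothesis is used — by \eqref{504} the surface $\Sigma$ is a $C^{1,\delta_0}$ graph over a hyperplane with norm $\lesssim\epsilon_1$, hence a small perturbation of the flat picture, keeping all constants in the trace, Kato–Ponce, Lemma~\ref{pps} and Sobolev estimates universal. Everything else is the standard space–time interpolation and Moser machinery already employed elsewhere in the paper.
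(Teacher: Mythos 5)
The paper gives no proof of this lemma — it is quoted directly from Smith--Tataru \cite{ST}, Lemma 5.4 — so there is nothing to compare line by line; your argument is the standard one (Fourier-side trace-in-time via the fundamental theorem of calculus plus Cauchy--Schwarz, then Kato--Ponce/Moser together with the trace bound and the Sobolev embedding $H^{r-1/2}\hookrightarrow L^\infty$ for the products), and it is correct, including the dimension count that shifts the threshold from $r>\tfrac32$ on $\mathbb{R}^2$ to $r>1$ on the one-dimensional slices of $\Sigma$. One small remark: since $\vert\kern-0.25ex\vert\kern-0.25ex\vert\cdot\vert\kern-0.25ex\vert\kern-0.25ex\vert_{a,2,\Sigma}$ and $\|\cdot\|_{H^a(\Sigma^t)}$ are \emph{defined} as flat norms in the $(t,x'_\theta)$ parametrisation, the reduction to the Euclidean setting is definitional and does not actually require the bootstrap hypothesis or \eqref{504}, so your final paragraph is over-cautious but harmless.
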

Next, we prove two characteristic energy estimates for the hyperbolic system \eqref{QHl}.
\begin{Lemma}\label{te1}
Assume $\frac74<s_0\leq s\leq \frac{15}{8} $. Suppose that $\bU=(p(h), \mathrm{e}^{-h}v^1, \mathrm{e}^{-h}v^2 )^{\mathrm{T}}$ satisfies the hyperbolic symmetric system
	\begin{equation}\label{505}
		A^0(\bU) \partial_t\bU+ \sum^{2}_{i=1}A^i(\bU) \partial_i \bU= 0.
	\end{equation}
	Then
	\begin{equation}\label{te10}
		\begin{split}
			\vert\kern-0.25ex\vert\kern-0.25ex\vert  \bU\vert\kern-0.25ex\vert\kern-0.25ex\vert_{s_0-\frac14,2,\Sigma} & \lesssim \|d \bU \|_{L^4_t L^{\infty}_x}+ \| \bU\|_{L^{\infty}_tH_x^{s_0-\frac14}}.
		\end{split}
	\end{equation}
\end{Lemma}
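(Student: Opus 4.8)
The goal is a characteristic (null-hypersurface) energy estimate for the symmetric hyperbolic system \eqref{505}, controlling $\vert\kern-0.25ex\vert\kern-0.25ex\vert\bU\vert\kern-0.25ex\vert\kern-0.25ex\vert_{s_0-\frac14,2,\Sigma}$ by the Strichartz norm $\|d\bU\|_{L^4_tL^\infty_x}$ and the energy norm $\|\bU\|_{L^\infty_tH_x^{s_0-\frac14}}$. The natural strategy, following Smith--Tataru \cite{ST} (their Lemma 5.6/5.7), is a weighted energy identity adapted to the foliation $\{\Sigma_{\theta,r}\}$. First I would flatten the geometry: use the change of variables $\tilde f(t,x)=f(t,x',x_2+\phi(t,x'))$ from Lemma \ref{te0}, which transports the characteristic hypersurface $\Sigma$ to a flat hyperplane $\{x_2=0\}$ at the cost of commuting derivatives with $\phi$. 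The small-curvature bounds \eqref{503}--\eqref{504} guarantee that this change of variables is bi-Lipschitz and that all commutator terms it generates are controlled by $\epsilon_1$ times the quantities we are estimating, so they can be absorbed. The point is that in the new coordinates the hypersurface $\Sigma$ becomes a constant-$x_2$ slab, and the symmetric-hyperbolic structure survives (with a perturbed, but still positive-definite near the relevant state, $A^0$).

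**Key steps in order.** (1) Reduce to the flattened picture via Lemma \ref{te0}, so $\Sigma$ is $\{x_2=0\}$ and the estimate becomes a statement about $\partial_t^j\tilde\bU$ restricted to a coordinate slab. (2) Apply $\Lambda_{x'}^{s_0-\frac14}$ (defined in \eqref{Sig}) to the flattened system and form the standard energy $\int A^0(\tilde\bU)\,\Lambda_{x'}^{s_0-\frac14}\tilde\bU\cdot\Lambda_{x'}^{s_0-\frac14}\tilde\bU$; integrate the identity over the region bounded by $\Sigma$ and the initial slice $\{t=-2\}$, where the null covector $\theta\cdot dx-t$ being co-normal to $\Sigma$ makes the flux through $\Sigma$ a positive quantity (this is the genuinely ``characteristic'' gain — energy flux through a null surface is controllable). (3) Bound the error terms: the commutator $[\Lambda_{x'}^{s_0-\frac14}, A^i(\tilde\bU)]\partial_i\tilde\bU$ is handled by the Kato--Ponce commutator estimate (Lemma \ref{jiaohuan}), putting $\|d\tilde\bU\|_{L^\infty_x}$ in $L^4_t$ and the top-order factor $\|\tilde\bU\|_{H^{s_0-\frac14}_x}$ in $L^\infty_t$ — exactly the two norms on the right of \eqref{te10}; the terms coming from differentiating $A^0$ in time and from the $\phi$-commutators are of the same schematic shape $\|d\cdot\|_{L^4_tL^\infty_x}\cdot\vert\kern-0.25ex\vert\kern-0.25ex\vert\cdot\vert\kern-0.25ex\vert\kern-0.25ex\vert_{s_0-\frac14,2,\Sigma}$, so with the smallness from \eqref{5021} and a Gronwall/bootstrap absorption they close. (4) Re-read the resulting slab estimate through Lemma \ref{te0} to return to the original $\Sigma_{\theta,r}$-norm, and take the supremum over $\theta,r$ at the end (though here the statement is for a single fixed $\Sigma$).

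**Main obstacle.** The delicate point is the low regularity: $s_0-\frac14$ is only slightly above $\frac32$, so $A^i(\tilde\bU)$ is barely better than $H^{\frac32+}$, and the commutator $[\Lambda_{x'}^{s_0-\frac14},A^i(\tilde\bU)]\partial_i\tilde\bU$ sits right at the edge of where Kato--Ponce applies on the hypersurface — one must carefully choose the H\"older exponent split $\frac1p+\frac1q=\frac12$ in Lemma \ref{jiaohuan} so that the $L^p_x$ factor lands on $\|d\tilde\bU\|_{L^\infty_x}$ (hence $p=\infty$) while the $L^q_x=L^2_x$ factor carries the full $s_0-\frac14$ derivatives, and then verify that restricting to $\Sigma$ does not cost regularity thanks to the trace estimate in Lemma \ref{te2} ($r>\frac32$, which holds since $s_0>\frac74$). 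The second subtlety is ensuring positivity of the flux integral through $\Sigma$: this uses that $\phi$ is a near-null graph, i.e. the bound \eqref{504} on $d\phi-dt$, so that the boundary quadratic form $(A^\alpha n_\alpha)$ with $n$ the conormal to $\Sigma$ is positive semidefinite up to $O(\epsilon_1)$ corrections. Everything else — the $L^\infty_t H^{s_0-\frac14}_x$ energy bound feeding the right side, the $j=0,1$ time-derivative components of the triple-bar norm (the $j=1$ piece follows by reading $\partial_t\tilde\bU$ off the equation \eqref{505} itself, trading one time derivative for spatial ones) — is routine once the commutator and flux points are secured.
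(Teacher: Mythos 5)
Your proposal matches the paper's proof in all essentials: the paper likewise flattens $\Sigma$ via the change of variables $x_2\mapsto x_2-\phi(t,x')$ of Lemma \ref{te0}, performs the characteristic energy estimate first at order $0$ and then after applying tangential derivatives $\partial^{\beta}_{x'}$, controls the commutators $[\partial^\beta_{x'},\partial_t\phi\,\partial_2]$ and $[\partial^\beta_{x'},A^\alpha\partial_\alpha\phi\,\partial_\alpha]$ with the Kato--Ponce estimate of Lemma \ref{jiaohuan} together with \eqref{503}--\eqref{504} and \eqref{5021}, and recovers the $\partial_t$ component of the triple-bar norm from the equation $\partial_t\bU=-\sum_i(A^0)^{-1}A^i\partial_i\bU$ and Lemma \ref{te2}. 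This is the same argument, so no further comment is needed.
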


\begin{proof}
	By a change of coordinates $x_2 \rightarrow x_2-\phi(t,x')$ and setting $\tilde{\bU}(t,x)=U(t,x',x_2+\phi(t,x'))$, the system \eqref{505} is transformed to
	\begin{equation*}
		A^0(\tilde{\bU}) \partial_t \tilde{\bU}+ \sum_{i=1}^2 A^i(\tilde{\bU}) \partial_{i} \tilde{\bU}= - \partial_t \phi  \partial_2 \tilde{\bU} - \sum_{\alpha=0}^2 A^\alpha(\tilde{\bU}) \partial_{\alpha}\phi \partial_\alpha \tilde{\bU}.
	\end{equation*}
	For $\phi$ is independent of $x_2$, we further get
	\begin{equation}\label{U}
		A^0(\tilde{\bU}) \partial_t \tilde{\bU}+ \sum_{i=1}^2 A^i(\tilde{\bU}) \partial_{i} \tilde{\bU}= - \partial_t \phi  \partial_2 \tilde{\bU} - \sum_{\alpha=0}^2 A^\alpha(\tilde{\bU}) \partial_{\alpha}\phi \partial_\alpha \tilde{\bU}.
	\end{equation}
	To prove \eqref{te10}, we first establish the $0$-order estimate. A direct calculation on$[-2,2]\times \mathbb{R}^2$ shows that
	\begin{equation*}
		\begin{split}
			\vert\kern-0.25ex\vert\kern-0.25ex\vert \tilde{\bU}\vert\kern-0.25ex\vert\kern-0.25ex\vert^2_{0,2,\Sigma} & \lesssim \| d\tilde{\bU} \|_{L^1_t L_x^\infty}\|\tilde{\bU}\|_{L_x^2} + \|\nabla d\phi\|_{L^1_t L_{x'}^\infty}\|\tilde{\bU}\|_{L_x^2}
			\\
			& \lesssim \| d\tilde{\bU} \|_{L^1_t L_x^\infty}\|\tilde{\bU}\|_{L_x^2} + \|\nabla d\phi\|_{L^1_t L_x^\infty}\|\tilde{\bU}\|_{L_x^2}.
		\end{split}
	\end{equation*}
	By using Lemma \ref{te0}, \eqref{504} and \eqref{5021}, we can prove that
	\begin{equation}\label{U0}
		\vert\kern-0.25ex\vert\kern-0.25ex\vert \bU\vert\kern-0.25ex\vert\kern-0.25ex\vert_{0,2,\Sigma} \lesssim \|d \bU \|_{L^2_t L^{\infty}_x}+ \| \bU\|_{L^{\infty}_tL_x^2}.
	\end{equation}
	Next, we will prove the $(s_0-\frac14)$-order estimate. Operating the derivative of $\partial^{\beta}_{x'}$($1 \leq |\beta| \leq s_0-\frac14$) on \eqref{U} and integrating it on $[-2,2]\times \mathbb{R}^2$, we get
	\begin{equation}\label{U1}
		\begin{split}
			\| \partial^{\beta}_{x'} \tilde{\bU}\|^2_{L^2_{\Sigma}} & \lesssim \| d \tilde{\bU} \|_{L^1_t L^{\infty}_x} \| \partial^{\beta}_{x} \tilde{\bU}\|_{L^{\infty}_tL_x^2} +|I_1|+|I_2|,
		\end{split}
	\end{equation}
	where
	\begin{equation*}
		\begin{split}
			&I_1= -\int_{-2}^2\int_{\mathbb{R}^2} \partial^{\beta}_{x'}  \big( \partial_t \phi  \partial_3 \tilde{\bU} \big) \cdot \Lambda^{\beta}_{x'} \tilde{\bU}  dxd\tau,
			\\
			& I_2= -\sum^2_{\alpha=0}\int_{-2}^2\int_{\mathbb{R}^2} \partial^{\beta}_{x'} \big( A^\alpha(\tilde{\bU}) \partial_{\alpha}\phi \partial_\alpha \tilde{\bU} \big) \cdot \partial^{\beta}_{x'} \tilde{\bU}   dxd\tau.
		\end{split}
	\end{equation*}
	Write $I_1$ as
	\begin{equation*}
		\begin{split}
			I_1 =& -\int_{-2}^2\int_{\mathbb{R}^2} \big( \partial^{\beta}_{x'}(\partial_t \phi  \partial_3 \tilde{\bU})-\partial_t \phi \partial_3 \partial^{\beta}_{x'} \tilde{\bU} \big)   \partial^{\beta}_{x'} \tilde{\bU}dxd\tau
			\\
			& + \int_{-2}^2\int_{\mathbb{R}^2} \partial_t \phi \cdot \partial_3 \partial^{\beta}_{x'} \tilde{\bU} \cdot \partial^{\beta}_{x'} \tilde{\bU}  dx d\tau,
			\\
			=& -\int_{-2}^2\int_{\mathbb{R}^2} [ \partial^{\beta}_{x'}, \partial_t \phi  \partial_3] \tilde{\bU} \cdot \partial^{\beta}_{x'} \tilde{\bU}dxd\tau
		\end{split}
	\end{equation*}
	Similarly, $I_2$ can be written by
	\begin{equation*}
		\begin{split}
			I_2 = & -\sum^2_{\alpha=0}\int_{-2}^2\int_{\mathbb{R}^2} \big( \partial^{\beta}_{x'} \big( A^\alpha(\tilde{\bU}) \partial_{\alpha}\phi \partial_\alpha \tilde{\bU}) -  A^\alpha(\tilde{\bU}) \partial_{\alpha}\phi \partial_\alpha \partial^{\beta}_{x'} \tilde{\bU} \big) \cdot \partial^{\beta}_{x'} \tilde{\bU}  dxd\tau
			\\
			& +\sum^2_{\alpha=0} \int_{-2}^2\int_{\mathbb{R}^2} \big( A^\alpha(\tilde{\bU}) \partial_{\alpha}\phi \big) \cdot \partial_\alpha(\partial^{\beta}_{x'} \tilde{\bU}) \cdot \partial^{\beta}_{x'} \tilde{\bU}dxd\tau.
		\end{split}
	\end{equation*}
	By using Lemma \ref{jiaohuan}, we infer that
	\begin{equation}\label{U2}
		\begin{split}
			|I_1|
			& \lesssim  \big( \|\partial^\beta \tilde \bU\|_{L^{\infty}_tL_x^2} \| d \partial_t \phi \|_{L^1_tL_{x}^\infty}+ \sup_{\theta, r} \|\partial^{\beta}_{x'} \partial_t \phi\|_{L^2(\Sigma_{\theta,r})} \| d \tilde{\bU}\|_{L^1_tL_x^\infty} \big)\cdot\|\partial^\beta \tilde \bU\|_{L^\infty_tL_x^2}
		\end{split}
	\end{equation}
	and
	\begin{equation}\label{U3}
		\begin{split}
			|I_2| \lesssim  & \big(\| \partial^{\beta}_{x'} \tilde{\bU} \|_{L^2_tL_x^2} \|d \nabla \phi\|_{L^1_t L^\infty_x} + \|d\tilde{\bU}\|_{L^1_tL_x^\infty} \sup_{\theta,r}\|\partial^{\beta}_{x'}d \phi\|_{L^2(\Sigma_{\theta,r})}  \big) \cdot \|\partial^{\beta}_{x'} \tilde{\bU} \|_{L^\infty_tL_x^2}
			\\
			& + \big( \| d \tilde{\bU} \|_{L^2_t L_x^\infty} \| \nabla \phi\| _{L^2_tL_x^\infty}+ \|\tilde{\bU}\|_{L^2_t L_x^\infty} \|\nabla^2\phi\|_{L^2_tL_{x}^\infty} \big)  \cdot \|\partial^\beta \tilde{\bU} \|^2_{L^\infty_t L_x^2}.
		\end{split}
	\end{equation}
	Take sum of $1\leq \beta \leq s_0-\frac14$ on \eqref{U1}. By using Lemma \ref{te0}, \eqref{U2}, \eqref{U3}, \eqref{5021}, and \eqref{504}, we obtain
	\begin{equation}\label{U4}
		\begin{split}
			\vert\kern-0.25ex\vert\kern-0.25ex\vert \partial_{x'} \bU\vert\kern-0.25ex\vert\kern-0.25ex\vert_{s_0-\frac54,2,\Sigma} & \lesssim \|d \bU \|_{L^4_t L^{\infty}_x}+\|d \bU\|_{L^{\infty}_tH_x^{s_0-1}}.
		\end{split}
	\end{equation}
	Operating $\nabla$ on \eqref{505}, we have
	\begin{equation*}
		\begin{split}
			A^0(\bU) \partial_t(\nabla \bU) + \sum^2_{i=1} A^i(\bU) \partial_i(\nabla \bU)=-\sum^2_{\alpha=0}\nabla (A^\alpha(\bU)) \partial_\alpha \bU,
		\end{split}
	\end{equation*}
	In a similar way, we can obtain
	\begin{equation}\label{U40}
		\begin{split}
			\vert\kern-0.25ex\vert\kern-0.25ex\vert \nabla \bU \vert\kern-0.25ex\vert\kern-0.25ex\vert_{s_0-\frac14,2,\Sigma} & \lesssim \|d \bU \|_{L^4_t L^{\infty}_x}+\|d \bU\|_{L^{\infty}_tH_x^{s_0-1}}.
		\end{split}
	\end{equation}
	Thanks to $\partial_t \bU=- \sum^2_{i=1}(A^0)^{-1}A^i(\bU)\partial_i \bU$ and Lemma \ref{te2}, we can carry out
	\begin{equation}\label{U5}
		\begin{split}
			\vert\kern-0.25ex\vert\kern-0.25ex\vert \partial_t \bU \vert\kern-0.25ex\vert\kern-0.25ex\vert_{s_0-\frac14,2,\Sigma}
			& \lesssim \vert\kern-0.25ex\vert\kern-0.25ex\vert \bU \vert\kern-0.25ex\vert\kern-0.25ex\vert_{s_0-\frac14,2,\Sigma} \vert\kern-0.25ex\vert\kern-0.25ex\vert\partial_t \bU\vert\kern-0.25ex\vert\kern-0.25ex\vert_{s_0-\frac14,2,\Sigma}
			\lesssim \|d \bU \|_{L^4_t L^{\infty}_x}.
		\end{split}
	\end{equation}
	The estimate \eqref{U5} together with \eqref{U0}, \eqref{U4}, \eqref{U40} gives \eqref{te10}. This conclude the proof of this lemma.
\end{proof}
\begin{Lemma}\label{fre}%\cite{ST}
	Assume $\frac74<s_0\leq s\leq \frac{15}{8} $. Let $\bU$ satisfy the assumption in Lemma \ref{te1}. Then
	\begin{equation}\label{508}
		\vert\kern-0.25ex\vert\kern-0.25ex\vert  \lambda(\bU-P_{<\lambda} \bU), d P_{<\lambda} \bU, \lambda^{-1} d \nabla P_{<\lambda} \bU \vert\kern-0.25ex\vert\kern-0.25ex\vert_{s_0-\frac{5}{4},2,\Sigma} \lesssim \epsilon_2. %\|U\|_{L^\infty_t H^{s_0}_x} +  \|dU\|_{L^2_t L_x^\infty}.
	\end{equation}
\end{Lemma}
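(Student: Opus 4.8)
The plan is to upgrade the characteristic energy estimate of Lemma \ref{te1} to its frequency-localized form, following the scheme of Lemma~5.4 of \cite{ST}. First, Lemma \ref{te1} together with \eqref{5021} (and the fact that the components of $\bU=(p(h),\mathrm{e}^{-h}v^1,\mathrm{e}^{-h}v^2)^{\mathrm{T}}$ have norms controlled by those of $h,\bv$) already gives
$\vert\kern-0.25ex\vert\kern-0.25ex\vert \bU\vert\kern-0.25ex\vert\kern-0.25ex\vert_{s_0-\frac14,2,\Sigma}\lesssim \|d\bU\|_{L^4_tL^\infty_x}+\|\bU\|_{L^\infty_tH_x^{s_0-\frac14}}\lesssim \epsilon_2$, so it suffices to dominate each of the three quantities in \eqref{508} by $\vert\kern-0.25ex\vert\kern-0.25ex\vert \bU\vert\kern-0.25ex\vert\kern-0.25ex\vert_{s_0-\frac14,2,\Sigma}$ together with quantities already bounded in \eqref{5021}. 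As in the proof of Lemma \ref{te1}, I would perform the change of variables $x_2\mapsto x_2-\phi(t,x')$ straightening $\Sigma$ to $\{x_2=0\}$, work with $\tilde\bU(t,x)=\bU(t,x',x_2+\phi(t,x'))$ solving the transformed system \eqref{U}, and reduce \eqref{508} to ambient $\vert\kern-0.25ex\vert\kern-0.25ex\vert\cdot\vert\kern-0.25ex\vert\kern-0.25ex\vert_{\cdot,2}$-type bounds for the relevant frequency-localized pieces of $\tilde\bU$ restricted to $\{x_2=0\}$, using Lemma \ref{te0} and the $C^{1,\delta_0}$-control \eqref{504} on $\phi$.

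For the high-frequency tail $\lambda(\bU-P_{<\lambda}\bU)$, which lives at spatial frequencies $\gtrsim\lambda$, the Bernstein-type bound $\|\lambda(I-P_{<\lambda})f\|_{\dot{H}^{a-1}}\lesssim\|f\|_{\dot{H}^{a}}$ (uniform in $\lambda$) gives the claim at the level $s_0-\frac54=(s_0-\frac14)-1$; the derivative terms $dP_{<\lambda}\bU$ and $\lambda^{-1}d\nabla P_{<\lambda}\bU$ are handled analogously in the spatial variables, the extra derivative(s) being absorbed against the weights $\lambda,\lambda^{-1}$ via $\|P_{<\lambda}f\|_{\dot{H}^{b}}\lesssim\lambda^{b-a}\|f\|_{\dot{H}^{a}}$ ($b\ge a$). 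The time derivatives are the point where care is needed: the norm $\vert\kern-0.25ex\vert\kern-0.25ex\vert\cdot\vert\kern-0.25ex\vert\kern-0.25ex\vert_{\cdot,2}$ admits only one $\partial_t$, so the worst contributions of $dP_{<\lambda}\bU$ and $\lambda^{-1}d\nabla P_{<\lambda}\bU$ formally require two $\partial_t$'s; I would eliminate these by substituting $\partial_t\bU=-\sum_i(A^0)^{-1}A^i(\bU)\partial_i\bU$ (and its spatial derivative for the second-order term), which converts them into spatial derivatives at the cost of variable-coefficient products of $\bU$-factors, estimated by the characteristic product estimate of Lemma \ref{te2} together with \eqref{5021}. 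Alternatively, one may run the characteristic energy argument of Lemma \ref{te1} directly on the equations for $P_{<\lambda}\tilde\bU$ and $\nabla P_{<\lambda}\tilde\bU$ obtained by applying $P_{<\lambda}$ and $\nabla P_{<\lambda}$ to \eqref{U}; the commutators $[P_{<\lambda},A^\alpha(\tilde\bU)]\partial_\alpha\tilde\bU$ are then controlled by Lemma \ref{jiaohuan}.

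The main obstacle is the interaction between the ambient spatial frequency projection $P_{<\lambda}$ (acting in $x\in\mathbb{R}^2$) and the restriction to the characteristic surface $\Sigma$ (parametrized by $(t,x')$): these do not commute, so $(P_{<\lambda}f)|_{\Sigma^t}$ is not $P^{x'}_{<\lambda}(f|_{\Sigma^t})$ and the Bernstein inequalities cannot be applied naively on $\Sigma$. One must show the discrepancy is harmless uniformly in $\lambda$, quantifying the commutator between composition with the near-identity graph map and $P_{<\lambda}$ using only the available regularity $\vert\kern-0.25ex\vert\kern-0.25ex\vert d\phi-dt\vert\kern-0.25ex\vert\kern-0.25ex\vert_{s_0-\frac14,2,\Sigma}\le 2\epsilon_1$ of $\phi$ together with \eqref{504}; this is the technical heart and the natural place to invoke (a refinement of) Lemma \ref{te0}. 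Once this commutation is under control and the time derivatives have been traded for spatial ones, \eqref{508} reduces to the frequency-summation bookkeeping above, each of the three terms being bounded by $\vert\kern-0.25ex\vert\kern-0.25ex\vert \bU\vert\kern-0.25ex\vert\kern-0.25ex\vert_{s_0-\frac14,2,\Sigma}\lesssim\epsilon_2$.
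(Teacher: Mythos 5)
Your primary route does not close: you correctly identify that $P_{<\lambda}$ does not commute with restriction to $\Sigma$, call the resulting commutator "the technical heart," and then leave it unproved. The paper's proof never confronts that commutator at all, because it never tries to move the frequency projection past the restriction. Instead it observes that for any order-zero multiplier $\Delta_0$ (in particular $P_{<\lambda}$, $\lambda^{-1}\nabla P_{<\lambda}$, and each $\mu P_\mu$ rescaled by $\lambda/\mu$), the function $\Delta_0\bU$ itself solves the symmetric hyperbolic system \eqref{505} in ambient coordinates with source $-[\Delta_0,A^\alpha(\bU)]\partial_\alpha\bU$; one then applies the characteristic energy estimate of Lemma \ref{te1} to this new unknown as a black box, bounding the commutator source by $\|d\bU\|_{L^4_tL^\infty_x}\|\Delta_0\bU\|_{L^\infty_tH^{s_0-\frac14}_x}$ via Kato--Ponce. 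The restriction to $\Sigma$ is performed only at the level of the energy estimate for the already-projected solution, so the obstruction you flag simply never arises. The high-frequency tail is then handled by writing $\lambda(\bU-P_{<\lambda}\bU)=\sum_{\mu\ge\lambda}(\lambda/\mu)\cdot\mu P_\mu\bU$, applying the estimate at regularity $s_0-\tfrac54$ to each $\mu P_\mu\bU$ (which costs exactly the one derivative gained by dropping from $s_0-\tfrac14$), and summing the square-summable weights $\lambda/\mu$ — not by a single Bernstein inequality on the restricted norm.

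Your fallback ("run the characteristic energy argument on the equations for $P_{<\lambda}\tilde\bU$ obtained by applying $P_{<\lambda}$ to \eqref{U}") is morally the paper's argument, but as stated it estimates the wrong object: projecting the straightened function $\tilde\bU(t,x)=\bU(t,x',x_2+\phi)$ is not the same as restricting $P_{<\lambda}\bU$ to $\Sigma$, and converting between the two reintroduces precisely the commutator you could not control. The fix is to apply the multiplier to the original system \eqref{505} \emph{before} any change of variables, and to let Lemma \ref{te1} absorb the geometry. Your concern about two time derivatives is likewise moot once the argument is organized this way, since $\partial_t(\Delta_0\bU)$ is already controlled inside Lemma \ref{te1} by substituting $\partial_t\bU=-\sum_i(A^0)^{-1}A^i\partial_i\bU$, exactly as you suggest.
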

\begin{proof}
	Let $\Delta_0$ be a standard multiplier of order $0$ on $\mathbb{R}^2$, such that $\Delta_0$ is additionally bounded on $L^\infty(\mathbb{R}^2)$. Clearly, we have
	\begin{equation*}
		A^0(\bU) \partial_t(\Delta_0U)+ \sum^2_{i=1}A^i(\bU) \partial_i(\Delta_0 \bU)= -[\Delta_0, A^\alpha(\bU)]\partial_{\alpha}\bU.
	\end{equation*}
	Applying Lemma \ref{te1}, we derive
	\begin{equation}\label{60}
		\vert\kern-0.25ex\vert\kern-0.25ex\vert \Delta_0U\vert\kern-0.25ex\vert\kern-0.25ex\vert^2_{s_0-\frac{1}{4},2,\Sigma} \lesssim \|d \bU \|_{L^4_t L^{\infty}_x}\| \Delta_0 \bU\|^2_{L^{\infty}_tH_x^{s_0}}+\| [\Delta_0, A^\alpha(\bU)]\partial_{\alpha}\bU \|_{L^{2}_tH_x^{s_0-\frac{1}{4}}}\| \Delta_0 \bU\|_{L^{\infty}_tH_x^{s_0-\frac{1}{4}}}.
	\end{equation}
	Due to commutator estimates, we obtain
	\begin{equation*}
		\| [\Delta_0, A^\alpha(\bU)]\partial_{\alpha}\bU \|_{L^{2}_tH_x^{s_0-\frac{1}{4}}} \lesssim \|d \bU \|_{L^4_t L^{\infty}_x}\| \Delta_0 \bU\|_{L^{\infty}_tH_x^{s_0-\frac{1}{4}}}.
	\end{equation*}
	According to the inequality \eqref{60}, it turns out
	\begin{equation}\label{60e}
		\vert\kern-0.25ex\vert\kern-0.25ex\vert \Delta_0 \bU \vert\kern-0.25ex\vert\kern-0.25ex\vert^2_{s_0-\frac{1}{4},2,\Sigma} \lesssim \|d \bU \|_{L^4_t L^{\infty}_x}\| \Delta_0 \bU\|^2_{L^{\infty}_tH_x^{s_0-\frac{1}{4}}}.
	\end{equation}
	To get a bound for $\lambda(\bU-P_{<\lambda} \bU)$, we write
	\begin{equation*}
		\lambda(\bU-P_{<\lambda} \bU)= \lambda \sum_{\mu\geq \lambda}P_\mu \bU,
	\end{equation*}
	where $P_\mu \bU$ satisfies the above conditions for $\Delta_0 \bU$. Applying \eqref{60e} and replacing $s_0-\frac{1}{4}$ to $s_0-\frac{5}{4}$, we get
	\begin{equation*}
		\begin{split}
			\vert\kern-0.25ex\vert\kern-0.25ex\vert \lambda(\bU-P_{<\lambda} \bU) \vert\kern-0.25ex\vert\kern-0.25ex\vert^2_{s_0-\frac{5}{4},2,\Sigma}
			%\lesssim & \sum_{\mu\geq \lambda} \vert\kern-0.25ex\vert\kern-0.25ex\vert \lambda P_\mu \bU \vert\kern-0.25ex\vert\kern-0.25ex\vert^2_{s_0-\frac{5}{4},2,\Sigma}
			% \\
			=	& \sum_{\mu\geq \lambda} \vert\kern-0.25ex\vert\kern-0.25ex\vert \mu P_\mu \bU \cdot \frac{\lambda}{\mu}  \vert\kern-0.25ex\vert\kern-0.25ex\vert^2_{s_0-\frac{5}{4},2,\Sigma}
			%	\\
			%	\lesssim & \|d \bU \|_{L^4_t L^{\infty}_x} \sum_{\mu\geq \lambda} (\mu\|P_\mu \bU\|^2_{L^{\infty}_tH_x^{s_0-\frac{1}{4}}})
			\\
			\lesssim & \| \bU \|^2_{L^{\infty}_tH_x^{s_0-\frac{1}{4}}} \lesssim \epsilon^2_2.
		\end{split}
	\end{equation*}
	Taking square of the above inequality, which yields
	\begin{equation*}
		\vert\kern-0.25ex\vert\kern-0.25ex\vert \lambda(\bU-P_{<\lambda} \bU) \vert\kern-0.25ex\vert\kern-0.25ex\vert_{s_0-\frac{5}{4},2,\Sigma}
		\lesssim  \epsilon_2.
	\end{equation*}
	Finally, applying \eqref{60} to $\Delta_0=P_{<\lambda}$ and $\Delta_0=\lambda^{-1}\nabla P_{<\lambda}$ shows that
	\begin{equation*}
		\vert\kern-0.25ex\vert\kern-0.25ex\vert d P_{<\lambda} \bU\vert\kern-0.25ex\vert\kern-0.25ex\vert_{s_0-\frac{5}{4},2,\Sigma} +\vert\kern-0.25ex\vert\kern-0.25ex\vert \lambda^{-1} d \nabla P_{<\lambda} \bU \vert\kern-0.25ex\vert\kern-0.25ex\vert_{s_0-\frac{5}{4},2,\Sigma} \lesssim \epsilon_2.
	\end{equation*}
	Therefore, the estimate \eqref{508} holds. We have finished the proof of this lemma.
\end{proof}
Taking advantage of Lemma \ref{te1}, inequalities \eqref{402a} and \eqref{403}, we can directly obtain:
\begin{corollary}\label{vte}
	Assume $\frac74<s_0\leq s\leq \frac{15}{8} $. 	Suppose $(h, \bv, \bw) \in \mathcal{H}$. Then the following estimate holds:
	\begin{equation*}\label{e017}
		\vert\kern-0.25ex\vert\kern-0.25ex\vert \bv \vert\kern-0.25ex\vert\kern-0.25ex\vert_{s_0-\frac14,2,\Sigma}+ \vert\kern-0.25ex\vert\kern-0.25ex\vert h \vert\kern-0.25ex\vert\kern-0.25ex\vert_{s_0-\frac14,2,\Sigma}  \lesssim \epsilon_2.
	\end{equation*}
\end{corollary}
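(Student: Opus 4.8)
The plan is to reduce the estimate for $(h,\bv)$ to the symmetrized unknown $\bU=(p(h),\mathrm{e}^{-h}v^1,\mathrm{e}^{-h}v^2)^{\mathrm{T}}$, for which the characteristic energy estimate of Lemma~\ref{te1} is available, and then transfer back. First I would record the uniform bounds implied by membership in $\mathcal{H}$: since $s_0-\tfrac14>\tfrac74>1$, Sobolev embedding applied to \eqref{402a} gives $\|h,\bv-\mathbf{1}\|_{L^\infty_{[-2,2]\times\mathbb{R}^2}}\lesssim\epsilon_2$, so in particular $v^0$ is bounded above and below and every smooth function of $(h,\bv)$ appearing below has controlled $L^\infty_{t,x}$ norm. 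By Lemma~\ref{QH}, $\bU$ satisfies a symmetric hyperbolic system of the form \eqref{505} with $A^\alpha=A^\alpha(\bU)$, so Lemma~\ref{te1} applies and yields
\[
\vert\kern-0.25ex\vert\kern-0.25ex\vert \bU\vert\kern-0.25ex\vert\kern-0.25ex\vert_{s_0-\frac14,2,\Sigma}\lesssim \|d\bU\|_{L^4_tL^\infty_x}+\|\bU\|_{L^\infty_tH_x^{s_0-\frac14}}.
\]

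Next I would bound the two terms on the right by the defining properties of $\mathcal{H}$. By the chain rule $d\bU=G(h,\bv)\,(dh,d\bv)$ with $G$ smooth, so using the $L^\infty$ bound above and $C^\delta_x\hookrightarrow L^\infty_x$ together with \eqref{403} gives $\|d\bU\|_{L^4_tL^\infty_x}\lesssim\|dh,d\bv\|_{L^4_tC^\delta_x}\lesssim\epsilon_2$. Since $\bU$ minus its equilibrium value is a smooth function of $(h,\bv-\mathbf{1})$ vanishing at the origin, the Moser estimate (Lemma~\ref{jiaohuan0}) and $s_0-\tfrac14\le s$ give $\|\bU\|_{L^\infty_tH_x^{s_0-\frac14}}\lesssim(1+\|h,\bv\|_{L^\infty_{t,x}})\|h,\bv\|_{L^\infty_tH_x^{s}}\lesssim\epsilon_2$ by \eqref{402a}. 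Hence $\vert\kern-0.25ex\vert\kern-0.25ex\vert\bU\vert\kern-0.25ex\vert\kern-0.25ex\vert_{s_0-\frac14,2,\Sigma}\lesssim\epsilon_2$. To transfer this to $(h,\bv)$, note that from $p(h)=U^0$ and $v^i=\mathrm{e}^hU^i$ one recovers $h$ and $\mathring{\bv}$ as smooth functions of $\bU$, and $v^0=\sqrt{\mathrm{e}^{2h}+|\mathring{\bv}|^2}$, so each component of $(h,\bv)$, minus its equilibrium value, is a smooth function of $\bU$ vanishing at the equilibrium. Since $s_0-\tfrac14>\tfrac32>1$, the norm $\vert\kern-0.25ex\vert\kern-0.25ex\vert\cdot\vert\kern-0.25ex\vert\kern-0.25ex\vert_{s_0-\frac14,2,\Sigma}$ is an algebra by Lemma~\ref{te2}; combining this with the uniform $L^\infty_{t,x}$ bounds, the change of variables flattening $\Sigma$ (cf. Lemma~\ref{te0}, whose hypotheses hold because of \eqref{504}), and a Taylor expansion of the recovery maps, one gets a composition bound $\vert\kern-0.25ex\vert\kern-0.25ex\vert h,\bv\vert\kern-0.25ex\vert\kern-0.25ex\vert_{s_0-\frac14,2,\Sigma}\lesssim\vert\kern-0.25ex\vert\kern-0.25ex\vert\bU\vert\kern-0.25ex\vert\kern-0.25ex\vert_{s_0-\frac14,2,\Sigma}\,(1+\vert\kern-0.25ex\vert\kern-0.25ex\vert\bU\vert\kern-0.25ex\vert\kern-0.25ex\vert_{s_0-\frac14,2,\Sigma})^N\lesssim\epsilon_2$, which is the claim.

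The only step requiring genuine care is this last composition estimate for the characteristic norms $\vert\kern-0.25ex\vert\kern-0.25ex\vert\cdot\vert\kern-0.25ex\vert\kern-0.25ex\vert_{s_0-\frac14,2,\Sigma}$: one must iterate the algebra property of Lemma~\ref{te2} against the $L^\infty_{t,x}$ control supplied by $\mathcal{H}$, keep track of the $\partial_t$-component of the norm via the chain rule, and handle the restriction to the merely $C^{1,\delta_0}$-regular hypersurface $\Sigma$ through the flattening change of variables. Everything else — the reduction to $\bU$, the application of Lemma~\ref{te1}, and the bounds on $\|d\bU\|_{L^4_tL^\infty_x}$ and $\|\bU\|_{L^\infty_tH_x^{s_0-\frac14}}$ — is an immediate consequence of \eqref{402a}, \eqref{403} and the product/composition estimates of Lemmas~\ref{jiaohuan}, \ref{jiaohuan0} and \ref{pps}.
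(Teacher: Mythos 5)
Your proposal is correct and follows the same route as the paper: the paper's justification for Corollary \ref{vte} is precisely to apply the characteristic energy estimate of Lemma \ref{te1} to the symmetric-hyperbolic unknown $\bU$ from Lemma \ref{QH}, bound the right-hand side via \eqref{402a} and \eqref{403}, and transfer back to $(h,\bv)$. The only difference is that you spell out the composition/transfer step (via the algebra property of Lemma \ref{te2} and the $L^\infty_{t,x}$ control) which the paper leaves implicit, and that added care is appropriate rather than a deviation.
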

Next, let us prove the characteristic energy estimates for vorticity.
\begin{Lemma}\label{te20}
 Assume $\frac74<s_0\leq s\leq \frac{15}{8} $. 	Suppose that $(h, \bv, \bw) \in \mathcal{H}$. Then we have 
	\begin{equation}\label{te201}
		\begin{split}
			\vert\kern-0.25ex\vert\kern-0.25ex\vert  \bw  \vert\kern-0.25ex\vert\kern-0.25ex\vert_{s_0-\frac14,2,\Sigma}+
			\vert\kern-0.25ex\vert\kern-0.25ex\vert  d \bw  \vert\kern-0.25ex\vert\kern-0.25ex\vert_{s_0-\frac54,2,\Sigma} \lesssim  \epsilon_2.
		\end{split}
	\end{equation}
\end{Lemma}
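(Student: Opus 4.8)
The plan is to mirror the structure of Lemma~\ref{te1} and Lemma~\ref{fre}, but applied to the transport equations \eqref{EW0} and \eqref{EW1} for $\bw$ and $\bW$ rather than to the symmetric hyperbolic system \eqref{QHl}. The key point is that the vorticity satisfies a transport equation along $\mathbf{T}$ (after dividing \eqref{EW0} by $v^0$), so the characteristic energy estimate on $\Sigma$ is obtained exactly as in \cite{ST}, Lemma~5.5: after the change of variables $x_2 \mapsto x_2 - \phi(t,x')$, the transported field $\tilde{\bw}$ solves a transport equation whose inhomogeneity picks up terms involving $d\phi$ and $\partial_t d\phi$ on $\Sigma$, which are controlled by $\Re \leq 2\epsilon_1$ via \eqref{503}--\eqref{504}, together with $\vert\kern-0.25ex\vert\kern-0.25ex\vert h,\bv \vert\kern-0.25ex\vert\kern-0.25ex\vert_{s,\infty} \lesssim \epsilon_2$ and $\|d\bv,dh\|_{L^4_tC^\delta_x}\lesssim\epsilon_2$ from \eqref{5021}.

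First I would establish the $0$-order bound $\vert\kern-0.25ex\vert\kern-0.25ex\vert \bw \vert\kern-0.25ex\vert\kern-0.25ex\vert_{0,2,\Sigma} \lesssim \|d\bv\|_{L^2_tL^\infty_x}\|\bw\|_{L^\infty_tL^2_x} + (\text{lower order})$ by multiplying the transported equation for $\tilde{\bw}$ by $\tilde{\bw}$ and integrating over $[-2,2]\times\mathbb{R}^2$, using that the right side of \eqref{EW0} is $\bw\cdot d\bv$ and that the boundary terms on $\Sigma$ are signed (this is where the foliation structure enters, exactly as in Lemma~\ref{te1}). Next, for the $(s_0-\tfrac14)$-order bound, I would apply $\partial^\beta_{x'}$ with $1\leq|\beta|\leq s_0-\tfrac14$ to the transported transport equation, commute, and use Lemma~\ref{jiaohuan} for the commutators $[\partial^\beta_{x'}, A^\alpha(\tilde\bv)\partial_\alpha\phi\,\partial_\alpha]$ and $[\partial^\beta_{x'}, \frac{\tilde v^i}{\tilde v^0}\partial_i]$, bounding the $d\phi$-factors by \eqref{504} and the $\bv$-factors by \eqref{5021}; Lemma~\ref{te0} transfers these back to the untransported norms. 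For $\partial_t\bw$, I would use \eqref{EW0} itself to write $\partial_t\bw = -(v^0)^{-1}v^i\partial_i\bw + (v^0)^{-1}(\bw\cdot d\bv)$ and apply Lemma~\ref{te2}'s multiplicative estimate on $\Sigma$ (valid since $s_0-\tfrac14 > 1$), closing the estimate for $\vert\kern-0.25ex\vert\kern-0.25ex\vert\bw\vert\kern-0.25ex\vert\kern-0.25ex\vert_{s_0-\frac14,2,\Sigma}$; this gives the first half of \eqref{te201}.

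For the second half, $\vert\kern-0.25ex\vert\kern-0.25ex\vert d\bw \vert\kern-0.25ex\vert\kern-0.25ex\vert_{s_0-\frac54,2,\Sigma}$, I would work with $\bW$ via the transport equation \eqref{EW1}: the spatial derivatives of $\bw$ are recovered from $\bW$ and lower-order terms by the elliptic/Hodge relations \eqref{ew16}--\eqref{ew18} (as already used in Theorem~\ref{DW4}), so $\|\nabla\bw\|$-type norms on $\Sigma$ reduce to $\vert\kern-0.25ex\vert\kern-0.25ex\vert\bW\vert\kern-0.25ex\vert\kern-0.25ex\vert_{s_0-\frac54,2,\Sigma}$ plus $\vert\kern-0.25ex\vert\kern-0.25ex\vert \bw\cdot(dh,d\bv)\vert\kern-0.25ex\vert\kern-0.25ex\vert_{s_0-\frac54,2,\Sigma}$, the latter handled by Lemma~\ref{te2} (since $s_0-\tfrac54$ might be slightly below $1$, I would instead use the product estimate at level $s_0-\tfrac14$ on $\bw$ paired with $(dh,d\bv)$ and interpolate, or equivalently use that $(dh,d\bv)\in C^\delta$ uniformly). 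Since the right side of \eqref{EW1} consists of terms of the schematic form $(dh,d\bv)\cdot d\bw$ and $\bw\cdot(dh,d\bv)\cdot(dh,d\bv)$, the characteristic energy estimate for $\bW$ at order $s_0-\tfrac54$ runs exactly as for $\bw$, with the extra factor of $(dh,d\bv)$ absorbed into the $L^4_tC^\delta_x$ norm from \eqref{5021}; the time derivative $\partial_t\bW$ is again eliminated using \eqref{EW1}. Combining with the energy bound $\|\bW\|_{L^\infty_tH^{s_0-\frac54}_x}\lesssim\epsilon_2$ (which follows from \eqref{ew38} and \eqref{402a}) yields \eqref{te201}.

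The main obstacle I anticipate is the low-regularity bookkeeping in the $\bW$ estimate: the term $\epsilon^{\alpha\beta\gamma}w_\gamma\partial_\beta\{(c_s^{-2}-1)v^\kappa\}\partial_\kappa h$ in \eqref{EW1} contains $d\bv\cdot dh\cdot \bw$ but also, through the chain rule, $\bw\cdot d^2(\text{metric coefficients})$, which is only $H^{s_0-2}_x$-regular — borderline for the $s_0-\tfrac54$ norm since $s_0-2 < s_0-\tfrac54$. One must check that this worst term always appears paired against $\bw$ (which is $H^{s_0-\frac14}_x$, with room to spare) so that the product lands in $H^{s_0-\frac54}_x$ via Lemma~\ref{pps} with the exponents $r = s_0-\tfrac14 > 1$, $r' = s_0 - 2$; this requires $s_0 - \tfrac14 + s_0 - 2 > 1$, i.e. $s_0 > \tfrac{13}{8}$, which holds since $s_0 > \tfrac74$. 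A careful accounting of exactly which factor carries the $\bw$ and which carries the derivative loss is the one genuinely delicate point; everything else is a routine adaptation of the Smith--Tataru characteristic energy machinery already invoked in Lemmas~\ref{te1} and \ref{fre}.
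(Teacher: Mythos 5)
Your overall architecture for the second half (route through $\bW$ and the div--curl relations \eqref{ew16}--\eqref{ew18}) matches the paper, but there are two genuine gaps and one misplaced worry.

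First, the direct $(s_0-\tfrac14)$-order tangential energy estimate you propose for the $\bw$-transport equation does not close. After the change of variables $x_2\mapsto x_2-\phi$, the transport operator acquires coefficients involving $d\phi$, and commuting $\Lambda_{x'}^{s_0-\frac14}$ with these produces the term $\Lambda_{x'}^{s_0-\frac14}(d\phi)\cdot\partial_2\tilde\bw$. In the analogous estimate for $\bU$ (Lemma \ref{te1}, see \eqref{U2}) this is handled by pairing $\sup_{\theta,r}\|\partial^\beta_{x'}\partial_t\phi\|_{L^2(\Sigma_{\theta,r})}$ (which is all the regularity $\Re\le 2\epsilon_1$ provides at top order) against $\|d\tilde\bU\|_{L^1_tL^\infty_x}$, available from the Strichartz estimate \eqref{5021}. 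For $\bw$ there is no substitute: $\nabla\bw$ lies only in $H^{s_0-\frac54}_x\cap L^8_x$, neither of which embeds in $L^\infty_x$, and no Strichartz estimate for $d\bw$ exists. This is exactly why the paper reverses the order of your argument: it first controls $\vert\kern-0.25ex\vert\kern-0.25ex\vert\nabla\bw\vert\kern-0.25ex\vert\kern-0.25ex\vert_{s_0-\frac54,2,\Sigma}$ (where the coefficient factor $\Lambda_{x'}^{s_0-\frac54}(d\phi)$ still has a full spare derivative and hence lands in $L^\infty_{x'}$), and only then recovers the top-order tangential norm of $\bw$ from the full gradient via $\partial_{x'}(\bw|_\Sigma)=(\nabla\bw)|_\Sigma\,\partial_{x'}(\cdot)$, cf.\ \eqref{VW19}. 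Second, in the div--curl step the decomposition \eqref{VW5} produces $(-\Delta)^{-1}\nabla^2\bW$, and $\vert\kern-0.25ex\vert\kern-0.25ex\vert(-\Delta)^{-1}\nabla^2\bW\vert\kern-0.25ex\vert\kern-0.25ex\vert_{s_0-\frac54,2,\Sigma}$ is \emph{not} controlled by $\vert\kern-0.25ex\vert\kern-0.25ex\vert\bW\vert\kern-0.25ex\vert\kern-0.25ex\vert_{s_0-\frac54,2,\Sigma}$: the nonlocal zeroth-order operator does not commute with restriction to the characteristic surface. The paper must derive a separate transport equation \eqref{VW1q} for $(-\Delta)^{-1}\nabla^2\bW$, with the commutator $[(v^0)^{-1}v^i\partial_i,(-\Delta)^{-1}\nabla^2]\bW$ as an extra source, and run the characteristic estimate again; your sketch omits this.

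Finally, the obstacle you single out is not actually there: the term $\epsilon^{\alpha\beta\gamma}w_\gamma\partial_\beta\{(c_s^{-2}-1)v^\kappa\}\partial_\kappa h$ in \eqref{EW1} involves only \emph{first} derivatives of $h,\bv$ (schematically $\bw\cdot(dh,d\bv)\cdot(dh,d\bv)$), not $d^2$ of the metric coefficients, so no borderline $H^{s_0-2}$ product arises. The genuinely delicate points are the two identified above.
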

\begin{proof}
	By a change of coordinates $x_2 \rightarrow x_2-\phi(t,x')$ and setting $\tilde{\bw}(t,x)= \bw(t,x',x_2+\phi(t,x'))$, the third equation in \eqref{wrt} is transformed to
	\begin{equation*}
		\tilde{v}^\kappa	\partial_\kappa \tilde{w}^{\alpha} = - \tilde{v}^\kappa \partial_\kappa \phi \tilde{w}^{\alpha}.
	\end{equation*}
	For $\phi$ is independent of $x_2$, we then get
	\begin{equation}\label{VW1}
		\partial_t \tilde{w}^{\alpha}+ (\tilde{v}^0)^{-1} \tilde{v}^i \partial_i \tilde{w}^{\alpha}= - \partial_t \phi \tilde{w}^{\alpha} - (\tilde{v}^0)^{-1} \tilde{v}^1 \partial_1 \phi \tilde{w}^{\alpha}.
	\end{equation}
	Multiplying with $\tilde{w}_{\alpha}$ on \eqref{VW1}, and integrating it on $[0,t] \times \mathbb{R}^2$, we can obtain 
	\begin{equation*}
		\begin{split}
			\| \tilde{\bw} \|^2_{0,2,\Sigma}\leq & C \|d \bv\|_{L^4_t L^\infty_x}\| \tilde{\bw} \|^2_{L^\infty_t L_x^{2}}+ C\| d \phi -dt\|_{s_0-\frac14,2,\Sigma} \| \tilde{\bw} \|^2_{L^\infty_t L_x^{2}} .
		\end{split}
	\end{equation*}
Applying the inequalities \eqref{401} and \eqref{403}, it yields
	\begin{equation}\label{VW2}
	\| \tilde{\bw} \|_{0,2,\Sigma} \lesssim \epsilon_2.
\end{equation}
For $\bw=(w^0,\mathring{\bw})$, we get
\begin{equation*}
	\begin{split}
		\|\nabla \bw\|_{s_0-\frac54,2,\Sigma}=  & \|\nabla w^0\|_{s_0-\frac54,2,\Sigma} + \|\nabla {\mathring{\bw}}\|_{s_0-\frac54,2,\Sigma} .
	\end{split}
\end{equation*}
By Hodge's decomposition, we know
\begin{equation}\label{VW5}
	\nabla {\mathring{\bw}}= (-\Delta)^{-1}\nabla^2 \left( \textrm{curl} {\mathring{\bw}} +  \textrm{div} {\mathring{\bw}} \right). 
\end{equation}
Due to \eqref{ew16}, \eqref{ew18}, and \eqref{VW5}, we can see
\begin{equation}\label{VW6}
	\begin{split}
		\|\nabla \bw\|_{s_0-\frac54,2,\Sigma}\lesssim  & \|\bW\|_{s_0-\frac54,2,\Sigma} + \|(-\Delta)^{-1}\nabla^2 \bW\|_{s_0-\frac54,2,\Sigma}+ \|\bw\cdot (dh,d\bv)\|_{s_0-\frac54,2,\Sigma} .
	\end{split}
\end{equation}
Using \eqref{EW1}, by a change of coordinates, we have
	\begin{equation}\label{VW7}
	\begin{split}
		\tilde{v}^\kappa \partial_\kappa    \tilde{W}^\alpha   
		=& \tilde{F}^\alpha - \tilde{v}^\kappa \partial_\kappa \phi \tilde{W}^\alpha,
	\end{split}
\end{equation}
where 
\begin{equation*}
	\begin{split}
		F^\alpha = 	& -\epsilon^{\alpha \beta \gamma} \partial_\beta v^\kappa \partial_\kappa w_\gamma- \epsilon^{\alpha \beta \gamma} \partial_\beta w_\gamma  \partial^\kappa v_\kappa 
		+ \epsilon^{\alpha \beta \gamma}\ \partial_\beta w^\kappa \partial_\gamma v_\kappa
		\\
		&	+ \epsilon^{\alpha \beta \gamma} w_\gamma \partial_\beta \left\{ ( c^{-2}_s-1) v^\kappa \right\} \partial_\kappa h 
		- \epsilon^{\alpha \beta \gamma} ( c^{-2}_s-1) v^\kappa \partial_\kappa w_\gamma \partial_\beta h 
		\\
		&	+2 \epsilon^{\alpha \beta \gamma}  c^{-3}_s c'_s v^\kappa w_\gamma \partial_\kappa h \partial_\beta h.
	\end{split}
\end{equation*}
	Operating derivatives $\Lambda_{x'}^{\alpha}$ ($0\leq \alpha \leq s_0-\frac54$) on \eqref{VW7}, multiplying with $\Lambda_{x'}^{\alpha} \tilde{\bW}$, and integrating on $[0,t] \times \mathbb{R}^2$, we can obtain 
	\begin{equation*}
		\begin{split}
			\| \bW \|^2_{s_0-\frac14,2,\Sigma}\leq & C \|d \bv\|_{L^4_t L^\infty_x}\| {\bw} \|^2_{L^\infty_t H_x^{s_0-\frac14}}+ C\| d \phi -dt\|_{s-\frac14,2,\Sigma} \| {\bw} \|^2_{L^\infty_t H_x^{s_0-\frac14}}
			\\
			&+  C\| d \phi -dt\|_{s_0-\frac14,2,\Sigma}\| \tilde{\bw} \|_{L^\infty_t H_x^{s_0-\frac14}} 
			{\| \nabla {\bw} \|_{L^\infty_t L^8_x} } (1+\|\bv\|_{L^\infty_t H^{s_0}_x}).
		\end{split}
	\end{equation*}
	Taking advantage of inequalities \eqref{401} and \eqref{403}, we find that
	\begin{equation}\label{VW9}
		\| \bW \|_{s_0-\frac54,2,\Sigma} \lesssim \epsilon_2.
	\end{equation}
	Multiplying $(v^0)^{-1}$ on \eqref{EW1} and operating $(-\Delta)^{-1}\nabla^2$, we have
	\begin{equation}\label{VW1q}
		\begin{split}
		\partial_t (-\Delta)^{-1}\nabla^2 W^\alpha + (v^0)^{-1} v^i \partial_i (-\Delta)^{-1}\nabla^2 W^\alpha= & (-\Delta)^{-1}\nabla^2 \left\{ (v^0)^{-1}   F^\alpha  \right\}
		\\
		& + [(v^0)^{-1} v^i \partial_i, (-\Delta)^{-1}\nabla^2]W^\alpha.
		\end{split}
	\end{equation}
	The commutator estimate tells us
	\begin{equation}\label{VW8}
	\|	[(v^0)^{-1} v^i \partial_i, (-\Delta)^{-1}\nabla^2]\bW \|_{H^{s_0-\frac54}_x} \lesssim \|d\bv\|_{L^\infty_x} \|\bW \|_{H^{s_0-\frac54}_x} .
	\end{equation}
	Similarly, by change of coordinates on \eqref{VW1q}, we can establish
		\begin{equation}\label{VW11}
		\| (-\Delta)^{-1}\nabla^2 \bW \|_{s_0-\frac54,2,\Sigma} \lesssim \epsilon_2.
	\end{equation}
	By trace theorem, we have
	\begin{equation}\label{VW13}
		\begin{split}
			\|\bw\cdot (dh,d\bv)\|_{s_0-\frac54,2,\Sigma} \lesssim & \|\bw\cdot (dh,d\bv)\|_{L^2_t H_x^{s_0-\frac34}}
			\\
			\lesssim &  \|\bw\|_{L^\infty_t H_x^{s_0-\frac14}}\| h,\bv\|_{L^2_t H_x^{s}}
			\\
			\lesssim & \epsilon^2_2.
		\end{split}
	\end{equation}
	Combining \eqref{VW6}, \eqref{VW9}, \eqref{VW11}, and \eqref{VW13}, we get
	\begin{equation}\label{VW15}
		\begin{split}
			\|\nabla \bw\|_{s_0-\frac54,2,\Sigma}\lesssim  & \epsilon_2.
		\end{split}
	\end{equation}
By using \eqref{VW1} and \eqref{VW15}, it tells us
	\begin{equation}\label{VW17}
		\begin{split}
			\|\partial_t \bw\|_{s_0-\frac54,2,\Sigma}\lesssim  & \epsilon_2.
		\end{split}
	\end{equation}
	Due to $\partial_{x'}\bw =  \nabla \bw \partial_{x'} \phi$, we obtain
	\begin{equation}\label{VW19}
	\| \tilde{\bw} \|_{s_0-\frac14,2,\Sigma}\lesssim	\|\nabla \tilde{\bw} \|_{s_0-\frac54,2,\Sigma}  .
	\end{equation}
	Combining \eqref{VW2}, \eqref{VW15}, \eqref{VW17} with \eqref{VW19} yields the estimate \eqref{te201}. This concludes the proof of this lemma.
\end{proof}
Finally, we are ready to prove Proposition \ref{r1}.
\begin{proof}[Proof of Proposition \ref{r1}]
	From Lemma \ref{fre}, it only remains for us to verify
	\begin{equation*}
		\begin{split}
			\vert\kern-0.25ex\vert\kern-0.25ex\vert {\mathbf{g}}^{\alpha \beta}-\mathbf{m}^{\alpha \beta}\vert\kern-0.25ex\vert\kern-0.25ex\vert_{s_0-\frac{1}{4},2,\Sigma_{\theta,r}} \lesssim \epsilon_2.
		\end{split}
	\end{equation*}
	Due to Lemma \ref{te1}, \eqref{504}, and \eqref{5021}, we get
	\begin{equation*}
		\sup_{\theta,r}\vert\kern-0.25ex\vert\kern-0.25ex\vert \bv \vert\kern-0.25ex\vert\kern-0.25ex\vert_{s_0-\frac{1}{4},2,\Sigma_{\theta,r}}+ \sup_{\theta,r}\vert\kern-0.25ex\vert\kern-0.25ex\vert h \vert\kern-0.25ex\vert\kern-0.25ex\vert_{s_0-\frac{1}{4},2,\Sigma_{\theta,r}} \lesssim \epsilon_2.
	\end{equation*}
	By using the definition of $\mathbf{g}$ (see \eqref{AMd3}), and Lemma \ref{te2}, the following estimate
	\begin{equation*}
		\begin{split}
			\vert\kern-0.25ex\vert\kern-0.25ex\vert  {\mathbf{g}}^{\alpha \beta}-\mathbf{m}^{\alpha \beta}\vert\kern-0.25ex\vert\kern-0.25ex\vert _{s_0-\frac{1}{4},2,\Sigma_{\theta,r}} & \lesssim \vert\kern-0.25ex\vert\kern-0.25ex\vert \bv\vert\kern-0.25ex\vert\kern-0.25ex\vert_{s_0-\frac{1}{4},2,\Sigma_{\theta,r}}+\vert\kern-0.25ex\vert\kern-0.25ex\vert \bv \cdot \bv\vert\kern-0.25ex\vert\kern-0.25ex\vert_{s_0-\frac{1}{4},2,\Sigma_{\theta,r}}+\vert\kern-0.25ex\vert\kern-0.25ex\vert c_s^2-c_s^2(0)\vert\kern-0.25ex\vert\kern-0.25ex\vert_{s_0-\frac{1}{4},2,\Sigma_{\theta,r}}
			\\
			& \lesssim \epsilon_2,
		\end{split}
	\end{equation*}
	holds for $s_0>\frac74$. Therefore, the estimate \eqref{501} holds. This concludes the proof of this lemma.
\end{proof}
It also remains for us to obtain Proposition \ref{r2}. Next, we introduce a new frame on the null hypersurface $\Sigma$.
\subsection{Null frame}
We introduce a null frame along $\Sigma$ as follows. Let
\begin{equation*}
	V=(dr)^*,
\end{equation*}
where $r$ is the defining function of the foliation $\Sigma$, and where $*$ denotes the identification of covectors and vectors induced by $\mathbf{g}$. Then $V$ is the null geodesic flow field tangent to $\Sigma$. Let
\begin{equation*}\label{600}
	\sigma=dt(V), \qquad l=\sigma^{-1} V.
\end{equation*}
Thus $l$ is the $\mathbf{g}$-normal field to $\Sigma$ normalized so that $dt(l)=1$, hence
\begin{equation*}\label{601}
	l=\left< dt,dx_2-d\phi\right>^{-1}_{\mathbf{g}} \left( dx_2-d \phi \right)^*.
\end{equation*}
Therefore, the coefficients $l^j$ are smooth functions of $\bv, \rho$ and $d \phi$. Conversely, we have
\begin{equation}\label{602}
	dx_2-d \phi =\left< l,\partial_{2}\right>^{-1}_{\mathbf{g}} l^*.
\end{equation}
Seeing from \eqref{602}, $d \phi$ is also a smooth function of $\bv, \rho$ and the coefficients of $l$.

Next we introduce the vector fields $e_1$ tangent to the fixed-time slice $\Sigma^t$ of $\Sigma$. We do this by applying Grahm-Schmidt orthogonalization in the metric $\mathbf{g}$ to the $\Sigma^t$-tangent vector fields $\partial_{1}+ \partial_{1} \phi \partial_{2}$.

Finally, we denote
\begin{equation*}
	\underline{l}=l+2\partial_t.
\end{equation*}
It follows that $\{l, \underline{l}, e_1 \}$ form a null frame in the sense that
\begin{align*}
	& \left<l, \underline{l} \right>_{\mathbf{g}} =2, \qquad \qquad \ \ \left< e_1, e_1\right>_{\mathbf{g}}=1,
	\\
	& \left<l, l \right>_{\mathbf{g}} =\left<\underline{l}, \underline{l} \right>_{\mathbf{g}}=0, \quad \left<l, e_1 \right>_{\mathbf{g}}=\left<\underline{l}, e_1 \right>_{\mathbf{g}}=0 .
\end{align*}
The coefficients of each fields is a smooth function of $h$, $\bv$, and $d \phi$, and by assumption we also have the pointwise bound
\begin{equation*}
	| e_1 - \partial_{1} |  + | l- (\partial_t+\partial_{2}) | + | \underline{l} - (-\partial_t+\partial_{2})|  \lesssim \epsilon_1.
\end{equation*}
Based on the above setting, we introduce a result about the decomposition of curvature tensor.
\begin{Lemma}\label{LLQ}(\cite{ST}, Lemma 5.8)
	Assume $\frac74<s_0\leq s\leq \frac{15}{8} $. Suppose $f$ satisfying $$\mathbf{g}^{\alpha \beta} \partial^2_{\alpha \beta}f=F.$$
	Let $(t,x',\phi(t,x'))$ denote the projective parametrisation of $\Sigma$, and for $0 \leq \alpha, \beta \leq 1$, let $/\kern-0.55em \partial_\alpha$ denote differentiation along $\Sigma$ in the induced coordinates. Then, for $0 \leq \alpha, \beta \leq 1$, one can write
	\begin{equation*}
		/\kern-0.55em \partial_\alpha /\kern-0.55em \partial_\beta (f|_{\Sigma}) = l(f_2)+ f_1,
	\end{equation*}
	where
	\begin{equation*}
		\| f_2 \|_{L^2_t H^{s_0-\frac54}_{x'}(\Sigma)}+\| f_1 \|_{L^1_t H^{s_0-\frac54}_{x'}(\Sigma)} \lesssim \|df\|_{L^\infty_t H_x^{s_0-\frac54}}+ \|df\|_{L^4_t L_x^\infty}+  \| F\|_{L^1_t H^{s_0-\frac54}_{x'}(\Sigma)}.
	\end{equation*}
\end{Lemma}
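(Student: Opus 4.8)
The plan is to follow Smith--Tataru \cite{ST}, Lemma 5.8, and adapt it to the present acoustic metric $\mathbf{g}$, whose regularity along the characteristic surface $\Sigma$ has just been quantified in Proposition \ref{r1}. The starting point is the decomposition of the operator $\mathbf{g}^{\alpha\beta}\partial^2_{\alpha\beta}$ in the null frame $\{l,\underline l,e_1\}$ constructed above. Since this frame reconstructs the inverse metric, one obtains an identity of the schematic form
\begin{equation*}
	\mathbf{g}^{\alpha\beta}\partial^2_{\alpha\beta} f = a_1\, l(\underline l f)+ a_2\, \underline l(l f)+ a_3\, e_1(e_1 f)+ \mathcal{C}\cdot df,
\end{equation*}
where $a_1,a_2,a_3$ are smooth bounded functions of $h,\bv,d\phi$ (close to constants), and the ``connection'' term $\mathcal{C}\cdot df$ is linear in $(dh,d\bv,d^2\phi)$ with smooth-function coefficients, arising when a derivative falls on a frame coefficient. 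The key structural point is that $\underline l f=\underline l^\mu\partial_\mu f$ is a bounded linear combination of the components of $df$; hence $l(\underline l f)$ is already of the form $l(\,\cdot\,)$, and, after commuting $\underline l(l f)=l(\underline l f)+[\underline l,l]f$ with $[\underline l,l]f$ first order, the only genuinely ``transversally second order'' quantity on $\Sigma$ is $e_1(e_1 f)$, which can be solved for from the displayed identity in terms of $l(\underline l f)$, $F$, and first-order terms.

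Next I would express the tangential derivatives along $\Sigma$ in this frame: for $\alpha\in\{0,1\}$ one writes $/\kern-0.55em \partial_\alpha=b_\alpha\, l+c_\alpha\, e_1$ with $b_\alpha,c_\alpha$ smooth functions of $h,\bv,d\phi$. Composing,
\begin{equation*}
	/\kern-0.55em \partial_\alpha/\kern-0.55em \partial_\beta (f|_\Sigma)= \sum_{X,Y\in\{l,e_1\}} \lambda_{XY}\, X(Y f)+ \mathcal{R}_{\alpha\beta},
\end{equation*}
where $\lambda_{XY}$ are bounded smooth functions of $h,\bv,d\phi$ and $\mathcal{R}_{\alpha\beta}=(/\kern-0.55em \partial_\alpha b_\beta)\, l f+(/\kern-0.55em \partial_\alpha c_\beta)\, e_1 f$ is first order in $df$ with $(dh,d\bv,d^2\phi)$-type coefficients. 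Then: (i) rewrite $e_1(l f)=l(e_1 f)+[e_1,l]f$; (ii) use the wave equation to replace $e_1(e_1 f)$ by a combination of $l(\underline l f)$, $F$, and $\mathcal C\cdot df$; and (iii) keep $l(l f)$ and $l(e_1 f)$ as they are. Collecting terms, we set $f_2$ to be the resulting bounded-coefficient linear combination of $l f,e_1 f,\underline l f$ (so schematically $f_2=\Phi(h,\bv,d\phi)\,df$), and $f_1$ to be $\Psi(h,\bv,d\phi)\,F$ plus all the remaining first-order contributions, each of the schematic type $\big(1+dh+d\bv+d^2\phi\big)\cdot df$ with smooth-function coefficients; by construction $/\kern-0.55em \partial_\alpha/\kern-0.55em \partial_\beta(f|_\Sigma)=l(f_2)+f_1$.

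It remains to bound $\|f_2\|_{L^2_tH^{s_0-5/4}_{x'}(\Sigma)}$ and $\|f_1\|_{L^1_tH^{s_0-5/4}_{x'}(\Sigma)}$. Since $s_0-\tfrac54>\tfrac12$, the space $H^{s_0-5/4}$ on the one-dimensional slices $\Sigma^t$ is a multiplication algebra, and Lemma \ref{te2} handles products along $\Sigma$, so it suffices to estimate each factor. The coefficient functions $\Phi,\Psi,\mathcal C$ and the quantities $dh,d\bv,d^2\phi$ restricted to $\Sigma$ are bounded in the relevant norms by $O(\epsilon_1)+O(\epsilon_2)$, using the section hypothesis $\Re(h,\bv,\bw)\leq 2\epsilon_1$, Corollary \ref{vte}, and Proposition \ref{r1}, together with the chain rule for composition with smooth functions. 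For the remaining factor one needs a characteristic energy estimate for the scalar equation $\mathbf{g}^{\alpha\beta}\partial^2_{\alpha\beta}f=F$: rewriting it as a first-order symmetric hyperbolic system for $(\partial_t f,\nabla f)$ and repeating the energy argument along $\Sigma$ as in Lemma \ref{te1}, one obtains
\begin{equation*}
	\|df\|_{L^2_tH^{s_0-5/4}(\Sigma)}\lesssim \|df\|_{L^\infty_tH^{s_0-5/4}_x}+\|df\|_{L^4_tL^\infty_x}+\|F\|_{L^1_tH^{s_0-5/4}(\Sigma)}.
\end{equation*}
Inserting this together with the coefficient bounds into the product estimates, and using H\"older in time ($L^2_tL^2_t\hookrightarrow L^1_t$) for the bilinear contributions to $f_1$, closes both estimates. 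I expect the main obstacle to be this last step: running the characteristic energy estimate for $f$ with the rough metric $\mathbf{g}$ while keeping the $H^{s_0-5/4}(\Sigma)$ norm controlled by the flat-slice $H^{s_0-5/4}_x$ norm --- rather than paying the half-derivative that a naive trace would cost --- since $s_0-\tfrac54$ sits just above the critical Sobolev threshold on $\Sigma$ and the frame coefficients and $d^2\phi$ are themselves only of regularity $s_0-\tfrac54$; this essentially forces the energy estimate and the commutator bookkeeping of steps (i)--(iii) to be carried out simultaneously.
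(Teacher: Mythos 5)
The paper gives no proof of this lemma at all — it is quoted verbatim from Smith--Tataru \cite{ST} (their Lemma 5.8) with only the citation — so the only meaningful comparison is with the argument of \cite{ST} itself, which your reconstruction follows faithfully: decompose $\square_{\mathbf g}$ in the null frame $\{l,\underline l,e_1\}$, observe that among the tangential second derivatives only $e_1(e_1f)$ fails to be of the form $l(\cdot)$ plus first order, trade it via the equation for $l(\underline l f)+F+{}$lower order, and close the bounds using the algebra property of $H^{s_0-\frac54}(\mathbb{R})$ together with a characteristic energy estimate for $df$ along $\Sigma$ (the wave-equation analogue of Lemma \ref{te1}, i.e.\ Lemma 5.6 of \cite{ST}). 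Your plan is correct and coincides with the intended proof, including the correct identification of the one genuinely nontrivial ingredient — the characteristic energy estimate that controls $\|df\|_{L^2_tH^{s_0-5/4}(\Sigma)}$ without paying the half-derivative loss of a naive trace.
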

%\begin{proof}
%\textcolor{red}{ Let }
%\end{proof}
\begin{corollary}\label{Rfenjie}
Assume $\frac74<s_0\leq s\leq \frac{15}{8} $, $ \delta\in (0, s-\frac{7}{4})$, and $\delta_0\in (0,s_0-\frac74)$. Let $R$ be the Riemann curvature tensor for the metric ${\mathbf{g}}$. Let $e_0=l$. Then for any $0 \leq a, b, c,d \leq 2$, we can write
	\begin{equation}\label{603}
		\left< R(e_a, e_b)e_c, e_d \right>_{\mathbf{g}}|_{\Sigma}=l(f_2)+f_1,
	\end{equation}
	where $|f_1|\lesssim |d \bw|+ |d \mathbf{g} |^2$ and $|f_2| \lesssim |d {\mathbf{g}}|$. Moreover, the characteristic energy estimates
	\begin{equation}\label{604}
		\|f_2\|_{L^2_t H^{s_0-\frac54}_{x'}(\Sigma)}+\|f_1\|_{L^1_t H^{s_0-\frac54}_{x'}(\Sigma)} \lesssim \epsilon_2,
	\end{equation}
	holds. Additionally, for any $t \in [-2,2]$, it follows
	\begin{equation}\label{605}
		\|f_2(t,\cdot)\|_{C^{\delta_0}_{x'}(\Sigma^t)} \lesssim \|d \mathbf{g}\|_{C^{\delta_0}_x(\mathbb{R}^2)}.
	\end{equation}
\end{corollary}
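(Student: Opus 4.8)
\textbf{Proof proposal for Corollary \ref{Rfenjie}.}
The plan is to reduce the corollary to Lemma \ref{LLQ} applied to the components of the metric $\mathbf{g}$, and then to trace through the algebraic structure of the Riemann tensor. First I would recall that, in local coordinates, $\left< R(e_a,e_b)e_c,e_d\right>_{\mathbf{g}}$ is a sum of terms of the schematic form $\mathbf{g}\cdot\partial^2\mathbf{g}$ and $\mathbf{g}\cdot\mathbf{g}\cdot\partial\mathbf{g}\cdot\partial\mathbf{g}$, with the frame coefficients $e_a^\mu$ themselves being smooth functions of $h$, $\bv$, and $d\phi$ (as recorded just before the statement). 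Contracting against the frame thus produces, on $\Sigma$, expressions that are either (i) a smooth coefficient times a second-order derivative of $\mathbf{g}^{\alpha\beta}$ along $\Sigma$, i.e. of the form $/\kern-0.55em\partial_\alpha/\kern-0.55em\partial_\beta(\mathbf{g}^{\cdot\cdot}|_\Sigma)$ up to lower-order terms, or (ii) genuinely quadratic terms $|d\mathbf{g}|^2$. For case (i) I would invoke Lemma \ref{LLQ} with $f=\mathbf{g}^{\alpha\beta}$ and $F=\square_{\mathbf{g}}\mathbf{g}^{\alpha\beta}$: since $\mathbf{g}$ is built from $h,\bv$ via \eqref{met1}–\eqref{AMd3}, the wave equations \eqref{wrt} for $h,v^\alpha$ show that $\square_{\mathbf{g}}\mathbf{g}^{\alpha\beta}$ is controlled by $d\bw$ plus quadratic expressions in $(dh,d\bv)$, hence by $|d\bw|+|d\mathbf{g}|^2$. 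This yields the decomposition \eqref{603} with $f_2$ essentially a smooth-coefficient multiple of $d\mathbf{g}$ (so $|f_2|\lesssim|d\mathbf{g}|$) and $f_1$ absorbing the $|d\bw|+|d\mathbf{g}|^2$ contributions.

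Next I would establish the characteristic energy bound \eqref{604}. From Lemma \ref{LLQ} the right-hand side is controlled by $\|d\mathbf{g}\|_{L^\infty_t H^{s_0-5/4}_x}+\|d\mathbf{g}\|_{L^4_tL^\infty_x}+\|\square_{\mathbf{g}}\mathbf{g}^{\alpha\beta}\|_{L^1_tH^{s_0-5/4}_{x'}(\Sigma)}$. The first two terms are $\lesssim\epsilon_2$ by \eqref{5021} (which gives $\vert\kern-0.25ex\vert\kern-0.25ex\vert h\vert\kern-0.25ex\vert\kern-0.25ex\vert_{s,\infty},\vert\kern-0.25ex\vert\kern-0.25ex\vert\bv\vert\kern-0.25ex\vert\kern-0.25ex\vert_{s,\infty}\lesssim\epsilon_2$ and $\|dh,d\bv\|_{L^4_tC^\delta_x}\lesssim\epsilon_2$), after noting $\mathbf{g}$ is a smooth function of $h,\bv$ and using the product estimate Lemma \ref{pps} together with $s_0-5/4<s-1$. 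For the source term on $\Sigma$, I would use the trace/characteristic energy estimates already assembled: the $|d\bw|$ part is handled by Lemma \ref{te20}, which gives $\vert\kern-0.25ex\vert\kern-0.25ex\vert d\bw\vert\kern-0.25ex\vert\kern-0.25ex\vert_{s_0-5/4,2,\Sigma}\lesssim\epsilon_2$, hence $\|d\bw\|_{L^1_tH^{s_0-5/4}_{x'}(\Sigma)}\lesssim\epsilon_2$ after a Hölder step in $t$; the quadratic part $|d\mathbf{g}|^2$ is handled by the trace version of the product estimate in Lemma \ref{te2} combined with Corollary \ref{vte} and Proposition \ref{r1}, which together control $\vert\kern-0.25ex\vert\kern-0.25ex\vert\mathbf{g}^{\alpha\beta}-\mathbf{m}^{\alpha\beta}\vert\kern-0.25ex\vert\kern-0.25ex\vert_{s_0-1/4,2,\Sigma}$ and hence $\vert\kern-0.25ex\vert\kern-0.25ex\vert d\mathbf{g}\vert\kern-0.25ex\vert\kern-0.25ex\vert_{s_0-5/4,2,\Sigma}$. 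Multiplying two such factors via Lemma \ref{te2} (valid since $s_0-5/4>1/2$, and after absorbing one factor in $L^\infty$) yields the $\epsilon_2^2\lesssim\epsilon_2$ bound.

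Finally, for the pointwise Hölder estimate \eqref{605}, I would observe that $f_2$ is, up to smooth coefficients (bounded in $C^{\delta_0}_x$ by \eqref{5021} and \eqref{504} via the chain rule), a linear combination of first-order derivatives of $\mathbf{g}$, so $\|f_2(t,\cdot)\|_{C^{\delta_0}_{x'}(\Sigma^t)}\lesssim\|d\mathbf{g}(t,\cdot)\|_{C^{\delta_0}_x(\mathbb{R}^2)}$ by restricting to the slice $\Sigma^t$ and using that the defining map $(t,x')\mapsto(t,x',\phi(t,x'))$ has $C^{1,\delta_0}_{x'}$ regularity (again \eqref{504}), which is $C^{\delta_0}$-bounded. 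The main obstacle I anticipate is bookkeeping the exact form of $f_1$ and $f_2$: one must check that \emph{every} term produced by expanding $\left<R(e_a,e_b)e_c,e_d\right>_{\mathbf{g}}$ and contracting with the frame either falls under the ``$l(\cdot)$ of a first-derivative quantity'' structure of Lemma \ref{LLQ} or is a bona fide $|d\mathbf{g}|^2$ term — in particular, the contributions where derivatives hit the frame coefficients $e_a^\mu$ rather than $\mathbf{g}$ must be shown to reduce, via \eqref{602} and the smooth dependence of $d\phi$ on $(h,\bv,l)$, back to derivatives of $\mathbf{g}$ and of $\phi$, the latter being controlled by $\Re\leq2\epsilon_1$ and \eqref{504}. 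Once this structural reduction is in place, the three estimates follow mechanically from the cited lemmas.
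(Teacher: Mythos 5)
Your proposal is correct and follows essentially the same route as the paper: expand the curvature into $\mathbf{g}\cdot\partial^2\mathbf{g}$ plus quadratic terms, apply Lemma \ref{LLQ} to the metric components with source $\square_{\mathbf{g}}\mathbf{g}_{\mu\nu}$ controlled by $|d\bw|+|d\mathbf{g}|^2$ via the wave equations and Corollary \ref{vte}/Lemma \ref{te20}, and absorb the frame-derivative and quadratic contributions into $f_1$ using Proposition \ref{r1} and the bounds on $l^\alpha$, $e_1^\alpha$. The bookkeeping obstacle you flag is resolved in the paper exactly as you anticipate, via the characteristic energy bounds on the frame coefficients coming from $\Re\leq 2\epsilon_1$.
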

\begin{proof}
	Due to the definition of curvature tensor, we have
	\begin{equation*}
		\left< R(e_a, e_b)e_c, e_d \right>_{\mathbf{g}}= R_{\alpha \beta \mu \nu}e^\alpha_a e^\beta_b e_c^\mu e_d^\nu,
	\end{equation*}
	where
	\begin{equation*}
		R_{\alpha \beta \mu \nu}= \frac12 \left[ \partial^2_{\alpha \mu} \mathbf{g}_{\beta \nu}+\partial^2_{\beta \nu} \mathbf{g}_{\alpha \mu}-\partial^2_{\beta \mu} \mathbf{g}_{\alpha \nu}-\partial^2_{\alpha \nu} \mathbf{g}_{\beta \mu} \right]+ F(\mathbf{g}^{\alpha \beta}, d \mathbf{g}_{\alpha \beta}),
	\end{equation*}
	where $F$ is a sum of products of coefficients of $\mathbf{g}^{\alpha \beta} $ with quadratic forms in $d \mathbf{g}_{\alpha \beta}$. Due to Lemma \ref{LLQ}, the estimate \eqref{603} holds. Next, we will prove \eqref{604} and \eqref{605}.

	Applying Proposition \ref{r1}, the term $F$ satisfies the bound required of $f_1$. It suffices for us to consider
	\begin{equation*}
		\frac12 e^\alpha_a e^\beta_b e_c^\mu e_d^\nu  \left[ \partial^2_{\alpha \mu} \mathbf{g}_{\beta \nu}+\partial^2_{\beta \nu} \mathbf{g}_{\alpha \mu}-\partial^2_{\beta \mu} \mathbf{g}_{\alpha \nu}-\partial^2_{\alpha \nu} \mathbf{g}_{\beta \mu} \right].
	\end{equation*}
	We therefore look at the term $ e^\alpha_a e_c^\mu \partial^2_{\alpha \mu} \mathbf{g}_{\beta \nu} $, which is typical. By \eqref{503} and Proposition \ref{r1}, we get
	\begin{equation}\label{LLL}
		\vert\kern-0.25ex\vert\kern-0.25ex\vert  l^\alpha - \delta^{\alpha 0} \vert\kern-0.25ex\vert\kern-0.25ex\vert _{s_0-\frac14,2,\Sigma} +\vert\kern-0.25ex\vert\kern-0.25ex\vert  \underline{l}^\alpha + \delta^{\alpha 0}-2\delta^{\alpha 2} \vert\kern-0.25ex\vert\kern-0.25ex\vert _{s_0-\frac14,2,\Sigma} + \vert\kern-0.25ex\vert\kern-0.25ex\vert  e^\alpha_1- \delta^{\alpha 1} \vert\kern-0.25ex\vert\kern-0.25ex\vert _{s_0-\frac14,2,\Sigma} \lesssim \epsilon_1.
	\end{equation}
	By using \eqref{LLL} and Proposition \ref{r1}, the term $ e_a (e_c^\mu) \partial_{ \mu} \mathbf{g}_{\beta \nu}$ satisfies the bound required of $f_1$, we therefore consider $e_a(e_c(\mathbf{g}_{\beta \nu}))$. Since the coefficients of $e_c$ in the basis $/\kern-0.55em \partial_\alpha$ have tangential derivatives bounded in $L^2_tH^{s_0-\frac54}_{x'}(\Sigma)$, we are reduced by Lemma \ref{LLQ} to verifying that
	\begin{equation*}
		\| \mathbf{g}^{\alpha \beta } \partial^2_{\alpha \beta} \mathbf{g}_{\mu \nu} \|_{L^1_t H^{s_0-\frac54}_{x'}(\Sigma)} \lesssim \epsilon_2.
	\end{equation*}
	Note $\mathbf{g}^{\alpha \beta } \partial^2_{\alpha \beta} \mathbf{g}_{\mu \nu}= \square_{\mathbf{g}} \mathbf{g}_{\mu \nu}$. By use of Corollary \ref{vte} and Lemma \ref{te20}, we have
	\begin{equation*}
		\begin{split}
			\| \square_{\mathbf{g}} \mathbf{g}_{\mu \nu}\|_{L^1_t H^{s_0-\frac54}_{x'}(\Sigma)}
			\lesssim & \ \| \square_{\mathbf{g}} \bv\|_{L^2_t H^{s_0-\frac54}_{x'}(\Sigma)}+\| \square_{\mathbf{g}} h \|_{L^2_t H^{s_0-\frac54}_{x'}(\Sigma)}
			\\
			\lesssim & \ \| d \bw \|_{L^2_t H^{s_0-\frac54}_{x'}(\Sigma)}+ \| (d\bv,dh)\cdot (d\bv,dh) \|_{L^1_t H^{s_0-\frac54}_{x'}(\Sigma)}
			\\
			\lesssim & \ \| d \bw \|_{L^2_t H^{s_0-\frac54}_{x'}(\Sigma)}+ [ 2-(-2)  ]^{\frac14}\| d\bv, dh \|_{L^4_t L^\infty_x }\| d\bv, dh\|_{L^2_t H^{s_0-\frac54}_{x'}(\Sigma)}
			\\
			\lesssim & \ \epsilon_2.
		\end{split}
	\end{equation*}
	Above, $d \bw$ and $(d{\mathbf{g}})^2$ are included in $f_1$. Therefore, we have finished the proof of this corollary.
\end{proof}
Based on the above null frame, we can discuss the estimates for connection coefficients as follows.
\subsection{Estimates for connection coefficients}
Define
\begin{equation*}
	\chi= \left<D_{e_1}l,e_1 \right>_{\mathbf{g}}, \qquad l(\ln \sigma)=\frac{1}{2}\left<D_{l}\underline{l},l \right>_{\mathbf{g}}. \quad %\mu_{0ab} = \left<D_{l}e_a,e_b \right>_{\mathbf{g}}.
\end{equation*}
For $\sigma$, we set the initial data $\sigma=1$ at the time $-2$. Thanks to Proposition \ref{r1}, we have
\begin{equation}\label{606}
	\|\chi \|_{L^2_t H^{s_0-\frac54}_{x'}(\Sigma)} + \| l(\ln \sigma)\|_{L^2_t H^{s_0-\frac54}_{x'}(\Sigma)} \lesssim \epsilon_1.
\end{equation}
In a similar way, if we expand $l=l^\alpha /\kern-0.55em \partial_\alpha$ in the tangent frame $\partial_t, \partial_{x'}$ on $\Sigma$, then
\begin{equation}\label{607}
	l^0=1, \quad \|l^1\|_{s_0-\frac14,2,\Sigma} \lesssim \epsilon_1.
\end{equation}
Next, we will establish some estimates for connections along the hypersurface.
\begin{Lemma}\label{chi}
	Assume $\frac74<s_0\leq s\leq \frac{15}{8} $, $ \delta\in (0, s-\frac{7}{4})$, and $ \delta_0 \in (0, s_0-\frac{7}{4})$. Let $\chi$ be defined as before. Then
	\begin{equation}\label{608}
		\|\chi\|_{L^2_t H^{s_0-\frac54}_{x'}(\Sigma)} \lesssim \epsilon_2.
	\end{equation}
	Furthermore, for any $t \in [-2,2]$,
	\begin{equation}\label{609}
		\| \chi \|_{C^{\delta_0}_{x'}(\Sigma^t)} \lesssim \epsilon_2+ \|d \mathbf{g} \|_{C^{\delta_0}_{x}(\mathbb{R}^2)}.
	\end{equation}
\end{Lemma}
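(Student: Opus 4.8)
\textbf{Proof proposal for Lemma \ref{chi}.}

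The plan is to derive a transport equation for $\chi$ along the null generator $l$ of $\Sigma$ — namely the Raychaudhuri / Riccati equation — and then run a characteristic energy estimate on $\Sigma$, exactly in the spirit of \cite{ST}. Since $\dim \Sigma^t = 1$, the null second fundamental form $\chi = \langle D_{e_1}l, e_1\rangle_{\mathbf g}$ is a scalar, and $D_l \chi$ satisfies a structure equation of the schematic form
\begin{equation*}
	D_l \chi + \chi^2 = -\langle R(e_1,l)l, e_1\rangle_{\mathbf g} + (\text{lower-order connection terms}),
\end{equation*}
where the lower-order terms involve $\chi$ itself, $l(\ln\sigma)$, and first derivatives of the frame coefficients, all of which are controlled by \eqref{606}, \eqref{607} and Proposition \ref{r1}. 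First I would write this equation in the projective coordinates $(t,x')$ on $\Sigma$, so that $l$ becomes $\partial_t + l^1\partial_{x'}$ with $l^1$ small by \eqref{607}; this turns the structure equation into a genuine $1+1$ transport equation for $\chi$ along $\Sigma$ with source term $\langle R(e_1,l)l,e_1\rangle_{\mathbf g}|_\Sigma$ plus quadratic-in-$\chi$ and frame-derivative corrections.

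The crucial input is the curvature decomposition from Corollary \ref{Rfenjie}: setting $e_0 = l$ and taking $(a,b,c,d)$ appropriately, $\langle R(e_1,l)l,e_1\rangle_{\mathbf g}|_\Sigma = l(f_2) + f_1$ with $\|f_2\|_{L^2_t H^{s_0-5/4}_{x'}(\Sigma)} + \|f_1\|_{L^1_t H^{s_0-5/4}_{x'}(\Sigma)} \lesssim \epsilon_2$ and $\|f_2(t,\cdot)\|_{C^{\delta_0}_{x'}} \lesssim \|d\mathbf g\|_{C^{\delta_0}_x}$. Because the source has a good $l$-derivative structure, I would substitute $\widetilde\chi := \chi - f_2$; then $\widetilde\chi$ solves a transport equation along $l$ whose right-hand side is only $f_1$ plus $\chi^2$ plus the connection corrections. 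For \eqref{608}, I then run the $H^{s_0-5/4}_{x'}(\Sigma)$ energy estimate on this transport equation: commute $\Lambda_{x'}^{s_0-5/4}$ through, multiply by $\Lambda_{x'}^{s_0-5/4}\widetilde\chi$, integrate over $\Sigma$, and use Lemma \ref{te2} together with the product/commutator bounds. The quadratic term $\chi^2$ is handled by $\|\chi^2\|_{L^1_t H^{s_0-5/4}_{x'}(\Sigma)} \lesssim \|\chi\|_{L^2_t H^{s_0-5/4}_{x'}(\Sigma)}^2$ (valid since $s_0 - 5/4 > 1/2$, so $H^{s_0-5/4}_{x'}$ on the one-dimensional slice is an algebra up to the Grönwall absorption), and the frame-coefficient corrections are absorbed using \eqref{606}, \eqref{607}, \eqref{LLL}. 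The initial data for $\chi$ at $t=-2$ is $O(\epsilon_1)$ by \eqref{606}, and a Grönwall argument in $t$ (with the smallness of $\epsilon_1,\epsilon_2$ ensuring the quadratic term does not destabilize the bound) closes \eqref{608}. For the pointwise bound \eqref{609}, I would work at fixed $t$: integrate the transport equation in $t$ from $-2$ to the given time along the (nearly vertical) integral curves of $l$, estimate in $C^{\delta_0}_{x'}$ using \eqref{605} for the $f_2$ contribution, $\|f_1\|_{L^1_t C^{\delta_0}_{x'}} \lesssim \epsilon_2$ (via Sobolev embedding $H^{s_0-5/4}_{x'} \hookrightarrow C^{\delta_0}_{x'}$ on the line, since $s_0 - 5/4 > 1/2 + \delta_0$), and \eqref{608} together with embedding to bound the quadratic term — producing the desired $\epsilon_2 + \|d\mathbf g\|_{C^{\delta_0}_x(\mathbb R^2)}$.

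The main obstacle I anticipate is the borderline regularity in the characteristic energy estimate: $s_0$ is barely above $7/4$, so $s_0 - 5/4$ is barely above $1/2$, and the Sobolev space on the one-dimensional slices $\Sigma^t$ is only just an algebra / just embeds into $C^{\delta_0}$. This forces careful bookkeeping of where $\delta_0 < s_0 - 7/4$ is used, and means the commutator estimates (Lemma \ref{jiaohuan}) and the trace-type bounds (Lemma \ref{te2}) must be applied with exactly the admissible exponents; there is no room to be lossy. A secondary technical point is verifying that the frame vector fields $l, \underline l, e_1$ and hence all connection coefficients $\chi$, $l(\ln\sigma)$, $l^1$ are genuinely functions of $(h,\bv,d\phi)$ with the claimed regularity — but this is already essentially recorded in \eqref{602}, \eqref{LLL}, \eqref{606}–\eqref{607}, so it reduces to a chain-rule/product-estimate check rather than new analysis.
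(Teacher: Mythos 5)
Your proposal follows essentially the same route as the paper: the Riccati/transport equation $l(\chi)=\langle R(l,e_1)l,e_1\rangle_{\mathbf g}-\chi^2-l(\ln\sigma)\chi$, the substitution $\chi-f_2$ via the curvature decomposition of Corollary \ref{Rfenjie}, an $H^{s_0-\frac54}_{x'}(\Sigma)$ characteristic energy estimate with commutator control of $[\Lambda_{x'}^{s_0-\frac54},l]$ and the algebra property of $H^{s_0-\frac54}_{x'}$ on the one-dimensional slices, and finally the Sobolev embedding $H^{s_0-\frac54}(\mathbb{R})\hookrightarrow C^{\delta_0}(\mathbb{R})$ together with \eqref{612} for the pointwise bound \eqref{609}. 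The argument is correct and matches the paper's proof in all essential respects.
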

\begin{proof}
	Along the null hypersurface, $\chi$ satisfies a transport equation (see Klainerman and Rodnianski \cite{KR2}). This tells us:
	\begin{equation*}
		l(\chi)=\left< R(l,e_1)l, e_1 \right>_{\mathbf{g}}-\chi^2-l(\ln \sigma)\chi.
	\end{equation*}
	Due to Corollary \ref{Rfenjie}, we can rewrite the above equation by
	\begin{equation}\label{610}
		l(\chi-f_2)=f_1-\chi^2-l(\ln \sigma)\chi,
	\end{equation}
	where
	\begin{equation}\label{611}
		\|f_2\|_{L^2_t H^{s_0-\frac54}_{x'}(\Sigma)}+\|f_1\|_{L^1_t H^{s_0-\frac54}_{x'}(\Sigma)} \lesssim \epsilon_2,
	\end{equation}
	and for any $t \in [0,T]$,
	\begin{equation}\label{612}
		\|f_2(t,\cdot)\|_{C^{\delta_0}_{x'}(\Sigma^t)} \lesssim \|d \mathbf{g}\|_{C^{\delta_0}_x(\mathbb{R}^2)}.
	\end{equation}
	Let $\Lambda_{x'}$ be defined in \eqref{Sig}. To be simple, we set
	\begin{equation*}\label{ef}
		F=f_1-\chi^2-l(\ln \sigma)\chi.
	\end{equation*}
	Applying integration by parts on \eqref{610}, we obtain
	\begin{equation}\label{613}
		\begin{split}
			\|\Lambda_{x'}^{s_0-\frac54}(\chi-f_2)(t,\cdot) \|_{L^2_{x'}(\Sigma^t)}
			\lesssim \ &\| [\Lambda_{x'}^{s_0-\frac54},l](\chi-f_2) \|_{L^1_tL^2_{x'}(\Sigma^t)}+ \| \Lambda_{x'}^{s_0-\frac54}F \|_{L^1_tL^2_{x'}(\Sigma^t)}.
		\end{split}
	\end{equation}
	By using $s_0>\frac74$, $H^{s_0-\frac54}_{x'}(\Sigma^t)$ is an algebra. This ensures that
	\begin{equation}\label{614}
		\begin{split}
			\| \Lambda_{x'}^{s_0-\frac54} F \|_{L^1_tL^2_{x'}(\Sigma^t)} &\lesssim \|f_1\|_{L^1_tH^{s_0-\frac54}_{x'}(\Sigma^t)}+ \|\chi\|^2_{L^2_tH^{s_0-\frac54}_{x'}(\Sigma^t)}
			\\
			& \quad + \|\chi\|_{L^2_tH^{s_0-\frac54}_{x'}(\Sigma^t)}\cdot\|l(\ln \sigma)\|_{L^2_tH^{s_0-\frac54}_{x'}(\Sigma^t)}
			\\
			& \quad + \|\mu\|_{L^2_tH^{s_0-\frac54}_{x'}(\Sigma^t)}\cdot\|\chi\|_{L^2_tH^{s_0-\frac54}_{x'}(\Sigma^t)}.
		\end{split}
	\end{equation}
	For $a \geq 0$, a direct calculation tells us
	\begin{equation*}
		\begin{split}
			[\Lambda_{x'}^{a},l]f=& \Lambda_{x'}^{a} (l^{\alpha} /\kern-0.55em \partial_{\alpha}  f ) - l^{\alpha} /\kern-0.55em \partial_{\alpha} \Lambda_{x'}^{a} f
			\\
			=& \Lambda_{x'}^{a} /\kern-0.55em \partial_{\alpha}  ( l^{\alpha} f ) - \Lambda_{x'}^{a} ( /\kern-0.55em \partial_{\alpha} l^\alpha f)- l^\alpha \Lambda_{x'}^{a} /\kern-0.55em \partial_{\alpha} f
			\\
			=&  - \Lambda_{x'}^{a} ( /\kern-0.55em \partial_{\alpha} l^\alpha f) + [\Lambda_{x'}^{a} /\kern-0.55em \partial_{\alpha} , l^{\alpha} ]f.
		\end{split}
	\end{equation*}
	As a result, we can bound $\| [\Lambda_{x'}^{s_0-\frac54},l](\chi-f_2) \|_{L^2_{x'}(\Sigma^t)}$ by
	\begin{equation}\label{k20}
		\begin{split}
			\| [\Lambda_{x'}^{s_0-\frac54},l](\chi-f_2) \|_{L^2_{x'}(\Sigma^t)} 
			\leq & \| /\kern-0.55em \partial_{\alpha} l^{\alpha} (\chi-f_2)(t,\cdot) \|_{H^{s_0-\frac54}_{x'}(\Sigma^t)}
			\\
			& + \|[\Lambda_{x'}^{s_0-\frac54} /\kern-0.55em \partial_{\alpha}, l^{\alpha}](\chi-f_2)(t,\cdot) \|_{L^{2}_{x'}(\Sigma^t)}.
		\end{split}	
	\end{equation}
	Note $l^0=1$ and $s_0>\frac74$. Applying Kato-Ponce's commutator estimate and Sobolev embeddings, the right hand side of \eqref{k20} can be bounded by
	\begin{equation}\label{615}
		\|l^1(t,\cdot)\|_{H^{s_0-\frac54}_{x'}(\Sigma^t)} \| \Lambda_{x'}^{s_0-\frac54}(\chi-f_2)(t,\cdot) \|_{L^{2}_{x'}(\Sigma^t)} .
	\end{equation}
	Because of \eqref{606}, \eqref{607}, \eqref{611}, \eqref{613}, \eqref{614} and \eqref{615}, we thus prove that
	\begin{equation}\label{k22}
		\sup_t \|(\chi-f_2)(t,\cdot)\|_{H^{s_0-\frac54}_{x'}(\Sigma^t)}  \lesssim \epsilon_2.
	\end{equation}
	The estimate \eqref{k22} combining with \eqref{611} yield \eqref{608}. Due to \eqref{610}, we find
	\begin{equation}\label{616}
		\begin{split}
			\| \chi-f_2\|_{C^{\delta_0}_{x'}} & \lesssim \| f_1 \|_{L^1_tC^{\delta_0}_{x'}}+ \|\chi^2\|_{L^1_tC^{\delta_0}_{x'} }+\|l(\ln \sigma)\chi\|_{L^1_tC^{\delta_0}_{x'}}.
		\end{split}
	\end{equation}
	Using Sobolev's imbedding, we have
	\begin{equation}\label{619}
		H^{s_0-\frac54}(\mathbb{R})\hookrightarrow C^{\delta_0}(\mathbb{R}), \qquad \delta_0 \in (0,s_0-\frac74).
	\end{equation}
	Taking advantage of \eqref{612}, \eqref{616}, and \eqref{619}, we get \eqref{609}. At this stage, we have finished the proof of this lemma.
\end{proof}
Finally, we present a proof for Proposition \ref{r2} as follows.
\subsection{The proof of Proposition \ref{r2}}
First, we will verify \eqref{Re}. By using \eqref{501} and \eqref{602} for $\vert\kern-0.25ex\vert\kern-0.25ex\vert \mathbf{g}-\mathbf{m} \vert\kern-0.25ex\vert\kern-0.25ex\vert_{s_0-\frac54,2,\Sigma}$, then the inequality \eqref{Re} follows from the following bound:
\begin{equation*}
	\vert\kern-0.25ex\vert\kern-0.25ex\vert l-(\partial_t-\partial_{2})\vert\kern-0.25ex\vert\kern-0.25ex\vert_{s_0-\frac14,2,\Sigma} \lesssim \epsilon_2.
\end{equation*}
Above, it is understood that one takes the norm of the coefficients of $l-(\partial_t-\partial_{2})$ in the standard frame on $\mathbb{R}^{1+2}$. The geodesic equation, together with the bound for Christoffel symbols $\|\Gamma^\alpha_{\beta \gamma}\|_{L^4_t L^\infty_x} \lesssim \|d {\mathbf{g}} \|_{L^4_t L^\infty_x}\lesssim \epsilon_2$, imply that
\begin{equation*}
	\|l-(\partial_t-\partial_{2})\|_{L^\infty_{t,x}} \lesssim \epsilon_2.
\end{equation*}
Therefore, it suffices for us to bound the tangential derivatives of the coefficients for $l-(\partial_t-\partial_{2})$ in the norm $L^2_t H^{s_0-\frac54}_{x'}(\Sigma)$. By using Proposition \ref{r1}, we can estimate the Christoffel symbols
\begin{equation*}
	\|\Gamma^\alpha_{\beta \gamma} \|_{L^2_t H^{s_0-\frac54}_{x'}(\Sigma^t)} \lesssim \epsilon_2.
\end{equation*}
Note that $H^{s_0-\frac54}_{x'}(\Sigma^t)$ is a algebra when $s_0>\frac74$. We thus derive
\begin{equation*}
	\|\Gamma^\alpha_{\beta \gamma} e_1^\beta l^\gamma\|_{L^2_t H^{s_0-\frac54}_{x'}(\Sigma^t)} \lesssim \epsilon_2.
\end{equation*}
In what follows, we can establish the following bound:
\begin{equation}\label{fb}
	\| \left< D_{e_1}l, e_1 \right>\|_{L^2_t H^{s_0-\frac54}_{x'}(\Sigma^t)}+ \| \left< D_{e_1}l, \underline{l} \right>\|_{L^2_t H^{s_0-\frac54}_{x'}(\Sigma^t)}+\|\left< D_{l}l, \underline{l} \right>\|_{L^2_t H^{s_0-\frac54}_{x'}(\Sigma^t)} \lesssim \epsilon_2.
\end{equation}
In fact, the first term in \eqref{fb} is $\chi$, which can be bounded by using Lemma \ref{chi}. For the second term in the left side of \eqref{fb}, consider
\begin{equation*}
	\left< D_{e_1}l, \underline{l} \right>=\left< D_{e_1}l, 2\partial_t \right>=-2\left< D_{e_1}\partial_t,l \right>.
\end{equation*}
Then it can be bounded by use of Proposition \ref{r1}. Similarly, using Proposition \ref{r1}, we can obtain the desired estimate for the last term in \eqref{fb}. Therefore, the rest of proof is to obtain
\begin{equation*}
	\| d \phi(t,x')-dt \|_{C^{1,\delta_0}_{x'}(\mathbb{R})}  \lesssim \epsilon_2+ \| d\mathbf{g}(t,\cdot)\|_{C^{\delta_0}_x(\mathbb{R}^2)}.
\end{equation*}
In order to do this, it suffices for us to show that
\begin{equation*}
	\|l(t,\cdot)-(\partial_t-\partial_{x_2})\|_{C^{1,\delta_0}_{x'}(\mathbb{R})} \lesssim \epsilon_2+ \| d \mathbf{g} (t,\cdot)\|_{C^{\delta_0}_x(\mathbb{R}^2)}.
\end{equation*}
It's clear that the coefficients of $e_1$ are small in $C^{\delta_0}_{x'}(\Sigma^t)$ perturbations of their constant coefficient analogs. Thus, it's left for us to prove
\begin{equation*}
	\|\left< D_{e_1}l, e_1 \right>(t,\cdot)\|_{C^{\delta_0}_{x'}(\Sigma^t)}
	+\|\left< D_{e_1}l, \underline{l} \right>(t,\cdot)\|_{C^{\delta_0}_{x'}(\Sigma^t)}  \lesssim \epsilon_2+ \| d\mathbf{g}(t,\cdot)\|_{C^{\delta_0}_x(\mathbb{R}^2)}.
\end{equation*}
Above, the first term is bounded by Lemma \ref{chi}, and the second by using
\begin{equation*}
	\|\left< D_{e_1}\partial_t, l \right>(t,\cdot)\|_{C^{\delta_0}_{x'}(\Sigma^t)} \lesssim  \| d\mathbf{g}(t,\cdot)\|_{C^{\delta_0}_x(\mathbb{R}^2)}.
\end{equation*}
Therefore, the estimate \eqref{502} holds. At this stage, we complete the proof of Proposition \ref{r2}.

\section{The proof of Proposition \ref{r3}}\label{sec7}
In this part, our goal is to give a proof of Proposition \ref{r3} and establish Strichartz estimates of solutions.
\subsection{The proof of Proposition \ref{r3}}
We first introduce Strichartz estimates for linear inhomogeneous wave equations.
\begin{proposition}\label{r5}
Assume $s\in (\frac74, \frac{15}{8}]$. Suppose that $(h,\bv,\bw) \in \mathcal{{H}}$ and $\Re(h, \bv, \bw) \leq 2 \epsilon_1$. For each $1 \leq r \leq s+1$, then the linear, homogenous equation
	\begin{equation*}
		\begin{cases}
			& \square_{\mathbf{g}} F=0,
			\\
			&F(t,x)|_{t=t_0}=F_0, \quad \partial_t F(t,x)|_{t=t_0}=F_1,
		\end{cases}
	\end{equation*}
	is well-posed for the initial data $(F_0, F_1)$ in $H_x^r \times H_x^{r-1}$. Moreover, the solution satisfies, respectively, the energy estimates
	\begin{equation*}
		\| F\|_{L^\infty_tH_x^r}+ \| \partial_t F\|_{L^\infty_tH_x^{r-1}} \leq C \big( \|F_0\|_{H_x^r}+ \|F_1\|_{H_x^{r-1}} \big),
	\end{equation*}
	and the Strichartz estimates
	\begin{equation}\label{SL}
		\| \left<\nabla \right>^a F\|_{L^4_{t}L^\infty_x} \leq C \big( \|F_0\|_{H_x^r}+ \|F_1\|_{H_x^{r-1}} \big), \quad a<r-\frac34.
	\end{equation}
	The similar estimate \eqref{SL} also holds if we replace $\left<\nabla \right>^a$ by $\left<\nabla \right>^{a-1}d$.
\end{proposition}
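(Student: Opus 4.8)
The plan is to prove Proposition \ref{r5} by constructing a wave packet parametrix for $\square_{\mathbf{g}}$ in the spirit of Smith and Tataru \cite{ST}, using the control on the characteristic geometry furnished by Propositions \ref{r1}--\ref{r2}, Corollary \ref{Rfenjie} and Lemma \ref{chi}. The energy estimate is the easy half: a standard $L^2$-energy argument for the second order hyperbolic operator $\square_{\mathbf{g}}$, commuted with $\langle\nabla\rangle^{r}$, gives $\|F\|_{L^\infty_t H_x^r}+\|\partial_t F\|_{L^\infty_t H_x^{r-1}}\lesssim\|F_0\|_{H_x^r}+\|F_1\|_{H_x^{r-1}}$ once one knows $\|d\mathbf{g}\|_{L^1_t L^\infty_x}\lesssim\epsilon_2$, which follows from the bounds defining $\mathcal{H}$. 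So the real content is the Strichartz bound \eqref{SL}.

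First I would reduce \eqref{SL} to a single dyadic piece. Writing $F=\sum_\lambda P_\lambda F$ and using the energy estimate together with Bernstein for $\lambda\lesssim1$, it suffices to prove the frequency-localized estimate
\[
\|P_\lambda F\|_{L^4_{[-2,2]}L^\infty_x}\lesssim\lambda^{\frac34}\|P_\lambda F_0\|_{L^2_x}+\lambda^{-\frac14}\|P_\lambda F_1\|_{L^2_x},\qquad \lambda\gg1,
\]
since multiplying by $\lambda^a$ and summing over dyadic $\lambda$ then converges by Cauchy--Schwarz precisely because $a<r-\frac34$; the version with $\langle\nabla\rangle^{a-1}d$ in place of $\langle\nabla\rangle^a$ is identical. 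For the dyadic estimate I would first replace $\mathbf{g}$ by its frequency truncation $\mathbf{g}_{<\lambda^{1-c\delta}}$ (the truncation error is absorbed into the iteration below), then rescale $(t,x)\mapsto\lambda(t,x)$ to reduce to building a parametrix on a time interval of length $\sim\lambda$ for a metric that is smooth at the unit scale with uniformly bounded higher derivatives.

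Next I would construct the parametrix $F^{\mathrm{app}}$ as a superposition $\sum_T c_T\,u_{\lambda,T}$ of wave packets, each $u_{\lambda,T}$ concentrated near a null geodesic of $\mathbf{g}$ in a curved tube of width $\sim\lambda^{-\frac12}$; the phases are built from the defining functions $\phi_{\theta,r}$ and the null frame $\{l,\underline{l},e_1\}$ of Section \ref{secp4}, and the coefficients $c_T$ come from a wave packet (FBI-type) transform of the data, so that $\|c\|_{\ell^2}$ is controlled by the suitably weighted $L^2$ norm of $(F_0,F_1)$. One then checks (i) that $\square_{\mathbf{g}}u_{\lambda,T}$ is a small error, via the eikonal equation for the phase and the transport equation for the amplitude, where the admissible error size is governed by the curvature decomposition $\langle R(e_a,e_b)e_c,e_d\rangle_{\mathbf{g}}|_\Sigma=l(f_2)+f_1$ of Corollary \ref{Rfenjie} together with $\|\chi\|_{C^{\delta_0}_{x'}}\lesssim\epsilon_2+\|d\mathbf{g}\|_{C^{\delta_0}_x}$ and $\|\chi\|_{L^2_t H^{s_0-\frac54}_{x'}(\Sigma)}\lesssim\epsilon_2$ from Lemma \ref{chi}; and (ii) the fixed-time dispersive bound $\big\|F^{\mathrm{app}}(t,\cdot)\big\|_{L^\infty_x}\lesssim\lambda(1+\lambda|t-t_0|)^{-\frac12}\|c\|_{\ell^2}$, a consequence of the transversality and finite overlap of the tubes guaranteed by the connection coefficient estimates. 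The bound \eqref{SL} for $F^{\mathrm{app}}$ then follows from the abstract $TT^*$ mechanism of Keel--Tao \cite{KT} together with the Christ--Kiselev lemma to handle the lack of time translation invariance, and the full estimate is obtained by iteration: $\square_{\mathbf{g}}(F-F^{\mathrm{app}})$ is an error gaining a power of $\lambda$, so feeding it back through the energy estimate and the already-proven Strichartz bound and summing the resulting series — organized with a Tao-type frequency envelope to avoid a logarithmic loss in the dyadic summation — closes the argument; Proposition \ref{r3} then follows from \ref{r5} by Duhamel and Christ--Kiselev.

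The main obstacle is that the acoustic metric sits at the very edge of what the method tolerates: $d\mathbf{g}$ lies only in $L^4_t C^{\delta}_x$ and, along the characteristic hypersurfaces, only in $L^2_t H^{s_0-\frac54}_{x'}(\Sigma)$ with $s_0>\tfrac74$ barely above the Smith--Tataru threshold, so the parametrix error estimates, the transport estimates for $\chi$ and $l(\ln\sigma)$, and the convergence of the dyadic sum must all be balanced against one another with essentially no slack. A secondary difficulty is that the vorticity term $-c_s^2\Theta\,\epsilon^{\alpha\beta\gamma}\partial_\beta w_\gamma$ enters $\square_{\mathbf{g}}v^\alpha$, hence the curvature of $\mathbf{g}$ through $\square_{\mathbf{g}}\mathbf{g}$, so the contribution of $d\bw$ has to be carried through the characteristic energy estimates of Section \ref{secp4} (Lemma \ref{te20}, Corollary \ref{Rfenjie}) rather than being absorbed perturbatively.
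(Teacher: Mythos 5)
Your skeleton (energy estimate, dyadic reduction, wave packets adapted to the characteristic foliation, iteration of the parametrix error) matches the paper's architecture: the paper reduces Proposition \ref{r5} to the phase-space Proposition \ref{A1}, treats $r=1$ first and then obtains $1<r\leq s+1$ by conjugating with $\left<\nabla\right>^{r-1}$ and handling the commutator by analytic interpolation. However, the step you place at the heart of the argument --- a fixed-time dispersive bound $\|F^{\mathrm{app}}(t,\cdot)\|_{L^\infty_x}\lesssim\lambda(1+\lambda|t-t_0|)^{-\frac12}\|c\|_{\ell^2}$ followed by Keel--Tao $TT^*$ and Christ--Kiselev --- is a genuine gap. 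The geometry of the null foliation is controlled only in an $L^2$-in-time sense: the functional $\Re$ in \eqref{500} and the bounds \eqref{604}, \eqref{608} are $L^2_t$ norms along $\Sigma$, while the pointwise-in-time bound \eqref{502} degrades like $\|d\mathbf{g}(t,\cdot)\|_{C^{\delta_0}_x}$, which is only in $L^4_t$. One therefore cannot extract a uniform-in-$t$ kernel decay for the superposition; this is exactly the obstruction that forces the regularity loss in the dispersive-estimate approaches of Bahouri--Chemin and Tataru, and at the threshold $a<r-\frac34$ needed here that route does not close. The paper replaces it by a direct combinatorial proof of the $L^4_tL^\infty_x$ bound for superpositions (Proposition \ref{szt}): sample times at scale $\lambda^{-1}$, count pairs of sample points lying in a common curved slab via the overlap estimate $N_\lambda(P_1,P_2)\lesssim\epsilon_0^{-\frac12}\lambda^{-\frac12}|t_1-t_2|^{-\frac12}$ of Corollary \ref{corl}, and close by double counting, accepting a $(\ln\lambda)^{\frac14}$ loss which is harmless because the inequality $a<r-\frac34$ is strict.

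A second, related defect is your packet construction. Geometric-optics packets of transverse width $\lambda^{-1/2}$ built from eikonal phases and transported amplitudes require transversal regularity of the foliation $\cup_{r}\Sigma_{\omega,r}$, which is precisely what is unavailable. The paper's normalized packets are mollified surface measures $\epsilon_0^{\frac14}\lambda^{-\frac54}T_\lambda\big(\delta(x_\omega-\phi_{\omega,r})\,\theta\big)$ of transverse width $(\epsilon_0\lambda)^{-\frac12}$: applying $\square_{\mathbf{g}_\lambda}$ to such an object produces only tangential derivatives of $\phi_{\omega,r}$ restricted to $\Sigma$ (Proposition \ref{np}), which is exactly what Proposition \ref{r1}, Corollary \ref{Rfenjie} and Lemma \ref{chi} control, and the widened $(\epsilon_0\lambda)^{-\frac12}$ scale is what makes the parametrix error $O(\epsilon_0)$ so that the contraction via the operator $\mathbf{M}$ converges; no frequency envelope is needed, and the metric is truncated at frequency $\lambda$ rather than $\lambda^{1-c\delta}$, with the truncation error controlled by $\sup_\lambda\lambda\|\mathbf{g}-\mathbf{g}_\lambda\|_{L^\infty_x}\lesssim\|d\mathbf{g}\|_{C^\delta_x}$. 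These choices are essential to reaching the stated range of $a$, not cosmetic variants.
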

\begin{remark}
	The proof of Proposition \ref{r5} will be given in Section \ref{Ap}. 
\end{remark}
Now we are ready to prove Proposition \ref{r3} by use of Proposition \ref{r5}.
\begin{proof}[Proof of Proposition \ref{r3} by using Proposition \ref{r5}]
	By using Duhamel's principle and Proposition \ref{r3}, the conclusions of Proposition \ref{r5} hold .
\end{proof}

We can also obtain the following Strichartz estimates of solutions.
\subsection{Strichartz estimates for solutions}
\begin{proposition}\label{r6}
	Assume $s\in (\frac74, \frac{15}{8}]$. Suppose $(h, \bv, \bw) \in \mathcal{H}$ and $\Re(h, \bv, \bw)\leq 2 \epsilon_1$. Let $\bv_{+}$ be defined in \eqref{De}. Then for $a<s-\frac34$, we have
	\begin{equation}\label{fgh}
		\|\left< \nabla \right>^a h, \left< \nabla \right>^a \bv_{+}\|_{L^4_t L^\infty_x}
		\lesssim  \| h_0\|_{H^s}+\| \bv_0\|_{H^s}+\| \bw_0\|_{H^{1+}} .
	\end{equation}
	The similar estimate \eqref{fgh} holds if we replace $\left<\nabla \right>^a$ by $\left<\nabla \right>^{a-1}d$.
\end{proposition}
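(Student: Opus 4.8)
The plan is to combine the Strichartz estimates for the linear wave operator $\square_{\mathbf g}$ (Proposition \ref{r5}) with the good wave equations for $h$ and $\bv_{+}$ derived in Lemma \ref{DW0} and Lemma \ref{Wag}, exactly as in the proof of Proposition \ref{p4}. For $h$ we use $\square_{\mathbf g} h = \mathcal D$ with $\mathcal D$ estimated by Lemma \ref{yux}, so that $\|\mathcal D\|_{L^1_t H^{s-1}_x}\lesssim \|dh,d\bv\|_{L^1_tL^\infty_x}\|h,\bv\|_{L^\infty_t H^s_x}\lesssim \epsilon_2\cdot\epsilon_3$; applying \eqref{lw1} with $r=s$ and $a<s-\tfrac34$ then yields $\|\langle\nabla\rangle^a h\|_{L^4_tL^\infty_x}\lesssim \|h_0\|_{H^s}+\|h_1\|_{H^{s-1}}+\|\mathcal D\|_{L^1_tH^{s-1}_x}$, where $h_1=\partial_t h|_{t=0}$ is controlled by $\|h_0\|_{H^s}+\|\bv_0\|_{H^s}$ through the first equation of \eqref{REEf}. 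For $\bv_{+}$ we use \eqref{wag0}, i.e.
\[
\square_{\mathbf g}\bv_{+} = \Theta\mathrm{e}^{-2h}(v^0)^2(1-3c_s^2)\,\mathbf T\mathbf T\bv_{-} - c_s^2\Theta\,\bv_{-} + \bQ,
\]
and again invoke \eqref{lw1} with $r=s$, reducing matters to bounding the right-hand side in $L^1_t H^{s-1}_x$.

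The second step is therefore to estimate the forcing terms for $\bv_{+}$. The term $\bQ$ is handled by Lemma \ref{yux} just as $\mathcal D$ above. The terms $c_s^2\Theta\,\bv_{-}$ and $\mathbf T\mathbf T\bv_{-}$ are controlled by the elliptic estimates of Lemma \ref{yuy}: taking $a>0$ small there, $\|\bv_{-}\|_{L^2_t H^{s-1}_x}$ and $\|\mathbf T\bv_{-}\|_{L^2_t H^s_x}$, $\|\partial_t\mathbf T\bv_{-}\|_{L^2_t H^{s-1}_x}$ are all bounded by $\|\bw\|_{L^2_t H^{1+a}_x}(1+\|h,\bv\|^2_{L^\infty_t H^s_x})$, hence by $\epsilon_3$. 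To pass from $\mathbf T\bv_{-}$ back to $\mathbf T\mathbf T\bv_{-}$ one writes $\mathbf T\mathbf T\bv_{-}=\mathbf T(\mathbf T\bv_{-})$ and observes that the $L^1_t H^{s-1}_x$ norm of this is bounded (using Hölder in $t$ on $[-2,2]$ and the product estimate Lemma \ref{pps} with $s>\tfrac74$) by $\|\partial_t\mathbf T\bv_{-}\|_{L^2_t H^{s-1}_x}+\|(v^0)^{-1}v^i\|_{L^\infty_t H^s_x}\|\nabla\mathbf T\bv_{-}\|_{L^2_t H^{s-1}_x}$, which Lemma \ref{yuy} again bounds by $\lesssim\epsilon_3$. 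Multiplying through by the smooth coefficient $\Theta\mathrm e^{-2h}(v^0)^2(1-3c_s^2)$ costs only a factor $(1+\|h,\bv\|_{L^\infty_t H^s_x})$ via Lemma \ref{jiaohuan0} and Lemma \ref{pps}. Collecting, $\|\square_{\mathbf g}\bv_{+}\|_{L^1_t H^{s-1}_x}\lesssim \epsilon_3$, and \eqref{lw1} gives \eqref{fgh} for $\bv_{+}$, with the initial data $\partial_t\bv_{+}|_{t=0}$ again reduced to $\|h_0\|_{H^s}+\|\bv_0\|_{H^s}+\|\bw_0\|_{H^{1+}}$ (the vorticity enters $\bv_{-}|_{t=0}$ through the elliptic equation \eqref{De0}).

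The statement replacing $\langle\nabla\rangle^a$ by $\langle\nabla\rangle^{a-1}d$ is immediate since Proposition \ref{r5} provides exactly that variant of \eqref{SL}, and the same forcing-term bounds apply verbatim.

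I expect the main technical point to be the $L^1_t H^{s-1}_x$ bound on $\mathbf T\mathbf T\bv_{-}$: it is a \emph{second} transport derivative of the elliptic quantity $\bv_{-}$, so one must be careful to use the transport-elliptic structure (the operator $\mathbf P$ of \eqref{De1} commuting reasonably with $\mathbf T$, as exploited in the proof of Lemma \ref{yuy}) rather than naively differentiating. Everything else is a routine combination of Hölder in time on the compact interval $[-2,2]$, the product and composition estimates of Lemmas \ref{jiaohuan}, \ref{jiaohuan0}, \ref{pps}, and the already-established bounds \eqref{402a}, \eqref{403} defining the class $\mathcal H$.
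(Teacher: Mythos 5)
Your proposal is correct and follows essentially the same route as the paper: write the coupled wave system $\square_{\mathbf g}h=\mathcal D$, $\square_{\mathbf g}\bv_{+}=\Theta\mathrm e^{-2h}(v^0)^2(1-3c_s^2)\mathbf T\mathbf T\bv_{-}-c_s^2\Theta\bv_{-}+\bQ$ from Lemma \ref{Wag}, apply the (inhomogeneous) Strichartz estimate of Proposition \ref{r5} with $r=s$, and bound the source terms in $L^1_tH^{s-1}_x$ via Lemma \ref{yux} and the elliptic-transport estimates \eqref{Dsb}, \eqref{etaq} of Lemma \ref{yuy}, closing with the bounds \eqref{402a}--\eqref{403} defining $\mathcal H$. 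Your extra remark on unpacking $\mathbf T\mathbf T\bv_{-}=\mathbf T(\mathbf T\bv_{-})$ is exactly what the paper's citation of \eqref{etaq} does implicitly, so there is no substantive difference.
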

\begin{proof}
	Note \eqref{wag0}. The functions $h$ and $\bv_{+}$ satisfy the system
	\begin{equation}\label{fcr}
		\begin{cases}
			& \square_{\mathbf{g}} h= \mathcal{D},
			\\
		&	\square_{\mathbf{g}} \bv_{+}
			= \Theta\mathrm{e}^{-2h}(v^0)^2  (1-3 c^2_s)    \mathbf{T} \mathbf{T} \bv_{-}- c^2_s  \Theta \bv_{-} + \bQ,
		\end{cases}
	\end{equation}
	By using the Strichartz estimate in Proposition \ref{r5} (taking $r=s$ and $a<r-\frac34$) %and the modified Duhamel's principle(see Lemma \ref{LD})
	, we obtain
	\begin{equation*}
		\begin{split}
			& \|\left< \nabla \right>^a h, \left< \nabla \right>^a \bv_{+}\|_{L^4_t L^\infty_x} 
			\\
			\lesssim & \| h_0, \bv_0\|_{H^s}+  \|\mathbf{T}\mathbf{T}\bv_{-}\|_{L^1_{[-2,2]}H_x^{s-1}}+\|\mathcal{D},\bQ \|_{L^1_{[-2,2]}H_x^{s-1}} +\|\bv_{-} \|_{L^1_{[-2,2]}H_x^{s-1}} .
		\end{split}
	\end{equation*}
Due to \eqref{etaq} and Lemma \ref{yux}, we further get
	\begin{equation}\label{fgH}
		\begin{split}
			\|\left< \nabla \right>^a h, \left< \nabla \right>^a \bv_{+}\|_{L^4_t L^\infty_x} 
			\lesssim & \| h_0 \|_{H^s}+ \| \bv_0\|_{H^s} +  \|dh,d\bv \|_{L^4_{[-2,2]}L_x^\infty}  \|h,\bv \|_{L^\infty_{[-2,2]}H_x^s} 
			\\
			& + \|\bw\|_{L^\infty_{[-2,2]}H_x^{1+}}   (1+  \|h,\bv \|^2_{L^\infty_{[-2,2]}H_x^s} ) .
		\end{split}
	\end{equation}
%\textcolor{red}{ Since \eqref{EW1}, a direct calculation tells us }
%	\begin{equation}\label{k24}
%		\|\bw\|_{L^\infty_{[-2,2]}H_x^{1+a'}}  \lesssim \|\bw_0 \|_{H^{1+a'}} \exp \left(  \int^t_0 \|d\bv,dh\|_{C^{a'}_x} d\tau \right), \quad 0 \leq  a'<1 .  
%	\end{equation}
	Due to \eqref{402a} and \eqref{403}, the estimate \eqref{fgH} becomes
	\begin{equation*}
		\begin{split}
			\|\left< \nabla \right>^a h, \left< \nabla \right>^a \bv_{+}\|_{L^4_t L^\infty_x} & \lesssim \| h_0\|_{H^s}+\| \bv_0\|_{H^s} +\| \bw_{0}\|_{H^{1+}}, \qquad a<s-\frac34.
		\end{split}
	\end{equation*}
\end{proof}
The Proposition \ref{r6} states a Strichartz estimate of solutions with a very low regularity of the velocity, density, and vorticity. This also motivates the following proposition.
\begin{proposition}\label{r4}
	Assume $\frac74<s_0\leq s\leq \frac{15}{8} $ and $ \delta\in (0, s-\frac{7}{4})$. Suppose $(h, \bv, \bw) \in \mathcal{H}$ and $\Re(h,\bv, \bw)\leq 2 \epsilon_1$. Let $\bv_{+}$ be defined in \eqref{De}. Then we have
	\begin{equation}\label{SR0}
		\begin{split}
			\| d h, d \bv\|_{L^2_t C^\delta_x} & \lesssim \| h \|_{L^\infty_t H_x^s}+\|\bv\|_{L^\infty_t H_x^s}+ {\| \bw\|_{L^\infty_t H_x^{\frac54+2\delta}}}.
		\end{split}
	\end{equation}
	Furthermore, the Strichartz estimates
	\begin{equation}\label{strr}
		\|d \bv, d h, d \bv_{+}\|_{L^4_t C^\delta_x} \leq \epsilon_2,
	\end{equation}
	and the energy estimates
	\begin{equation}\label{eef}
		\| \bv\|_{L^\infty_tH_x^s} +\|h\|_{L^\infty_tH_x^s}+\| \bw \|_{L^\infty_tH_x^{s_0-\frac14}}+ \| \nabla \bw \|_{L^\infty_tL^8_x} \leq \epsilon_2,
	\end{equation}
	hold.
\end{proposition}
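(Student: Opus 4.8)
The plan is to read off the three conclusions of Proposition \ref{r4} from the machinery already assembled for the class $\mathcal{H}$: the Strichartz estimate for the linear wave operator $\square_{\mathbf{g}}$ (Proposition \ref{r6}, itself built on Proposition \ref{r5}), the elliptic control of the corrector $\bv_-$ (Lemma \ref{yuy}, resting on the space-time ellipticity of $\mathbf{P}$ from Lemma \ref{Ees}), the commutator bounds for the quadratic terms $\mathcal{D},\bQ$ (Lemma \ref{yux}), and the energy inequality of Theorem \ref{DW4}. The three statements are coupled --- \eqref{eef} is needed to run the Strichartz estimates and \eqref{strr} is what closes \eqref{eef} --- so they are obtained together, as one bootstrap step in which the a priori $\mathcal{H}$-bounds \eqref{401}, \eqref{402a}, \eqref{403} are the input and the strictly improved constant $\epsilon_2$ (indeed $\epsilon_3$) is the output.

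First I would establish \eqref{eef}. For $(h,\bv,\bw)\in\mathcal{H}$ the hypotheses of Theorem \ref{DW4} hold: \eqref{402a} gives $\|h,\bv\|_{L^\infty_{[-2,2]}L^\infty_x}\lesssim\epsilon_2\le 2+C_0$, so \eqref{aap} is met, and \eqref{403} gives $\int_{-2}^{2}\|(d\bv,dh)\|_{L^\infty_x}\,d\tau\lesssim\|d\bv,dh\|_{L^4_{[-2,2]}L^\infty_x}\lesssim\epsilon_2$. Feeding this together with the initial bound \eqref{401} ($E(0)\lesssim\epsilon_3$) into Theorem \ref{DW4} --- exactly as in the derivation of \eqref{w00} --- yields $E(t)\lesssim\epsilon_3\ll\epsilon_2$ on $[-2,2]$, the higher-order contributions being absorbed using the ordering \eqref{a0}; this is \eqref{eef}.

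Next, for \eqref{SR0} I would split $\bv=\bv_++\bv_-$. For the wave components, which solve $\square_{\mathbf{g}}h=\mathcal{D}$ and the $\bv_+$-equation \eqref{fcr}, I apply the $\langle\nabla\rangle^{a-1}d$-variant of Proposition \ref{r6} with $a=1+\delta'$ for some $\delta'\in(\delta,s-\tfrac74)$ --- admissible since then $a<s-\tfrac34$, which is precisely where $\delta<s-\tfrac74$ is used --- and then the bound $\|f\|_{C^\delta_x}\lesssim\|\langle\nabla\rangle^{\delta'}f\|_{L^\infty_x}$, obtaining $\|dh,d\bv_+\|_{L^4_tC^\delta_x}\lesssim\|h_0,\bv_0\|_{H^s}+\|\bw_0\|_{H^{1+}}\lesssim\|h,\bv\|_{L^\infty_tH^s_x}+\|\bw\|_{L^\infty_tH^{5/4+2\delta}_x}$. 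For the corrector $\bv_-$ (defined by \eqref{De0}) I follow the argument behind \eqref{w04}--\eqref{w06}: by $H^{1+2\delta}_x\hookrightarrow C^\delta_x$ and $H^{1/4}_t\hookrightarrow L^4_t$ the task reduces to bounding $\|\Lambda_x^{1/4+2\delta}\bv_-\|_{H^2_{t,x}}$, and Lemma \ref{Ees} (with the commutator estimate of Lemma \ref{jiaohuan3}) together with Lemma \ref{yuy} controls this by $\|\bw\|_{L^2_tH^{5/4+2\delta}_x}(1+\|h,\bv\|_{L^\infty_tH^s_x})$. The forcing $\mathbf{T}\mathbf{T}\bv_-,\ \bv_-,\ \bQ$ of \eqref{fcr} is treated exactly as in \eqref{w02}: Lemma \ref{yuy} with $a=\tfrac14+2\delta$ (combining its two estimates and using Lemma \ref{pps} to write $\mathbf{T}\mathbf{T}\bv_-=\partial_t(\mathbf{T}\bv_-)+(v^0)^{-1}v^i\partial_i(\mathbf{T}\bv_-)$) and Lemma \ref{yux} for $\mathcal{D},\bQ$. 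On $[-2,2]$ one has $\|\cdot\|_{L^2_t}\lesssim\|\cdot\|_{L^4_t}$ and $L^\infty_t\subset L^2_t$, so summing the wave and corrector pieces and absorbing the $O(\epsilon_2)$ factors $\|h,\bv\|_{H^s_x}$, $\|dh,d\bv\|_{L^4_tL^\infty_x}$ into the universal constant gives \eqref{SR0}.

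Finally, \eqref{strr} is the bootstrap improvement. I would insert \eqref{eef}, now in the strong form $\lesssim\epsilon_3$ --- so that $\|h_0,\bv_0\|_{H^s}\lesssim\epsilon_3$, $\|\bw\|_{L^\infty_tH^{s_0-\frac14}}\lesssim\epsilon_3$, and $\|\bw\|_{H^{5/4+2\delta}}\le\|\bw\|_{H^{s_0-\frac14}}$ because $5/4+2\delta<3/2<s_0-\tfrac14$ --- into the estimates of the previous paragraph. Every nonlinear contribution then carries a factor $\epsilon_2$ or $\epsilon_3$ (for instance $\|\mathcal{D},\bQ\|_{L^1_tH^{s-1}_x}\lesssim\|dh,d\bv\|_{L^4_tL^\infty_x}\|h,\bv\|_{L^\infty_tH^s_x}\lesssim\epsilon_2\epsilon_3$ by Lemma \ref{yux}), so $\|d\bv,dh,d\bv_+\|_{L^4_tC^\delta_x}\lesssim\epsilon_3+\epsilon_2\epsilon_3\ll\epsilon_2$, which is \eqref{strr}. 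I expect the delicate point to be the bookkeeping with the hierarchy \eqref{a0} --- verifying that running Theorem \ref{DW4} on the a priori $\mathcal{H}$-bounds returns a strictly smaller constant, and that no term in the Strichartz estimates merely reproduces $2\epsilon_2$ --- together with the technical task of placing $\bv_-$ in $L^4_tC^\delta_x$ at the cost of only $H^{5/4+2\delta}_x$ regularity on $\bw$, i.e. tracking how the space-time operator $\mathbf{P}$ converts the $\tfrac14$ of time integrability in $H^{1/4}_t\hookrightarrow L^4_t$ into spatial smoothness, compatibly with $s\in(\tfrac74,\tfrac{15}{8}]$.
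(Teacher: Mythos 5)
Your proposal is correct and follows essentially the same route as the paper: Strichartz estimates from Proposition \ref{r5} applied to the $h$ and $\bv_+$ equations \eqref{fcr}, the embedding $H^{1/4}_t\hookrightarrow L^4_t$, $H^{1+2\delta}_x\hookrightarrow C^\delta_x$ combined with the space-time elliptic estimate of Lemma \ref{Ees} for $\bv_-$, the observation $\tfrac54+2\delta<\tfrac32\le s_0-\tfrac14$, and Theorem \ref{DW4} with the hierarchy $\epsilon_3\ll\epsilon_2$ to close. The only difference is cosmetic ordering (you prove \eqref{eef} first, the paper derives \eqref{SR0} first and recovers \eqref{eef} at the end), and your choice $a=1+\delta'<s-\tfrac34$ in the Strichartz estimate is the correct precise reading of the paper's informal ``$a=1$''.
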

\begin{proof}
	Due to \eqref{fcr}, and using the Strichartz estimate in Proposition \ref{r5} (taking $r=s$ and $a=1$), we have
	\begin{equation}\label{rrr}
		\begin{split}
			\| d h, d \bv_+\|_{L^4_t C^\delta_x}  \lesssim \| h \|_{L^\infty_t H_x^s}+\|\bv\|_{L^\infty_t H_x^s}+ {\| \bw\|_{L^\infty_t H_x^{1+\delta}}}.
		\end{split}
	\end{equation}
	Recall \eqref{De0} and \eqref{De1}. By Sobolev imbedding $C^\delta_x(\mathbb{R}^2) \hookrightarrow H_x^{1+2\delta}(\mathbb{R}^2)$ and $L^4_t([-2,2]) \hookrightarrow H^{\frac14}_t([-2,2]) $, we then get
	\begin{equation}\label{pp3}
		\begin{split}
			\| d \bv_{-}\|_{L^4_t C^\delta_x} & \lesssim \| d \bv_{-} \|_{H^\frac14_t H_x^{1+2\delta}}
			\\
			& \lesssim \| \Lambda^{\frac14+2\delta}_{x}d \bv_{-} \|_{H^\frac14_t H_x^{\frac34}} 
			\\
			& \lesssim \| \Lambda^{\frac14+2\delta}_{x} \bv_{-} \|_{H^2_{t,x}} .
		\end{split}
	\end{equation} 
	By using Lemma \ref{jiaohuan3} and \ref{Ees}, we derive that
		\begin{equation}\label{pp3a}
		\begin{split}
			\| \Lambda^{\frac14+2\delta}_{x} \bv_{-} \|_{H^2_{t,x}} \lesssim & \|\Lambda_x^{\frac14+2\delta} d\bw \|_{L^2_t L_x^{2}} + \|[\Lambda_x^{\frac14+2\delta}, \mathbf{{P}} ] v^\alpha_{-}\|_{L^2_t L_x^{2}} 
			\\
			\lesssim & \|  d\bw \|_{L^2_t H_x^{{\frac14+2\delta}}} + \|\Lambda_x^{\frac14+2\delta} \bv\|_{L^\infty_t L^\infty_x} \|d^2 \bv \|_{L^2_t L_x^{2}} 
			\\
			\lesssim & \| \bw \|_{L^2_t H_x^{\frac54+2\delta}} ( 1+ \|h,\bv\|_{L^\infty_t H^s_x}).
		\end{split}
	\end{equation}
	Substituting \eqref{pp3a} to \eqref{pp3}, it follows
		\begin{equation}\label{pp3b}
		\begin{split}
			\| d \bv_{-}\|_{L^4_t C^\delta_x} & \lesssim \| \bw \|_{L^2_t H_x^{\frac54+2\delta}} ( 1+ \|h,\bv\|_{L^\infty_t H^s_x}).
		\end{split}
	\end{equation} 
	By using \eqref{rrr} and \eqref{pp3b}, we therefore obtain \eqref{SR0}. Considering $\frac74<s_0\leq s\leq \frac{15}{8} $ and $ \delta\in (0, s-\frac{7}{4})$, it yields
	\begin{equation}\label{key}
		\frac54+2\delta < \frac32< s_0-\frac14.
	\end{equation} 
	Due to \eqref{SR0} and \eqref{key}, applying Theorem \ref{DW4}, we conclude that
	\begin{equation}\label{pp2}
		\begin{split}
			\| d h,  d \bv_{+}, d \bv\|_{L^4_t C^\delta_x} 
			&  \lesssim  \| h \|_{L^\infty_t H_x^s}+\|\bv\|_{L^\infty_t H_x^s}+ {\| \bw\|_{L^\infty_t H_x^{s_0-\frac14}}}
			\\
			& \lesssim \| h_0\|_{H^s}+\|\bv_0\|_{H^s} + \| \bw_0\|_{H^{s_0-\frac14}} + \|\nabla \bw_0\|_{L_x^8}.
		\end{split}
	\end{equation}
	By using \eqref{401} and \eqref{pp2}, we have
	\begin{equation}\label{900}
		\begin{split}
			\| d h, d \bv_{+}, d \bv\|_{L^2_t C^\delta_x} \lesssim \epsilon_3.
		\end{split}
	\end{equation}
	Note that $\epsilon_3 \ll \epsilon_2$. Taking advantage of \eqref{900}, we therefore obtain \eqref{strr}. By using Theorem \ref{DW4} and \eqref{strr}, the estimate \eqref{eef} holds. Therefore, we have finished the proof of this proposition. 
\end{proof}
\begin{remark}
	The Strichartz estimate \eqref{SR0} requires a very low regularity of the velocity, density, and vorticity. We hope that the regularity exponent in \eqref{SR0} is optimal.
\end{remark}
It only remains for us to establish Proposition \ref{r5}. This will be discussed in the next section.

\section{Proof of Proposition \ref{r5}}\label{Ap}
In this section, we present a proof of Proposition \ref{r5} using wave-packet approximation, following the approach of Smith and Tataru in \cite{ST}. 
%This mainly relies on the regularity of the hypersurfaces and the metric. We divide the proof into several steps. 

\subsection{The proof of Proposition \ref{r5}}
In order to prove Proposition \ref{r5}, we first reduce it to phase space. Given a frequency scale $\lambda \geq 1$ ($\lambda=2^j, j\in \mathbb{N}^{+}$), we consider the smooth coefficients
\begin{equation*}
	\mathbf{g}_{\lambda}= P_{<\lambda} \mathbf{g}=\sum_{\lambda'<\lambda} P_{\lambda'},
\end{equation*}
where $\mathbf{g}$ is defined in \eqref{AMd3}. We now introduce a proposition in the phase space, which will be very useful.
\begin{proposition}\label{A1}
	Suppose $(h,\bv,\bw) \in \mathcal{{H}}$ and $\Re(h,\bv,\bw)\leq 2 \epsilon_1$. Let $f$ satisfy
	\begin{equation}\label{linearA}
		\begin{cases}
			\square_{\mathbf{g}} f=0, \quad (t,x)\in [-2,2]\times \mathbb{R}^2,\\
			(f, \partial_t f)|_{t=t_0}=(f_0, f_1).
		\end{cases}
	\end{equation}
	Then for each $(f_0,f_1) \in H^1 \times L^2$ there exists a function $f_{\lambda} \in C^\infty([-2,2]\times \mathbb{R}^2)$, with
	\begin{equation*}
		\mathrm{supp} \widehat{f_\lambda(t,\cdot)} \subseteq \{ \xi \in \mathbb{R}^2: \frac{\lambda}{8} \leq |\xi| \leq 8\lambda \},
	\end{equation*}
	such that
	\begin{equation}\label{Yee}
		\begin{cases}
			& \| \square_{\mathbf{g}_\lambda} f_{\lambda} \|_{L^4_{[-2,2]} L^2_x} \lesssim \epsilon_0 (\| f_0\|_{H^1}+\| f_1 \|_{L^2} ),
			\\
			& f_\lambda(-2)=P_\lambda f_0, \quad \partial_t f_{\lambda} (-2)=P_{\lambda} f_1.
		\end{cases}
	\end{equation}
	Additionally, if $r>\frac34$, then the following Strichartz estimates holds:
	\begin{equation}\label{Ase}
		\| f_{\lambda} \|_{L^4_{[-2,2]} L^\infty_x} \lesssim \epsilon_0^{-\frac{1}{4}} \lambda^{r-1} ( \| P_\lambda f_0 \|_{H^1} + \| P_\lambda f_1 \|_{L^2} ) .
	\end{equation}
\end{proposition}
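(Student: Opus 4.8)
\textbf{Proof proposal for Proposition \ref{A1}.} The plan is to follow the Smith--Tataru wave-packet parametrix construction from \cite{ST}, adapted to the truncated acoustic metric $\mathbf{g}_\lambda$ and exploiting the regularity bounds on the null hypersurfaces established in Proposition \ref{r1} and Corollary \ref{Rfenjie}. First I would perform a phase-space localization: since the data is frequency-localized at scale $\lambda$, we may work modulo smoothing errors with the regularized metric $\mathbf{g}_\lambda=P_{<\lambda}\mathbf{g}$, whose $L^\infty_{t,x}$ difference from $\mathbf{g}$ is $O(\epsilon_1)$ and whose derivatives satisfy $\|d\mathbf{g}_\lambda\|_{L^4_tL^\infty_x}\lesssim\epsilon_2$ together with the frequency-envelope bounds $\|\lambda(\mathbf{g}-P_{<\lambda}\mathbf{g}),\,dP_{<\lambda}\mathbf{g},\,\lambda^{-1}\nabla dP_{<\lambda}\mathbf{g}\|$ controlled in the characteristic norms of Proposition \ref{r1}. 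The key geometric input is that the null hypersurfaces $\Sigma_{\theta,r}$ of $\mathbf{g}_\lambda$ are $C^{1,\delta_0}$-regular with the curvature decomposition $\langle R(e_a,e_b)e_c,e_d\rangle = l(f_2)+f_1$ from Corollary \ref{Rfenjie}, which is exactly what makes the wave-packet flow well-defined and controllable.

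The construction proceeds in the standard steps. I would build a wave packet $\mathfrak{p}_{T}$ associated to each tube $T$ in phase space (parametrized by a point and a direction $\theta$ on the unit sphere), by transporting a bump function of size $\lambda^{-1/2}$ in the normal direction and $\lambda^{-1}$ transversally along the null geodesic flow of $\mathbf{g}_\lambda$; the Hamilton flow being a $C^{1,\delta_0}$ map (by Proposition \ref{r2}) ensures the packets remain coherent on $[-2,2]$. Then I superimpose: $f_\lambda = \sum_T a_T \mathfrak{p}_T$, choosing the coefficients $a_T$ so that $(f_\lambda,\partial_t f_\lambda)|_{t=-2}$ matches $(P_\lambda f_0, P_\lambda f_1)$ — this is the usual wave-packet decomposition of frequency-localized data. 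To adjust the initial data condition to be exactly at $t_0$ rather than $t=-2$, one notes $[-2,2]$ is a fixed interval and the group property of the flow lets us re-index; in fact the statement already imposes $f_\lambda(-2)=P_\lambda f_0$ so the relevant step is just the synthesis at the initial slice. The bound \eqref{Yee} on $\|\square_{\mathbf{g}_\lambda}f_\lambda\|_{L^4_tL^2_x}$ comes from the fact that each packet approximately solves $\square_{\mathbf{g}_\lambda}\mathfrak{p}_T\approx 0$ with an error controlled by the curvature terms $f_1,f_2$ and the connection coefficient $\chi$, all bounded by $\epsilon_2$ (hence $\lesssim\epsilon_0$) in the characteristic norms by Lemma \ref{chi} and Corollary \ref{Rfenjie}, combined with almost-orthogonality of the packets in $L^2_x$ for each fixed $t$.

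For the Strichartz estimate \eqref{Ase}, the idea is the classical $TT^*$ / dispersive-decay argument for the packet synthesis: a single packet satisfies $\|\mathfrak{p}_T\|_{L^\infty_x}\lesssim \lambda^{1/2}\cdot(\text{localization in }t)$, and summing over the $O(\lambda)$ angular tubes with almost-disjoint supports at generic times gives, after interpolating the fixed-time $L^2\to L^\infty$ loss of $\lambda$ against the $L^4_t$ time-averaging gain, the exponent $r-1$ with a constant $\epsilon_0^{-1/4}$ reflecting the $\epsilon_0$-sized time window on which the parametrix is accurate (one divides $[-2,2]$ into $\sim\epsilon_0^{-1}$ subintervals, applies the estimate on each, and sums in $\ell^4$). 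The main obstacle I anticipate is controlling the error term $\square_{\mathbf{g}_\lambda}f_\lambda$ uniformly in the synthesis: one must show that the accumulated errors from the $C^{1,\delta_0}$ (rather than $C^2$) regularity of the metric — which enters the transport equation for $\chi$ and the second fundamental form of $\Sigma$ — do not destroy the coherence of the packets over the full time interval, and this is precisely where the characteristic energy estimates of Proposition \ref{r1}, Lemma \ref{te20}, and the curvature decomposition of Corollary \ref{Rfenjie} must be invoked carefully, together with the observation $\epsilon_2\ll\epsilon_0$ to absorb all such errors into the right-hand side of \eqref{Yee}.
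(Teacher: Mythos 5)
Your overall architecture (wave packets adapted to the characteristic foliation of $\mathbf{g}_\lambda$, superposition, matching the data at the initial slice, and controlling $\square_{\mathbf{g}_\lambda}f_\lambda$ through the curvature decomposition of Corollary \ref{Rfenjie}, the connection bound of Lemma \ref{chi}, and the characteristic energy estimates of Proposition \ref{r1}) matches the paper's construction in Propositions \ref{np}, \ref{szy} and \ref{szi}. Two details are off even at this stage: the packets are mollified delta functions of thickness $\sim\lambda^{-1}$ in the \emph{normal} direction $x_\omega$ and of width $(\epsilon_0\lambda)^{-1/2}$ \emph{transversally}, with $\sim\epsilon_0^{-1/2}\lambda^{1/2}$ directions $\omega$ — not ``$\lambda^{-1/2}$ normal, $\lambda^{-1}$ transverse'' with $O(\lambda)$ angular tubes. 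The $\epsilon_0$-dependent transverse width is not cosmetic: it is the source of both the $\epsilon_0$ gain in \eqref{Yee} and the $\epsilon_0^{-1/4}$ loss in \eqref{Ase}. Also, the fixed-time $L^2$ bound for a superposition (Lemma \ref{SWP}) carries a factor $1+\epsilon_0^{-1}\bar\rho(t)$ with $\bar\rho=\|d\mathbf{g}\|_{C^{\delta_0}_x}$, which is only tamed after integration in time via $\|\bar\rho\|_{L^4_t}\lesssim\epsilon_0$; ``almost-orthogonality at each fixed $t$'' alone does not suffice.

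The genuine gap is your proof of the Strichartz estimate \eqref{Ase}. The paper does \emph{not} use a $TT^*$/dispersive-decay argument, and no such argument is available here: the metric $\mathbf{g}_\lambda$ is far too rough for stationary phase, and avoiding dispersive estimates is precisely the point of the Smith--Tataru method. Instead, \eqref{Ase} is proved by a purely combinatorial incidence count (Proposition \ref{szt}): one partitions $[-2,2]$ into $\approx\lambda$ intervals of length $\lambda^{-1}$, reduces to bounding $\sum_j|f(t_j,x_j)|^4$, and double-counts pairs of points lying in a common slab using the overlap estimate $N_\lambda(P_1,P_2)\lesssim\epsilon_0^{-1/2}\lambda^{-1/2}|t_1-t_2|^{-1/2}$ of Corollary \ref{corl}; this yields $\|f\|_{L^4_tL^\infty_x}\lesssim\epsilon_0^{-1/4}(\ln\lambda)^{1/4}$ for normalized coefficients. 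In particular, your mechanism for the $\epsilon_0^{-1/4}$ — subdividing $[-2,2]$ into $\epsilon_0^{-1}$ windows ``on which the parametrix is accurate'' — is wrong: the parametrix is accurate on all of $[-2,2]$ (that is the content of \eqref{Yee}), and the loss instead comes from the $\epsilon_0^{-1/2}\lambda^{1/2}$ slabs through each point and the $\epsilon_0^{-1/2}$ factor in the overlap count. Without the counting lemma (or some substitute for dispersive decay valid for $C^{1,\delta_0}$ foliations), your argument for \eqref{Ase} does not close.
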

Next, we give a proof for Proposition \ref{r5} based on Proposition \ref{A1}.
\begin{proof}[Proof of Proposition \ref{r5} by use of Proposition \ref{A1}]
	We divide the proof into several cases.
	
	(i) $r=1$. Using basic energy estimates for \eqref{linear}, we have
	\begin{equation*}
		\begin{split}
			\| \partial_t f \|_{L^2_x} + \|\nabla f \|_{L^2_x}  \lesssim & \ (\| f_0\|_{H^1}+ \| f_1\|_{L^2}) \exp(\int^t_0 \| d \mathbf{g} \|_{L^\infty_x} d\tau)
			\\
			\lesssim & \ \| f_0\|_{H^1}+ \| f_1\|_{L^2}.
		\end{split}
	\end{equation*}
	Then the Cauchy problem \eqref{linearA} holds a unique solution $f \in C([-2,2],H_x^1)$ and $\partial_t f \in C([-2,2],L_x^2)$. It remains to show that the solution $f$ also satisfies the Strichartz estimate \eqref{SL}.

	Without loss of generality, we take $t_0=0$. For any given initial data $(f_0,f_1) \in H^1 \times L^2$, and $t_0 \in [-2,2]$, we take a Littlewood-Paley decomposition
	\begin{equation*}
		f_0=\sum_{\lambda}P_{\lambda}f_0, \qquad  f_1=\sum_{\lambda}P_{\lambda}f_1,
	\end{equation*}
	and for each $\lambda$ we take the corresponding $f_{\lambda}$ as in Proposition \ref{A1}.  If we set
	\begin{equation*}
		f=\sum_{\lambda}f_{\lambda},
	\end{equation*}
	then $f$ matches the initial data $(f_0,f_1)$ at the time $t=t_0$, and also satisfies the Strichartz estimates \eqref{Ase}. In fact, $f$ is also an approximate solution for $\square_{\mathbf{g}}$ in the sense that
	\begin{equation*}
		\| \square_{\mathbf{g}} f \|_{L^2_t L^2_x} \lesssim \epsilon_0(\| f_0 \|_{H^1}+\| f_1 \|_{L^2} ).
	\end{equation*}
	We can derive the above bound by using the decomposition
	\begin{equation*}
		\square_{\mathbf{g}} f=\textstyle{\sum_{\lambda}} \square_{\mathbf{g}_\lambda} f_\lambda+ \textstyle{\sum_{\lambda}} \square_{\mathbf{g}-\mathbf{g}_\lambda} f_\lambda.
	\end{equation*}
	The first term can be controlled by Proposition \ref{A1}. As for the second term, using $\mathbf{g}^{00}=-1$, we can rewrite
	\begin{equation*}
		\begin{split}
			\textstyle{\sum_{\lambda}} \square_{\mathbf{g}-\mathbf{g}_\lambda} f_\lambda= \textstyle{\sum_{\lambda}} ({\mathbf{g}-\mathbf{g}_\lambda}) \nabla  df_\lambda.
		\end{split}
	\end{equation*}
	By using H\"older's inequality, it follows that
	\begin{equation}\label{yv1}
		\begin{split}
			\| \textstyle{\sum_{\lambda}} \square_{\mathbf{g}-\mathbf{g}_\lambda} f_\lambda \|_{L^2_x} \lesssim \mathrm{sup}_{\lambda} \left(\lambda \| {\mathbf{g}-\mathbf{g}_\lambda} \|_{L^\infty_x} \right) \left( \sum_{\lambda}  \| df_\lambda \|^2_{L^2_x} \right)^{\frac12}.
		\end{split}
	\end{equation}
	Due to Bernstein's inequality, we have
	\begin{equation}\label{yv2}
		\begin{split}
			\mathrm{sup}_{\lambda} \left(\lambda \| {\mathbf{g}-\mathbf{g}_\lambda} \|_{L^\infty_x} \right) \lesssim & \ \mathrm{sup}_{\lambda} \big(\lambda \sum_{\mu > \lambda}\| \mathbf{g}_\mu \|_{L^\infty_x} \big)
			\\
			\lesssim & \ \mathrm{sup}_{\lambda} \big(\lambda \sum_{\mu > \lambda}\mu^{-(1+\delta)}\|d \mathbf{g}_\mu \|_{C^\delta_x} \big)
			\\
			\lesssim & \ \|d \mathbf{g} \|_{C^\delta_x}(\textstyle{\sum_{\mu}} \mu^{-\delta}) \lesssim \ \|d \mathbf{g} \|_{C^\delta_x}.
		\end{split}
	\end{equation}
	Combining \eqref{yv1} and \eqref{yv2}, we get
	\begin{equation*}
		\begin{split}
			\textstyle{\sum_{\lambda}} \square_{\mathbf{g}-\mathbf{g}_\lambda} f_\lambda \lesssim \epsilon_0(\| f_0 \|_{H^1}+\| f_1 \|_{L^2} ).
		\end{split}
	\end{equation*}
	For given $F\in L^1_t L^2_x$, we set
	\begin{equation*}
		\mathbf{M} F=\int^t_0 f^{\tau}(t,x)d\tau,
	\end{equation*}
	where $f^{\tau}(t,x)$ is the approximate solution formed above with the Cauchy data
	\begin{equation*}
		f^{\tau}(\tau,x)=0, \quad \partial_t f^{\tau}(\tau,x)=F(\tau,\cdot).
	\end{equation*}
	By calculating
	\begin{equation*}
		\square_{\mathbf{g}} \mathbf{M}F=\int^t_0 \square_{\mathbf{g}} f^{\tau}(t,x) d\tau+F,
	\end{equation*}
	it follows that
	\begin{equation*}
		\| \square_{\mathbf{g}} \mathbf{M}F-F\|_{L^2_t L^2_x} \lesssim \| \square_{\mathbf{g}}f^{\tau}\|_{L^1_t L^2_x} \lesssim \epsilon_0 \| F \|_{L^2_t L^2_x}.
	\end{equation*}
	Using the contraction principle, we can write the solution $f$ in the form
	\begin{equation*}
		f= \tilde{f}+ \mathbf{M}F,
	\end{equation*}
	where $\tilde{f}$ is the approximation solution formed above for initial data $(f_0,f_1)$ specified at time $t=0$, and
	\begin{equation*}
		\| F\|_{L^2_t L^2_x} \lesssim \epsilon_0 ( \| f_0 \|_{H^1}+\| f_1\|_{L^2}).
	\end{equation*}
	The Strichartz estimates now follow since they holds for each $f^\tau$, $\tau \in [0,t]$. By Duhamel's principle, we can also obtain the Strichartz estimates for the linear, nonhomogeneous wave equation
	\begin{equation*}
		\begin{cases}
			\square_{\mathbf{g}} f= F',
			\\
			f|_{t=0}=f_0, \quad \partial_tf|_{t=0}=f_1.
		\end{cases}
	\end{equation*}
	That means
	\begin{equation*}
		\| \left< \nabla \right>^a f \|_{L^4_t L^\infty_x} \lesssim  \|f_0\|_{H^1}+ \| f_1 \|_{L^2}+ \|F'\|_{L^1_t L^2_x} , \quad   a<\frac14.
	\end{equation*}
	(ii) $1 < r \leq s+1$. Based on the above result, we plan to transform the initial data in $H^1 \times L^2$. Operating $\left< \nabla \right>^{r-1}$ on \eqref{linearA}, we have 
	\begin{equation*}
		\square_{\mathbf{g}} \left< \nabla \right>^{r-1}f=-[\square_{\mathbf{g}}, \left< \nabla \right>^{r-1}]f.
	\end{equation*}
	Let $\left< \nabla \right>^{r-1}f=\bar{f}$. Then $\bar{f}$ is a solution to
	\begin{equation}\label{qd}
		\begin{cases}
			\square_{\mathbf{g}}\bar{f}=-[\square_{\mathbf{g}}, \left< \nabla \right>^{r-1}]\left< \nabla \right>^{1-r}\bar{f},
			\\
			(\bar{f}(t_0), \partial_t \bar{f}(t_0)) \in H^1 \times L^2.
		\end{cases}
	\end{equation}
	To handle this case, we need to bound the right term as
	\begin{equation}\label{qqd}
		\| [\square_{\mathbf{g}}, \left< \nabla \right>^{r-1}]\left< \nabla \right>^{1-r}\bar{f}\|_{L^4_t L^2_x} \lesssim \epsilon_0( \| d\bar{f}\|_{L^\infty_t L^2_x}+  \| \left< \nabla \right>^m d\bar{f}\|_{L^4_t L^\infty_x}),
	\end{equation}
	provided $m>1-s$. To prove it, we apply analytic interpolation to the family
	\begin{equation*}
		\bar{f} \rightarrow   [\square_{\mathbf{g}}, \left< \nabla \right>^{r-1}]\left< \nabla \right>^{1-r}\bar{f}.
	\end{equation*}
	For $\mathrm{Re}z=0$, noting $\mathbf{g}^{00}=-1$, we use the commutator estimate (c.f. (3.6.35) of \cite{KP}) to get
	\begin{equation*}
		\| [\mathbf{g}^{\alpha \beta}, \left< \nabla \right>^z] \partial^2_{\alpha \beta} \bar{f} \|_{L^2_x}=\| [\mathbf{g}^{\alpha i}, \left< \nabla \right>^z] \partial_i ( \partial_{\alpha} \bar{f} )\|_{L^2_x} \lesssim \| d \mathbf{g} \|_{L^\infty_x} \| d \bar{f} \|_{L^2_x}.
	\end{equation*}
	For $\mathrm{Re}z=s$, we use the Kato-Ponce commutator estimate
	\begin{equation}\label{qdy}
		\begin{split}
			\| [\mathbf{g}^{\alpha \beta}, \left< \nabla \right>^z] \left< \nabla \right>^{-z} \partial^2_{\alpha \beta} \bar{f} \|_{L^2_x} 
			= 	& \| [\mathbf{g}^{\alpha i}, \left< \nabla \right>^z] \left< \nabla \right>^{-z} \partial_{i} ( \partial_{\alpha } \bar{f} ) \|_{L^2_x} 
			\\
			\lesssim & \| d \mathbf{g} \|_{L^\infty_x} \| d \bar{f} \|_{L^2_x}+  \| \mathbf{g}^{\alpha \beta}- \mathbf{m}^{\alpha \beta}\|_{H^{s}_x}   \|\left< \nabla \right>^{-z} \nabla d\bar{f} \|_{L^\infty_x}.
		\end{split}
	\end{equation}
	Taking advantage of
	\begin{equation*}
		\| d \mathbf{g} \|_{L^4_t L^\infty_x} +  \| \mathbf{g}^{\alpha \beta}- \mathbf{m}^{\alpha \beta}\|_{L^\infty_t H^{s}_x} \lesssim \epsilon_0,
	\end{equation*}
	we can bound \eqref{qdy} by
	\begin{equation*}
		\| [\mathbf{g}^{\alpha \beta}, \left< \nabla \right>^z] \left< \nabla \right>^{-z} \nabla^2_{\alpha \beta} \bar{f} \|_{L^2_x} \lesssim \epsilon_0( \| d \bar{f} \|_{L^2_x}+  \|\left< \nabla \right>^{-z} \nabla d\bar{f}  \|_{L^\infty_x}).
	\end{equation*}
	Let us go back \eqref{qd}. Using the discussion in case $r=1$, for $ \theta<0$, we obtain
	\begin{equation}\label{eh}
		\begin{split}
			& \| \left< \nabla \right>^{(\theta-1)} d \bar{f} \|_{L^4_t L^\infty_x} 
			\\
			\lesssim  & \  \epsilon_0( \|\left< \nabla \right>^{r-1}f_0\|_{H^1}+ \| \left< \nabla \right>^{r-1} f_1 \|_{L^2}+ \| d \bar{f} \|_{L^2_x}+  \|\left< \nabla \right>^{-r} \nabla d \bar{f} \|_{L^4_t L^\infty_x} ) ,
			\\
			\lesssim & \ \epsilon_0( \|f_0\|_{H^r}+ \| f_1 \|_{H^{r-1}} + \| d \bar{f} \|_{L^2_x}+  \|\left< \nabla \right>^{-r} \nabla d \bar{f} \|_{L^4_t L^\infty_x} ).
		\end{split}
	\end{equation}
	Taking $\theta = -r+1$ in \eqref{eh}, we can see
	\begin{equation}\label{eeh}
		\|\left< \nabla \right>^{-r} \nabla d \bar{f} \|_{L^4_t L^\infty_x} \lesssim \epsilon_0( \|f_0\|_{H^r}+ \| f_1 \|_{H^{r-1}}).
	\end{equation}
	By \eqref{qd}, \eqref{qqd}, and \eqref{eeh}, we have
	\begin{equation*}
		\begin{split}
			\| \left< \nabla \right>^a f, \left< \nabla \right>^{a-1} d f \|_{L^2_t L^\infty_x} \lesssim  
			& \  \epsilon_0( \|f_0\|_{H^r}+ \| f_1 \|_{H^{r-1}} ) , \quad   a<r-1.
		\end{split}
	\end{equation*}
\end{proof}
\begin{remark}\label{rel}
	From Proposition \ref{A1}, it implies that there is a good approximate solution $f_\lambda$ for the problem
	\begin{equation}\label{linearD}
		\begin{cases}
			\square_{\mathbf{g}_\lambda} f=0, \quad (t,x)\in [-2,2]\times \mathbb{R}^2,\\
			(f, \partial_t f)|_{t=t_0}=(P_\lambda f_0, P_\lambda f_1).
		\end{cases}
	\end{equation}
	In the case of $\epsilon_0 \lambda\leq 1$, we can take $f_\lambda=P_\lambda f$, where $f$ is the exact solution of \eqref{linearD}.
	%	\begin{equation}\label{linearD}
		%		\begin{cases}
			%			\square_{\mathbf{g}_\lambda} f=0, \quad (t,x)\in [-2,2]\times \mathbb{R}^2,\\
			%			(f, \partial_t f)|_{t=t_0}=(P_\lambda f_0, P_\lambda f_1).
			%		\end{cases}
		%	\end{equation}
	Applying energy estimates for \eqref{linearD}, we can see
	\begin{equation}\label{eet}
		\|df\|_{L^\infty_{[-2,2]} L^2_x} \lesssim \| P_\lambda f_0\|_{H^1}+\|P_\lambda f_1\|_{L^2}.
	\end{equation}
	%This also implies that
	%\begin{equation}\label{eet}
	%	\|\nabla f_\lambda\|_{L^\infty_{[-2,2]} L^2_x} \approx  \lambda \| \nabla f  \|_{L^\infty_{[-2,2]} L^2_x} \lesssim \| f_0\|_{H^1}+\|f_1\|_{L^2}.
	%\end{equation}
	Moreover, for $\mathbf{g}^{00}_\lambda=-1$, so we have
	\begin{equation*}
		\begin{split}
			\| \square_{\mathbf{g}_\lambda} f_\lambda \|_{L^2_{[-2,2]} L^2_x} 
			\lesssim & 
			\| [ \mathbf{g}^{\alpha i}_\lambda, \partial_\alpha P_\lambda ] \partial_i f \|_{ L^2_{[-2,2]} L^2_x } 
			+ \| P_\lambda (\partial_\alpha \mathbf{g}^{\alpha i}_\lambda ) \partial_i f \|_{ L^2_{[-2,2]} L^2_x }
			\\
			\lesssim & \| d \mathbf{g}_\lambda \|_{L^2_{[-2,2]} L^\infty_x} \| d {f} \|_{L^\infty_{[-2,2]} L^2_x}
			\\
			\lesssim & \epsilon_0 ( \| P_\lambda f_0\|_{H^1}+\|P_\lambda f_1\|_{L^2} ).
		\end{split}
	\end{equation*}
	The Strichartz estimate \eqref{Ase} follows from Sobolev imbeddings and \eqref{eet}. Hence, the rest of the paper is to establishing Proposition \ref{A1} in the case that
	\begin{equation*}
		\epsilon_0 \lambda \gg 1.
	\end{equation*}
\end{remark}
\subsection{Proof of Proposition \ref{A1}}
By using Remark \ref{rel}, it suffices for us to consider the problem if the frequency $\lambda$ is large enough, namely
\begin{equation*}
	\lambda \geq \epsilon_0^{-1}.
\end{equation*}
In this case, we can construct an approximate solution to \eqref{linearA} by using wave packets. We also note that the proposition \ref{A1} has two parts: \eqref{Yee} and \eqref{Ase}. The conclusion of the first part, i.e. \eqref{Yee} will be established in the following Proposition \ref{szy} and \ref{szi}. The inequality \eqref{Ase} will be proved in Proposition \ref{szt}.

Before our proof, we introduce a spatially localized mollifier $T_\lambda$ by
\begin{equation*}
	T_\lambda f = \chi_\lambda * f, \quad \chi_\lambda=\lambda^2 \chi(\lambda^{-1} y),
\end{equation*}
where $\chi \in C^\infty_0(\mathbb{R}^2)$ is supported in the ball $|x| \leq \frac{1}{32}$, and has integral $1$. By choosing $\chi$ appropriately, any function $\phi$ with frequency support contained in $\{\xi\in \mathbb{R}^2:|\xi|\leq 4\lambda\}$ can be factored $\phi=T_\lambda \widetilde{\phi}$, where $\| \widetilde{\phi} \|_{L^2_x} \approx \| \phi \|_{L^2_x}$.

%We next introduce the definition of wave packets.

\subsubsection{\textbf{A normalized wave packet}}\label{cwp}
Let's begin with the definition of a normalized wave packet.
\begin{definition}[\cite{ST},Definition 8.1]
	Let the hypersurface $\Sigma_{\omega,r}$ and the geodesic $\gamma$ be defined in Section \ref{secp4}. A normalized wave packet centered around $\gamma$ is a function $f$ of the form
	\begin{equation*}
		f=\epsilon_0^{\frac14} \lambda^{-\frac54} T_\lambda(u \theta),
	\end{equation*}
	where
	\begin{equation*}
		u(t,x)=\delta(x_{\omega}-\phi_{\omega,r}(t,x'_\omega)), \quad \theta=\theta_0( (\epsilon_0 \lambda)^{\frac12}(x'_\omega-\gamma'_\omega(t))  ).
	\end{equation*}
	Here, $\theta_0$ is a smooth function supported in the set $|x'| \leq 1$, with uniform bounds on its derivatives $|\partial^\alpha_{x'} \theta_0(x')| \leq c_\alpha$.
\end{definition}
\begin{remark}
	The advantage of this definition is that the derivatives of wave packets involve only the tangential behavior of the restrictions of various functions to the characteristic surfaces $\Sigma_{\omega,r}$, as opposed to their regularity within the support of the wave packet. Therefore, we don't need to discuss the behavior of the null foliations $\cup_{r\in \mathbb{R}}\Sigma_{\omega,r}$ in
	transversal directions.
\end{remark}
We give two notations here. The notation $L(\varphi,\psi)$ means a translation invariant bilinear operator of the form
\begin{equation*}
	L(\varphi,\psi)(x)=\int K(y,z)\varphi(x+y)\psi(x+z)dydz,
\end{equation*}
where $K(y,z)$ is a finite measure. If $X$ is a Sobolev spaces, we then denote $X_\kappa$ the same space but with the norm obtained by dimensionless rescaling by $\kappa$,
\begin{equation*}
	\| \varphi \|_{X_\kappa}=\| \varphi(\kappa \cdot)\|_{X}.
\end{equation*}
Since $\frac74<s_0\leq s \leq \frac{15}{8}$, we get $s_0-\frac54>\frac12$. For $\kappa<1$, we have $\| \varphi \|_{H^{s_0-\frac54}_\kappa(\mathbb{R})} \lesssim \| \varphi \|_{H^{s_0-\frac54}(\mathbb{R})}$.

Based on the above definition, let us calculate what we will get when we operate $\square_{\mathbf{g}_\lambda}$ on wave packets.
\begin{proposition}\label{np}
	Let $f$ be a normalized packet. Then there exists another normalized wave packet $\tilde{f}$ and functions $\phi_m(t,x'_\omega), m=0,1,2$, so that
	\begin{equation}\label{np1}
		\square_{\mathbf{g}_\lambda} P_\lambda f= L(d \mathbf{g}, d \tilde{P}_\lambda \tilde{f})+ \epsilon_0^{\frac14}\lambda^{-\frac54}P_{\lambda}T_{\lambda}\sum_{m=0,1,2}
		\psi_m\delta^{(m)}(x'_\omega-\phi_{\omega,r}),
	\end{equation}
	where the functions $\psi_m=\psi_m(t,x'_\omega)$ satisfy the scaled Sobolev estimates
	\begin{equation}\label{np2}
		\| \psi_m\|_{L^2_t H^{s_0-\frac54}_{a,x'_\omega}} \lesssim \epsilon_0 \lambda^{1-m}, \quad m=0,1,2, \quad a=(\epsilon_0 \lambda)^{-\frac12}.
	\end{equation}
\end{proposition}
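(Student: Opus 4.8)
## Proof plan for Proposition \ref{np}

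\textbf{Overall strategy.} The plan is to compute $\square_{\mathbf{g}_\lambda} P_\lambda f$ directly, exploiting the fact that a normalized wave packet $f=\epsilon_0^{\frac14}\lambda^{-\frac54}T_\lambda(u\theta)$ is built from the characteristic distribution $u=\delta(x_\omega-\phi_{\omega,r})$ adapted to the very metric $\mathbf{g}$ (hence to $\mathbf{g}_\lambda$ up to lower-order error), times a slowly-varying bump $\theta$ concentrated in the transversal-null directions. Because $u$ is supported on $\Sigma_{\omega,r}$ and $\phi_{\omega,r}$ solves the eikonal equation associated with $\mathbf{g}$, the leading term of $\square_{\mathbf{g}_\lambda}$ applied to $u$ vanishes, and everything that survives is expressible either as an $L$-type bilinear expression in $d\mathbf{g}$ and a derivative of another normalized packet, or as a sum $\sum_{m=0}^2 \psi_m \delta^{(m)}(x'_\omega-\phi_{\omega,r})$ with the stated scaled Sobolev bounds on the coefficients $\psi_m$. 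This follows the scheme of Smith--Tataru \cite{ST}, Section 8, adapted to our two-dimensional acoustic metric $\mathbf{g}$ from \eqref{AMd3}.

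\textbf{Key steps in order.} First I would commute $P_\lambda$ and $T_\lambda$ past $\square_{\mathbf{g}_\lambda}$: since $\mathbf{g}_\lambda=P_{<\lambda}\mathbf{g}$ has frequency below $\lambda$, the commutators $[\square_{\mathbf{g}_\lambda},P_\lambda]$ and $[\square_{\mathbf{g}_\lambda},T_\lambda]$ are lower order and, after reabsorbing $T_\lambda$ into a redefined packet, produce exactly the first term $L(d\mathbf{g},d\tilde P_\lambda \tilde f)$; here one uses the factorization property of $T_\lambda$ recorded after its definition. Second, I would write $\square_{\mathbf{g}_\lambda}(u\theta)=(\square_{\mathbf{g}_\lambda}u)\theta + 2\mathbf{g}_\lambda^{\alpha\beta}\partial_\alpha u\,\partial_\beta\theta + u\,\square_{\mathbf{g}_\lambda}\theta$. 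For the first piece, differentiating $u=\delta(x_\omega-\phi_{\omega,r})$ twice gives terms of the form $(\text{coeff})\,\delta^{(m)}(x_\omega-\phi_{\omega,r})$ with $m\le 2$; the $m=2$ coefficient is precisely the null symbol $\mathbf{g}_\lambda(dr_{\omega,r},dr_{\omega,r})$ evaluated on $\Sigma_{\omega,r}$ (where $dr_{\omega,r}=dx_\omega-d\phi_{\omega,r}$), and since $\phi_{\omega,r}$ parametrizes a $\mathbf{g}$-null hypersurface this symbol is $O(\mathbf{g}-\mathbf{g}_\lambda)$, i.e.\ fits into an $L(d\mathbf{g},\cdot)$ term or contributes to $\psi_2$ with the right size. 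The $m=0,1$ coefficients involve one or two tangential derivatives of $d\phi_{\omega,r}$ and of $d\mathbf{g}_\lambda$ restricted to $\Sigma_{\omega,r}$; here the input is Proposition \ref{r1} and Corollary \ref{Rfenjie} (decomposition of the curvature along $\Sigma$), together with the bootstrap bound \eqref{403}, which control these restricted quantities in $L^2_t H^{s_0-\frac54}_{x'}(\Sigma)$. The mixed term $2\mathbf{g}_\lambda^{\alpha\beta}\partial_\alpha u\,\partial_\beta\theta$ produces $\delta^{(1)}$ and $\delta^{(0)}$ contributions: since $\partial_\beta\theta$ carries a factor $(\epsilon_0\lambda)^{1/2}$ and $\theta$ is localized on the $(\epsilon_0\lambda)^{-1/2}$-scale, after rescaling by $a=(\epsilon_0\lambda)^{-1/2}$ these land in $\psi_1,\psi_0$ with the claimed powers of $\epsilon_0\lambda$; the key point is that $\partial_\beta\theta$ is a derivative \emph{along} $\Sigma$, so only tangential smoothness of the geodesic $\gamma_\omega$ enters, and $\gamma_\omega$ inherits $H^{s_0-\frac54}$-regularity of its tangential coordinates from the connection coefficient bounds in Lemma \ref{chi} and \eqref{607}. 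Finally the term $u\,\square_{\mathbf{g}_\lambda}\theta$ contributes to $\psi_0$ with two tangential derivatives on $\theta$, i.e.\ a factor $\epsilon_0\lambda$, again matching \eqref{np2}.

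\textbf{Assembling the estimates.} Having organized every term as either $L(d\mathbf{g}, d\tilde P_\lambda\tilde f)$ (for the pieces where the output is again a packet, after reabsorbing the mollifier and a harmless frequency projection) or as $\epsilon_0^{\frac14}\lambda^{-\frac54}P_\lambda T_\lambda\psi_m\delta^{(m)}(x'_\omega-\phi_{\omega,r})$, the remaining work is to verify \eqref{np2}. Each $\psi_m$ is a finite sum of products of: restrictions to $\Sigma_{\omega,r}$ of $\mathbf{g}_\lambda$, $d\mathbf{g}_\lambda$, $d^2\mathbf{g}_\lambda$ (with the last one only through the $\square_{\mathbf{g}_\lambda}\mathbf{g}$-type combination handled by Corollary \ref{Rfenjie}); tangential derivatives of $d\phi_{\omega,r}-dt$ (bounded by $\Re\le 2\epsilon_1\ll\epsilon_0$ in the $\vert\kern-0.25ex\vert\kern-0.25ex\vert\cdot\vert\kern-0.25ex\vert\kern-0.25ex\vert_{s_0-\frac14,2,\Sigma}$ norm); and powers of $(\epsilon_0\lambda)^{1/2}$ from $\theta$-derivatives. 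Using that $H^{s_0-\frac54}_{x'}(\Sigma^t)$ is an algebra (since $s_0-\frac54>\frac12$) together with the scaling $\|\varphi\|_{H^{s_0-\frac54}_a}\lesssim\|\varphi\|_{H^{s_0-\frac54}}$ for $a<1$, the products are estimated in $L^2_t H^{s_0-\frac54}_{a,x'_\omega}$ by the product of the individual norms, and counting powers of $\lambda$ gives $\epsilon_0\lambda^{1-m}$ with a factor $\epsilon_0$ (not merely $\epsilon_2$) because the null-hypersurface quantities and the $\theta$-localization both carry smallness measured against $\epsilon_0$. \textbf{The main obstacle} I expect is the bookkeeping of the $m=2$ coefficient: one must show the eikonal-type cancellation is exact enough that $\psi_2$ really is $O(\epsilon_0\lambda^{-1})$ and not merely $O(\lambda^{-1})$, which requires carefully tracking that $\mathbf{g}_\lambda(dr_{\omega,r},dr_{\omega,r})=\mathbf{g}_\lambda(dr_{\omega,r},dr_{\omega,r})-\mathbf{g}(dr_{\omega,r},dr_{\omega,r})$ on $\Sigma$ and bounding this difference via Proposition \ref{r1}'s control of $\lambda(\mathbf{g}-P_{<\lambda}\mathbf{g})$ in the $\Sigma$-adapted norm — this is exactly the quantity singled out in Proposition \ref{r1}, so the machinery is in place, but threading it through the $\delta^{(2)}$ term cleanly is the delicate point.
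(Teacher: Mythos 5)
Your proposal is correct and follows essentially the same route as the paper's proof: the commutator $[\square_{\mathbf{g}_\lambda},P_\lambda T_\lambda]$ reabsorbed via $\tilde P_\lambda\tilde T_\lambda$ to give the $L(d\mathbf{g},d\tilde P_\lambda\tilde f)$ term, the Leibniz split of $\square_{\mathbf{g}_\lambda}(u\theta)$ into three pieces, the eikonal cancellation reducing the $\delta^{(2)}$ coefficient to $\theta(\mathbf{g}_\lambda^{\alpha\beta}-\mathbf{g}^{\alpha\beta})\nu_\alpha\nu_\beta$ controlled by Proposition \ref{r1}'s bound on $\lambda(\mathbf{g}-P_{<\lambda}\mathbf{g})$ along $\Sigma$, and the power counting in $(\epsilon_0\lambda)^{1/2}$ for the $\theta$-derivatives. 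The only cosmetic difference is that the paper cites \eqref{Re}, Proposition \ref{r1} and Corollary \ref{vte} (rather than Corollary \ref{Rfenjie}) for the bounds on $\partial^2_{\alpha\beta}\phi$ and the restricted metric derivatives, but the underlying estimates are the same.
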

\begin{proof}[Proof of Proposition \ref{np}]
	For brevity, we consider the case $\omega=(0,1)$. Then $x_\omega=x_2$, and $x'_\omega=x'$. We write
	\begin{equation}\label{js0}
		\square_{\mathbf{g}_\lambda} P_\lambda f
		= \lambda^{-1} ( [\square_{\mathbf{g}_\lambda}, P_\lambda T_\lambda]+P_\lambda T_\lambda \square_{\mathbf{g}_\lambda} )(u\theta).
	\end{equation}
	For the first term in \eqref{js0}, noting $\mathbf{g}_\lambda$ supported at frequency $\leq 4 \lambda$, then we can write
	\begin{equation*}
		[\square_{\mathbf{g}_\lambda}, P_\lambda T_\lambda]=[\square_{\mathbf{g}_\lambda}, P_\lambda T_\lambda]\tilde{P}_\lambda \tilde{T}_\lambda
	\end{equation*}
	for some multipliers $\tilde{P}_\lambda$ and $\tilde{T}_\lambda$ which have the same properties as $P_\lambda$ and $T_\lambda$. Therefore, by using the kernal bounds for $P_\lambda T_\lambda$, we conclude that
	\begin{equation*}
		[\square_{\mathbf{g}_\lambda}, P_\lambda T_\lambda]f=L(d\mathbf{g}, df).
	\end{equation*}
	For the second term in \eqref{js0}, we use the Leibniz rule
	\begin{equation}\label{js1}
		\square_{\mathbf{g}_\lambda}(u\theta)=\theta \square_{\mathbf{g}_\lambda} u+(\mathbf{g}^{\alpha \beta}_\lambda+\mathbf{g}^{\beta \alpha}_\lambda)\partial_\alpha u \partial_\beta \theta + u \square_{\mathbf{g}_\lambda} \theta.
	\end{equation}
	Let $\nu$ denote the conormal vector field along $\Sigma$. We thus know $\nu=dx_2-d\phi(t,x')$. In the following, we take the greek indices $0 \leq \alpha, \beta \leq 1$.

	For the first term in \eqref{js1}, we can compute out
	\begin{equation*}
		\begin{split}
			\mathbf{g}_{\lambda}^{\alpha \beta} \partial^2_{\alpha \beta} u =& \mathbf{g}_{\lambda}^{\alpha \beta}(t,x',\phi)\nu_\alpha \nu_\beta \delta^{(2)}_{x_2-\phi}
			-2 (\partial_2 \mathbf{g}_{\lambda}^{\alpha \beta})(t,x',\phi) \nu_\alpha \nu_\beta \delta^{(1)}_{x_2-\phi}
			\\
			&+(\partial^2_2 \mathbf{g}_{\lambda}^{\alpha \beta})(t,x',\phi) \nu_\alpha \nu_\beta \delta^{(0)}_{x_2-\phi}- \mathbf{g}_{\lambda}^{\alpha \beta}(t,x',\phi) \partial^2_{\alpha \beta}\phi \delta^{(1)}_{x_2-\phi}
			\\
			&+ \partial_2\mathbf{g}_{\lambda}^{\alpha \beta}(t,x',\phi) \partial^2_{\alpha \beta}\phi \delta^{(0)}_{x_2-\phi}.
		\end{split}
	\end{equation*}
	Above, $\delta^{(m)}_{x_2-\phi}=(\partial^m \delta)(x_2-\phi) $. Due to Leibniz rule, we can take
	\begin{equation*}
		\begin{split}
			\psi_0&=\theta \big\{ (\partial^2_2 \mathbf{g}_{\lambda}^{\alpha \beta})(t,x',\phi)\nu_\alpha \nu_\beta+ (\partial_2 \mathbf{g}_{\lambda}^{\alpha \beta})(t,x',\phi)\partial^2_{\alpha \beta}\phi  \big\},
			\\
			\psi_1&=\theta \big\{ 2(\partial_2 \mathbf{g}_{\lambda}^{\alpha \beta})(t,x',\phi)\nu_\alpha \nu_\beta-  \mathbf{g}_{\lambda}^{\alpha \beta}(t,x',\phi)\partial^2_{\alpha \beta}\phi \big\},
			\\
			\psi_2&=\theta( \mathbf{g}_{\lambda}^{\alpha \beta}-\mathbf{g}^{\alpha \beta})\nu_\alpha \nu_\beta,
		\end{split}
	\end{equation*}
	Taking advantage of \eqref{Re}, Proposition \ref{r1}, and Corollary \ref{vte}, we can conclude that this settings of $\psi_0, \psi_1,$ and $\psi_2$ satisfy the estimates \eqref{np2}.

	For the second term in \eqref{js0}, we have
	\begin{equation*}
		\begin{split}
			(\mathbf{g}^{\alpha \beta}_\lambda+\mathbf{g}^{\beta \alpha}_\lambda)\partial_\alpha u \partial_\beta \theta=& \frac12\nu_\alpha (\mathbf{g}^{\alpha \beta}_\lambda+\mathbf{g}^{\beta \alpha}_\lambda)(t,x',\phi)\partial_\beta \theta \delta^{(1)}_{x_2-\phi}
			\\
			&- \frac12\nu_\alpha \partial_2 (\mathbf{g}^{\alpha \beta}_\lambda+\mathbf{g}^{\beta \alpha}_\lambda)(t,x',\phi)\partial_\beta \theta \delta^{(0)}_{x_2-\phi}.
		\end{split}
	\end{equation*}
	Then we can take
	\begin{equation*}
		\psi_0= \frac12\nu_\alpha \partial_2 (\mathbf{g}^{\alpha \beta}_\lambda+\mathbf{g}^{\beta \alpha}_\lambda)(t,x',\phi)\partial_\beta \theta, \quad \psi_1= \frac12\nu_\alpha (\mathbf{g}^{\alpha \beta}_\lambda+\mathbf{g}^{\beta \alpha}_\lambda)(t,x',\phi)\partial_\beta \theta.
	\end{equation*}
	Thanks to \eqref{Re}, Proposition \ref{r1}, and Corollary \ref{vte}, we conclude that this settings of $\psi_0$ and $\psi_1$ satisfy the estimate \eqref{np2}.

	For the third term in \eqref{js0}, we take
	\begin{equation*}
		\psi_0=\mathbf{g}^{\alpha \beta}_\lambda(t,x',\phi)\partial^2_{\alpha \beta}\theta.
	\end{equation*}
	By using \eqref{Re}, Proposition \ref{r1}, and Corollary \ref{vte} again, this settings of $\psi_0$ satisfies the estimates \eqref{np2}.
\end{proof}
As a immediate consequence, we can obtain the following corollary.
\begin{corollary}\label{np0}
	Let $f$ be a normalized wave packet. Then the following estimates hold:
	\begin{equation*}\label{np3}
		\|d P_\lambda f \|_{L^\infty_t L^2_x} \lesssim 1, \quad \| \square_{\mathbf{g}} P_\lambda f \|_{L^2_t L^2_x} \lesssim \epsilon_0.
	\end{equation*}
\end{corollary}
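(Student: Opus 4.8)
The plan is to prove Corollary \ref{np0} by specializing Proposition \ref{np} to a single normalized wave packet $f$ and reading off both bounds from its right-hand side. For the first bound, $\|dP_\lambda f\|_{L^\infty_t L^2_x} \lesssim 1$, I would use that $P_\lambda f = \epsilon_0^{\frac14}\lambda^{-\frac54} P_\lambda T_\lambda(u\theta)$ with $u(t,x)=\delta(x_\omega-\phi_{\omega,r})$ and $\theta = \theta_0((\epsilon_0\lambda)^{\frac12}(x'_\omega - \gamma'_\omega(t)))$. The frequency localization $P_\lambda T_\lambda$ turns the distribution $u\theta$ into a smooth function essentially concentrated in a tube of dimensions $\lambda^{-1}\times (\epsilon_0\lambda)^{-\frac12}$ around $\gamma$; differentiating costs at most a factor $\lambda$, and an $L^2$ computation in the $(x_\omega, x'_\omega)$ coordinates gives $\|dP_\lambda f\|_{L^2_x} \approx \epsilon_0^{\frac14}\lambda^{-\frac54}\cdot \lambda \cdot \lambda^{\frac12}(\epsilon_0\lambda)^{-\frac14} = 1$ (the $\lambda^{\frac12}$ being the $L^2$ mass of the mollified $\delta$, and $(\epsilon_0\lambda)^{-\frac14}$ the $L^2$ mass of the Gaussian-type bump). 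Uniformity in $t$ follows since the estimate is pointwise in $t$ and the transport of $\gamma'_\omega(t)$, $\phi_{\omega,r}$ only shifts the tube.

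For the second bound, I would apply Proposition \ref{np}, which gives
\begin{equation*}
	\square_{\mathbf{g}_\lambda} P_\lambda f = L(d\mathbf{g}, d\tilde P_\lambda \tilde f) + \epsilon_0^{\frac14}\lambda^{-\frac54} P_\lambda T_\lambda \sum_{m=0,1,2}\psi_m \delta^{(m)}(x'_\omega - \phi_{\omega,r}),
\end{equation*}
together with the scaled Sobolev estimates \eqref{np2}. I would first replace $\square_{\mathbf{g}_\lambda}$ by $\square_{\mathbf{g}}$ at the cost of $\square_{\mathbf{g}-\mathbf{g}_\lambda}P_\lambda f = (\mathbf{g}-\mathbf{g}_\lambda)\nabla dP_\lambda f$, using $\mathbf{g}^{00}=-1$; by Bernstein, $\|\mathbf{g}-\mathbf{g}_\lambda\|_{L^\infty_x}\lesssim \lambda^{-1-\delta}\|d\mathbf{g}\|_{C^\delta_x}\lesssim \epsilon_0\lambda^{-1-\delta}$, and $\|\nabla dP_\lambda f\|_{L^2_x}\lesssim \lambda\|dP_\lambda f\|_{L^2_x}\lesssim \lambda$, so this term is $O(\epsilon_0\lambda^{-\delta})$ in $L^2_tL^2_x$, which is $\lesssim \epsilon_0$. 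For the term $L(d\mathbf{g}, d\tilde P_\lambda \tilde f)$, I would bound it by $\|d\mathbf{g}\|_{L^2_tL^\infty_x}\|d\tilde P_\lambda\tilde f\|_{L^\infty_tL^2_x}$; the first factor is $\lesssim \epsilon_0$ (from $(h,\bv,\bw)\in\mathcal{H}$ and $\|d\mathbf{g}\|_{L^4_tL^\infty_x}\lesssim \|d\bv,dh\|_{L^4_tC^\delta_x}\lesssim \epsilon_2$, hence also $\lesssim \epsilon_0$ on the bounded time interval) and the second is $\lesssim 1$ by the first part applied to $\tilde f$.

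The main work, which is also the main obstacle, is the delta-layer term: I need $\|\epsilon_0^{\frac14}\lambda^{-\frac54}P_\lambda T_\lambda(\psi_m\delta^{(m)}(x'_\omega-\phi))\|_{L^2_tL^2_x}\lesssim \epsilon_0$ for each $m=0,1,2$. Here I would use that $P_\lambda T_\lambda$ maps the conormal distribution $\psi_m\delta^{(m)}$ along $\Sigma$ into an $L^2$ function with $L^2_x$-norm controlled, for each fixed $t$, by $\lambda^{m}\lambda^{\frac12}\|\psi_m\|_{L^2(\Sigma^t)}$ (the factor $\lambda^m$ from the frequency truncation acting on $\delta^{(m)}$, the $\lambda^{\frac12}$ from smoothing $\delta$ into an $L^2$ object of width $\lambda^{-1}$); the transverse localization of $T_\lambda$ and the graph structure $x_\omega = \phi$ convert the ambient $L^2_x$-norm into the surface $L^2$-norm along $\Sigma$. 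Since $\|\psi_m\|_{L^2_t H^{s_0-\frac54}_{a,x'_\omega}}\lesssim \epsilon_0\lambda^{1-m}$ by \eqref{np2}, and the rescaled norm with $a=(\epsilon_0\lambda)^{-\frac12}$ satisfies $\|\psi_m\|_{L^2_{x'}}\lesssim (\epsilon_0\lambda)^{-\frac14}\|\psi_m\|_{H^{s_0-\frac54}_a}$ (the $(\epsilon_0\lambda)^{-\frac14}$ again being the $L^2$ mass of the bump in the $x'$-variable), one gets
\begin{equation*}
	\epsilon_0^{\frac14}\lambda^{-\frac54}\cdot \lambda^{m+\frac12}\cdot (\epsilon_0\lambda)^{-\frac14}\cdot \epsilon_0\lambda^{1-m} = \epsilon_0,
\end{equation*}
as required. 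The delicate point is justifying the mapping property of $P_\lambda T_\lambda$ on conormal distributions uniformly in the (low-regularity but geometrically controlled) surface $\Sigma$ — this is where the specific construction of the wave packet, so that only tangential derivatives of restrictions to $\Sigma$ appear, is essential, and where I would lean directly on the corresponding estimates in \cite{ST}. Assembling the three pieces yields $\|\square_{\mathbf{g}}P_\lambda f\|_{L^2_tL^2_x}\lesssim \epsilon_0$, completing the proof.
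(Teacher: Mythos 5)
Your proposal is correct and follows essentially the route the paper intends: the corollary is the single-packet specialization of Proposition \ref{np}, and your three-piece treatment of $\square_{\mathbf{g}}P_\lambda f$ (Bernstein for $\mathbf{g}-\mathbf{g}_\lambda$, the $L(d\mathbf{g},d\tilde P_\lambda\tilde f)$ term, and the delta-layer term via the scaled Sobolev bounds \eqref{np2} and the $P_\lambda T_\lambda$ mapping property of Lemma \ref{SWP}) is exactly the argument the paper itself deploys for the superposition version in Proposition \ref{szy} and in \eqref{yv1}--\eqref{yv2}. Your scaling arithmetic checks out in both bounds, and you correctly identify the only external input as the conormal-distribution estimate from \cite{ST}.
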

From Proposition \ref{np} and Corollary \ref{np0}, a single normalized wave packet is not enough for us to construct approximate solutions to a linear wave equation. Therefore, we discuss the superposition of wave packets as follows.
\subsubsection{\textbf{Superpositions of wave packets}}
The index $\omega$ stands for the initial orientation of the wave packet at $t=-2$, which varies over a maximal collection of approximately $\epsilon_0^{-\frac12}\lambda^{\frac12}$ unit vectors separated by at least $\epsilon_0^{\frac12}\lambda^{-\frac12}$. For each $\omega$, we have the orthonormal coordinate system $(x_\omega, x'_\omega)$ of $\mathbb{R}^2$, where $x_\omega=x \cdot \omega$, and $x'_\omega$ are projective along $\omega$.

%We denote $\mathbb{R}^2$ by a parallel tiling of rectangles, with length $(8\lambda)^{-1}$ in the $x_{\omega}$ direction, and $(4\epsilon_0 \lambda)^{-\frac12}$ in the other directions $x'_{\omega}$. The index $j$ corresponds to a counting of the rectangles in this decomposition. Let $R_{\omega,j}$ denote the collection of the doubles of these rectangles, and $\Sigma_{\omega,j}$ denote the element of the $\Sigma_{\omega}$ ($\Sigma_{\omega}=\cup_{r} \Sigma_{\omega,r}$) foliation upon which $R_{\omega,j}$ is centered. Let $\gamma_{\omega,j}$ denote the null geodesic contained in $\Sigma_{\omega,j}$ which passes through the center of $R_{\omega,j}$ at time $t=-2$.

Next, we decompose $\mathbb{R}^2$ into a parallel tiling of rectangles of size $(8\lambda)^{-1}$ in the $x_{\omega}$ direction, and $(4\epsilon_0 \lambda)^{-\frac12}$ in the other directions $x'_{\omega}$. The index $j \in \mathbb{N}$ corresponds to a counting of these rectangles in this decomposition. We let $R_{\omega,j}$ denote the collection of the doubles of these rectangles, and let $\Sigma_{\omega,j}$ denote the element of the $\Sigma_{\omega}$ foliation upon which $R_{\omega,j}$ is centered. 
The distinct elements of the foliation, denoted by $\Sigma_{\omega,j}$ are thus separated by at least $(8\lambda)^{-1}$ at $t = −2$, and thus by $(9\lambda)^{-1}$ at all values of $t$, as shown in \eqref{pr0} below. Let $\gamma_{\omega,j}$ denote the null geodesic contained in $\Sigma_{\omega,j}$ which passes through the center of $R_{\omega,j}$ at time $t = −2$.

We denote the slab $T_{\omega,j}$ as
\begin{equation}\label{twj}
	T_{\omega,j}=\Sigma_{\omega,j} \cap \{ |x'_{\omega}-\gamma_{\omega,j}| \leq (\epsilon_0 \lambda)^{-\frac12}\},
\end{equation}
and let
\begin{equation*}
	\Sigma_\omega=\cup_{r\in\mathbb{R}}\Sigma_{\omega,r}.
\end{equation*}
For each $\omega$ the slabs $T_{\omega,j}$ satisfy a finite-overlap condition; indeed, slabs associated to different elements of $\Sigma_\omega$ are disjoint, and those associated to the
same $\Sigma_\omega$ have finite overlap in the $x'_{\omega}$ variable, since the flow restricted to any
$\Sigma_{\omega,r}$ is $C^1$ close to translation. We next introduce some geometry properties for these slabs.

%The wave packets $$ that arise in the superposition are normalized wave
%packets associated to Σω,j and γω,j as in Definition 8.1, with uω,j supported
%in Tω,j .
By \eqref{600} and \eqref{606}, then the estimate
\begin{equation*}
	|dr_{\theta}-(\theta \cdot d x-dt)| \lesssim \epsilon_1,
\end{equation*}
holds pointwise uniformly on $[-2,2]\times \mathbb{R}^2$. This also implies that
\begin{equation}\label{pr0}
	| \phi_{\theta,r}(t,x'_{\theta})-\phi_{\theta,r'}(t,x'_{\theta})-(r-r')| \lesssim \epsilon_1|r-r'|.
\end{equation}
On the other hand, \eqref{502} tells us
\begin{equation}\label{pr1}
	\| d^2_{x'_{\omega}}\phi_{\omega,r}(t,x'_{\omega})-d^2_{x'_{\omega}}\phi_{\omega,r'}(t,x'_{\omega})\|_{L^\infty_{x'_{\omega}}} \lesssim \epsilon_2+ \bar{\rho}(t),
\end{equation}
where set $$\bar{\rho}(t)=\| d \mathbf{g} \|_{C^{\delta_0}_x}.$$
By using \eqref{pr0} and \eqref{pr1}, we get
\begin{equation*}\label{pr2}
	\| d_{x'_{\omega}}\phi_{\omega,r}(t,x'_{\omega})-d_{x'_{\omega}}\phi_{\omega,r'}(t,x'_{\omega})\|_{L^\infty_{x'_{\omega}}} \lesssim (\epsilon_2+ \bar{\rho}(t))^{\frac12}|r-r'|^{\frac12}.
\end{equation*}
For $dx_{\omega}-d\phi_{\omega,r}$ is null and also $|d \mathbf{g} | \leq \bar{\rho}(t)$, this also implies H\"older-$\frac12$ bounds on $d\phi_{\omega,r}$. Therefore, we suppose that $(t,x)\in \Sigma_{\omega,r}$ and $(t,y)\in \Sigma_{\omega,r'}$, that $|x'_{\omega}-y'_{\omega}| \leq 2(\epsilon_0 \lambda)^{-\frac12}$, and that $|r-r'|\leq 2 \lambda^{-1}$. Using \eqref{601}, we can obtain
\begin{equation*}
	|l_{\omega}(t,x)-l_{\omega}(t,y)| \lesssim \epsilon_0^{\frac12}\lambda^{-\frac12}+\epsilon_0^{-\frac12}\bar{\rho}(t)\lambda^{-\frac12}.
\end{equation*}
Due to $\dot{\gamma}_{\omega}=l_{\omega}$ and $\|\bar{\rho}\|_{L^4_t} \lesssim \epsilon_0$, any geodesic in $\Sigma_{\omega}$ which intersects a slab $T_{\omega,j}$ should be contained in the similar slab of half the scale.

We now introduce a lemma for superpositions of wave packets from a certain fixed time.
\begin{Lemma}\label{SWP}[\cite{ST},Lemma 8.5 and Lemma 8.6]
	Let $\frac74<s_0\leq s \leq \frac{15}{8}$, $\delta_0 \in (0, s_0-\frac74)$ and $0<\mu < \delta_0$. Let a scalar function $\bar{v}(t,x)$ be formulated by
	\begin{equation*}\label{swp}
		\bar{v}(t,x)=\epsilon_0^{\frac14}\lambda^{-\frac14} P_{\lambda} \sum_{\omega,j}T_\lambda(\psi^{\omega,j}\delta_{x_{\omega}-\phi_{\omega,j}(t,x'_{\omega})}).
	\end{equation*}
	Set $a=(\epsilon_0 \lambda)^{-\frac12}$. Then we have
	\begin{equation}\label{swp0}
		\| \bar{v}(t) \|^2_{L^2_x}\lesssim \sum_{\omega,j}\| \psi^{\omega,j}\|^2_{H^{\frac12+\mu}_a}, \qquad \text{if} \ \ \ \ \bar{\rho}(t) \leq \epsilon_0,
	\end{equation}
	and
	\begin{equation}\label{swp1}
		\| \bar{v}(t) \|^2_{L^2_x}\lesssim \epsilon_0^{-1} \bar{\rho}(t)  \sum_{\omega,j}\| \psi^{\omega,j}\|^2_{H^{\frac12+\mu}_a},  \quad \text{if} \ \ \bar{\rho}(t) \geq \epsilon_0 .
	\end{equation}
	
	\begin{remark}
		Thanks to \eqref{swp0} and \eqref{swp1}, we can carry out
		\begin{equation}\label{swp3}
			\| \bar{v}(t) \|^2_{L^2_x}\lesssim \left(1+\epsilon_0^{-1} \bar{\rho}(t)\right)  \sum_{\omega,j}\| \psi^{\omega,j}\|^2_{H^{\frac12+\mu}_a}.
		\end{equation}
	\end{remark}
\end{Lemma}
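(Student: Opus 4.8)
The plan is to follow the wave-packet superposition argument of Smith and Tataru (\cite{ST}, Lemmas 8.5--8.6), adapted to two space dimensions and to the geometry of the null foliations $\Sigma_\omega$ established above. Write $\bar v(t)=\sum_{\omega,j} v_{\omega,j}(t)$ with $v_{\omega,j}=\epsilon_0^{\frac14}\lambda^{-\frac14}P_\lambda T_\lambda(\psi^{\omega,j}\delta_{x_\omega-\phi_{\omega,j}})$, and expand $\|\bar v(t)\|_{L^2_x}^2=\sum_{(\omega,j),(\omega',j')}\langle v_{\omega,j}(t),v_{\omega',j'}(t)\rangle_{L^2_x}$. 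The whole estimate reduces to two facts: (a) a bound for the individual diagonal terms, and (b) an almost-orthogonality statement controlling the off-diagonal interactions. For (a), since $\delta_{x_\omega-\phi_{\omega,j}}$ restricts mass to $\Sigma_{\omega,j}$ and $\theta$-type tangential localization has width $a=(\epsilon_0\lambda)^{-\frac12}$, one computes, using the trace/Sobolev theorem on $\Sigma_{\omega,j}$ together with the frequency localization $|\xi|\sim\lambda$ produced by $P_\lambda$, that $\|v_{\omega,j}(t)\|_{L^2_x}\lesssim \|\psi^{\omega,j}\|_{H^{\frac12+\mu}_a}$; here $\tfrac12+\mu<\tfrac12+\delta_0<s_0-\tfrac54$ so the surface regularity available from Proposition \ref{r1}, Corollary \ref{vte}, and Lemma \ref{te20} is more than enough.

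For (b), I would split the off-diagonal interactions into two types. If $\omega\neq\omega'$, the directions are $\epsilon_0^{\frac12}\lambda^{-\frac12}$-separated, and after $P_\lambda$ each packet $v_{\omega,j}$ has spatial Fourier support in a tube of angular width $\sim a^{-1}\lambda^{-1}=(\epsilon_0\lambda)^{\frac12}\lambda^{-1}=\epsilon_0^{\frac12}\lambda^{-\frac12}$ around the conormal direction $\nu_\omega=dx_\omega-d\phi_{\omega,j}$, which by \eqref{pr0}--\eqref{pr1} is an $O(\epsilon_1)$-perturbation of $\omega$; hence distinct $\omega$-classes have essentially disjoint frequency supports and contribute orthogonally, so it suffices to sum the diagonal in $\omega$. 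If $\omega=\omega'$ but $j\neq j'$, one uses spatial near-disjointness of the slabs: slabs sitting on distinct leaves of $\Sigma_\omega$ are separated by $\gtrsim\lambda^{-1}$ at all times (by \eqref{pr0}), while those on the same leaf overlap only finitely in $x'_\omega$ because the flow restricted to any $\Sigma_{\omega,r}$ is $C^1$-close to a translation. Combining, one obtains $\|\bar v(t)\|_{L^2_x}^2\lesssim N(t)\sum_{\omega,j}\|\psi^{\omega,j}\|^2_{H^{\frac12+\mu}_a}$, where $N(t)$ is the maximal number of slabs $T_{\omega,j}$ (for fixed $\omega$) through a given point.

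The main obstacle is controlling $N(t)$, and this is precisely where the dichotomy in $\bar\rho(t)=\|d\mathbf g(t,\cdot)\|_{C^{\delta_0}_x}$ enters. When $\bar\rho(t)\le\epsilon_0$, the geodesic stability estimate established before \eqref{pr0} (namely that any geodesic in $\Sigma_\omega$ meeting $T_{\omega,j}$ stays in the half-scale slab, which uses $|l_\omega(t,x)-l_\omega(t,y)|\lesssim \epsilon_0^{\frac12}\lambda^{-\frac12}+\epsilon_0^{-\frac12}\bar\rho(t)\lambda^{-\frac12}$) forces $N(t)\lesssim 1$, yielding \eqref{swp0}. When $\bar\rho(t)\ge\epsilon_0$, the curvature of the foliation can no longer be absorbed: the H\"older-$\tfrac12$ bound on $d_{x'_\omega}\phi_{\omega,r}$ coming from \eqref{pr1} allows the conormal tubes to spread, and a point may lie in up to $N(t)\lesssim \epsilon_0^{-1}\bar\rho(t)$ slabs, which gives \eqref{swp1}. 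Finally \eqref{swp3} is immediate by taking the maximum of the two bounds since $1+\epsilon_0^{-1}\bar\rho(t)$ dominates both $1$ and $\epsilon_0^{-1}\bar\rho(t)$. I expect the bookkeeping of the Fourier-support tubes and the overlap count in the large-curvature regime to be the delicate part; everything else is routine given the estimates \eqref{pr0}--\eqref{pr1} and Proposition \ref{r1}.
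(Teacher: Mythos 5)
First, note that the paper does not actually prove this lemma: it is imported verbatim (up to the dimensional adjustment $\lambda^{-\frac{n-1}{4}}\mapsto\lambda^{-\frac14}$) from \cite{ST}, Lemmas 8.5--8.6, with only the Remark deriving \eqref{swp3} added. So the comparison is really against the Smith--Tataru argument. Your skeleton --- diagonal trace bound, almost-orthogonality across packets, and a geometric dichotomy in $\bar\rho(t)$ --- is the right shape, but two of your key steps do not hold as stated.

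First, the claim that distinct $\omega$-classes have ``essentially disjoint frequency supports'' is false. The conormal $dx_\omega-d\phi_{\omega,j}$ is only an $O(\epsilon_1)$ perturbation of $\omega$, while consecutive directions $\omega$ are separated by merely $\epsilon_0^{\frac12}\lambda^{-\frac12}$; since $\lambda\geq\epsilon_0^{-1}$, for large $\lambda$ one has $\epsilon_0^{\frac12}\lambda^{-\frac12}\ll\epsilon_1$, so a single packet's Fourier support can straddle many $\omega'$-sectors. In addition the curvature of $\Sigma_{\omega,j}$ spreads the Fourier transform of $\delta_{\Sigma}$ at frequency $\lambda$ over an angular sector of width $\sim\lambda^{-\frac12}$ rather than a single direction. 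What one actually needs (and what \cite{ST} proves) is a \emph{quantitative} almost-orthogonality: a bound on $|\langle v_{\omega,j},v_{\omega',j'}\rangle|$ with decay in the angle $|\omega-\omega'|$ and in the leaf separation $|r-r'|$, summed via Cotlar--Stein/Schur. A pure overlap count cannot replace this for the cross-$\omega$ interactions: a single point lies in one slab from each of the $\approx\epsilon_0^{-\frac12}\lambda^{\frac12}$ directions, so the Cauchy--Schwarz overlap bound would produce a loss of $\epsilon_0^{-\frac12}\lambda^{\frac12}$, which is useless.

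Second, the mechanism you give for the factor $\epsilon_0^{-1}\bar\rho(t)$ --- ``a point may lie in up to $N(t)\lesssim\epsilon_0^{-1}\bar\rho(t)$ slabs'' --- is asserted, not derived, and it is not the mechanism in \cite{ST}. There the loss arises because the transversality and curvature bounds on the leaves (here \eqref{pr1} and \eqref{609}, i.e. $\|d^2_{x'_\omega}\phi_{\omega,r}-d^2_{x'_\omega}\phi_{\omega,r'}\|_{L^\infty}\lesssim\epsilon_2+\bar\rho(t)$) degrade at times where $\bar\rho(t)$ is large, which weakens the constants in the interaction estimates between leaves of the same and of nearby foliations; the pointwise slab-overlap count (Corollary \ref{corl}) is a separate ingredient used later for the $L^4_tL^\infty_x$ bound, not for the fixed-time $L^2$ almost-orthogonality. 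If you want to fill the gap, the step to supply is the interaction estimate for a pair of mollified surface measures $T_\lambda(\psi\,\delta_{\Sigma_{\omega,r}})$, $T_\lambda(\psi'\delta_{\Sigma_{\omega',r'}})$ with explicit dependence on $|\omega-\omega'|$, $|r-r'|$ and $\bar\rho(t)$; everything else in your sketch (the diagonal bound via the trace theorem, the disjointness of packets on distinct leaves of a fixed foliation, and the derivation of \eqref{swp3} from \eqref{swp0}--\eqref{swp1}) is sound.
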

\begin{proposition}[\cite{ST},Proposition 8.4]\label{szy}
	Let $f=\sum_{\omega,j}a_{\omega,j}f^{\omega,j}$, where $f^{\omega,j}$ are normalized wave packets supported in $T_{\omega,j}$. Then we have
	\begin{equation}\label{ese}
		\| d P_\lambda f\|_{L^\infty_t L^2_x} \lesssim (\sum_{\omega,j} a^2_{\omega,j})^{\frac12},
	\end{equation}
	and
	\begin{equation}\label{ese1}
		\| \square_{\mathbf{g}_{\lambda}} P_\lambda f \|_{L^1_t L^2_x} \lesssim \epsilon_0 (\sum_{\omega,j} a^2_{\omega,j})^{\frac12}.
	\end{equation}
\end{proposition}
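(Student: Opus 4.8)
\textbf{Proof proposal for Proposition \ref{szy}.}

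The plan is to reduce the two estimates \eqref{ese} and \eqref{ese1} to Proposition \ref{np} and Lemma \ref{SWP}, exactly along the lines of Smith--Tataru \cite{ST}. For \eqref{ese}: since $f = \sum_{\omega,j} a_{\omega,j} f^{\omega,j}$ with each $f^{\omega,j}$ a normalized wave packet of the form $\epsilon_0^{\frac14}\lambda^{-\frac54} T_\lambda(u^{\omega,j}\theta^{\omega,j})$ and $u^{\omega,j} = \delta(x_\omega - \phi_{\omega,j})$, one computes $dP_\lambda f$ and observes that $\partial_\alpha$ either hits the cutoff $\theta^{\omega,j}$ (which, being on the $(\epsilon_0\lambda)^{1/2}$-scale, costs $(\epsilon_0\lambda)^{1/2}\lambda^{-5/4}\cdot\lambda \sim$ the $\lambda^{-1/4}$-normalization after the $P_\lambda T_\lambda$ factor) or hits $u^{\omega,j}$ producing a $\delta'$. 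Writing the result as a superposition of the type appearing in Lemma \ref{SWP} with symbol data $\psi^{\omega,j}$ controlled by $\vert\kern-0.25ex\vert\kern-0.25ex\vert d\phi_{\omega,j}-dt\vert\kern-0.25ex\vert\kern-0.25ex\vert$ (hence by $\epsilon_1$ via $\Re\le 2\epsilon_1$) plus an $O(1)$ principal contribution, we invoke \eqref{swp3}. At a time slice where $\bar\rho(t)\le\epsilon_0$ the factor $1+\epsilon_0^{-1}\bar\rho(t)$ is $O(1)$, giving the bound $(\sum a_{\omega,j}^2)^{1/2}$ directly; for the finitely many $t$ (in the sense that $\|\bar\rho\|_{L^4_t}\lesssim\epsilon_0$, so $\{\bar\rho>\epsilon_0\}$ has small measure) where $\bar\rho(t)\ge\epsilon_0$ one still has a bound with the extra factor, but since we only need $L^\infty_t$ of the $L^2_x$ norm we must argue that the geometric separation of the slabs $T_{\omega,j}$ (the $(9\lambda)^{-1}$ lower bound from \eqref{pr0} and the finite-overlap property recorded after \eqref{twj}) keeps the frame almost-orthogonal uniformly in $t$; this is where the Hölder-$\frac12$ bounds on $d\phi_{\omega,r}$ derived above from \eqref{pr0}--\eqref{pr1} are used to guarantee that each geodesic stays in a half-scale slab and the overlap count is $t$-independent.

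For \eqref{ese1}: apply $\square_{\mathbf{g}_\lambda}$ to each $P_\lambda f^{\omega,j}$ and use Proposition \ref{np}, which gives
\[
\square_{\mathbf{g}_\lambda} P_\lambda f^{\omega,j} = L(d\mathbf{g}, d\tilde{P}_\lambda \tilde f^{\omega,j}) + \epsilon_0^{\frac14}\lambda^{-\frac54}P_\lambda T_\lambda \sum_{m=0}^{2}\psi_m^{\omega,j}\,\delta^{(m)}(x'_\omega-\phi_{\omega,j}),
\]
with $\|\psi_m^{\omega,j}\|_{L^2_t H^{s_0-5/4}_{a,x'_\omega}}\lesssim \epsilon_0\lambda^{1-m}$, $a=(\epsilon_0\lambda)^{-1/2}$. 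Summing over $(\omega,j)$ with coefficients $a_{\omega,j}$, the leading-order ($\delta^{(2)}$) terms and the $L(d\mathbf{g},\cdot)$ terms are handled by: for the $L(d\mathbf{g},d\tilde P_\lambda\tilde f)$ piece, bound $L$ by $\|d\mathbf{g}\|_{L^\infty_x}$ times the $L^2$-superposition of the $\tilde f^{\omega,j}$, then use $\|d\mathbf{g}\|_{L^4_tL^\infty_x}\lesssim\epsilon_2\lesssim\epsilon_0$ together with \eqref{ese} applied to $\tilde f$; for the $\psi_m\delta^{(m)}$ pieces, each $\delta^{(m)}$ contributes $\lambda^m$ after the $P_\lambda T_\lambda$ smoothing, so the $m$-th term is a superposition of the Lemma \ref{SWP} type with effective symbols $\lambda^{m}\psi_m^{\omega,j}$ whose $H^{1/2+\mu}_a$-norm (note $s_0-5/4>1/2$, so $H^{s_0-5/4}\hookrightarrow H^{1/2+\mu}$ for $\mu<\delta_0$ small) is $\lesssim \epsilon_0\lambda$ in $L^2_t$; applying \eqref{swp3} and then taking $L^1_t$ via Cauchy--Schwarz on $[-2,2]$ (again splitting according to $\bar\rho\lessgtr\epsilon_0$ and using $\|\bar\rho\|_{L^4_t}\lesssim\epsilon_0$ to absorb the bad set) yields $\lesssim\epsilon_0(\sum a_{\omega,j}^2)^{1/2}$.

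The main obstacle I expect is not any single estimate but the bookkeeping of the almost-orthogonality of the wave-packet superposition uniformly in $t$: one has to check carefully that the slabs $T_{\omega,j}$ — which are defined via the level sets $\Sigma_{\omega,j}$ of the eikonal function at $t=-2$ — retain their separation and finite-overlap structure at every later time, which is precisely the content of the geometric estimates \eqref{pr0}--\eqref{pr1} and the Hölder-$\frac12$ consequence drawn from them, and then to feed this into Lemma \ref{SWP} with the correct rescaled norms. A secondary technical point is tracking the powers of $\lambda$ and $\epsilon_0$ through the $\delta^{(m)}\mapsto \lambda^m$ heuristic after the $P_\lambda T_\lambda$ localization, so that the three terms $m=0,1,2$ all land at the common bound $\epsilon_0\lambda$; once Proposition \ref{np} is granted this is a routine scaling count. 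Both \eqref{ese} and \eqref{ese1} then follow, and the proof is complete.
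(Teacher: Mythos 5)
Your treatment of \eqref{ese1} is essentially the paper's argument: apply Proposition \ref{np} packet by packet, bound the $L(d\mathbf{g},d\tilde P_\lambda\tilde f)$ term by H\"older using $\|d\mathbf{g}\|_{L^4_tL^\infty_x}\lesssim\epsilon_0$ against an $L^2_tL^2_x$ bound on $d\tilde P_\lambda\tilde f$, and treat the $\psi_m\delta^{(m)}$ terms as a superposition of the Lemma \ref{SWP} type with the scaling count $\delta^{(m)}\mapsto\lambda^m$ against $\|\psi_m\|\lesssim\epsilon_0\lambda^{1-m}$, splitting the time integral according to $\bar\rho\lessgtr\epsilon_0$. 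That part is fine (modulo one ordering issue noted below).

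The genuine gap is in your proof of \eqref{ese}. You try to establish the $L^\infty_tL^2_x$ bound directly from Lemma \ref{SWP}, and you correctly notice that at a time $t$ with $\bar\rho(t)\ge\epsilon_0$ the fixed-time estimate \eqref{swp1} carries the loss $\epsilon_0^{-1}\bar\rho(t)$, which the hypothesis $\|\bar\rho\|_{L^4_t}\lesssim\epsilon_0$ controls only after integration in $t$, not pointwise. Your proposed repair --- that the geometric separation of the slabs from \eqref{pr0}--\eqref{pr1} keeps the packet frame almost-orthogonal uniformly in $t$ --- does not close this: those geometric facts are already the input to Lemma \ref{SWP}, and the $\epsilon_0^{-1}\bar\rho(t)$ factor in \eqref{swp1} is exactly what survives after they are used; no further separation argument removes it at a fixed bad time. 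The paper's route is different and is the point of the proposition's structure: one first proves only the weaker bound $\|dP_\lambda f\|_{L^2_tL^2_x}\lesssim(\sum a_{\omega,j}^2)^{1/2}$ (where integrating $1+\epsilon_0^{-1}\bar\rho(t)$ over $[-2,2]$ is harmless), handling $\partial_t$ separately via $\partial_t\theta=\dot\gamma\,(\epsilon_0\lambda)^{1/2}\tilde\theta$ and $\partial_t\delta(x_\omega-\phi)=\partial_t\phi\,\delta^{(1)}$ with $\dot\gamma,\partial_t\phi\in L^\infty_t$; then proves \eqref{ese1} using this $L^2_tL^2_x$ bound; and only at the end upgrades $L^2_t$ to $L^\infty_t$ by the classical energy estimate for $\square_{\mathbf{g}_\lambda}$, feeding in \eqref{ese1} as the forcing term. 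This ordering also matters for your \eqref{ese1} argument: you invoke \eqref{ese} (the $L^\infty_t$ bound) to control $d\tilde P_\lambda\tilde f$ inside the proof of \eqref{ese1}, which in the correct logical order would be circular; the $L^2_tL^2_x$ version is what must be used there, and it suffices since $L^4_t\times L^2_t\hookrightarrow L^1_t$ on a bounded interval.
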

\begin{proof}
	We first prove a weaker estimate comparing with \eqref{ese}
	\begin{equation}\label{ese2}
		\| d P_\lambda f\|_{L^2_t L^2_x} \lesssim (\sum_{\omega,j} a^2_{\omega,j})^{\frac12}.
	\end{equation}
	By using \eqref{swp3} and replacing $P_\lambda$ by $\lambda^{-1}\nabla P_\lambda$, and $\psi^{\omega,j}=a_{\omega,j}\zeta^{\omega,j}$, we have
	\begin{equation*}%\label{ese3}
		\| \nabla P_\lambda f(t)\|^2_{L^2_x} \lesssim (1+\epsilon_0^{-1}\bar{\rho}(t)) \sum_{\omega,j} a^2_{\omega,j}.
	\end{equation*}
	Due to the fact $\|\bar{\rho}\|_{L^4_t} \lesssim \epsilon_0$, we can see that
	\begin{equation}\label{ese3}
		\| \nabla P_\lambda f\|^2_{L^2_t L^2_x} \lesssim \sum_{\omega,j} a^2_{\omega,j} .
	\end{equation}
	We also need to get the similar estimate for the time derivatives. We can calculate
	\begin{equation*}
		\partial_t h = \dot{\gamma}(t) (\epsilon_0 \lambda)^{\frac12} \tilde{h}, \quad \partial_t \delta(x_\omega-\phi_{\omega,j})=\partial_t \phi_{\omega,j} \delta^{(1)}(x_\omega-\phi_{\omega,j}).
	\end{equation*}
	For $\dot{\gamma} \in L^\infty_t$ and $\partial_t \phi_{\omega,j} \in L^\infty_t$, then we have
	\begin{equation}\label{ese4}
		\| \partial_t P_\lambda f\|^2_{L^2_t L^2_x} \lesssim \sum_{\omega,j} a^2_{\omega,j} .
	\end{equation}
	Together with \eqref{ese3} and \eqref{ese4}, we have proved \eqref{ese2}.

	To prove \eqref{ese1}, we use the formula \eqref{np1}. Considering the right hand of \eqref{np1}, by using \eqref{ese2}, we can bound the first term by
	\begin{equation}\label{k26}
		\| L(d\mathbf{g}, d \tilde{P}_{\lambda} \tilde{f}) \|_{L^1_tL^2_x} \lesssim \|d\mathbf{g}\|_{L^4_tL^\infty_x} \|d \tilde{P}_{\lambda} \tilde{f} \|_{L^2_tL^2_x} \lesssim \epsilon_0 (\sum_{\omega,j} a^2_{\omega,j})^{\frac12}.
	\end{equation}
	It only remains for us to estimate the second right term on \eqref{np1}. If we set
	\begin{equation}\label{k28}
		\vartheta=\epsilon_0^{\frac12} \lambda^{-\frac54} P_\lambda T_\lambda \left(\sum_{\omega,j}a_{\omega,j}\cdot \sum_{m=0,1,2}\psi^{\omega,j}_m \delta^{(m)}_{x_{\omega}-\phi_{\omega,j}} \right),
	\end{equation}
	we have
	\begin{equation*}
		\| \vartheta \|^2_{L^2_x}=(1+\bar{\rho}(t)\epsilon_0^{-1}) \sum_{\omega,j}a^2_{\omega,j} \sum_{m=0,1,2} \lambda^{m-1}\|\psi^{\omega,j}_m (t) \|^2_{H^{1+\mu}_a}.
	\end{equation*}
	By \eqref{np2}, we therefore get
	\begin{equation}\label{k30}
		\begin{split}
			& \| \vartheta \|^2_{L^1_t L^2_x}
			\\
			\lesssim & \left(\int^2_{-2}[1+\bar{\rho}(t)\epsilon_0^{-1}]dt \right) \left( \int^{2}_{-2} \sum_{\omega,j}a^2_{\omega,j}  \sum_{m=0,1,2} \lambda^{2(m-1)}\|\psi^{\omega,j}_m (t) \|^2_{H^{1+\mu}_a} dt \right)
			\\
			\lesssim & \epsilon_0 (\sum_{\omega,j} a^2_{\omega,j})^{\frac12}.
		\end{split}
	\end{equation}
	Due to \eqref{np1}, using \eqref{k26}, \eqref{k28}, and \eqref{k30}, we have proved \eqref{ese1}. Using \eqref{ese2} and \eqref{ese1}, and classical energy estimates for linear wave equation, we obtain \eqref{ese}.
\end{proof}
\subsubsection{\textbf{Matching the initial data}}
Although we have constructed the approximate solutions using superpositions of normalized wave packets, we also need to complete this construction, i.e. matching the initial data for the solutions. %Our goal is to verify it as follows, and we can cite it from \cite{ST} (Proposition 8.7 on page 346).
\begin{proposition}[\cite{ST}, Proposition 8.7]\label{szi}
	Given any initial data $(f_0,f_1) \in H^1 \times L^2$, there exists a function of the form
	\begin{equation*}
		f=\sum_{\omega,j}a_{\omega,j}f^{\omega,j},
	\end{equation*}
	where the function $f^{\omega,j}$ are normalized wave packets, such that
	\begin{equation*}
		P_\lambda f(-2)=P_\lambda f_0, \quad \partial_t P_\lambda f(-2)=P_\lambda f_1.
	\end{equation*}
	Furthermore,
	\begin{equation*}
		\sum_{\omega,j}a_{\omega,j}^2 \lesssim \| f_0 \|^2_{H^1}+ \| f_1 \|^2_{L^2}.
	\end{equation*}
\end{proposition}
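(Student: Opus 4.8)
\emph{Proof strategy.} The whole construction is carried out at the initial time $t=-2$. There the metric $\mathbf{g}$ coincides with the Minkowski metric $\mathbf{m}$ and each $\phi_{\omega,r}(-2,\cdot)$ is constant, so the restriction $g^{\omega,j}:=f^{\omega,j}(-2,\cdot)$ is a genuine spatial wave packet: a smooth bump adapted to the plank $R_{\omega,j}$ (of sides $\lambda^{-1}$ along $\omega$ and $(\epsilon_0\lambda)^{-1/2}$ transversally) whose spatial Fourier transform is, up to rapidly decaying tails, concentrated in the cap of $\{|\xi|\sim\lambda\}$ of aperture $\sim(\epsilon_0/\lambda)^{1/2}$ around $\pm\lambda\omega$. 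First I would reduce to frequency-localized data: every $f^{\omega,j}$ has spatial frequency in $[\lambda/8,8\lambda]$, so it is enough to match $P_\lambda f_0$ and $P_\lambda f_1$, and $\|P_\lambda f_0\|_{L^2}+\lambda^{-1}\|P_\lambda f_1\|_{L^2}\lesssim\lambda^{-1}(\|f_0\|_{H^1}+\|f_1\|_{L^2})$.

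Next I would compute the initial time derivative of a packet. Differentiating $u\theta$ in $t$ and using $\lambda^{-1}(\epsilon_0\lambda)^{1/2}=(\epsilon_0/\lambda)^{1/2}\le\epsilon_0$ for the contribution of $\partial_t\theta$, one gets
\[
\partial_t f^{\omega,j}(-2,\cdot)=-\epsilon_0^{\frac14}\lambda^{-\frac54}\,(\partial_t\phi_{\omega,j})\,T_\lambda\!\big(\delta'(x_\omega-\phi_{\omega,j})\,\theta\big)+O_{L^2}(\epsilon_0).
\]
Since the conormal $dx_\omega-d\phi_{\omega,j}$ is $\mathbf{m}$-null at $t=-2$ we have $\partial_t\phi_{\omega,j}(-2,\cdot)=\mp1$, the sign depending on which sheet of the characteristic cone carries the foliation, so $\partial_t f^{\omega,j}(-2,\cdot)=\pm\partial_{x_\omega}g^{\omega,j}+O_{L^2}(\epsilon_0)=\pm i\Lambda_x g^{\omega,j}+O_{L^2}(\epsilon_0)$ after the further $O((\epsilon_0/\lambda)^{1/2})$ error of replacing the cap symbol by $|\xi|$. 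To be able to prescribe an arbitrary $\partial_t$-datum I would therefore use, in each direction $\omega$, the two families of packets adapted to the outgoing and incoming null sheets; their $t=-2$ restrictions are the same atoms $g^{\omega,j}$, while their time derivatives carry the opposite signs $\pm i\Lambda_x$.

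The matching itself is then a frame expansion plus a contraction. Write the Cauchy datum in half-wave form, $P_\lambda f_0=h_++h_-$ with $h_\pm:=\tfrac12\big(P_\lambda f_0\mp i\Lambda_x^{-1}P_\lambda f_1\big)$, so that prescribing $(P_\lambda f_0,P_\lambda f_1)$ for $P_\lambda f$ becomes, up to the errors above, two decoupled problems $\sum_{\omega,j}a^{\pm}_{\omega,j}g^{\omega,j}=h_\pm$. The atoms $\{g^{\omega,j}\}_{\omega,j}$ form a frame for functions of spatial frequency $\lambda$ --- the standard almost-orthogonal second dyadic decomposition, and it is exactly here that the flatness of $\mathbf{g}$ at $t=-2$ and the constancy of $\phi_{\omega,j}(-2,\cdot)$ are used --- so each problem has a solution with $\sum_{\omega,j}|a^{\pm}_{\omega,j}|^2\lesssim\lambda^2\|h_\pm\|_{L^2}^2\lesssim\|f_0\|_{H^1}^2+\|f_1\|_{L^2}^2$. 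Feeding these amplitudes into $f=\sum_{\omega,j}(a^+_{\omega,j}f^{\omega,j,+}+a^-_{\omega,j}f^{\omega,j,-})$ produces, at $t=-2$, exactly $P_\lambda f_0$ in the function slot and $P_\lambda f_1+E$ in the $\partial_t$ slot with $\|\Lambda_x^{-1}E\|_{\dot{H}^1}\lesssim\epsilon_0(\|f_0\|_{H^1}+\|f_1\|_{L^2})$. Since we are in the regime $\epsilon_0\lambda\gg1$, hence $\lambda\ge\epsilon_0^{-1}$ and all of the above error gains are $\le\epsilon_0\ll1$, the amplitude-to-Cauchy-data map is a small perturbation of the boundedly invertible model map coming from the frame expansion; a Neumann series then corrects $E$ away and yields amplitudes realising $(P_\lambda f_0,P_\lambda f_1)$ exactly, at the cost of only a constant factor in the $\ell^2$ bound. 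This is the required $f$, and the finite-overlap property of the slabs $T_{\omega,j}$ makes it a legitimate superposition of packets.

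\emph{Main obstacle.} The delicate part is not the half-wave algebra but the $\ell^2$ bookkeeping: one must prove that the $t=-2$ restrictions $g^{\omega,j}$, which are localized to $R_{\omega,j}$ and to their frequency caps only up to Schwartz tails, satisfy both Bessel inequalities on frequency-$\lambda$ functions with constants uniform in $\lambda$ and $\epsilon_0$, and that every perturbative error term is dominated by $\epsilon_0$ uniformly in $(\omega,j)$. These are precisely the single-time-slice versions of the almost-orthogonality estimates behind Lemma \ref{SWP} and Proposition \ref{szy}; granted those, the rest is the routine frame-plus-contraction argument sketched above.
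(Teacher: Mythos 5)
The paper does not prove this statement at all: it is quoted verbatim as Proposition 8.7 of Smith--Tataru \cite{ST}, so the only ``reference proof'' to compare against is theirs. Your reconstruction is essentially that argument with one structural substitution. You are right that everything happens at $t=-2$, where (by the extension of $\mathbf{g}$ to the Minkowski metric on $[-2,-\tfrac32]$) the surfaces $\Sigma_{\omega,r}$ are exact null hyperplanes; in fact this buys you more than you use: $\partial_t\phi_{\omega,r}(-2,\cdot)$ is exactly the constant $\mp1$ and $\dot\gamma'_\omega(-2)=0$, so $\partial_t f^{\omega,j}(-2,\cdot)=\mp\partial_{x_\omega}f^{\omega,j}(-2,\cdot)$ with no $O(\epsilon_0)$ error, and the only genuine perturbative loss is the $O(\epsilon_0)$ discrepancy between $\partial_{x_\omega}$ and $i\Lambda_x$ on the frequency cap. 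The half-wave splitting $h_\pm=\tfrac12(P_\lambda f_0\mp i\Lambda_x^{-1}P_\lambda f_1)$ and the use of both orientations $\pm\omega$ are exactly as in \cite{ST}.

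Where you genuinely diverge is in producing the coefficients: you invoke a frame property of the atoms $g^{\omega,j}$ and invert by a Neumann series, whereas \cite{ST} construct the decomposition directly --- factor $P_\lambda h_\pm=T_\lambda\widetilde h_\pm$ using the mollifier $T_\lambda$, apply the second dyadic (angular) partition of unity into sectors of aperture $(\epsilon_0/\lambda)^{1/2}$, slice each sector piece into delta sheets $\delta(x_\omega-c)$ and discretize $c$ at scale $\lambda^{-1}$, and absorb the transverse profile into the (bounded family of) admissible bumps $\theta_0$. This matters for your ``Main obstacle'' paragraph: a Bessel (upper frame) inequality of the type behind Lemma \ref{SWP} and Proposition \ref{szy} controls the synthesis operator but does \emph{not} by itself guarantee that a given frequency-$\lambda$ function admits \emph{any} expansion in the atoms with $\ell^2$ coefficients; for that you need either a lower frame bound (which you never state and which is not what the overlap estimates give) or the constructive decomposition. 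So as written, the step ``the atoms form a frame \dots\ so each problem has a solution with $\sum|a^\pm_{\omega,j}|^2\lesssim\lambda^2\|h_\pm\|_{L^2}^2$'' is the one genuine gap; it is closed most cheaply by replacing the frame/contraction language with the explicit partition-of-unity construction, after which your perturbative correction of the $\partial_t$-slot error (now only the $\partial_{x_\omega}$ versus $i\Lambda_x$ term) goes through as you describe.
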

\subsubsection{\textbf{Overlap estimates}} Since the foliations $\Sigma_{\omega,r}$ varing with $\omega$ and $r$, so a fixed $\Sigma_{\omega,r}$ may intersect with other $\Sigma_{\omega',r'}$. As a result, we should be clear about the number of $\lambda$-slabs which contain two given points in the space-time $[-2,2]\times \mathbb{R}^2$.
\begin{corollary}[\cite{ST}, Proposition 9.2]\label{corl}
	For all points $P_1=(t_1,x_1)$ and $P_2=(t_2,x_2)$ in space-time $\mathbb{R}^{+} \times \mathbb{R}^2$ , and $\epsilon_0 \lambda \geq 1$, the number $N_{\lambda}(P_1,P_2)$ of slabs of scale $\lambda$ that contain both $P_1$ and $P_2$ satisfies the bound
	\begin{equation*}
		\begin{split}
			N_{\lambda}(P_1,P_2)\lesssim & \epsilon_0^{-\frac12} \lambda^{-\frac12} |t_1-t_2|^{-\frac12}.
		\end{split}
	\end{equation*}
\end{corollary}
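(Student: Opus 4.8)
The plan is to follow the geometric counting argument of Smith--Tataru (\cite{ST}, Proposition 9.2), feeding into it the regularity of the characteristic foliation $\bigcup_r \Sigma_{\omega,r}$ already obtained in Proposition \ref{r2}. First I would fix the two space--time points $P_1=(t_1,x_1)$ and $P_2=(t_2,x_2)$ and, for each of the $\sim \epsilon_0^{-1/2}\lambda^{1/2}$ admissible directions $\omega$, count the leaf indices $j$ with $P_1,P_2\in T_{\omega,j}$, then sum over $\omega$. For a fixed $\omega$ this inner count is $O(1)$: by \eqref{pr0} the distinct leaves $\Sigma_{\omega,j}$ stay $\gtrsim \lambda^{-1}$--separated for all $t\in[-2,2]$, and by the definition \eqref{twj} the slabs $T_{\omega,j}$ have transverse extent $(\epsilon_0\lambda)^{-1/2}$ together with the finite-overlap property recorded right after \eqref{twj}, so only boundedly many of them can contain a given point. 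Hence $N_\lambda(P_1,P_2)\lesssim \#\{\omega:\ \exists\, j,\ P_1,P_2\in T_{\omega,j}\}$, and the whole problem is reduced to counting admissible directions.

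Next I would extract the geometric constraint on $\omega$. If $P_1,P_2\in T_{\omega,j}$ then, on the one hand, both points lie within $O(\lambda^{-1})$ of the single leaf $\Sigma_{\omega,j}$, and on the other hand the null geodesic $\gamma_{\omega,j}\subset\Sigma_{\omega,j}$ passes within $(\epsilon_0\lambda)^{-1/2}$, measured in the $x'_\omega$ variable, of $x_1$ at time $t_1$ and of $x_2$ at time $t_2$. Using the pointwise closeness $|\,l_\omega-(\partial_t+\partial_{x_\omega})\,|\lesssim \epsilon_1$ together with the scaled H\"older-$\frac12$ bound for $l_\omega$ across a slab displayed just before Lemma \ref{SWP} (and $\|\bar{\rho}\|_{L^4_t}\lesssim \epsilon_0$, where $\bar{\rho}(t)=\|d\mathbf{g}\|_{C^{\delta_0}_x}$), the curve $\gamma_{\omega,j}(t)$ is a controlled perturbation of the straight line $t\mapsto x_1+(t-t_1)\omega$; and the leaf-membership condition, in the same approximation and via the leaf separation \eqref{pr0} refined by \eqref{pr1}, forces $|\,\omega\cdot(x_1-x_2)-(t_1-t_2)\,|\lesssim \lambda^{-1}$ up to the H\"older error. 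Combining these two facts (and noting that together they give $|x_1-x_2|\approx|t_1-t_2|$), $\omega$ is confined to an arc of $\mathbb{S}^1$ centred at $\pm (x_1-x_2)/|x_1-x_2|$ of angular width $\lesssim (\lambda|t_1-t_2|)^{-1/2}$.

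Finally, since the admissible directions are $\epsilon_0^{1/2}\lambda^{-1/2}$--separated, the number of them lying in such an arc is $\lesssim \epsilon_0^{-1/2}\lambda^{-1/2}|t_1-t_2|^{-1/2}$ whenever this quantity exceeds $1$, and is $O(1)$ otherwise, which yields the stated bound. The main obstacle is precisely the middle step: one must quantify how the low regularity of the defining functions $\phi_{\omega,r}$ and of the null frame coefficients --- their $C^{1,\delta_0}$ (essentially $C^{1,\frac12}$) \emph{tangential} regularity, not their transversal behaviour --- propagates along the geodesic flow, so that the deviation of $\gamma_{\omega,j}$ from its straight-line model stays below the slab's transverse scale $(\epsilon_0\lambda)^{-1/2}$ while the leaf-matching tolerance stays at scale $\lambda^{-1}$. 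This is exactly what the characteristic-surface estimates of Proposition \ref{r2}, through \eqref{pr0} and \eqref{pr1}, are designed to supply, and once they are in hand the remaining bookkeeping is that of \cite{ST}.
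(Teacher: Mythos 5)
The first thing to note is that the paper does not prove this statement at all: it is quoted verbatim from \cite{ST} (Proposition 9.2), and the only supporting material supplied here is the surrounding geometric setup (\eqref{pr0}, \eqref{pr1}, and the geodesic-deviation remark preceding Lemma \ref{SWP}). So your proposal is necessarily being measured against the argument of \cite{ST} rather than against anything written in this paper. Your reduction is reasonable as far as it goes: the inner count over $j$ for fixed $\omega$ is indeed $O(1)$ by the $\gtrsim\lambda^{-1}$ leaf separation and the finite overlap of the tubes, and the two constraints you extract --- $|\omega\cdot(x_1-x_2)-(t_1-t_2)|\lesssim\lambda^{-1}$ from common-leaf membership and a transverse displacement $\lesssim(\epsilon_0\lambda)^{-1/2}$ from common-tube membership --- are the natural ones.

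The gap is in your final step, and it is not a harmless constant. An arc of angular width $(\lambda|t_1-t_2|)^{-1/2}$ populated by directions separated by $\epsilon_0^{1/2}\lambda^{-1/2}$ contains about $(\lambda|t_1-t_2|)^{-1/2}\cdot\epsilon_0^{-1/2}\lambda^{1/2}=\epsilon_0^{-1/2}|t_1-t_2|^{-1/2}$ of them, not $\epsilon_0^{-1/2}\lambda^{-1/2}|t_1-t_2|^{-1/2}$; your stated conclusion does not follow from your stated arc width. The tube constraint does not repair this: it confines $\omega$ to an arc of width $(\epsilon_0\lambda)^{-1/2}|t_1-t_2|^{-1}$, giving a count of $\epsilon_0^{-1}|t_1-t_2|^{-1}$, which on the relevant range $|t_1-t_2|\le 4$ is at least as large as $\epsilon_0^{-1/2}|t_1-t_2|^{-1/2}$. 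So the argument as written yields only $N_\lambda\lesssim\epsilon_0^{-1/2}|t_1-t_2|^{-1/2}$, which is weaker than the statement by a factor of $\lambda^{1/2}$ --- and this is exactly the factor the downstream counting argument cannot absorb: in Proposition \ref{szt} the weaker bound gives $K\lesssim\epsilon_0^{-1/2}M^{2}(\ln\lambda)^{1/2}$ in place of $\epsilon_0^{-1/2}M^{3/2}(\ln\lambda)^{1/2}$, and \eqref{wee2} fails. To close the proof you must identify the additional mechanism in \cite{ST} that shrinks the set of admissible directions by a further $\lambda^{-1/2}$ (the two flat-model constraints you invoke, even combined, do not produce it), or else follow the actual proof of Proposition 9.2 in \cite{ST} rather than its flat-space caricature.
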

After the construction of approximate solutions, we still need to prove the key estimate \eqref{Ase}. 
\subsubsection{\textbf{The proof of \eqref{Ase}}}
To start the proof, let us define $\mathcal{T}=\cup_{\omega,j}T_{\omega,j}$, where $T_{\omega,j}$ is set in \eqref{twj}. We also denote $\chi_{{J}}$ be a smooth cut-off function, and $\chi_J=1$ on a set $J$.
\begin{proposition}\label{szt}%[\cite{ST}]
	Let $t \in [-2,2]$ and
	\begin{equation*}
		f=\sum_{J \in \mathcal{T}}a_{J}\chi_{J}f_{J},
	\end{equation*}
	where $\sum_{J \in \mathcal{T}}a_{J}^2 \leq 1$ and $f_J$ are normalized wave packets in $J$. Then
	\begin{equation}\label{Aswr}
		\|f \|_{L^4_t L^\infty_x} \lesssim \epsilon_0^{-\frac14}  (\ln \lambda)^{\frac14}.
	\end{equation}
\end{proposition}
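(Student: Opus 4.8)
The plan is to estimate the $L^4_t L^\infty_x$ norm of $f$ by a duality/$TT^*$ argument on the wave packets, exactly following Smith--Tataru \cite{ST} (Proposition 9.1 there). The starting point is the pointwise estimate: since each $\chi_J f_J$ is a normalized wave packet localized to a slab $J=T_{\omega,j}$ of thickness $\lambda^{-1}$ in the $x_\omega$-direction and $(\epsilon_0\lambda)^{-1/2}$ in the transverse direction, we have $\|T_\lambda(u\theta)\|_{L^\infty_x}\lesssim \lambda\cdot(\epsilon_0\lambda)^{1/4}$ from the $\delta$-function factor being smoothed at scale $\lambda^{-1}$, hence $\|\chi_J f_J\|_{L^\infty_{t,x}}\lesssim \epsilon_0^{1/4}\lambda^{-5/4}\cdot \lambda^{3/2}(\epsilon_0\lambda)^{0}$... more precisely each individual wave packet is bounded by roughly $\epsilon_0^{-1/4}\lambda^{-1/2}$ in $L^\infty$, and the point is to control how many of them can pile up at a single space-time point.

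First I would fix a time $t$ and a point $x$, and count: by Corollary \ref{corl} (the overlap estimate), the number of slabs of scale $\lambda$ through two points $P_1=(t_1,x_1)$, $P_2=(t_2,x_2)$ is $\lesssim \epsilon_0^{-1/2}\lambda^{-1/2}|t_1-t_2|^{-1/2}$. The strategy is then to write $\|f(t)\|_{L^\infty_x}^2$, or rather to estimate $\|f\|_{L^4_tL^\infty_x}^4 = \|\,|f|^2\,\|_{L^2_tL^\infty_x}^2$, by expanding $|f|^2=\sum_{J,J'} a_Ja_{J'}(\chi_Jf_J)\overline{(\chi_{J'}f_{J'})}$ and using almost-orthogonality together with the overlap bound to sum over pairs $(J,J')$. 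One reduces to a bilinear sum where the key input is $\int_{-2}^{2}\int_{-2}^{2} N_\lambda(P_1,P_2)\,\cdots$; the overlap factor $|t_1-t_2|^{-1/2}$ is integrable in $t_1$ and $t_2$ over $[-2,2]$, and the counting of how many slabs pass through a generic point in a single time slice produces the logarithmic factor $\ln\lambda$ (coming from summing a geometric-type series over the $\sim\log\lambda$ dyadic scales of transverse separation, or equivalently from the number of essentially distinct tubes through a point). Collecting the normalization constants $\epsilon_0^{1/4}\lambda^{-5/4}$ attached to each packet against the $\lambda$-powers from the counting yields the claimed bound $\epsilon_0^{-1/4}(\ln\lambda)^{1/4}$.

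Concretely the steps are: (i) record the pointwise and $L^2$ size of a single normalized wave packet and of $\chi_J f_J$; (ii) set up the $L^2_tL^\infty_x$ estimate for $|f|^2$ via $TT^*$, organizing the sum by the dyadic distance between the geodesic tubes $\gamma_{\omega,j}$, $\gamma_{\omega',j'}$; (iii) invoke Corollary \ref{corl} to bound the number of tubes through a fixed point, and the finite-overlap property of the slabs $T_{\omega,j}$ for fixed $\omega$ (stated in Section \ref{secp4}) to control same-$\omega$ interactions; (iv) perform the $t$-integration using integrability of $|t_1-t_2|^{-1/2}$, picking up the $\ln\lambda$ from the angular/scale sum; (v) use $\sum_J a_J^2\le 1$ and Cauchy--Schwarz to close.

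The main obstacle I expect is step (iii)--(iv): carefully organizing the double sum over wave-packet pairs so that the overlap estimate from Corollary \ref{corl} can be applied with the right powers of $\lambda$ and $\epsilon_0$, and isolating precisely where the single logarithmic loss enters (it must be $\ln\lambda$ and not $\ln^2\lambda$ or worse). This is the technical heart of the Smith--Tataru parametrix construction, and in our setting it goes through verbatim because the geometric inputs we need --- namely the regularity of the foliation $\Sigma_{\omega,r}$ encoded in \eqref{pr0}--\eqref{pr1}, the $\frac12$-Hölder bound on $d\phi_{\omega,r}$, and the slab separation $(9\lambda)^{-1}$ --- have all been established above for the metric $\mathbf g$ under the bootstrap hypothesis $\Re(h,\bv,\bw)\le 2\epsilon_1$. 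Hence the proof reduces to citing \cite{ST} once these ingredients are in place.
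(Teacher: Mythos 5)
You correctly identify the essential geometric input (the overlap bound of Corollary \ref{corl}) and correctly predict that the proof is a transcription of the Smith--Tataru argument, but the mechanism you sketch is not the one that works, and the step you yourself flag as ``the technical heart'' is exactly what is missing. The paper's proof is not a $TT^*$/bilinear expansion of $|f|^2$ in $L^2_tL^\infty_x$ organized by transverse dyadic separation. It is a discrete incidence-counting argument: partition $[-2,2]$ into $\approx\lambda$ intervals of length $\lambda^{-1}$ and use the mean value theorem to reduce $\|f\|_{L^4_tL^\infty_x}^4$ to $\sum_j|f(t_j,x_j)|^4$ over $\approx\lambda$ points $P_j=(t_j,x_j)$ with $|t_i-t_j|\gtrsim\lambda^{-1}|i-j|$; decompose dyadically in the size of the coefficients ($a_J\approx N^{-1/2}$) and in the number $L$ of slabs through each point, so that the claim reduces to the purely combinatorial inequality $L^4MN^{-1}\lesssim\epsilon_0^{-1}\ln\lambda$; then count the number $K$ of point--slab incidence pairs in two ways, with the lower bound $K\gtrsim N^{-1}M^2L^2$ from Cauchy--Schwarz and the upper bound $K\lesssim\epsilon_0^{-1/2}\lambda^{-1/2}\sum_{i\neq j}|t_i-t_j|^{-1/2}$ from Corollary \ref{corl}.

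Two concrete defects follow. First, you misattribute the source of the logarithm: it does not come from ``summing over the $\sim\log\lambda$ dyadic scales of transverse separation,'' nor from mere integrability of $|t_1-t_2|^{-1/2}$; it comes from the discrete harmonic-type sum $\sum_{1\le i\neq j\le M}|i-j|^{-1}\approx M\ln M$ with $M\approx\lambda$ time points spaced $\lambda^{-1}$ apart, after a Cauchy--Schwarz on $\sum|i-j|^{-1/2}$. Without the time discretization at scale $\lambda^{-1}$ this sum is not even set up, and a continuous $TT^*$ version would have to be re-derived from scratch -- you give no indication of how the single power $(\ln\lambda)^{1/4}$ (rather than a worse loss) would emerge from your organization of the pairs $(J,J')$. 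Second, your pointwise bookkeeping for a single packet is internally inconsistent (you assert in one breath $\lesssim\lambda(\epsilon_0\lambda)^{1/4}$, then $\epsilon_0^{1/4}\lambda^{-5/4}\cdot\lambda^{3/2}$, then ``roughly $\epsilon_0^{-1/4}\lambda^{-1/2}$''), whereas the argument only needs the clean statement $|f(t_j,x_j)|\lesssim N^{-1/2}L$ when $P_j$ lies in $L$ slabs each carrying coefficient $\approx N^{-1/2}$. As written, the proposal would not close; you need to supply the discretization and the double-counting step.
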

\begin{proof}
	This proof follows Smith-Tataru's paper \cite{ST}(Proposition 10.1 on page 353).
	
	Let us first make a partition of the time-interval $[-2,2]$. By decomposition, there exists a partition $\left\{ I_j \right\}$ of the  interval $[-2,2]$ into disjoint subintervals $I_j$ such that with the size of each $I_j$, $|I_j| \approx \lambda^{-1}$, and the number of subintervals $j \approx\lambda$. %, and on each $I_j$ satisfies a estimate
	We claim that, by the Mean Value Theorem, there exists a number $t_j$ such that
	\begin{equation}\label{Aswrq}
		\|f \|^4_{L^4_t L^\infty_x} \leq \sum_j \|f \|^4_{L^4_{I_j} L^\infty_x} \leq \sum_j \|f(t_j,\cdot) \|^4_{ L^\infty_x} ,%\lesssim \epsilon_0^{-\frac12}(\ln \lambda)^{\frac12}.
	\end{equation}
	where $t_j$ is located in $I_j$, and $|t_{j+1}-t_j| \approx \lambda^{-1}$. Let us explain the \eqref{Aswrq} as follows. To be simple, we let  $I_{j_0}=[0,\lambda^{-1}]$ and $I_{j_0+1}=[\lambda^{-1},2\lambda^{-1}]$.By mean value theorem, on $I_{j_0}$, we have
	\begin{equation*}%\label{Aswrt}
		\|f \|^4_{L^4_{I_{j_0}} L^\infty_x} = \|f(t_{j_0},\cdot) \|^4_{ L^\infty_x}\lambda^{-1}.%\lesssim \epsilon_0^{-\frac12}(\ln \lambda)^{\frac12}.
	\end{equation*}
	We also have
	\begin{equation*}%\label{Aswrb}
		\|f \|^4_{L^4_{I_{j_0+1}} L^\infty_x} = \|f(t_{j_0+1},\cdot) \|^4_{ L^\infty_x}\lambda^{-1}.%\lesssim \epsilon_0^{-\frac12}(\ln \lambda)^{\frac12}.
	\end{equation*}
	If $t_{j_0} \leq \frac12 \lambda^{-1}$ or $t_{j_0+1} \geq \frac32 \lambda^{-1}$, then $|t_{j_0+1}-t_{j_0}| \geq \frac12 \lambda^{-1}$. Otherwise, $t_{j_0} \in [\frac12 \lambda^{-1}, \lambda^{-1}] $ and $t_{j_0+1} \in [\lambda^{-1}, \frac32 \lambda^{-1}]$. In this case,
	%the estimates
	%\begin{equation*}%\label{Aswrb}
	%	\begin{split}
		%		&\|f \|^2_{L^2_{[\frac12 \lambda^{-1}, \lambda^{-1}]} L^\infty_x} \geq \|f \|^2_{L^2_{[0,\frac12 \lambda^{-1}]} L^\infty_x},
		%		\\
		%		& \|f \|^2_{L^2_{[\lambda^{-1}, \frac32\lambda^{-1}]} L^\infty_x} \geq \|f \|^2_{L^2_{[\frac32 \lambda^{-1},2\lambda^{-1}]} L^\infty_x},
		%	\end{split}
	%\lesssim \epsilon_0^{-\frac12}(\ln \lambda)^{\frac12}.
	%\end{equation*}
	%hold,
	we combine $I_{j_0}, I_{j_0+1}$ together and set $I^*_{j_0}= I_{j_0}\cup I_{j_0+1}$. On the new interval  $I^*_{j_0}$, we can see that
	\begin{equation*}
		\|f \|^4_{L^4_{I^*_{j_0}} L^\infty_x} = 2\lambda^{-1}\|f(t^*_{j_0},\cdot) \|^4_{ L^\infty_x},%,\quad  t_{j^*_0} \in [\frac12\lambda^{-1}, \frac32\lambda^{-1}].
	\end{equation*}
	and
	\begin{equation*}
		2\|f(t^*_{j_0},\cdot) \|^4_{ L^\infty_x} = \|f(t_{j_0},\cdot) \|^4_{ L^\infty_x}+\|f(t_{j_0+1},\cdot) \|^4_{ L^\infty_x}. %,\quad  t_{j^*_0} \in [\frac12\lambda^{-1}, \frac32\lambda^{-1}].
	\end{equation*}
	%We also have I_{j_0+2}$.
	For $f$ is a contituous function, we then get
	\begin{equation*}
		\|f(t^*_{j_0},\cdot) \|_{ L^\infty_x} = \left\{  \frac12( \|f(t_{j_0},\cdot) \|^4_{ L^\infty_x} +\|f(t_{j_0+1},\cdot) \|^4_{ L^\infty_x} ) \right\}^{\frac14}, \qquad t^*_{j_0} \in [\frac12\lambda^{-1}, \frac32\lambda^{-1}].
	\end{equation*}
	When no confusion arise, we still set $t_{j_0}=t^*_{j_0} \in [\frac12\lambda^{-1}, \frac32\lambda^{-1}]$. On the next time-interval $I_{j_0+2}=[2\lambda^{-1}, 3\lambda^{-1}]$, we have
	\begin{equation*}
		\|f \|^4_{L^4_{I_{j_0+2}} L^\infty_x} = \|f(t_{j_0+1},\cdot) \|^4_{ L^\infty_x}\lambda^{-1}, \quad t_{j_0+1} \in [2\lambda^{-1}, 3\lambda^{-1}].
	\end{equation*}
	We thus obtain $|t_{j_0+1}-t_{j_0}| \geq \frac12 \lambda^{-1}$. In this way, we can decompose $[-2,2]$.

	Therefore, to prove \eqref{Aswr}, and combining with \eqref{Aswrq}, we only need to show that
	\begin{equation}\label{wee0}
		\sum_j |f(t_j,x_j)|^4 \lesssim \epsilon_0^{-1} \ln \lambda,
	\end{equation}
	where $x_j$ is arbitrarily chosen. We then set the points $P_j=(t_j,x_j)$.

	Since each points lies in at most $\approx \epsilon_0^{-\frac12}\lambda^{\frac12}$ slabs, so we may assume that $|a_J| \geq \epsilon_0^{\frac12}\lambda^{-\frac12}$. Then we decompose the sum $f=\sum_{J \in \mathcal{T}}a_{J}\chi_{J}f_{J}$ dyadically with respect to the size of $a_J$. %So it suffices for us to prove the result, with $(\log \lambda)^2$ replaced by $\log \lambda$, in the case that we have a sum over exactly $N$ slabs $K$ for which $a_K\approx N^{-\frac12}$.
	We next decompose the sum over $j$ via a dyadic decomposition in the numbers of slabs containing $(t_j,x_j)$. We may assume that we are summing over $M$ points $(t_j,x_j)$, each of which is contained in approximately $L$ slabs. Then $|f(t_j,x_j)|\lesssim N^{-\frac12}L$ and
	\begin{equation}\label{wee1}
		\sum_j |f(t_j,x_j)|^4 \lesssim L^4 M N^{-2}.
	\end{equation}
	Combining \eqref{wee0} with \eqref{wee1}, so we only prove
	\begin{equation}\label{wee2}
		L^4 M N^{-1} \lesssim \epsilon_0^{-1} \ln \lambda .
	\end{equation}
	This is a counting problem, which we will prove it by calculating in two different ways. It's based on the number $K$ of pairs $(i,j)$ for which $P_i$ and $P_j$ are contained in a common slab, counted with multiplicity. For $J \in \mathcal{T}$, we denote by $n_J$ the number of points $P_j$ contained in $J$. Then
	\begin{equation*}
		K \approx \sum_{n_J \geq 2} n_J^2 \gtrsim N^{-1}  (\sum_{n_J \geq 2} n_J)^2.
	\end{equation*}
	Note that $ \sum_{J \in \mathcal{T}} n_J \approx ML$. We consider it into two cases. If
	\begin{equation*}
		\sum_{n_J \geq 2} n_J \leq \sum_{n_J =1 } n_J,
	\end{equation*}
	then $N\approx ML$. In this case, combining with the fact that $L \lesssim \epsilon_0^{-\frac12}\lambda^{\frac12}$, then \eqref{wee2} holds. Otherwise, we have
	\begin{equation}\label{wee3}
		K \gtrsim N^{-1}M^2L^2.
	\end{equation}
	In this case, by using Corollary \ref{corl}, we obtain
	\begin{equation}\label{wee4}
		K \lesssim \epsilon_0^{-\frac12} \lambda^{-\frac12} \sum_{1\leq i,j \leq M, i\neq j} |t_i-t_j|^{-\frac12}. %\lesssim M \epsilon_0^{-1} \lambda \ln \lambda.
	\end{equation}
	The sum is maximized in the case that $t_j$ are close as possible, i.e. if the $t_j$ are consecutive multiples of $\lambda^{-1}$. Therefore, for $M \approx \lambda$, we can update \eqref{wee4} as
	\begin{equation}\label{wee5}
		\begin{split}
			K \lesssim & \epsilon_0^{-\frac12} \lambda^{-\frac12} \lambda^{\frac12}  \sum_{1\leq i,j \leq M, i\neq j} |i-j|^{-\frac12} 
			\\
			\lesssim & \epsilon_0^{-\frac12}  (\sum_{1\leq i,j \leq M, i\neq j} |i-j|^{-1})^{\frac12}  (\sum_{1\leq i,j \leq M, i\neq j} 1 )^{\frac12}
			\\ 
			\lesssim & \epsilon_0^{-\frac12} ( M \ln M )^{\frac12} \cdot ( M^2 )^{\frac12}
			\\
			=& M^{\frac32} \epsilon_0^{-\frac12} ( \ln \lambda )^{\frac12}. 
		\end{split}
	\end{equation}
	Combining \eqref{wee5} and \eqref{wee3}, we get \eqref{wee2}. Therefore, we have finished the proof of Proposition \ref{szt}.
\end{proof}

\section{Proof of Theorem \ref{dingli2}}\label{Sec8}
In this section, we will prove the existence of solutions for Theorem \ref{dingli2} (uniqueness was already established in Corollary \ref{uniq2}). First, we reduce Theorem \ref{dingli2} to the case of smooth initial data with bounded frequency support (see Proposition \ref{DDL2} below). Next, we present a small data result with smoother vorticity than that in Proposition \ref{DDL2} (see Proposition \ref{DDL3} below). Finally, based on Proposition \ref{DDL3}, we develop a semi-classical method\footnote{This approach is inspired by Bahouri-Chemin \cite{BC1,BC2}, Tataru \cite{T1}, and Ai-Ifrim-Tataru \cite{AIT}.} to prove the Strichartz estimate involving the original vorticity in Proposition \ref{DDL2}, thereby completing the proof of Proposition \ref{DDL2}.

\subsection{Proof of Theorem \ref{dingli2}}\label{keypra}
Denote $P_{j}$ being the Littlewood-Paley operator with the frequency $2^j (j \in \mathbb{Z})$. Denote a sequence of initial data $(h_{0j},\bv_{0j})$ satisfy
\begin{equation}\label{Dss0}
	h_{0j}=P_{\leq j}h_0, \quad \bv_{0j}=P_{\leq j}\bv_0,
\end{equation}
where $(h_0, \bv_0)$ is stated as \eqref{czB}, and $P_{\leq j}=\textstyle{\sum}_{k\leq j}P_k$. Following \eqref{Vor}, we define $\bw_{0j}=(w^0_{0j},w^1_{0j},w^2_{0j})$ as
\begin{equation}\label{Qss0}
	w^\alpha_{0j}=\epsilon^{\alpha \beta \gamma}\partial_\beta v_{0j\gamma}.
\end{equation}
Due to \eqref{czB} and \eqref{Qss0}, we can obtain
\begin{equation}\label{Ess0}
	\begin{split}
		\|\bw_{0j}\|_{H^{\frac32}}
		\leq &  M_0  .
	\end{split}
\end{equation}
Similarly, we have
\begin{equation}\label{Essa}
	\begin{split}
		\|\nabla \bw_{0j}\|_{L^{8}}
		\leq &   M_0  .
	\end{split}
\end{equation}
Adding \eqref{Dss0}, \eqref{Ess0}, and \eqref{Essa}, we can see
\begin{equation}\label{pu0}
	\|h_{0j}\|_{H^s}+ \|\bv_{0j}\|_{H^s}+ \| \bw_{0j} \|_{H^{\frac32}}  + \|\nabla \bw_{0j}\|_{L^{8}}\leq CM_0 .
\end{equation}
Using \eqref{HEw}, \eqref{Dss0} and \eqref{Qss0}, we get
\begin{equation}\label{pu00}
	| \bv_{0j}, h_{0j} | \leq C_0, \quad c_s|_{t=0}\geq c_0>0.
\end{equation}

Before we give a proof of Theorem \ref{dingli2}, let us now introduce Proposition \ref{DDL2}.

\begin{proposition}\label{DDL2}
	Let $s$ be stated as in Theorem \ref{dingli2}. Let \eqref{HEw}-\eqref{czB} hold. Let $(h_{0j}, \bv_{0j}, \bw_{0j})$ be stated in \eqref{Dss0} and \eqref{Qss0}. For each $j\geq 1$, consider Cauchy problem \eqref{wrt} with the initial data $(h_{0j},\bv_{0j},  \bw_{0j})$. Then for all $j \geq 1$, there exists two positive constants $T^{*}>0$ and ${M}_{2}>0$ ($T^*$ and ${M}_{2}$ only depends on ${M}_{0},s, C_0$ and $c_0$) such that \eqref{wrt} has a unique solution $(h_{j},\bv_{j})\in C([0,T^*];H_x^s)$, $\bw_{j}\in C([0,T^*];H_x^{\frac32}) $. To be precise,

	\begin{enumerate}
		\item the solution $h_j, \bv_j$ and $\bw_j$ satisfy the energy estimates
		\begin{equation}\label{Duu0}
			\|h_j\|_{L^\infty_{[0,T^*]}H_x^{s}}+\|\bv_j\|_{L^\infty_{[0,T^*]}H_x^{s}}+ \|\bw_j\|_{L^\infty_{[0,T^*]}H_x^{\frac32}}
			+ \|\nabla \bw_j\|_{L^\infty_{[0,T^*]}L_x^{8}} \leq {M}_{2},
		\end{equation}
		and
		\begin{equation}\label{Duu00}
			\|h_j, \bv_j\|_{L^\infty_{[0,T^*]\times \mathbb{R}^2}} \leq 2+C_0.
		\end{equation}
		\item the solution $h_j$ and $\bv_j$ satisfy the Strichartz estimate
		\begin{equation}\label{Duu2}
			\|dh_j, d\bv_j\|_{L^4_{[0,T^*]}L_x^\infty} \leq {M}_{2}.
		\end{equation}
		\item  for $s-\frac{3}{4} \leq r \leq \frac{11}{4}$, consider the following linear wave equation
		\begin{equation}\label{Duu21}
			\begin{cases}
				\square_{{g}_j} f_j=0, \qquad [0,T^*]\times \mathbb{R}^2,
				\\
				(f_j,\partial_t f_j)|_{t=0}=(f_{0j},f_{1j}),
			\end{cases}
		\end{equation}
		where $(f_{0j},f_{1j})=(P_{\leq j}f_0,P_{\leq j}f_1)$ and $(f_0,f_1)\in H_x^r \times H^{r-1}_x$. Then there is a unique solution $f_j$ on $[0,T^*]\times \mathbb{R}^2$. Moreover, for $a\leq r-(s-1)$, the following estimates
		\begin{equation}\label{Duu22}
			\begin{split}
				&\|\left< \nabla \right>^{a} {f}_j,\left< \nabla \right>^{a-1} d{f}_j\|_{L^2_{[0,T^*]} L^\infty_x}
				\leq  {M}_3(\|{f}_0\|_{{H}_x^r}+ \|{f}_1 \|_{{H}_x^{r-1}}),
				\\
				&\|{f}_j\|_{L^\infty_{[0,T^*]} H^{r}_x}+ \|\partial_t {f}_j\|_{L^\infty_{[0,T^*]} H^{r-1}_x} \leq  {M}_3 (\| {f}_0\|_{H_x^r}+ \| {f}_1\|_{H_x^{r-1}}),
			\end{split}
		\end{equation}
		hold, where ${M}_3$ is a universal constant depends on $C_0, c_0, M_0, s$.
	\end{enumerate}
\end{proposition}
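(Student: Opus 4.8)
The proof of Proposition \ref{DDL2} will be a direct transplant of the machinery developed for Theorem \ref{dingli}, applied frequency-by-frequency. Since each initial datum $(h_{0j},\bv_{0j},\bw_{0j})$ built from \eqref{Dss0}--\eqref{Qss0} is smooth (in fact band-limited) and satisfies the uniform bound \eqref{pu0} together with the non-degeneracy \eqref{pu00}, the point is \emph{not} to prove new estimates but to check that the time of existence $T^*$ and all the constants can be chosen uniformly in $j$. First I would invoke Proposition \ref{p3} (or, after the scaling/localization reductions of Section \ref{sec4.2}, Proposition \ref{p1}) with the uniform bound $M_0$ replaced by $CM_0$ from \eqref{pu0}: this already yields, for each fixed $j$, a solution $(h_j,\bv_j,\bw_j)$ on a time interval $[0,T_j]$ with $T_j$ and $M_{2}$ depending only on $CM_0, C_0, c_0, s$ --- hence \emph{independent of $j$}. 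Setting $T^*=T_j$, the energy bounds \eqref{Duu0}, the $L^\infty$ bound \eqref{Duu00} (here the constant $2+C_0$ rather than $1+C_0$ gives room for the frequency truncation), the Strichartz bound \eqref{Duu2}, and the linear estimates \eqref{Duu22} are exactly the specializations of \eqref{A02}, \eqref{SSr}, \eqref{SE1} to this sequence of data. Uniqueness on $[0,T^*]$ is Corollary \ref{uniq2} (or \ref{uniq}) applied to each $j$.

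The second step is to make the uniformity genuinely visible, i.e.\ to track that no constant secretly blows up as $j\to\infty$. The key input is that $\|(h_{0j},\bv_{0j})\|_{H^s}\le CM_0$, $\|\bw_{0j}\|_{H^{3/2}}\le M_0$ and $\|\nabla\bw_{0j}\|_{L^8}\le M_0$ hold \emph{with the same $M_0$ for all $j$} --- this is where the Littlewood--Paley truncation $P_{\le j}$ is harmless, because $P_{\le j}$ is bounded on every $H^\sigma$ and on $L^8$ with norm $\le C$ independent of $j$. Feeding this into Theorem \ref{DW4} gives the propagated energy bound $E(t)\le M_2$ on a $j$-independent interval, provided $\|(dh_j,d\bv_j)\|_{L^4_tL^\infty_x}$ stays under control; and the latter is precisely the Strichartz estimate \eqref{SSr}/\eqref{p303} supplied by Proposition \ref{p3}. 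The bootstrap structure of Section \ref{sec4.3} then closes in the usual way. For the linear estimates \eqref{Duu22} one applies part \eqref{point:3} of Theorem \ref{dingli} (equivalently Proposition \ref{r3}) to the metric $g_j$ built from $(h_j,\bv_j)$, with $r$ in the range $[s-\tfrac34,\tfrac{11}{4}]$ and the admissible exponent $a<r-(s-1)$ coming from the shifted regularity threshold; the truncated data $(f_{0j},f_{1j})=(P_{\le j}f_0,P_{\le j}f_1)$ obey $\|f_{0j}\|_{H^r}+\|f_{1j}\|_{H^{r-1}}\le C(\|f_0\|_{H^r}+\|f_1\|_{H^{r-1}})$ uniformly in $j$, which is what produces the constant $M_3$.

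The one genuinely delicate point --- and the reason this is stated as a separate proposition rather than absorbed into Theorem \ref{dingli} --- is that here the vorticity lives only in $H^{3/2}$, not $H^{3/2+}$, so one cannot directly quote Theorem \ref{dingli} (whose geometric control of the null hypersurfaces in Section \ref{secp4} used $\bw_0\in H^{s_0-1/4}$ with $s_0>\tfrac74$, i.e.\ $\bw_0\in H^{3/2+}$). The resolution is exactly the frequency-truncation trick flagged in the remark after Theorem \ref{dingli2}: the \emph{truncated} datum $\bw_{0j}=P_{\le j}\bw_0$ does belong to $H^{3/2+}$ (indeed to every $H^\sigma$), but its $H^{3/2+\epsilon}$ norm grows like $2^{\epsilon j}$, so Theorem \ref{dingli} applied naively gives a lifespan $T_j\sim 2^{-\epsilon j}\to 0$. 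The main obstacle, therefore, is to prove that in fact $T_j$ can be taken uniform: this is where one must re-run the characteristic-energy and null-frame estimates of Section \ref{secp4} keeping the vorticity only in $H^{3/2}$ throughout (using $\vert\kern-0.25ex\vert\kern-0.25ex\vert\bw\vert\kern-0.25ex\vert\kern-0.25ex\vert_{s_0-1/4,2,\Sigma}$ and Lemma \ref{te20} with $s_0$ pushed down to $s_0=\tfrac74$, i.e.\ the borderline algebra case $H^{1/2}(\Sigma^t)$), and accept a loss of derivatives in the resulting Strichartz estimate --- the ``relaxed'' estimate \eqref{Duu22} with the threshold $a\le r-(s-1)$ rather than $a<r-\tfrac34$ reflects exactly this loss. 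The quantitative form of the loss, summed over the $O(j)$ dyadic blocks via the semiclassical/Bahouri--Chemin argument announced for Section \ref{sec10}, is precisely what is postponed; at the level of Proposition \ref{DDL2} it suffices to record the $j$-uniform existence time and the above estimates for the truncated problems, which follow once the borderline versions of Proposition \ref{r1}, Proposition \ref{r2} and Lemma \ref{te20} are in hand.
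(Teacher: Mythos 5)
There is a genuine gap, and it sits exactly where the proposition's difficulty lies. Your opening step --- invoking Proposition \ref{p3} with $M_0$ replaced by $CM_0$ to get a lifespan $T_j$ ``independent of $j$'' --- is false, and you in fact contradict it yourself in the last paragraph: since $\|\bw_{0j}\|_{H^{3/2+\epsilon}}=\|P_{\le j}\bw_0\|_{H^{3/2+\epsilon}}$ grows like $2^{\epsilon j}$, Theorem \ref{dingli}/Proposition \ref{p3} only yields a lifespan shrinking like $2^{-\epsilon j}$. Having correctly diagnosed this, you then propose the wrong cure: re-running the characteristic-energy and null-frame estimates of Section \ref{secp4} at the borderline $s_0=\tfrac74$ (vorticity exactly in $H^{3/2}$, $H^{1/2}(\Sigma^t)$ on the foliation). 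The paper explicitly states that this cannot be done (``if we apply Smith--Tataru's approach directly, we cannot obtain the Strichartz estimates \dots when $w_0\in H^{3/2}$''), and no ``borderline versions'' of Propositions \ref{r1}, \ref{r2} or Lemma \ref{te20} are proved anywhere. Deferring to them leaves the proposition unproved.

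What the paper actually does is quantitatively different. First, one rescales by $T_j^*=2^{-\delta_1 j}(CM_0)^{-3}$ so that the rescaled truncated data become small in $H^s\times H^{3/2+\delta_1}\times L^8$ and Proposition \ref{DDL3} (the \emph{non}-borderline small-data result) applies; this gives solutions only on the shrinking intervals $[0,T_j^*]$. Second, one proves frequency-localized Strichartz bounds carrying decay in $j$: for $k\ge j$, $\|P_k(dh_j,d\bv_j)\|_{L^4L^\infty_x}\lesssim 2^{-\delta_1 k}2^{-7\delta_1 j}$ by Bernstein, and for $k<j$ one controls the \emph{differences} $P_k\,d(h_{m+1}-h_m)$, $P_k\,d(\bv_{m+1}-\bv_m)$ via $L^2$ stability estimates \eqref{yu0}--\eqref{yu1} combined with the linear Strichartz estimate with loss, obtaining a gain $2^{-\delta_1 k}2^{-6\delta_1 m}$. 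Third --- and this is the heart of the proposition, not something ``postponed'' --- one telescopes $dh_j=P_{\ge j}dh_j+\sum_{k<j}\sum_{m\ge k}P_k\,d(h_{m+1}-h_m)+\sum_{k<j}P_k\,dh_k$ and runs an induction over extension steps from $[0,T_j^*]$ up to the uniform time $[0,T^*_{N_0}]$; the number of steps grows like $2^{\delta_1 l}$ but is beaten by the $2^{-6\delta_1 m}$ decay, so the $L^1_tL^\infty_x$ norm of $(dh_j,d\bv_j)$ stays bounded and Theorem \ref{DW4} closes the energy at each step. (Also, the semiclassical summation you attribute to Section \ref{sec10} is in fact the content of Section \ref{Sec8}, i.e.\ of this very proposition; Section \ref{sec10} treats the stiff case.) Without the rescaling-plus-extension induction, your argument does not produce a $j$-uniform $T^*$.
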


Based on Proposition \ref{DDL2}, we are ready to prove Theorem \ref{dingli2}.
\medskip\begin{proof}[Proof of Theorem \ref{dingli2} by using Proposition \ref{DDL2}] 
	%We will divide the proof into four steps, The following Step 1 and Step 2 is for Point 1 and Point 2 in Theorem \ref{dingli3}. Step 3 is for giving a proof to Point 3. 
	
	%\textit{Step 1: Obtaining a solution of Theorem \ref{dingli3} in a weaker spaces by taking a limit.}
	If we set $\bU_j=(p(h_j), \mathrm{e}^{-h_j}\bv_j)$, by Lemma \ref{QH} and \ref{Wrs}, for $j \in \mathbb{N}^{+}$ we have
	\begin{equation*}
			A^0 (\bU_j) \partial_t \bU_j+ A^i (\bU_j) \partial_i \bU_j=0,
	\end{equation*}
	and
		\begin{equation*}
			\partial_t w^\alpha_j + (v^0_j)^{-1} v^i_j \partial_i w^\alpha_j = (v^0_j)^{-1} w^\kappa_j \partial^\alpha v_{j \kappa} - (v^0_j)^{-1} w^\alpha_j \partial_\kappa v^\kappa_j.
	\end{equation*}
	Thus, for any $j, l \in \mathbb{N}^{+}$, we obtain
	\begin{equation}\label{ur1}
		\begin{split}
			A^0 (\bU_j) \partial_t (\bU_j-\bU_l)+ A^i (\bU_j) \partial_i (\bU_j-\bU_l)=&-\left\{   A^\alpha (\bU_j)-A^\alpha (\bU_l) \right\} \partial_\alpha \bU_l,
		\end{split}
	\end{equation}
	and
	\begin{equation}\label{ur2}
		\begin{split}
			& \partial_t ( w^\alpha_j -w^\alpha_l ) + (v^0_j)^{-1} v^i_j \partial_i ( w^\alpha_j - w^\alpha_l )  
			\\
			= & - \left\{  (v^0_j)^{-1} v^i_j - (v^0_l)^{-1} v^i_l \right\} \partial_i w^\alpha_l +
			 \left\{  (v^0_j)^{-1} w^\kappa_j - (v^0_l)^{-1} w^\kappa_l \right\} \partial^\alpha v_{j \kappa} 
			 \\
			 & + (v^0_l)^{-1} w^\kappa_l  \partial^\alpha ( v_{j \kappa} - v_{l \kappa} )
			-\left\{  (v^0_j)^{-1} w^\alpha_j - (v^0_l)^{-1} w^\alpha_l \right\}   \partial_\kappa v^\kappa_j
			- (v^0_l)^{-1} w^\alpha_l     \partial_\kappa ( v^\kappa_j -  v^\kappa_l ).
		\end{split}
	\end{equation}
	Due to \eqref{ur1} and \eqref{ur2}, using \eqref{Duu0}-\eqref{Duu2}, we can show that
	\begin{equation*}
		\| \bU_{j}-\bU_{l}\|_{L^\infty_{[0, T^*]}H^1_x}+\| \bw_{j}-\bw_{l}\|_{L^\infty_{[0, T^*]}H^{\frac12}_x} \leq C_{_2} (\| \bv_{0j}-\bv_{0l}\|_{H^{s}}+ \|h_{0j}-h_{0l}\|_{H^{s}}+ \| \bw_{0j}-\bw_{0l}  \|_{H^{\frac32}}).
	\end{equation*}
	Here $C_{2}$ is a positive constant depending on ${M}_2$. By Lemma \ref{jiaohuan0}, so we get
	\begin{equation*}
		\| \bv_{j}-\bv_{l},h_{j}-h_{l}\|_{L^\infty_{[0, T^*]}H^1_x}+\| \bw_{j}-\bw_{l}\|_{L^\infty_{[0, T^*]}H^{\frac12}_x} \leq C_{2} (\| \bv_{0j}-\bv_{0l}\|_{H^{s}}+ \|h_{0j}-h_{0l}\|_{H^{s}}+ \| \bw_{0j}-\bw_{0l}  \|_{H^{\frac12}}).
	\end{equation*}
	%Using \eqref{fc} and \eqref{sq} again, we also have
	%\begin{equation*}
	%	\|\partial_t ( \bU_{j}-\bU_{l} )\|_{L^\infty_{[0, T^*]}L^2_x}+\|\partial_t (  \bw_{j}-\bw_{l} )\|_{L^\infty_{[0, T^*]}L^2_x} \leq C_{C_*} (\| \bv_{0j}-\bv_{0l}\|_{H^{s}}+ \|\rho_{0j}-\rho_{0l}\|_{H^{s}}+ \| \bw_{0j}-\bw_{0l}  \|_{H^{2}}).
	%\end{equation*}
	Thus, $\{(h_{j}, \bv_{j}, \bw_{j})\}_{j\in \mathbb{N}^+}$ is a Cauchy sequence in $H^1_x\times H^1_x \times H^{\frac12}_x$. Let the limit be $(h, \bv, \bw)$. Then
	\begin{equation}\label{rwe}
		\begin{split}
			(h_{j}, \bv_{j}, \bw_{j})\rightarrow & (h, \bv, \bw) \quad \quad \ \ \quad \text{in} \ \ H_x^{1} \times H_x^{1} \times H_x^{\frac12}.
		\end{split}
	\end{equation}
	By using \eqref{Duu0}, a subsequence of $\{(h_{j}, \bv_{j}, \bw_{j})\}_{j\in \mathbb{N}^+}$ is weakly convergent. Therefore, if $m\rightarrow \infty$, then
	\begin{equation}\label{rwe0}
		\begin{split}
			(h_{j_m}, \bv_{j_m}, \bw_{j_m}, \nabla \bw_{j_m})\rightharpoonup & (h,  \bv, \bw, \nabla \bw) \quad \qquad \ \ \text{in} \ \ H_x^{s} \times H_x^{s} \times H_x^{\frac32} \times L^8_x.
		\end{split}
	\end{equation}
	Due to \eqref{rwe} and \eqref{rwe0}, then $(h, \bv,\bw)$ is a strong solution of \eqref{wrt} and $(h, \bv,\bw,\nabla \bw)\in H_x^{s} \times H_x^{s} \times H_x^{\frac32}\times L^8_x$. Also, points (1), (2), and (3) of Theorem \ref{dingli2} hold. Therefore, we have finished the proof of Theorem \ref{dingli2}.
	%By \eqref{rwe}, \eqref{rwe0} and interpolation formula, we can obtain
	%\begin{equation}\label{rwe1}
	%	(\bv_{j_m}, \rho_{j_m})\rightarrow( \bv, \rho) \quad \text{in} \ \ H_x^{a}(0 \leq a <s), \quad \bw_{j_m} \rightarrow \bw \quad \text{in} \ \ H_x^{b} (0\leq b<2).
	%\end{equation}
	%By \eqref{rwe1} and \eqref{fc0s}, \eqref{fc1s} we also have
	%\begin{equation}\label{rwe2}
	%	\begin{split}
		%		& (\partial_t\bv_{j_m}, \partial_t\rho_{j_m})\rightarrow(\partial_t \bv, \partial_t \rho) \ \qquad \text{in} \ \ H_x^{\alpha}(0 \leq \alpha <s-1),
		%		\\
		%		& \partial_t\bw_{j_m} \rightarrow \partial_t\bw \qquad \qquad \qquad \qquad \text{in} \ \ H_x^{\beta} (0 \leq \beta<1).
		%	\end{split}
	%\end{equation}	
	
\end{proof}
At this stage, all that remains is to prove Proposition \ref{DDL2}. We will postpone this proof until Section \ref{keypro}. Instead, we turn to introduce Proposition \ref{DDL3}, which, compared to Proposition \ref{DDL2}, provides a small-data existence result with a smoother vorticity.
\subsection{Proposition \ref{DDL3} for small data}
By the finite propagation speed of system \eqref{Wrs}, we denote $c>0$ as the largest speed.  
\begin{proposition}\label{DDL3}
	Assume $s\in (\frac74,\frac{15}{8}], \delta\in (0, s-\frac74)$, $\delta_1=\frac{s-\frac74}{10}$ and \eqref{a0} hold. For each small, smooth initial data $(h_0, \bv_0, \bw_0)$ supported in $B(0,c+2)$ which satisfies
	\begin{equation}\label{DP30}
		\begin{split}
			&\|h_0 \|_{H^{s}} + \|\bv_0 \|_{H^{s}}+   \| \bw_0\|_{H^{\frac32+\delta_1}}+ \|\nabla \bw_0\|_{L^{8}}  \leq \epsilon_3,
		\end{split}
	\end{equation}
	there exists a smooth solution $(h, \bv, \bw)$ to \eqref{wrt} on $[-2,2] \times \mathbb{R}^2$ satisfying
	\begin{equation}\label{DP31}
	\|h\|_{L^\infty_{[-2,2]}H_x^{s}} +	\|\bv\|_{L^\infty_{[-2,2]}H_x^{s}}+ \|\bw\|_{L^\infty_{[-2,2]}H_x^{\frac32+\delta_1}}+ \|\nabla \bw\|_{L^\infty_{[-2,2]}L_x^{8}} \leq \epsilon_2.
	\end{equation}
	Furthermore, the solution satisfies the following properties

	\begin{enumerate}
		\item dispersive estimate for $h$, $\bv$, and $\bv_+$
		\begin{equation}\label{DP32}
			\|d h, d \bv, d \bv_+\|_{L^4_{[-2,2]} C^{\delta}_x} \leq \epsilon_2.
		\end{equation}

		\item For any $t_0 \in [-2,2]$, let $f$ satisfy equation
		\begin{equation}\label{ppp}
			\begin{cases}
				&\square_{g} f=0,
				\\
				&(f,\partial_t f)|_{t=t_0}=(f_0,f_1).
			\end{cases}
		\end{equation} For each $1 \leq r \leq s+1$, the Cauchy problem \eqref{ppp} is well-posed in $H_x^r \times H_x^{r-1}$, and the following estimate holds:
		\begin{equation}\label{DP33}
			\|\left< \nabla \right>^a f\|_{L^4_{[-2,2]} L^\infty_x} \lesssim  \| f_0\|_{H_x^r}+ \| f_1\|_{H_x^{r-1}},\quad \ a<r-\frac34,
		\end{equation}
		and the same estimates hold if we replace $\left< \nabla \right>^a$ by $\left< \nabla \right>^{a-1}d$.
	\end{enumerate}
\end{proposition}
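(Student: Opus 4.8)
\textbf{Proof plan for Proposition \ref{DDL3}.} The plan is to run essentially the same bootstrap machinery already developed for Theorem \ref{dingli} (Propositions \ref{p1}--\ref{p4}), but now tracking the vorticity in the slightly higher space $H^{\frac32+\delta_1}_x$ rather than $H^{s_0-\frac14}_x$. Since $\frac32+\delta_1 = \frac32 + \tfrac{s-7/4}{10} < s_0-\tfrac14$ for an admissible choice of $s_0$, this is in fact a \emph{stronger} assumption on $\bw_0$ than in Definition \ref{fh}, so all the estimates proved in Sections \ref{sec4.3}--\ref{Ap} remain valid verbatim with $s_0-\tfrac14$ replaced by $\frac32+\delta_1$ wherever the vorticity norm appears. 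Concretely, I would first set up the truncated metric $\mathbf g$ as in \eqref{AMd3}, define the family $\mathcal H$ and the functional $\Re$ exactly as in \eqref{500} (this requires $\frac32+\delta_1 > \frac74$, i.e.\ $\delta_1>\frac14$ — so here one must instead keep the functional at level $s_0-\tfrac14$ for some fixed $s_0\in(\frac74,\frac{15}{8}]$, and only upgrade the \emph{a priori} control of $\bw$; I would state this carefully at the outset).

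The core steps, in order, are: (i) the energy estimate of Theorem \ref{DW4}, applied with $s'=\tfrac32+\delta_1+\tfrac14$ in place of $s_0$ in the statement (the hypothesis $\tfrac74\le s'\le s\le 2$ must be checked — for $\delta_1=\frac{s-7/4}{10}$ one has $s'=\frac74+\delta_1+\frac{s-7/4}{?}$, which lies strictly between $\frac74$ and $s$, so this is fine), yielding $\|h,\bv\|_{H^s_x}+\|\bw\|_{H^{3/2+\delta_1}_x}+\|\nabla\bw\|_{L^8_x}\lesssim\epsilon_3$ on $[-2,2]$ from the smallness \eqref{DP30}; (ii) the characteristic energy estimates of Proposition \ref{r1}, Corollary \ref{vte}, and Lemma \ref{te20}, which only need $\bw\in\mathcal H$ and $\Re\le 2\epsilon_1$ and hence carry over unchanged; (iii) the connection-coefficient and curvature estimates of Corollary \ref{Rfenjie} and Lemma \ref{chi}, giving Proposition \ref{r2}; (iv) the wave-packet Strichartz estimate Proposition \ref{A1}, hence Proposition \ref{r5}, hence Proposition \ref{r3}, which gives the linear Strichartz bound \eqref{lw1} and thus \eqref{DP33}; (v) the Strichartz estimates for $h$ and $\bv_+$ via Lemma \ref{yuy} and Lemma \ref{yux} applied to \eqref{wag0}, as in Proposition \ref{r6}, together with the elliptic bound \eqref{pp3b} for $\bv_-$ (here the extra $\delta_1$ of vorticity regularity is exactly what feeds $\|\bv_-\|_{L^4_tC^\delta_x}\lesssim \|\bw\|_{L^2_tH^{5/4+2\delta}_x}$ with $\frac54+2\delta\le\frac32+\delta_1$, which is the key quantitative inequality and must be verified from $\delta<s-\frac74$ and the definition of $\delta_1$), producing \eqref{DP32}; (vi) assembling (i)--(v) into the bootstrap Proposition \ref{p4} and then the continuity-in-$\gamma$ argument of the proof of Proposition \ref{p1} to pass from the trivial data to the given data, yielding \eqref{DP31}.

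The main obstacle I anticipate is purely bookkeeping rather than conceptual: one must check that every inequality in Sections \ref{nee} and \ref{sec4.3}--\ref{Ap} that was stated with the parameter $s_0$ (and the vorticity space $H^{s_0-1/4}_x$, the curvature bound at Sobolev level $s_0-\tfrac54$, the algebra property $H^{s_0-5/4}_{x'}(\Sigma^t)$, etc.) still closes when the vorticity is instead only assumed in $H^{3/2+\delta_1}_x$. The safest route is to \emph{not} change the functional $\Re$ or the space $\mathcal H$ at all — fix any $s_0\in(\frac74,\frac{15}{8}]$ with $s_0-\frac14\le\frac32+\delta_1$ (possible since $\delta_1>0$), so that the hypothesis $\|\bw_0\|_{H^{3/2+\delta_1}}\le\epsilon_3$ implies the weaker $\|\bw_0\|_{H^{s_0-1/4}}\le\epsilon_3$ used in Proposition \ref{p1}; then the entire apparatus of Section \ref{sec4.3} applies off the shelf, and the only genuinely new point is propagating the higher norm $\|\bw\|_{H^{3/2+\delta_1}_x}$ in Step (i) via Theorem \ref{DW4} (which is stated for a range of exponents $s'\le s\le 2$ and so already accommodates this) and checking the elliptic inequality in Step (v). I expect the verification of Step (v)'s inequality $\frac54+2\delta\le\frac32+\delta_1$, i.e.\ $\delta\le\frac18+\frac{\delta_1}{2}$, against the hypothesis $\delta\in(0,s-\frac74)$ and $\delta_1=\frac{s-7/4}{10}$, to be the one place where a small adjustment of constants (shrinking $\delta$, or enlarging $\delta_1$ within the allowed range) may be needed, and I would make that adjustment explicit.
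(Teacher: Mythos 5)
Your ``safest route'' is exactly the paper's proof: Proposition \ref{DDL3} is obtained by invoking Proposition \ref{p1} with $s_0=\frac74+\delta_1$ (so that $s_0-\frac14=\frac32+\delta_1$ and $\frac74<s_0<s$), with no re-running of the bootstrap. The constant adjustment you anticipate in step (v) is not needed, since $\delta<s-\frac74\le\frac18$ already gives $\frac54+2\delta<\frac32<s_0-\frac14$, which is all that \eqref{key} requires.
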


\begin{proof}
	For $\frac74+\delta_1<s$, by replacing the regularity exponents in Proposition \ref{p1} to $s=s, s_0=\frac74+\delta_1$, the conclusion of Proposition \ref{DDL3} follows immediately.
\end{proof}
\begin{remark}
	This Strichartz estimates \eqref{DP33} is stronger than \eqref{Duu2}, since the regularity of vorticity in Proposition \ref{DDL3} is higher than that in Proposition \ref{DDL2}.
\end{remark}

\subsection{Proof of Proposition \ref{DDL2}}\label{keypro}
To prove Proposition \ref{DDL2}, we divide the proof into three parts. First, as presented in subsection \ref{esess}, we obtain a solution to Proposition \ref{DDL2} over a short time interval. Second, we derive robust Strichartz estimates for the low- and mid-frequency components of the solutions within these short intervals, albeit with a loss of derivatives, as demonstrated in subsection \ref{esest}. Finally, in subsection \ref{finalk}, we extend these solutions to a regular time interval. Meanwhile, the solutions to the linear wave equations can also be extended to this regular time interval, as shown in subsection \ref{finalq}.
\subsubsection{Energy estimates and Strichartz estimates on a short time-interval.} \label{esess}
Take the scaling
\begin{equation*}
	\begin{split}
		& \widetilde{h}_{0j}=\rho_{0j}(Tt,Tx), \quad  \widetilde{\bv}_{0j}={\bv}_{0j}(Tt,Tx).
	\end{split}
\end{equation*}
Referring to \eqref{Vor}, we define $\widetilde{\bw}_{0j}=(\widetilde{w}^0_{0j}, \widetilde{w}^1_{0j}, \widetilde{w}^2_{0j})$ as
\begin{equation*}
	\widetilde{w}^\alpha_{0j}=\epsilon^{\alpha \beta \gamma} \partial_\beta \widetilde{v}_{0j\gamma}.
\end{equation*}
Thus, we can derive
\begin{equation}\label{Dss2}
	\begin{split}
		\| \widetilde{h}_{0j} \|_{\dot{H}^{s}} + \| \widetilde{\bv}_{0j} \|_{\dot{H}^{s}}    \leq  & T^{s-1}\| ({h}_{0j} \|_{\dot{H}^{s}} + \| {\bv}_{0j} \|_{\dot{H}^{s}} ),
		\\
		\| \widetilde{\bw}_{0j}\|_{\dot{H}^{\frac32+\delta_1}} \leq  & T^{\frac12+\delta_{1}} \|{\bw}_{0j} \|_{\dot{H}^{\frac32+\delta_1}}
		\\
		\leq & T^{\frac12+\delta_{1}} 2^{\delta_{1} j }\|{\bw}_{0j} \|_{\dot{H}^{\frac32}}.
	\end{split}
\end{equation}
Similarly, we obtain
\begin{equation*}\label{Dssa}
	\begin{split}
		\| \nabla \widetilde{\bw}_{0j}\|_{L^{8}} 
		\leq & T^{\frac74} \|\nabla {\bw}_{0j} \|_{L^{8}}.
	\end{split}
\end{equation*}
Define
\begin{equation}\label{DTJ}
	T^*_j=2^{-\delta_1j}(CM_0)^{-3},
\end{equation}
where $CM_0$ is from \eqref{pu0}. Taking $T$ in \eqref{Dss2} as $T^*_j$, for $s>\frac74$,  \eqref{Dss2} yields
\begin{equation*}\label{pp7}
	\begin{split}
		& \| \widetilde{h}_{0j}\|_{\dot{H}^{s}} + \| \widetilde{\bv}_{0j} \|_{\dot{H}^{s}} +  \| \widetilde{\bw}_{0j}\|_{\dot{H}^{\frac32+\delta_1}}+ \|\nabla \widetilde{\bw}_{0j}\|_{L^{8}}  \leq 2^{-\frac{\delta^2_1}2 j}( 1+CM_0 )^{-\frac12} .
	\end{split}
\end{equation*}
Note $\| \widetilde{\bv}_{0j}\|_{L^\infty}\leq  \|{\bv}_{0j}\|_{L^\infty}$ and $\| \widetilde{h}_{0j}\|_{L^\infty}  \leq  \|{h}_{0j}\|_{L^\infty} $. Due to \eqref{pu00}, we have
\begin{equation*}\label{pp60}
	\begin{split}
	\| \widetilde{h}_{0j}\|_{L^\infty}  +	\| \widetilde{\bv}_{0j}\|_{L^\infty}  \leq  \|{h}_{0j}\|_{L^\infty} +  \|{\bv}_{0j}\|_{L^\infty}  \leq  C_0.
	\end{split}
\end{equation*}
For a small parameter $\epsilon_3$ stated in Proposition \ref{DDL3}, we choose $N_0=N_0(s,M_0,c_0,C_0)$ such that
\begin{equation}\label{pp8}
	\begin{split}
		& 2^{-\frac{\delta^2_1 }2N_0 } ( 1+CM_0 )^{-\frac12} \ll \epsilon_3,
		\\
		&  2^{-\delta_{1} N_0 } (1+C_*)^3\{ 1+ {C^3_*} (CM_0)^{-3}(1+ CM_0)^{-3} \} \leq 1 ,
		\\
		&  C2^{-\frac{3}{4}\delta_1N_0}(CM_0)^{-\frac94} ( 1+CM_0 )^3  [\frac{1}{3}(1-2^{-\delta_{1}})]^{-2} \leq 2 .
	\end{split}	
\end{equation}
Above, $C_*$ is denoted by
\begin{equation}\label{Cstar}
	\begin{split}
		C_*=CM_0\mathrm{e}^2\exp\{  CM_0 \textrm{e}^2 \}  .
	\end{split}	
\end{equation}
Therefore, for $j \geq N_0$, we have
\begin{equation}\label{Dss3}
	\begin{split}
		\| \widetilde{h}_{0j}\|_{\dot{H}^{s}}+\| \widetilde{\bv}_{0j} \|_{\dot{H}^{s}}+\| \widetilde{\bw}_{0j}\|_{\dot{H}^{\frac32+\delta_1}}+\| \nabla \widetilde{\bw}_{0j}\|_{L^8}  \leq  \epsilon_3.
	\end{split}
\end{equation}
Since the estimate \eqref{Dss3} is formulated in terms of homogeneous norms, we need to use physical localization techniques to extend these bounds to the inhomogeneous case.

Define a smooth function $\chi\in C_0^\infty(\mathbb{R}^2)$ supported in $B(0,c+2)$, and $\chi$ equals $1$ in $B(0,c+1)$. Given $y \in \mathbb{R}^2$, we define the localized initial data for the velocity and density near $y$:
\begin{equation}\label{yyy0}
	\begin{split}
		\bar{\bv}_{0j}(x)=&\chi(x-y)\left( \widetilde{\bv}_{0j}(x)- \widetilde{\bv}_{0j}(y)\right),
		\\
		\bar{h}_{0j}(x)=&\chi(x-y)\left( \widetilde{h}_{0j}(x)-\widetilde{h}_{0j}(y)\right).
	\end{split}
\end{equation}
Referring to \eqref{Vor}, we set
\begin{equation}\label{yyy1}
	\bar{w}_{0j}=\epsilon^{\alpha \beta \gamma} \partial_\beta \bar{v}_{0j\gamma}.
\end{equation}
Taking advantage of \eqref{Dss3}, \eqref{yyy0} and \eqref{yyy1}, we obtain
\begin{equation}\label{sS4}
	\begin{split}
		& \| \bar{h}_{0j}\|_{{H}^{s}}+ \| \bar{\bv}_{0j} \|_{{H}^{s}}+\| \bar{\bw}_{0j} \|_{{H}^{\frac32+\delta_1}}+\| \nabla \bar{\bw}_{0j} \|_{{L}^{8}}  
		\\
		\leq  & C(\| \widetilde{h}_{0j}\|_{\dot{H}^{s}}+ \| \widetilde{\bv}_{0j} \|_{\dot{H}^{s}}+\| \widetilde{\bw}_{0j} \|_{\dot{H}^{\frac32+\delta_1}}+\| \nabla \widetilde{\bw}_{0j} \|_{L^{8}})
		\\
		\leq & \epsilon_3.
	\end{split}
\end{equation}
Using Proposition \ref{DDL3}, for every $j$, there is a smooth solution $(\bar{h}_j, \bar{\bv}_j, \bar{\bw}_j)$ on $[-2,2]\times \mathbb{R}^2$ satisfying
\begin{equation}\label{yz0}
	\begin{cases}
		\square_{\widetilde{g}_j} \bar{h}_j=\widetilde{\mathcal{D}}, 
		\\
		\square_{\widetilde{g}_j} \bar{v}_j^\alpha=-\widetilde{c}^2_s \widetilde{\Theta} \epsilon^{\alpha \beta \gamma} \partial_\beta \bar{w}_\gamma +\widetilde{{Q}}^\alpha,
		\\
		( {\bar v}^\kappa_j+\tilde{v}^\kappa_{0j}(y) ) \partial_\kappa \bar{\bw}^\alpha_j= \bar{w}_j^\kappa \partial^\alpha \bar{v}_{j\kappa} - \bar{w}_j^\alpha \partial_\kappa \bar{v}_j^{\kappa} ,
		\\
		(\bar{h}_j, \bar{\bv}_j, \bar{\bw}_j)|_{t=0}=(\bar{h}_{0j}, \bar{\bv}_{0j}, \bar{\bw}_{0j}).
	\end{cases}
\end{equation}
Above, the quantities $\widetilde{c}^2_s$ and $\widetilde{{g}_j}$ are defined by
\begin{equation}\label{QRY}
	\begin{split}
		\widetilde{c}^2_s &= c^2_s (\bar{h}_j+\tilde{h}_{0j}(y)),
		\\
		\widetilde{g}_j &=g({\bar h}_j+\tilde{h}_{0j}(y),  {\bar\bv}_j+\tilde{\bv}_{0j}(y) ),
		%\\
		%\widetilde{\mathbf{T}} &=\partial_t+ ({\bar\bv}_j+\tilde{\bv}_{0j}(y))\cdot \nabla,
	\end{split}
\end{equation}
and $\widetilde{{Q}}^{\alpha}, \alpha=0,1,2$ and $\widetilde{\mathcal{D}}$ have the same formulations with ${Q}^{\alpha}, \mathcal{D}$ by replacing $(h, {\bv})$ to $(\bar{\rho}_j+\tilde{\rho}_0(y), \bar{\bv}_j+\tilde{\bv}_0(y))$. Using Proposition \ref{DDL3} again, on $[-2,2]\times \mathbb{R}^2$, we infer
\begin{equation*}\label{Dsee0}
\|\bar{h}_j\|_{L^\infty_{[-2,2]}H_x^{s}}+	\|\bar{\bv}_j\|_{L^\infty_{[-2,2]}H_x^{s}}+  \| \bar{\bw}_j \|_{L^\infty_{[-2,2]}H_x^{\frac32+\delta_1}}+ \|\nabla \bar{\bw}_j \|_{L^\infty_{[-2,2]}L_x^{8}} \leq \epsilon_2,
\end{equation*}
and
\begin{equation*}\label{Dsee1}
	\|d\bar{h}_j , d\bar{\bv}_j \|_{L^4_{[-2,2]}C^{\delta}_x} \leq \epsilon_2.
\end{equation*}
Furthermore, using \eqref{DP33}, for $1\leq r \leq s+1$, the linear equation\footnote{Here $\widetilde{g}_j$ is given by \eqref{QRY}.}
\begin{equation}\label{Dsee2}
	\begin{cases}
		&\square_{ \widetilde{g}_j } F=0,\qquad \qquad \ \ \qquad [-2,2]\times \mathbb{R}^2,
		\\
		&({F}, \partial_t {F})|_{t=t_0}=({F}_0, {F}_1), \ \ \ t_0 \in [-2,2],
	\end{cases}
\end{equation}
admits a solution ${F} \in C([-2,2],H_x^{r})\times C^1([-2,2],H_x^{r-1})$. Moreover, for $k<r-1$, the following estimate holds\footnote{For all $j$, the initial norm of the initial data \eqref{sS4} is uniformly controlled by the same small parameter $\epsilon_3$, and the regularity of the initial data \eqref{sS4} only depends on $s$, so the constant in \eqref{3s1} is uniform for all $\widetilde{{g}}$ depending on $j$.}:
\begin{equation}\label{3s1}
	\begin{split}
		\|\left< \nabla \right>^{k}F, \left< \nabla \right>^{k-1}d{F}\|_{L^4_{[-2,2]} L^\infty_x} \leq  & C(\| {F}_0\|_{H_x^r}+ \| {F}_1\|_{H_x^{r-1}} ).
	\end{split}
\end{equation}
Note \eqref{yz0}. Then the function $(\bar{h}_j+\widetilde{h}_{0j}(y),  \bar{\bv}_j+\widetilde{\bv}_{0j}(y),  \bar{\bw}_j)$ is also a solution of the following system
\begin{equation*}\label{yz1}
	\begin{cases}
		\square_{\widetilde{g}_j} ( \bar{h}_j+\widetilde{h}_{0j}(y) )=\widetilde{\mathcal{D}}, 
		\\
		\square_{\widetilde{g}_j} (\bar{v}^\alpha _j+\widetilde{v}^\alpha_{0j}(y) )=-\widetilde{c}^2_s \widetilde{\Theta} \epsilon^{\alpha \beta \gamma} \partial_\beta \bar{w}_\gamma +\widetilde{{Q}}^\alpha,
		\\
		( {\bar v}^\kappa_j+\tilde{v}^\kappa_{0j}(y) ) \partial_\kappa \bar{\bw}^\alpha_j= \bar{w}_j^\kappa \partial^\alpha \bar{v}_{j\kappa} - \bar{w}_j^\alpha \partial_\kappa \bar{v}_j^{\kappa} ,
		\\
		(\bar{\bv}_j+\widetilde{\bv}_{0j}(y), \bar{\rho}_j+\widetilde{\rho}_{0j}(y),  \bar{w}_j )|_{t=0}=(\widetilde{\bv}_{0j}, \widetilde{\rho}_{0j}, \widetilde{w}_{0j}).
	\end{cases}
\end{equation*}
For $y\in \mathbb{R}^2$, we consider the restrictions 
\begin{equation}\label{Dsee3}
	\left( \bar{\bv}_j+\widetilde{\bv}_{0j} (y) \right)|_{\mathrm{K}^y},
	\quad (\bar{h}_j+\widetilde{h}_{0j} (y) )|_{\mathrm{K}^y},
	\\
	\quad \bar{w}_j|_{\mathrm{K}^y},
\end{equation}
where $$\mathrm{K}^y=\left\{ (t,x): ct+|x-y| \leq c+1, |t| <1 \right\}.$$ Then the restrictions \eqref{Dsee3} solve \eqref{wrt} on $\mathrm{K}^y$. Due to finite speed of propagation, a smooth solution $(\bar{\bv}_j+\widetilde{\bv}_{0j}(y), \bar{\rho}_j+ \widetilde{\rho}_{0j}(y), \bar{\bw}_j)$ solves \eqref{wrt} on $\mathrm{K}^y$.

Let a function $\psi$ be supported in the unit ball such that
\begin{equation*}
	\textstyle{\sum}_{y \in 3^{-\frac12} \mathbb{Z}^2 } \psi(x-y)=1.
\end{equation*}
Therefore, the function
\begin{equation}\label{Dsee4}
	\begin{split}
		\widetilde{\bv}_j(t,x)  &=\textstyle{\sum}_{y \in 3^{-\frac12} \mathbb{Z}^2 }\psi(x-y) (\bar{\bv}_j+\widetilde{\bv}_{0j}(y)),
		\\
		\widetilde{h}_j(t,x)  &=\textstyle{\sum}_{y \in 3^{-\frac12} \mathbb{Z}^2}\psi(x-y) ( \bar{\rho}_j+ \widetilde{\rho}_{0j}(y)),
		\\
		\widetilde{w}^\alpha_j(t,x)  &= \epsilon^{\alpha \beta \gamma}\partial_\beta \widetilde{v}_{j\gamma},
	\end{split}
\end{equation}
is a smooth solution of \eqref{wrt} on $[-1,1]\times \mathbb{R}^2$ with the initial data $(\widetilde{h}_j,\widetilde{\bv}_j,  \widetilde{\bw}_j )|_{t=0}=(\widetilde{h}_{0j}, \widetilde{\bv}_{0j}, \widetilde{\bw}_{0j})$,

On the other hand, the initial data $(\widetilde{h}_{0j}, \widetilde{\bv}_{0j}, \widetilde{\bw}_{0j})$ is a scaling of $(h_{0j},\bv_{0j},  \bw_{0j})$ with the space-time scale $T^*_j$, and the system \eqref{wrt} is scaling-invariant. Therefore, the function
\begin{equation*}
	({h}_{j}, {\bv}_{j}, {\bw}_{j})=(\widetilde{\bv}_{j},\widetilde{\rho}_{j}, \widetilde{\bw}_{j}) ((T^*_j)^{-1}t,(T^*_j)^{-1}x),
\end{equation*}
is a solution of \eqref{wrt} on $[0,T^*_j]\times \mathbb{R}^2$ ($T^*_j$ is defined as in \eqref{DTJ}) with the initial data
\begin{equation*}
	({h}_{j}, {\bv}_{j}, {\bw}_{j})|_{t=0}=({h}_{0j},{\bv}_{0j}, {\bw}_{0j}).
\end{equation*}
Referring \eqref{Dsee4} and using \eqref{Dsee2}-\eqref{3s1}, we can see
\begin{equation}\label{Dsee5}
	\begin{split}
		\|d\widetilde{h}_j, d\widetilde{\bv}_j \|_{L^4_{[0,1]}C^{\delta}_x}
		\leq & \sup_{y \in 3^{-\frac12} \mathbb{Z}^2}  \|d\bar{h}_j, d\bar{\bv}_j \|_{L^4_{[0,1]}{C^{\delta}_x}}
		\\
		\leq & C(\|\bar{h}_{0j}\|_{H^s}+ \|\bar{\bv}_{0j}\|_{H^s}+  \| \bar{\bw}_{0j} \|_{H^{\frac32}}+  \| \nabla \bar{\bw}_{0j} \|_{L^{8}}).
	\end{split}
\end{equation}
By changing of coordinates $(t,x)\rightarrow ((T_j^*)^{-1}t,(T_j^*)^{-1}x)$ for each $j\geq 1$, we get
\begin{equation}\label{Dsee6}
	\begin{split}
		\|d{h}_j, d{\bv}_j\|_{L^4_{[0,T^*_j]}C^{\delta}_x}
		\leq & (T^*_j)^{-(\frac34+{\delta})}\|d\widetilde{h}_j, d\widetilde{\bv}_j\|_{L^4_{[0,1]}C^{\delta}_x}.
	\end{split}
\end{equation}
Using \eqref{DTJ}, \eqref{Dsee5}, \eqref{Dsee6}, and ${\delta} \in (0,s-\frac74)$, it follows
\begin{equation}\label{Dsee7}
	\begin{split}
		\|d{h}_j, d{\bv}_j \|_{L^4_{[0,T^*_j]}C^{\delta}_x}
		\leq	&C (T^*_j)^{-(\frac34+{\delta})}(\|\bar{h}_{0j}\|_{H_x^s}+ \|\bar{\bv}_{0j}\|_{H_x^s}+  \| \bar{\bw}_{0j} \|_{H_x^{\frac32}})
		\\
		\leq & C (T^*_j)^{-(\frac34+{\delta})} \left\{  (T^*_j)^{s-1}\|{\bv}_{0j}, {h}_{0j}\|_{\dot{H}_x^s}
		+ (T^*_j)^{\frac32}\| {\bw}_{0j} \|_{\dot{H}_x^{\frac32}} + (T^*_j)^{\frac74}\|\nabla {\bw}_{0j} \|_{L_x^{8}} \right\}
		\\
		\leq & C (\|{h}_{0j}\|_{H^s}+ \|{\bv}_{0j}\|_{H^s}+  \| {\bw}_{0j} \|_{H^{\frac32}}+ \| \nabla {\bw}_{0j} \|_{L^{8}}).
	\end{split}
\end{equation}
Comining \eqref{pu0} and \eqref{Dsee7}, we obtain
\begin{equation}\label{yz4}
	\begin{split}
		\|d{h}_j, d{\bv}_j\|_{L^4_{[0,T^*_j]}C^{\delta}_x}
		\leq & C M_0.
	\end{split}
\end{equation}
Set
\begin{equation*}\label{Etr}
	E(T^*_{j})=\|h_{j}\|_{L^\infty_{[0,T^*_{j}]} H^{s}_x}+ \|\bv_{j}\|_{L^\infty_{[0,T^*_{j}]} H^{s}_x}+
	\|\bw_{j}\|_{L^\infty_{[0,T^*_{j}]} H^{\frac32}_x}+
	\|\nabla \bw_{j}\|_{L^\infty_{[0,T^*_{j}]} L^{8}_x}.
\end{equation*}
Due to \eqref{DTJ}, \eqref{pp8}, and \eqref{yz4}, we get
\begin{equation*}\label{Aab}
	\begin{split}
		\|d{h}_j, d{\bv}_j \|_{L^4_{[0,T^*_j]}C^{\delta}_x} \leq (T_j^*)^{\frac34}	\|d{h}_j, d{\bv}_j \|_{L^4_{[0,T^*_j]}C^{\delta}_x}
		\leq 2.
	\end{split}
\end{equation*}
By using Theorem \ref{DW4}, we have
\begin{align*}
	\label{AMM3}
	&  E(T^*_j) \leq C (1+M_0)^5 \exp\{ C (1+M_0)^3 \} .
\end{align*}
Similarly, for $1\leq r \leq s+1$, there exists a unique solution for the Cauchy problem
\begin{equation}\label{ru03}
	\begin{cases}
		\square_{{g}_j} F=0, \quad (0,T^*_j]\times \mathbb{R}^2,
		\\
		(F,\partial_t F)|_{t=0}=(F_0,F_1)\in H_x^{r} \times H^{r-1}_x.
	\end{cases}
\end{equation}
Moreover, for $a<r-1$, we have
\begin{equation}\label{ru04}
	\begin{split}
		\|\left<\nabla \right>^{a-1} dF\|_{L^4_{[0,T^*_j]} L^\infty_x}
		\leq & C(\|{F}_0\|_{{H}_x^{r}}+ \| {F}_1 \|_{{H}_x^{r-1}}),
	\end{split}
\end{equation}
and
\begin{equation*}\label{ru05}
	\begin{split}
		\|{F}\|_{L^\infty_{[0,T^*_j]} H^{r}_x}+ \|\partial_t {F}\|_{L^\infty_{[0,T_j]} H^{r-1}_x} \leq  C(\| {F}_0\|_{H_x^{r}}+ \| {F}_1\|_{H_x^{r-1}}).
	\end{split}
\end{equation*}
Gathering the results above, there is a uniform bound for $h_{j}, \bv_{j}$ and $\bw_j$. However, the length of the interval $[0, T_j^*]$ is not uniform and depends on $j$. To construct a solution as the limit of these smooth solutions, we must extend them over a uniformly regular time interval.

\subsubsection{A loss of Strichartz estimates on a short time-interval.} \label{esest}
We will discuss it into two cases\footnote{Here, we mainly inspired by Tataru \cite{T1}, Bahouri-Chemin \cite{BC2}, and Ai-Ifrim-Tataru \cite{AIT}. Of course, our work based on the property of Strichartz estimates in Proposition \ref{r6} and careful analysis on vorticity. Moreover, we eventually conclude it by induction method.}, the high frequency and low frequency for $d{h}_j$ and $d{\bv}_j$.

\textbf{Case 1: high frequency ($k \geq j$).} Due to \eqref{yz4}, we get
\begin{equation}\label{yz6}
	\begin{split}
		& \| d \bv_j, d h_j \|_{L^4_{[0,T^*_j]}C^a_x} \leq CM_0, \quad  a \in [0,s-\frac74).
	\end{split}
\end{equation}
For $k \geq j$, using Bernstein inequality and \eqref{yz6}, we have
\begin{equation}\label{yz9}
	\begin{split}
		\|P_{k} d h_j, P_{k} d \bv_j  \|_{L^4_{[0,T^*_j]}L^\infty_x}
		\leq & 2^{-ka}\| P_k d h_j, P_k d \bv_j  \|_{L^4_{[0,T^*_j]}C^a_x}
		\\
		\leq & C2^{-ja} \|d h_j  ,  d \bv_j \|_{L^4_{[0,T^*_j]}C^a_x}
		\\
		\leq &  C2^{-ja} M_0.
	\end{split}
\end{equation}
Taking $a=9\delta_1$ in \eqref{yz9}, we obtain
\begin{equation}\label{yz10}
	\begin{split}
		\| P_{k} d h_j , P_{k} d \bv_j\|_{L^4_{[0,T^*_j]}L^\infty_x}
		\leq & C   2^{-\delta_{1}k} \cdot (1+M_0)^3 2^{-7\delta_{1}j}, \quad k \geq j,
	\end{split}
\end{equation}
\quad \textbf{Case 2: low frequency($k<j$).} In this case, it's much different from the high frequency. Fortunately, there is some good estimates for difference terms $P_k(d{h}_{m+1}-d{h}_m)$ and $P_k(d{\bv}_{m+1}-d{\bv}_{m})$. Following \eqref{De}, we set
\begin{equation*}\label{etad0}
	\begin{split}
		& \bv_m=\bv_{+m}+ \bv_{-m}, 
		\\
		&\bv_{\pm m}=(v^0_{\pm m},v^1_{\pm m},v^2_{\pm m}),
		\\
		& (\mathbf{Id}-\mathbf{{P}}) v^\alpha_{-m}= \epsilon^{\alpha \beta \gamma} \partial_\beta w_{m\gamma}.
	\end{split}	
\end{equation*}
We will obtain some good estimates of $P_k(d{h}_{m+1}-d{h}_m)$ and $P_k(d{\bv}_{m+1}-d{\bv}_m)$ by using \eqref{ru03}-\eqref{ru04}. Indeed, this good estimate is from a loss of derivatives of Strichartz estimate. We claim that
\begin{align}\label{yu0}
	& \|h_{m+1}-h_{m}, \bv_{m+1}-\bv_{m}\|_{L^\infty_{ [0,T^*_{m+1}] } L^2_x} \leq C 2^{-(s- \delta_1)m} M_0,
	\\\label{yu1}
	& \|\bw_{m+1}-\bw_{m}\|_{L^\infty_{ [0,T^*_{m+1}]  } L^2_x} \leq C2^{- (\frac32- \delta_1)m}M_0.
\end{align}
To verify \eqref{yu0} and \eqref{yu1}, we start from the initial data. Applying Bernstein's inequality, we get
\begin{equation}\label{yz12}
	\begin{split}
		\|{\bv}_{0(m+1)}-{\bv}_{0m}\|_{L^2}
		\lesssim & 2^{-sm} \|{\bv}_{0(m+1)}-{\bv}_{0m}\|_{\dot{H}^s}.
	\end{split}	
\end{equation}
Similarly, we obtain
\begin{equation}\label{yz13}
	\begin{split}
		& \|{h}_{0(m+1)}-{h}_{0m}\|_{L^2} \lesssim   2^{-sm}\|{h}_{0(m+1)}-{h}_{0m}\|_{\dot{H}^s},
		\\
		& \|{\bw}_{0(m+1)}-{\bw}_{0m}\|_{L^2} \lesssim   2^{-{\frac32}m}\|{\bw}_{0(m+1)}-{\bw}_{0m}\|_{\dot{H}^{\frac32}}.
	\end{split}	
\end{equation}
Based on \eqref{yz12} and \eqref{yz13}, we can use the structure of the system \eqref{Wrs} to prove \eqref{yu0}-\eqref{yu1}. Let ${\bU}_{m}=(p(h_{m}), \mathrm{e}^{-h_m}\bv_{m})^\mathrm{T} $. Then ${\bU}_{m+1}-{\bU}_{m}$ satisfies
\begin{equation}\label{yz14}
	\begin{cases}
		& A^0({\bU}_{m+1}) \partial_t ( {\bU}_{m+1}- {\bU}_{m}) + A^i({\bU}_{m+1}) \partial_i ( {\bU}_{m+1}- {\bU}_{m})=\Pi_m,
		\\
		& ( {\bU}_{m+1}-{\bU}_{m} )|_{t=0}= {\bU}_{0(m+1)}-{\bU}_{0m},
	\end{cases}
\end{equation}
where
\begin{equation}\label{Fhz}
	\Pi_m=-[A^0({\bU}_{m+1})-A^0({\bU}_{m}) ]\partial_t  {\bU}_{m}- [A^i({\bU}_{m+1})-A^i({\bU}_{m}) ]\partial_i  {\bU}_{m}.
\end{equation}
From \eqref{Fhz}, we can see
\begin{equation*}\label{Fhz0}
	|\Pi_m|\lesssim |{\bU}_{m+1} - {\bU}_{m}| \cdot |  d{\bU}_{m} |.
\end{equation*}
Multiplying ${\bU}_{m+1}- {\bU}_{m}$ on \eqref{yz14} and integrating it on $\mathbb{R}^2$, we have
\begin{equation}\label{yz15}
	\frac{d}{dt}	\|{\bU}_{m+1}-{\bU}_{m} \|^2_{L^2_x} \lesssim   \| (d {\bU}_{m+1}, d{\bU}_{m}) \|_{L^\infty_x}\| {\bU}_{m+1}-{\bU}_{m} \|^2_{L^2_x}.
\end{equation}
Integrating \eqref{yz15} on $[0,T^*_{m+1}]$, using Gronwall's inequality and Strichartz estimate \eqref{yz4}, it yields
\begin{equation*}
	\begin{split}
		\| {\bU}_{m+1}-{\bU}_{m} \|_{L^\infty_{[0,T^*_{m+1} ]}L^2_x} \lesssim &  \| {\bU}_{0(m+1)}-{\bU}_{0m} \|_{L^2_x} .
	\end{split}
\end{equation*}
Due to \eqref{yz12} and \eqref{yz13}, we then have
\begin{equation*}
	\begin{split}
		\| {\bU}_{m+1}-{\bU}_{m} \|_{L^\infty_{[0,T^*_{m+1} ]}L^2_x} \leq C2^{-(s- \delta_1)m}M_0.
	\end{split}
\end{equation*}
By using Lemma \ref{jiaohuan0}, we show that
\begin{equation*}%\label{piw}
	\begin{split}
		\| {\bv}_{m+1}-{\bv}_{m}, {\rho}_{m+1}-{\rho}_{m} \|_{L^\infty_{[0,T^*_{m+1}]}L^2_x} \leq C2^{-(s- \delta_1)m}M_0.
	\end{split}
\end{equation*}
Thus, the estimate \eqref{yu0} holds. To prove \eqref{yu1}, we consider the transport equation of ${\bw}_{m+1} - {\bw}_m$:

\begin{equation}\label{AMM7}
\begin{split}
& \partial_t ({w}^\alpha_{m+1}- {w}^\alpha_{m}) + ( {v}^0_{m+1} )^{-1} {v}^i_{m+1} \partial_i ({w}^\alpha_{m+1}- {w}^\alpha_{m})
	\\
	= &   - \left\{  (v^0_{m+1})^{-1} v^i_{m+1} - (v^0_m)^{-1} v^i_m \right\} \partial_i w^\alpha_m +
	\left\{  (v^0_{m+1})^{-1} w^\kappa_{{m+1}} - (v^0_l)^{-1} w^\kappa_m \right\} \partial^\alpha v_{({m+1}) \kappa} 
	\\
	& + (v^0_m)^{-1} w^\kappa_l  \partial^\alpha ( v_{(m+1) \kappa} - v_{m \kappa} )
	-\left\{  (v^0_{m+1})^{-1} w^\alpha_j - (v^0_l)^{-1} w^\alpha_m \right\}   \partial_\kappa v^\kappa_{m+1}
	\\
	& - (v^0_l)^{-1} w^\alpha_m    \partial_\kappa ( v^\kappa_{m+1} -  v^\kappa_m )
\end{split}	
\end{equation}
Multiplying with ${w}_{(m+1)\alpha}- {w}_{m\alpha}$ on \eqref{AMM7} and integrating it on $[0,t]\times \mathbb{R}^2$, we derive
\begin{equation}\label{AMM8}
	\begin{split}
		\|{\bw}_{m+1}- {\bw}_{m} \|^2_{L^2_x} \leq  & \| {\bw}_{0(m+1)}- {\bw}_{0m} \|^2_{L^2_x}+ C\int^t_{0}  \| \nabla {\bv}_{m+1}\|_{L^\infty_x}\| {\bw}_{m+1}- {\bw}_{m} \|^2_{L^2_x}d\tau
		\\
		& + C\int^t_{0} \| {\bv}_{m+1}-{\bv}_{m} \|_{L^{\frac83}_x}\| {\bw}_{m+1}-{\bw}_{m} \|_{L^2_x}\|\nabla {\bw}_{m} \|_{L^8_x}d\tau
		\\
		\leq  & \| {\bw}_{0(m+1)}- {\bw}_{0m} \|^2_{L^2_x}+ C\int^t_{0}  \| \nabla {\bv}_{m+1}\|_{L^\infty_x}\| {\bw}_{m+1}- {\bw}_{m} \|^2_{L^2_x}d\tau
		\\
		& + C\int^t_{0} \| {\bv}_{m+1}-{\bv}_{m} \|_{H^{\frac14}_x}\| {\bw}_{m+1}-{\bw}_{m} \|_{L^2_x}\|\nabla {\bw}_{m} \|_{L^8_x}d\tau.
	\end{split}
\end{equation}
We also note that
\begin{equation}\label{kkk}
	\| {\bv}_{m+1}-{\bv}_{m} \|_{H^{\frac14}_x} \lesssim \| {\bv}_{m+1}-{\bv}_{m} \|^{1-\frac{1}{4s}}_{L^{2}_x} \| {\bv}_{m+1}-{\bv}_{m} \|^\frac{1}{4s}_{H^{s}_x}  .
\end{equation}
Taking advantage of \eqref{yz13}, \eqref{yu0}, \eqref{yz6}, \eqref{AMM8}, and \eqref{kkk}, and using Gronwall's inequality gives
\begin{equation*}
	\begin{split}
		\|{\bw}_{m+1}- {\bw}_{m} \|_{L^\infty_{[0,T^*_{m+1}]} L^2_x}  \lesssim   2^{-(\frac32-\delta_1)m} M_0.
	\end{split}
\end{equation*} 
This implies that \eqref{yu1} holds. On the other hand, for $\delta_1<s-\frac74$, using Strichartz estimates \eqref{ru04}(similar to \eqref{SR0}), it yields
\begin{equation}\label{see80}
	\begin{split}
		& \|\nabla({h}_{m+1}-{h}_{m}) \|_{L^4_{[0,T^*_{m+1}]} C^{\delta_{1}}_x}+\|\nabla ({\bv}_{+(m+1)}-{\bv}_{+m}) \|_{L^4_{[0,T^*_{m+1}]} C^{\delta_{1}}_x}
		\\
		\leq  &  C\| {h}_{m+1}-{h}_{m}, {\bv}_{m+1}-{\bv}_{m} \|_{L^\infty_{[0,T^*_{m+1}]} H^{\frac74+2\delta_{1}}_x}+\| {\bw}_{m+1}-{\bw}_{m} \|_{L^\infty_{[0,T^*_{m+1}]} H^{1+2\delta_{1}}_x}.
	\end{split}
\end{equation}
Noting $s=2+10\delta_{1}$ and using \eqref{yu0}-\eqref{yu1}, we can bound \eqref{see80} by
\begin{equation}\label{see99}
	\begin{split}
		& \|\nabla({h}_{m+1}-{h}_{m}) \|_{L^4_{[0,T^*_{m+1}]} C^{\delta_{1}}_x}+\|\nabla ({\bv}_{+(m+1)}-{\bv}_{+m}) \|_{L^4_{[0,T^*_{m+1}]} C^{\delta_{1}}_x}
		\leq   C 2^{-7\delta_1m} M_0.
	\end{split}
\end{equation}
Applying \eqref{yu1} and Sobolev imbeddings, we can establish
\begin{equation}\label{siq}
	\begin{split}
		\|\nabla ({\bv}_{-(m+1)}-{\bv}_{-m}) \|_{L^4_{[0,T^*_{m+1}]} C^{\delta_1}_x}
		\leq & \| \nabla ({\bv}_{-(m+1)}-{\bv}_{-m}) \|_{ L^\infty_{[0,T^*_{m+1}]} C^{\delta_1}_x}
		\\
		\leq  & C \|{\bw}_{m+1}- {\bw}_{m} \|_{L^\infty_{[0,T^*_{m+1}]} H^{1+2\delta_{1}}_x}
		\\
		\leq & C  2^{-7\delta_{1} m} (1+M_0)^2.
	\end{split}
\end{equation}
Adding \eqref{see99} and \eqref{siq}, it yields
\begin{equation}\label{siw}
	\begin{split}
		& \|\nabla( {h}_{m+1}-{h}_{m} ) \|_{L^4_{[0,T^*_{m+1}]} C^{\delta_{1}}_x}+\| \nabla ({\bv}_{m+1}-{\bv}_{m}) \|_{L^4_{[0,T^*_{m+1}]} C^{\delta_{1}}_x}\leq  C 2^{-7\delta_{1} m} (1+M_0)^2.
	\end{split}
\end{equation}
By using \eqref{Wrs} and \eqref{siw}, we obtain
\begin{equation}\label{sie}
	\begin{split}
		& \|\partial_t({h}_{m+1}-{h}_{m})\|_{L^4_{[0,T^*_{m+1}]} C^{\delta_{1}}_x}+\| \partial_t({\bv}_{m+1}-{\bv}_{m}) \|_{L^2_{[0,T^*_{m+1}]} C^{\delta_{1}}_x}
		\\
		\leq  & \|\nabla({h}_{m+1}-{h}_{m}) , \nabla ({\bv}_{m+1}-{\bv}_{m}) \|_{L^4_{[0,T^*_{m+1}]} C^{\delta_{1}}_x} \cdot (1+ \|\bv_m, h_m\|_{L^\infty_{[0,T^*_{m+1}]} H^s_x} )
		\\
		\leq & C (1+M_0)^3 2^{-6\delta_{1} m}.
	\end{split}
\end{equation}
Due to \eqref{siw} and \eqref{sie}, for $k<m$, we get
\begin{equation}\label{Sia}
	\|P_k d({h}_{m+1}-{h}_{m}), P_k d ({\bv}_{m+1}-{\bv}_{m}) \|_{L^4_{[0,T^*_{m+1}]} L^\infty_x}
	\leq   C2^{-\delta_1k}\cdot   2^{-6\delta_1m}(1+M_0)^3,
\end{equation}
and
\begin{equation*}\label{kz4}
	\|P_k d({h}_{m+1}-{h}_{m}), P_k d ({\bv}_{m+1}-{\bv}_{m}) \|_{L^1_{[0,T^*_{m+1}]} L^\infty_x}
	\leq   C2^{-\delta_1k} \cdot  2^{-6\delta_1m} (1+M_0)^3.
\end{equation*}
\subsubsection{Uniform energy and Strichartz estimates on a regular time-interval $[0,T_{N_0}^*]$.}\label{finalk}
Recall \eqref{DTJ}, then $T_{N_0}^*=2^{-\delta_{1}N_0}( CM_0)^{-3}$. Recall \eqref{yz10} and \eqref{Sia}. Therefore, when $k\geq j$ and $j\geq N_0$, using \eqref{yz10} and H\"older's inequality, we have
\begin{equation}\label{kf01}
	\begin{split}
		\| P_{k} d \bv_j, P_{k} d h_j \|_{L^1_{[0,T^*_j]}L^\infty_x} \leq & (T^*_j)^{\frac34}\| P_{k} d \bv_j, P_{k} dh_j \|_{L^4_{[0,T^*_j]}L^\infty_x}
		\\
		\leq & (T^*_{N_0})^{\frac34} \cdot C  (1+M_0)^3 2^{-\delta_{1}k} 2^{-7\delta_1j}.
	\end{split}
\end{equation}
When $k< j$ and $m\geq N_0$, using \eqref{Sia} and H\"older's inequality, we obtain
\begin{equation}\label{kf02}
	\|P_k d({h}_{m+1}-{h}_{m}), P_k d({\bv}_{m+1}-{\bv}_{m}) \|_{L^1_{[0,T^*_{m+1}]} L^\infty_x}
	\leq    (T^*_{N_0})^{\frac34} \cdot C  (1+M_0)^3 2^{-\delta_{1}k} 2^{-6\delta_1m}.
\end{equation}
On the other side, when $k\geq j$ and $j< N_0$, using \eqref{yz10}, we get
\begin{equation}\label{kf03}
	\begin{split}
		\| P_{k} d \bv_j, P_{k} d h_j \|_{L^1_{[0,T^*_{N_0}]}L^\infty_x} \leq & (T^*_{N_0})^{\frac34} \| P_{k} d \bv_j, P_{k} d h_j \|_{L^4_{[0,T^*_{N_0}]}L^\infty_x}
		\\
		\leq & (T^*_{N_0})^{\frac34} \| P_{k} d \bv_j, P_{k} d h_j \|_{L^4_{[0,T^*_j]}L^\infty_x}
		\\
		\leq & (T^*_{N_0})^{\frac34} \cdot C  (1+M_0)^3 2^{-\delta_{1}k} 2^{-7\delta_1j}.
	\end{split}
\end{equation}
When $k< j$ and $m< N_0$, using \eqref{Sia} and H\"older's inequality, it yields
\begin{equation}\label{kf04}
	\begin{split}
		& \|P_k d({h}_{j+1}-{h}_{j}), P_k d({\bv}_{j+1}-{\bv}_{j}) \|_{L^1_{[0,T^*_{N_0}]} L^\infty_x}
		\\
		\leq & (T^*_{N_0})^{\frac34} \|P_k d({h}_{j+1}-{h}_{j}), P_k d({\bv}_{j+1}-{\bv}_{j}) \|_{L^4_{[0,T^*_{N_0}]} L^\infty_x}
		\\
		\leq & (T^*_{N_0})^{\frac34} \|P_k d({h}_{j+1}-{h}_{j}), P_k d({\bv}_{j+1}-{\bv}_{j}) \|_{L^4_{[0,T^*_{j+1}]} L^\infty_x}
		\\
		\leq    & (T^*_{N_0})^{\frac34} \cdot C  (1+M_0)^3 2^{-\delta_{1}k} 2^{-6\delta_1j}.
	\end{split}
\end{equation}
Due to a different time-interval for the sequence  $(h_j, \bv_j,w_j)$, we will discuss the solutions $(h_j, \bv_j,w_j)$ if  $j \leq N_0$ or  $j \geq N_0+1$ as follows.

\textbf{Case 1: $j \leq N_0$.} In this case, $(h_j, \bv_j, w_j)$ exists on $[0,T_j^*]$, and $[0,T^*_{N_0}]\subseteq [0,T^*_{j}]$. As a result, we don't need to extend solutions $(h_j, \bv_j, w_j)$ if  $j \leq N_0$. Using \eqref{yz4} and H\"older's inequality, we get
\begin{equation*}
	\| d\bv_j, dh_j\|_{L^1_{[0,T^*_{N_0}]}L^\infty_x} \leq (T^*_{N_0})^{\frac34} \| d\bv_j, dh_j\|_{L^4_{[0,T^*_{N_0}]}L^\infty_x} \leq C(T^*_{N_0})^{\frac34} ( 1+ M_0 ).
\end{equation*}
By \eqref{pp8}, this yields
\begin{equation*}\label{ky0}
	\| d\bv_j, dh_j\|_{L^1_{[0,T^*_{N_0}]}L^\infty_x} \leq 2.
\end{equation*}
By using Newton-Leibniz's formula and \eqref{pu00}, it follows that
\begin{equation*}\label{ky1}
	\|\bv_j, h_j\|_{L^\infty_{[0,T^*_{N_0}] \times \mathbb{R}^3}}\leq |\bv_{0j}, h_{0j}|+ \| d\bv_j, dh_j\|_{L^1_{[0,T^*_{N_0}]}L^\infty_x} \leq 2+C_0.
\end{equation*}
Using the energy theorem \ref{DW4}, we get
\begin{equation*}\label{ky2}
	 E(T^*_{N_0})  \leq C (1+M_0)^5 \exp\{ C (1+M_0)^3 \} .
\end{equation*}
\quad \textbf{Case 2: $j \geq N_0+1$.} In this case, we expect to extend the time interval $I_1=[0,T^*_j]$ to $[0,T^*_{N_0}]$. Our idea is to use \eqref{yz10}, \eqref{Sia}, and Theorem \ref{DW4} to calculate the energy at time $T_j^*$. Starting at $T_j^*$, we can also get a new time-interval.

We set
\begin{equation*}\label{ti1}
	I_1=[0,T^*_j]=[t_0,t_1], \quad \quad |I_1|=(CM_0)^{-3}2^{-\delta_{1} j}.
\end{equation*}
By using frequency decomposition, we get
\begin{equation}\label{kz03}
	\begin{split}
		d \bv_j= & \textstyle{\sum}^{\infty}_{k=j} d \bv_j+ \textstyle{\sum}^{j-1}_{k=1}P_k d \bv_j
		\\
		=& P_{\geq j}d \bv_j+ \textstyle{\sum}^{j-1}_{k=1} \textstyle{\sum}_{m=k}^{j-1} P_k  (d\bv_{m+1}-d\bv_m)+ \textstyle{\sum}^{j-1}_{k=1}P_k d \bv_k .
	\end{split}
\end{equation}
Similarly, we also have
\begin{equation}\label{kz04}
	\begin{split}
		d h_j= & P_{\geq j}d h_j+ \textstyle{\sum}^{j-1}_{k=1} \textstyle{\sum}_{m=k}^{j-1} P_k  (d h_{m+1}-d h_m)+ \textstyle{\sum}^{j-1}_{k=1}P_k d h_k.
	\end{split}
\end{equation}
When $k\geq j$, using \eqref{kf01} and \eqref{kf03}, we have\footnote{In the case $j\geq N_0$ we use \eqref{kf01}. In the case $j < N_0$, we take $T^*_j = T^*_{N_0}$ and use \eqref{kf03} to give a bound on $[0,T^*_{N_0}]$.}
\begin{equation}\label{kz01}
	\begin{split}
		\| P_{k} d h_j, P_{k} d \bv_j,  \|_{L^1_{[0, T^*_j]}L^\infty_x} \leq   &   (T^*_{N_0})^{\frac34} \cdot C (1+M_0)^3 2^{-\delta_{1}k} 2^{-7\delta_{1}j}.
	\end{split}
\end{equation}
When $k< j$, using \eqref{kf02} and \eqref{kf04}, it follows\footnote{In the case $m\geq N_0$ we use \eqref{kf02}. In the case $m < N_0$, we take $T^*_{m+1} = T^*_{N_0}$ and use \eqref{kf04} to give a bound on $[0,T^*_{N_0}]$.}
\begin{equation}\label{kz02}
	\|P_k d({h}_{m+1}-{h}_{m}), P_k d({\bv}_{m+1}-{\bv}_{m}) \|_{L^1_{[0,T^*_{m+1}]} L^\infty_x}
	\leq    (T^*_{N_0})^{\frac34} \cdot C  (1+M_0)^3 2^{-\delta_{1}k} 2^{-6\delta_1m}.
\end{equation}
%Considering \eqref{kz03}-\eqref{kz04}, the terms in \eqref{kz04} is much different.
We will use  and \eqref{kz01}-\eqref{kz02} to give a precise analysis of \eqref{kz03}-\eqref{kz04} and get some new time-intervals, and then we try to extend $\rho_j$ from $[0,T^*_j]$ to $[0,T^*_{N_0}]$. Our strategy is as follows. In step 1, we extend it from $[0,T^*_j]$ to $[0,T^*_{j-1}]$. In step 2, we use induction methods to conclude these estimates and also extend it to $[0,T_{N_0}^*]$.

\textbf{Step 1: Extending $[0,T^*_j]$ to $[0,T^*_{j-1}] \ (j \geq N_0+1)$.} To start, referring \eqref{DTJ}, we need to calculate $E(T^*_j)$ for obtaining a length of a new time-interval. Then we shall calculate $\|d h_j, d \bv_j \|_{L^1_{[0,T^*_j]}L^\infty_x}$. Using \eqref{kz03} and \eqref{kz04}, we derive that
\begin{equation*}
	\begin{split}
		\|d \bv_j, d h_j\|_{L^1_{[0,T^*_j]} L^\infty_x }
		\leq & 	\|P_{\geq j}d\bv_j, P_{\geq j}d h_j\|_{L^1_{[0,T^*_j]} L^\infty_x }  + \textstyle{\sum}^{j-1}_{k=1} \|P_k d\bv_k, P_k dh_k\|_{L^1_{[0,T^*_j]}L^\infty_x}\\
		& + \textstyle{\sum}^{j-1}_{k=1} \textstyle{\sum}_{m=k}^{j-1}  \|P_k  (d\bv_{m+1}-d\bv_m), P_k  (dh_{m+1}-dh_m)\|_{L^1_{[0,T^*_j]}L^\infty_x}
		\\
		\leq & 	\|P_{\geq j}d\bv_j, P_{\geq j}d h_j\|_{L^1_{[0,T^*_j]}L^\infty_x} + \textstyle{\sum}^{j-1}_{k=1} \|P_k d\bv_k, P_k dh_k\|_{L^1_{[0,T^*_k]}L^\infty_x}\\
		& + \textstyle{\sum}^{j-1}_{k=1} \textstyle{\sum}_{m=k}^{j-1}\| P_k  (d\bv_{m+1}-d\bv_m), P_k  (dh_{m+1}-dh_m)\|_{L^1_{[0,T^*_{m+1}]}L^\infty_x}
		\\
		\leq & C(1+M_0)^3 (T^*_{N_0})^{\frac34}  \textstyle{\sum}_{k=j}^{\infty} 2^{-\delta_{1} k} 2^{-7\delta_{1} j}
		\\
		& + C(1+M_0)^3 (T^*_{N_0})^{\frac34} \textstyle{\sum}^{j-1}_{k=1} 2^{-\delta_{1} k} 2^{-7\delta_{1} k}
		\\
		& +  C(1+M_0)^3 (T^*_{N_0})^{\frac34} \textstyle{\sum}^{j-1}_{k=1} \textstyle{\sum}_{m=k}^{j-1} 2^{-\delta_{1} k} 2^{-6\delta_{1} m}
		\\
		\leq & C(1+M_0)^3 (T^*_{N_0})^{\frac34} [\frac{1}{3}(1-2^{-\delta_{1}})]^{-2}.
	\end{split}
\end{equation*}
we get
\begin{equation}
	\begin{split}\label{kz05}
		\|d \bv_j, d h_j\|_{L^1_{[0,T^*_j]}L^\infty_x}
		\leq  C(1+M_0)^3(T^*_{N_0})^{\frac34} [\frac{1}{3}(1-2^{-\delta_{1}})]^{-2} \leq 2.
	\end{split}
\end{equation}
By using \eqref{kz05} and \eqref{pu00}, we get
\begin{equation*}
	\begin{split}\label{kp05}
		\| \bv_j,  h_j\|_{L^\infty_{[0,T^*_j]}L^\infty_x} \leq \| \bv_{0j},  h_{0j}\|+	\|d \bv_j, d h_j\|_{L^1_{[0,T^*_j]}L^\infty_x}
		\leq  C_0+2.
	\end{split}
\end{equation*}
By \eqref{kz05}, \eqref{pp8} and Theorem \ref{DW4}, we have
\begin{equation}\label{kz06}
	\begin{split}
		E(T^*_{j}) \leq C (1+M_0)^5 \exp\{ C (1+M_0)^3 \}  =C_*. 
	\end{split}
\end{equation}
Above, $C_*$ is stated in \eqref{Cstar}. Starting at the time $T^*_j$, seeing \eqref{DTJ} and \eqref{kz06}, we can obtain an extending time-interval with a length of $(C_*)^{-3}2^{-\delta_{1} j}$. But, if $T^*_j + (C_*)^{-3}2^{-\delta_{1} j} \geq T^*_{j-1}$, we have finished this step. Else, we need to extend it again.

\textbf{Case 1:} $T^*_j + (C_*)^{-3}2^{-\delta_{1} j} \geq T^*_{j-1} $. In this case, we can get a new interval
\begin{equation*}\label{deI2}
	I_2=[T^*_j, T^*_{j-1}], \quad |I_2|= (2^{\delta_{1}}-1) (CM_0)^{-3}2^{-\delta_{1} j}.
\end{equation*}
Moreover, referring \eqref{kz01} and \eqref{kz02}, we derive
\begin{equation}\label{kz07}
	\| P_{k} d h_j, P_{k} d \bv_j \|_{L^1_{[T^*_j, T^*_{j-1}]}L^\infty_x}
	\leq  (T^*_{N_0})^{\frac34}  \cdot C  (1+C_*)^3 2^{-\delta_{1}k} 2^{-7\delta_{1}j}, \quad k \geq j,
\end{equation}
and $k<j$,
\begin{equation}\label{kz08}
	\|P_k (d{h}_{j}-d{h}_{j-1}), P_k (d{\bv}_{j}-d{\bv}_{j-1}) \|_{L^1_{[T^*_j, T^*_{j-1}]} L^\infty_x}
	\leq   (T^*_{N_0})^{\frac34}  \cdot C  (1+C_*)^3 2^{-\delta_{1}k} 2^{-6\delta_1(j-1)}.
\end{equation}
Using \eqref{pp8} and $j \geq N_0+1$, \eqref{kz07} and \eqref{kz08} yields
\begin{equation}\label{kz09}
	\| P_{k} d h_j, P_{k} d \bv_j \|_{L^1_{[T^*_j, T^*_{j-1}]}L^\infty_x}
	\leq  (T^*_{N_0})^{\frac34}  \cdot C  (1+M_0)^3 2^{-\delta_{1}k} 2^{-6\delta_{1}j}, \quad k \geq j,
\end{equation}
and $k<j$,
\begin{equation}\label{kz10}
	\|P_k (d{h}_{j}-d{h}_{j-1}), P_k (d{\bv}_{j}-d{\bv}_{j-1}) \|_{L^1_{[T^*_j, T^*_{j-1}]} L^\infty_x}
	\leq    (T^*_{N_0})^{\frac34}  \cdot C   (1+M_0)^3 2^{-\delta_{1}k} 2^{-5\delta_1(j-1)}.
\end{equation}
Therefore, we obtain the following estimate
\begin{equation}\label{kz11}
	\begin{split}
		& \|dh_j, d \bv_j\|_{L^1_{[0,T^*_{j-1}]} L^\infty_x}
		\\
		\leq & 	\|P_{\geq j}d h_j, P_{\geq j}d\bv_j \|_{L^1_{[0,T^*_{j-1}]} L^\infty_x}  + \textstyle{\sum}^{j-1}_{k=1} \|P_k dh_k, P_k d\bv_k\|_{L^1_{[0,T^*_{j-1}]} L^\infty_x}
		\\
		& + \textstyle{\sum}^{j-1}_{k=1} \|P_k  (d\bv_{j}-d\bv_{j-1}), P_k  (dh_{j}-dh_{j-1})\|_{L^1_{[0,T^*_{j-1}]} L^\infty_x}
		\\
		& + \textstyle{\sum}^{j-2}_{k=1} \textstyle{\sum}_{m=k}^{j-2}  \|P_k  (d\bv_{m+1}-d\bv_m), P_k  (dh_{m+1}-dh_m)\|_{L^1_{[0,T^*_{j-1}]} L^\infty_x}.
	\end{split}
\end{equation}
Due to \eqref{kz01} and \eqref{kz09}, it yields
\begin{equation}\label{kz12}
	\begin{split}
		\|P_{\geq j}dh_j, P_{\geq j}d\bv_j\|_{L^1_{[0,T^*_{j-1}]} L^\infty_x}
		\leq & C   (1+M_0)^3 (T^*_{N_0})^{\frac34}  \textstyle{\sum}^{\infty}_{k=j}2^{-\delta_{1}k} (2^{-7\delta_1 j}+ 2^{-6\delta_{1} j})
		\\
		\leq & C   (1+M_0)^3  (T^*_{N_0})^{\frac34}  \textstyle{\sum}^{\infty}_{k=j} 2^{-\delta_{1}k} 2^{-6\delta_{1} j}\times 2.
	\end{split}
\end{equation}
Due to \eqref{kz02} and \eqref{kz10}, it follows
\begin{equation}\label{kz13}
	\begin{split}
		& \textstyle{\sum}^{j-1}_{k=1} \|P_k  (d\bv_{j}-d\bv_{j-1}), P_k  (dh_{j}-dh_{j-1})\|_{L^1_{[0,T^*_{j-1}]} L^\infty_x}
		\\
		\leq & C   (1+M_0)^3 (T^*_{N_0})^{\frac34}  \textstyle{\sum}^{j-1}_{k=1} 2^{-\delta_{1}k} (2^{-6\delta_1 (j-1)}+ 2^{-5\delta_{1} (j-1)})
		\\
		\leq & C   (1+M_0)^3 (T^*_{N_0})^{\frac34}  \textstyle{\sum}^{j-1}_{k=1} 2^{-\delta_{1}k} 2^{-5\delta_{1} (j-1)} \times 2.
	\end{split}
\end{equation}
Inserting \eqref{kz12}-\eqref{kz13} into \eqref{kz11}, we derive that
\begin{equation}\label{kz14}
	\begin{split}
		\|d \bv_j, d h_j\|_{L^1_{[0,T^*_{j-1}]} L^\infty_x}
		\leq & 	C (1+M_0)^3 (T^*_{N_0})^{\frac34}   \textstyle{\sum}_{k=j}^{\infty} 2^{-\delta_{1} k} 2^{-6\delta_{1} j}\times 2
		\\
		& + C    (1+M_0)^3 (T^*_{N_0})^{\frac34}  \textstyle{\sum}^{j-1}_{k=1} 2^{-\delta_{1}k} 2^{-5\delta_{1} (j-1)} \times 2
		\\
		& + C (1+M_0)^3 (T^*_{N_0})^{\frac34}  \textstyle{\sum}^{j-1}_{k=1} 2^{-\delta_{1} k} 2^{-7\delta_{1} k}
		\\
		& +  C (1+M_0)^3 (T^*_{N_0})^{\frac34}  \textstyle{\sum}^{j-2}_{k=1} \textstyle{\sum}_{m=k}^{j-2} 2^{-\delta_{1} k} 2^{-6\delta_{1} m}
		\\
		\leq & 	C (1+M_0)^3 (T^*_{N_0})^{\frac34}  [\frac13(1-2^{-\delta_{1}})]^{-2}.
	\end{split}
\end{equation}
By using \eqref{kz14}, \eqref{pp8} and Theorem \ref{DW4}, we can establish
\begin{equation}\label{kz15}
	\begin{split}
		&|\bv_j, h_j| \leq 2+C_0,
		\\
		& E(T^*_{j-1}) \leq C_*.
	\end{split}
\end{equation}
\quad \textbf{Case 2:} $T^*_j + (C_*)^{-3}2^{-\delta_{1} j} < T^*_{j-1} $. In this case, we record
\begin{equation*}
	I_2=[T^*_j, t_2], \quad |I_2| = (C_*)^{-3}2^{-\delta_{1} j}.
\end{equation*}
Referring \eqref{kz01} and \eqref{kz02}, we can derive
\begin{equation}\label{kz16}
	\| P_{k} d \bv_j, P_{k} d h_j \|_{L^1_{I_2}L^\infty_x}
	\leq  (T^*_{N_0})^{\frac34} \cdot C  (1+C_*)^3 2^{-\delta_{1}k} 2^{-7\delta_{1}j}, \quad k \geq j,
\end{equation}
and
\begin{equation}\label{kz17}
	\|P_k (dh_{j}-dh_{j-1}), P_k  (d{\bv}_{j}-d{\bv}_{j-1}) \|_{L^1_{I_2} L^\infty_x}
	\leq    (T^*_{N_0})^{\frac34} \cdot C  (1+C_*)^3 2^{-\delta_{1}k} 2^{-6\delta_1(j-1)}, \quad k<j.
\end{equation}
Similarly, on $I_1 \cup I_2$, we have
\begin{equation*}
	\begin{split}
		\|d\bv_j, dh_j\|_{L^1_{ I_1 \cup I_2 } L^\infty_x }
		\leq & 	\|P_{\geq j}d\bv_j, P_{\geq j}d h_j\|_{L^1_{I_1 \cup I_2} L^\infty_x }   + \textstyle{\sum}^{j-1}_{k=1} \|P_k d\bv_k, P_k dh_k\|_{L^1_{[0,T^*_k]} L^\infty_x }
		\\
		& + \textstyle{\sum}^{j-1}_{k=1} \|P_k  (d\bv_{j}-d\bv_{j-1}), P_k  (dh_{j}-dh_{j-1})\|_{L^1_{I_1 \cup I_2 } L^\infty_x }
		\\
		& + \textstyle{\sum}^{j-2}_{k=1} \textstyle{\sum}_{m=k}^{j-2}  \|P_k  (d\bv_{m+1}-d\bv_m), P_k  (dh_{m+1}-dh_m)\|_{L^1_{I_1 \cup I_2} L^\infty_x } .
	\end{split}
\end{equation*}
Noting that $ I_1 \cup I_2 \subseteq T^*_k$ when $k \leq j-1$, we therefore obtain
\begin{equation}\label{kz11A}
	\begin{split}
		& \|d \bv_j, d h_j\|_{L^1_{ I_1 \cup I_2 } L^\infty_x }
		\\
		\leq & 	\|P_{\geq j}d\bv_j, P_{\geq j}d h_j\|_{L^1_{I_1 \cup I_2} L^\infty_x }   + \textstyle{\sum}^{j-1}_{k=1} \|P_k d\bv_k, P_k dh_k\|_{L^1_{I_1 \cup I_2 } L^\infty_x }
		\\
		& + \textstyle{\sum}^{j-1}_{k=1} \|P_k  (d \bv_{j}-d \bv_{j-1}), P_k  (dh_{j}-dh_{j-1})\|_{L^1_{I_1 \cup I_2 } L^\infty_x }
		\\
		& + \textstyle{\sum}^{j-2}_{k=1} \textstyle{\sum}_{m=k}^{j-2}  \|P_k  (d\bv_{m+1}-d\bv_m), P_k  (dh_{m+1}-dh_m)\|_{L^1_{[0,T^*_{m+1}]} L^\infty_x }
	\end{split}
\end{equation}
Inserting \eqref{kz01}, \eqref{kz02} and \eqref{kz16} and \eqref{kz17} to \eqref{kz11A}, it follows that
\begin{equation}\label{kz11a}
	\begin{split}		
		\|d \bv_j, d h_j\|_{L^1_{ I_1 \cup I_2 } L^\infty_x }
		\leq & 	C (1+M_0)^3 (T^*_{N_0})^{\frac34} \textstyle{\sum}_{k=j}^{\infty} 2^{-\delta_{1} k} 2^{-6\delta_{1} j}\times 2
		\\
		& + C    (1+M_0)^3 (T^*_{N_0})^{\frac34} \textstyle{\sum}^{j-1}_{k=1} 2^{-\delta_{1}k} 2^{-5\delta_{1} (j-1)} \times 2
		\\
		& + C (1+M_0)^3 (T^*_{N_0})^{\frac34} \textstyle{\sum}^{j-1}_{k=1} 2^{-\delta_{1} k} 2^{-7\delta_{1} k}
		\\
		& +  C (1+M_0)^3 (T^*_{N_0})^{\frac34} \textstyle{\sum}^{j-2}_{k=1} \textstyle{\sum}_{m=k}^{j-2} 2^{-\delta_{1} k} 2^{-6\delta_{1} m}
		\\
		\leq & 	C (1+M_0)^3 (T^*_{N_0})^{\frac34} [\frac13(1-2^{-\delta_{1}})]^{-2}.
	\end{split}
\end{equation}
Applying \eqref{kz11a} and Theorem \ref{DW4}, we get
\begin{equation*}\label{kz15f}
	\begin{split}
		& |\bv_j,h_j| \leq 2+C_0,
		\\
		& E(t_2) \leq C_*.
	\end{split}
\end{equation*}
Therefore, we can repeat the process with a length with $(C_*)^{-1}2^{-\delta_{1} j}$ till extending it to $T^*_{j-1}$. Moreover, on every new time-interval with $(C_*)^{-1}2^{-\delta_{1} j}$, the estimates \eqref{kz16} and \eqref{kz17} hold. Set
\begin{equation}\label{times1}
	X_1= \frac{T^*_{j-1}-T^*_j}{C^{-3}_*2^{-\delta_{1} j}}= (2^{\delta_{1}}-1)C^3_* (CM_0)^{-3}.
\end{equation}
Then we need a maximum of $X_1$-times to reach the time $T^*_{j-1}$ both in case 2 (it's also adapt to case 1 for calculating the times). As a result, we can calculate
\begin{equation*}
	\begin{split}
		\|d \bv_j, d h_j\|_{L^1_{[0,T^*_{j-1}]} L^\infty_x}
		\leq & 	\|P_{\geq j}d\bv_j, P_{\geq j}d h_j\|_{L^1_{[0,T^*_{j-1}]} L^\infty_x}
		+ \textstyle{\sum}^{j-1}_{k=1} \|P_k d\bv_k, P_k dh_k\|_{L^1_{[0,T^*_{k}]} L^\infty_x}
		\\
		+ & \textstyle{\sum}^{j-1}_{k=1} \|P_k  (d\bv_{j}-d\bv_{j-1}), P_k  (dh_{j}-dh_{j-1})\|_{L^1_{[0,T^*_{j-1}]} L^\infty_x}.
		\\
		& + \textstyle{\sum}^{j-2}_{k=1} \textstyle{\sum}_{m=k}^{j-2}  \|P_k  (d\bv_{m+1}-d\bv_m), P_k  (dh_{m+1}-dh_m)\|_{L^1_{[0,T^*_{m}]} L^\infty_x}
	\end{split}
\end{equation*}	
Due to \eqref{kz01}, \eqref{kz02}, \eqref{kz16}, \eqref{kz17}, and \eqref{pp8}, we obtain
\begin{equation}\label{kzqt}
	\begin{split}
		& \|d \bv_j, d h_j\|_{L^1_{[0,T^*_{j-1}]} L^\infty_x}
		\\
		\leq & C (1+CM_0)^3 (T^*_{N_0})^{\frac34} \textstyle{\sum}_{k=j}^{\infty} 2^{-\delta_{1} k} 2^{-6\delta_{1} j}\times (2^{\delta_{1}}-1)C^3_* (CM_0)^{-3}
		\\
		& + C    (1+CM_0)^3 (T^*_{N_0})^{\frac34} \textstyle{\sum}^{j-1}_{k=1} 2^{-\delta_{1}k} 2^{-5\delta_{1} (j-1)} \times (2^{\delta_{1}}-1)C^3_* (CM_0)^{-3}
		\\
		& + C (1+CM_0)^3(T^*_{N_0})^{\frac34} \textstyle{\sum}^{j-1}_{k=1} 2^{-\delta_{1} k} 2^{-7\delta_{1} k}
		\\
		& +  C (1+CM_0)^3(T^*_{N_0})^{\frac34} \textstyle{\sum}^{j-2}_{k=1} \textstyle{\sum}_{m=k}^{j-2} 2^{-\delta_{1} k} 2^{-6\delta_{1} m}
		\\
		\leq & 	C (1+M_0)^3 (T^*_{N_0})^{\frac34}[\frac13(1-2^{-\delta_{1}})]^{-2}.
	\end{split}
\end{equation}
Therefore, by using \eqref{kzqt} and Theorem \eqref{DW4}, we get
\begin{equation}\label{kz270}
	\begin{split}
		& \| \bv_j, h_j \|_{L^\infty_{ [0,T^*_{j-1}]\times \mathbb{R}^3}} \leq 2+C_0,
		\\
		& E(T^*_{j-1}) \leq C_*.
	\end{split}
\end{equation}
At this stage, both in case 1 or case 2, seeing from \eqref{kz270}, \eqref{kzqt}, \eqref{kz14}, \eqref{kz15}, through a maximum of $X_1=(2^{\delta_{1}}-1)C^3_* (CM_0)^{-3}$ times with each length $C_*^{-3}2^{-\delta_{1}j}$ or $(2^{\delta_{1}}-1)(CM_0)^{-3}2^{-\delta_{1} j}$, we can extend the solutions $(h_j,\bv_j,\bw_j)$ from $[0,T^*_j]$ to $[0,T^*_{j-1}]$, and
\begin{equation}\label{kz27}
	\begin{split}
		& \| \bv_j, h_j \|_{L^\infty_{ [0,T^*_{j-1}]\times \mathbb{R}^2}} \leq 2+C_0, \quad E(T^*_{j-1}) \leq C_*,
		\\
		& \|d \bv_j, d h_j\|_{L^1_{[0,T^*_{j-1}]} L^\infty_x}
		\leq  	C(1+M_0)^3 (T^*_{N_0})^{\frac34}[\frac13(1-2^{-\delta_{1}})]^{-2}.
	\end{split}		
\end{equation}
From \eqref{kz27}, we have extended the solutions $(h_m,\bv_m,\bw_m)$ ($m\in[N_0, j-1]$) from $[0,T^*_m]$ to $[0,T^*_{m-1}]$. Moreover, referring \eqref{kz01} and \eqref{kz02}, \eqref{kz27}, and \eqref{times1}, we get
\begin{equation}\label{kz29}
	\begin{split}
		& \| P_{k} d \bv_m, P_{k} d h_m \|_{L^1_{[0,T^*_{m-1}]}L^\infty_x}
		\\
		\leq  & \| P_{k} d\bv_m, P_{k} d h_m \|_{L^1_{[0,T^*_{m}]}L^\infty_x}
		\\
		& +\| P_{k} d \bv_m, P_{k} d h_m \|_{L^1_{[T^*_{m},T^*_{m-1}]}L^\infty_x}
		\\
		\leq  & C(1+CM_0)^3(T^*_{N_0})^{\frac34} 2^{-\delta_{1}k} 2^{-7\delta_{1}m} + C(1+C_*)^3 2^{-\delta_{1}k} 2^{-7\delta_{1}m} \times (2^{\delta_{1}}-1)C^3_* (CM_0)^{-3}
		\\
		\leq  & C(1+M_0)^3(T^*_{N_0})^{\frac34} 2^{-\delta_{1}k} 2^{-7\delta_{1}m} + C(1+M_0)^3 2^{-\delta_{1}k} 2^{-6\delta_{1}m} \times (2^{\delta_{1}}-1).
	\end{split}	
\end{equation}
For $k \geq m\geq N_0+1$, due to \eqref{pp8} and \eqref{kz29}, it yields
\begin{equation}\label{kz31}
	\begin{split}
		\| P_{k} d \bv_m, P_{k} d h_m \|_{L^1_{[0,T^*_{m-1}]}L^\infty_x}
		\leq & C(1+M_0)^3(T^*_{N_0})^{\frac34} 2^{-\delta_{1}k} 2^{-5\delta_{1}m} \times 2^{\delta_{1}},
	\end{split}	
\end{equation}
Similarly, if $m\geq N_0+1$, using \eqref{pp8}, for $k<j$, the following estimate holds:
\begin{equation}\label{kz34}
	\begin{split}
		& \|P_k (d{h}_{m}-d{h}_{m-1}), P_k (d{\bv}_{m}-d{\bv}_{m-1}) \|_{L^1_{[0,T^*_{m-1}]} L^\infty_x}
		\\
		\leq    &  \|P_k (d{h}_{m}-d{h}_{m-1}), P_k  (d{\bv}_{m}-d{\bv}_{m-1}) \|_{L^1_{[0,T^*_{m}]} L^\infty_x}
		\\
		& +\|P_k (d {h}_{m}-d {h}_{m-1}), P_k (d {\bv}_{m}-d {\bv}_{m-1}) \|_{L^1_{[T^*_m,T^*_{m-1}]} L^\infty_x}
		\\
		\leq & C  (1+CM_0)^3 (T^*_{N_0})^{\frac34} 2^{-\delta_{1}k} 2^{-6\delta_1(m-1)}
		\\
		& + C  (1+C_*)^3 2^{-\delta_{1}k} 2^{-6\delta_1(m-1)} \times (2^{\delta_{1}}-1)C^3_* (CM_0)^{-3}
		\\
		\leq & C(1+M_0)^3(T^*_{N_0})^{\frac34} 2^{-\delta_{1}k} 2^{-5\delta_{1}(m-1)} \times 2^{\delta_{1}}.
	\end{split}	
\end{equation}
\quad \textbf{Step 2: Extending time interval $[0,T^*_j]$ to $[0,T^*_{N_0}]$}. Based the above analysis in Step 1, we can give a induction by achieving the goal. We assume the solutions $(h_j, \bv_j, w_j)$ can be extended from $[0,T^*_j]$ to $[0,T^*_{j-l}]$ through a maximam $X_l$ times and
\begin{equation}\label{kz41}
	\begin{split}
		X_l=& \frac{T_{j-l}^*- T_{j}^*}{C^{-3}_* 2^{-\delta_{1} j}}
		\\
		=& \frac{(CM_0)^{-1}( 2^{-\delta_{1}(j-l)} - 2^{-\delta_{1}j} )}{C^{-3}_* 2^{-\delta_{1} j}}
		\\
		=& \frac{C^3_*}{(CM_0)^3} (2^{\delta_{1}l}-1).
	\end{split}	
\end{equation}
Moreover, the following bounds
\begin{equation}\label{kz42}
	\begin{split}
		\| P_{k} d \bv_j, P_{k} d h_j \|_{L^1_{[0,T^*_{j-l}]}L^\infty_x}
		\leq & C(1+M_0)^3 (T^*_{N_0})^{\frac34} 2^{-\delta_{1}k} 2^{-5\delta_{1}j} \times 2^{\delta_{1}l}, \qquad k \geq j,
	\end{split}	
\end{equation}
and
\begin{equation}\label{kz43}
	\begin{split}
		& \|P_k (d{h}_{m+1}-d{h}_{m}), P_k (d{\bv}_{m+1}-d{\bv}_{m}) \|_{L^1_{[0,T^*_{m-l}]} L^\infty_x}
		\\
		\leq  & C(1+M_0)^3 (T^*_{N_0})^{\frac34} 2^{-\delta_{1}k} 2^{-5\delta_{1}m } \times 2^{\delta_{1}l}, \quad k<j,
	\end{split}	
\end{equation}
and
\begin{equation}\label{kz44}
	\|d \bv_j, d h_j\|_{L^1_{[0,T^*_{j-l}]} L^\infty_x}
	\leq  	C(1+M_0)^3 (T^*_{N_0})^{\frac34}[\frac13(1-2^{-\delta_{1}})]^{-2}.
\end{equation}
and
\begin{equation}\label{kz45}
	|\bv_j, h_j| \leq 2+C_0, \quad E(T^*_{j-l}) \leq C_*.
\end{equation}
In the following, we will check the estimates \eqref{kz41}, \eqref{kz42}, \eqref{kz43}, \eqref{kz44}, and \eqref{kz45} hold when $l=1$, and it also holds when we replace $l$ by $l+1$.

Using \eqref{kz27}, \eqref{kz31}, \eqref{times1}, and \eqref{kz34}, then \eqref{kz42}-\eqref{kz45} hold by taking $l=1$. Let us now check it for $l+1$. In this case, it implies that  $T^*_{j-l} \leq T^*_{N_0}$. Therefore, $j-l\geq N_0+1$ should hold. Starting at the time $T^*_{j-l}$, seeing \eqref{DTJ} and \eqref{kz45}, we shall get an extending time-interval of $(h_j, \bv_j,w_j)$ with a length of $(C_*)^{-3}2^{-\delta_{1} j}$. Hence, we can go to the case 2 in step 1, and the length every new time-interval is $C_*^{-3} 2^{-\delta_{1} j}$. Therefore, the times is
\begin{equation}\label{kz46}
	X= \frac{T^*_{j-(l+1)}-T^*_{j-l}}{C^{-3}_*2^{-\delta_{1} j}}= 2^{\delta_{1} l}(2^{\delta_{1}}-1)C^3_* (CM_0)^{-3}.
\end{equation}
Thus, we can deduce that
\begin{equation}\label{kz40}
	X_{l+1}=X_l+X= (2^{\delta_{1}(l+1)}-1)C^3_* (CM_0)^{-3}.
\end{equation}
Moreover, for $k \geq j$, we have
\begin{equation*}\label{kz48}
	\begin{split}
		\| P_{k} d \bv_j, P_{k} dh_j \|_{L^1_{[0, T^*_{j-(l+1)}]}L^\infty_x}\leq & \| P_{k} d \bv_j, P_{k} dh_j \|_{L^1_{[0,T^*_{j-l}]}L^\infty_x}
		\\
		& +	\| P_{k} d \bv_j, P_{k} dh_j \|_{L^1_{[T^*_{j-l}, T^*_{j-(l+1)}]}L^\infty_x}.
	\end{split}
\end{equation*}
Using \eqref{kz01} and \eqref{kz45}
\begin{equation}\label{kz49}
	\begin{split}
		& \| P_{k} d \bv_j, P_{k} dh_j \|_{L^1_{[T^*_{j-l}, T^*_{j-(l+1)}]}L^\infty_x}
		\\
		\leq   & C  (1+C_*)^3(T^*_{N_0})^{\frac34} 2^{-\delta_{1}k} 2^{-7\delta_{1}j}\times 2^{\delta_{1} l}(2^{\delta_{1}}-1)C^3_* (CM_0)^{-3}, \quad k\geq j.
	\end{split}
\end{equation}
Due to \eqref{pp8}, it yields
\begin{equation*}
	\begin{split}
		(1+C_*)^3 C^3_* (CM_0)^{-3} (1+CM_0)^{-3}2^{-\delta_{1} N_0} \leq 1.
	\end{split}
\end{equation*}
Hence, from \eqref{kz49} we have
\begin{equation}\label{kz50}
	\begin{split}
		& \| P_{k} d \bv_j, P_{k} dh_j \|_{L^1_{[T^*_{j-l}, T^*_{j-(l+1)}]}L^\infty_x}
		\\
		\leq   & C  (1+M_0)^3(T^*_{N_0})^{\frac34} 2^{-\delta_{1}k} 2^{-5\delta_{1}j}\times 2^{\delta_{1} l}(2^{\delta_{1}}-1), \quad k\geq j.
	\end{split}
\end{equation}
Using \eqref{kz42} and \eqref{kz50}, so we get
\begin{equation}\label{kz51}
	\begin{split}
		\| P_{k} d \bv_j, P_{k} d h_j \|_{L^1_{[0,T^*_{j-(l+1)}]}L^\infty_x}
		\leq & C(1+M_0)^3 (T^*_{N_0})^{\frac34} 2^{-\delta_{1}k} 2^{-5\delta_{1}j} \times 2^{\delta_{1}(l+1)}, \qquad k \geq j.
	\end{split}	
\end{equation}
If $k<j$, we can derive
\begin{equation}\label{kz52}
	\begin{split}
		& \|P_k (d{h}_{m+1}-d{h}_{m}), P_k (d{\bv}_{m+1}-d{\bv}_{m}) \|_{L^1_{[0,T^*_{m-(l+1)}]} L^\infty_x}
		\\
		\leq  & \|P_k (d{h}_{m+1}-d{h}_{m}), P_k (d{\bv}_{m+1}-d{\bv}_{m}) \|_{L^1_{[0,T^*_{m-l}]} L^\infty_x}
		\\
		& + \|P_k (d{h}_{m+1}-d{h}_{m}), P_k (d{\bv}_{m+1}-d{\bv}_{m}) \|_{L^1_{[T^*_{m-l},T^*_{m-(l+1)}]} L^\infty_x}.
	\end{split}	
\end{equation}
When we extend the solutions $(h_j, \bv_j, w_j)$ from $[0,T^*_{j-l}]$ to $[0,T^*_{j-(l+1)}]$, then the solutions $(h_m, \bv_m, w_m)$ is also extended from $[0,T^*_{m-l}]$ to $[0,T^*_{m-(l+1)}]$. Seeing \eqref{kz02} and \eqref{kz45}, we can obtain
\begin{equation}\label{kz53}
	\begin{split}
		& \|P_k (d{h}_{m+1}-d{h}_{m}), P_k (d{\bv}_{m+1}-d{\bv}_{m}) \|_{L^1_{[T^*_{m-l},T^*_{m-(l+1)}]} L^\infty_x}
		\\
		\leq  & C(1+C_*)^3 (T^*_{N_0})^{\frac34} 2^{-\delta_{1}k} 2^{-6\delta_{1}m } \times 2^{\delta_{1} l}(2^{\delta_{1}}-1)C^3_* (CM_0)^{-3}.
	\end{split}	
\end{equation}
Hence, for $m-l \geq N_0+1$, using \eqref{pp8}, it yields
\begin{equation*}
	(1+C_*)^3  C^3_* (CM_0)^{-3} (1+CM_0)^{-3} 2^{-\delta_{1}N_0 } \leq 1.
\end{equation*}
Based on the above results, \eqref{kz53} becomes
\begin{equation}\label{kz54}
	\begin{split}
		& \|P_k (d{h}_{m+1}-d{h}_{m}), P_k (d{\bv}_{m+1}-d{\bv}_{m}) \|_{L^1_{[T^*_{m-l},T^*_{m-(l+1)}]} L^\infty_x}
		\\
		\leq  & C(1+M_0)^3(T^*_{N_0})^{\frac34} 2^{-\delta_{1}k} 2^{-5\delta_{1}m } \times 2^{\delta_{1} l}(2^{\delta_{1}}-1).
	\end{split}	
\end{equation}
Inserting \eqref{kz43} and \eqref{kz54} to \eqref{kz52}, we have
\begin{equation}\label{kz55}
	\begin{split}
		& \|P_k (d{h}_{m+1}-d{h}_{m}), P_k (d{\bv}_{m+1}-d{\bv}_{m}) \|_{L^1_{[0,T^*_{m-(l+1)}]} L^\infty_x}
		\\
		\leq  & C(1+M_0)^3(T^*_{N_0})^{\frac34} 2^{-\delta_{1}k} 2^{-5\delta_{1}m } \times 2^{\delta_{1}(l+1)}, \quad k<j,
	\end{split}	
\end{equation}
Next, we can bound 
\begin{equation}\label{kz56}
	\begin{split}
		& \|d \bv_j, d h_j\|_{L^1_{[0,T^*_{j-(l+1)}]} L^\infty_x}
		\\
		\leq & \|P_{\geq j}d\bv_j, P_{\geq j}d h_j\|_{L^1_{[0,T^*_{j-(l+1)}]} L^\infty_x}
		\\
		& + \textstyle{\sum}^{j-1}_{k=j-l}\textstyle{\sum}^{j-1}_{m=k} \|P_k  (d\bv_{m+1}-d\bv_{m}), P_k  (dh_{m+1}-dh_{m})\|_{L^1_{[0,T^*_{j-(l+1)}]} L^\infty_x}
		\\
		& + \textstyle{\sum}^{j-1}_{k=1} \|P_k d\bv_k, P_k dh_k\|_{L^1_{[0,T^*_{j-(l+1)}]} L^\infty_x}
		\\
		=& 	\|P_{\geq j}d\bv_j, P_{\geq j}d h_j\|_{L^1_{[0,T^*_{j-(l+1)}]} L^\infty_x}
		\\
		& + \textstyle{\sum}^{j-1}_{k=j-l} \|P_k d\bv_k, P_k dh_k\|_{L^1_{[0,T^*_{j-(l+1)}]} L^\infty_x}
		+ \textstyle{\sum}^{j-(l+1)}_{k=1} \|P_k d\bv_k, P_k dh_k\|_{L^1_{[0,T^*_{j-(l+1)}]} L^\infty_x}
		\\
		& + \textstyle{\sum}^{j-1}_{k=1} \textstyle{\sum}^{j-1}_{m=j-(l+1)} \|P_k  (d\bv_{m+1}-d\bv_{m}), P_k  (dh_{m+1}-dh_{m})\|_{L^1_{[0,T^*_{j-(l+1)}]} L^\infty_x}
		\\
		& + \textstyle{\sum}^{j-(l+2)}_{k=1} \textstyle{\sum}_{m=k}^{j-(l+2)}  \|P_k  (d\bv_{m+1}-d\bv_m), P_k  (dh_{m+1}-dh_m)\|_{L^1_{[0,T^*_{j-(l+1)}]} L^\infty_x}
		\\
		=& R_1+  R_2+ R_3+ R_4+ R_5,
	\end{split}
\end{equation}
where
\begin{equation}\label{Theta}
	\begin{split}
		R_1= &  \|P_{\geq j}d\bv_j, P_{\geq j}d h_j\|_{L^1_{[0,T^*_{j-(l+1)}]} L^\infty_x} ,
		\\
		R_2= & \textstyle{\sum}^{j-(l+2)}_{k=1}\textstyle{\sum}^{j-(l+2)}_{m=k} \|P_k  (d\bv_{m+1}-d\bv_{m}), P_k  (dh_{m+1}-dh_{m})\|_{L^1_{[0,T^*_{j-(l+1)}]} L^\infty_x},
		\\
		R_3= & \textstyle{\sum}^{j-1}_{k=1}\textstyle{\sum}^{j-1}_{m=j-(l+1)} \|P_k  (d\bv_{m+1}-d\bv_{m}), P_k  (dh_{m+1}-dh_{m})\|_{L^1_{[0,T^*_{j-(l+1)}]} L^\infty_x},
		\\
		R_4 =& \textstyle{\sum}^{j-(l+1)}_{k=1}	\|P_{k}d\bv_k, P_{k}dh_k\|_{L^1_{[0,T^*_{j-(l+1)}]} L^\infty_x},
		\\
		R_5 =& \textstyle{\sum}^{j-1}_{k=j-l}	\|P_{k}d\bv_k, P_{k}dh_k\|_{L^1_{[0,T^*_{j-(l+1)}]} L^\infty_x}.
	\end{split}
\end{equation}
On time-interval $[0,T^*_{j-(l+1)}]$, we note that there is no growth for $R_2$ and $R_4$ in this extending process. For example, considering $R_2$, the existing time-interval of $P_k  (d\bv_{m+1}-d\bv_{m})$ is actually $[0,T^*_{m+1}]$, and $[0,T^*_{j-(l+1)}] \subseteq [0,T^*_{m+1}]$ if $m \geq j-(l+2)$. Therefore, we can use the bounds \eqref{kz01} and \eqref{kz02} to handle $R_2$ and $R_4$. While, considering $R_1, R_3$, and $R_5$, we need to calculate the growth in Strichartz estimates. Based on this idea, let us give a precise analysis on \eqref{Theta}.

According to \eqref{kz51}, we can estimate $R_1$ by
\begin{equation}\label{kz57}
	\begin{split}
		R_1 \leq & C(1+CM_0)^3(T^*_{N_0})^{\frac34} \textstyle{\sum}^{\infty}_{k=j} 2^{-\delta_{1}k} 2^{-5\delta_{1}j} \times 2^{\delta_{1}(l+1)}.
	\end{split}
\end{equation}
Due to \eqref{kz01}, we have
\begin{equation}\label{kz58}
	\begin{split}
		R_2\leq &	\textstyle{\sum}^{j-(l+2)}_{k=1}\textstyle{\sum}^{j-(l+2)}_{m=k} \|P_k  (d\bv_{m+1}-d\bv_{m}), P_k  (dh_{m+1}-dh_{m})\|_{L^1_{[0,T^*_{m+1}]} L^\infty_x},
		\\
		\leq &  C(1+CM_0)^3 (T^*_{N_0})^{\frac34} \textstyle{\sum}^{j-(l+2)}_{k=1}\textstyle{\sum}^{j-(l+2)}_{m=k} 2^{-\delta_{1}k} 2^{-6\delta_{1}m}.
	\end{split}	
\end{equation}
For $1 \leq k \leq j-1$ and $m \leq j-1$, using \eqref{kz55}, it follows
\begin{equation}\label{kz59}
	\begin{split}
		R_3\leq & \textstyle{\sum}^{j-1}_{k=1}\textstyle{\sum}^{j-1}_{m=j-(l+1)} \|P_k  (d\bv_{m+1}-d\bv_{m}), P_k  (dh_{m+1}-dh_{m})\|_{L^1_{[0,T^*_{m-(l+1)}]} L^\infty_x}
		\\
		\leq &  C  (1+CM_0)^3 (T^*_{N_0})^{\frac34} \textstyle{\sum}^{j-1}_{k=1}\textstyle{\sum}^{j-1}_{m=j-(l+1)}  2^{-\delta_{1}k} 2^{-5\delta_{1}m} \times 2^{\delta_{1}(l+1)}.
	\end{split}
\end{equation}
Due to \eqref{kz01}, it yields
\begin{equation}\label{kz60}
	\begin{split}
		R_4 \leq & \textstyle{\sum}^{j-(l+1)}_{k=1}	\|P_{k}d\bv_k, P_{k}dh_k\|_{L^1_{[0,T^*_{k}]} L^\infty_x}
		\\
		\leq & C  (1+CM_0)^3 (T^*_{N_0})^{\frac34} \textstyle{\sum}^{j-(l+1)}_{k=1} 2^{-\delta_{1}k} 2^{-7\delta_{1}k}.
	\end{split}
\end{equation}
If $j-l \leq k\leq j-1$, then $k+l+1-j \leq l$. By using \eqref{kz42}, we can estimate
\begin{equation}\label{kz61}
	\begin{split}
		R_5 =& \textstyle{\sum}^{j-1}_{k=j-l}	\|P_{k}d\bv_k, P_{k}dh_k\|_{L^1_{[0,T^*_{j-(l+1)}]} L^\infty_x}
		\\
		= & \textstyle{\sum}^{j-1}_{k=j-l}	\|P_{k}d\bv_k, P_{k}dh_k\|_{L^1_{[0,T^*_{k-(k+l+1-j)}]} L^\infty_x}
		\\
		\leq	& C  (1+CM_0)^3 (T^*_{N_0})^{\frac34} \textstyle{\sum}^{j-1}_{k=j-l} 2^{-\delta_{1}k} 2^{-5\delta_{1}j}2^{\delta_{1}(k+l+1-j)}.
	\end{split}
\end{equation}
Inserting \eqref{kz57}, \eqref{kz58}, \eqref{kz59}, \eqref{kz60}, \eqref{kz61} to \eqref{kz56}, it follows
\begin{equation}\label{kz62}
	\begin{split}
		& \|d \bv_j, d h_j\|_{L^1_{[0,T^*_{j-(l+1)}]} L^\infty_x}
		\\
		\leq & C(1+CM_0)^3 (T^*_{N_0})^{\frac34} \textstyle{\sum}^{\infty}_{k=j} 2^{-\delta_{1}k} 2^{-5\delta_{1}j} \times 2^{\delta_{1}(l+1)}
		\\
		& + C(1+CM_0)^3(T^*_{N_0})^{\frac34} \textstyle{\sum}^{j-(l+2)}_{k=1}\textstyle{\sum}^{j-(l+2)}_{m=k} 2^{-\delta_{1}k} 2^{-6\delta_{1}m}
		\\
		&+C  (1+CM_0)^3(T^*_{N_0})^{\frac34} \textstyle{\sum}^{j-1}_{k=1}\textstyle{\sum}^{j-1}_{m=j-(l+1)}  2^{-\delta_{1}k} 2^{-5\delta_{1}m} \times 2^{\delta_{1}(l+1)}
		\\
		& + C  (1+CM_0)^3(T^*_{N_0})^{\frac34} \textstyle{\sum}^{j-(l+1)}_{k=1} 2^{-\delta_{1}k} 2^{-7\delta_{1}k}
		\\
		& + C  (1+CM_0)^3(T^*_{N_0})^{\frac34} \textstyle{\sum}^{j-1}_{k=j-l} 2^{-\delta_{1}k} 2^{-5\delta_{1}k}2^{\delta_{1}(k+l+1-j)}
	\end{split}
\end{equation}
In the case of $j-l \geq N_0+1$ and $j\geq N_0+1$, the estimate \eqref{kz62} yields
\begin{equation}\label{kz63}
	\begin{split}
		& \|d \bv_j, d h_j\|_{L^1_{[0,T^*_{j-(l+1)}]} L^\infty_x}
		\\
		\leq & C(1+CM_0)^3(T^*_{N_0})^{\frac34}(1-2^{-\delta_{1}})^{-2} \big\{
		2^{-\delta_{1}j} 2^{\delta_{1}(l+1)}+ 2^{-6\delta_{1}}
		\\
		& \quad +2^{-5\delta_{1}[j-(l+1)]}  2^{\delta_{1}(l+1)}+ 2^{-6\delta_{1}}+ 2^{-5\delta_{1}(j-l)}2^{\delta_{1}(l+1-j)}
		\big\}
		\\
		\leq & C(1+CM_0)^3(T^*_{N_0})^{\frac34}(1-2^{-\delta_{1}})^{-2} \left\{
		2^{-6\delta_{1}N_0} + 2^{-6\delta_{1}} +	2^{-5\delta_{1}N_0} + 2^{-6\delta_{1}}+ 	2^{-6\delta_{1}N_0} 	\right\}
		\\
		\leq & C(1+M_0)^3(T^*_{N_0})^{\frac34}[\frac13(1-2^{-\delta_{1}})]^{-2}.
	\end{split}
\end{equation}
By using \eqref{pu00}, \eqref{pp8}, \eqref{kz63}, and Theorem \ref{DW4}, we have proved
\begin{equation}\label{kz64}
	\begin{split}
		|\bv_j, h_j|\leq 2+C_0, \quad  E(T^*_{j-(l+1)}) \leq C_*.
	\end{split}
\end{equation}
Gathering \eqref{kz40}, \eqref{kz51}, \eqref{kz55}, \eqref{kz63} and \eqref{kz64}, the estimates \eqref{kz41}-\eqref{kz45} hold for $l+1$. Thus, our induction hold \eqref{kz41}-\eqref{kz45} for $l=1$ to $l=j-N_0$. Therefore, we can extend the solutions $(h_j,\bv_j,\bw_j)$ from $[0,T^*_j]$ to $[0,T^*_{N_0}]$ when $j \geq N_0$. We denote
\begin{equation}\label{Tstar}
	T^*=\min\{ 1, T^*_{N_0} \}=\min\{ 1,  (CM_0)^{-3}2^{-\delta_{1} N_0} \}.
\end{equation}
Taking $l=j-N_0$ in \eqref{kz44}-\eqref{kz45}, we therefore get
\begin{equation}\label{kz65}
	\begin{split}
		& E(T^*) \leq  C_*, \quad \|\bv_j, h_j\|_{L^\infty_{[0,T^*]} L^\infty_x} \leq 2+C_0,
		\\
		& \|d \bv_j, d h_j\|_{L^1_{[0,T^*]} L^\infty_x}
		\leq  C(1+M_0)^3(T^*_{N_0})^{\frac34}[\frac13(1-2^{-\delta_{1}})]^{-2} \leq 2,
	\end{split}
\end{equation}
where $N_0$ and $C_*$ (depending on $C_0, c_0, s, M_0$) are denoted in \eqref{pp8} and \eqref{Cstar}. Similarly, we conclude
\begin{equation}\label{kz66}
	\begin{split}
		\|d \bv_j, d h_j\|_{L^4_{[0,T^*]} L^\infty_x}
		\leq & C(1+M_0)^3 [\frac13(1-2^{-\delta_{1}})]^{-2}.
	\end{split}
\end{equation}
Due to \eqref{Tstar}, \eqref{kz65}, \eqref{kz66}, we have proved \eqref{Duu0}, \eqref{Duu00}, and \eqref{Duu2}.

It still remains for us to prove \eqref{Duu21} and \eqref{Duu22}. We present the proof in the following subsection.
%\newpage
\subsubsection{Strichartz estimates of linear wave equation on time-interval $[0,T^*_{N_0}]$.}\label{finalq}
We still expect the behaviour of a linear wave equation endowed with $g_j=g(h_j,\bv_j)$. So we claim a theorem as follows
\begin{proposition}\label{rut}
	For $s-\frac34 \leq r \leq \frac{11}{4}$, there is a solution $f_j$ on $[0,T^*_{N_0}]\times \mathbb{R}^2$ satisfying the following linear wave equation
	\begin{equation}\label{ru01}
		\begin{cases}
			\square_{{g}_j} f_j=0,
			\\
			(f_j,\partial_t f_j)|_{t=0}=(f_{0j},f_{1j}),
		\end{cases}
	\end{equation}
	where $(f_{0j},f_{1j})=(P_{\leq j}f_0,P_{\leq j}f_1)$ and $(f_0,f_1)\in H_x^r \times H^{r-1}_x$. Moreover, for $a\leq r-(s-1) $, we have
	\begin{equation}\label{ru02}
		\begin{split}
			&\|\left< \nabla \right>^{a-1} d{f}_j\|_{L^4_{[0,T^*_{N_0}]} L^\infty_x}
			\leq  C_{M_0}(\|{f}_0\|_{{H}_x^r}+ \|{f}_1 \|_{{H}_x^{r-1}}),
			\\
			&\|{f}_j\|_{L^\infty_{[0,T^*_{N_0}]} H^{r}_x}+ \|\partial_t {f}_j\|_{L^\infty_{[0,T^*_{N_0}]} H^{r-1}_x} \leq  C_{M_0}(\| {f}_0\|_{H_x^r}+ \| {f}_1\|_{H_x^{r-1}}).
		\end{split}
	\end{equation}
\end{proposition}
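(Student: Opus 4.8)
The plan is to prove Proposition \ref{rut} by running, for the linear equation \eqref{ru01}, the same extension procedure used for the nonlinear solution in subsections \ref{esess}--\ref{finalk}; because \eqref{ru01} is linear, once the short-time estimate and an energy bound on the full interval are in hand the only substantive issue is keeping track of the loss of derivatives.

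I would begin with the short-interval input and the energy estimate. On $[0,T^*_j]$ one rescales by $T^*_j$, applies the physical localization of subsection \ref{esess} together with \eqref{DP33} of Proposition \ref{DDL3} -- legitimate because, after rescaling, the metric has norm $\ll\epsilon_3$ by the choice \eqref{pp8} of $N_0$ -- and rescales back to obtain, for $a<r-\tfrac34$, the no-loss Strichartz inequality $\|\langle\nabla\rangle^{a-1}df_j\|_{L^4_{[0,T^*_j]}L^\infty_x}\lesssim\|f_0\|_{H^r_x}+\|f_1\|_{H^{r-1}_x}$ together with the matching energy estimate; this is exactly \eqref{ru04}. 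For the energy bound on $[0,T^*_{N_0}]$ the standard identity for $\square_{g_j}f_j=0$ gives $\|df_j\|_{L^\infty_tH^{r-1}_x}\lesssim(\|f_0\|_{H^r_x}+\|f_1\|_{H^{r-1}_x})\exp\!\big(C\!\int_0^{T^*_{N_0}}\|dg_j\|_{L^\infty_x}d\tau\big)$, and $\int_0^{T^*_{N_0}}\|dg_j\|_{L^\infty_x}d\tau\lesssim\|dh_j,d\bv_j\|_{L^1_{[0,T^*_{N_0}]}L^\infty_x}\lesssim 1$ by \eqref{kz65}; together with a Newton--Leibniz bound for $\|f_j\|_{L^\infty_tL^2_x}$ this yields the second line of \eqref{ru02}.

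The core step is the Strichartz bound \eqref{ru02} on the full interval. I would partition $[0,T^*_{N_0}]$ into the $\sim 2^{\delta_1(j-N_0)}$ consecutive sub-intervals of length $\sim C_*^{-3}2^{-\delta_1 j}$ constructed in subsection \ref{finalk}; on each such sub-interval $I$ one restarts \eqref{ru01} from $(f_j,\partial_tf_j)|_{\partial I}$, whose $H^r\times H^{r-1}$ norm is controlled by the energy estimate above, and applies the no-loss Strichartz inequality after rescaling by $|I|$, which is permitted since $|I|^{s-1}E(T^*)\lesssim |I|^{s-1}C_*\ll\epsilon_3$ for $j\geq N_0$. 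Adding the resulting bounds in $L^4_tL^\infty_x$ over the sub-intervals, and accounting for the $|I|$-powers generated by the rescaling as in \eqref{po10}, produces an accumulated factor that is a small power of $2^j$; this is where the frequency truncation of the data is essential: decomposing $(f_{0j},f_{1j})=(P_{\leq j}f_0,P_{\leq j}f_1)=\sum_{k\le j}(P_kf_0,P_kf_1)$, propagating (up to rapidly decaying tails) the frequency localization of each dyadic constituent through all $j-N_0$ restarts, and summing, one trades the $2^{\delta_1 j}$-type factor for a loss of $\sim 10\delta_1=s-\tfrac74$ derivatives, so that the no-loss range $a<r-\tfrac34$ contracts exactly to $a\leq r-(s-1)$. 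The estimate with $\langle\nabla\rangle^{a-1}d$ replaced by $\langle\nabla\rangle^a$ follows identically, and the hypothesis $r\leq\tfrac{11}{4}$ enters only to guarantee $r\leq s+1$, so that Proposition \ref{DDL3} applies throughout.

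The main obstacle will be making this conversion of the sub-interval count into a clean derivative loss uniform in $j$. One must argue an almost-orthogonality/Bernstein statement -- the metric $g_j$ is not frequency localized, so one has to show that the generation-$k$ constituents of $f_j$ stay essentially at spatial frequency $2^k$ after the extension -- and then verify that the arithmetic of the sub-interval count, the rescaling exponents, the $L^4$-in-time summation, and the gap $r-\tfrac34-(r-(s-1))=s-\tfrac74=10\delta_1$ close, with exactly the constants $N_0$ and $C_*$ of \eqref{pp8} and \eqref{Cstar}; this is the precise analogue of the bookkeeping already carried out for $(h_j,\bv_j)$ in subsection \ref{finalk}.
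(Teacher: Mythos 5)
Your proposal captures the right architecture (short-time no-loss Strichartz from Proposition \ref{DDL3} after rescaling, extension over the $\sim 2^{\delta_1(j-N_0)}$ sub-intervals, conversion of the sub-interval count into a derivative loss of size $s-\frac74$), but two of the load-bearing steps are either wrong or missing. First, the energy estimate: you claim the standard identity gives $\|df_j\|_{L^\infty_t H^{r-1}_x}\lesssim(\|f_0\|_{H^r}+\|f_1\|_{H^{r-1}})\exp(C\int\|dg_j\|_{L^\infty_x}d\tau)$ for all $r\le\frac{11}{4}$. This fails once $r-1$ exceeds the regularity of the metric: the commutator $[\square_{g_j},\langle\nabla\rangle^{r-1}]f_j$ cannot be bounded by $\|dg_j\|_{L^\infty_x}\|df_j\|_{H^{r-1}_x}$ alone when $g_j$ is only in $H^s_x$ with $s\le\frac{15}{8}$ and $r$ is as large as $\frac{11}{4}$. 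The paper's proof devotes a substantial Kato--Ponce/Gagliardo--Nirenberg analysis (split into the cases $s-\frac34<r\le s$ and $s<r\le\frac{11}{4}$) to this commutator, and the resulting Gronwall inequality \eqref{ru23} couples the energy to the Strichartz norm $\int\|\langle\nabla\rangle^{r-(s-\frac34)}df_j\|_{L^\infty_x}d\tau$ via a double exponential; the energy and Strichartz bounds must then be closed simultaneously through the extension argument, not sequentially as you propose.

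Second, and more fundamentally, your mechanism for the low frequencies --- decomposing the data into dyadic pieces and ``propagating (up to rapidly decaying tails) the frequency localization of each constituent through all $j-N_0$ restarts'' --- is not justified and is not what the paper does. With a metric of regularity $H^s$, $s\le\frac{15}{8}$, a solution with frequency-localized data does not stay frequency localized, and the almost-orthogonality statement you flag as ``the main obstacle'' is precisely the step that would fail. The paper avoids it by a different decomposition: it introduces the whole family of linear solutions $f_m$ of $\square_{g_m}f_m=0$ with data $(P_{\le m}f_0,P_{\le m}f_1)$, writes $df_j=P_{\ge j}df_j+\sum_{k<j}\sum_{m\ge k}P_k(df_{m+1}-df_m)+\sum_{k<j}P_kdf_k$, and controls each difference $f_{m+1}-f_m$ via Duhamel for the equation $\square_{g_{m+1}}(f_{m+1}-f_m)=(g_{m+1}-g_m)\partial^2f_m$. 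The crucial smallness $2^{-8\delta_1 m}$ then comes from $\|g_{m+1}-g_m\|_{L^1_tL^\infty_x}$, which rests on the nonlinear difference bounds \eqref{yu0}--\eqref{yu1} for the background $(h_m,\bv_m,\bw_m)$, together with a Bernstein gain $2^{-\delta_1 k}$ from the output projection $P_k$ (and, for $k\ge j$, a trade of $9\delta_1$ derivatives for $2^{-\delta_1 k}2^{-8\delta_1 j}$). These gains are what absorb the factor $\{2^{\delta_1(j-N_0)}\}^{3/4}$ accumulated over the sub-intervals. Without the family $f_m$ and the difference equation, your argument has no source of summable decay in the low-frequency regime, so the sub-interval count cannot be converted into a finite derivative loss.
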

\begin{proof}[Proof of Proposition \ref{rut}.] Firstly, there are some Strichartz estimates for short time intervals. By applying the extension method described in subsection \ref{esest} and summing the short-time estimates corresponding to these intervals, we can derive a specific type of Strichartz estimate that incurs a loss of derivatives. Due to	
	\begin{equation}\label{ru060}
		a+9\delta_{1} \leq r-(s-1)+9\delta_{1} < r-(s-1)+(s-\frac74) <r-\frac34,
	\end{equation}	
	using \eqref{ru01}, \eqref{ru03} and \eqref{ru04}, we therefore have
	\begin{equation}\label{ru06}
		\begin{split}
			\| \left< \nabla \right>^{a-1+9\delta_{1}} df_j\|_{L^4_{[0,T^*_j]} L^\infty_x} \leq & C( \| f_{0j} \|_{H^r_x} + \| f_{1j} \|_{H^{r-1}_x}  )
			\\
			\leq & C ( \| f_{0} \|_{H^r_x} + \| f_{1} \|_{H^{r-1}_x}  ).
		\end{split}
	\end{equation}
	Next, we will discuss the cases of high frequency and low frequency. For $k\geq j$, which corresponds to the high frequency, we apply  Bernstein's inequality to obtain
	\begin{equation}\label{ru07}
		\begin{split}
			\| \left< \nabla \right>^{a-1} P_k df_j\|_{L^4_{[0,T^*_j]} L^\infty_x}
			=& C2^{-9\delta_{1} k}\| \left< \nabla \right>^{a-1+9\delta_{1}} P_k df_j\|_{L^4_{[0,T^*_j]} L^\infty_x}
			\\
			\leq & C2^{-\delta_{1} k} 2^{-8\delta_{1} j} \| \left< \nabla \right>^{a-1+9 \delta_{1}} df_j\|_{L^4_{[0,T^*_j]} L^\infty_x}.
		\end{split}
	\end{equation}
	Combining \eqref{ru06} and \eqref{ru07}, for $a \leq r-(s-1)$, we obtain
	\begin{equation}\label{ru070}
		\begin{split}
			\| \left< \nabla \right>^{a-1} P_k df_j\|_{L^4_{[0,T^*_j]} L^\infty_x}
			\leq & C2^{-\delta_{1} k} 2^{-8\delta_{1} j} ( \| f_{0} \|_{H^r_x} + \| f_{1} \|_{H^{r-1}_x}  ), \quad k\geq j.
		\end{split}
	\end{equation}
	For $k\leq j$, which corresponds to the low frequency, we need to discuss the difference terms. For any integer $m\geq 1$, we have
	\begin{equation*}
		\begin{cases}
			\square_{{g}_m} f_m=0, \quad [0,T^*_{m}]\times \mathbb{R}^2,
			\\
			(f_m,\partial_t f_m)|_{t=0}=(f_{0m},f_{1m}),
		\end{cases}
	\end{equation*}
	and
	\begin{equation*}
		\begin{cases}
			\square_{{g}_{m+1}} f_{m+1}=0, \quad [0,T^*_{m+1}]\times \mathbb{R}^2,
			\\
			(f_{m+1},\partial_t f_{m+1})|_{t=0}=(f_{0(m+1)},f_{1(m+1)}).
		\end{cases}
	\end{equation*}
	Then the difference term $f_{m+1}-f_m$ is satisfied by
	\begin{equation*}\label{ru08}
		\begin{cases}
			\square_{{g}_{m+1}} (f_{m+1}-f_m)=({g}^{\alpha i}_{m+1}-{g}^{\alpha i}_{m} )\partial^2_{\alpha i} f_m, \quad [0,T^*_{m+1}]\times \mathbb{R}^2,
			\\
			(f_{m+1}-f_m,\partial_t (f_{m+1}-f_m))|_{t=0}=(f_{0(m+1)}-f_{0m},f_{1(m+1)}-f_{1m}).
		\end{cases}
	\end{equation*}
	Due to \eqref{ru04}, \eqref{ru060}, and Duhamel's principle, which yields
	\begin{equation}\label{ru09}
		\begin{split}
			& \| \left< \nabla \right>^{a-1} P_k (d f_{m+1}-df_m)\|_{L^4_{[0,T^*_{m+1}]} L^\infty_x}
			\\
			\leq & C\| f_{0(m+1)}-f_{0m}\|_{H_x^{r-9\delta_{1}}} + C\|f_{1(m+1)}-f_{1m}\|_{H_x^{r-1-9\delta_{1}}}
			\\
			& \ + C\|({g}_{m+1}-{g}_{m}) \cdot\nabla d f_m\|_{L^1_{[0,T^*_{m+1}]} H^{r}_x}
			\\
			\leq & C2^{-9\delta_{1} m}( \| f_{0} \|_{H^r_x} + \| f_{1} \|_{H^{r-1}_x}  ) + C \| {g}_{m+1}-{g}_{m}\|_{L^1_{[0,T^*_{m+1}]} L^\infty_x} \| \nabla d f_m\|_{L^\infty_{[0,T^*_{m+1}]} H^{r-1}_x} .
		\end{split}
	\end{equation}
	By applying energy estimates and the Bernstein inequality, it follows that
	\begin{equation}\label{ru10}
		\begin{split}
			\| \nabla d f_m\|_{L^\infty_{[0,T^*_{m+1}]} H^{r-1}_x} \leq & C\| d f_m\|_{L^\infty_{[0,T^*_{m+1}]} H^{r}_x}
			\\
			\leq & C(\|f_{0m}\|_{H^{r+1}_x}+\|f_{1m}\|_{H^{r}_x})
			\\
			\leq & C2^m (\|f_{0m}\|_{H^r_x}+\|f_{1m}\|_{H^{r-1}_x}) .
		\end{split}
	\end{equation}
	Using H\"older's inequality, the Strichartz estimates \eqref{ru04} and Duhamel's principle, we obtain
	\begin{equation}\label{ru11}
		\begin{split}
			& 2^m \| {g}_{m+1}-{g}_{m}\|_{L^1_{[0,T^*_{m+1}]} L^\infty_x}
			\\
			\leq & 2^m (T^*_{m+1})^{\frac34}\| {\bv}_{m+1}-{\bv}_{m},  h_{m+1}-h_{m}\|_{L^4_{[0,T^*_{m+1}]} L^\infty_x}
			\\
			\leq & C2^m ( \| {\bv}_{m+1}-{\bv}_{m},  h_{m+1}-h_{m}\|_{L^\infty_{[0,T^*_{m+1}]} H^{\frac34+\delta_{1}}_x} + \| {w}_{m+1}-{w}_{m}\|_{L^\infty_{[0,T^*_{m+1}]} H^{\delta_{1}}_x})
			\\
			\leq & C2^{-8\delta_{1}m } (1+CM_0)^3.
		\end{split}
	\end{equation}
	Above, we also use \eqref{yu0}, \eqref{yu1}, and \eqref{DTJ}. According to \eqref{ru10} and \eqref{ru11}, for $k<j$ and $k\leq m$, \eqref{ru09} tells us that
	\begin{equation}\label{ru12}
		\begin{split}
			\| \left< \nabla \right>^{a-1} P_k (df_{m+1}-df_m)\|_{L^4_{[0,T^*_{m+1}]} L^\infty_x}
			\leq  C2^{-\delta_{1} k}2^{-8\delta_{1} m} \| (f_{0},f_1) \|_{H^r_x\times H^{r-1}_x}  (1+CM_0)^3.
		\end{split}
	\end{equation}
	Although we have established some estimates for difference terms, we also need to discuss the energy estimates. This is necessary because the key point of the extension method (in subsection \ref{esest}) crucially relies on the uniform bound of the energy. To achieve this goal, we take the operator $\left< \nabla \right>^{r-1}$ on \eqref{ru01}, which yields the following result:
	\begin{equation}\label{ru15}
		\begin{cases}
			\square_{{g}_j} \left< \nabla \right>^{r-1}f_j=[ \square_{{g}_j} ,\left< \nabla \right>^{r-1}] f_j,
			\\
			(\left< \nabla \right>^{r-1}f_j,\partial_t \left< \nabla \right>^{r-1}f_j)|_{t=0}=(\left< \nabla \right>^{r-1}f_{0j},\left< \nabla \right>^{r-1}f_{1j}).
		\end{cases}
	\end{equation}
	To estimate $[ \square_{{g}_j} ,\left< \nabla \right>^{r-1}] f_j$, we will divide it into two cases: $s-\frac34<r\leq s$ and $s<r\leq \frac{11}{4}$.
	
	\textbf{Case 1: $s-\frac34<r\leq s$.} In this situation, we note that
	\begin{equation}\label{ru14}
		\begin{split}
			[ \square_{{g}_j} ,\left< \nabla \right>^{r-1}] f_j
			=& [{g}^{\alpha i}_j-\mathbf{m}^{\alpha i}, \left< \nabla \right>^{r-1} \partial_i ] \partial_{\alpha}f_j + \left< \nabla \right>^{r-1}( \partial_i g_j \partial_{\alpha}f_j )
			\\
			= &[{g}_j-\mathbf{m}, \left< \nabla \right>^{r-1} \nabla] df_j + \left< \nabla \right>^{r-1}( \nabla g_j df_j).
		\end{split}
	\end{equation}
	By \eqref{ru14} and Kato-Ponce estimates, we have\footnote{If $r=s$, then $L^{\frac{3}{s-r}}_x=L^\infty_x$.}
	\begin{equation}\label{ru16}
		\| [ \square_{{g}_j} ,\left< \nabla \right>^{r-1}] f_j \|_{L^2_x} \leq C ( \|dg_j\|_{L^\infty_x} \|d f_j \|_{H^{r-1}_x} + \|\left< \nabla \right>^{r} (g_j-\mathbf{m}) \|_{L^{\frac{2}{1+r-s}}_x} \| df_j \|_{L^{\frac{2}{s-r}}_x} )
	\end{equation}
	By Sobolev's inequality, it follows that
	\begin{equation}\label{ru17}
		\|\left< \nabla \right>^{r} (g_j-\mathbf{m}) \|_{L^{\frac{2}{1+r-s}}_x} \leq C\|g_j-\mathbf{m}\|_{H^s_x}.
	\end{equation}
	Taking advantage of the Gagliardo-Nirenberg and Young's inequalities, which yields
	\begin{equation}\label{ru18}
		\begin{split}
			\| df_j \|_{L^{\frac{3}{s-r}}_x} \leq & C\|\left< \nabla \right>^{r-1} df_j \|^{\frac{3/4}{3/4+(2-s)}}_{L^{2}_x} \|\left< \nabla \right>^{r-(s-\frac34)} df_j \|^{\frac{2-s}{3/4+(2-s)}}_{L^{\infty}_x}
			\\
			\leq & C( \|df_j \|_{H^{r-1}_x}+ \|\left< \nabla \right>^{r-(s-\frac34)} df_j \|_{L^{\infty}_x})
		\end{split}
	\end{equation}
	\quad	\textbf{Case 2: $s<r\leq \frac{11}{4}$.} For $s<r\leq \frac{11}{4}$, applying Kato-Ponce estimates, we have
	\begin{equation}\label{ru150}
		\begin{split}
			\| [ \square_{{g}_j} ,\left< \nabla \right>^{r-1}] f_j \|_{L^2_x}
			= & \| {g}^{\alpha i}_j-\mathbf{m}^{\alpha i},\left< \nabla \right>^{r-1}] \partial^2_{\alpha i}f_j \|_{L^2_x}
			\\
			\leq & C ( \|dg_j\|_{L^\infty_x} \|d f_j \|_{H^{r-1}_x} + \|\left< \nabla \right>^{r-1} (g_j-\mathbf{m}) \|_{L^{\frac{2}{1+s-r}}_x} \|\nabla df_j \|_{L^{\frac{2}{r-s}}_x} ) .
		\end{split}
	\end{equation}
	By Sobolev's inequality, it follows that
	\begin{equation}\label{ru151}
		\|\left<\nabla \right>^{r-1} (g_j-\mathbf{m}) \|_{L^{\frac{2}{1+s-r}}_x} \leq C\|g_j-\mathbf{m}\|_{H^s_x}.
	\end{equation}
	Note that $\frac74<s\leq 2$. By applying the Gagliardo-Nirenberg and Young's inequalities, we derive:
	\begin{equation}\label{ru152}
		\begin{split}
			\| \nabla df_j \|_{L^{\frac{3}{1+s-r}}_x} \leq & C\|\left< \nabla \right>^{r-1} df_j \|^{\frac{3/4}{3/4+(2-s)}}_{L^{2}_x} \|\left< \nabla \right>^{r-(s-\frac34)} df_j \|^{\frac{2-s}{3/4+(2-s)}}_{L^{\infty}_x}
			\\
			\leq & C( \|df_j \|_{H^{r-1}_x}+ \|\left< \nabla \right>^{r-(s-\frac34)} df_j \|_{L^{\infty}_x}).
		\end{split}
	\end{equation}
	Therefore, in both cases, for \( s - \frac{3}{4} < r \leq \frac{11}{4} \), using \eqref{ru16}-\eqref{ru18} and  \eqref{ru150}-\eqref{ru152}, we can derive
	\begin{equation*}\label{ru19}
		\begin{split}
			\| [ \square_{{g}_j} ,\left< \nabla \right>^{r-1}] f_j \|_{L^2_x}\|d f_j \|_{H^{r-1}_x} \leq & C  \|dg_j\|_{L^\infty_x} \|d f_j \|^2_{H^{r-1}_x}
			+ C  \| g_j-\mathbf{m} \|_{H^s_x}\|d f_j \|^2_{H^{r-1}_x}
			\\
			& + C  \|g_j-\mathbf{m}\|_{H^s_x} \|\left< \nabla \right>^{r-\frac{s}{2}-1} df_j \|_{L^{\infty}_x} \|d f_j \|_{H^{r-1}_x}
			\\
			\leq & C  ( \|dg_j\|_{L^\infty_x}+ \|g_j-\mathbf{m}\|_{H^s_x}+ \|\left< \nabla \right>^{r-(s-\frac34)} df_j \|_{L^{\infty}_x} ) \|d f_j \|^2_{H^{r-1}_x}
			\\
			&+ C  \|g_j-\mathbf{m}\|^2_{H^s_x}  \|\left< \nabla \right>^{r-(s-\frac34)} df_j \|_{L^{\infty}_x} .
		\end{split}	
	\end{equation*}
	The above estimate, when combined with \eqref{ru15}, yields
	\begin{equation*}\label{ru20}
		\begin{split}
			\frac{d}{dt}\|d f_j \|^2_{H^{r-1}_x}
			\leq & C  (\|dg_j\|_{L^\infty_x}+ \|g_j-\mathbf{m}\|_{H^s_x}+ \|\left< \nabla \right>^{r-(s-\frac34)} df_j \|_{L^{\infty}_x} ) \|d f_j \|^2_{H^{r-1}_x}
			\\
			&+ C  \|g_j-\mathbf{m}\|^2_{H^s_x}  \|\left< \nabla \right>^{r-(s-\frac34)} df_j \|_{L^{\infty}_x} .
		\end{split}	
	\end{equation*}
	Using Gronwall's inequality, we obtain
	\begin{equation}\label{ru21}
		\begin{split}
			\|d f_j(t) \|^2_{H^{r-1}_x}
			\leq & C  (\|d f_j(0) \|^2_{H^{r-1}_x} + C \int^t_0 \|g_j-\mathbf{m}\|^2_{H^s_x}  \|\left< \nabla \right>^{r-(s-\frac34)} df_j \|_{L^{\infty}_x}d\tau)
			\\
			& \qquad \cdot\exp\left\{\int^t_0  \|dg_j\|_{L^\infty_x}+ \|g_j-\mathbf{m}\|_{H^s_x}+ \|\left< \nabla \right>^{r-(s-\frac34)} df_j \|_{L^{\infty}_x} d\tau \right\} .
		\end{split}	
	\end{equation}
	On the other hand, the Newton-Leibniz formula tells us
	\begin{equation}\label{ru22}
		\begin{split}
			\|f_j(t) \|_{L^{2}_x}
			\leq & \| f_j(0) \|_{L^2_x} + \int^t_0 \| \partial_t f_j \|_{L^2_x} d\tau.
		\end{split}	
	\end{equation}
	Summarizing the results from \eqref{ru21}, \eqref{ru22} and \eqref{kz65}, for \( t \in [0, T^*_{N_0}] \), we obtain
	\begin{equation}\label{ru23}
		\begin{split}
			& \| f_j(t) \|^2_{H^{r}_x} + \| d f_j(t) \|^2_{H^{r-1}_x}
			\\
			\leq & C  \textrm{e}^{C_*}(\|f_0 \|^2_{H^{r}_x}+\|f_1 \|^2_{H^{r-1}_x} + C_*)
			\\
			& \quad \cdot\exp \left\{\int^t_0   \|\left< \nabla \right>^{r-\frac{s}{2}-1} df_j \|_{L^{\infty}_x} d\tau \cdot 
			\exp \left( \int^t_0   \|\left< \nabla \right>^{r-(s-\frac34)} df_j \|_{L^{\infty}_x} d\tau \right)  \right\} .
		\end{split}	
	\end{equation}
	For \( a \leq r - (s - 1) \), based on \eqref{ru070}, \eqref{ru12}, and \eqref{ru23}, and following the extension method outlined in subsection \ref{esest}, we can obtain the following bounds:
	\begin{equation}\label{ru234}
		\begin{split}
			\| \left< \nabla \right>^{a-1} P_k df_j\|_{L^4_{[0,T^*_{N_0}]} L^\infty_x}
			\leq & C2^{-\delta_{1} k} 2^{-8\delta_{1} j} ( \| f_{0} \|_{H^r_x} + \| f_{1} \|_{H^{r-1}_x}  ) \times \left\{ 2^{\delta_{1} (j-N_0)} \right\}^{\frac34}
			\\
			\leq & C2^{-\frac34\delta_{1}N_0} 2^{-\delta_{1} k} 2^{-4\delta_{1} j} ( \| f_{0} \|_{H^r_x} + \| f_{1} \|_{H^{r-1}_x}  ), \quad k \geq j,
		\end{split}
	\end{equation}
	and
	\begin{equation}\label{ru25}
		\begin{split}
			& \| \left< \nabla \right>^{a-1} P_k (df_{m+1}-df_m)\|_{L^4_{[0,T^*_{N_0}]} L^\infty_x}
			\\
			\leq & C2^{-\delta_{1} k}2^{-8\delta_{1} m}( \| f_{0} \|_{H^r_x} + \| f_{1} \|_{H^{r-1}_x}  )(1+CM_0)^3 \times \left\{ 2^{\delta_{1} (m+1-N_0)} \right\}^{\frac34}
			\\
			\leq & C2^{-\frac34\delta_{1}N_0} 2^{-\delta_{1} k}2^{-4\delta_{1} m}( \| f_{0} \|_{H^r_x} + \| f_{1} \|_{H^{r-1}_x}  ) (1+CM_0)^3 , \quad k<j.
		\end{split}
	\end{equation}
	By decomposing the frequency, we obtain
	\begin{equation*}
		df_j= P_{\geq j} df_j+ \textstyle{\sum}^{j-1}_{k=1} \textstyle{\sum}^{j-1}_{m=k} P_k (df_{m+1}-df_m)+ \textstyle{\sum}^{j-1}_{k=1} P_k df_k.
	\end{equation*}
	By using \eqref{ru234} and \eqref{ru25}, we get
	\begin{equation}\label{ru230}
		\begin{split}
			\| \left< \nabla \right>^{a-1} df_{j} \|_{L^4_{[0,T^*_{N_0}]} L^\infty_x}
			\leq & C( \| f_{0} \|_{H^r_x} + \| f_{1} \|_{H^{r-1}_x}  )(1+CM_0)^3  [\frac13(1-2^{-\delta_{1}})]^{-2}.
		\end{split}
	\end{equation}
	Applying \eqref{ru23} and \eqref{ru230}, we obtain
	\begin{equation*}\label{ru26}
		\begin{split}
			\|f_j\|_{L^\infty_{[0,T^*_{N_0}]} H^{r}_x}+ \|d f_j\|_{L^\infty_{[0,T^*_{N_0}]} H^{r-1}_x }
			\leq & A_* \exp ( B_* \mathrm{e}^{B_*} ),
		\end{split}	
	\end{equation*}
	where $C_*$ is as stated in \eqref{Cstar} and 
	\begin{equation*}
		\begin{split}
			A_{*}=& C  \mathrm{e}^{C_*}(\|f_0 \|_{H^{r}_x}+\|f_1 \|_{H^{r-1}_x} + C_*),
			\\
			B_*=& C(\| f_{0} \|_{H^r_x} + \| f_{1} \|_{H^{r-1}_x})(1+CM_0)^3 [\frac13(1-2^{-\delta_{1}})]^{-2}.
		\end{split}
	\end{equation*}
	At this stage, we have proved \eqref{ru02}.
\end{proof}

\section{Proof of Theorem \ref{thm3} and Theorem \ref{thm4}}\label{sec10}
In this section, we assume the fluid is stiff, with the pressure given by $p(\varrho)=\varrho$. Since Remark \ref{rem}, we can obtain
\begin{equation}\label{nu00}
	c_s\equiv 1, \quad \Theta\equiv 1, \quad g=\Bm.
\end{equation}
Furthermore, due to \eqref{EW2}, we can see
\begin{equation}\label{nu01}
W^\alpha=\epsilon^{\alpha \beta \gamma} \partial_\beta w_\gamma.
\end{equation}
Therefore, we can derive a better wave-transport formulation than in the polytropic case.
\subsection{New formulations and estimates} Let us prove the following lemma:
%Denote
%\begin{equation}\label{null}
%\mathcal{N}_0(f,g)=\partial^\alpha f \partial_\alpha g.
%\end{equation}
\begin{Lemma}\label{Wrq}
	Let $(h,\bv)$ be a solution of \eqref{REEf} and $p(\varrho)=\varrho$. Let $\bw$ be defined in \eqref{Vor}. Then the system \eqref{REEf} can be written as
	\begin{equation}\label{wrq}
		\begin{cases}
			& \square h = \mathrm{e}^{-2h} w^\beta w_\beta - 2 \partial_\alpha h \partial^\alpha h- \mathrm{e}^{-2h}\partial_\alpha v^\beta \partial^\alpha v_\beta,
			\\
			& \square v^\alpha = -  W^\alpha ,
			\\
			& v^\kappa \partial_\kappa w^\alpha= w^\kappa \partial^\alpha v_\kappa,
			\\
			& v^\kappa \partial_\kappa W^\alpha= -\epsilon^{\alpha \beta \gamma} \partial_\beta v^\kappa \partial_\kappa w_\gamma 
			+ \epsilon^{\alpha \beta \gamma} \partial_\beta w_\kappa \partial_\gamma v^\kappa.
		\end{cases}
	\end{equation}
\end{Lemma}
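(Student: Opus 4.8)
The plan is to obtain \eqref{wrq} by specializing the general polytropic identities---Lemma \ref{Wrs} together with the vorticity transport equations of Lemma \ref{DW1}---to the stiff state $p(\varrho)=\varrho$. The starting point is Remark \ref{rem} and \eqref{nu00}: in this case $c_s\equiv 1$, $\Theta\equiv 1$, $g=\Bm$, so that $\square_g=\square$; moreover $c'_s=0$ because $c_s$ is constant, and by \eqref{nu01} we have $W^\alpha=\epsilon^{\alpha\beta\gamma}\partial_\beta w_\gamma$. I would also record at the outset the additional simplification coming from the first equation of \eqref{REEf}: with $c_s\equiv 1$ it collapses to the divergence-free constraint $\partial_\kappa v^\kappa=0$, which will kill several terms below.

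First I would handle the wave equation for $h$. Substituting $c_s\equiv 1$, $c'_s=0$, $\Theta\equiv 1$ into the expression \eqref{wrt0} for $\mathcal{D}$ makes the first term (proportional to $c'_s$) vanish, leaving $\mathcal{D}=-\mathrm{e}^{-2h}\partial_\kappa v^\beta\partial_\beta v^\kappa-2\partial_\kappa h\partial^\kappa h$. To bring this to the claimed form I would use the inverse-curl identity already exploited in the proof of Lemma \ref{DW1} (see the computation \eqref{a12}), namely $\partial_\beta v_\gamma-\partial_\gamma v_\beta=\epsilon_{\mu\beta\gamma}w^\mu$; contracting it against $\partial_\kappa v^\beta$ rewrites the ``crossed'' quadratic form as $\partial_\kappa v^\beta\partial_\beta v^\kappa=\partial_\kappa v^\beta\partial^\kappa v_\beta-w^\beta w_\beta$, and hence $\mathcal{D}=\mathrm{e}^{-2h}w^\beta w_\beta-2\partial_\kappa h\partial^\kappa h-\mathrm{e}^{-2h}\partial_\kappa v^\beta\partial^\kappa v_\beta$, which is the first equation of \eqref{wrq}. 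This index-bookkeeping step is the only genuinely non-mechanical point in the argument, and it is the one where I expect to need care with the raising and lowering conventions for $\epsilon$; everything else is ``set $c_s=1$ and watch terms disappear''.

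For the wave equation for $v^\alpha$, inspection of \eqref{wrt1} shows that every one of the five terms of $Q^\alpha$ carries a factor $c^2_s-1$ or $c'_s$, so $Q^\alpha\equiv 0$ in the stiff case; combined with $-c^2_s\Theta\,\epsilon^{\alpha\beta\gamma}\partial_\beta w_\gamma=-W^\alpha$ this yields $\square v^\alpha=-W^\alpha$. For the transport equation of the vorticity I would take \eqref{EW0} and drop the term $-w^\alpha\partial_\kappa v^\kappa$ using $\partial_\kappa v^\kappa=0$, obtaining $v^\kappa\partial_\kappa w^\alpha=w^\kappa\partial^\alpha v_\kappa$. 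Finally, in \eqref{EW1} the term containing $\partial^\kappa v_\kappa$ vanishes by divergence-freeness, and the three terms carrying $c^{-2}_s-1$ or $c'_s$ vanish since $c_s\equiv1$; what remains, after raising and lowering the dummy index in $\partial_\beta w^\kappa\partial_\gamma v_\kappa=\partial_\beta w_\kappa\partial_\gamma v^\kappa$, is exactly the fourth equation of \eqref{wrq}. Assembling these four identities completes the proof.
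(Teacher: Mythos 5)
Your proposal is correct and follows essentially the same route as the paper: specialize $\mathcal{D}$, $\bQ$, \eqref{EW0}, and \eqref{EW1} to $c_s\equiv1$, $c'_s=0$, $\Theta\equiv1$, use the constraint $\partial_\kappa v^\kappa=0$ from the first equation of \eqref{REEf}, and rewrite the crossed quadratic $\partial_\kappa v^\beta\partial_\beta v^\kappa$ via the antisymmetric-part/vorticity identity exactly as in the paper's computation \eqref{nu02}. No gaps.
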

\begin{proof}
For $c_s \equiv1$, using \eqref{REEf}, we obtain
\begin{equation}\label{nu03}
	\partial_\kappa v^\kappa=0.
\end{equation}
Inserting \eqref{nu03} into the third equation in \eqref{wrt}, it yields
\begin{equation}\label{nu05}
	v^\kappa \partial_\kappa w^\alpha= w^\kappa \partial^\alpha v_\kappa.
\end{equation}
	Due to \eqref{wrt0} and \eqref{nu00}, we can derive that
	\begin{equation}\label{nu02}
		\begin{split}
			\mathcal{D}=&  -\mathrm{e}^{-2h} \partial_\kappa v^\beta \partial_\beta v^\kappa-2 \partial_\kappa h \partial^\kappa h
			\\
			=& -\mathrm{e}^{-2h} \partial_\kappa v^\beta ( \partial_\beta v^\kappa - \partial^\kappa v_\beta) -\partial_\kappa v^\beta \partial^\kappa v_\beta -2  \partial_\kappa h \partial^\kappa h
			\\
			=& -\mathrm{e}^{-2h} \epsilon_\beta^{\ \ \kappa \alpha} \partial_\kappa v^\beta  w_\alpha -\mathrm{e}^{-2h} \partial_\kappa v^\beta \partial^\kappa v_\beta -2  \partial_\kappa h \partial^\kappa h
			\\
			=& \mathrm{e}^{-2h} w^\beta w_\beta- \mathrm{e}^{-2h} \partial_\alpha v^\beta \partial^\alpha v_\beta- 2 \partial_\alpha h\partial^\alpha h.
		\end{split}
	\end{equation}
	Similarly, using \eqref{wrt1} and \eqref{nu00}, we have
	\begin{equation}\label{nu04}
		Q^\alpha=0, \quad \alpha=0,1,2.
	\end{equation}
	Applying \eqref{nu00}, \eqref{nu03} and \eqref{EW1}, it yields
	\begin{equation}\label{nu06}
		v^\kappa \partial_\kappa W^\alpha= -\epsilon^{\alpha \beta \gamma} \partial_\beta v^\kappa \partial_\kappa w_\gamma 
		 + \epsilon^{\alpha \beta \gamma} \partial_\beta w_\kappa \partial_\gamma v^\kappa.
	\end{equation}
	Substituting \eqref{nu02} and \eqref{nu04} to \eqref{wrt}, and combining with \eqref{nu05} and \eqref{nu06}, we therefore obtain \eqref{wrq}.
\end{proof}

\begin{Lemma}\label{Wagc}
Assume $0<\epsilon'<\epsilon \leq \frac18$. Let $(h,\bv)$ be a solution of \eqref{REEf} and $p(\varrho)=\varrho$. Let $\bw$ be defined in \eqref{Vor}. Then we have the following energy inequality
	\begin{equation}\label{wagc}
			\begin{split}
				\|	\bw  \|^2_{{H}^{1+\epsilon'}_x} \lesssim  & \|\bw_0  \|^2_{{H}^{1+\epsilon'}} + \int^t_0 \left( \|d\bv\|_{L^\infty_x} + \|  d\bv \|_{\dot{B}^{\epsilon'}_{\infty,2}} \right) \|\bw \|^2_{{H}^{1+\epsilon'}_x}  d\tau  + \| \bw \cdot d\bv \|^2_{H^{\epsilon'}_x}.
			\end{split}
	\end{equation}
\end{Lemma}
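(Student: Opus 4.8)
The plan is to derive \eqref{wagc} directly from the transport equations for $\bw$ and $\bW$ in \eqref{wrq}, combined with the elliptic (Hodge) representation of $\nabla\bw$ in terms of $\bW$ that was already worked out in the proof of Theorem~\ref{DW4}. Concretely, since $\|\bw\|_{H^{1+\epsilon'}_x}\lesssim\|\bw\|_{L^2_x}+\|\nabla\bw\|_{H^{\epsilon'}_x}$, and by \eqref{ew16}--\eqref{ew18} together with Hodge decomposition $\nabla\mathring{\bw}=(-\Delta)^{-1}\nabla^2(\mathrm{curl}\,\mathring{\bw}+\mathrm{div}\,\mathring{\bw})$ one has
\begin{equation*}
	\|\nabla\bw\|_{H^{\epsilon'}_x}\lesssim\|\bW\|_{H^{\epsilon'}_x}+\|\bw\cdot d\bv\|_{H^{\epsilon'}_x},
\end{equation*}
it suffices to control $\|\bw\|_{L^2_x}$ and $\|\bW\|_{H^{\epsilon'}_x}$. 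Note that in the stiff case $c_s\equiv1$ kills the terms of the form $w\,\partial h\,\partial h$ in \eqref{EW1} and also the $\partial_\kappa v^\kappa=0$ (see \eqref{nu03}) kills the $\partial_\kappa v^\kappa$ factors, so $\bW=\epsilon^{\alpha\beta\gamma}\partial_\beta w_\gamma$ by \eqref{nu01} and the transport equation \eqref{nu06} has the clean structure $v^\kappa\partial_\kappa W^\alpha=(d\bv)\cdot(d\bw)$, with no $h$-dependence.

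First I would do the $L^2$ estimate for $\bw$: rewrite \eqref{nu05} as $\partial_t w^\alpha+(v^0)^{-1}v^i\partial_i w^\alpha=(v^0)^{-1}w^\kappa\partial^\alpha v_\kappa$, multiply by $w_\alpha$, integrate over $\mathbb{R}^2$, and use that the transport part contributes only a $\|\dv((v^0)^{-1}\bv)\|_{L^\infty_x}\lesssim\|d\bv\|_{L^\infty_x}$ factor after integration by parts; this gives $\frac{d}{dt}\|\bw\|^2_{L^2_x}\lesssim\|d\bv\|_{L^\infty_x}\|\bw\|^2_{L^2_x}$, which after integration sits comfortably inside the right-hand side of \eqref{wagc}. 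Second, I would handle $\|\bW\|_{H^{\epsilon'}_x}$: apply $\Lambda_x^{\epsilon'}$ to $\partial_t W^\alpha+(v^0)^{-1}v^i\partial_i W^\alpha=(v^0)^{-1}(d\bv)\cdot(d\bw)$, pair with $\Lambda_x^{\epsilon'}W^\alpha$ in $L^2_x$, and integrate. The transport commutator $[\Lambda_x^{\epsilon'},(v^0)^{-1}v^i\partial_i]\bW$ is bounded in $L^2_x$ by $\|d\bv\|_{L^\infty_x}\|\bW\|_{H^{\epsilon'}_x}$ plus (using $0<\epsilon'\le\frac18<s-1$) a term involving $\|d\bv\|_{\dot B^{\epsilon'}_{\infty,2}}$ by Lemma~\ref{pro}-type product/commutator bounds; this is exactly where the $\dot B^{\epsilon'}_{\infty,2}$ norm of $d\bv$ enters the statement. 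The forcing term $\|(v^0)^{-1}(d\bv)\cdot(d\bw)\|_{H^{\epsilon'}_x}$ is estimated via Lemma~\ref{pro}: $\|(d\bv)\cdot(d\bw)\|_{H^{\epsilon'}_x}\lesssim\|d\bv\|_{\dot B^{\epsilon'}_{\infty,2}}\|d\bw\|_{L^2_x}+\|d\bv\|_{L^\infty_x}\|d\bw\|_{H^{\epsilon'}_x}\lesssim(\|d\bv\|_{\dot B^{\epsilon'}_{\infty,2}}+\|d\bv\|_{L^\infty_x})\|\bw\|_{H^{1+\epsilon'}_x}$, after first reabsorbing $\|\nabla\bw\|_{H^{\epsilon'}_x}\lesssim\|\bW\|_{H^{\epsilon'}_x}+\|\bw\cdot d\bv\|_{H^{\epsilon'}_x}$.

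Putting the pieces together: one obtains
\begin{equation*}
	\|\bW(t)\|^2_{H^{\epsilon'}_x}\lesssim\|\bW_0\|^2_{H^{\epsilon'}}+\int_0^t\big(\|d\bv\|_{L^\infty_x}+\|d\bv\|_{\dot B^{\epsilon'}_{\infty,2}}\big)\|\bw\|^2_{H^{1+\epsilon'}_x}\,d\tau,
\end{equation*}
using also $\|\bW_0\|_{H^{\epsilon'}}\lesssim\|\bw_0\|_{H^{1+\epsilon'}}$ (which follows from \eqref{nu01} and is the stiff-case analogue of \eqref{ew36}). Adding the $L^2$ bound for $\bw$, feeding $\|\bw\|^2_{H^{1+\epsilon'}_x}\lesssim\|\bw\|^2_{L^2_x}+\|\bW\|^2_{H^{\epsilon'}_x}+\|\bw\cdot d\bv\|^2_{H^{\epsilon'}_x}$ back in, and noting $\|\bw\|_{L^2_x}\lesssim\|\bw\|_{H^{1+\epsilon'}_x}$, yields \eqref{wagc}. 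The main obstacle I anticipate is the careful handling of the transport-commutator term $[\Lambda_x^{\epsilon'},(v^0)^{-1}v^i\partial_i]\bW$ and the product $(d\bv)\cdot(d\bw)$ at the low regularity $\epsilon'\le\frac18$: here one cannot use $C^{\epsilon'}_x$ bounds freely and must route everything through the $\dot B^{\epsilon'}_{\infty,2}$ norm via Lemma~\ref{pro}, which is precisely why that norm appears on the right-hand side of \eqref{wagc} rather than the simpler $\|d\bv\|_{C^{\epsilon'}_x}$. Everything else is standard energy-estimate bookkeeping analogous to Steps~1--2 in the proof of Theorem~\ref{DW4}.
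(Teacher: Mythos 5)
Your proposal is correct and follows essentially the same route as the paper: the same splitting $\|\bw\|_{H^{1+\epsilon'}_x}\lesssim\|\bw\|_{L^2_x}+\|\nabla\bw\|_{H^{\epsilon'}_x}$ with the Hodge/elliptic reduction of $\nabla\bw$ to $\bW$ and $\bw\cdot d\bv$ via \eqref{ew16}--\eqref{ew18}, the $L^2$ transport estimate for $\bw$ from \eqref{nu05}, and the $H^{\epsilon'}$ transport estimate for $\bW$ from \eqref{nu06} with the commutator and the product $(d\bv)\cdot(d\bw)$ handled through Lemma~\ref{pro}, which is exactly how the $\dot B^{\epsilon'}_{\infty,2}$ norm enters in the paper as well. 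No gaps.
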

\begin{proof}
	First, we have
\begin{equation}\label{nu08}
	\| \bw \|_{H^{1+\epsilon'}_x} = \| \bw \|_{L^{2}_x}+\| \bw \|_{\dot{H}^{1+\epsilon'}_x}.
\end{equation}
Noting $\bw=(w^0,\mathring{\bw})$ and using elliptic estimates, we obtain
\begin{equation*}\label{nu10}
	\begin{split}
	\| \bw \|_{\dot{H}^{1+\epsilon'}_x} = & \| w^0 \|_{\dot{H}^{1+\epsilon'}_x}+\| \mathring{\bw} \|_{\dot{H}^{1+\epsilon'}_x}
	\\
	=& \| w^0 \|_{\dot{H}^{1+\epsilon'}_x}+\| \textrm{curl} \mathring{\bw} \|_{\dot{H}^{\epsilon'}_x} +\| \textrm{div} \mathring{\bw} \|_{\dot{H}^{\epsilon'}_x}.
	\end{split}
\end{equation*}
Due to \eqref{ew18}, \eqref{nu00}, and \eqref{nu03}, it yields
\begin{equation*}
	\begin{split}
		\textrm{div} \mathring{\bw}= & (v^0)^{-1} v^i \epsilon_{\alpha i 0} W^\alpha- (v^0)^{-2} v^i v^j \partial_j w_i +(v^0)^{-2} v^i  w^\kappa \partial_i v_\kappa  
	 +
		(v^0)^{-1} w^\kappa \partial_t v_\kappa .
	\end{split}
\end{equation*}
The above equation tells us 
\begin{equation}\label{nu12}
	\begin{split}
		\| \mathrm{div} \mathring{\bw} \|_{H^{\epsilon'}_x} \leq  & C\| \bW \|_{H^{\epsilon'}_x} +(\frac{|\mathring{\bv}|}{v^0})^2 \|\mathring{\bw} \|_{H^{\epsilon'}_x}  + C\| \bw \cdot d\bv\|_{H^{\epsilon'}_x} .
	\end{split}
\end{equation}
By using \eqref{nu01}, we can see
\begin{equation}\label{nu14}
	\begin{split}
		\| \mathrm{curl} \mathring{\bw} \|_{H^{\epsilon'}_x} \leq  & \| \bW \|_{H^{\epsilon'}_x}  .
	\end{split}
\end{equation}
Taking advantage of \eqref{nu12} and \eqref{nu14}, we get
\begin{equation}\label{nu16}
	\left\{  1- (\frac{|\mathring{\bv}|}{v^0})^2  \right\} \| \mathring{\bw} \|_{\dot{H}^{\epsilon'}_x} \lesssim  \| \bW \|_{H^{\epsilon'}_x}+ \| \bw \cdot  d\bv\|_{H^{\epsilon'}_x}.
\end{equation}
By use of \eqref{rsv}, then \eqref{nu16} becomes
\begin{equation}\label{nu18}
	\| \mathring{\bw} \|_{\dot{H}^{\epsilon'}_x} \lesssim  \| \bW \|_{H^{\epsilon'}_x}+ \| \bw \cdot d\bv\|_{H^{\epsilon'}_x}.
\end{equation}
By using \eqref{ew16}, \eqref{nu00} and \eqref{nu03}, we obtain
\begin{equation}\label{nu20}
	\begin{split}
		\partial_i w_0  =& 	\epsilon_{\alpha i 0} \epsilon^{\alpha \beta \gamma} \partial_\beta w_\gamma
		- (v^0)^{-1} v^j \partial_j w_i + (v^0)^{-1} w^\kappa \partial_i v_\kappa - (v^0)^{-1} w_i \partial^\kappa v_\kappa  
		\\
		=& 	\epsilon_{\alpha i 0}  W^\alpha 
		- (v^0)^{-1} v^j \partial_j w_i 
		 + (v^0)^{-1} w^\kappa \partial_i v_\kappa .
	\end{split}
\end{equation}
Due to \eqref{nu20} and \eqref{nu18}, we can derive
\begin{equation}\label{nu22}
	\begin{split}
		\| w^0 \|_{\dot{H}^{1+\epsilon'}_x} \lesssim  &  \| \bW \|_{H^{\epsilon'}_x} + \|\mathring{\bw} \|_{H^{\epsilon'}_x}  + \| \bw \cdot d\bv \|_{H^{\epsilon'}_x} 
		\\
		\lesssim  &  \| \bW \|_{H^{\epsilon'}_x} + \| \bw \cdot d\bv\|_{H^{\epsilon'}_x} .
	\end{split}
\end{equation}
Gathering \eqref{nu08}, \eqref{nu18}, and \eqref{nu22}, we get
\begin{equation}\label{nu24}
	\begin{split}
		\| \bw \|_{{H}^{1+\epsilon'}_x} \lesssim  &  \| \bw \|_{L^{2}_x} + \| \bW \|_{H^{\epsilon'}_x} + \| \bw \cdot d\bv\|_{H^{\epsilon'}_x} .
	\end{split}
\end{equation}
Multiplying $w^\alpha$ on \eqref{nu05}, integrating it on $[0,t]\times \mathbb{R}^2$, and summing $\alpha$ over $0$ to $2$, it yields
\begin{equation}\label{nu26}
	\begin{split}
		\|  \bw \|^2_{L^2_x} \lesssim & \|  \bw_0 \|^2_{L^2_x} + \int^t_0 \left( \|(v^0)^{-1} d\bv\|_{L^\infty_x} + \|  (v^0)^{-2} \bv\cdot \nabla \bv \|_{L^\infty_x} \right) \|  \bw \|^2_{L^2_x} d\tau
		\\
		\lesssim & \|  \bw_0 \|^2_{L^2_x} + \int^t_0 \|d\bv\|_{L^\infty_x} \|  \bw \|^2_{L^2_x}  d\tau.
	\end{split}
\end{equation}
Operating $\Lambda_x^{\epsilon'}$ on the equation \eqref{nu06}, we have
\begin{equation}\label{nu28}
\begin{split}
	\partial_t \Lambda_x^{\epsilon'} W^\alpha + (v^0)^{-1}v^i \partial_i \Lambda_x^{\epsilon'} W^\alpha= & 
	-[\Lambda_x^{\epsilon'} ,(v^0)^{-1}v^i \partial_i]W^\alpha -\epsilon^{\alpha \beta \gamma} \Lambda_x^{\epsilon'}  \left\{  (v^0)^{-1} \partial_\beta v^\kappa \partial_\kappa w_\gamma \right\}
	\\
	&   + \epsilon^{\alpha \beta \gamma} \Lambda_x^{\epsilon'}  \left\{ (v^0)^{-1} \partial_\beta w_\kappa \partial_\gamma v^\kappa \right\}.
\end{split}	
\end{equation}
Multiplying $\Lambda_x^{\epsilon'} W^\alpha$ on \eqref{nu28}, integrating it on $[0,t]\times \mathbb{R}^2$, and summing $\alpha$ over $0$ to $2$, we can derive
\begin{equation}\label{nu30}
	\begin{split}
	\|	\Lambda_x^{\epsilon'} \bW  \|^2_{L^2_x} \lesssim  & \|\Lambda_x^{\epsilon'}	\bW_0  \|^2_{L^2_x} + \int^t_0 \|d\bv\|_{L^\infty_x} \|\Lambda_x^{\epsilon'}	\bW  \|^2_{L^2_x}  d\tau 
	\\
	& + \int^t_0 \| [\Lambda_x^{\epsilon'} ,(v^0)^{-1}v^i \partial_i] \bW \|_{L^2_x} \|	\Lambda_x^{\epsilon'} \bW  \|_{L^2_x} d\tau
	\\
	& + \int^t_0 \| \Lambda_x^{\epsilon'}( d\bv \cdot d\bw ) \|_{L^2_x} \|	\Lambda_x^{\epsilon'} \bW  \|_{L^2_x} d\tau.
	\end{split}
\end{equation}
By using Lemma \ref{jiaohuan3} and \ref{pro}, we can bound the estimate \eqref{nu30} by
\begin{equation*}\label{nu32}
	\begin{split}
		\|	\Lambda_x^{\epsilon'} \bW  \|^2_{L^2_x} \lesssim  & \|\Lambda_x^{\epsilon'}	\bW_0  \|^2_{L^2_x} + \int^t_0 \|d\bv\|_{L^\infty_x} \|\Lambda_x^{\epsilon'}	\bW  \|^2_{L^2_x}  d\tau 
		\\
		& + \int^t_0 \|  d\bv \|_{\dot{B}^{\epsilon'}_{\infty,2}} \|d	\bw  \|_{L^2_x} \|	\Lambda_x^{\epsilon'} \bW  \|_{L^2_x} d\tau
		\\
		& + \int^t_0 \|  d\bv \|_{L^\infty_x} \|	d \bw  \|_{L^2_x} \|	\Lambda_x^{\epsilon'} \bW  \|_{L^2_x} d\tau.
	\end{split}
\end{equation*}
This implies
\begin{equation}\label{nu34}
	\begin{split}
		\|	\bW  \|^2_{\dot{H}^{\epsilon'}_x} \lesssim  & \|\bw_0  \|^2_{{H}^{1+\epsilon'}} + \int^t_0 \left( \|d\bv\|_{L^\infty_x} + \|  d\bv \|_{\dot{B}^{\epsilon'}_{\infty,2}} \right) \|\bw \|^2_{{H}^{1+\epsilon'}_x}  d\tau  .
	\end{split}
\end{equation}
Combining with \eqref{nu24}, \eqref{nu26}, and \eqref{nu34}, we therefore get
\begin{equation*}\label{nu36}
	\begin{split}
		\|	\bw  \|^2_{{H}^{1+\epsilon'}_x} \lesssim  & \|\bw_0  \|^2_{{H}^{1+\epsilon'}} + \int^t_0 \left( \|d\bv\|_{L^\infty_x} + \|  d\bv \|_{\dot{B}^{\epsilon'}_{\infty,2}} \right) \|\bw \|^2_{{H}^{1+\epsilon'}_x}  d\tau  + \| \bw \cdot d\bv \|^2_{H^{\epsilon'}_x}.
	\end{split}
\end{equation*}
\end{proof}
We also need the decomposition of the rescaled velocity $\bv$. The following lemma can be directly obtained by substituting \eqref{nu00} and \eqref{nu04} into \eqref{wag0}.
\begin{Lemma}\label{Wagb}
	Let $(h,\bv)$ be a solution of \eqref{REEf} and $p(\varrho)=\varrho$. Let $\bv_{+}$ and $\bv_{-}$ be defined in \eqref{De}. Then we have
	\begin{equation}\label{wagb}
		\begin{split}
			\square \bv_{+}
			=& -2\mathrm{e}^{-2h}(v^0)^2  \mathbf{T} \mathbf{T} \bv_{-}- \bv_{-},
		\end{split}
	\end{equation}
	where $\mathbf{T}$ is stated as in \eqref{opt}.
\end{Lemma}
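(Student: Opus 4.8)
\textbf{Proof proposal for Lemma \ref{Wagb}.} The plan is to specialize the general identity \eqref{wag0} from Lemma \ref{Wag} to the stiff case $p(\varrho)=\varrho$. First I would observe that the operator $\mathbf{P}$ in \eqref{De1}, and hence the splitting $\bv=\bv_{+}+\bv_{-}$ defined through \eqref{De0}--\eqref{De}, does not depend on the equation of state; in particular $\mathbf{P}$ remains a space-time elliptic operator by Lemma \ref{Ees}, so $\bv_{\pm}$ are well defined exactly as in the polytropic setting and \eqref{wag0} applies verbatim to the present solution $(h,\bv)$.

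Next I would invoke Lemma \ref{Wag}, which gives
\begin{equation*}
	\square_{{g}} \bv_{+} = \Theta\,\mathrm{e}^{-2h}(v^0)^2 (1-3 c^2_s)\, \mathbf{T}\mathbf{T}\bv_{-} - c^2_s \Theta\, \bv_{-} + \bQ .
\end{equation*}
Now I substitute the stiff-case reductions recorded in \eqref{nu00} and \eqref{nu04}: since $c_s\equiv 1$ and $\Theta\equiv 1$ we have $g=\Bm$, hence $\square_g = g^{\alpha\beta}\partial^2_{\alpha\beta} = m^{\alpha\beta}\partial^2_{\alpha\beta} = \square$; moreover $1-3c_s^2 = -2$ and $c_s^2\Theta = 1$; finally \eqref{nu04} gives $\bQ = \mathbf{0}$. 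Plugging these in turns the right-hand side into $-2\,\mathrm{e}^{-2h}(v^0)^2\,\mathbf{T}\mathbf{T}\bv_{-} - \bv_{-}$, which is precisely \eqref{wagb}.

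I do not anticipate a genuine obstacle here: the lemma is an algebraic specialization, and the only point worth spelling out carefully is the consistency check that all objects entering \eqref{wag0} (the metric $g$, the operators $\mathbf{P}$ and $\mathbf{T}$, the decomposition $\bv_{\pm}$, and the quadratic terms $\bQ$) coincide with or reduce to the quantities appearing in \eqref{wagb} when $p(\varrho)=\varrho$. Everything else is the bookkeeping of setting $c_s=1$, $\Theta=1$, $\bQ=0$.
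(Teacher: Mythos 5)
Your proposal is correct and is exactly the paper's own argument: the text preceding Lemma \ref{Wagb} states that it "can be directly obtained by substituting \eqref{nu00} and \eqref{nu04} into \eqref{wag0}," which is precisely your specialization $c_s\equiv 1$, $\Theta\equiv 1$, $g=\Bm$, $\bQ=\mathbf{0}$ applied to Lemma \ref{Wag}. The arithmetic $1-3c_s^2=-2$ and $c_s^2\Theta=1$ checks out, so nothing further is needed.
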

\subsection{Proof of Theorem \ref{thm3}}
We will use bootstrap arguments to prove the existence result stated in Theorem \ref{thm3}. Set
\begin{equation}\label{nu66}
	T_0 =\max\{  1, \frac{1}{4C^2\big( 1+\|h_0, \bv_0 \|_{H^{\frac74+\epsilon}} + \|\bw_0  \|_{H^{1+\epsilon'}}  \big)^8} \},
\end{equation}
where $C$ is a universal positive constant. Assume the estimate
\begin{equation}\label{asu}
\int^{T_0}_0 \left( \|dh, d\bv\|_{L^\infty_x} + \|  dh,d\bv \|_{\dot{B}^{\epsilon'}_{\infty,2}} \right) d\tau \leq 1,
\end{equation} 
holds. By using Lemma \ref{DW3}, for $t\leq T_0 $ we derive that
\begin{equation}\label{nu40}
	\begin{split}
		\| (h, \bv) \|_{H^{\frac74+\epsilon}_x} \leq  & C\| ( h_0, \bv_0 ) \|_{H^{\frac74+\epsilon}} \exp \left( \int^t_0   \|(dh,d\bv)\|_{L^\infty_x}  d\tau \right) 
		\\
		\leq  & C\| ( h_0, \bv_0 ) \|_{H^{\frac74+\epsilon}} .
	\end{split}
\end{equation} 
Due to Lemma \ref{Wagc} and $0<\epsilon'<\epsilon$, when $t\leq T_0 $, we obtain
\begin{equation}\label{nu42}
	\begin{split}
		\|	\bw  \|^2_{{H}^{1+\epsilon'}_x} \leq  & C\|\bw_0  \|^2_{{H}^{1+\epsilon'}} + C\int^t_0 \left( \|d\bv\|_{L^\infty_x} + \|  d\bv \|_{\dot{B}^{\epsilon'}_{\infty,2}} \right) \|\bw \|^2_{{H}^{1+\epsilon'}_x}  d\tau  + C\| \bw \cdot d\bv \|^2_{H^{\epsilon'}_x}
		\\
		\leq  & C\|\bw_0  \|^2_{{H}^{1+\epsilon'}} + C\int^t_0 \left( \|d\bv\|_{L^\infty_x} + \|  d\bv \|_{\dot{B}^{\epsilon'}_{\infty,2}} \right) \|\bw \|^2_{{H}^{1+\epsilon'}_x}  d\tau  + C\| d \bv \cdot d\bv \|^2_{H^{\epsilon'}_x}
		\\
		 \leq  & C\|\bw_0  \|^2_{{H}^{1+\epsilon'}} + C\int^t_0 \left( \|d\bv\|_{L^\infty_x} + \|  d\bv \|_{\dot{B}^{\epsilon'}_{\infty,2}} \right) \|\bw \|^2_{{H}^{1+\epsilon'}_x}  d\tau  + C\| \bv \|^4_{H^{\frac74+ \epsilon}_x}.
	\end{split}
\end{equation}
Combining \eqref{nu40} with \eqref{nu42}, we find
\begin{equation}\label{nu44}
	\begin{split}
		\|	\bw  \|^2_{{H}^{1+\epsilon'}_x} 
		\leq  & C\|\bw_0  \|^2_{{H}^{1+\epsilon'}} + C\int^t_0 \left( \|d\bv\|_{L^\infty_x} + \|  d\bv \|_{\dot{B}^{\epsilon'}_{\infty,2}} \right) \|\bw \|^2_{{H}^{1+\epsilon'}_x}  d\tau  + C^4\mathrm{e}^4\| ( h_0, \bv_0 ) \|^4_{H^{\frac74+\epsilon}}.
	\end{split}
\end{equation}
Applying Gronwall's inequality on \eqref{nu44}, it yields
\begin{equation*}
	\begin{split}
		\|	\bw  \|^2_{{H}^{1+\epsilon'}_x} 
		\leq  & C(C\|\bw_0  \|^2_{{H}^{1+\epsilon'}} + C^4\| ( h_0, \bv_0 ) \|^4_{H^{\frac74+\epsilon}} ) \exp \left\{  \int^t_0 \left( \|d\bv\|_{L^\infty_x} + \|  d\bv \|_{\dot{B}^{\epsilon'}_{\infty,2}} \right) d\tau \right\} 
		\\
			\leq  & C(\|\bw_0  \|^2_{{H}^{1+\epsilon'}} + \| ( h_0, \bv_0 ) \|^4_{H^{\frac74+\epsilon}} ).
	\end{split}
\end{equation*}
Taking square on the above inequality, we get
\begin{equation}\label{nu46}
	\begin{split}
		\|	\bw  \|_{{H}^{1+\epsilon'}_x} 
		\leq  & C(\|\bw_0  \|_{{H}^{1+\epsilon'}} + \| ( h_0, \bv_0 ) \|^2_{H^{\frac74+\epsilon}} ).
	\end{split}
\end{equation}
By using Lemma \ref{yuy}, for $0<\epsilon'<\epsilon \leq \frac18$, we can demonstrate 
\begin{equation}\label{nu48}
	\| \partial_t \bv_{-}  \|_{L^2_{[0,T_0]} H_x^{1+\epsilon'}}+ \| \bv_{-} \|_{L^2_{[0,T_0]} H_x^{2+\epsilon'}}  \leq C(1+\| ( h, \bv ) \|_{L^\infty_{[0,T_0]} H^{\frac74+\epsilon}}) \| \bw \|_{L^2_{[0,T_0]} H_x^{1+\epsilon'}},
\end{equation}	
and
\begin{equation}\label{nu50}
	\| \partial_t \mathbf{T} \bv_{-} \|_{L^2_{[0,T_0]}H_x^{\frac{3}{4}+\epsilon}} + 	\| \mathbf{T} \bv_{-} \|_{L^2_{[0,T_0]}H_x^{\frac{7}{4}+\epsilon}} \leq C\| \bw\|_{L^2_{[0,T_0]} H_x^{1+\epsilon'}} (1+\| h,\bv \|^2_{L^\infty_{[0,T_0]} H_x^{\frac{7}{4}+\epsilon}}), 
\end{equation}
Take advantage of the estimates \eqref{nu40} and \eqref{nu46}, the bounds for \eqref{nu48} and \eqref{nu50} become
\begin{equation}\label{nu52}
	\| \partial_t \bv_{-}  \|_{L^2_{[0,T_0]} H_x^{1+\epsilon'}}+ \| \bv_{-} \|_{L^2_{[0,T_0]} H_x^{2+\epsilon'}}  \leq C(1+\|h_0, \bv_0 \|_{H^{\frac74+\epsilon}} + \|\bw_0  \|_{H^{1+\epsilon'}}  )^3 ,
\end{equation}	
and
\begin{equation}\label{nu54}
	\| \partial_t \mathbf{T} \bv_{-} \|_{L^2_{[0,T_0]}H_x^{\frac{3}{4}+\epsilon}} + 	\| \mathbf{T} \bv_{-} \|_{L^2_{[0,T_0]}H_x^{\frac{7}{4}+\epsilon}} \leq C(1+\|h_0, \bv_0 \|_{H^{\frac74+\epsilon}} + \|\bw_0  \|_{H^{1+\epsilon'}}  )^4, 
\end{equation}
Due to equation \eqref{nu52} and Sobolev embeddings, we have
\begin{equation}\label{nu56}
	\begin{split}
		\|d\bv_{-}\|_{L^2_{[0,T_0]} L^\infty_x} + \|  d\bv_{-} \|_{L^2_{[0,T_0]} \dot{B}^{\epsilon'}_{\infty,2}} \leq & C\| d \bv_{-}  \|_{L^2_{[0,T_0]} H_x^{1+\epsilon'}}
		\\
		 \leq & C(1+\|h_0, \bv_0 \|_{H^{\frac74+\epsilon}} + \|\bw_0  \|_{H^{1+\epsilon'}}  )^3 .
	\end{split}
\end{equation}
By applying Strichartz estimates to \eqref{Wagb}, and since $0<\epsilon'<\epsilon \leq \frac18$, we can derive
\begin{equation}\label{nu58}
	\begin{split}
		\|d\bv_{+}\|_{L^4_{[0,T_0]} L^\infty_x}  \leq & C\big( \|\bv_0\|_{ H^{\frac{7}{4}+\epsilon} } + \| d \mathbf{T} \bv_{-} \|_{L^1_{[0,T_0]}H_x^{\frac{3}{4}+\epsilon}} +  \| \bv_{-} \|_{L^1_{[0,T_0]}H_x^{\frac{3}{4}+\epsilon}} \big)
		\\
		\leq & C\big( \|\bv_0\|_{ H^{\frac{7}{4}+\epsilon} } + \| d \mathbf{T} \bv_{-} \|_{L^1_{[0,T_0]}H_x^{\frac{3}{4}+\epsilon}} +  \| \bv_{-} \|_{L^1_{[0,T_0]}H_x^{2+\epsilon'}} \big)
		\\
		\leq & C(1+\|h_0, \bv_0 \|_{H^{\frac74+\epsilon}} + \|\bw_0  \|_{H^{1+\epsilon'}}  )^4.
	\end{split}
\end{equation}
Similarly, since $0<\epsilon'<\epsilon \leq \frac18$, we can also obtain
\begin{equation}\label{nu60}
	\begin{split}
		\|  d\bv_{+} \|_{L^4_{[0,T_0]} \dot{B}^{\epsilon'}_{\infty,2}} \leq & C\big( \|\bv_0\|_{ H^{\frac{7}{4}+\epsilon} } + \| d \mathbf{T} \bv_{-} \|_{L^1_{[0,T_0]}H_x^{\frac{3}{4}+\epsilon}} +  \| \bv_{-} \|_{L^1_{[0,T_0]}H_x^{\frac{3}{4}+\epsilon}} \big)
		\\
		\leq & C\big( \|\bv_0\|_{ H^{\frac{7}{4}+\epsilon} } + \| d \mathbf{T} \bv_{-} \|_{L^1_{[0,T_0]}H_x^{\frac{3}{4}+\epsilon}} +  \| \bv_{-} \|_{L^1_{[0,T_0]}H_x^{2+\epsilon'}} \big)
		\\
		\leq & C(1+\|h_0, \bv_0 \|_{H^{\frac74+\epsilon}} + \|\bw_0  \|_{H^{1+\epsilon'}}  )^4.
	\end{split}
\end{equation}
Applying the Strichartz estimates on the first equation in system \eqref{wrq}, it yields
\begin{equation}\label{nu62}
	\begin{split}
		\|  dh \|_{L^4_{[0,T_0]} L^\infty_x }+\|  dh \|_{L^4_{[0,T_0]} \dot{B}^{\epsilon'}_{\infty,2}}  \leq & C\big( \|h_0\|_{ H^{\frac{7}{4}+\epsilon} } + \| (d \bv, dh) \cdot (d \bv, dh) \|_{L^1_{[0,T_0]}H_x^{\frac{3}{4}+\epsilon}} \big)
		\\
		\leq & C\big( \|h_0\|_{ H^{\frac{7}{4}+\epsilon} } + \| d \bv,dh \|_{L^1_{[0,T_0]}L_x^{\infty}} \| d \bv, dh \|_{L^\infty_{[0,T_0]}H_x^{\frac{3}{4}+\epsilon}} \big)
		\\
		\leq & C(1+\|h_0, \bv_0 \|_{H^{\frac74+\epsilon}}  ),
	\end{split}
\end{equation}
where we also use \eqref{asu} and \eqref{nu40}. Due to \eqref{nu56}, \eqref{nu58}, \eqref{nu60}, \eqref{nu62}, and H\"older's inequality, we can prove
\begin{equation*}\label{nu64}
	\begin{split}
 \int^{T_0}_0 \big( \|dh, d\bv\|_{L^\infty_x} + \|dh,  d\bv \|_{\dot{B}^{\epsilon'}_{\infty,2}} \big) d\tau 
\leq &  T_0^{\frac12} \big( \|dh, d\bv\|_{L^2_{[0,T_0]} L^\infty_x} + \|  d\bv \|_{L^2_{[0,T_0]} \dot{B}^{\epsilon'}_{\infty,2}} \big) 
\\
\leq &  T_0^{\frac12} \big(  \|d\bv_{-}\|_{L^2_{[0,T_0]} L^\infty_x} + \|  d\bv_{-} \|_{L^2_{[0,T_0]} \dot{B}^{\epsilon'}_{\infty,2}} \big)
\\
& + T_0^{\frac14} \big(  \|dh, d\bv_{+}\|_{L^4_{[0,T_0]} L^\infty_x} +  \|dh,  d\bv_{+} \|_{L^4_{[0,T_0]} \dot{B}^{\epsilon'}_{\infty,2}} \big)
\\
\leq & CT_0^{\frac12} \big( 1+\|h_0, \bv_0 \|_{H^{\frac74+\epsilon}} + \|\bw_0  \|_{H^{1+\epsilon'}}  \big)^4.
	\end{split}
\end{equation*}
Noting \eqref{nu66}, we therefore get
\begin{equation*}\label{nu65}
 \int^{T_0}_0 \big( \|dh, d\bv\|_{L^\infty_x} + \|dh,  d\bv \|_{ \dot{B}^{\epsilon'}_{\infty,2}} \big) d\tau  \leq \frac12.
\end{equation*}
According to bootstrap arguments, the estimate \eqref{asu} holds. Therefore, on $[0,T_0]\times \mathbb{R}^2$, the total energy satisfies
\begin{equation*}\label{nu70}
	\begin{split}
	\|h\|_{H_x^{\frac74+\epsilon}}+\|\bv \|_{H_x^{\frac74+\epsilon}}+	\|	\bw  \|_{{H}^{1+\epsilon'}_x} 
		\leq  & C\big(1+\|\bw_0  \|_{{H}^{1+\epsilon'}} +\| h_0 \|_{H^{\frac74+\epsilon}} +\| \bv_0 \|_{H^{\frac74+\epsilon}} \big)^2.
	\end{split}
\end{equation*}
Moreover, the following Strichartz estimate holds:
\begin{equation*}\label{nu72}
	\begin{split}
		\|dh, d\bv\|_{L^4_{[0,T_0]}L^\infty_x} + \|  dh,d\bv \|_{L^4_{[0,T_0]}\dot{B}^{\epsilon'}_{\infty,2}}
		\leq  & C\big(1+\|\bw_0  \|_{{H}^{1+\epsilon'}} +\| h_0 \|_{H^{\frac74+\epsilon}} +\| \bv_0 \|_{H^{\frac74+\epsilon}} \big)^4.
	\end{split}
\end{equation*}
We thus prove the existence of solutions. For uniqueness and continuous dependence, we omit the details because the system \eqref{wrq} is semi-linear and straightforward to verify. At this stage, we have proved Theorem \ref{thm3}.

\subsection{Proof of Theorem \ref{thm4}}
For the system \eqref{wrtq} satisfying the ``wave map" null condition, we refer to Klainerman and Selberg \cite{KS} (Theorem 1) and Tataru \cite{T} (page 38, Theorem 1). Consequently, we obtain the conclusions stated in Theorem \ref{thm4}.

%\section*{Funding declaration}
%The author is supported by Natural Science Foundation of Hunan Province, China (Grant No. 2025JJ40003) and the Fundamental Research Funds for the Central Universities (Grant No. 531118010867).

\section*{Conflicts of interest and Data Availability Statements}
The authors declared that this work does not have any conflicts of interest. The authors also confirm that the data supporting the findings of this study are available within the article.

\end{sloppypar}
\end{document}